\renewcommand{\geq}{\geqslant}
\renewcommand{\leq}{\leqslant}
\newtheorem{theorem}{Theorem}
\newtheorem{prop}{Proposition}[section]
\newtheorem{defi}{Definition}
\newtheorem{cor}[prop]{Corollary}
\newtheorem{lemma}{Lemma}[section]
\newcommand{\be}{\begin{equation}}
\newcommand{\ee}{\end{equation}}
\newcommand\inter[1]{\overset{\circ}{#1}}
\newcommand\steq[1]{\stackrel{\text{\rm #1.}}{=}}
\pgfplotsset{compat=newest}
\colorlet{darkgreen}{green!50!black}
\definecolor{darkseagreen}{rgb}{0.56, 0.74, 0.56}
\definecolor{lightcyan}{rgb}{0.88, 1.0, 1.0}
\definecolor{lightblue}{rgb}{0.68, 0.85, 0.9}
\definecolor{palecerulean}{rgb}{0.61, 0.77, 0.89}
\definecolor{lgreen} {RGB}{180,210,100}
\definecolor{dblue}  {RGB}{20,66,129}
\definecolor{ddblue} {RGB}{11,36,69}
\definecolor{lred}   {RGB}{220,0,0}
\definecolor{nred}   {RGB}{224,0,0}
\definecolor{norange}{RGB}{230,120,20}
\definecolor{nyellow}{RGB}{255,221,0}
\definecolor{ngreen} {RGB}{98,158,31}
\definecolor{dgreen} {RGB}{78,138,21}
\definecolor{nblue}  {RGB}{28,130,185}
\definecolor{jblue}  {RGB}{20,50,100}
\definecolor{Apricot} {RGB}{255, 170, 123} 
\definecolor{dpurple}  {RGB}{53,21,93}
\numberwithin{equation}{section}
\def\R{{\mathbb R}}
\def\N{{\mathbb N}}
\def\C{{\mathbb C}}
\def\Z{{\mathbb Z}}
\def\1{\mathbb{1}}
\def\E{{\mathbb E}}
\def\eps{{\epsilon}}
\def\P{{\mathbb P}}
\def\cal{\mathcal}
\def\Z{\mathbb{Z}}
\def\C{\mathbb{C}}
\def\R{\mathbb{R}}
\def\Q{\mathbb{Q}}
\renewcommand{\epsilon}{\varepsilon}
\definecolor{darkgreen}{rgb}{0,0.4,0}
\definecolor{MyDarkBlue}{rgb}{0,0.08,0.50}
\definecolor{BrickRed}{rgb}{0.65,0.08,0}
\title[Classification of Asymptotic Behaviors of Green Functions]{A Classification of Asymptotic Behaviors of\\ Green Functions of Random Walks in the Quadrant}
\author{Irina Ignatiouk-Robert}
\address{D\'epartement de math\'ematiques, Universit\'e de Cergy-Pontoise, 2 Avenue Adolphe Chauvin
95302 Cergy-Pontoise Cedex, France}
\email{Irina.Ignatiouk@u-cergy.fr}
\date{\today}
\begin{document}

\begin{abstract}
This paper investigates the asymptotic behavior of  Green functions associated to partially homogeneous random walks in the quadrant $\Z_+^2$. There are four possible distributions for the jumps of these processes, depending on the location of the starting point: in the interior, on the two positive axes of the boundary, and at the origin $(0,0)$. 

With mild conditions on the positive jumps of the random walk, which can be unbounded, a complete analysis of the asymptotic behavior of the Green function of the random walk killed at $(0,0)$ is achieved. The main result is that {\em eight} regions of the set of parameters determine completely the possible limiting behaviors of Green functions of these Markov chains. These regions are defined by a  set of relations for several characteristics of the distributions of the jumps.

In the transient case, a  description of the Martin boundary is obtained and in the positive recurrent case, our results  give the  exact limiting behavior of the invariant distribution of a state whose norm goes to infinity along some asymptotic direction in the quadrant. These limit theorems extend  results of the literature obtained, up to now,  essentially for random walks whose jump sizes are either $0$ or $1$ on each coordinate.

Our approach  relies on a combination of  several methods:  probabilistic representations of solutions of analytical equations, Lyapounov functions,  convex analysis,  methods of homogeneous random walks, and  complex analysis arguments. 

\end{abstract}

\maketitle

\tableofcontents

\newpage
 
\section{Introduction}
For a  transient Markov chain  $(Z(n))$ on an infinite countable state space  ${\cal E}$ determining all  possible limits of the associated Martin kernel, {\em the Martin boundary}, is an important and difficult problem in general. By the Poisson-Martin representation theorem, it gives the {\em Martin compactification} of the state space, an integral representation of all non-negative harmonic functions. For an introduction to the theory of Martin boundary for countable Markov chains, see the classical references Doob~\cite{Doob:02} and Dynkin~\cite{Dynkin}.

The characterization of the Martin boundary for homogeneous  random walks in $\Z^d$ has been obtained  in Ney and Spitzer~\cite{Ney-Spitzer}, via a set of technical estimates related to the local central limit theorem.  The Martin boundary  has been also identified for  random walks on free groups, hyperbolic graphs and Cartesian products. See  Woess~\cite{Woess} for a thorough presentation of boundary theory of random walks. 

There are few results for more general Markov chains. For random walks on non-homogeneous trees, the Martin boundary has been obtained in Cartier~\cite{Cartier}.  Doney~\cite{Doney:02} identified  the Martin boundary of homogeneous random walks $(Z(n))$ on $\Z$ killed on the negative half-line of $\Z$. For space-time random walks  $(S(n)){=}(n, Z(n))$  associated to a homogeneous random walk $Z(n)$ on $\Z$ killed on the negative half-line, the Martin boundary is obtained in Alili and Doney~\cite{Alili-Doney}. The proof of these results relies on the one-dimensional structure of these  processes.

The  Martin boundary for partially  homogeneous random walks killed or reflected on a half-space or a cone of $\Z^d$ has been identified with  large deviation techniques, Choquet-Deny theory and  ratio   limit  theorems  of Markov-additive processes. See Ignatiouk~\cite{Ignatiouk:2008,Ignatiouk:2010,Ignatiouk:2020}. 

\subsection*{Random Walks on $\Z_+^2$ }
In this paper we consider a partially homogeneous random walk $(Z(n))$ on $\Z_+^2$ with the following characteristics: the distribution of its jumps is
\begin{enumerate}
\item[] a) $\mu$ in the interior of  $\Z_+^2$;\qquad\qquad b) $\mu_0$ at $0{=}(0,0)$;
\item[] c) $\mu_1$ in $\{0\}{\times}(\Z_+{\setminus}\{0\})$;\hspace{16mm}  d)  $\mu_2$ in $(\Z_+{\setminus}\{0\}){\times}\{0\}$;
\end{enumerate}
The possible negative jumps are either $0$ or ${-}1$ on each coordinate for $\mu$, $\mu_1$ and $\mu_2$. 

When it is transient, the Green function $G$ of the Markov chain is, for $j$, $k{\in}\Z_+^2$,
\[
G(j,k)\steq{def}\sum_{n=0}^{+\infty} \P_j(Z(n){=}k).
\]
The strict hitting time of $0{=}(0,0)$ is denoted as
\[
\tau_0\steq{def}\inf\{k{>}0:Z(k){=}(0,0)\}
\]
and the Green function of the Markov chain  killed at $0$ is, for $j{\in}\Z_+^2$, $k{\in}\Z_+^2{\setminus}\{0\}$,
\[
g(j,k)\steq{def}\sum_{n{\ge}0} \P_j(Z(n)=k,n{<}\tau_0).
\]
\subsection*{Exact Asymptotics of Green Functions}
The Martin Kernel being the ratio of two Green functions, its limiting behavior  can be obtained from  the exact asymptotics of $G(j,k)$ when $k$ goes to infinity.
Ney and Spitzer~\cite{Ney-Spitzer} determines the Martin boundary of homogeneous random walks in $\Z^d$ in this way.

It is easily seen that, for $j$, $k{\in}\Z_+^2{\setminus}\{0\}$,
\[
G(j,k) = g(j,k) + G(j,0) g(0,k)
\text{ and  } 
G(0, k) = G(0,0) g(0,k),
\]
which gives a relation between the asymptotic behaviors of the Green functions $k{\to}G(j,k)$ and $k{\to}g(j,k)$.

This type of asymptotic analysis can also be used to investigate positive recurrent Markov chains. The invariant distribution $\pi$ is represented  with the Green function of the  Markov chain killed at $0$,  for $k{\in}\Z_+^2$,
\be\label{InvGreen}
\pi(k)=\pi(0)\sum_{n=0}^{+\infty} \P_{0}\left(Z(n){=}k,n{<}\tau_0\right)=\pi(0)g(0,k),
\ee
The limiting behavior of $k{\to}g(0,k)$ gives therefore the asymptotic behavior of the invariant distribution of a state $k$ going to infinity.

\subsubsection*{Convergence to Infinity}
For $j{\in}\Z_+^2$, we will investigate the asymptotic behavior of $g(j,k)$  as $k{=}(k_1,k_2){\in}\Z_+^2$ gets large in several ways.
\begin{enumerate}
\item\label{NInf}  With direction $u{\in}{\mathbb S}^1_+{=}\{x{=}(x_1,x_2){\in}\R_+^2:\|x\|{=}x_1^2{+}x_2^2{=}1\}$,
\[
\min(k_1, k_2)\to {+}\infty, \quad\text{ and } \quad \frac{k}{\|k\|}\to u.
\]
\item\label{KMInf} Along the axes.\\
The quantity $k_2{\in}\Z$ is fixed and $k_1{\to}{+}\infty$, and the symmetrical case by exchanging the variables $k_1$ and $k_2$.
\end{enumerate}
The case~\ref{NInf} is the classical set of asymptotic behaviors considered in general. As we will see, the asymptotics of the Green function for the  case~\ref{KMInf} are different from  the case~\ref{NInf}, they exhibit interesting behaviors. They have also been considered in~Kobayashi and Miyazawa~\cite{Kobayashi-Miyazawa-2} for random walks with jumps of size $1$. Note that there is a slight abuse of terminology for~\ref{KMInf} since, strictly speaking, the cases $u{=}(1,0)$ and $u{=}(0,1)$ of~\ref{NInf} are also ``along the axes''.

\subsection*{A Functional Equation}
A  functional equation for  generating functions of the Green function of the Markov chain killed at $0$  plays a central role in our study. It is expressed as, for $j{\in}\Z_+^2$ and $(x,y)$ in a convenient subset of $\C^2$, 
\begin{multline}\label{functional-equation}
\bigl(1{-}P(x,y)\bigr) (H_j(x,y){-}H_j(x,0){-}H_j(0,y)) \\ =  L_j(x,y) + \bigl(\phi_1(x,y){-}1\bigr) H_j(x,0) + \bigl(\phi_2(x,y){-}1\bigr) H_j(0,y)  
\end{multline}
holds, where the quantity $(L_j(x,y))$ is a known function and 
\begin{enumerate}
  \item $(H_j(x,y))$ is the generating function of $(g(j,k),k{\in}\Z_+^2)$;
\item $P(x,y)$ is the generating function of $\mu$ at $(x,y)$, the distribution of the jumps in the interior of $\Z_+^2$. The quantity $Q(x,y){=}xy(1{-}P(x,y))$ is in general referred to as the {\em kernel};
\item For $i{=}1$, $2$, $\phi_i(x,y)$ is the  generating function of $\mu_i$ at $(x,y)$.
\end{enumerate}
In most of studies on the asymptotic behavior of   Green functions or invariant measures of random walks in the quadrant, there is always an associated functional equation similar to equation~\eqref{functional-equation}.
\subsection*{Literature}
We now give a brief presentation of the existing asymptotic results for non-homogeneous random walks in the quadrant.
\begin{enumerate}
\item  Nearest neighbor random walks: Jumps of size $0$ or $1$ on each coordinate.\\
In this case, the kernel $(xy(1{-}P(x,y)))$ is a polynomial of degree at most $2$ in each variable.    In the positive recurrent case, by using methods of complex analysis on elliptic curves,  the asymptotic behavior of the invariant distribution  along lines of $\Z_+^2$ has been obtained in the early work Malyshev~\cite{Malyshev} in 1973. 
Following these ideas, extensions of these results have  been established in  Kurkova and  Malyshev~\cite{Kurkova-Malyshev}, Kurkova and Raschel~\cite{Kurkova-Raschel},   and Li and Zhao~\cite{Li-Zhao-2018}, but, as in the original paper~\cite{Malyshev}, only when positive jumps are of size  $1$. See Fayolle et al.~\cite{F-Y-M-book} for a general presentation and additional references therein.

In the positive recurrent case, Kobayashi and Miyazawa~\cite{Kobayashi-Miyazawa-2}  determines  the exact domain of convergence of the generating function of stationary distribution and obtains the asymptotic behavior of the stationary distribution.

In both cases, \cite{Kobayashi-Miyazawa-2} and~\cite{Malyshev}, the analysis relies in an essential way on the explicit representation of the roots of the quadratic function $(xy(1{-}P(x,y)))$. This is the main limitation of this type of approach. 

\bigskip

\item  For positive recurrent random walks with unbounded positive jumps,  exact asymptotics of the stationary measure have been investigated in Borovkov and Mogulskii~\cite{Borovkov-Mogulskii-2001} with large deviation techniques. The asymptotics are considered for interior directions of ${\mathbb S}^1_+$.  Some constants do not seem to be explicitly determined in the limit results of this reference. In particular, it is not clear how the limiting behavior of the invariant distribution of a state going to infinity depends on the asymptotic direction $u{\in}{\mathbb S}^1_+$.

\item   Positive recurrent random walks with unbounded positive jumps  have also been analyzed in  Kobayashi and Miyazawa~\cite{Kobayashi-Miyazawa} from the point of view of tail asymptotics. For these asymptotics,  a line of $\Z_+^2$ associated to a fixed vector is going to infinity in the sense that its distance  to $(0,0)$ is going to infinity. The quantity considered for the tail asymptotics is the invariant distribution of all states of $\Z_+^2$  above this line.  The exact domain of convergence of the generating function of the invariant distribution  is obtained and, with methods of Markov-additive processes,  exact tail asymptotics with explicit constants are derived. 

\item  Ignatiouk et al.~\cite{I-K-R} has investigated  transient random walks in the quadrant $\Z^2_+$ with unbounded positive jumps,  the size of negative jumps  is not necessarily $1$ but bounded. An additional assumption in~\cite{I-K-R} is that the random walk escapes to the infinity along the horizontal axis and the vertical axis. With methods of local Markov-additive processes and complex analysis arguments, the exact asymptotics of the Green function are obtained. They are expressed in terms of asymptotics of Green functions of random walks in half-plane, i.e. with one boundary removed. Our present paper shows that such a result is wrong in general, both boundaries play a role in fact in several of our convergence results.  
 \end{enumerate}

\subsection*{A Quick Presentation}\ \\
A significant part of our paper is devoted to the definition and the properties of  the classification in eight regions of the space of parameters. It is defined as a set of relations for several characteristics associated to the distributions of the jumps $\mu$, $\mu_i$, $i{\in}\{1,2\}$.  For each region  of this classification,  an investigation of the analyticity properties  of the generating functions functions $H_j$, $j{\in}\Z_2^+$ and the study of the nature of their dominant singularities are achieved.  With these results, the exact asymptotics of the Green function  $(g(j,k),j,k{\in}\N^2)$ are derived. They are stated in Theorems~\ref{theorem3} and~\ref{theorem4} of Section~\ref{Results}.

We now give a sketch of the general method used to obtain these convergence results. Section~\ref{Results}  gives a much more detailed description of the contributions and also of the methods used. 

A first step is of studying some solutions $(x,y)$ of the equation $Q(x,y){=}xy(1{-}P(x,y)){=}0$. This is done by investigating the existence of  a function  $Y(x)$ defined on a subset  of $\C$ such that $(x,Y(x))$ is a zero of $Q$.  By canceling the left-hand side of~\eqref{functional-equation}, it gives a relation between $(H_j(x,0))$ and $(H_j(0,Y(x)))$. An analogue study is done, by exchanging the roles of $x$ and $y$, with $(X(y),y)$. After this step,  an analytic continuation of  $(H_j(0,Y(x)))$  is achieved and, with relation~\eqref{functional-equation} it gives an analytic continuation of $(H_j(x,0))$. This is a difficult point and this is where a  convenient  representation of $(Y(x))$ is crucial.  In our approach,  the key argument in this step is a probabilistic representation of the function $(Y(x))$. Under general conditions, an  expression of the  functions  $(H_j(0,y))$ and $(H_j(x,0))$ is derived, and therefore an expression of  $(H_j(x,y))$,  the generating function of $(g(j,k))$. The last step uses this representation  and, with complex analysis arguments, one can derive the asymptotic behavior of  $(g(j,k))$ when $k$ goes to infinity.

\section{Overview of the Results}\label{Results}  
\subsection*{General Notations}\label{general-notations} 
Throughout the paper, the following notations are used.
\begin{enumerate}
\item For two points $(x_1,y_2)$, $(x_2,y_2){\in}\R^2$, the {\em line segment} in $\R^2$ with the end-points $(x_1,y_2)$ and $(x_2,y_2)$ is denoted by $[(x_1,y_2), \, (x_2,y_2)]$. 
The unit circle of $\R_+^2$ is
\be\label{defS1}
  {\mathbb S}^1_+\steq{def}\{w{=}(u,v){\in}\R_+^2: \|x\|=u^2{+}v^2=1\}.
  \ee
\item For $a{\in}\C$ and $r{>}0$, we denote by $B(a,r)$ the open disk in $\C$ with center $a$ and radius $r$.  A {\em poly-disc} of $\C^2$ is the product of two open discs. 
\item For $r_2{>} r_1 {>} 0$, we let 
\[
C(r_1,r_2) = \{x\in\C:~r_1 < |x| < r_2\} \quad \text{and} \quad \overline{C}(r_1,r_2) = \{x\in\C:~r_1 \leq |x| \leq r_2\}.
\]
\item For a subset $B$ of $[0,{+}x\infty[^2$, we denote
\be\label{OmDef}
 \Omega(B) \steq{def} \{(x,y)\in\C^2: \, (|x|, |y|)\in B\}.
\ee
A set $B\subset[0,{+}\infty[^2$ is {\em logarithmically convex} (resp. {\em strictly logarithmically convex}) if, for any $x$, $y{\in}B$ and $\lambda{\in}[0,1]$, $x^{\lambda}y^{1-\lambda}{\in}B$, resp. $x^{\lambda}y^{1-\lambda}{\in}\inter{B}$ when $x{\ne}y$. We denote by  $\text{LogCH}(B)$ the logarithmic convex hull of $B$ in $[0,{+}\infty[^2$, i.e. the smallest logarithmically convex set of $[0,{+}\infty[^2$ containing $B$. 
\end{enumerate}
To simplify some expressions, we will also use the notations, for a ${\cal C}_2$-function $f$ on $\C^2$,
\begin{align*}
\partial_x f(x,y) &= \frac{\partial f}{\partial x}  (x,y), \quad \partial_y f(x,y) = \frac{\partial f}{\partial y}(x,y), \\
\partial^2_{xx} f(x,y) &= \frac{\partial^2 f}{\partial x^2} (x,y), \quad \partial^2_{xy} f(x,y) = \frac{\partial^2 f}{\partial x \partial y} (x,y), \quad \partial^2_{yy} f(x,y) = \frac{\partial^2 f}{\partial y^2} (x,y).
\end{align*}

We now introduce the non-homogeneous random walks investigated in this paper. 
\subsection{Non-Homogeneous Random walks in $\Z^2_+$: Definitions and Assumptions} 
The process $Z(n){=}(Z_1(n),Z_2(n))$ on $\Z^2_+$ is a Markov chain on $\Z^2_+$ with transition probabilities given 
for $j{=}(j_1,j_2){\in}\Z_+^2$ by 
\be\label{trans_probabilities}
\P_{j}(Z(1)= j{+}k)\steq{def}
\begin{cases} \mu(k) & \text{ if } j_1{>}0 \text{ and } j_2{>}0,\\
\mu_1(k) & \text{ if } j_1{>}0 \text{ and } j_2{=}0,\\
\mu_2(k) & \text{ if } j_2{>}0 \text{ and } j_1{=}0,\\ 
\mu_0(k) & \text{ if }  (j_1,j_2){=}(0,0),
\end{cases} 
\ee
where  $\mu$ is a probability measure on $\Z^2$, and  $\mu_1$, $\mu_2$ and  $\mu_0$ are sub-probability measures (positive measures with total mass  less or equal than $1$) on, respectively, $\Z{\times}\Z_+$, $\Z_+{\times}\Z$ and $\Z^2_+$.

Their  generating functions, defined on their set of convergence in $\C^2$, are denoted by 
\begin{align}
P(x,y) &\steq{def} \sum_{j=(j_1,j_2)\in\Z^2} \mu(j) x^{j_1}y^{j_2}, \label{jump_generating_function_P}\\
\phi_1(x,y)& \steq{def} \sum_{j=(j_1,j_2)\in\Z^2} \mu_1(j) x^{j_1}y^{j_2}, \quad \quad 
\phi_2(x,y) \steq{def} \sum_{j=(j_1,j_2)\in\Z^2} \mu_2(j) x^{j_1}y^{j_2},\label{jump_generating_function_phi1}\\
\phi_0 (x,y)& \steq{def} \sum_{j=(j_1,j_2)\in\Z^2} \mu_0(j) x^{j_1}y^{j_2}.\label{jump_generating_function_phi_k}
\end{align}
The {\em level sets} $D$, $D_1$ and $D_2$  of these generating functions are defined as 
\begin{multline}\label{Ddef}
D \steq{def} \left\{(x,y)\in]0,+\infty[^2~:~P(x,y) \leq 1\right\}\\ \text{ and } D_i \steq{def} \left\{(x,y)\in]0,+\infty[^2~:~ \phi_i(x,y) \leq 1\right\}, \quad i{\in}\{1,2\}.
\end{multline}

\bigskip

\noindent
There are three main assumptions used in our results.

\noindent
{\bf Assumption (A1)} {\em 
\begin{enumerate}[label=(\roman*)]
\item The homogeneous random walk  associated to the distribution $\mu$ is irreducible on $\Z^2$; 
\item The generating function $P$ is finite in a neighborhood of the set $D$ in $\R^2$;
\item The set $D$ has a non-empty interior.
\end{enumerate} 
}

\bigskip
\noindent
    {\bf Assumption (A2)}
    \begin{enumerate} [label=(\roman*)]
\item{\em For $j{=}(j_1,j_2){\in}\Z^2$, $\mu(j_1,j_2){=}0$ if  $j_1{<}{-} 1$ or $j_2 {<}{-}1$.}
    \end{enumerate}
    
\bigskip
\noindent
{\bf Assumption (A3)} {\em 
\begin{enumerate} [label=(\roman*)]
\item The random walk $Z(n){=}(Z_1(n),Z_2(n))$ is irreducible on $\Z^2_+$;
\item The generating functions  $\phi_1$, $\phi_2$ and $\phi_0$ are  finite in a neighborhood of the set $D$;
\item The sets $D{\cap}D_1$ and $D{\cap}D_2$ have a non-empty interior;
\item For $j{=}(j_1,j_2){\in}\Z^2$,
\begin{itemize} 
\item  $\mu_1(j_1,j_2){=} 0$ if $j_1 {<}{-} 1$;
\item $\mu_2(j_1,j_2) {=} 0$  if  $j_2{<}{-} 1$; 
\end{itemize} 
\item There exists $j{=}(j_1,j_2){\in}\Z^2$ with $j_2{>}0$ such that $\mu_1(j_1,j_2) {>} 0$; 
\item There exists $j{=}(j_1,j_2){\in}\Z^2$  with $j_1{>}0$ such that $\mu_2(j_1,j_2) {>} 0$.
\end{enumerate}
}

\medskip

The Markov chain $(Z(n))$ killed at $0$ and its Green function are now introduced. 
\begin{defi}[Killed Markov Chain]\label{Killed}
The return time of the process $(Z(n))$ to the origin $0{=}(0,0)$ is defined by
\[
\tau_0 \steq{def} \inf\{ n \geq 1~: Z(n) = (0,0)\} 
\]
and $(Z_{\tau_0}(n))$ denotes a process with the distribution of the Markov chain $(Z(n))$ killed at $0$. Its Green function is defined by, for $j$, $k{\in}\Z_+^2$,
\be\label{Greeng}
g(j,k) \steq{def} \sum_{n=0}^\infty \P_j(Z_{\tau_0}(n) = k)=\sum_{n=0}^\infty \P_j(Z(n) = k, \; \tau_0 > n). 
\ee
A non-negative function $\varkappa:\Z^2_+{\to}\R_+$ is said to be {\em harmonic} for the Markov chain  $(Z(n))$  killed at $0$ if, for  $j{\in}\Z_+^2$, 
\[
\E_j(\varkappa(Z(1)); \; \tau_0 > 1) = \varkappa(j).
\]
\end{defi}

The next proposition introduces key quantities used to define the different regions which determine the asymptotic behavior of  the Green function $(g(j,k))$. Its proof follows from Lemma~\ref{preliminary-lemma1} and Lemma~\ref{preliminary-lemma2} of Section~\ref{preliminary-section}. Figures~\ref{F0a}, ~\ref{F0b},~\ref{F1} and~\ref{F3} in Section~\ref{FigSec} of the appendix illustrate some of these definitions.

\begin{prop}\label{propx*}
Under the assumptions (A1){-}(A3),
\begin{enumerate} 
\item the sets $D_1$ and $D_2$ are logarithmically convex and  the set $D$  is compact, strictly logarithmically convex and does not intersect with the axes $\{(x,y){\in}\R^2{:} x{=}0\}$ and $\{(x,y){\in}\R^2{:} y{=}0\}$;
\item There exist $x^*_P$, $x^{**}_P{\in}]0,{+}\infty[$ and $x^*$,  $x^{**}\in]0,{+}\infty[$ such that $x^*_P {<} x^{**}_P$, $x^* {<} x^{**}$,  and
\begin{align*}
[x^*_P, x^{**}_P] &= \{x{\in}]0,+\infty[ :~ \exists y{\in}]0,+\infty[, \, (x,y){\in} D\},\\
[x^*,x^{**}] &= \{x{\in}]0,+\infty[ :~ \exists y{\in}]0,+\infty[, \, (x,y){\in} D\cap D_1\}.
\end{align*}
\item There exist functions $Y_1$, $Y_2:[x^*_P, x^{**}_P]{\to}[y^*_P, y^{**}_P]$, such that, for  $x{\in} [x^{*}_P,x^{**}_P]$, $Y_1(x) {\leq} Y_2(x)$ and 
\[
[Y_1(x),Y_2(x)] = \{y{\in}]0,+\infty[~:~(x,y){\in} D\}.
\]
For $x{\in} [x^{*}_P,x^{**}_P]$,  $Y_1(x)$, $Y_2(x)$ are the unique positive solutions of the equation $P(x,y) {=}1$, and  $Y_1(x){=} Y_2(x)$ holds if and only if $x{\in}\{x^{*}_P,x^{**}_P\}$.
\item  There exist $y^*_P, y^{**}_P{\in}]0,+\infty[$ and $y^*, y^{**}{\in}]0,+\infty[$ such that $y^*_P {<} y^{**}_P$, $y^* {<} y^{**}$, and 
\begin{align*}
[y^*_P, y^{**}_P] &= \{y{\in}]0,+\infty[ :~ \exists x{\in}]0,+\infty[, \, (x,y){\in} D\},\\
[y^*,y^{**}] &= \{x{\in}]0,+\infty[ :~ \exists x{\in}]0,+\infty[, \, (x,y){\in} D\cap D_2\}.
\end{align*}
\item   There exist functions $X_1$, $X_2{:} [y^*_P, y^{**}_P]{\to}[x^*_P, x^{**}_P]$ such that, for   $y{\in} [y^{*}_P,y^{**}_P]$, $X_1(y){\leq} X_2(y)$ and 
 \[
 [X_1(y),X_2(y)] =\{x{\in}]0,+\infty[~:~(x,y){\in} D\}.
 \]
 For  $y{\in} [y^{*}_P,y^{**}_P]$, $X_1(y)$, $X_2(y)$ are the unique positive solutions of the equation $P(x,y){=} 1$, and  $X_1(y) {=} X_2(y)$ holds if and only if $y{\in}\{y^*_P, y^{**}_P\}$. 
\end{enumerate}
\end{prop}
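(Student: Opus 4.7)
The plan is to pass to logarithmic coordinates $(u,v) = (\log x, \log y)$, where everything becomes convex analysis. Under this change of variables, $P(e^u,e^v) = \sum_{j \in \Z^2} \mu(j)\, e^{j_1 u + j_2 v}$ is a sum of exponentials of linear forms, hence convex in $(u,v)$; the same is true of $\phi_1(e^u,e^v)$ and $\phi_2(e^u,e^v)$. Log-convexity of $D$, $D_1$, $D_2$ is just convexity of the respective sublevel sets. For \emph{strict} log-convexity of $D$, I would compute the Hessian $\sum_j \mu(j)\, jj^\top e^{\langle j,(u,v)\rangle}$ and observe that irreducibility of the walk on $\Z^2$ (Assumption (A1)(i)) forces $\supp\mu$ to generate $\Z^2$ as a group, hence not to lie on any one-dimensional subspace, so the Hessian is positive definite.

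Next I would show $D$ is compact and bounded away from the axes. Irreducibility of $(Z(n))$ on $\Z^2$ combined with Assumption (A2) implies that $\supp\mu$ contains a jump with $j_1 = -1$, a jump with $j_1 \geq 1$, and analogously for $j_2$. So for any fixed $y>0$ lying in the $y$-projection of $D$, the map $x \mapsto P(x,y)$ contains terms proportional to $x^{-1}$ and to $x^{k}$ with $k \geq 1$, hence tends to $+\infty$ as $x \to 0^+$ and as $x \to +\infty$. Combined with (A1)(ii)--(iii) (which gives properness and a nonempty interior), standard convex analysis yields that $D$ is compact, convex in log-coordinates, and disjoint from both axes. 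The projection onto the $x$-axis is therefore a closed interval $[x^*_P, x^{**}_P] \subset \,]0,+\infty[$ with $x^*_P < x^{**}_P$ since $D$ has nonempty interior. The existence of $[x^*,x^{**}]$ and $[y^*,y^{**}]$ follows the same way using (A3)(iii), since $D \cap D_1$ and $D \cap D_2$ are intersections of compact and closed convex-in-log sets with nonempty interior.

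For the parametrization by $Y_1$, $Y_2$, I would fix $x \in [x^*_P, x^{**}_P]$ and study $v \mapsto P(x,e^v)$. This is a sum of exponentials $\sum_k c_k(x) e^{kv}$ with $c_k(x) = \sum_{j_1} \mu(j_1,k)\, x^{j_1} \geq 0$, and its second derivative is $\sum_k k^2 c_k(x) e^{kv}$. Irreducibility forces $c_k(x) > 0$ for some $k \neq 0$, so the function is strictly convex in $v$, tends to $+\infty$ at $\pm\infty$ (by the same $-1$/positive-jump argument applied in the second coordinate), and has a unique minimum. Its $\{\leq 1\}$ sublevel set in $v$ is therefore a closed interval (possibly a single point), which exponentiates to $[Y_1(x), Y_2(x)]$. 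Strict convexity gives that $P(x,y)=1$ has at most two positive roots in $y$, and these are exactly $Y_1(x)$ and $Y_2(x)$ when the minimum is $<1$. Finally, by strict log-convexity of $D$ established in the first paragraph, the minimum of $P(x,\cdot)$ equals $1$ precisely when $x$ lies on the boundary of the $x$-projection of $D$, i.e.\ $x \in \{x^*_P, x^{**}_P\}$, in which case $Y_1(x) = Y_2(x)$. Parts (4)--(5) follow by interchanging the roles of $x$ and $y$.

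The main obstacle will be the careful derivation, from irreducibility of $(Z(n))$ on $\Z^2$ together with (A2), of the two facts actually used: that $\supp\mu$ is not confined to a line (for strict log-convexity), and that it contains jumps with each coordinate both strictly negative and strictly positive (for properness of $P$ at the axes and at infinity). Everything else is standard convex analysis applied to log-Laplace functionals; presumably Lemmas~\ref{preliminary-lemma1} and~\ref{preliminary-lemma2} of Section~\ref{preliminary-section} isolate precisely these two ingredients.
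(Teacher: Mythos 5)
Your proposal is correct and follows essentially the same route as the paper's Lemmas~\ref{preliminary-lemma1} and~\ref{preliminary-lemma2}: pass to logarithmic coordinates, use strict convexity and compactness of $\tilde D=\{\tilde P\leq 1\}$ (which the paper outsources to Spitzer, while you derive from the positive-definite Hessian and coercivity implied by irreducibility of the homogeneous walk), and read off the intervals and boundary functions by projection. One small slip to fix: the irreducibility invoked for the support-spanning arguments is that of the homogeneous walk $(S(n))$ on $\Z^2$ under Assumption (A1)(i), not of $(Z(n))$, which lives on $\Z^2_+$.
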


The relations $D{\cap}D_1{\subset}D$ and $D{\cap} D_2{\subset}D$ give the inequalities
\be\label{eq-critical-points} 
x^*_P \leq x^* < x^{**} \leq x_P^{**} \quad  \text{and} \quad y^*_P \leq y^* < y^{**} \leq y_P^{**}.
\ee
Note that, since the point $(1,1)$ is an element of $D{\cap} D_1$ and $D{\cap} D_2$, one has also
\be\label{eq2-critical-points} 
x^* \leq 1 \leq x^{**}, \quad  y^* \leq 1 \leq y^{**}. 
\ee

We now define four curves ${\mathcal S}_{11}$ , ${\mathcal S}_{12}$, ${\mathcal S}_{21}$ and ${\mathcal S}_{22}$ on the boundary $\partial D$ of the set $D$. They are also used in the definition of our classification. 
\begin{align}
{\mathcal S}_{11} &\steq{def} \left\{(x,y)\in\partial D:~\partial_x P(x,y) \leq 0,\;\partial_y P(x,y) \leq 0 \right\}, 
\label{eq_S_11}\\ 
{\mathcal S}_{12} &\steq{def} \left\{(x,y)\in\partial D:~\partial_x P(x,y) \leq 0, \; \partial_y P(x,y) \geq 0 \right\},
\label{eq_S_12}\\
{\mathcal S}_{21}  &\steq{def} \left\{(x,y)\in\partial D:~\partial_x P(x,y) \geq 0,\: \partial_y P(x,y) \leq 0 \right\},
\label{eq_S_21}\\
{\mathcal S}_{22}  &\steq{def} \left\{(x,y)\in\partial D:~\partial_x P(x,y) \geq 0, \; \partial_y P(x,y) \geq 0 \right\}.  
\label{eq_S_22}
\end{align}
With Lemma~\ref{preliminary-lemma1} of Section~\ref{preliminary-section}, these curves can be expressed  in terms of the functions $X_1$, $X_2$ and $Y_1$, $Y_2$ of Proposition~\ref{propx*} as follows,
\begin{align}
{\mathcal S}_{11} &= \{(x, Y_1(x)):~ x\in[x^*_P, X_1(y^*_P)]\} = \{ (X_1(y),y):~ y\in[y^*_P, Y_1(x^*_P)]\}, \label{eq2_S_11}\\
{\mathcal S}_{12} &= \{(x, Y_2(x)):~ x\in[x^*_P, X_1(y^{**}_P)]\} = \{ (X_1(y),y):~ y\in[ Y_1(x^*_P), y^{**}_P]\}, \label{eq2_S_12}\\
{\mathcal S}_{21} &= \{(x, Y_1(x)):~ x\in[X_1(y^{*}_P), x^{**}_P]\} = \{ (X_2(y),y):~ y\in[ y^*_P, Y_1(x^{**}_P)]\}, \label{eq2_S_21}\\
{\mathcal S}_{22} &= \{(x, Y_2(x)):~ x\in[X_1(y^{**}_P), x^{**}_P]\} = \{ (X_2(y),y):~ y\in[ Y_1(x^{**}_P), y^{**}_P]\}, \label{eq2_S_22}
\end{align}
and the relations $X_1(y^*_P){=}X_2(y^*_P)$, $X_1(y^{**}_P){=}X_2(y^{**}_P)$, $Y_1(x^*_P){=}Y_2(x^*_P)$ and $Y_1(x^{**}_P){=}Y_2(x^{**}_P)$ hold.

\subsection{A Partition of the Space of Parameters}\label{Part-Sec}
The next proposition shows that there is a partition of eight regions (B$a$), $a{\in}\{0,\ldots,7\}$ for the possible locations of the points $x^{**}$, $Y_1(x^{**})$, $Y_2(x^{**})$, $y^{**}$, $X_1(y^{**})$, $X_2(y^{**})$.  Its proof is given in Section~\ref{section_cases}. As it will be seen, the asymptotic behavior of the Green functions of the Markov chain killed at $0$ depends on the region associated to its parameters. Figures~\ref{F4}, \ref{F7}, \ref{F6}  and~\ref{F8} in Section~\ref{FigSec} of the appendix give an illustration of several situations.

\begin{prop}[Definition of the Classification]\label{cases} Under the assumptions (A1){-}(A3), one and only one of the following cases can occur,
  \begin{enumerate}
  \item[(B0)] \;  $X_1(y^{**}) {<} x^{**} {<} X_2(y^{**})$ and \; $Y_1(x^{**}) {<} y^{**} {<} Y_2(x^{**})$;
\vspace{1mm}
\item[(B1)]\;   $X_1(y^{**}) {<} X_2(y^{**}) = x^{**} {<} x^{**} _P$, \; $Y_1(x^{**}) {<} Y_2(x^{**}) = y^{**}  {<} y^{**}_P$  and 
$(x^{**}, y^{**})\in{\mathcal S}_{22}$;
\vspace{1mm}
\item[(B2)]\;  $X_2(y^{**}) {<} x^{**}$, \; $ Y_2(x^{**}) {<} y^{**}$ and $(x^{**}, Y_2(x^{**})), \, (X_2(y^{**}), y^{**})\in{\mathcal S}_{22}$;
\vspace{1mm}
\item[(B3)] \;   $x^{**} = X_1(y^{**}) {<} x^{**}_P$, \; $ Y_1(x^{**})  {<} Y_2(x^{**}) =  y^{**}$, \; $y^* {\leq} 1 {<} Y_2(x^{**})$   and  $(x^{**}, y^{**}) \in{\mathcal S}_{12}$; 
\vspace{1mm}
\item[(B4)] \;   $x^{**} {<} X_1(y^{**}) {<} x^{**}_P$, \;  $Y_1(x^{**})  {<} Y_2(x^{**}) {<} y^{**}$ , \; $y^*{\leq} 1 {<} Y_2(x^{**})$  and 
\[(x^{**},Y_2(x^{**})),\,(X_1(y^{**}), y^{**}) \in{\mathcal S}_{12};  \]
\item[(B5)] \;   $ y^{**}=Y_1(x^{**}) {<} y^{**}_P$, \;  $X_1(y^{**}) {<} X_2(y^{**}) =  x^{**}$, \; $x^*{\leq} 1 {<} X_2(y^{**})$  and $(x^{**},y^{**})\in{\mathcal S}_{21}$; 
\vspace{1mm}
\item[(B6)] \;   $y^{**} {<} Y_1(x^{**}) {<} y^{**}_P $, \; $X_1(y^{**}) {<} X_2(y^{**}) {<} x^{**}$, \; $x^* {\leq} 1 {<} X_2(y^{**})$  and   
\[
(X_2(y^{**}),y^{**}),\, (x^{**}, Y_1(x^{**}))\in{\mathcal S}_{21};
\]
\item[(B7)] \;   $x^{**} = X_1(y^{**}) = 1$, \; $y^{**} = Y_1(x^{**}) =1$, \; $\partial_x P(1,1) {<} 0$\;  and \; $\partial_y P(1,1) {<} 0$.
\end{enumerate}
\end{prop}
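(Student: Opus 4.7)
The plan is a geometric case analysis based on the position of the pair $(x^{**},y^{**})$ relative to the strictly log-convex compact set $D$. Proposition~\ref{propx*} combined with \eqref{eq-critical-points} and \eqref{eq2-critical-points} yields $x^{**}\in[1,x^{**}_P]$ and $y^{**}\in[1,y^{**}_P]$, so the vertical and horizontal slices of $D$ at $x=x^{**}$ and $y=y^{**}$ are the non-empty intervals $[Y_1(x^{**}),Y_2(x^{**})]$ and $[X_1(y^{**}),X_2(y^{**})]$. The eight cases then exhaust the possible relative positions of $x^{**}$ in, at an endpoint of, or outside $[X_1(y^{**}),X_2(y^{**})]$, paired with the analogous positions for $y^{**}$.

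The top-level trichotomy is: (a) $(x^{**},y^{**})\in D^\circ$; (b) $(x^{**},y^{**})\in\partial D$; (c) $(x^{**},y^{**})\notin D$. Case (a) gives directly the strict inequalities of (B0). For (b), the parametrizations \eqref{eq2_S_11}--\eqref{eq2_S_22} translate membership of $(x^{**},y^{**})$ in one of the four arcs $\mathcal{S}_{ij}$ into a pair of equalities of the form $x^{**}=X_i(y^{**})$, $y^{**}=Y_j(x^{**})$: $\mathcal{S}_{22}$, $\mathcal{S}_{12}$, $\mathcal{S}_{21}$ yield (B1), (B3), (B5) respectively. Membership of $(x^{**},y^{**})$ in $\mathcal{S}_{11}$ requires $\partial_xP(x^{**},y^{**})\leq 0$ and $\partial_yP(x^{**},y^{**})\leq 0$; combined with $x^{**}\geq 1$, $y^{**}\geq 1$, $(1,1)\in D$ and strict log-convexity of $D$, this forces $(x^{**},y^{**})=(1,1)$, with the strict negativity of the two partials following from the non-degeneracy of $D\cap D_1$ and $D\cap D_2$ and producing (B7). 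The ancillary strict inequalities $1<Y_2(x^{**})$ in (B3) and $1<X_2(y^{**})$ in (B5) are precisely what guarantees (B7) is disjoint from these.

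For (c), the essential step is to rule out the combination $x^{**}<X_1(y^{**})$ together with $y^{**}<Y_1(x^{**})$. Consider the log-segment $t\mapsto((x^{**})^t,(y^{**})^t)$, $t\in[0,1]$, from $(1,1)$ to $(x^{**},y^{**})$: since $x^{**},y^{**}\geq 1$ it is monotone in the NE direction, and because $(1,1)\in D$ while $(x^{**},y^{**})\notin D$, it exits $D$ at a unique point $(a,b)\in\partial D$ with $a\leq x^{**}$ and $b\leq y^{**}$; using monotonicity of $X_1,X_2,Y_1,Y_2$ along the arcs and strict log-convexity of $D$, one concludes that $(x^{**},y^{**})$ lies strictly on the NE side of $\partial D$, which excludes the SW combination. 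The three remaining sign patterns produce (B2), (B4), (B6). The auxiliary arc memberships of the projected boundary points $(x^{**},Y_j(x^{**}))$ and $(X_i(y^{**}),y^{**})$ are then obtained by analyzing the signs of $\partial_xP$ and $\partial_yP$ at those points: for example in (B2), the maximality of $x^{**}$ on $D\cap D_1$ combined with the log-convexity of $D_1$ forces $\partial_xP(x^{**},Y_2(x^{**}))\geq 0$, since otherwise a tangential perturbation along $\partial D$ would yield a point of $D\cap D_1$ with strictly larger $x$-coordinate; the analogous argument for $\partial_yP$ places $(x^{**},Y_2(x^{**}))$ on $\mathcal{S}_{22}$.

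The main obstacle will be this last verification, together with the delicate handling of the corners of $\partial D$ where adjacent arcs meet, and of the corresponding ancillary strict inequalities in (B3)--(B7). These amount to careful sign analyses of the partials of $P$ at boundary points of $D\cap D_i$, relying on the interplay between strict log-convexity of $D$, log-convexity of the $D_i$, and the non-degeneracy and irreducibility assumptions in (A1)--(A3). The principal difficulty is precisely to ensure mutual exclusivity of the cases in these corner situations, so that the trichotomy above refines to a genuine partition into exactly the eight regions (B0)--(B7).
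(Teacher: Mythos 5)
Your top-level decomposition (interior, boundary, exterior of $D$) matches the paper's: these are exactly the grouping of the paper's nine geometric cases $(a)$--$(i)$ of Corollary~\ref{convexity_cor2} into $(a)$, $\{(b),(d),(f),(h)\}$, and $\{(c),(e),(g),(i)\}$. The plan to exclude the SW combination via the log-segment through $(1,1)$ is in the same spirit as the paper's Lemma~\ref{convexity_lemma2}. So the overall strategy is sound. However, two of the detailed steps you sketch contain genuine gaps.

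First, your ``tangential perturbation'' argument for placing $(x^{**},Y_2(x^{**}))$ on $\mathcal{S}_{22}$ in case (B2) does not work. You argue that if $\partial_x P(x^{**},Y_2(x^{**}))<0$ one could slide along $\partial D$ to find a point of $D\cap D_1$ with larger first coordinate, contradicting the definition of $x^{**}$. But in case (B2) with $x^{**}<x^{**}_P$ one has $Y_1(x^{**})<Y_2(x^{**})$, and Corollary~\ref{preliminary_cor2} gives $\phi_1(x^{**},Y_1(x^{**}))=1$; since $y\mapsto\phi_1(x^{**},y)$ is strictly increasing, $\phi_1(x^{**},Y_2(x^{**}))>1$, so the point $(x^{**},Y_2(x^{**}))$ and its small perturbations on $\partial D$ lie \emph{outside} $D_1$. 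There is therefore no contradiction with the maximality of $x^{**}$ over $D\cap D_1$. The paper's Lemma~\ref{convexity_lemma3} obtains the arc memberships instead by a reductio that exploits the strict monotonicity of $Y_2$ on $[X_1(y^{**}_P),x^{**}_P]$ (and of $X_2$ on the analogous interval) together with the hypothesis $x^{**}>X_2(y^{**})$, $y^{**}>Y_2(x^{**})$; this avoids any reference to $D_1$ at the point $(x^{**},Y_2(x^{**}))$.

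Second, the ancillary strict inequalities in (B3), (B4), (B5), (B6) --- for instance $y^*\leq 1<Y_2(x^{**})$ --- are not merely ``what guarantees (B7) is disjoint from these'' but must actually be derived, and the derivation is not trivial. It requires the argument of the paper's Lemma~\ref{convexity_lemma4}: one needs the strict monotonicity of $Y_2$ on $[x^*_P,X_1(y^{**}_P)]$ together with the dichotomy $Y_1(1)=1\leq Y_2(1)$ or $Y_1(1)\leq 1=Y_2(1)$ (since $P(1,1)=1$), and, in the borderline case $x^{**}=1$, the chain $\phi_1(1,Y_1(1))=1$ from Corollary~\ref{preliminary_cor2} plus the strict monotonicity of $y\mapsto\phi_1(1,y)$ and $\phi_1(1,1)\leq 1$ to exclude $Y_2(1)=1$. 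Similarly, your ruling out of the SW combination relies on the unproven assertion that the exit point places $(x^{**},y^{**})$ ``strictly on the NE side of $\partial D$'': to make this precise you need (as in Lemma~\ref{convexity_lemma2}) to show that when $x^{**}\leq X_1(y^{**})$ and $y^{**}\leq Y_1(x^{**})$, the points $(x^{**},Y_1(x^{**}))$ and $(X_1(y^{**}),y^{**})$ both lie on $\mathcal{S}_{11}$, and then use the decreasing monotonicity of $Y_1$ on $[x^*_P,X_1(y^*_P)]$ and of $X_1$ on $[y^*_P,Y_1(x^*_P)]$, together with $(1,1)\in D$ and Corollary~\ref{convexity_cor1}, to force equalities everywhere. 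Without these three lemmas spelled out, the proposal does not establish mutual exclusivity of the eight regions, which is, as you yourself note, the crux of the statement.
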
 
The cases (B0){-}(B7) have in fact a simple geometrical interpretation. They are determined by the location of the line segments  $[(x^{**}, Y_1(x^{**})), \, (x^{**}, Y_2(x^{**}))]$ and $[(X_1(y^{**}), y^{**}), \, (X_2(y^{**}), y^{**})]$. See Figures~\ref{F3} of Section~\ref{FigSec} of the appendix, where, see Section~\ref{section_cases}, 
\begin{itemize}
  \item the horizontal line segment represents  $[(X_1(y^{**}),y^{**}), (X_2(y^{**}),y^{**})]$;
\item the  vertical \phantom{line segment }"\phantom{ represents }  $[(x^{**}, Y_1(x^{**})), (x^{**},Y_2(x^{**}))]$.
\end{itemize}

The case (B7) corresponds to the case when the Markov chain $(Z(n))$ is transient and escapes to the infinity along the each of the axes  $\{0\}{\times}\N$ and $\N{\times}\{0\}$. See Section~\ref{section-conditions}. In this case, the exact asymptotic of the Green function has been already obtained in the paper \cite{I-K-R} by using methods of Markov-Additive processes. For this reason we will not consider this case.

In the literature, asymptotic results for nearest neighbor random walks in the quadrant are often formulated either under conditions of positive recurrence, see~\cite{F-Y-M-book,Li-Zhao-2018,Malyshev,Miyazawa}, or under conditions of  transience, see \cite{Kurkova-Malyshev}. As it will be seen in Section~\ref{section-conditions}, the classification of Proposition~\ref{cases} is not defined in such a way.  Transience {\em and } recurrence properties  may hold in each of the regions (B$a$), $a{\in}\{3,4,5,6\}$. See Proposition~\ref{recurrence_transience}. The recurrence/transience properties of $(Z(n))$ have in fact a marginal impact in our investigation of the asymptotic behavior of the Green function $(g(j,k))$ of the killed Markov chain. 

\subsection{Convergence Domain and Functional Equation}
For $j{\in}\Z_+^2$, the generating function of $(g(j,k),k{\in}\Z_+^2)$, defined on its  convergence domain in $\C^2$,  is denoted as 
\be\label{HjDef}
H_j(x,y) \steq{def} \sum_{k=(k_1,k_2)\in\Z_+^2{\setminus}\{0\}} g(j,k) x^{k_1}y^{k_2}.
\ee
This is a central set of functions in our analysis. A significant part of our work is devoted to the investigation of their convergence domain and also to determine the nature of the dominant singularities of the functions $x{\to}H_j(x,0)$  and $y{\to}H_j(0,y)$. Once it is done, with Tauberian like theorems and complex analysis arguments,  we will able to derive the asymptotic behavior of the Green function  $(g(j,k),k{\in}\Z_+^2)$ when $k$ goes to infinity. 

Our first important result is that if
\be\label{eq-domain} 
{\Gamma} \steq{def} \{(x,y)\in [0,+\infty[^2:~ x {<} x' \; \text{and} \; y {<} y' \; \text{for some} \; (x',y')\in D\},
    \ee
where $D$ is defined by~\eqref{Ddef},  and 
\begin{align}
x_d &\steq{def}  \begin{cases} x^{**}  &\text{if one of the conditions (B0){-}(B4) holds},\\
X_2(y^{**}) &\text{if either (B5) or (B6) holds},
\end{cases} \label{eq_def_xd}\\
y_d &\steq{def} \begin{cases} y^{**} &\text{if one of the conditions (B0){-}(B2), (B5) or (B6) holds},\\
Y_2(x^{**})  &\text{if either (B3) or (B4) holds}.
\end{cases} \label{eq_def_yd}
\end{align}
then, for  any $j{\in}\Z^2_+$,  the generating function $(x,y){\to}H_j(x,y)$ is analytic on 
\be\label{Omd}
  \Omega_d({\Gamma})\steq{def} \{(x,y)\in \Omega({\Gamma}) :~ |x|{<} x_d, \;  |y| {<} y_d\},
  \ee
In our next result, we will see that the point $x_d$ (resp. $y_d$) is the dominant singularity of the functions $x{\to} H_j(x,0)$ (resp. of the functions $y{\to} H_j(0,y)$) and that the set $\Omega_d({\Gamma})$ is the maximal domain in $\C^2$ where all generating functions $(x,y){\to} H_j(x,y)$, $j{\in}\Z^2_+$, converge. 

For $j{\in}\Z_+^2$, to formulate a key relation for the generating functions $(H_j(x,y))$, we define 
\be\label{Qdef}
Q(x,y) \steq{def} xy(1 - P(x,y))=xy - \sum_{k=(k_1,k_2)\in\Z^2} x^{k_1+1} y^{k_2+1} \mu(k)
\ee
and
\begin{align}
  \psi_1(x,y) &\steq{def} x(1-\phi_1(x,y)) = x - \sum_{k=(k_1,k_2)\in\Z^2} x^{k_1+1} y^{k_2} \mu_1(k),\label{psi1def}\\
  \psi_2(x,y) &\steq{def} y(1-\phi_2(x,y)) = y - \sum_{k=(k_1,k_2)\in\Z^2} x^{k_1} y^{k_2+1} \mu_2(k),\label{psi2def}
\end{align}
\begin{align}
L_j(x,y) &\steq{def}  \begin{cases} x^{j_1}y^{j_2}  -  \P_j(\tau_0 < +\infty) &\text{ if }(j_1,j_2){\not=}(0,0),\\ 
\phi_0(x,y)  -  \P_{(0,0)}(\tau_0 < +\infty) &\text{ if } (j_1,j_2){=}(0,0),
\end{cases} \label{eq-def-Lj} \\
h_j(x,y) &\steq{def} \sum_{k=(k_1,k_2)\in\Z^2_+} g(j, (k_1+1, k_2+1)) x^{k_1} y^{k_2}, \label{series-h}\\
h_{1j}(x) & \steq{def} \sum_{k_1=0}^\infty g(j, (k_1+ 1,0)) x^{k_1}, \quad \text{and} \quad h_{2j}(y) \steq{def} \sum_{k_2=0}^\infty g(j, (0, k_2+1)) y^{k_2}.\label{series-h1-h2}
\end{align}
Under the assumptions (A1){-}(A3), the functions $(x,y) {\to} L_j(x,y)$, $j{\in}\Z^2_+$ are clearly analytic  in a neighborhood  of the set $\Omega(\overline{\Gamma})$, see relation~\eqref{OmDef},  and the functions $(x,y){\to} Q(x,y)$  and $(x,y){\to} \psi_i(x,y)$, $i{=}1$, $2$, can be analytically continued to a neighborhood  of $\Omega(\overline{\Gamma})$.
\begin{theorem}[Convergence Domain and Functional Equation]\label{theorem1} Under the assumptions (A1){-}(A4), for any $j{\in}\Z^2_+$,  the following assertions hold 
\begin{enumerate}[label=\roman*)]
\item  The function $x{\to}h_{1j}(x)$, resp. $y{\to}h_{2j}(y)$,  is analytic in  $B(0, x_d)$, resp. in $B(0, y_d)$.
\item  On the set $\Omega_d({\Gamma})$ the function $(x,y) {\to}h_j(x,y)$ is analytic and the relation
  \be\label{extended-functional-equation} 
Q(x,y) h_j(x,y) = L_j(x,y) + \psi_1(x,y) h_{1j}(x) + \psi_2(x,y) h_{2j}(y)
\ee
holds.
\end{enumerate}
\end{theorem}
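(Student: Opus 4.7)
The plan splits Theorem~\ref{theorem1} into two essentially independent tasks: deriving the functional equation~(\ref{extended-functional-equation}) as a formal identity in a small polydisk around $(0,0)$, and identifying the sharp analyticity domains of $h_j$, $h_{1j}$, $h_{2j}$. Once both are in hand, (\ref{extended-functional-equation}) propagates from a neighborhood of the origin to all of $\Omega_d(\Gamma)$ by analytic continuation, since each of the functions appearing in it is analytic on the connected open set $\Omega_d(\Gamma)$.

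For the first task I would apply the standard forward one-step decomposition of the killed Green function: for $k\in\Z_+^2\setminus\{0\}$,
\begin{equation*}
g(j,k)=\delta_{j,k}+\delta_{j,0}\,\mu_0(k)+\sum_{l\in\Z_+^2\setminus\{0\}}g(j,l)\,p(l,k),
\end{equation*}
where $p$ denotes the unkilled transition kernel. Multiplying by $x^{k_1}y^{k_2}$ and summing over $k\neq 0$, I split the $l$-sum into interior, horizontal-axis and vertical-axis contributions. Using Assumption~(A2) together with the identities $\sum_{k\in\Z_+^2}p(l,k)x^{k_1}y^{k_2}=x^{l_1}y^{l_2}P(x,y)$ on the interior (and the analogous identities involving $\phi_1$, $\phi_2$ on the two axes), these subsums produce $P\cdot xy\,h_j$, $\phi_1\cdot x\,h_{1j}$ and $\phi_2\cdot y\,h_{2j}$ respectively. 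The correction terms $\sum_{l\neq 0}g(j,l)p(l,0)$ coming from the transitions into $0$ that have to be subtracted from the full generating identities collapse, by an analogous last-step analysis of the absorption probability, to $\P_j(\tau_0<+\infty)$; combining with the $\delta_{j,k}$ and $\delta_{j,0}\phi_0$ terms gives $L_j$, and rearranging using the definitions of $Q$ and $\psi_i$ yields~(\ref{extended-functional-equation}) as an equality of absolutely convergent power series in a small polydisk about the origin.

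For the analyticity part the basic tool is a multiplicative Lyapunov function $V(k_1,k_2)=u^{k_1}v^{k_2}$. Choosing $(u,v)$ in the interior of $D\cap D_1\cap D_2$ yields the strict supermartingale inequalities $P(u,v)<1$, $\phi_1(u,v)<1$, $\phi_2(u,v)<1$ on the interior and on each axis, and a Foster--Lyapunov estimate then gives $\sum_kg(j,k)u^{k_1}v^{k_2}<+\infty$, hence analyticity of $h_j$ on the polydisk $\{|x|<u,|y|<v\}$ and of $h_{1j}$, $h_{2j}$ on the corresponding one-dimensional discs. The main obstacle is to push this argument up to the sharp domain $\Omega_d(\Gamma)$: the values $x_d$, $y_d$ of~(\ref{eq_def_xd})--(\ref{eq_def_yd}) are in general strictly larger than the maximum coordinates of any point of $D\cap D_1\cap D_2$; in cases~(B5)--(B6), for instance, $x_d=X_2(y^{**})>x^{**}$, so no single choice of $(u,v)$ inside $D\cap D_1\cap D_2$ realises the full radius. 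This forces a case-by-case bootstrap using Proposition~\ref{cases}: in each region one picks a Lyapunov tailored to the binding boundary, typically built from a point of $D\cap D_1$ (resp.\ $D\cap D_2$) sitting on the relevant curve among $\mathcal{S}_{11}$, \ldots, $\mathcal{S}_{22}$, establishes the sharp convergence of $h_{1j}$ (resp.\ $h_{2j}$) by a one-dimensional argument, and then closes the loop via~(\ref{extended-functional-equation}), using that wherever $Q(x,y)=xy(1-P(x,y))$ vanishes in $\Omega_d(\Gamma)$ the right-hand side also vanishes (so division is well-defined), to promote the sharp bound to $h_j$ and to the remaining one-dimensional generating function. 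Once this bootstrap is carried out, the functional equation extends from a neighborhood of the origin to all of $\Omega_d(\Gamma)$ by analytic continuation.
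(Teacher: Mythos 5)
Your derivation of the functional equation is essentially the paper's: the one-step decomposition of the Green function combined with the identities involving $P$, $\phi_1$, $\phi_2$ gives~\eqref{extended-functional-equation} first on a set where everything converges absolutely. That part is fine, though the paper performs this computation directly on the larger set $\Omega(\Theta)$, not just near the origin.

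The gap is in the analyticity step, and it is not cosmetic. First, the Lyapunov function. You propose $V(k_1,k_2)=u^{k_1}v^{k_2}$ with $(u,v)\in\inter{D}\cap\inter{D_1}\cap\inter{D_2}$. This triple intersection is not guaranteed to have nonempty interior under (A1)--(A3), and even when it does, a single monomial can never realize the sharp radii: to have $|x|$ approach $x_d$ you need $(u,v)$ with $u$ close to $x_d$ and $v$ close to $Y_1(x_d)$, a point lying on $\partial D\cap\partial D_1$ but typically well outside $D_2$, so the drift condition on the vertical axis then fails. The paper's crucial device (Lemma~\ref{lemma1-upper-bounds}) is a \emph{sum} of two monomials $f(k)=x_1^{k_1}y_1^{k_2}+x_2^{k_1}y_2^{k_2}$ with $(x_1,y_1)\in\inter{D}\cap\inter{D_1}$, $(x_2,y_2)\in\inter{D}\cap\inter{D_2}$, $x_1>x_2$, $y_1<y_2$. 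The ordering ensures the first term dominates near the horizontal axis (where the $\phi_1$ condition is the binding one) and the second dominates near the vertical axis. This sum is what lets the Lyapunov argument reach the corners $(x_d,Y_1(x_d))$ and $(X_1(y_d),y_d)$, and it is the idea your plan is missing; ``a Lyapunov tailored to the binding boundary'' is too vague to capture it.

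Second, the way you close the loop is circular. You want to divide the right-hand side of~\eqref{extended-functional-equation} by $Q$ and remove the singularities along $\{Q=0\}$ by claiming the right-hand side vanishes there. But to know the right-hand side vanishes on $\{Q=0\}\cap\Omega_d(\Gamma)$ you would need the identity $Q\,h_j=\text{RHS}$ to already hold there with $h_j$ analytic---which is what you are trying to prove. Unless the part of $\{Q=0\}$ reached by your small polydisk propagates to all of $\{Q=0\}\cap\Omega_d(\Gamma)$ by connectedness of the variety (not obvious and not argued), the removable-singularity step does not go through. The paper avoids this entirely by proving (Proposition~\ref{lemma1-proof-theorem1}) the covering
\[
\{(x,y)\in\Gamma:\ x<x_d,\ y<y_d\}\subset \Theta\cup\{(x,y)\in\inter{D}:\ x<x_d,\ y<y_d\},
\]
so that $\Omega_d(\Gamma)\subset\Omega(\Theta)\cup\Omega(\inter{D})$: on $\Omega(\Theta)$ analyticity of $h_j$ comes from the Lyapunov estimate, and on $\Omega(\inter{D})$ one has $|P(x,y)|\le P(|x|,|y|)<1$ so $Q\neq 0$ and division is legitimate. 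Introducing the intermediate set $\Theta$ and establishing this covering is the second ingredient you need and do not have.
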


When the random walk $(Z(n))$ is positive recurrent, Theorem~\ref{theorem1} has been established in Kobayashi and Miyazawa~\cite{Kobayashi-Miyazawa}. With  Proposition~\ref{cases}, the transience or recurrence properties do not play  a role in our proof of this result.

Section~\ref{proof-theorem1} is devoted to the proof of Theorem~\ref{theorem1}. We give a  sketch of it. 
\begin{enumerate}
\item We first prove that the series~\eqref{series-h1-h2} and~\eqref{series-h} converge in $\Omega(\Theta)$ for a logarithmically convex set $\Theta{\subset}[0,+\infty[^2$ whose boundary contains the line segments $[(x_d,0), (x_d, Y_1(x_d))]$ and $[(0, y_d), (X_1(y_d), y_d)]$  and such that $\inter\Theta{\cap}\inter{D} {\not=} \emptyset$ and $\Theta{\cup} \inter{D} {=}\{ (x,y){\in}  \Gamma : x {<} x_d, y {<} y_d\}$. See Proposition~\ref{upper-bounds} and Lemma~\ref{lemma1-proof-theorem1}. An important ingredient of the proof of this step is the use of  Lyapunov functions. See also Figure~\ref{F5}  of Section~\ref{FigSec} of the appendix.
  \item  The functional equation~\eqref{extended-functional-equation} is established on the set  $\Omega(\Theta)$.
From there, we get that, for $j{\in}\Z_+^2$, the functions $(x,y){\to}Q(x,y) h_j(x,y)$ can be analytically continued to the set $\Omega_d({\Gamma})$. 
\item Since the function $(x,y){\to}h_j(x,y)$ is analytic in $\Omega(\Theta)$ and the function $(x,y){\to} 1/Q(x,y)$ is analytic in $\Omega(\inter{D})$, from these results, we will be able to deduce that the function $(x,y){\to} h_j(x,y)$ can be continued as an analytic function to the set $\Omega_d({\Gamma})$. 
\end{enumerate}

\bigskip

\noindent
{\bf Remark.}\\
Clearly, for $j{\in}\Z_+^2$ and  $x{\ne}0$, $y{\ne}0$,
\[
H_j(x,0) = x h_{1j}(x), \quad H_j(0,y) = y h_{2j}(y), \quad H_j(x,y){-}H_j(x,0){-}H_j(0,y) = xy h_j(x,y),
\]
and the functional equation~\eqref{functional-equation} of the introduction is equivalent to relation~\eqref{extended-functional-equation}. The technical advantage of the formulation~\eqref{extended-functional-equation} is that the functions $\psi_1$, $\psi_2$ and $Q$ are also defined when $x{=}0$ or $y{=}0$.  In some cases, however, the expression~\eqref{functional-equation} is more convenient.

\medskip

\subsection{Singularity Analysis of $H_j$} 
Since the set $\Omega_d({\Gamma})$ can be represented as a union of the poly-discs centered at the  origin of $\C^2$, the above theorem proves that for any $j{\in}\Z^2_+$, the generating function  $H_j(x,y)$ converge in $\Omega_d({\Gamma})$. The results of this section establish that the set  $\Omega_d({\Gamma})$ defined by~\eqref{Omd}  is the exact domain of convergence of the functions $H_j$, $j{\in}\Z^2_+$, i.e. the maximal domain in $\C^2$ where the series \eqref{HjDef} converge, and the dominant singularities of the functions $x{\to}H_j(x,0)$  and $y{\to}H_j(0,y)$  and their nature are identified.

Before stating these results, in order to have straight assertions, we have to fix a (small) technical problem related to irreducibility. It must be noted that, even under the assumptions (A1){-}(A3), the killed Markov chain $(Z_{\tau_0}(n))$, see Definition~\ref{Killed},  is not necessarily irreducible on the space $\Z^2_+{\setminus}\{0\}$.  One may have $g(j,k){=}0$ for some $j$, $k{\in}\Z^2_+{\setminus}\{0\}$. The following lemma solves this problem. 
\begin{lemma}\label{irreductibility-lemma} Under the assumptions (A1){-}(A3), there exist $N_0{\geq}0$ and a finite subset $E_0$ of $\Z^2_+{\setminus}\{(0,0)\}$  such that  for  $j$, $k{\in}\Z^2_+$, 
\begin{align}
g(j,k) = 0 \quad &\text{ if } j{\in}E_0 \text{ and } k{\not\in} E_0,\label{eq-set-E0-1}\\
g(j,k) > 0 \quad &\text{ if } j{\not\in}E_0 \text{ and }\|k\|{\geq} N_0.\label{eq-set-E0-2}
\end{align}
\end{lemma}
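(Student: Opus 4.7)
My plan is to reduce the lemma to a single reachability statement. Consider the directed graph $\mathcal{G}$ on $\Z^2_+\setminus\{0\}$ with an edge $j\to k$ whenever $\P_j(Z(1)=k)>0$, and let $\to^*$ denote its multi-step reachability relation. Summing over finite paths gives, for $j,k\in\Z^2_+\setminus\{0\}$, the equivalence $g(j,k)>0\Leftrightarrow j\to^* k$, while $g(j,0)=0$ for every $j\neq 0$. The key claim $(\star)$ is: there exists $N_0\geq 1$ such that $j\to^* k$ for every $j,k\in\Z^2_+\setminus\{0\}$ with $\|j\|,\|k\|\geq N_0$.

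Granted $(\star)$, I would set $E_0\steq{def}\{j\in\Z^2_+\setminus\{0\}:~j\not\to^* k\text{ for every } k \text{ with }\|k\|\geq N_0\}$, which is contained in $\{j:\|j\|<N_0\}$ and hence finite. Transitivity of $\to^*$ makes $E_0$ closed under $\to^*$, giving \eqref{eq-set-E0-1}. For \eqref{eq-set-E0-2}, if $j\notin E_0\cup\{0\}$, then either $\|j\|\geq N_0$ and $(\star)$ applies, or $j\to^* k_*$ for some $k_*$ with $\|k_*\|\geq N_0$ and chaining with $(\star)$ at $k_*$ completes the argument. The case $j=0$ uses irreducibility: if $\mu_0$ charged no state outside $E_0\cup\{0\}$, then the finite set $E_0\cup\{0\}$ would be absorbing for $(Z(n))$, contradicting (A3)(i) on the infinite space $\Z^2_+$. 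Hence $\mu_0(j_1)>0$ for some $j_1\notin E_0\cup\{0\}$, and $g(0,k)\geq\mu_0(j_1)g(j_1,k)>0$ whenever $\|k\|\geq N_0$.

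The main obstacle is $(\star)$, which I would prove by piecing together explicit $\mathcal{G}$-paths. Three ingredients feed the construction. First, by the irreducibility of $\mu$ on $\Z^2$ in (A1)(i), each of the four unit displacements $\pm(1,0),\pm(0,1)$ is realised by a positive-probability sequence of $\mu$-jumps; let $C$ be a common bound on their lengths. Second, by (A2)(i) and (A3)(iv), every individual jump of $\mu$, $\mu_1$, or $\mu_2$ decrements each coordinate by at most $1$, so a path of length $\ell$ decreases each coordinate by at most $\ell$. Third, by (A3)(v)-(vi) one can leave each axis via a jump with strictly positive orthogonal component, and irreducibility of $\mu$ additionally supplies interior $\mu$-jumps climbing freely in each positive coordinate. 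With $N_1$ chosen large relative to $C$ and $N_0$ large relative to $N_1$: from any $j$ with $\|j\|\geq N_0$ I would first drive the walk into the deep interior $\{x_1,x_2\geq N_1\}$, directly if $j$ is already there, otherwise by leaving the axis via (A3)(v)-(vi) and then climbing with the interior jumps; within the deep interior, any two states are joined by concatenating the unit-displacement sub-paths from the first ingredient, and the second ingredient keeps the trajectory inside $\{x_1,x_2\geq 1\}$, hence off $0$; if $k$ itself lies on an axis, a symmetric descent from the deep interior followed by an adjustment along that axis via $\mu_1$ or $\mu_2$ finishes. The delicate quantitative point is tuning $N_0,N_1$ against $C$ so that every concatenated sub-path remains in $\Z^2_+\setminus\{0\}$ throughout.
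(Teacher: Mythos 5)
Your high-level plan --- cast $g(j,k)>0$ as graph reachability, establish a two-sided reachability claim $(\star)$ for all large $\|j\|,\|k\|$, and then define $E_0$ as the set of sources from which no far-away state is reachable --- is exactly the structure of the paper's proof, and the deep-interior part of $(\star)$ (concatenating $\mu$-paths realizing the four unit displacements and using (A2) to keep them in $(\N^*)^2$) is also handled the same way. The gap is in the remaining case of $(\star)$: targets $k$ with one coordinate small, which is precisely where most of the paper's effort goes. Your plan only discusses $k$ strictly on an axis, so $k=(k_1,k_2)$ with $0<k_2<N_1$ and $k_1$ large is not addressed at all. Moreover the mechanism you offer for the on-axis case --- a ``symmetric descent from the deep interior followed by an adjustment along that axis via $\mu_1$ or $\mu_2$'' --- does not follow from the assumptions: (A3)(v)--(vi) only guarantee that $\mu_1$ charges \emph{some} jump with strictly positive second coordinate; $\mu_1$ need not charge any jump with second coordinate zero, so ``adjusting along the axis via $\mu_1$'' is in general unavailable. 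A descent via $(0,-1)$-type $\mu$-paths also has a subtle failure mode: an intermediate partial sum may land on $\{j_2=0\}$, where the process switches from $\mu$ to $\mu_1$ and the prepared $\mu$-path is interrupted.

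The paper closes this gap with a tool your sketch does not invoke: the half-plane random walk $(\hat S(n))$ on $\Z\times\N$, with kernel $\mu_1$ on the boundary $\Z\times\{0\}$ and $\mu$ in the interior, which is irreducible on $\Z\times\N$ under (A1)--(A3). For each boundary height $j_2\leq N_1$ and each unit displacement, the paper fixes a finite $\hat S$-path that stays in $\Z\times\N$, then translates it far from the vertical axis (using the bounded negative jumps in $j_1$) so that it also stays in $\N^*\times\N$. This is the step that yields $g(j,k)>0$ for $j_1,k_1$ large and $j_2,k_2$ small, and symmetrically near the other axis, giving $(\star)$. If you wanted to avoid introducing $\hat S$, you would at least need to replace the ``symmetric descent + axis adjustment'' by a concrete device --- e.g.\ fix one $\mu$-jump $(a_1,-1)$ with $\mu(a_1,-1)>0$ (which exists by (A1)(i) and (A2)) and descend by iterating it from a suitable deep-interior column; but as written, $(\star)$ is not established.
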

Consequently, to  investigate the asymptotics of $g(j,k)$ as $k$ goes to infinity, it is sufficient  to consider a starting point $j$ outside  $E_0$.  The proof of this lemma is given in Section~\ref{pfirr} of the appendix. From now on, the integer $N_0{\geq}0$ and the set $E_0$ satisfying \eqref{eq-set-E0-1} and \eqref{eq-set-E0-2} are fixed. When the killed Markov chain $(Z_{\tau_0}(n))$ is irreducible on $\Z^2_+{\setminus\{(0,0)\}}$, the set $E_0$ is of course empty. 

\bigskip
Proposition~\ref{cases} and the definition of the points $x_d$ and $y_d$ show that the relations, $x_d{\leq}x^{**}{\leq} x^{**}_P$ and $y_d{\leq}y^{**}{\leq} y^{**}_P$ hold.  For $j\in\Z^2_+{\setminus}E_0$, we will prove that  $x_d$, resp.  $y_d$, is the dominant singularity of the function $x{\to}H_j(x,0)$, resp. $y\to H_j(0,y)$.  We will see that the  nature of the singularity $x_d$, resp. $y_d$,  is determined by the cases (B0){-}(B6) and also by several relations between  the quantities $x_d$, $x^{**}$ and $x^{**}_P$, resp.  $y_d$, $y^{**}$ and $y^{**}_P$.  

By relation~\eqref{eq-critical-points}, the points $x^{**}$ and $y^{**}$ are respectively in $[x^*_P, x^{**}_P]$ and $[y^*_P, y^{**}_P]$, and from the definition of the functions $Y_2: [x^*_P, x^{**}_P]{\to}[y^*_P, y^{**}_P]$ and $X_2: [y^*_P, y^{**}_P]{\to}[x^*_P, x^{**}_P]$,  we have
\[
Y_2(x^{**})\leq y^{**}_P \quad \text{and} \quad X_2(y^{**}) \leq x^{**}_P.
\] 
Proposition~\ref{cases} gives that $y^{**} {<} y^{**}_P$ if  one of the cases (B0),(B1), (B5) or (B6)  holds and, similarly,  $x^{**}{<} x^{**}_P$ if one of the cases (B0){-}(B4)  holds. This is summarized as follows. 
\be\label{eq2-def-xd}
x_d{=} \begin{cases} x^{**} {<} x^{**}_P &\text{ if one of the cases (B0),(B1), (B3) or (B4) holds},\\
x^{**}{\leq} x^{**}_P &\text{ with  a possible equality } x^{**}{=}x^{**}_P, \text{ if  (B2) holds},\\
X_2(y^{**}){=} x^{**} {\leq} x^{**}_P&\phantom{ with possible}"\phantom{aaa equality } x^{**}{=} x^{**}_P, \text{ if  (B5) holds},\\
X_2(y^{**}) {<} x^{**} {\leq} x^{**}_P&\text{ if  (B6) holds},
\end{cases} 
\ee
and 
\be\label{eq2-def-yd}
y_d {=} \begin{cases} y^{**}  < y^{**}_P &\text{ if one of the cases (B0),(B1), (B5) or (B6) holds},\\
y^{**}  {\leq} y^{**}_P&\text{ with a possible equality } y^{**}{=} y^{**}_P, \;\text{ if (B2)  holds},\\
Y_2(x^{**}) {=} y^{**}{\leq} y^{**}_P &\phantom{ with possible}"\phantom{aaa equality } y^{**}{=} y^{**}_P,\;\text{ if (B3)  holds},\\ 
Y_2(x^{**}) {<} y^{**}{\leq} y^{**}_P &\text{ if (B4)  holds}.
\end{cases} 
\ee
We now introduce several functions on $\Z_+^2$ which will be used to describe the dependence on the initial state $j{\in}\Z_+^2$ in the asymptotic behavior of   $g(j,k)$ when $k$ goes to infinity. Under convenient conditions,  as we will see, these functions are harmonic for the killed Markov chain at $0$.
\begin{defi}[Functions for the Dependence on the Initial State]
For  $j\in\Z^2_+$, 
\begin{align}
\varkappa_1(j) &\steq{def} L_j(x_d, Y_1(x_d)) + \left(\phi_2(x_d, Y_1(x_d))-1\right) H_j(0, Y_1(x_d)),\label{eq-def-kappa-1}\\
\tilde\varkappa_1(j) &\steq{def} \left. \partial_y\left( \frac{L_j(x, y) + \left(\phi_2(x, y)-1\right) H_j(0, y)}{1-\phi_1(x, y)}\right)\right|_{(x,y)=(x^{**}_P,Y_1(x^{**}_P))}, \label{eq-def-tilde-kappa-1}\\
\varkappa_2(j) &\steq{def} L_j(X_1(y_d), y_d) + \left(\phi_1(X_1(y_d), y_d)-1\right) H_j(X_1(y_d), 0),  \label{eq-def-kappa-2}\\
\tilde\varkappa_2(j) &\steq{def} \left. \partial_x\left( \frac{L_j(x, y) + \left(\phi_1(x, y)-1\right) H_j(x,0)}{1-\phi_2(x, y)}\right)\right|_{(x,y)=(X_1(y^{**}_P), y^{**}_P)}, \label{eq-def-tilde-kappa-2}\\
\varkappa_{(x,y)}(j) &\steq{def}  L_j(x,y) + (\phi_1(x,y)-1) H_j(x,0) + (\phi_2(x,y)-1) H_j(0,x), \label{def-kappa-interior-direction} 
\end{align}
\end{defi}
By Theorem~\ref{theorem1}, and since the functions $\psi_1$,$\psi_2$, $Q$ and $L_j$ are analytic in a neighborhood of the set $\Omega(\overline\Gamma)$, we have 
\begin{itemize}
\item[--] $\varkappa_1$ is well defined on $\Z^2_+$  if $Y_1(x_d){<}x_d$, that is, if one of the cases (B0){-}(B4) holds;
\item[--] $\tilde\varkappa_1$ \phantom{ifwell defined}"\phantom{on $\Z^2_+$ }  if (B2), $x_d{=}x^{**}{=}x^{**}_P$ and  $\phi_1(x^{**}_P, Y_1(x^{**}_P)){<}1$  hold;
\item[--] $\varkappa_2$ \phantom{ifwell defined}"\phantom{on $\Z^2_+$ }  if one the cases (B0){-}(B2), (B5) or (B6) holds;
\item[--] $\tilde\varkappa_2$ \phantom{ifwell defined}"\phantom{on $\Z^2_+$ }  if (B2), $y_d{=}y^{**}{=} y^{**}_P$ and $\phi_2(X_1(y^{**}_P), y^{**}_P){<}1$ hold;
\item  $\varkappa_{(x,y)}$ \phantom{ell defined}"\phantom{on $\Z^2_+$ }  for  $(x,y){\in}\Omega(\overline{\Gamma})$ such that  $|x|{<} x_d$ and $|y|{<} y_d$. 
\end{itemize} 
The following theorem gives a complete description of the nature of the singularity $x_d$ for the function $x{\to}H_j(x,0)$. 
Note that the case (B7) is not considered because, as mentioned before, it has been already investigated in \cite{I-K-R}.
\begin{theorem}[Singularity Analysis of $(H_j)$]\label{theorem2}
  Under the assumptions (A1){-}(A3), the following assertions hold.   
\begin{enumerate}[label=\roman*)]
\item  If   one of the cases (B0),(B1), (B3), (B4) holds or (B2)  and $x_d{<} x^{**}_P$ hold,  then  
\begin{itemize} 
\item[--] there exists $\eps{>}0$ such that, for any $j{\in}\Z^2_+$,   the function $x{\to} H_j(x,0)$  can be  analytically continued to the set $B(0,x_d{+}\eps){\setminus}\{x_d\}$;  
\item[--] the function $\varkappa_1$ of~\eqref{eq-def-kappa-1} is non-negative on $\Z_+^2$,  harmonic for the Markov chain  $(Z(n))$  killed at $0$ and  positive  on the set $\Z^2_+{\setminus}E_0$;
\item[--] for any $j\in\Z^2_+$, 
\be\label{eq1-theorem2} 
\lim_{x\to x_d} (x_d{-}x)H_j(x,0) = a_1 \, \varkappa_1(j),
\ee 
where
\be\label{eq2b-cases-B0-B4-theorem2} 
 a_1 = \left.\left(\frac{d }{dx} \phi_1(x, Y_1(x))\right|_{x=x_d}\right)^{-1}  > 0.  
\ee
\end{itemize} 
\item If (B2) and $x_d{=}x^{**}_P$ hold, then 
\begin{itemize} 
\item[--] there exists $\eps{>}0$ such that, for any $j{\in}\Z^2_+$, the function $x{\to} H_j(x,0)$ can be analytically  continued to $B(0,x^{**}_P{+}\eps){\setminus} [x^{**}_P, x^{**}_P{+}\eps]$;  
\item[--] if $\phi_1(x_d, Y_1(x_d)){=}1$, then the function $\varkappa_1$ is  non-negative $\Z^2_+$, harmonic for the Markov chain  $(Z(n))$  killed at $0$, positive  on the set $\Z^2_+{\setminus}E_0$  and for any $j{\in}\Z^2_+$,  
\be\label{eq-analytical-structure-IIa}
\lim_{x\to x_d} \sqrt{x_d{-}x}\, H_j(x,0) = a_2\,  \varkappa_1(j),
\ee
where the limit is taken in the set $B(0,x^{**}_P{+}\eps){\setminus} [x^{**}_P, x^{**}_P{+}\eps]$ and  
\be\label{eq4a-case-B2-theorem2} 
 a_2 = \left.\left(\partial_y\phi_1(x,y) \sqrt{\partial_x P(x,y)/\partial^2_{yy}P(x,y) } \right)^{-1} \right|_{(x,y) = (x_d, Y_1(x_d))}  > 0;
 \ee
\item[--] if $\phi_1(x_d, Y_1(x_d)){<} 1$, then the function $\tilde\varkappa_1$  of~\eqref{eq-def-tilde-kappa-1} is  non-negative on $\Z^2_+$, harmonic for the Markov chain  $(Z(n))$  killed at $0$,  positive  on the set $\Z^2_+{\setminus} E_0$, and for any $j{\in}\Z^2_+$, 
\be\label{eq-analytical-structure-IIb}
\lim_{x\to x_d} \sqrt{x_d{-}x}\, \frac{d}{dx} \, H_j(x,0) = a_3\,  \tilde\varkappa_1(j), 
\ee
where the limit is taken in the set $B(0,x^{**}_P{+}\eps){\setminus} [x^{**}_P, x^{**}_P{+}\eps]$ and 
\be\label{eq2b-case-B2-theorem2} 
{a}_3 =  \left.\frac{1}{2} \sqrt{\partial_x P(x,y)/\partial^2_{yy}P(x,y) }  \right|_{(x,y) = (x_d, Y_1(x_d))} > 0. 
\ee
\end{itemize}
\item If (B5) and  $x_d{<}x^{**}_P$ hold, then  
\begin{itemize} 
\item[--] there exists $\eps{>}0$ such that, for any $j{\in}\Z^2_+$, the function $x{\to} H_j(x,0)$ can be analytically  continued to the set $B(0, x_d{+}\eps){\setminus}\{x_d\}$;
\item[--] the function $\varkappa_2$  of~\eqref{eq-def-kappa-2} is  non-negative on $\Z^2_+$, harmonic for the Markov chain  $(Z(n))$  killed at $0$, positive  on the set  $\Z^2_+{\setminus}E_0$  and for any $j{\in}\Z^2_+$, 
\be\label{eq-analytical-structure-IV}
\lim_{x\to x_d} (x_d{-}x)^2 H_j(x,0) = a_4 \,  \varkappa_2(j),
\ee
where 
\be\label{eq-analytical-structure-IVb}
a_4 =  (\phi_2(x, y){-}1)\left. \left(\frac{d}{d x} \phi_1(x, Y_1(x))\frac{d }{dy} \phi_2(X_1(y), y) \frac{d }{dx}Y_1(x)\right)^{-1}\right|_{(x,y)=(x_d,y_d)} > 0.
\ee
\end{itemize} 
\item If (B5) and $x_d{=} x^{**}_P$ hold, then  
\begin{itemize} 
\item[--] there exists $\eps{>}0$ such that,  for any $j{\in}\Z^2_+$, the function $x\to H_j(x,0)$ can be continued as an analytic functions to the set $B(0,x^{**}_P{+}\eps){\setminus} [x^{**}_P, x^{**}_P{+}\eps]$; 
\item[--] the function $\varkappa_2$ is   non-negative on $\Z^2_+$, harmonic for the Markov chain  $(Z(n))$  killed at $0$ and  positive  on the set  $\Z^2_+{\setminus} E_0$; 
\item[--] if $\phi_1(x_d, Y_1(x_d)) {=} 1$ then  for any $j{\in}\Z^2_+$, 
\be\label{eq-analytical-structure-Va}
\lim_{x\to x_d} (x_d-x)\, H_j(x,0) = a_5\,  \varkappa_2(j), 
\ee
where the limit  is taken in the set $B(0,x^{**}_P{+}\eps){\setminus} [x^{**}_P, x^{**}_P{+}\eps]$ and 
\be\label{eq-analytical-structure-Vab}
a_5 = \left. (\phi_2(x,y){-}1) \partial^2_{yy}P(x,y)\left( \partial_y\phi_1(x,y) \partial_x P(x,y) \frac{d \phi_2}{dy} (X_1(y), y) \right)^{-1}\right|_{(x,y){=}(x_d,y_d)} > 0.
\ee
\item[--] if $\phi_1(x_d, Y_1(x_d)) {<} 1$ then  for any $j{\in}\Z^2_+$, 
\be\label{eq-analytical-structure-Vb}
\lim_{x\to x_d} \sqrt{x_d-x}\, H_j(x,0) = {a}_6\,  \varkappa_2(j), 
\ee
where the limit  is taken in the set $B(0,x^{**}_P{+}\eps){\setminus} [x^{**}_P, x^{**}_P{+}\eps]$ and 
\be\label{eq-analytical-structure-Vbb}
{a}_6 = \left.(\phi_2(x,y){-}1) \sqrt{\frac{\partial^2_{yy}P(x,y)}{\partial_x P(x,y)}}\left((1{-}\phi_1(x,y)) \frac{d }{dy} \phi_2(X_1(y), y) \right)^{-1} \right|_{(x,y)=(x_d,y_d)}> 0.
\ee
\end{itemize} 
\item  If  (B6)  holds, then 
\begin{itemize} 
\item[--] there exists  $\eps{>}0$ such that, for any $j{\in}\Z^2_+$, the function $x{\to} H_j(x,0)$ can be analytically  continued to the set $B(0, x_d{+}\eps){\setminus}\{x_d\}$;
\item[--] the function $\varkappa_2$ is  non-negative on $\Z^2_+$, harmonic for the Markov chain  $(Z(n))$  killed at $0$, positive   on the set  $\Z^2_+{\setminus}E_0$  and  for any $j{\in}\Z^2_+$, 
\be\label{eq-analytical-structure-III}
\lim_{x\to x_d} (x_d {-} x)H_j(x,0) ~=~  a_7\,  \varkappa_2(j),
\ee
where
\be\label{eq-analytical-structure-IIIb}
a_7 = \left.(\phi_2(x, y){-}1) \left((1{-}\phi_1(x, y))\frac{d  }{dy}\phi_2(X_1(y), y)  \frac{d }{dx} Y_1(x)\right)^{-1}\right|_{(x,y)=(x_d,y_d)}  > 0.
\ee
\end{itemize} 
\item If (B2) holds, then 
\begin{itemize} 
\item[--] the set $\{(x,y){\in}{\mathcal S}_{22}: \; x {<} x_d, \; y {<} y_d\}$ is non-empty; 
\item[--] there exists a neighborhood ${\mathcal V}$ of the set ${\mathcal S}_{22}$ in $\R^2_+$ such that, for any $j{\in}\Z^2_+$,\\ the function $x{\to} (1{-}P(x,y))H_j(x,y)$ can be  analytically  continued to the set $\{(x,y){\in}\Omega({\mathcal V}): |x| {<} x_d,  |y| {<} y_d\}$; 
\item[--] for any $(\hat{x},\hat{y}){\in} \{(x,y){\in}{\mathcal S}_{22}: x{<} x_d, y {<} y_d\}$,  the function $\varkappa_{(\hat{x},\hat{y})}$ is non-negative on $\Z^2_+$, harmonic for the Markov chain  $(Z(n))$  killed at $0$,  positive on the set $\Z^2_+{\setminus}E_0$ and for any $j{\in}\Z^2_+$, 
\be\label{limit-R}
\lim_{\substack{(x,y) \to (\hat{x},\hat{y})\\ (x,y)\in\inter{D}}} (1{-}P(x,y))H_j(x,y) =  \varkappa_{(\hat{x},\hat{y})}(j).
\ee
\end{itemize} 
  \end{enumerate}
\end{theorem}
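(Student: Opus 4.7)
The starting point is a master identity obtained by specializing the functional equation~\eqref{extended-functional-equation} of Theorem~\ref{theorem1}. Substituting $y{=}Y_1(x)$, so that $Q(x,Y_1(x)){=}0$ by Proposition~\ref{propx*}, and using $\psi_1(x,y) h_{1j}(x){=}(1{-}\phi_1(x,y))H_j(x,0)$ together with the analogous identity for $\psi_2$, yields
\[
(1{-}\phi_1(x,Y_1(x)))\,H_j(x,0) \;=\; L_j(x,Y_1(x)) + (\phi_2(x,Y_1(x)){-}1)\,H_j(0,Y_1(x)),
\]
whose right-hand side I denote $R_1(x;j)$. A symmetric identity, obtained by setting $x{=}X_1(y)$, defines $R_2(y;j)$. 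Observe that $R_1(x_d;j){=}\varkappa_1(j)$ in cases~(i)–(ii) and $R_2(y_d;j){=}\varkappa_2(j)$ in cases~(iii)–(v). The proof then reduces to a case-by-case study of the zeros of the denominator $1{-}\phi_1(x,Y_1(x))$ against the analyticity of the numerator $R_1(x;j)$, together with the symmetric analysis for $R_2$.

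For case~(i), one has $x_d{<}x^{**}_P$, so Proposition~\ref{propx*} provides a holomorphic extension of $Y_1$ to a disk $B(x_d,\epsilon)$, while the geometric characterization of the regions (Section~\ref{section_cases}) forces $Y_1(x_d){<}y_d$, so that $H_j(0,Y_1(x))$ is holomorphic on $B(x_d,\epsilon)$ by Theorem~\ref{theorem1}. The condition $\partial_xP(x_d,Y_1(x_d)){<}0$ together with implicit differentiation of $P(x,Y_1(x)){=}1$ gives a simple zero of the denominator at $x_d$ with positive derivative $\tfrac{d}{dx}\phi_1(x,Y_1(x))|_{x_d}{>}0$, from which~\eqref{eq1-theorem2} follows. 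Case~(iii), i.e.\ (B5) with $x_d{<}x^{**}_P$, and case~(v), i.e.\ (B6), use the symmetric identity together with the composition $H_j(x,0){\to}H_j(0,Y_1(x)){\to}H_j(X_1(Y_1(x)),0)$: this chain multiplies the two factors $(1{-}\phi_1)$ and $(1{-}\phi_2)$ and produces the double pole~\eqref{eq-analytical-structure-IV} in case~(iii) and the single pole~\eqref{eq-analytical-structure-III} in case~(v), according to whether both or only one of these factors vanish at $(x_d,y_d)$. Part~(vi) is then read off directly from~\eqref{extended-functional-equation} once the regularity of $H_j(x,0)$ and $H_j(0,y)$ has been secured by the previous cases.

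The hard part is cases~(ii) and~(iv), where $x_d{=}x^{**}_P$. There $x^{**}_P$ is a square-root branch point of $Y_1$; twice differentiating $P(x,Y_1(x)){=}1$ and using $\partial_yP(x^{**}_P,Y_1(x^{**}_P)){=}0$ gives the Puiseux expansion
\[
Y_1(x) \;=\; Y_1(x^{**}_P) \;-\; \sqrt{\partial_xP/\partial^2_{yy}P}\,\sqrt{x^{**}_P{-}x}\,(1{+}o(1))
\]
at $(x^{**}_P,Y_1(x^{**}_P))$, which in turn extends $H_j(x,0)$ analytically to the cut disk $B(0,x^{**}_P{+}\epsilon){\setminus}[x^{**}_P,x^{**}_P{+}\epsilon]$. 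A dichotomy then arises: if $\phi_1(x_d,Y_1(x_d)){=}1$, the chain rule injects a $\sqrt{x_d{-}x}$ zero in the denominator, giving~\eqref{eq-analytical-structure-IIa} and~\eqref{eq-analytical-structure-Va} with the constants $a_2,a_5$; if $\phi_1(x_d,Y_1(x_d)){<}1$, the denominator stays away from zero and the singularity is imported from $R_1(x;j)$ through the Puiseux behaviour of $H_j(0,Y_1(x))$, the right quantity with a finite limit becoming the derivative $\tfrac{d}{dx}H_j(x,0)$, producing~\eqref{eq-analytical-structure-IIb} and~\eqref{eq-analytical-structure-Vb} with the constants $a_3,a_6$. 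The auxiliary $\tilde\varkappa_i$ is precisely the first analytic Taylor coefficient after the singular square-root part has been extracted, which explains its $\partial_y$-formula.

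It remains to show, in each case, that the limit function $\varkappa$ is non-negative, harmonic for the killed chain, and positive on $\Z_+^2{\setminus}E_0$. Non-negativity is immediate since each $\varkappa$ is a limit of non-negative quantities of the form $(x_d{-}x)^r H_j(x,0){\geq}0$ as $x{\uparrow}x_d$ in $(0,x_d)$. Harmonicity is obtained by translating the master identity into the balance equation $\E_j(\varkappa(Z(1));\tau_0{>}1){=}\varkappa(j)$ via the one-step decomposition $g(j,k){=}\delta_{j,k}{+}\sum_{\ell{\neq}0}\P_j(Z(1){=}\ell)\,g(\ell,k)$. The strict positivity of the constants $a_i$ drops out of $\partial_xP(x_d,Y_1(x_d)){<}0$ and Assumption~(A3)(v), and positivity of $\varkappa$ on $\Z_+^2{\setminus}E_0$ follows from Lemma~\ref{irreductibility-lemma} combined with the strong maximum principle for non-negative superharmonic functions.
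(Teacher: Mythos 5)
Your outline follows the paper's high-level strategy (inject $y{=}Y_1(x)$ into the functional equation, then study the zeros of $1{-}\phi_1(x,Y_1(x))$ against the analyticity of the right-hand side), but it omits the paper's central technical innovation and, as a consequence, contains genuine gaps.

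The main gap is the analytic continuation of $Y_1$. You write ``Proposition~\ref{propx*} provides a holomorphic extension of $Y_1$ to a disk $B(x_d,\epsilon)$,'' but Proposition~\ref{propx*} only defines $Y_1$ on the real interval $[x^*_P,x^{**}_P]$. Local extensions by the implicit function theorem do not suffice for two reasons. First, to continue $H_j(x,0)$ from its original disk of convergence $B(0,x_d)$ to $B(0,x_d{+}\eps){\setminus}\{x_d\}$ one must continue $Y_1$ across the entire circle $\{|x|{=}x_d\}$, not just near the real point $x_d$. Second, and crucially, one must know that the continued $Y_1(x)$ stays inside the disk $B(0,y_d)$ where $H_j(0,\cdot)$ is analytic, i.e.\ one needs the uniform bound $|Y_1(x)|{\le}Y_1(|x|){<}y_d$ for $|x|{\le}x_d$, $x{\neq}x_d$. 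Both points are handled in the paper by the probabilistic representation $Y_1(x){=}\E_{(0,1)}\bigl(x^{S_1(\tau_1)};\tau_1{<}\infty\bigr)$ of Corollary~\ref{probabilistic_representation_cor1}: the positive coefficients of this Laurent series give the strict inequality $|Y_1(x)|{<}Y_1(|x|)$ on the annulus $\overline{C}(x^*_P,x^{**}_P)$ and, via the implicit function theorem applied on the circle $|x|{=}x^{**}_P$, the global extension of $Y_1$ to $C(x^*_P,x^{**}_P{+}\eps){\setminus}[x^{**}_P,x^{**}_P{+}\eps[$ (Proposition~\ref{functions-Y1-X1}). Without this ingredient the claimed analytic continuation of $H_j(\cdot,0)$ is unjustified, and the same issue reappears in the Puiseux step in case~(ii).

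Two secondary points. Your claim that ``$\partial_xP(x_d,Y_1(x_d)){<}0$ together with implicit differentiation'' yields a simple zero of $1{-}\phi_1(x,Y_1(x))$ at $x_d$ with $\tfrac{d}{dx}\phi_1(x,Y_1(x))|_{x_d}{>}0$ is not a valid argument: the sign of $\partial_xP$ on the lower branch $\{y{=}Y_1(x)\}$ changes along the boundary of $D$ and does not determine that derivative. The paper establishes the simplicity via the strict convexity of $x\mapsto\phi_1(x,Y_1(x))$ (again from a probabilistic representation, Corollary~\ref{probabilistic_representation_cor2}) together with Corollary~\ref{preliminary_cor2}. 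Finally, your positivity argument via the strong maximum principle is incomplete: before irreducibility can propagate positivity one must first know that $\varkappa$ is not identically zero, which the paper proves by a Lyapunov estimate showing $\varkappa_1(j_1,0)\sim x_d^{j_1}$ as $j_1{\to}\infty$ (and analogously for the other harmonic functions). You should make this non-vanishing step explicit.
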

 By symmetry (it is sufficient to  to exchange  the roles of $x$ and $y$), the analogous results for the assertions i)-v) of Theorem~\ref{theorem2} hold for the functions $y{\to}H_j(0,y)$, \, $j{\in}\Z^2_+{\setminus}E_0$.  
 
 The last assertion of Theorem~\ref{theorem2} is obtained as a consequence of Theorem~\ref{theorem1}. The proofs of the first five assertions are more demanding as it will be seen.

\bigskip

\noindent
{\bf Remark.}\\
In the context of  the functional equation~\eqref{extended-functional-equation}, a classical approach of the literature  consists in finding a suitable analytic function $x{\to}Y(x)$ (resp. $y{\to}X(y)$) satisfying the equation $Q(x,Y(x)) {=} 0$ for any $x$, resp. $Q(X(y),y){=}0$ for any $y$,  in some domain large enough, in order to inject  $y{=}Y(x)$ (resp $x{=}X(y)$) in~\eqref{extended-functional-equation}.

For nearest neighbor  random walks, see Malyshev~\cite{Malyshev},  the equation $Q(x,y){=}0$ is quadratic in $x$ and $y$, its solutions have therefore an explicit form. In the general case, when the jumps of the random walk are unbounded, one clearly cannot find  the functions $Y$ and $X$ in  such a way. See Remark~2.3 of Kobayashi and Miyazawa~\cite{Kobayashi-Miyazawa}. 

\subsection*{Probabilistic Representations of ${\mathbf X_1}$ and ${\mathbf Y_1}$}
In our analysis, a part of the technicalities of the literature related to analytic continuation of the functions $(Y(x))$ and $(X(y))$ mentioned above is avoided via a  probabilistic argument.

We show that   the functions $Y_1{:}[x^*_P, x^{**}_P]{\to}[y^*_P, y^{**}_P]$ and  $X_1{:} [y^*_P, y^{**}_P] {\to} [x^*_P, x^{**}_P]$, introduced in Proposition~\ref{propx*}, have a probabilistic representation. With this result, we are able to prove that there exists  $\eps{>} 0$ such that  the function $X_1$, resp $Y_1$,  can be analytically continued to the set
\[
\{x{\in}\C{:}x^*_P {<} |x| {<} x^{**}_P{+}\eps,  x{\not\in}[x^{**}_P, x^{**}_P{+}\eps[\}, \text{ resp.  }\{y{\in}\C{:} y^*_P {<} |y| {<} y^{**}_P{+}\eps, \; y{\not\in}[y^{**}_P,y^{**}_P{+}\eps[\}.
        \]
       From there,  several important properties of the analytic continuation of  $Y_1$ and $X_1$ are then derived. As a consequence one can  inject $y{=}Y_1(x)$ or $x{=}X_1(y)$  in the equation \eqref{extended-functional-equation}, and then to establish Theorem~\ref{theorem2}. 

\subsection{Asymptotics of the Green function along the Axes} In this section, we investigate the asymptotics of the Green function $g(j,k)$ when one of  coordinate of $k{=}(k_1,k_2){\in}\Z_+^2$ is fixed, i.e.  as $k_1{\to}{+}\infty$ with $k_2$ fixed or  $k_2{\to}{+}\infty$ and $k_1$ fixed. By symmetry it is enough to consider only the first convergence.

We define $\nu_1(0){=}1$ and, for $n{\ge}1$, 
\be\label{eq-def-nu-1}
\nu_1(n) \steq{def}  \frac{1}{(n{-}1)!} \left.\frac{\partial^{n-1}}{\partial y^{n-1}} \left(\frac{\psi_1(x_d, y)}{Q(x_d, y)}\right) \right|_{y=0},  
\ee
where $Q$  and $\psi_1$ are defined by~\eqref{Qdef} and~\eqref{psi1def} respectively. 
As the following theorem shows, the quantity $\nu_1(k_2)$ expresses the dependence on $k_2$  in the limiting behavior of $k_1{\to}g(j,(k_1,k_2)))$ when $k_1{\to}{+}\infty$.

In Section~\ref{section-proof-theorem3},  it is shown that $\nu_1$ is, up to a multiplicative constant, the invariant distribution of a twisted version of a random walk on $\Z\times\Z_+$ obtained by removing the boundary $\{0\}{\times}\Z_+$. It will show in particular that the coefficients $\nu_1(n)$, $n{\in}\Z_+$, are positive. 

 The case (B7), already analyzed, excepted, the following result gives a complete description of all possible cases for the asymptotic behavior of the Green function $g(j,(k_1,k_2))$ as $k_1\to+\infty$ for a fixed $k_2{\in}\Z_+$. 
\begin{theorem}[Asymptotics of Green Function with a Fixed Second Component]\label{theorem3}  Under the assumptions (A1){-}(A3), the following assertions hold. 
\begin{enumerate}  
\item If  either one of the cases (B0),(B1), (B3), (B4) holds or (B2) and $x_d{<} x^{**}_P$ hold,  then for any $j{\in}\Z^2_+{\setminus}E_0$ and $k_2{\in}\Z_+$,  as $k_1{\to}{+}\infty$, 
\be\label{eq1-theorem3} 
g(j, (k_1,k_2))  ~\sim~ a_1 \, \nu_1(k_2) \varkappa_1(j) x_d^{-k_1-1}.
\ee
where  $\varkappa_1(j) > 0$ and $a_1>0$ are defined   respectively by \eqref{eq-def-kappa-1} and~\eqref{eq2b-cases-B0-B4-theorem2}.
\item  If (B2) and $x_d{=} x^{**}_P$  hold, then for any $j{\in}\Z^2_+{\setminus} E_0$ and $k_2{\in}\Z_+$, as $k_1{\to}{+}\infty$, 
 \begin{align}
g(j, (k_1,k_2)) &\sim a_2\,   \nu_1(k_2) \varkappa_1(j) x_d^{- k_1} \left(\sqrt{\pi k_1 x_d}\right)^{-1} \quad\text{if} \quad  \phi_1(x_d, Y_1(x_d)){=}1, \label{eq2-theorem3}\\
 g(j, (k_1,k_2)) &\sim  {a}_3\,   \nu_1(k_2)  \tilde\varkappa_1(j) x_d^{- k_1} \left(k_1\sqrt{\pi k_1 x_d}\right)^{-1}  \quad\text{if} \quad  \phi_1(x_d, Y_1(x_d)){<} 1, \label{eq3-theorem3}
\end{align} 
where  $\varkappa_1(j)>0$, $\tilde\varkappa_1(j)>0$, $a_1>0$ and ${a}_2>0$ are defined respectively by~\eqref{eq-def-kappa-1}, \eqref{eq-def-tilde-kappa-1}, \eqref{eq4a-case-B2-theorem2} and \eqref{eq2b-case-B2-theorem2}.
\item If (B5) holds and $x_d{<}x^{**}_P$,  then for $j{\in}\Z^2_+{\setminus} E_0$ and $k_2{\in}\Z_+$,  as $k_1{\to}{+}\infty$, 
\be\label{eq4-theorem3} 
g(j, (k_1,k_2)) ~\sim~  a_4  \nu_1(k_2)  \varkappa_2(j) k_1 x_d^{-k_1-2},
\ee
where  $\varkappa_2(j)>0$ and $a_4>0$ are defined by~\eqref{eq-def-kappa-2} and~\eqref{eq-analytical-structure-IVb}.
\item If (B5) holds and $x_d{=}x^{**}_P$,  then for  $j{\in}\Z^2_+{\setminus}E_0$ and $k_2{\in}\Z_+$, as $k_1{\to}{+}\infty$,
\begin{align}
 g(j, (k_1,k_2)) &\sim~  a_5\,   \nu_1(k_2)  \varkappa_2(j) x_d^{- k_1-1} \quad \text{if } \quad \phi_1(x_d, Y_1(x_d)){=}1,\label{eq5-theorem3} \\
 g(j, (k_1,k_2)) &\sim~   {a}_6\,   \nu_1(k_2) \varkappa_2(j) x_d^{-k_1} \left(\sqrt{\pi k_1 x_d} \right)^{-1}  \quad \text{if } \quad  \phi_1(x_d, Y_1(x_d)){<} 1,\label{eq6-theorem3} 
\end{align} 
where  $\varkappa_2(j)>0$,  $a_5>0$ and  ${a}_6>0$ are defined respectively by~\eqref{eq-def-kappa-2}, \eqref{eq-analytical-structure-Vab} and~\eqref{eq-analytical-structure-Vbb}. 
\item  If  (B6)  holds,  then for   $j{\in}\Z^2_+{\setminus}E_0$ and $k_2{\in}\Z_+$, as $k_1{\to}{+}\infty$, 
\be\label{eq7-theorem3} 
g(j, (k_1,k_2)) ~\sim~  a_7  \nu_1(k_2) \varkappa_2(j) x_d^{-k_1-1},
\ee
where  $\varkappa_2(j)>0$ and $a_7>0$ are defined respectively by \eqref{eq-def-kappa-2} and~\eqref{eq-analytical-structure-IIIb}. 
\end{enumerate} 
\end{theorem}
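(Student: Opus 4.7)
The plan is to combine the functional equation of Theorem~\ref{theorem1} with the singularity analysis of Theorem~\ref{theorem2} in order to identify the dominant singular behaviour in $x$ of $x\mapsto [y^{k_2}] H_j(x,y)$, and then apply standard transfer theorems. For $k_2=0$, since $g(j,(k_1,0))=[x^{k_1}]H_j(x,0)$, the conclusion (with $\nu_1(0)=1$) follows directly from Theorem~\ref{theorem2}(i)--(v) via the transfer theorem: a simple pole at $x_d$ gives $x_d^{-k_1-1}$, a double pole gives $k_1 x_d^{-k_1-2}$, a $(x_d-x)^{-1/2}$ singularity gives $(\sqrt{\pi k_1 x_d})^{-1} x_d^{-k_1}$, and a $(x_d-x)^{1/2}$ branch point (equivalently a $1/\sqrt{x_d-x}$ behaviour for the derivative, as in Theorem~\ref{theorem2}(ii) with $\phi_1(x_d,Y_1(x_d))<1$) yields $(k_1\sqrt{\pi k_1 x_d})^{-1} x_d^{-k_1}$.

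For $k_2\geq 1$, write $g(j,(k_1,k_2)) = [x^{k_1-1}\,y^{k_2-1}]\,h_j(x,y)$. Using the functional equation of Theorem~\ref{theorem1} to isolate the contribution of $h_{1j}$,
\[
h_j(x,y) \;=\; \frac{\psi_1(x,y)}{Q(x,y)}\,h_{1j}(x) \;+\; R_j(x,y), \qquad R_j(x,y):=\frac{L_j(x,y)+\psi_2(x,y)\,h_{2j}(y)}{Q(x,y)}.
\]
The key observation is that for $|y|$ small, $R_j(x,y)$ extends analytically in $x$ to a disc strictly larger than $\{|x|\leq x_d\}$. Indeed, assumption (A2) combined with irreducibility forces $\mu(\cdot,-1)$ to have positive mass, so the Laurent expansion of $P$ at $y=0$ yields $1-P(x,y)\sim -c(x)/y$ with $c(x):=\sum_{j_1}\mu(j_1,-1)x^{j_1}$ analytic and generically nonvanishing on the enlarged disc; hence $y/Q(x,y) = 1/[x(1-P(x,y))]$ extends analytically across $y=0$ on this domain, and a continuity/compactness argument together with the strict log-convexity of $D$ rules out spurious zeros of $Q(\cdot,y)$ on the enlarged $x$-contour for all small $|y|$.

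Consequently the singular part of $h_j(\cdot,y)$ at $x_d$ equals $\psi_1(x_d,y)/Q(x_d,y)$ times the singular part of $h_{1j}(x)=H_j(x,0)/x$ at $x_d$, which is read off directly from Theorem~\ref{theorem2}. The factor $\psi_1(x_d,y)/Q(x_d,y)$ is analytic in $y$ at $0$ (the $y^{-1}$ pole of $1-P(x_d,y)$ exactly cancels the explicit $y$-factor in $Q$), and its $(k_2{-}1)$-st Taylor coefficient at $y=0$ is by definition~\eqref{eq-def-nu-1} equal to $\nu_1(k_2)$. Combining this $y$-factor with the case-by-case transfer in $x$, and tracking the extra $1/x_d$ coming from the relation $H_j(x,0)=xh_{1j}(x)$ to match the constants $a_1,\dots,a_7$ of Theorem~\ref{theorem2}, yields the stated asymptotics in each of the six regimes.

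The main obstacle is the rigorous control of the contour deformation underlying the transfer theorem across these distinct singular regimes. Two delicate points arise: (a) verifying that $Q(x,y)$ has no unexpected zeros in $x$ on the enlarged deformed contour for $y$ running over a small circle around $0$, so that the analytic continuation of $R_j(\cdot,y)$ provided by the functional equation is genuinely single-valued; and (b) correctly discriminating in the (B2) and (B5) boundary subcases (where $x_d=x^{**}_P$) between $\varkappa_1,\varkappa_2$ (when $\phi_1(x_d,Y_1(x_d))=1$) and $\tilde\varkappa_1$ (when $\phi_1(x_d,Y_1(x_d))<1$), since these correspond to singularities of different orders and force the asymptotic extraction to involve either $H_j(x,0)$ itself or its $x$-derivative.
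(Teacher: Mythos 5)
The proposal takes a genuinely different route for $k_2\geq 1$ than the paper does. You attack the problem purely analytically: extract the $y$-coefficient $\phi_{k_2}(x)=[y^{k_2-1}]h_j(x,y)$ via the functional equation, observe that the singular part at $x_d$ is $h_{1j}(x)$ times the analytic factor $[y^{k_2-1}]\bigl(\psi_1(x,\cdot)/Q(x,\cdot)\bigr)$ evaluated near $x_d$ (whose value at $x_d$ is $\nu_1(k_2)$ by definition~\eqref{eq-def-nu-1}), and then transfer. The paper instead first establishes the $k_2=0$ case analytically and then proves the ratio limit $\lim_n g(j,(n,k_2))/g(j,(n,0))=\nu_1(k_2)$ probabilistically, by identifying $\nu_1$ (up to a twist) as the unique invariant measure of the second coordinate of a twisted half-plane chain (Lemma~\ref{lemma-proof-theorem3}) and showing that every subsequential limit of the normalized ratios satisfies the same balance equations. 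Your route is potentially shorter, but it has two real gaps.

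First, the delta-domain condition for $\phi_{k_2}$. To apply Flajolet--Odlyzko to $\phi_{k_2}(x)$ you need $\phi_{k_2}$ to continue analytically to a slit disc of radius $>x_d$ with $x_d$ the \emph{only} singularity on $\{|x|=x_d\}$. Each piece of your decomposition carries $Q(x,0)=-c_0(x)$ in the denominator, where $c_0(x)=\sum_{k_1\geq-1}x^{k_1+1}\mu(k_1,-1)$ is a power series with non-negative coefficients. Such a series can vanish at complex points of modulus $\leq x_d$ (e.g.\ if $\mu(\cdot,-1)$ is supported on $\{-1,1\}$, $c_0$ vanishes on a circle). For $|x|<x_d$ these zeros are cancelled because $h_j$ itself is analytic there, but on $|x|=x_d$ and beyond the cancellation is exactly what one must prove, and your appeal to ``continuity/compactness and strict log-convexity of $D$'' does not touch it: the log-convexity of $D$ controls the zero set of $1-P(x,y)$, not of $c_0$. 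Without ruling out spurious singularities of $\phi_{k_2}$ on the boundary circle, the transfer theorem cannot be invoked.

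Second, positivity of $\nu_1(k_2)$. Your argument identifies $\nu_1(k_2)$ as a Taylor coefficient, but the sign of such a coefficient is not a priori known, and if $\nu_1(k_2)=0$ the claimed asymptotic equivalence would be meaningless (indeed the singular contribution at $x_d$ would then drop an order, and the leading term would come from elsewhere). The paper obtains positivity for free from the probabilistic representation: $(\nu_1(n)\,Y_1(x_d)^n)_n$ is the invariant measure of an irreducible twisted chain, hence strictly positive (Corollary~\ref{cor-proof-theorem3}). Your analytic route provides no access to this and would need a separate argument.
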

For $k_2 = 0$, this result is obtained as a straightforward consequence of Theorem~\ref{theorem2} by using the  Tauberian-like theorem  due to Flajolet and Odlyzko~\cite{FlajoletO}, see Corollary~VI.1 of~\cite{Flajolet-Sedgewick}). To get this result for $k_2 > 0$, we prove that for any $j\in\Z^2_+\setminus E_0$ and $k_2 \in\Z_+$,
\[
\lim_{n\to+\infty} g(j,(n,k_2))/g(0,(n,k_2)) = \nu_1(k_2), \quad k_2\in\Z_+, 
\]
by using a probabilistic representation of the coefficients $\nu_1(k_2)$, $k_2\in\Z_+$. The proof of this result is given in Section~\ref{section-proof-theorem3}. 

\subsection{Asymptotics of the Green function along Directions of ${\mathbb S}^1_+$} 
In this section, we present the asymptotics of the Green function $g(j,(k_1,k_2))$ as $\min\{k_1, k_2\}\to\infty$ and $k/\|k\|{\to}w$ where $w$, the direction,   is an element of ${\mathbb S}^1_+{=}\{(u_1,u_2){\in}\R^2_+:u_1^2{+} u_2^2{=} 1\}$.  

For each of the cases (B0){-}(B6), we will introduce  subsets ${\mathcal W}_0$, ${\mathcal W}_1$ and ${\mathcal W}_2$ of ${\mathbb S}^1_+$  used  to define the partition of the set of directions in ${\mathbb S}^1_+$. This is achieved by the  definitions~\ref{Wdef-B0}, \ref{Wdef-B1}, \ref{Wdef-B2} and~\ref{Wdef-B3-B6} in Section~\ref{region-of-directions}. A critical direction $w_c{=}(u_c,v_c)$ in ${\mathbb S}^1_+$ will play a role in several cases. As it will be seen this partition  will determine a structure of  the asymptotics behavior of the Green function $k{\to}g(j,k)$.

\subsubsection{Diffeomorphism between  ${\mathbb S}^1_+$ and ${\mathcal S}_{22}$}\label{def-homeomorphism-sphere-boundary}
The sets ${\mathbb S}^1_+$ and ${\mathcal S}_{22}$ are defined by~\eqref{defS1} and~\eqref{eq_S_22}. 
Under  Assumptions (A1), see for instance Ney and Spitzer~\cite{Ney-Spitzer}, the Laplace transform
\be\label{function-tilde-P}
(\alpha,\beta) \to \widetilde{P}(\alpha,\beta) \steq{def} P(e^\alpha, e^\beta) = \sum_{k=(k_1,k_2)\in\Z^2} \mu(k) e^{\alpha k_1 + \beta k_2}. 
\ee
is strictly convex, the level set $\widetilde{D} \steq{def} \{(\alpha,\beta){\in} \R^2:\tilde{P}(\alpha, \beta){\leq}1\}$
 is strictly convex and compact, the gradient $\nabla_{\alpha,\beta} \tilde{P}(\alpha,\beta)$ does not vanishes on the boundary  $\partial \widetilde{D}$ of  $\widetilde{D}$, and the function
\be\label{diffeomorphism-log}
(\alpha,\beta)\to \nabla_{\alpha,\beta} \widetilde{P}(\alpha, \beta)/\|\nabla_{\alpha,\beta} \widetilde{P}(\alpha, \beta)\|
\ee
determines a diffeomorphism from $\partial \widetilde{D}$ to the unit circle ${\mathbb S}^1{=}\{(u_1,u_2){\in}\R^2: u_1^2 {+} u_2^2 {=} 1\}$.  

Since for $x{=}e^\alpha$ and $y{=}e^\beta$, one has 
 \be\label{eq2-diffeomorphism-point-direction}
 \partial_\alpha \widetilde{P}(\alpha, \beta) = x\partial_x P(x,y) \quad \text{and} \quad \partial_\beta \tilde{P}(\alpha, \beta) = y\partial_yP(x,y), 
 \ee
for the set  $D$ defined by~\eqref{Ddef},  it follows that the function
\begin{multline}\label{diffeomorphism-point-direction}
(x,y)\to w_D(x,y) = (u_D(x,y), v_D(x,y))  \\ 
~\steq{def}~ \frac{1}{\sqrt{x^2(\partial_xP(x,y))^2 + y^2 (\partial_y P(x,y))^2}} (x\partial_x P(x,y), \, y\partial_y P(x,y)) 
\end{multline}
is a  diffeomorphism from  $\partial D$ to  ${\mathbb S}^1$ and, by  the definition of ${\mathcal S}_{22}$, from  ${\mathcal S}_{22}$ to ${\mathbb S}^1_+$.
We denote by $w{\to}(x_D(w), y_D(w))$, resp. $w{\to} (\alpha_D(w),\beta_D(w))$, the inverse mapping of the function~\eqref{diffeomorphism-point-direction},  resp. of~\eqref{diffeomorphism-log}. 

Remark that, for $w_1{=}(u_1,v_1)$ and  $w_2{=}(u_2,v_2){\in}{\mathbb S}^1_+$, the inequalities $u_1{<} u_2$ and $v_1 {>} v_2$ are equivalent and, since the set $\tilde{D}$ is strictly convex, $\alpha_D(u_1,v_1){<} \alpha_D(u_2,v_2)$, resp. $\beta_D(u_1,v_1){<} \beta_D(u_2,v_2)$,  if and only if  $u_1 {<} u_2$, resp. $v_1 {>} v_2$. Hence, using again \eqref{eq2-diffeomorphism-point-direction} for $x{=}e^\alpha$ and $y{=}e^\beta$, one gets a similar property for the diffeomorphism $w{\to}(x_D(w), y_D(w))$:

\begin{lemma}\label{lemma-diffeomorphism-point-direction} Under Assumption (A1), for any $w_1{=}(u_1,v_1)$, $w_2{=}(u_2,v_2){\in}{\mathbb S}^1_+$, then
\[
x_D(u_1,v_1) < x_D(u_2,v_2) \; \Leftrightarrow \; u_1 < u_2 \; \Leftrightarrow \; v_1 > v_2 \; \Leftrightarrow \; y_D(u_1,v_1) > y_D(u_2,v_2). 
\]
\end{lemma}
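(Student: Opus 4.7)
My plan is to transfer the statement to the log-coordinates $(\alpha,\beta) = (\log x,\log y)$ already used in~\eqref{function-tilde-P}. Under Assumption~(A1), $\widetilde P$ is strictly convex, $\widetilde D$ is a compact strictly convex set with non-empty interior, and the map $(\alpha,\beta)\mapsto \nabla\widetilde P/\|\nabla\widetilde P\|$ is a diffeomorphism from $\partial\widetilde D$ onto $\mathbb S^1$, with inverse $w\mapsto(\alpha_D(w),\beta_D(w))$. Because $x = e^\alpha$ and $y = e^\beta$ are strictly increasing, it suffices to prove, for $w_1,w_2\in\mathbb S^1_+$,
\[
u_1 < u_2 \;\Longleftrightarrow\; v_1 > v_2 \;\Longleftrightarrow\; \alpha_D(w_1) < \alpha_D(w_2) \;\Longleftrightarrow\; \beta_D(w_1) > \beta_D(w_2).
\]
The first equivalence is immediate, since on $\mathbb S^1_+$ one has $v_i = \sqrt{1-u_i^2}$, a strictly decreasing function of $u_i\in[0,1]$.

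For the remaining equivalences, I would parametrize $\mathbb S^1_+$ by $\theta\in[0,\pi/2]$ via $w(\theta)=(\cos\theta,\sin\theta)$ and study the smooth curve $\theta\mapsto(\alpha_D(w(\theta)),\beta_D(w(\theta)))$. Differentiating the identity $\widetilde P(\alpha_D(w(\theta)),\beta_D(w(\theta)))=1$ in $\theta$, and using that by construction $\nabla\widetilde P(\alpha_D(w(\theta)),\beta_D(w(\theta)))$ is a strictly positive multiple of $w(\theta)$, yields
\[
\cos\theta\,\alpha_D'(\theta)+\sin\theta\,\beta_D'(\theta)=0,
\]
so $(\alpha_D'(\theta),\beta_D'(\theta))$ is a scalar multiple of $(-\sin\theta,\cos\theta)$. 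The sign of this scalar is fixed by the orientation of the Gauss map of a smooth strictly convex closed curve: parametrizing $\partial\widetilde D$ counterclockwise, the angle of the outer unit normal strictly increases with arc length (its derivative equals the positive curvature), so the tangent to $\partial\widetilde D$ at $(\alpha_D(w(\theta)),\beta_D(w(\theta)))$ is $(-\sin\theta,\cos\theta)$ up to a strictly positive factor. Consequently $\alpha_D'(\theta)<0$ and $\beta_D'(\theta)>0$ for every $\theta\in(0,\pi/2)$, so $\theta\mapsto\alpha_D(w(\theta))$ is strictly decreasing and $\theta\mapsto\beta_D(w(\theta))$ strictly increasing on $[0,\pi/2]$.

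Since $u=\cos\theta$ is strictly decreasing and $v=\sin\theta$ strictly increasing on $[0,\pi/2]$, the two monotonicities just established translate into $u_1<u_2 \Leftrightarrow \alpha_D(w_1)<\alpha_D(w_2)$ and $v_1>v_2 \Leftrightarrow \beta_D(w_1)>\beta_D(w_2)$, which closes the chain of equivalences. The only step that requires care, and which I view as the main (and essentially only) obstacle, is the sign determination for $(\alpha_D'(\theta),\beta_D'(\theta))$: it relies on orientation-preservation of the Gauss map, a standard but easily mis-stated fact that I would verify in a single line through the Frenet formulas. Everything else is a direct consequence of the strict convexity of $\widetilde P$, the implicit definition of $\alpha_D$ and $\beta_D$, and the chain rule.
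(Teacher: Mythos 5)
Your proof is correct, but it takes a heavier and more explicitly differential-geometric route than the paper does. The paper disposes of the lemma in one sentence: it simply says that since $\tilde D$ is strictly convex, $\alpha_D(w_1)<\alpha_D(w_2)$ holds if and only if $u_1<u_2$, and similarly for $\beta_D$. The natural way to flesh out that terse remark is a static convexity/optimality argument: the point $p_i=(\alpha_D(w_i),\beta_D(w_i))$ is the unique maximizer of $w_i\cdot(\alpha,\beta)$ over $\tilde D$, so $w_1\cdot p_1>w_1\cdot p_2$ and $w_2\cdot p_2>w_2\cdot p_1$; adding these gives $(w_1-w_2)\cdot(p_1-p_2)>0$, and combining this with the uniqueness of the maximizers and the sign pattern of $w_1-w_2$ rules out $\alpha_1\ge\alpha_2$ when $u_1<u_2$ (and symmetrically for $\beta$). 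That argument uses only strict convexity of $\tilde D$ and needs no smoothness, parametrization, or Frenet apparatus.

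You instead parametrize $\mathbb S^1_+$ by $\theta$, differentiate the level-set identity, observe the tangent is proportional to $(-\sin\theta,\cos\theta)$, and fix the sign by orientation-preservation of the Gauss map. This is valid, but it silently invokes that $\partial\tilde D$ has strictly positive curvature (equivalently, that the Gauss map is an orientation-preserving diffeomorphism, not merely a homeomorphism). Under Assumption (A1)(ii) this is indeed true — the Hessian of $\tilde P$ is the covariance matrix of a tilted, irreducible distribution, hence positive definite, so the paper's appeal to ``diffeomorphism'' is justified — but this is a nontrivial input that the elementary convexity argument does not require. So your route buys you quantitative monotonicity ($\alpha_D'(\theta)<0<\beta_D'(\theta)$) at the price of additional regularity hypotheses; the paper's route is shorter, more elementary, and relies only on strict convexity. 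Both reach the same conclusion.
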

\noindent
This elementary property of the diffeomorphism $w{\to}(x_D(w), y_D(w))$ will be useful in the next section. 
\subsubsection{Regions of directions}\label{region-of-directions}
\begin{defi}\label{Wdef-B0}
 If (B0)  holds,  we introduce the vector $w_c{=}(u_c,v_c)\in {\mathbb S}^1_+$ orthogonal to the vector
    \[
    \left(\ln x^{**}{-}\ln X_2(y^{**}),\ln X_2(y^{**}){-} \ln y^{**}\right)
    \]
    and we let 
    \begin{align}\label{eq-def-W1-W2-B0} 
    {\mathcal W}_1 &= \{w{=}(u,v)\in{\mathbb S}^1_+: \, u > u_c\}\\
    {\mathcal W}_2 &= \{w{=}(u,v)\in{\mathbb S}^1_+: \, u < u_c\}\\
    {\mathcal W}_0 &= \emptyset.
    \end{align}
    \end{defi} 
Since in the case (B0), we have $X_2(y_d){>} x_d$ and $Y_2(x_d){>} y_d$, such a vector $w_c{=}(u_c,v_c)\in {\mathbb S}^1_+$ exists, is clearly unique and has positive coordinates. The sets ${\mathcal W}_1$ and ${\mathcal W}_2$ are therefore both non-empty: ${\mathcal W}_1$ is a part of ${\mathbb S}^1_+$  included in the half-plane
\[
\{w{=}(u,v){\in}\R^2:   v_c u  {>} u_c v  \} = \{w{=}(u,v){\in}\R^2:  (\ln(X_2(y_d)){-}\ln(x_d)) u > (\ln(Y_2(x_d)){-}\ln(y_d)) v\}
\]
and contains the vector $(1,0)$, and ${\mathcal W}_2$ is a part of ${\mathbb S}^1_+$ included in the half-plane
\[
\{w{=}(u,v)\in\R^2:   v_c u  {<} u_c v  \} = \{w{=}(u,v)\in\R^2:  (\ln(X_2(y_d)) {-} \ln(x_d)) u < (\ln(Y_2(x_d)){-}\ln(y_d)) v\}
\]
and contains the vector $(0,1)$. 
 \begin{defi}\label{Wdef-B1} If(B1) holds, we take $w_c{=}(u_c,v_c){=}w_D(x_d, Y_2(x_d))$ and define the regions ${\mathcal W}_0$, ${\mathcal W}_1$ and ${\mathcal W}_2$ in the same way as in the case (B0). 
 \end{defi} 
In the case (B1), we have  $x_d{<} x^{**}_P$, $y_d {<} y^{**}_P$ and $(x_d, Y_2(x_d)){=}(X_2(x_d), y_d){\in}{\mathcal S}_{22}$, hence,  similarly to the case (B0),  both coordinates of the vector $w_c$ are positive and the sets of the regions ${\mathcal W}_1$ and ${\mathcal W}_2$ are non-empty: ${\mathcal W}_1$ is a part of ${\mathbb S}^1_+$  included in the half plane $\{w{=}(u,v)\in\R^2:   u_c u { >}  v_c v  \}$  containing the vector $(1,0)$,  and ${\mathcal W}_2$ is a part of ${\mathbb S}^1_+$ included in the half-plane $\{w{=}(u,v)\in\R^2: u_c u  {<} v_c v \}$  containing the vector $(0,1)$.

\begin{defi}\label{Wdef-B2} If (B2) holds, we define
\begin{align}\label{eq-def-W0-W1-W2-B2} 
 {\mathcal W}_0 &= \{w{=}(u,v){\in}{\mathbb S}^1_+ :u_D(X_2(y_d),y_d){<} u{<} u_D(x_d, Y_2(x_d))\},\\
    {\mathcal W}_1 &= \{w{=}(u,v){\in}{\mathbb S}^1_+ : u{>} u_D(x_d, Y_2(x_d))\},\\
    {\mathcal W}_2 &= \{w{=}(u,v){\in}{\mathbb S}^1_+ : u{<} u_D(X_2(y_d),y_d)\}. 
 \end{align}
 \end{defi} 
By Lemma~\ref{lemma-diffeomorphism-point-direction}, for $w{=}(u,v){\in}{\mathbb S}^1_+$, the inequalities $u_D(X_2(y_d),y_d){<} u{<} u_D(x_d, Y_2(x_d))$ are equivalent to the inequalities $x_D(u,v) {<} x_d$ and $y_D(u,v) {<} y_d$, and hence,  the above definition of the sets  ${\mathcal W}_0$, ${\mathcal W}_1$ and ${\mathcal W}_2$ is equivalent to 
\begin{align*}
{\mathcal W}_0 &= \{w_D(x,y) : (x,y)\in{\mathcal S}_{22}, \,x{<} x_d, y{<} y_d\}, \\
{\mathcal W}_1 &= \{w_D(x,y) : (x,y)\in{\mathcal S}_{22}, \,x{>} x_d\}, \\
{\mathcal W}_0 &= \{w_D(x,y) : (x,y)\in{\mathcal S}_{22}, \,y{>} y_d \}.
\end{align*}
In the case (B2), we have $X_2(y_d) {<} x_d {\leq} x^{**}_P$ and $Y_2(x_d) {<} y_d{\leq} y^{**}_P$, hence  the set $\{(x,y){\in}{\mathcal S}_{22}: x {<} x_d, y{<} y_d\}$ is non-empty, and consequently, the set of directions ${\mathcal W}_0$ is also non-empty: this a part of the set ${\mathbb S}^1_+$  included in the intersection of the two half-planes $\{w=(u,v){\in}\R^2:   v_D(X_2(y_d),y_d)  u  >  u_D(X_2(y_d),y_d)  v  \}$ and $\{w{=}(u,v)\in\R^2:   v_D(x_d,Y_2(x_d))  u <  u_D(x_d,Y_2(x_d))  v  \}$. Similar arguments show that in this case, 
\begin{itemize}
\item[--] the set of directions ${\mathcal W}_1$ is empty if and only if $x_d{=} x^{**}_P$, 
\item[--] \ \phantom{the set of}"\phantom{direction} ${\mathcal W}_2$\phantom{is}"\phantom{empty} if and only if $y_d{=} y^{**}_P$. 
\end{itemize} 

\begin{defi}\label{Wdef-B3-B6} If either (B3) or (B4) holds we let $ {\mathcal W}_2 = {\mathcal W}_0 = \emptyset$ and we define ${\mathcal W}_1$ by 
\begin{equation}\label{eq-def-W1-B3-B4} 
    {\mathcal W}_1 = \begin{cases}  {\mathbb S}^1_+\setminus\{(0,1)\}, &\text{if (B3) holds}\\
 {\mathbb S}^1_+   &\text{if (B4) holds}.
 \end{cases} 
 \end{equation} 
 Similarly, if either (B5) or (B6) holds we take $ {\mathcal W}_1{=} {\mathcal W}_0{=} \emptyset$ and we define ${\mathcal W}_2$ by 
 \begin{equation}\label{eq-def-W2-B5-B6} 
    {\mathcal W}_2 = \begin{cases}  {\mathbb S}^1_+\setminus\{(1,0)\}, &\text{if (B5) holds}\\
 {\mathbb S}^1_+   &\text{if (B6) holds}.
 \end{cases} 
 \end{equation} 
 \end{defi} 

We will see that for any $j{\in}\Z^2_+{\setminus}E_0$, the asymptotic of the Green function $g(j,k)$  as $\min\{k_1,k_2\}{\to}{+}\infty$ with $(k_1,k_2)/\|(k_1,k_2)\|{\to}w{\in}{\mathbb S}^1_+$, is 
\begin{itemize}
\item[--] determined  by the simple pole $x_d$ of the function $x{\to} H_j(x,0)$ when $w\in{\mathcal W}_1$; 
\item[--] \ \phantom{determined  by}"\phantom{the simplepole} $y_d$ \phantom{ofthe}"\phantom{function} $y{\to} H_j(0,y)$  when $w\in{\mathcal W}_2$;
\item[--] similar to the  asymptotic of the Green function of the homogeneous random walk on $\Z^2$ associated to the distribution $\mu$ when $w{\in}{\mathcal W}_0$.
\end{itemize} 

\subsubsection*{Twisted Homogeneous Random Walks} 
To formulate convergence results  when $\min\{k_1,k_2\}{\to}{+}\infty$ and $(k_1,k_2)/\|(k_1,k_2)\|{\to}w$, for $w\in{\mathcal W}_0$,   some quantities,  also used in the asymptotics of the Green function of the homogeneous random walk, are now introduced. See Ney and Spitzer~\cite{Ney-Spitzer} or Theorem~25.15 in Woess~\cite{Woess}.
\begin{defi}
For $w{\in}{\mathbb S}^{1}_+$,  we denote by $(S^{w}(n))$ the  homogeneous random walk  on $\Z^2$ with transition probabilities, for
 $m$, $k{=}(k_1,k_2){\in}\Z^2$,
\[
\P_m(S^{w}(1) {=} m{+}k) = (x_D(w))^{k_1} (y_D(w))^{k_2} \mu(k).
\]
The vector of first moments and the matrix of second moments  of  $(S^{(w)}(n))$  are denoted respectively by ${\mathfrak m}(w){=} ({\mathfrak m}_1(w), {\mathfrak m}_2(w))$ and ${\mathcal Q}(w){=} \bigl( {\mathcal Q}_{i,j}(w) \bigr)_{i,j=1}^2$: for $i$, $j{\in}\{1,2\}$,
\begin{align*}
{\mathfrak m}_i(w) &= \sum_{k\in\Z^2} k_i\, (x_D(w))^{k_1} (y_D(w))^{k_2}\mu(k), \\
{\mathcal Q}_{i,j}(w) &= \sum_{k\in\Z^2} k_i\, k_j \, (x_D(w))^{k_1} (y_D(w))^{k_2}\mu(k).  
\end{align*}
The associated quadratic form at $z{\in}\R^2$ is denoted by $z{\cdot}{\mathcal Q}(w){\cdot}z$. For $w{=}(u,v){\in}{\mathbb S}^1$, we define $w^\perp{=}({-}v,u)$. 
\end{defi}

We can now state our second set of asymptotic results. The proof of this theorem is given in Section~\ref{section-proof-theorem4}. Recall that here and throughout the paper, for $k{=}(k_1,k_2){\in}\Z^2$, we denote $w_k{=}k/\|k\|$. 
\begin{theorem}[Asymptotics of Green Function along Directions of ${\mathbb S}^1_+$]\label{theorem4} Under the assumptions (A1){-}(A3), for any $j\in\Z^2_+{\setminus}E_0$, the following assertions hold:  
\begin{enumerate}[label=\roman*)]

\item If $\min\{k_1,k_2\}{\to}{+}\infty$, then, uniformly with respect to $w_k$ in any compact subset of ${\mathcal W}_1$, 
\be\label{simple-pole-x-asymptotics}
g(j,k) ~\sim~ b_1\varkappa_1(j) x_d^{-k_1-1} (Y_2(x_d))^{-k_2-1},
\ee
where 
\be\label{constant-C1}
b_1 = (\phi_1(x_d, Y_2(x_d)){-}1) \left(\left.\partial_y P(x, y)\right|_{(x,y)=(x_d,Y_2(x_d))} \left.\frac{d}{dx}\phi_1(x,Y_1(x))\right|_{x=x_d} \right)^{-1} > 0;
\ee

\item  If $\min\{k_1,k_2\}{\to}{+}\infty$, then, uniformly with respect to $w_k$ in any  compact subset of ${\mathcal W}_2$,
\be\label{simple-pole-y-asymptotics}
g(j,k) ~ \sim~ b_2 \varkappa_2(j) (X_2(y_d))^{-k_1-1} y_d^{-k_2-1},
\ee
 where 
\be\label{constant-C2}
b_2 = (\phi_2(X_2(y_d), y_d){-}1) \left(\left.\partial_x P(x,y)\right|_{(x,y)=(X_2(y_d),y_d)} \left.\frac{d}{dy}\phi_2(X_1(y), y) \right|_{y=y_d}\right)^{-1} > 0.
\ee
\item\label{G1}  If (B0) holds then, as $\min\{k_1,k_2\}{\to}{+}\infty$ and $w_k{\to}w_c$, 
\be\label{competition-simple-poles-asymptotics}
g(j,k) ~ \sim~ C_1 \varkappa_1(j) x_d^{-k_1-1} (Y_2(x_d))^{-k_2-1}  + C_2 \varkappa_2(j) (X_2(y_d))^{-k_1-1} y_d^{-k_2-1},
\ee 
where  $b_1{>}0$ and $b_2{>}0$ are given respectively by \eqref{constant-C1} and~\eqref{constant-C2}.

\item\label{G2}  If (B3)  and $y_d {<} y^{**}_P$ hold, then as $\min\{k_1,k_2\}{\to}{+}\infty$ and $w_k{\to}(0,1)$, 
\be\label{competition-simple-double-poles-asymptotics}
g(j,k) ~ \sim~ \varkappa_1(j) \left( b_1 x_d^{-k_1-1}   + b_3   (X_2(y_d))^{-k_1-1} y_d^{-1} k_2 \right) y_d^{-k_2-1},
\ee
where  $b_1>0$ is given by \eqref{constant-C1} and
\be\label{constant-C2-case-B3} 
b_3= x_d  (\phi_1(x_d, y_d){-}1) \left.\left(\partial_y P(x,y)\,  \frac{d}{dy} \phi_2(X_1(y), y)\frac{d}{dx} \phi_1(x, Y_1(x)) \frac{d}{dy} X_1(y)\right)^{-1}\right|_{(x,y)=(x_d,y_d)}  > 0.
\ee
\item\label{G3}  If  (B5) and  $x_d{<} x^{**}_P$ hold, then, as $\min\{k_1,k_2\}{\to}{+}\infty$ and $w_k{\to} (1,0)$,
\be\label{competition-double-simple-poles-asymptotics}
g(j,k) ~ \sim~ \varkappa_2(j) x_d^{-k_1-1} \left( b_4  k_1 x_d^{-1} (Y_2(x_d))^{-k_2-2}  + b_2  y_d^{-k_2-1} \right) 
\ee
where $b_2>0$ is given by~\eqref{constant-C2}  and 
\be\label{constant-C1-case-B5} 
b_4 =  y_d  (\phi_2(x_d, y_d){-}1) \left.\left(\partial_x P(x,y) \,  \frac{d}{dx} \phi_1(x, Y_1(x))\frac{d}{dy} \phi_2(X_1(y), y) \frac{d}{dx} Y_1(x)\right)^{-1}\right|_{(x,y)=(x_d,y_d)} > 0.
\ee
\item   if (B2) holds and  $\min\{k_1,k_2\}{\to}{+}\infty$, then,  uniformly with respect to $w_k$ in any compact subset of ${\mathcal W}_0$, 
\be\label{Woess-asymptotics} 
g(j,k) ~\sim~  \varkappa_{(x_D(w_k), y_D(w_k))}(j)\frac{\|{\mathfrak m}(w_k) \| \sqrt{w_k^\perp {\cdot}{\mathcal Q}(w_k){\cdot}w_k^\perp }}{\sqrt{2\pi\|k\|}\,(x_D(w_k))^{k_1} (y_D(w_k))^{k_2}}.
\ee
\end{enumerate}
\end{theorem}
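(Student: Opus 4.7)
The plan is to establish Theorem~\ref{theorem4} by combining the Cauchy integral representation
\[
g(j, k) = \frac{1}{(2\pi i)^2} \oint_{|x|=r_1} \oint_{|y|=r_2} H_j(x, y)\, x^{-k_1-1} y^{-k_2-1} \, dx \, dy,
\]
valid by Theorem~\ref{theorem1} for $r_1 < x_d$ and $r_2 < y_d$, with the rewriting
\[
(1-P(x,y))H_j(x,y) = L_j(x,y) + (\phi_1(x,y){-}P(x,y))H_j(x,0) + (\phi_2(x,y){-}P(x,y))H_j(0,y)
\]
of the functional equation, and with the singularity structure of $H_j(\cdot, 0)$ and $H_j(0, \cdot)$ supplied by Theorem~\ref{theorem2} and its symmetric counterpart. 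The direction $w_k$ dictates in which order to deform the two contours outward and which singularities yield the dominant contributions.

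For assertions (i) and (ii), I treat (i) and deduce (ii) by symmetry. For $w_k$ in a compact subset of $\mathcal{W}_1$, I move the $x$-contour past $x_d$ into the enlarged disk $B(0, x_d+\epsilon)$ furnished by assertion~(i) of Theorem~\ref{theorem2}. The simple pole of $H_j(\cdot,0)$ at $x_d$, propagated through the functional equation, leaves a residue proportional to $a_1 \varkappa_1(j)(\phi_1(x_d,y) - P(x_d,y))/(1 - P(x_d,y))$ times $x_d^{-k_1-1}$. I then move the $y$-contour past $Y_2(x_d)$, which is inside the analyticity region since $(x_d, Y_2(x_d)) \in \mathcal{S}_{22}$, and compute the residue there: this produces $b_1\varkappa_1(j) x_d^{-k_1-1} Y_2(x_d)^{-k_2-1}$, with $b_1$ as in~\eqref{constant-C1} arising from $\partial_y P(x_d, Y_2(x_d))$ and the formula~\eqref{eq2b-cases-B0-B4-theorem2} for $a_1$. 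The residual double integrals decay at a strictly faster exponential rate, with a uniform gap on compact subsets of $\mathcal{W}_1$.

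For assertions (iii), (iv), (v), the direction sits at the boundary of two regions and two singular contributions coalesce. In case (B0) at $w_c$, the very definition of $w_c$ balances the exponential rates $x_d^{-k_1} Y_2(x_d)^{-k_2}$ and $X_2(y_d)^{-k_1} y_d^{-k_2}$, so both the $x$-first and $y$-first deformations of the previous step contribute leading terms, giving~\eqref{competition-simple-poles-asymptotics}. In (iv) and (v) the coalescence is geometric: in (B3), the pole $x_d$ of $H_j(\cdot, 0)$ coincides with the root $X_1(y_d)$ of $1 - P(\cdot, y_d)$; equivalently, the symmetric counterpart of assertion~(iii) of Theorem~\ref{theorem2} forces $H_j(0,\cdot)$ to acquire a double pole at $y_d$. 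A local Taylor expansion of $1-P$ at the merged singularity, involving $\frac{d}{dy}X_1(y_d)$, extracts the extra linear factor $k_2$ and the coefficient $b_3$ of~\eqref{competition-simple-double-poles-asymptotics}; the $b_1$ term is the residual simple-pole contribution from the $x$-first deformation, and case (v) is treated by exchange of roles.

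For assertion (vi) in case (B2) with $w_k$ in a compact subset of $\mathcal{W}_0$, neither boundary pole dominates and the asymptotics is governed by the curve $\{P=1\}$. The point $(x_D(w_k), y_D(w_k)) \in \mathcal{S}_{22}$ of the diffeomorphism~\eqref{diffeomorphism-point-direction} is the two-dimensional saddle point of $k_1 \log x + k_2 \log y$ restricted to this curve. After deforming the bi-contour to a tubular neighborhood of $\mathcal{S}_{22}$ near the saddle and invoking assertion~(vi) of Theorem~\ref{theorem2} to identify $(1-P(x,y))H_j(x,y)$ there with $\varkappa_{(x_D(w_k), y_D(w_k))}(j)$ plus a holomorphic error, a one-dimensional Laplace integration along a local parametrization of $\{P=1\}$ (say $y = Y_2(x)$) yields~\eqref{Woess-asymptotics}: $\|\mathfrak{m}(w_k)\|$ arises from the gradient $\nabla P$ at the saddle and $\sqrt{w_k^\perp \cdot \mathcal{Q}(w_k) \cdot w_k^\perp}$ from the constrained Hessian of $\log P$. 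The hardest step is precisely this saddle-point analysis: one must realize steepest descent simultaneously in both complex variables along a neighborhood of $\{P=1\}$, splice in the analytic continuation supplied by Theorem~\ref{theorem2}\,(vi), and obtain error bounds uniform in $w_k$ on compact subsets of $\mathcal{W}_0$. A secondary difficulty is the bookkeeping of the coalescing-singularity contribution in (iv) and (v), where the effective multiplicity must be identified rigorously and the two natural orders of contour deformation shown to give the same leading expression.
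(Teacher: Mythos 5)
Your proposal follows the same overall strategy as the paper: start from the double Cauchy integral for $g(j,k)$, rewrite the integrand via the functional equation as $\bigl(L_j + (\phi_1{-}P)H_j(\cdot,0) + (\phi_2{-}P)H_j(0,\cdot)\bigr)/(1{-}P)$, deform contours outward to collect residues, and control the remainder by exponential (large deviation) bounds; for the interior directions the paper also reduces to a two-variable saddle point on $\{P=1\}$, which it delegates to Proposition~1 of the cited companion paper rather than reproving it. The one organizational difference worth flagging: the paper \emph{first} splits the integral into three pieces $g=I_0+I_1+I_2$, one for each numerator term, and only then deforms contours. This split is more than cosmetic. Because $I_1$ contains only $H_j(\cdot,0)$ and not $H_j(0,\cdot)$, its $y$-contour may freely cross $y_d$ and go all the way to $Y_2(\hat x)$, and symmetrically for $I_2$; the two deformations are performed on disjoint pieces and so the issue you flag at the end --- that the two orders of deformation must be shown to give the same leading expression --- simply does not arise. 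In your ``one integral, deform $x$ then $y$'' scheme the residual integral after the $x$-deformation still contains the $H_j(0,y)$ term, whose $y$-contour is pinned below $y_d$, while the residue piece has its $y$-contour moved past $Y_2(x_d)\geq y_d$; keeping track of these two different $y$-contours is exactly the bookkeeping the $I_0+I_1+I_2$ decomposition eliminates. A second small omission: the claim that the residuals have ``a strictly faster exponential rate, uniformly on compacts of $\mathcal W_1$'' requires the comparison lemmas the paper proves (Lemmas~\ref{large-deviation-estimates-lemma2} and~\ref{large-deviation-estimates-lemma3}), which assert that the saddle-point exponent $\max_{(x,y)\in D,\, y\le y_d}(u\ln x + v\ln y)$ strictly exceeds $u\ln x_d + v\ln Y_2(x_d)$ for $w\in\mathcal W_1$; this is a genuine calculation about the geometry of $D$ and the critical direction $w_c$, not an automatic consequence of the contour deformation, and should be supplied.
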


\subsubsection{Missing Asymptotics}\label{MissAss}
With the definition of the regions of directions ${\mathcal W}_0$, ${\mathcal W}_1$ and ${\mathcal W}_2$, the above theorem provides  the asymptotics of the Green function $g(j,k)$ as $\min\{k_1,k_2\}{\to}{+}\infty$ for all possible directions $w$ except the following singular cases:
\begin{itemize} 
\item  (B1) holds and $w{=}w_c{=}w_D(x^{**}, Y_2(x^{**}))$;
\item  (B2) holds and either $w{=}w_D(x^{**}, Y_2(x^{**}))$ or $w{=}w_D(X_2(y^{**}), y^{**})$;
\item  (B3) and $y_d{=}y^{**}{=}y^{**}_P$ hold and  $w{=}(0,1)$;
\item (B5) and $x_d{=}x^{**}{=}x^{**}_P$ hold and  $w=(1,0)$.
\end{itemize}
Getting these missing asymptotics seems to require an additional significant technical effort.  Note the asymptotics  for these singular directions has not been derived even in the case of nearest neighbor jumps.  In Kurkova and Malyshev~\cite{Kurkova-Malyshev} such asymptotic results are obtained but  only along lines of $\Z_+^2$, with a rational direction in particular.

\bigskip

\noindent
{\bf Remarks on Asymptotic Expressions.}\\
To the best of our knowledge,  the asymptotic behaviors~\ref{G2} and~\ref{G3}  have not been established  in the literature, even in the case of nearest neighbor jumps. Note  that the asymptotics of the cases \ref{G1}, \ref{G2} and \ref{G3} of Theorem~\ref{theorem4} are expressed as a sum of two terms.  We now discuss the implications of these asymptotic results. 

\begin{itemize}
\item[Case~\ref{G1}] The asymptotics~\eqref{competition-simple-poles-asymptotics} reflect in fact a competition between the geometric decay determined by the simple pole $x_d{=}x^{**}$ of the function $x{\to}H_j(x,0)$ and the geometric decay determined by the simple pole $y_d{=}y^{**}$ of the function $y\to H_j(0,y)$. In the case when (B0) holds and if $(k_n){=}(k_{1,n},k_{2,n})$ is  a sequence of points  of $\Z^2_+$ going to infinity such that  $k_n/\|k_n\|{\to}w_c$, since $(x_d/X_2(y_d))^{u_c}{=}(y_d/Y_2(x_d))^{v_c}$ and $X_2(y_d){>}x_d$, Theorem~\ref{theorem4} shows in fact that for $j{\in}\Z^2_+{\setminus}E_0$, 
\begin{align}
g(j,k_n) &~ \sim~ C_1 \varkappa_1(j) x_d^{-k_{1,n}-1} (Y_2(x_d))^{-k_{2,n}-1}\quad \text{if }\lim_n \Bigl(k_{1,n} {-}k_{2,n}{u_c}/{v_c} \Bigr)  = {+}\infty\nonumber\\
g(j,k_n) &~ \sim~ C_2 \varkappa_2(j)(X_2(y_d))^{-k_{1,n}-1} y_d^{-k_{2,n}-1} \quad\text{if }\lim_n \Bigl(k_{2,n}{-}k_{2,n}{u_c}/{v_c}  \Bigr)   = {-}\infty\nonumber\\
g(j,k_n) &~ \sim~ \left(C_1 \varkappa_1(j){+}C_2 \left(\frac{x_d}{X_2(y_d)}\right)^\sigma \varkappa_2(j)\right)x_d^{-k_{1,n}-1} (Y_2(x_d))^{-k_{2,n}-1}\label{eq-cor-asymptotics-critical-direction} \\ & \ \hspace{6cm}\text{ if }\lim_n \Bigl(k_{1,n}{-}k_{2,n}{u_c}/{v_c} \Bigr)  = \sigma  \in\R.\nonumber 
\end{align}

The convergence~\eqref{eq-cor-asymptotics-critical-direction} exhibits an interesting phenomenon.
\begin{enumerate}
\item If $u_c/v_c{\in}\R{\setminus}\Q$, then for any $\sigma{\in}\R$, there is a sequence $(k_n){=}(k_{1,n},k_{2,n}){\in}\Z^2_+$ going to infinity such that  $\lim_n k_n/\|k_n\|{\to}w_c$, for which $\sigma{=}\lim_n (k_{1,n}{-}k_{2,n} {u_c}/{v_c} )$ and relation~\eqref{eq-cor-asymptotics-critical-direction} holds. 
\item The rational case, i.e; when $u_c/v_c{=}p/q{\in}\Q$ for some $p$, $q{\in}\N^*$,  relation~\eqref{eq-cor-asymptotics-critical-direction}  holds if and only if for  $n$ large enough, $q k_{1,n}{-}k_{2,n} p$ is constant and, therefore,  $\sigma{\in}\Q$ is of the form $\sigma{=}\tilde{p}/q$  for  some $\tilde{p}{\in}\Z$.
An analogous result has been  established in Ignatiouk et al.~\cite{I-K-R}.
\end{enumerate}

\item[Case~\ref{G2}]
The asymptotics~\eqref{competition-simple-double-poles-asymptotics} reflect a competition between the geometric decay determined by the simple pole $x_d{=}x^{**}$ of the function $x{\to}H_j(x,0)$ and the geometric decay multiplied by a factor $k_2$ determined by the  pole $y_d{=}Y_2(x_d){=} y^{**}$ of the second order  of the function $y{\to} H_j(0,y)$. If (B3) and $y_d{<}y^{**}_P$ hold  and $(k_n){=}(k_{1,n},k_{2,n})$ is  a sequence of points of $\Z^2_+$ going to infinity such that $\|k_n\|{\to}{+}\infty$ and  $k_n/\|k_n\|{\to}(0,1)$, Theorem~\ref{theorem3} shows that for $j{\in}\Z^2_+{\setminus}E_0$, 
\begin{align*}
g(j,k) &\sim \varkappa_1(j) C_1 x_d^{-k_{1,n}-1}  y_d^{-k_{2,n}-1} &&\text{ if }\lim_n k_{2,n}x_d^{k_{1,n}}/(X_2(y_d)))^{k_{1,n}}{=}0;\\ 
g(j,k) &\sim \varkappa_1(j) C_2  (X_2(y_d))^{-k_{1,n}-1} k_{2,n}  y_d^{-k_{2,n}-2} &&\text{ if }\lim_n k_{2,n}x_d^{k_{1,n}}/(X_2(y_d)))^{k_{1,n}}{=}{+}\infty;\\ 
g(j,k) &\sim \varkappa_1(j) \left( C_1    + \sigma C_2  y_d^{-1} \right) x_d^{-k_{1,n}-1} y_d^{-k_{2,n}-1} &&\text{ if }\lim_n k_{2,n} x_d^{k_{1,n}}/(X_2(y_d)))^{k_{1,n}}{=}\sigma{>} 0.
\end{align*}
\item[Case~\ref{G3}]   The asymptotic \eqref{competition-double-simple-poles-asymptotics} reflects a competition between the geometric decay multiplied by a factor $k_1$ determined by the pole $x_d{=}X_2(y_d){=}x^{**}$ of the second order of the function $x\to H_j(x,0)$ and the geometric decay determined by  the  simple pole  $y_d{=}y^{**}$ of the function $y{\to}H_j(0,y)$.

Similar asymptotics hold, by exchanging the roles of $x$ and $y$ in this case when $x_d {<} x^{**}_P$, $\|k_n\|{\to}{+}\infty$ and  $k_n/\|k_n\|{\to}(1,0)$.  
\end{itemize}

\bigskip

\noindent
{\bf Relations with Homogeneous Random Walks.}\\
The asymptotic relation~\eqref{Woess-asymptotics} is similar to   the exact asymptotics of the Green function of the homogeneous random walk $(S(n))$ in $\Z^2$, see Ney and Spitzer~\cite{Ney-Spitzer} and Theorem~25.15 of Woess~\cite{Woess}. The only difference  in fact is that  in the asymptotic obtained by Ney and Spitzer~\cite{Ney-Spitzer},  the function $ j{=} (j_1,j_1){\to}(x_D(w_k))^{j_1}(y_D(w_k))^{j_1}{=}\exp(\alpha_D(w_k) j_1 + \beta_D(w_k) j_2)$,  which is harmonic  for the homogeneous random walk, is replaced in our case by the function $j \to \varkappa_{(x_D(w_k), y_D(w_k))}(j)$, which is harmonic for the killed random walk $(Z_{\tau_0}(n))$.  Analogous results have been obtained in Ignatiouk~\cite{Ignatiouk-2023-cone} for the Green functions of a homogeneous random walk in $\Z^d$ killed outside of an open cone and for the asymptotics along the interior directions of the cone.

\subsection*{Outline of the paper}
Our paper is organized as follows :
\begin{itemize}
\item In Section~\ref{Martin-section}, our results are used to investigate the Martin compactification of $\Z^2_+$ of the killed Markov chain $(Z_{\tau_0}(n))$ and also of the original random walk $(Z(n))$ when it is transient. 
\item In Section~\ref{preliminary-section}, we show that  the points $x^*_P$, $x^*$, $x^{**}$, $x^{**}_P$, $y^*_P$, $y^*$, $y^{**}$, $y^{**}_P$ and the functions $Y_1$, $Y_2:[x^*_P, x^{**}_P]{\to} [y^*_P, y^{**}_P]$ and $X_1$, $X_2: [y^*_P, y^{**}_P]{\to}[x^*_P, x^{**}_P]$ are well defined, and we obtain their first properties;
\item  Proposition~\ref{cases} is proved in Section~\ref{section_cases};
\item  Sections~\ref{proof-theorem1} and \ref{section-proof-theorem4} are respectively devoted to the proofs of Theorems~\ref{theorem1} and \ref{theorem4}. 
\item In Section~\ref{section-conditions}, the conditions (B0){-}(B7) are compared with the conditions of positive recurrence and transience for the random walk $(Z(n))$;
\end{itemize}

\section{Application to the Martin Boundary}\label{Martin-section} 
When the Markov chain $(Z(n))$ is transient, its Green function, for $j$, $k{\in}\Z_+^2$, 
\[
G(j,k) =\sum_{n=0}^\infty \P_j(Z(n)=k),
\]
is related to the Green function $g(j,k)$ of the killed Markov chain at $0$ in the following way, 
\[
G(j,k) = g(j,k) + G(j,0) g(0,k), \quad \text{for $j{\ne}0$}, 
\]
and
\[
G(0, k) = G(0,0) g(0,k). 
\]
The Martin kernels have therefore a simple relation
\be\label{eq1-Martin}
\frac{G(j,k)}{G(0,k)}=\frac{1}{G(0,0)} \frac{g(j,k)}{g(0,k)}+\frac{G(j,0)}{G(0,0)} = \P_0(\tau_0 = +\infty) \frac{g(j,k)}{g(0,k)}+ P_j(\tau_0 < +\infty),
\ee
and the Martin compactification of $\Z^2_+$ for the killed Markov chain $(Z(n))$ at $0$ is homeomorphic to the Martin compactification of the original random walk $(Z(n))$.

In this section, our asymptotic results,  Theorems~\ref{theorem3} and~\ref{theorem4},  are used to obtain the asymptotics of  the Martin kernel  of the killed Markov chain $(Z(n))$ at $0$. We do not assume that the original random walk $(Z(n))$ is positive recurrent or transient. The Martin boundary of $(g(j,k))$ is in fact almost completely described, the four asymptotic cases mentioned in Section~\ref{MissAss} excepted. Throughout this section, we will assume  that the conditions (A1){-}(A3) are satisfied. 

Since by Lemma~\ref{irreductibility-lemma}, $g(j,k) = 0$, for any $j\in E_0$, and $k\in\Z^2_+{\setminus} E_0$,  to investigate the Martin boundary of the killed random walk, it is sufficient to consider $j\in\Z^2_+{\setminus} E_0$. 

\begin{prop}\label{Martin-boundary-B0-B1} Suppose that either (B0) or (B1) holds, and let 
the vector $w_c{=}(u_c,v_c){\in}{\mathbb S}_+^1$ be the critical  direction defined in Section~\ref{region-of-directions} and   $(k_{n}){=}(k_{1,n}, k_{2,n})$ be a sequence of $Z_+^2$ whose norm goes to infinity. Then for any   $j{\in}\Z^2_+{\setminus}E_0$, the following assertions hold. 
\begin{enumerate}
\item If  $\liminf_n {k_{1,n}}/{\|k_{n}\|}{>} u_c$   then  
\be\label{martin-limit-varkappa1}
\lim_{n\to+\infty} \frac{g(j, k_{n})}{g(0, k_{n})} = \frac{\varkappa_1(j)}{\varkappa_1(0)},
\ee
where $\varkappa_1$ is the function defined by relation~\eqref{eq-def-kappa-1}.
\item   If $\limsup_n k_{1,n}/\|k_{n}\|{<} u_c$ then 
\be\label{martin-limit-varkappa2}
\lim_{n\to+\infty} \frac{g(j, k_{n})}{g(0, k_{n})} = \frac{\varkappa_2(j)}{\varkappa_2(0)},
\ee
where $\varkappa_2$ is  the function defined by relation~\eqref{eq-def-kappa-2}.
\item  If (B0) holds and $\lim_n k_n/\|k_n\|{=}w_c$, then 
\[
\lim_{n\to+\infty}\frac{g(j, k_n)}{g(0, k_n)}\\=\frac{b_1 \varkappa_1(j) \lambda_1^{-k_1-1}{+} b_2 \varkappa_2(j)  \lambda_2^{-k_2-1} }{ b_1 \varkappa_1(0) \lambda_1^{-k_1-1}{+} b_2 \varkappa_2(0) \lambda_2^{-k_2-1}},
\]
with $\lambda_1{=}x_d/X_2(y_d)$ and $\lambda_2{=}y_d/Y_2(x_d)$ and $b_1{>} 0$, resp.  $b_2{>}0$,  is given by relation~\eqref{constant-C1}, resp. relation~\eqref{constant-C2}.
\end{enumerate}
\end{prop}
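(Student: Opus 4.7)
The plan is to obtain each of the three assertions as a direct corollary of Theorem~\ref{theorem3} and Theorem~\ref{theorem4}, after extracting suitable subsequences. The strategy throughout is to show that every subsequence of $\bigl(g(j,k_n)/g(0,k_n)\bigr)_n$ admits a further subsequence converging to the announced limit, which forces the full sequence to converge.

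For part~(1), the hypothesis $\liminf_n k_{1,n}/\|k_n\|>u_c$ together with $u_c>0$ (which holds in both (B0) and (B1) since $w_c$ has strictly positive coordinates in each case) first forces $k_{1,n}\to+\infty$. From an arbitrary subsequence I would then extract a further subsequence along which either $k_{2,n}$ is eventually equal to some constant $c\in\mathbb{Z}_+$, or $k_{2,n}\to+\infty$. In the first case, Theorem~\ref{theorem3}(1) (which covers both (B0) and (B1)) applied to numerator and denominator of the Martin ratio yields
\[
\frac{g(j,(k_{1,n},c))}{g(0,(k_{1,n},c))}\sim \frac{a_1\,\nu_1(c)\,\varkappa_1(j)\,x_d^{-k_{1,n}-1}}{a_1\,\nu_1(c)\,\varkappa_1(0)\,x_d^{-k_{1,n}-1}}\longrightarrow \frac{\varkappa_1(j)}{\varkappa_1(0)}.
\]
In the second case, $\min(k_{1,n},k_{2,n})\to+\infty$ and $w_{k_n}$ lies eventually in the compact set $\{(u,v)\in\mathbb{S}^1_+:u\geq u_c+\varepsilon\}\subset\mathcal{W}_1$ for some $\varepsilon>0$, so Theorem~\ref{theorem4}(i) applies uniformly and recovers the same limit $\varkappa_1(j)/\varkappa_1(0)$. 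Part~(2) is handled by the mirror argument: the condition $\limsup_n k_{1,n}/\|k_n\|<u_c<1$ forces $k_{2,n}\to+\infty$, and one splits according to whether $k_{1,n}$ stays bounded (symmetric analogue of Theorem~\ref{theorem3}(1)) or tends to infinity (Theorem~\ref{theorem4}(ii) on a compact subset of $\mathcal{W}_2$).

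For part~(3), the strict positivity of the two coordinates of $w_c$ in case (B0) ensures that $k_n/\|k_n\|\to w_c$ automatically implies $\min(k_{1,n},k_{2,n})\to+\infty$, so Theorem~\ref{theorem4}(iii) applies directly and yields
\[
g(j,k_n)\sim b_1\,\varkappa_1(j)\,x_d^{-k_{1,n}-1}\,Y_2(x_d)^{-k_{2,n}-1}+b_2\,\varkappa_2(j)\,X_2(y_d)^{-k_{1,n}-1}\,y_d^{-k_{2,n}-1},
\]
together with the analogous equivalent for $g(0,k_n)$. Dividing numerator and denominator of the Martin ratio by the common positive factor $X_2(y_d)^{-k_{1,n}-1}Y_2(x_d)^{-k_{2,n}-1}$ converts the two summands into $\lambda_1^{-k_{1,n}-1}$ and $\lambda_2^{-k_{2,n}-1}$ with $\lambda_1=x_d/X_2(y_d)$ and $\lambda_2=y_d/Y_2(x_d)$, which reproduces exactly the stated expression.

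The only real difficulty in this argument is bookkeeping rather than analysis: one must verify in parts (1) and (2) that the two subcases — bounded versus unbounded second (resp.\ first) coordinate — produce the same limit for the Martin ratio. This consistency between the fixed-axis regime of Theorem~\ref{theorem3} and the interior regime of Theorem~\ref{theorem4} is exactly what allows the subsequence principle to close the proof without ever having to enter the singular directions postponed in Section~\ref{MissAss}.
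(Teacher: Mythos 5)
Your proof is correct and fills in exactly the argument the paper leaves implicit: Proposition~\ref{Martin-boundary-B0-B1} is stated as a direct consequence of Theorems~\ref{theorem3} and~\ref{theorem4}, and the subsequence dichotomy (bounded-versus-unbounded second or first coordinate) together with the observation that the fixed-axis regime of Theorem~\ref{theorem3} and the interior regime of Theorem~\ref{theorem4} produce the same Martin-kernel limit is precisely what is needed to justify it. Two small remarks for polish: in part~(1) the phrase \emph{eventually equal to some constant} should read \emph{constant along a further subsequence} (the two are interchangeable here only because you are inside a subsequence-of-every-subsequence argument); and in part~(3), since all coefficients in the two-term asymptotics of Theorem~\ref{theorem4}(iii) are strictly positive on $\Z^2_+\setminus E_0$ (and $(0,0)\notin E_0$ by Lemma~\ref{irreductibility-lemma}), the ratio of the two asymptotic equivalents is uniformly bounded away from $0$ and $\infty$, which is what legitimizes replacing $g(j,k_n)/g(0,k_n)$ by the displayed $k_n$-dependent expression — this positivity point deserves one explicit sentence.
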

When either (B0) or (B1) holds, Proposition~\ref{Martin-boundary-B0-B1} implies  that the minimal Martin boundary $\partial_m\Z^2_+$, see Proposition~24.4 of~\cite{Woess}, contains two points $\xi_1$ and $\xi_2$, and  any sequence of points $(k_{n})$  converging to infinity and such that  $\liminf_n k_1^{(n)}/\|k^{(n)}\| {>} u_c$ (resp. $\limsup_n k_1^{(n)}/\|k^{(n)}\| {<} u_c$) converges in the Martin compactification of $\Z^2_+$ to  $\xi_1$ (resp. to $\xi_2$).

When (B0) holds, with this  result one gets that  the minimal Martin boundary of $\Z^2_+$ relative for the killed Markov chain at $0$  contains exactly two points, and  with the same arguments as in Theorem~3 of~\cite{I-K-R}, we obtain that  the full Martin boundary is homeomorphic to $\Z\cup\{\pm\infty\}$, resp. $\R\cup\{\pm\infty\}$,  if $u_c/v_c{\in}{\mathbb Q}$, resp. $u_c/v_c{\not\in}{\mathbb Q}$.   By the Poisson-Martin representation theorem,  in this case,  any non-negative harmonic function for the killed Markov chain is therefore of the form $\theta_1 \varkappa_1{+}\theta_2 \varkappa_2$ with for some $\theta_1$, $\theta_2{\in} [0,{+}\infty[$. 

Note that the full Martin boundary is not obtained in the case (B1) since the asymptotics of the Green function $g(j,k)$ along the direction $w_c$ are missing. See Section~\ref{MissAss}. 

For the region~(B2) we have the following proposition. 
\begin{prop}\label{Martin-boundary-B2}
Suppose that  (B2) holds, and let  $(k_{n}){=}(k_{1,n}, k_{2,n})$ be a sequence of $Z_+^2$ whose norm goes to infinity. Then for any  $j{\in}\Z^2_+{\setminus}E_0$, the following assertions hold.
\begin{enumerate}
\item  If $\lim_n k_n/\|k_n\| = w\in{\mathcal W}_0$, then  
\be\label{martin-limit-varkappa-w}
\lim_{n\to+\infty} \frac{g(j, k_n)}{g(0, k_n)} = \frac{\varkappa_{(x_D(w),y_D(w))}(j)}{\varkappa_{(x_D(w),y_D(w))}(0)}
\ee
where $\varkappa_{({\cdot},{\cdot})}$ is  the function defined by~\eqref{def-kappa-interior-direction}.
\item If $x_d{=}x^{**}_P$ and the sequence $(k_{2,n})$ is bounded, then 
\be\label{martin-limit-tilde-varkappa1}
\lim_{n\to{+}\infty} \frac{g(j, k_n)}{g(0, k_n)} = \begin{cases}
\tilde\varkappa_1(j)/\tilde\varkappa_1(0)&\text{if $\phi_1(x^{**}_P, Y_1(x^{**}_P)) < 1$},\\
\varkappa_1(j)/\varkappa_1(0)&\text{if $\phi_1(x^{**}_P, Y_1(x^{**}_P)) =1$.}
\end{cases}
\ee
where $\tilde\varkappa_1$ is  the function defined by~\eqref{eq-def-tilde-kappa-1}.
\item If $x^{**} {<}x^{**}_P$ and $\liminf_n k_{1,n}/\|k_{n}\|{>}u_D(x_d, Y_2(x_d))$, then
  \[
  \lim_{n\to+\infty} \frac{g(j, k_{n})}{g(0, k_{n})} = \frac{\varkappa_1(j)}{\varkappa_1(0)}.
  \]
\item if $y^{**}{=}y^{**}_P$ and the sequence $(k_{1,n})$ is bounded, then 
\be\label{martin-limit-tilde-varkappa2}
\lim_{n\to+\infty} \frac{g(j, k_n)}{g(0, k_n)} = 
\begin{cases}
\tilde\varkappa_2(j)/\tilde\varkappa_2(0) &\text{if $\phi_2(X_1(y^{**}_P), y^{**}_P) < 1$},\\
\varkappa_2(j)/\varkappa_2(0) &\text{if $\phi_2(X_1(y^{**}_P), y^{**}_P) = 1$}
\end{cases}
\ee
where $\tilde\varkappa_2$ is  the function defined by~\eqref{eq-def-tilde-kappa-2}.
\item If  $y^{**}{<}y^{**}_P$ and $\liminf_n k_{2,n}/\|k_{n}\|{>}v_D(x_d, Y_2(x_d)$ then,
  \[
  \lim_{n\to+\infty} \frac{g(j, k_{n})}{g(0, k_{n})} = \frac{\varkappa_2(j)}{\varkappa_2(0)}.
  \]
\end{enumerate} 
\end{prop}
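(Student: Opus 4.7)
The plan is to extract the Martin kernel limits directly from the exact asymptotics of $g(j,k_n)$ and $g(0,k_n)$ provided by Theorems~\ref{theorem3} and~\ref{theorem4} (and their counterparts obtained by exchanging the roles of the two coordinates), and then to take the quotient. In every case the $j$-independent prefactors of the asymptotic equivalents — the geometric weights $x_d^{-k_{1,n}}$ and $y_d^{-k_{2,n}}$, the polynomial or square-root corrections in $\|k_n\|$, the coefficients $\nu_1(k_{2,n})$ or its symmetric counterpart, and the universal constants $a_i$, $b_i$ — cancel in the ratio, leaving a $j$-dependent limit. Division by $g(0,k_n)$ is legitimate for $n$ large because $(0,0)\notin E_0$, so Lemma~\ref{irreductibility-lemma} ensures $g(0,k_n)>0$ as soon as $\|k_n\|\geq N_0$; strict positivity of each limiting harmonic function at the origin is given by the relevant assertion of Theorem~\ref{theorem2}.

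For assertion~(1), the openness of $\mathcal{W}_0$ in ${\mathbb S}^1_+$ and the assumption $w{=}\lim_n w_{k_n}\in\mathcal{W}_0$ (whose two coordinates are bounded below by positive constants) yield $\min\{k_{1,n},k_{2,n}\}\to{+}\infty$, with $w_{k_n}$ eventually in a compact subset of $\mathcal{W}_0$. Theorem~\ref{theorem4}(vi) then applies uniformly to both $g(j,k_n)$ and $g(0,k_n)$; the common $j$-independent prefactor cancels, and the continuity of $w\mapsto(x_D(w),y_D(w))$ combined with the continuity of $(x,y)\mapsto\varkappa_{(x,y)}(j)$ (the latter following from Theorem~\ref{theorem2}(vi), which extends $(1{-}P(x,y))H_j(x,y)$ real-analytically to a neighborhood of $\mathcal{S}_{22}$) produces~\eqref{martin-limit-varkappa-w}. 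For assertion~(2), the hypotheses force $k_{1,n}\to{+}\infty$ while $(k_{2,n})$ takes only finitely many values; passing to subsequences along which $k_{2,n}{=}k_2^*$ is constant and invoking Theorem~\ref{theorem3}(2), the factor $\nu_1(k_2^*)$ cancels in the quotient, yielding the limit $\varkappa_1(j)/\varkappa_1(0)$ when $\phi_1(x_d,Y_1(x_d)){=}1$ and $\tilde\varkappa_1(j)/\tilde\varkappa_1(0)$ when $\phi_1(x_d,Y_1(x_d)){<}1$; as this value is independent of $k_2^*$, the full sequence converges.

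Assertion~(3) requires splitting $(k_n)$ into two subsequences according to whether $k_{2,n}$ remains bounded or escapes to infinity. The hypothesis $x^{**}{<}x^{**}_P$ guarantees that $\mathcal{W}_1$ is non-empty and contains a neighborhood of $(1,0)$, while $\liminf_n k_{1,n}/\|k_n\|{>}u_D(x_d,Y_2(x_d))$ places $w_{k_n}$ eventually in a compact subset of $\mathcal{W}_1$. On the subsequence where $k_{2,n}\to{+}\infty$, Theorem~\ref{theorem4}(i) applies uniformly and gives ratio $\varkappa_1(j)/\varkappa_1(0)$; on the subsequence where $k_{2,n}$ stays bounded, Theorem~\ref{theorem3}(1) applies (valid since $x_d{<}x^{**}_P$) and produces the same ratio after the $\nu_1(k_{2,n})$ factors cancel. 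Assertions~(4) and~(5) follow from symmetric arguments exchanging the two coordinates. The main obstacle is of a bookkeeping nature: in case~(B2) the sequence $(k_n)$ may oscillate between the along-axes regime treated by Theorem~\ref{theorem3} and the along-interior-directions regime treated by Theorem~\ref{theorem4}, and one must verify that the two regimes yield identical limiting ratios of Martin kernels so that the full sequence converges to a single boundary point.
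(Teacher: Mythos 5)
Your proof is correct and follows the same route the paper implicitly takes: the paper states Propositions~\ref{Martin-boundary-B0-B1}--\ref{Martin-boundary-B3} as direct corollaries of Theorems~\ref{theorem3} and~\ref{theorem4}, and you carry out exactly that derivation, taking quotients of the exact Green-function asymptotics so the $j$-independent prefactors cancel. Your handling of the subsequence bookkeeping in assertion~(3) — splitting according to whether $k_{2,n}$ is eventually bounded (Theorem~\ref{theorem3}) or escapes to infinity (Theorem~\ref{theorem4}(i)) and checking both regimes return the same limit, noting that $(1,0)\in\mathcal{W}_1$ precisely because $x_d<x^{**}_P$ makes $u_D(x_d,Y_2(x_d))<1$ — is the only genuinely non-trivial step and you got it right.
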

When (B2) holds, this result proves that for any direction $w{\in}{\mathbb S}^1_+$ in the closure $\overline{\mathcal W}_0$ of ${\mathcal W}_0$ there is a point $\xi(w)$ in the Martin boundary of the killed Markov chain, and that if $(k_{n})$ is a sequence of points of $\Z^2_+$ whose norm converges to infinity then, for the convergence in the Martin compactification,
\[
\lim_{n\to+\infty} k_n=
\begin{cases}
\xi(w) &\text{ for } w{\in}{\mathcal W}_0\text{  if } k_{n}/\|k_n\|{\to}w,\\
\xi(w_D(x_d,Y_2(x_d)))&\text{ if }\liminf_n k_{1,n}/\|k_{n}\|{>}u_D(x_d,Y_2(x_d)),\\
\xi(w_D(X_2(y_d), y_d))&\text{ if }\limsup_n k_{1,n}/\|k_{n}\|{<} u_D(X_2(y_d), y_d). 
\end{cases}
\]
As explained before, in the case (B2), also due to the missing asymptotics for the singular directions $w_D(x_d, Y_2(x_d))$ and $w_D(X_2(y_d),y_d)$, the full Martin boundary is not completely determined. We conjecture  that in this case it is homeomorphic to  $\overline{\mathcal W}_0$.

\begin{prop}\label{Martin-boundary-B4} Suppose that  either (B3) and $y_d{<}y^{**}_P$ hold or (B4) holds. Then for any  $j{\in}\Z^2_+{\setminus}E_0$, 
    \[
\lim_{\to+\infty} \frac{g(j, k)}{g(0, k)} = \frac{\varkappa_1(j)}{\varkappa_1(0)},
    \]
\end{prop}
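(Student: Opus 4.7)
The plan is to assemble the asymptotic formulas of Theorems~\ref{theorem3} and~\ref{theorem4} (and their obvious analogs by symmetry in $(x,y)$) in cases (B3) and (B4), and to check that the $j$-dependence everywhere factors through $\varkappa_1(j)$. Given a sequence $(k^{(n)})=(k_1^{(n)},k_2^{(n)})$ with $\|k^{(n)}\|\to+\infty$, it suffices to show that from any subsequence one can extract a further subsequence along which the ratio tends to $\varkappa_1(j)/\varkappa_1(0)$. Up to extraction, one may assume one of three situations: \textbf{(A)} $\min(k_1^{(n)},k_2^{(n)})\to+\infty$; \textbf{(B)} $k_2^{(n)}$ is eventually constant and $k_1^{(n)}\to+\infty$; or \textbf{(C)} $k_1^{(n)}$ is eventually constant and $k_2^{(n)}\to+\infty$.

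In case~(A), I would extract a further subsequence with $w_{k^{(n)}}\to w\in{\mathbb S}^1_+$ and apply Theorem~\ref{theorem4}. If (B4) holds, then ${\mathcal W}_1={\mathbb S}^1_+$ is compact, so Theorem~\ref{theorem4}(i) yields uniformly $g(j,k^{(n)})\sim b_1\varkappa_1(j)\, x_d^{-k_1^{(n)}-1}(Y_2(x_d))^{-k_2^{(n)}-1}$, and the conclusion is immediate. If (B3) and $y_d<y^{**}_P$ hold, the same argument works for $w\neq (0,1)$, since any compact subset of ${\mathcal W}_1={\mathbb S}^1_+\setminus\{(0,1)\}$ is allowed; for the exceptional direction $w=(0,1)$ I would invoke Theorem~\ref{theorem4}(iv), whose two-term asymptotic $g(j,k^{(n)})\sim\varkappa_1(j)\bigl(b_1 x_d^{-k_1^{(n)}-1}+b_3(X_2(y_d))^{-k_1^{(n)}-1}y_d^{-1}k_2^{(n)}\bigr)y_d^{-k_2^{(n)}-1}$ still factors $\varkappa_1(j)$ out of both terms and hence leads to the desired ratio.

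In case~(B), noting that in both regions (B3) and (B4) one has $x_d=x^{**}<x^{**}_P$, Theorem~\ref{theorem3}(1) applies and gives $g(j,k^{(n)})\sim a_1\nu_1(k_2^*)\varkappa_1(j)\,x_d^{-k_1^{(n)}-1}$. In case~(C), I would use the symmetric version of Theorem~\ref{theorem3}. Under the exchange $x\leftrightarrow y$, regions (B3) and (B4) correspond respectively to (B5) and (B6) of the swapped walk, the functions $\varkappa_1$ and $\varkappa_2$ also swap (immediate from~\eqref{eq-def-kappa-1}--\eqref{eq-def-kappa-2}), and the hypothesis $y_d<y^{**}_P$ of (B3) becomes the hypothesis $x_d<x^{**}_P$ needed in Theorem~\ref{theorem3}(3). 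Items~(3) and~(5) of that theorem then provide an asymptotic of the form $\varkappa_1(j)\,F(k^{(n)})$ with $F$ independent of $j$, so the ratio again converges to $\varkappa_1(j)/\varkappa_1(0)$.

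The only genuinely delicate point is the treatment of the singular direction $w=(0,1)$ in case (B3) with $y_d<y^{**}_P$: this is the unique place where one must invoke the two-term competition formula of Theorem~\ref{theorem4}(iv) rather than a single leading-term asymptotic, and the argument goes through only because that formula is still linear in $\varkappa_1(j)$. All remaining work amounts to a clean case decomposition, with no new analytical input beyond Theorems~\ref{theorem3} and~\ref{theorem4}.
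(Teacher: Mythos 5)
Your proposal is correct, and it is exactly the proof that the paper intends (the paper states Proposition~\ref{Martin-boundary-B4} without a displayed proof, presenting it as a direct consequence of Theorems~\ref{theorem3} and~\ref{theorem4}). Your subsequence trichotomy — (A) $\min(k_1,k_2)\to\infty$ with a convergent direction, (B) $k_2$ eventually constant, (C) $k_1$ eventually constant — is exhaustive, you correctly observe that $x_d=x^{**}<x^{**}_P$ in both (B3) and (B4) so Theorem~\ref{theorem3}(1) applies in case (B), you correctly match the $x\leftrightarrow y$ swap of (B3)/(B4) to (B5)/(B6) (with $\varkappa_1\leftrightarrow\varkappa_2$ and $y_d<y^{**}_P$ becoming the required $x_d<x^{**}_P$) in case (C), and you correctly identify the singular direction $w=(0,1)$ for (B3), where the two-term competition asymptotic of Theorem~\ref{theorem4}(iv) is nonetheless linear in $\varkappa_1(j)$, so the Martin kernel limit is preserved. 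No gap.
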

If either (B3) and $y_d{<}y^{**}_P$ hold or (B4) holds,  the full Martin boundary of $\Z^2_+$ of the killed Markov chain  is therefore a single point, and, up to a multiplicative constant, the function $\varkappa_1$  is the unique non-negative harmonic function. 

When the original random walk $(Z(n))$ is transient and the measures $\mu_0$, $\mu_1$ and $\mu_2$ are stochastic,  since the Martin compactification of $\Z^2_+$ of the Markov chain $(Z(n))$ is homeomorphic to the Martin compactification of $\Z^2_+$ of the killed Markov chain, we have the following corollary.

\begin{cor}\label{Martin-boundary-B4p} If the measures $\mu_0$, $\mu_1$ and $\mu_2$ are stochastic and  the Markov chain $(Z(n))$ is transient, then under the hypotheses of Proposition~\ref{Martin-boundary-B4}, for any $j{\in}\Z^2_+$, 
\be\label{eq2-Martin}
\lim_{\|k\|\to+\infty} \frac{G(j,k)}{G(0, k)} = 1.
\ee

\end{cor}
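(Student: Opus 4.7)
The plan is to combine the Martin-kernel identity~\eqref{eq1-Martin} with the convergence of $g(j,k)/g(0,k)$ from Proposition~\ref{Martin-boundary-B4} and then to identify the limiting ratio $\varkappa_1(j)/\varkappa_1(0)$ with the ratio $\P_j(\tau_0{=}{+}\infty)/\P_0(\tau_0{=}{+}\infty)$ of escape probabilities. Since the stochasticity of $\mu_0,\mu_1,\mu_2$ ensures that the original chain $(Z(n))$ is a bona fide random walk on $\Z_+^2$ and the transience assumption guarantees $\P_0(\tau_0{=}{+}\infty){>}0$, the relation~\eqref{eq1-Martin} will then yield the limit $1$.

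First I would apply Proposition~\ref{Martin-boundary-B4} directly: under either (B3) with $y_d{<}y^{**}_P$ or (B4), one has $g(j,k)/g(0,k)\to \varkappa_1(j)/\varkappa_1(0)$ as $\|k\|\to\infty$, for every $j\in\Z_+^2{\setminus}E_0$. The remaining case $j\in E_0$ is handled separately by observing that $g(j,k){=}0$ for $j\in E_0$ and $k\notin E_0$ by Lemma~\ref{irreductibility-lemma}, so the whole contribution of $g(j,k)/g(0,k)$ for large $\|k\|$ vanishes, and for such $j$ the argument below must be adjusted by verifying directly that $\P_j(\tau_0{=}{+}\infty)=0$ (which follows since, from a state in $E_0$, the chain can only visit states of $E_0$, so by transience of the overall chain, it must eventually leave $E_0$ only after hitting $0$).

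Next I would identify $\varkappa_1$ with the escape-probability function up to a positive multiplicative constant. The function $\varkappa(j)\steq{def}\P_j(\tau_0{=}{+}\infty)$ is non-negative on $\Z_+^2$; by the Markov property at time $1$,
\[
\E_j[\varkappa(Z(1));\tau_0{>}1]=\sum_{k\ne 0}\P_j(Z(1){=}k)\P_k(\tau_0{=}{+}\infty)=\P_j(\tau_0{=}{+}\infty)=\varkappa(j),
\]
so $\varkappa$ is harmonic for the killed Markov chain in the sense of Definition~\ref{Killed}. The discussion following Proposition~\ref{Martin-boundary-B4} asserts that under our hypotheses the minimal Martin boundary consists of a single point and, by the Poisson--Martin representation theorem, $\varkappa_1$ is the unique (up to multiplicative constant) non-negative harmonic function for the killed chain. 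Hence $\varkappa{=}c\,\varkappa_1$ for some constant $c\ge 0$. Transience of $(Z(n))$ implies $\varkappa(0){=}\P_0(\tau_0{=}{+}\infty){>}0$ and $\varkappa_1(0){>}0$ (by Proposition~\ref{Martin-boundary-B4}'s positivity statement read off from Theorem~\ref{theorem2}), so $c{>}0$ and
\[
\frac{\varkappa_1(j)}{\varkappa_1(0)}=\frac{\P_j(\tau_0{=}{+}\infty)}{\P_0(\tau_0{=}{+}\infty)}.
\]

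Finally I would plug this into~\eqref{eq1-Martin}: taking $\|k\|\to\infty$ gives
\[
\lim_{\|k\|\to\infty}\frac{G(j,k)}{G(0,k)}=\P_0(\tau_0{=}{+}\infty)\cdot\frac{\P_j(\tau_0{=}{+}\infty)}{\P_0(\tau_0{=}{+}\infty)}+\P_j(\tau_0{<}{+}\infty)=\P_j(\tau_0{=}{+}\infty)+\P_j(\tau_0{<}{+}\infty)=1,
\]
which is the claimed limit~\eqref{eq2-Martin}. The only subtlety I anticipate is ensuring the uniqueness of positive harmonic functions is actually available in the form needed here, i.e.\ that the Martin boundary result quoted just after Proposition~\ref{Martin-boundary-B4} really yields a one-dimensional cone of non-negative harmonic functions; this is a standard consequence of Poisson--Martin, but must be stated carefully because the killed chain need not be irreducible on $\Z_+^2{\setminus}\{0\}$ (hence the technical role of the set $E_0$ and of Lemma~\ref{irreductibility-lemma}).
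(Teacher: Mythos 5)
Your proof is correct, and it reaches the same intermediate identity as the paper via a genuinely complementary route. The paper starts from the same Martin-kernel relation~\eqref{eq1-Martin} and Proposition~\ref{Martin-boundary-B4}, obtains the limit $\P_0(\tau_0{=}{+}\infty)\,\varkappa_1(j)/\varkappa_1(0)+\P_j(\tau_0{<}{+}\infty)$, and then argues that by the Poisson--Martin representation theorem this expression is, up to scaling, the unique non-negative harmonic function for the \emph{original} chain $(Z(n))$; since the stochasticity of $\mu_0,\mu_1,\mu_2$ makes the constant function harmonic for $(Z(n))$, the expression must equal~$1$. You instead identify $\varkappa_1$ directly with the escape probability by observing that $j\mapsto\P_j(\tau_0{=}{+}\infty)$ is harmonic for the \emph{killed} chain and invoking the uniqueness statement recorded after Proposition~\ref{Martin-boundary-B4}; the algebra $\P_j(\tau_0{=}{+}\infty)+\P_j(\tau_0{<}{+}\infty)=1$ then finishes it. Both routes lean on the same single-point Martin-boundary fact and both require the stochasticity assumption (for you, so that the escape probability is genuinely harmonic for the killed chain; for the paper, so that constants are harmonic for $(Z(n))$), so neither is strictly more elementary; yours makes the identification of $\varkappa_1$ with escape probabilities explicit, whereas the paper's step ``constants are harmonic, therefore the Martin kernel limit is $1$'' is a little shorter. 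One minor remark: your separate treatment of $j\in E_0$ is sound, but it is not strictly necessary once you know $\varkappa_1$ vanishes on $E_0$ (a consequence of harmonicity of $\varkappa_1$ and the fact that, from $j\in E_0$, the killed chain remains in the finite set $E_0$ and must die, which also gives $\P_j(\tau_0{=}{+}\infty)=0$), since then the proportionality $\P_\cdot(\tau_0{=}{+}\infty)=c\,\varkappa_1(\cdot)$ already holds on all of $\Z_+^2$ and the plug-in step applies to every $j$ uniformly. The paper also remarks in passing that the key identity~\eqref{eq3-Martin} can be re-derived from Proposition~\ref{comparison-cor}; that is a third route you did not use, but the Corollary does not depend on it.
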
 
\begin{proof} Indeed, in this case, using \eqref{eq1-Martin}, Proposition~\ref{Martin-boundary-B4} and Lemma~\ref{irreductibility-lemma}, since the set $E_0$ does not contain the origin, one gets 
\[
\lim_{\|k\|\to+\infty} \frac{G(j,k)}{G(0, k)} = \P_0(\tau_0 = +\infty) \varkappa_1(j) + P_j(\tau_0 < +\infty), \quad \forall j\in\Z^2_+,
\]
and consequently,  by the Poisson-Martin representation theorem, up to a multiplicative constant, for any harmonic function $\varkappa$ for the Markov chain $(Z(n))$, one has 
\[
\varkappa(j)/\varkappa(0) = \P_0(\tau_0 = +\infty) \varkappa_1(j) + P_j(\tau_0 < +\infty), \quad \forall j\in \Z^2_+. 
\]
Since in the case when the measures $\mu_0$, $\mu_1$ and $\mu_2$ are stochastic, the constants functions are harmonic for $(Z(n))$, from this it follows that 
\be\label{eq3-Martin}
\P_0(\tau_0 = +\infty) \varkappa_1(j) + P_j(\tau_0 < +\infty) = 1, \quad \forall j\in \Z^2_+,
\ee
and \eqref{eq2-Martin} holds. 

Remark that the identity \eqref{eq3-Martin} can also be obtained in a straightforward way by using Proposition~\ref{comparison-cor} of Section~\ref{section-conditions}. 
\end{proof} 
By symmetry, similar result can be obtained with $\varkappa_2$ instead of $\varkappa_1$ if either (B5) and $x_d = x^{**}_P$ hold or (B6) holds.

We conclude with the case when either (B3) holds with $y_d{=}y^{**}_P$ or (B5) holds with $x_d{=}x^{**}_P$.
\begin{prop}\label{Martin-boundary-B3} Suppose that (B3) holds with $y_d{=}y^{**}_P$ and let $(k_n)$ be a sequence of points of $\Z_+^2$ whose norm converges to infinity. Then for any  $j\in\Z^2_+{\setminus} E_0$,  the convergence 
  \[
  \lim_{n\to+\infty} \frac{g(j, k_{n})}{g(0, k_{n})} = \frac{\varkappa_1(j)}{\varkappa_1(0)}
  \]
holds in any of the two following cases:
\begin{itemize}
\item  when  $\liminf_n k_{1,n}/\|k_n\|{>}0$;
\item when the sequence $(k_{1,n})$ is bounded and $\lim_n k_{2,n} = +\infty$. 
\end{itemize}
\end{prop}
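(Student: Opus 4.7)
The plan is to combine the asymptotic results already proved, namely Theorem~\ref{theorem3}(1) (asymptotics for fixed second coordinate), Theorem~\ref{theorem4}(i) (uniform asymptotics along compact subsets of ${\mathcal W}_1$), and the symmetric counterpart of Theorem~\ref{theorem3}(4). The key observation is that, by Definition~\ref{Wdef-B3-B6}, under (B3) the region of directions ${\mathcal W}_1$ equals ${\mathbb S}^1_+\setminus\{(0,1)\}$, and the sub-case $y_d=y^{**}_P$ of (B3) is the exact mirror of the sub-case $x_d=x^{**}_P$ of (B5) under the involution $(x,y,X_i,Y_i,\phi_1,\phi_2,\varkappa_1,\varkappa_2,\nu_1,\nu_2)\leftrightarrow(y,x,Y_i,X_i,\phi_2,\phi_1,\varkappa_2,\varkappa_1,\nu_2,\nu_1)$. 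Since the candidate limit $\varkappa_1(j)/\varkappa_1(0)$ is independent of any subsequence, it suffices to show that from any subsequence of $(k_n)$ one can extract a sub-subsequence along which the Martin kernel converges to this value.

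For the first case, $\liminf_n k_{1,n}/\|k_n\|>0$, I would split according to whether $(k_{2,n})$ is bounded. If some subsequence of $(k_{2,n})$ is bounded, extract a sub-subsequence on which $k_{2,n}$ equals a constant $k_2\in\Z_+$; since $\|k_n\|\to+\infty$, this forces $k_{1,n}\to+\infty$, and Theorem~\ref{theorem3}(1) (which applies under (B3)) gives $g(j,(k_{1,n},k_2))\sim a_1\nu_1(k_2)\varkappa_1(j)x_d^{-k_{1,n}-1}$, so the ratio $g(j,\cdot)/g(0,\cdot)$ tends to $\varkappa_1(j)/\varkappa_1(0)$. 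Otherwise $k_{2,n}\to+\infty$, and combined with $k_{1,n}\geq c\|k_n\|\to+\infty$ for some $c>0$, this yields $\min(k_{1,n},k_{2,n})\to+\infty$ with $w_{k_n}$ in the compact set $\{(u,v)\in{\mathbb S}^1_+:u\geq c\}\subset{\mathcal W}_1$. Theorem~\ref{theorem4}(i) then applies uniformly and the ratio collapses again to $\varkappa_1(j)/\varkappa_1(0)$.

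For the second case, with $(k_{1,n})$ bounded and $k_{2,n}\to+\infty$, the bounded sequence $(k_{1,n})$ takes only finitely many values in $\Z_+$, so after decomposing $(k_n)$ into finitely many subsequences I may assume $k_{1,n}=k_1$ is constant. I then invoke the symmetric analog of Theorem~\ref{theorem3}(4), obtained by exchanging the roles of $x$ and $y$; applied to (B3) with $y_d=y^{**}_P$, it yields, as $k_2\to+\infty$ with $k_1$ fixed,
\[
g(j,(k_1,k_2)) \sim \nu_2(k_1)\,\varkappa_1(j)\,\rho(k_2),
\]
where $\rho(k_2)$ equals $\tilde{a}_5\, y_d^{-k_2-1}$ when $\phi_2(X_1(y_d),y_d)=1$ and $\tilde{a}_6\, y_d^{-k_2}/\sqrt{\pi k_2 y_d}$ when $\phi_2(X_1(y_d),y_d)<1$, with constants depending only on the model. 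In either sub-case the prefactor $\nu_2(k_1)\rho(k_2)$ cancels in the ratio, producing once more $\varkappa_1(j)/\varkappa_1(0)$.

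The main piece of bookkeeping, and the only genuine obstacle, is to verify that $\varkappa_2$, which appears in Theorem~\ref{theorem3}(4) under (B5), transforms into $\varkappa_1$ under the swap $(x,y,\phi_1,\phi_2,X_i,Y_i)\leftrightarrow(y,x,\phi_2,\phi_1,Y_i,X_i)$; this is immediate from inspecting the definitions~\eqref{eq-def-kappa-1}--\eqref{eq-def-kappa-2}. Positivity of $\varkappa_1(0)$, which is needed for the limit to be a well-defined finite number, is guaranteed by Theorem~\ref{theorem2}(i) together with the fact that $0\notin E_0$, itself a consequence of the irreducibility assumption (A3)(i) and the definition of $E_0$ in Lemma~\ref{irreductibility-lemma}.
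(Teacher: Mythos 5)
The paper states Proposition~\ref{Martin-boundary-B3} without giving a detailed proof, presenting it (like Propositions~\ref{Martin-boundary-B0-B1}--\ref{Martin-boundary-B4}) as a consequence of Theorems~\ref{theorem3} and~\ref{theorem4}. Your deduction from those theorems is the intended one and is correct: you reduce, via the standard subsequence criterion, to the three regimes covered respectively by Theorem~\ref{theorem3}(1) (second coordinate eventually constant), Theorem~\ref{theorem4}(i) with $w_k$ confined to the compact set $\{(u,v)\in{\mathbb S}^1_+:u\geq c\}\subset{\mathcal W}_1$ (both coordinates tending to infinity with direction bounded away from $(0,1)$), and the $x\leftrightarrow y$ mirror of Theorem~\ref{theorem3}(4) applied to (B3) with $y_d=y^{**}_P$ (first coordinate eventually constant). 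Your bookkeeping that the mirror of $\varkappa_2$ is indeed $\varkappa_1$ is also right: writing $\tilde Z(n)=(Z_2(n),Z_1(n))$ one has $\tilde X_1(\tilde y_d)=Y_1(x_d)$, $\tilde\phi_1(a,b)=\phi_2(b,a)$ and $\tilde H_{\tilde j}(a,0)=H_j(0,a)$, so $\tilde\varkappa_2(\tilde j)=\varkappa_1(j)$, and the $j$-independent factor $\nu_2(k_1)\rho(k_2)$ (with $\nu_2$ the symmetric analogue of the strictly positive $\nu_1$) cancels in the Martin kernel. Only minor stylistic points remain: the notation $\nu_2$ is not introduced in the paper (though its meaning is unambiguous), and the case analysis in the first bullet would read more cleanly if phrased entirely in terms of an arbitrary subsequence and its further extractions, rather than alternating between statements about the full sequence and about subsequences.
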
 
When (B3) holds with $y_d{=}y^{**}_P$ there is therefore a point $\xi_0$ in the Martin boundary of $\Z^2_+$ of the killed Markov chain, such that any sequence  of points of $\Z_+^2$   satisfying the conditions of Proposition~\ref{Martin-boundary-B3} converges in the  Martin compactification to $\xi_0$. 

Similar result with the function $\varkappa_2$ instead of $\varkappa_1$ can be obtained when (B5) holds with $x_d{=}x^{**}_P$.

When (B3) holds with $y_d{=}y^{**}_P$, resp.  (B5) holds with $x_d{=}x^{**}_P$, the asymptotics of the Green function $g(j,k)$ when $\|k\|{\to}{+} \infty$ and $k_1/\|k\|{\to}0$, resp. when $\|k\|{\to}{+}\infty$ and $k_2/\|k\|{\to} 0$ are not known.  See Section~\ref{MissAss}. For this reason, the full Martin boundary is not  determined in this case.  We conjecture it is a single point. 

\section{Preliminary results}\label{preliminary-section} 

In the following statement, we investigate the set $D$ defined by \eqref{Ddef}: it is proved that the line segments $[x^*_P,x^{**}_P]$, $[y^*_P, y^{**}_P]$ and the functions $Y_1,Y_2: [x^*_P,x^{**}_P]\to [y^*_P, y^{**}_P]$ and $X_1, X_2: [y^*_P, y^{**}_P]\to [x^*_P,x^{**}_P]$ are well defined, and the first useful for our purpose properties of these functions are obtained. 

\begin{lemma}\label{preliminary-lemma1}  Under the hypotheses (A1), the line segments $[x^*_P,x^{**}_P]$ and $[y^*_P, y^{**}_P]$ and the functions $Y_1,Y_2: [x^*_P,x^{**}_P]\to [y^*_P, y^{**}_P]$ and $X_1, X_2: [y^*_P, y^{**}_P]\to [x^*_P,x^{**}_P]$ are well defined and the following assertions hold: 

\begin{enumerate}[label=\arabic*)]
\item For any $\hat{x}\in[x^*_P, x^{**}_P]$, $Y_1(\hat{x})$ and $Y_2(\hat{x})$ are the only real positive solutions of the equation $P(\hat{x}, y) =1$,
 \[
 Y_1(\hat{x}) < Y_2(\hat{x}), \quad  \partial_y P(\hat{x},Y_1(\hat{x})) < 0 \quad \text{and} \quad  \partial_y P(\hat{x},Y_2(\hat{x})) < 0  \quad \text{if} \quad  x^*_P < \hat{x} < x^{**}_P,
 \]
 and 
 \[
 Y_1(\hat{x}) = Y_2(\hat{x}) \quad \text{and} \quad \partial_y P(\hat{x},Y_1(\hat{x}))= 0 \quad \text{if} \quad \hat{x}\in\{x^*_P, x^{**}_P\}.
 \]
\item For any $\hat{y}\in[y^*_P, y^{**}_P]$, $X_1(\hat{y})$ and $X_2(\hat{y})$ are the only real positive solutions of the equation $P(x,\hat{y})=1$, 
 \[
 X_1(\hat{y}) < X_2(\hat{y}), \quad  \partial_x P(X_1(\hat{y}),\hat{y}) < 0 \quad  \text{and} \quad \partial_x P(X_2(\hat{y}),\hat{y})  \quad \text{if} \quad y^*_P < \hat{y} <  y^{**}_P,
 \]
 and 
\[
 X_1(\hat{y})=X_2(\hat{y}) \quad \text{and} \quad  \partial_x P(X_1(\hat{y}), \hat{y}) = 0 \quad \text{if} \quad  \hat{y}\in\{y^*_P, y^{**}_P\}.
 \]
 
\item The four points $(x^*_P, Y_1(x^*_P))$, $(x^{**}_P, Y_1(x^{**}_P))$, $(X_1(y^*_P), y^*_P)$ and $(X_1(y^{**}_P), y^{**}_P)$ are two by two distinct and moreover $Y_1(x^*_P), Y_1(x^{**}_P)\in]y^*_P, y^{**}_P[$ and $X_1(y^*_P), X_1(y^{**}_P)\in]x^*_P, x^{**}_P[$. 

\item The function $X_1:[y^*_P, Y_1(x^*_P)]{\to}[x^*_P, X_1(y^*_P)]$ is strictly decreasing and its inverse is\\  $Y_1:[x^*_P, X_1(y^*_P)]{\to}[y^*_P, Y_1(x^*_P)]$.

\item The function $X_1:[Y_1(x^*_P), y^{**}_P]{\to}[x^*_P, X_1(y^{**}_P)]$ is strictly increasing and its inverse   is\\  $Y_2: [x^*_P, X_1(y^{**}_P)]{\to}[Y_1(x^*_P), y^{**}_P]$.

\item The function $X_2: [y^*_P, Y_1(x^{**}_P]{\to}[X_1(y^*_P), x^{**}_P]$ is strictly increasing and its inverse is\\ $Y_1:~[X_1(y^*_P), x^{**}_P]{\to}[y^*_P, Y_1(x^{**}_P]$.

\item The function $X_2:[Y_1(x^{**}_P), y^{**}_P]{\to}[X_1(y^{**}_P), x^{**}_P]$  strictly decreasing  and its inverse is\\ $Y_2: [X_1(y^{**}_P), x^{**}_P]{\to} [Y_1(x^{**}_P), y^{**}_P]$.
\end{enumerate}
\end{lemma}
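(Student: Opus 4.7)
The plan is to transport everything to logarithmic coordinates and exploit strict convexity. Setting $\widetilde{P}(\alpha,\beta)=P(e^\alpha,e^\beta)$ and $\widetilde{D}=\{(\alpha,\beta)\in\R^2:\widetilde{P}(\alpha,\beta)\leq 1\}$, Assumption (A1)(ii) makes $\widetilde{P}$ analytic on a neighborhood of $\widetilde{D}$, and Assumption (A1)(i) (irreducibility of $\mu$ on $\Z^2$) ensures that the support of $\mu$ generates $\Z^2$ as a group, which makes the Hessian of $\widetilde{P}$ positive definite at every point of its open domain of finiteness. Hence $\widetilde{P}$ is strictly convex and $\widetilde{D}$ is strictly convex with non-empty interior by (A1)(iii); combined with the superlinear growth of $\widetilde{P}$ (again from strict convexity and finiteness on a neighborhood of $\widetilde{D}$), one gets that $\widetilde{D}$ is compact. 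This is the core technical input; all the rest is convex geometry of $\widetilde{D}$.

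Next I would define $[\alpha^*_P,\alpha^{**}_P]$ and $[\beta^*_P,\beta^{**}_P]$ as the projections of $\widetilde{D}$ onto the two coordinate axes and set $x^*_P=e^{\alpha^*_P}$, $x^{**}_P=e^{\alpha^{**}_P}$, $y^*_P=e^{\beta^*_P}$, $y^{**}_P=e^{\beta^{**}_P}$, with $x^*_P<x^{**}_P$ and $y^*_P<y^{**}_P$ because $\widetilde{D}$ has non-empty interior. For any $\hat\alpha\in[\alpha^*_P,\alpha^{**}_P]$, the fiber $\{\beta:\widetilde{P}(\hat\alpha,\beta)\leq 1\}$ is a closed interval $[\beta_1(\hat\alpha),\beta_2(\hat\alpha)]$, with $\beta_1(\hat\alpha)=\beta_2(\hat\alpha)$ exactly when $\hat\alpha\in\{\alpha^*_P,\alpha^{**}_P\}$ (otherwise the segment $\{\hat\alpha\}\times[\beta_1,\beta_2]$ would contradict the fact that $\hat\alpha$ is in the interior of the projection). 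Setting $Y_i(\hat x)=e^{\beta_i(\log\hat x)}$ and reading $\partial_\beta\widetilde{P}(\hat\alpha,\beta_i(\hat\alpha))=\hat y\,\partial_yP(\hat x,\hat y)$ yields assertion~1): strict convexity of $\widetilde{P}(\hat\alpha,\cdot)$ forces $\partial_\beta\widetilde{P}<0$ at the left endpoint and $>0$ at the right endpoint when they are distinct, and $\partial_\beta\widetilde{P}=0$ at the unique tangent point when $\hat\alpha\in\{\alpha^*_P,\alpha^{**}_P\}$. Symmetrically for $X_1,X_2$ and assertion~2).

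For assertion~3), I would argue by contradiction: if, say, $(x^*_P,Y_1(x^*_P))=(X_1(y^*_P),y^*_P)$, then at this point both $\partial_yP$ and $\partial_xP$ vanish (by the argument above applied on both sides), so $\nabla\widetilde{P}(\alpha^*_P,\beta^*_P)=0$. Strict convexity of $\widetilde{P}$ then forces this point to be its unique global minimum; but since $\inter{\widetilde{D}}\neq\emptyset$ we have $\min\widetilde{P}<1$, contradicting $\widetilde{P}(\alpha^*_P,\beta^*_P)=1$. The same reasoning handles the other three pairings of the four extremal points, and gives $Y_1(x^*_P)\in]y^*_P,y^{**}_P[$ and the analogous strict interiority statements.

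Finally, for assertions~4)--7), I would parametrize the four arcs of $\partial\widetilde{D}$ cut by the four extremal points. On the interior of each arc, $\nabla\widetilde{P}\neq 0$ with a definite sign pattern for its two components, so the implicit function theorem gives that $\beta$ is a smooth function of $\alpha$ (and vice-versa); the sign of the derivative is $-\partial_\alpha\widetilde{P}/\partial_\beta\widetilde{P}$, which has constant sign on each arc. Exponentiating and identifying the arcs with the restrictions of $X_1,X_2,Y_1,Y_2$ listed in the lemma yields the claimed strict monotonicities and the fact that the restricted functions are mutually inverse. The slightly delicate point is that on a closed arc the endpoints carry a vanishing partial derivative, so the IFT degenerates there; this is handled by continuity plus the strict convexity of $\widetilde{D}$ (which prevents any flat segment on $\partial\widetilde{D}$), and this degeneracy at the endpoints is the one place where a little care is required, though no deep obstacle arises.
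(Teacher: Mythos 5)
Your overall route is the same one the paper takes: pass to logarithmic coordinates, use strict convexity of $\widetilde{P}$ and of $\widetilde{D}$, define the endpoint quantities as projections of $\widetilde{D}$, read off the sign of $\partial_\beta\widetilde{P}$ (resp.\ $\partial_\alpha\widetilde{P}$) at the fiber endpoints, and then exponentiate. For assertions~4)--7) you reason locally via the implicit function theorem and sign patterns of $\nabla\widetilde{P}$ on the four boundary arcs, while the paper instead observes directly that $\beta_1$ (resp.\ $\beta_2$, $\alpha_1$, $\alpha_2$) is strictly convex with a minimum at $\alpha_1(\beta^*_P)$ (resp.\ strictly concave with a maximum at $\alpha_1(\beta^{**}_P)$, etc.), which gives the monotonicity and inverse relations in one stroke and handles the endpoints more cleanly than your IFT-plus-continuity patch; still, both arguments are valid and lead to the same conclusions.

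There is, however, one genuine gap. You claim compactness of $\widetilde{D}$ follows from ``strict convexity and finiteness on a neighborhood of $\widetilde{D}$'' because these imply superlinear growth. That implication is false. Take $\mu(1,0)=\mu(-1,0)=1/4$, $\mu(0,1)=1/2$: then $\widetilde{P}(\alpha,\beta)=\tfrac14 e^{-\alpha}+\tfrac14 e^{\alpha}+\tfrac12 e^{\beta}$ is smooth, strictly convex (Hessian positive definite everywhere), finite on all of $\R^2$, yet $\widetilde{D}=\{\widetilde{P}\leq 1\}$ contains the ray $\{(0,\beta):\beta\leq 0\}$ and is unbounded. The ingredient you need but did not invoke is that irreducibility of $\mu$ on $\Z^2$ forces the support of $\mu$ not to lie in any closed half-plane, which is precisely what gives $\widetilde{P}(tw)\to\infty$ along every ray $w\neq 0$ and hence compactness of $\widetilde{D}$ (given (A1)(ii)--(iii)). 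You already invoked irreducibility to get positive definiteness of the Hessian; you must invoke it again for boundedness, since strict convexity by itself is not enough. The paper avoids spelling this out by citing Spitzer.
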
 
\begin{proof} To prove this lemma we notice that  under the hypotheses (A1) the jump generating function $\tilde{P}:\R^2\to\R_+$ defined by \eqref{function-tilde-P} 
is strictly convex and finite in a neighborhood of the set $\tilde{D}= \{ (\alpha,\beta)\in\R^2: \tilde{P}(\alpha, \beta)\leq 1\}$. The set $\tilde{D}$ is therefore strictly convex and compact (see for instance, Spitzer~\cite{Spitzer}) and because of the assumption (A1) (iii), it has a non-empty interior. This implies that 

-- each of the sets $\{\alpha\in\R:~\inf_{\beta\in\R} \tilde{P}(\alpha,\beta) \leq 1\} $ and 
$\{\beta\in\R:~\inf_{\alpha\in\R} \tilde{P}(\alpha,\beta)\leq 1\}$
is a line segment with a non-zero length, we will denote them respectively by $[\alpha^*_P,\alpha^{**}_P]$ and $[\beta^*_P,\beta^{**}_P]$; 
 
-- for any $\hat\alpha\in [\alpha^{*}_P,\alpha^{**}_P]$, the set $
[\beta_1(\hat\alpha),\beta_2(\hat\alpha)]=\{\beta\in\R~:~\tilde{P}(\hat\alpha,\beta)\leq 1\}$ is a non-empty line segment , 
the points $\beta_1(\hat\alpha)$ and $\beta_2(\hat\alpha)$  are the only real  solutions of the equation $\tilde{P}(\hat\alpha,\beta) = 1$,  
 \[
 \beta_1(\hat\alpha) <\beta_2(\hat\alpha), \quad  \partial_\beta  \tilde{P}(\hat\alpha, \beta_1(\hat\alpha)) \quad  \text{and} \quad   \partial_\beta \tilde{P}(\hat\alpha, \beta_2(\hat\alpha)) > 0 \quad \text{if $\alpha^*_P <\hat\alpha <  \alpha^{**}_P$},
  \]
  and 
 \[
 \beta_1(\hat\alpha) = \beta_2(\hat\alpha) \quad \text{and} \quad   \partial_\beta \tilde{P}(\hat\alpha,\beta_1(\hat\alpha)) = 0 \quad \text{if  $\hat\alpha \in\{\alpha^*_P, \alpha^{**}_P\}$}: 
 \]
 
 -- for any $\hat\beta\in [\beta^{*}_P,\beta^{**}_P]$, the set $ [\alpha_1(\hat\beta),\alpha_2(\hat\beta)]=\{\alpha\in\R~:~\tilde{P}(\alpha,\hat\beta)\leq 1\}$ 
 is a  non-empty line segment,  the points $\alpha_1(\hat\beta)$ and  $\alpha_2(\hat\beta)$ are the only real  solutions of the equation $\tilde{P}(\alpha,\hat\beta) = 1$, 
\[
 \alpha_1(\hat\beta) < \alpha_2(\hat\beta), \quad  \partial_\alpha  \tilde{P}(\alpha_1(\hat\beta),\beta) < 0 \quad  \text{and} \quad  \partial_\alpha \tilde{P}(\alpha_2(\hat\beta), \hat\beta) > 0 \quad  \text{if $\beta^*_P < \hat\beta < \beta^{**}_P$}, 
\]
and
\[
 \alpha_1(\hat\beta) = \alpha_2(\hat\beta)  \quad  \text{and} \quad   \partial_\alpha \tilde{P}(\alpha_1(\hat\beta), \hat\beta) = 0 \quad  \text{if $\hat\beta\in\{\beta^*_P, \beta^{**}_P\}$},
 \]
 the four points $(\alpha^*_P, \beta_1(\alpha^*_P))$, $(\alpha^{**}_P, \beta_1(\alpha^{**}_P))$, $(\alpha_1(\beta^*_P), \beta^*_P)$ and $(\alpha_1(\beta^{**}_P), \beta^{**}_P)$ are two by two distinct and  $\alpha_1(\beta^*_P), \alpha_1(\beta^{**}_P)\in]\alpha^*_P, \alpha^{**}_P[$ and $\beta_1(\alpha^*_P), \beta_1(\alpha^{**}_P)\in]\beta^*_P, \beta^{**}_P[$. 
\begin{itemize}
\item  $\beta_1{:}~[\alpha^*_P,\alpha^{**}_P]{\to}[\beta^*_P,\beta^{**}_P]$  is strictly convex with a minimum $\beta^*_P$ at $\alpha_1(\beta^*_P)=\alpha_2(\beta^*_P)$;
\item $\beta_2{:}~[\alpha^*_P,\alpha^{**}_P]{\to}[\beta^*_P,\beta^{**}_P]$ is strictly concave with a maximum $\beta^{**}_P$ at $\alpha_1(\beta^{**}_P){=}\alpha_2(\beta^{**}_P)$; 
\item $\alpha_1{:}~[\beta^*_P,\beta^{**}_P]{\to}[\alpha^*_P,\alpha^{**}_P]$  is strictly convex with a minimum $\alpha^*_P$ at $\beta_1(\alpha^*_P){=}\beta_2(\alpha^*_P)$;
\item $\alpha_2{:}~[\beta^*_P,\beta^{**}_P]{\to}[\alpha^*_P,\alpha^{**}_P] $ is strictly concave with a maximum $\alpha^{**}_P$ at$\beta_1(\alpha^{**}_P){=}\beta_2(\alpha^{**}_P)$.
\end{itemize}
\noindent
As a consequence of these properties, one gets that 
\begin{itemize}
\item  $\alpha_1{:}~[\beta^*_P,\beta_1(\alpha^*_P)]{\to}[\alpha^*_P, \alpha_1(\beta^*_P)]$ is strictly decreasing and its inverse function is $\beta_1$;
\item $\alpha_1{:}~[\beta_1(\alpha^*_P), \beta^{**}_P]{\to}[\alpha^*_P, \alpha_1(\beta^{**}_P)]$ is strictly increasing \phantom{andts inverse}"\phantom{function is} $\beta_2$;
\item $\beta_1{:}~[\alpha_1(\beta^*_P), \alpha^{**}_P]{\to}[\beta^*_P, \beta_1(\alpha^{**}_P]$ is strictly increasing  \phantom{and its inverse}"\phantom{function is} $\alpha_2$;
\item $\alpha_2{:}[\beta_1(\alpha^{**}_P), \beta^{**}_P]{\to}[\alpha_1(\beta^{**}_P), \alpha^{**}_P]$ is strictly decreasing \phantom{andts inverse}"\phantom{function is} $\beta_2$.
\end{itemize}
Since the mapping $(\alpha,\beta){\to}(x,y){=}(e^\alpha, e^\beta)$ determines a homeomorphism from $\R^2$ to $]0,+\infty[^2$ and maps the set $\tilde{D}$  to the set $D$,  these assertions prove our lemma with  $x^*_P{=}e^{\alpha^*_P}$, $x^{**}_P{=}e^{\alpha^{**}_P}$, $y^*_P{=}e^{\beta^*_P}$, $y^{**}_P{=}e^{\beta^{**}_P}$,
\[
X_i(y) = e^{\alpha_i(\ln(y))}, \quad \forall y\in [y^*_P,y^{**}_P], \; i\in\{1,2\}, 
\]
and 
\[
Y_i(x) = e^{\beta_i(\ln(x))}, \quad \forall x\in [x^*_P, x^{**}_P], \; i\in\{1,2\}. 
\]
\end{proof}

In the  following lemma, we investigate the sets $D\cap D_1$ and $D\cap D_2$. It is proved that  the line segments $[x^*, x^{**}]$ and $[y^*, y^{**}]$ are well defined.  With this result, we will be able to get the first useful for our purpose properties of the functions $x\to \phi_1(x, Y_1(x))$ and $y\to \phi_2(X_1(y), y)$. 

\begin{lemma}\label{preliminary-lemma2} Under the hypotheses (A1) and (A3), there exist $x^*, x^{**}\in]0,+\infty[$ such that $x^* < x^{**}$ and  
\[
[x^*, x^{**}] = \{x\in]0,+\infty[ :~(x,y)\in D_1\cap D \; \text{for some} \; y > 0\}.
\] 
Moreover, for any $x\in [x^*, x^{**}] $,  there exists  $\tilde{Y}_2(x) \geq Y_1(x)$ such that 
\[
\{y > 0~:~(x,y)\in D_1\cap D\} = [{Y}_1(x), \tilde{Y}_2(x)], 
\] 
 the points $Y_1(x)$ and $\tilde{Y}_2(x)$ are the only real  positive solutions of the equation 
 \be\label{eq0_preliminary_lemma2} 
 \max\{P(x,y), \phi_1(x,y)\} = 1,
 \ee
 and  ${Y}_1(x) < \tilde{Y}_2(x)$ if and only if $x^*< x <x^{**}$. 
\end{lemma}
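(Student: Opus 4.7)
The plan is to deduce everything from the geometry of the compact log-convex set $D\cap D_1$. First, $D\cap D_1$ is compact (since $D$ is by Lemma~\ref{preliminary-lemma1}) and logarithmically convex as an intersection of two log-convex sets; by (A3)(iii) it also has non-empty interior. Its projection on the first axis is therefore a log-convex subset of $]0,+\infty[$ with non-empty interior, hence a closed interval, which is what defines $x^*<x^{**}$.

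Next I analyse the vertical slice at fixed $x>0$. Because $\mu_1$ is supported in $\Z\times\Z_+$, the map $y\mapsto\phi_1(x,y)$ is a power series in $y$ with non-negative coefficients, hence non-decreasing on $]0,+\infty[$; assumption (A3)(v) supplies one term with $j_2>0$ and positive coefficient, so it is in fact strictly increasing and diverges at $+\infty$. Consequently the slice $\{y>0:\phi_1(x,y)\leq 1\}$, whenever non-empty, equals $]0,\overline{y}_1(x)]$, where $\overline{y}_1(x)$ is the unique positive root of $\phi_1(x,y)=1$. Combining with the slice $[Y_1(x),Y_2(x)]$ of $D$ provided by Lemma~\ref{preliminary-lemma1} and setting $\tilde{Y}_2(x):=\min(Y_2(x),\overline{y}_1(x))$, one obtains $\{y>0:(x,y)\in D\cap D_1\}=[Y_1(x),\tilde{Y}_2(x)]$ for every $x\in[x^*,x^{**}]$.

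The statement about $\max\{P,\phi_1\}=1$ then reduces to a short case split: at each endpoint $Y_1(x)$, $\tilde{Y}_2(x)$ one of $P$, $\phi_1$ equals $1$ and the other is $\leq 1$ (the precise split at $\tilde{Y}_2(x)$ depending on which of $Y_2(x)$, $\overline{y}_1(x)$ realises the minimum); in the open interval $(Y_1(x),\tilde{Y}_2(x))$, Lemma~\ref{preliminary-lemma1} yields $P(x,y)<1$ strictly and the strict monotonicity in $y$ yields $\phi_1(x,y)<1$, so $\max<1$; outside $[Y_1(x),\tilde{Y}_2(x)]$ one of $P$, $\phi_1$ strictly exceeds $1$.

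The main effort is the equivalence $Y_1(x)<\tilde{Y}_2(x)\Leftrightarrow x\in(x^*,x^{**})$. For the direction $(\Leftarrow)$, passing to the logarithmic image reduces the claim to a general property of compact convex subsets of $\R^2$ with non-empty interior: at every interior point of the first-coordinate projection the vertical slice has non-empty interior. For $(\Rightarrow)$ I plan a contradiction argument: if $Y_1(x)<\tilde{Y}_2(x)$ held for some $x\in\{x^*,x^{**}\}$, picking any $y$ strictly between them would give $P(x,y)<1$ and $\phi_1(x,y)<1$ by the strict inequalities of the previous paragraph, hence $(x,y)\in\inter{D}\cap\inter{D_1}\subseteq\inter{(D\cap D_1)}$; a full neighbourhood of $(x,y)$ would then lie in $D\cap D_1$ and extend its first-coordinate projection strictly beyond $[x^*,x^{**}]$, a contradiction with the defining extremality of $x^*$ and $x^{**}$. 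This extremality step is the delicate point: it jointly uses the strict convexity of the slices of $D$ (inherited from~(A1) through Lemma~\ref{preliminary-lemma1}) and the strict monotonicity of $\phi_1$ in $y$ (from~(A3)(v)), neither of which may be dropped.
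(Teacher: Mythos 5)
Your proposal is correct and follows essentially the same route as the paper: pass to logarithmic coordinates to exploit compactness and convexity of $\tilde{D}\cap\tilde{D}_1$, identify the projection and slice structure, then use the strict monotonicity of $y\mapsto\phi_1(x,y)$ together with the two-root description of the $D$-slices from Lemma~\ref{preliminary-lemma1}. The one small presentational difference is that you make the upper endpoint of the slice explicit as $\tilde{Y}_2(x)=\min\bigl(Y_2(x),\,\overline{y}_1(x)\bigr)$, where $\overline{y}_1(x)$ is the root of $\phi_1(x,\cdot)=1$; the paper never names this minimum, instead deriving $\tilde\beta_1(\alpha)=\ln Y_1(e^\alpha)$ and $\max\{\tilde{P},\tilde\phi_1\}<1$ on the open slice directly from the chain $\tilde\phi_1(\alpha,\beta)<\tilde\phi_1(\alpha,\tilde\beta_2(\alpha))\leq 1$ and the inclusion $[\tilde\beta_1,\tilde\beta_2]\subset[\ln Y_1,\ln Y_2]$. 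Both routes rely on exactly the same two non-negotiable inputs, as you correctly flag: the only-two-roots property of $P(x,\cdot)=1$ coming from strict convexity, and strict monotonicity of $\phi_1(x,\cdot)$ from (A3)(v).
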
 
\begin{proof} To prove this lemma, we consider  the jump generating  function $\tilde{P}$ defined by \eqref{function-tilde-P}, the jump generating function $\tilde\phi_1:\R^2\to \R_+$, defined for  $(\alpha,\beta)\in\R^2$, by 
\[
\tilde\phi_1(\alpha,\beta) = \sum_{k=(k_1,k_2)\in\Z^2} \exp(\alpha k_1 + \beta k_2) \mu_1(k)
\]
and the sets $\tilde{D}= \{ (\alpha,\beta)\in\R^2: \tilde{P}(\alpha, \beta)\leq 1\}$ and $
\tilde{D}_1 =\{(\alpha,\beta)\in\R^2~: \tilde\phi_1(\alpha,\beta) \leq 1\}$. Recall that for $x= e^\alpha$ and $y=e^\beta$, 
\[
\tilde{P}(\alpha, \beta) = P(x,y), \quad \tilde\phi_1(\alpha, \beta) = \phi_1(x,y),
\]
and that the mapping $(\alpha,\beta)\to (x,y)=(e^\alpha, e^\beta)$ determines a homeomorphism from $\R^2$ to $]0,+\infty[^2$ and maps the set $\tilde{D}\cap \tilde{D}_1$  to the set $D\cap D_1$. Hence, to prove our lemma, it is sufficient to show that under our assumptions, the following assertions hold
\begin{itemize}
\item[--] there exist $\alpha^*, \alpha^{**}\in \R$ such that $\alpha^* < \alpha^{**}$ and  
\[
[\alpha^*, \alpha^{**}] = \{\alpha\in\R : (\alpha,\beta)\in \tilde{D}_1\cap \tilde{D} \; \text{for some} \; \beta\in\R \}.
\] 
\item[--] for any $\alpha\in [\alpha^*, \alpha^{**}] $,  there exists  $\tilde\beta_2(\alpha) \geq \ln Y_1(e^\alpha)$ such that 
\[
\{\beta\in\R~:~(\alpha,\beta)\in \tilde{D}_1\cap \tilde{D}\} = [\ln {Y}_1(e^\alpha), \tilde{\beta}_2(\alpha)], 
\] 
 the points $\beta_1(\alpha) = \ln Y_1(e^\alpha)$ and $\tilde{\beta}_2(\alpha)$ are the only real  positive solutions of the equation 
 \be\label{eq0000_preliminary_lemma2} 
 \max\{\tilde{P}(\alpha,\beta), \tilde\phi_1(\alpha,\beta)\} = 1,
 \ee
 and  $\ln Y_1(e^\alpha) < \tilde{\beta}_2(\alpha)$ if and only if $\alpha^*< \alpha <\alpha^{**}$. 
\end{itemize} 
For this we remark that the set $\tilde{D}\cap \tilde{D}_1$ is  compact convex and has a non empty interior because 
\begin{itemize}
\item[--] the function $\tilde\phi_1$ is convex and consequently also the set  $\tilde{D}_1$ is also convex;
\item[--]  the set $\tilde{D}$ is convex and compact (see the proof of Lemma~\ref{preliminary-lemma1});
\item[--] by Assumption (A3)(iii), the set $D\cap D_1$ has a non-empty interior, and consequently, since the mapping $(\alpha,\beta)\to (x,y)=(e^\alpha, e^\beta)$ determines a homeomorphism from $\R^2$ to $]0,+\infty[^2$ and maps the set $\tilde{D}_1\cap\tilde{D}$ to the set $D_1\cap D$, the set  $\tilde{D}_1\cap\tilde{D}$ has also   a non-empty interior. 
\end{itemize} 
From this, it follows that the following assertions hold:
\begin{itemize} 
\item[--]  there exist $\alpha^* > 0$ and $\alpha^{**} > \alpha^*$ such that 
\[
[\alpha^*, \alpha^{**}] = \{\alpha\in \R~: (\alpha, \beta)\in \tilde{D}_1\cap\tilde{D} \; \text{for some} \; \beta\in\R\}, 
\]
\item[--] for any $\alpha\in[\alpha^*, \alpha^{**}]$, there exists $\tilde\beta_1(\alpha), \tilde\beta_2(\alpha)\in ]0,+\infty[$ such that $\tilde\beta_1(\alpha) \leq \tilde\beta_2(\alpha)$, 
\[
[\tilde\beta_1(\alpha), \tilde\beta_2(\alpha)] = \{\beta \in\R: (\alpha,\beta)\in\tilde{D}\cap \tilde{D}_1\},
\]
and
\[
\max\{\tilde\phi_1(\alpha,  \tilde{\beta}_1(\alpha)), \tilde{P}(\alpha,  \tilde{\beta}_1(\alpha))\}  = \max\{\tilde\phi_1(\alpha,  \tilde{\beta}_2(\alpha)), \tilde{P}(\alpha,  \tilde{\beta}_2(\alpha))\}  = 1,
\]
\item[--]  for any $\alpha\in]\alpha^*, \alpha^{**}[$
\[
\tilde\beta_1(\alpha) < \tilde\beta_2(\alpha).
\]
\end{itemize} 
Hence to complete our proof, it is sufficient  to show that for any $\alpha\in[\alpha^*, \alpha^{**}]$, the following relations hold: 
\be\label{eq111-preliminary-lemma2}
\tilde\beta_1(\alpha) = \ln Y_1(e^\alpha),
\ee
\be\label{eq113-preliminary-lemma2}
\max\{\tilde{P}(\alpha,\beta), \tilde\phi_1(\alpha,\beta)\} < 1 \quad \quad \text{if} \quad \quad \beta_1(\alpha) < \beta < \tilde\beta_2(\alpha), 
\ee
and
\be\label{eq112-preliminary-lemma2}
 \alpha^*< \alpha <\alpha^{**} \quad \text{if} \quad \tilde\beta_1(\alpha) < \tilde{\beta}_2(\alpha).
\ee
Remark moreover that if \eqref{eq113-preliminary-lemma2} holds, then  any $\beta$ such that $\tilde\beta_1(\alpha) < \beta < \tilde{\beta}_2(\alpha)$, the point $(\alpha,\beta)$ belongs to the interior of the set $\tilde{D}\cap\tilde{D}_1$ and consequently, \eqref{eq112-preliminary-lemma2} also holds. Hence, to complete our proof, it is sufficient to prove that  for any $\alpha\in[\alpha^*, \alpha^{**}]$, \eqref{eq111-preliminary-lemma2} and \eqref{eq113-preliminary-lemma2} hold. 
To get these relations, we remark that by Lemma~\ref{preliminary-lemma1} and since $\tilde{D}\cap \tilde{D}_1 \subset \tilde{D}$, one has 
\[
 [\alpha^*,\alpha^{**}] \subset \{\alpha \in\R: (\alpha,\beta)\in\tilde{D} \quad \text{for some} \quad \beta\in\R\} = [\ln x^*_P, \ln x^{**}_P]
\]
and that for any $\alpha\in [\alpha^*,\alpha^{**}]$,
\[
[\tilde\beta_1(\alpha), \tilde\beta_2(\alpha)]  \subset\{\beta\in\R: (\alpha,\beta)\in \tilde{D}\} = [\ln Y_1(e^\alpha), \ln Y_2(e^\alpha)]. 
\]
Since because of Assumption (A3), the function $\beta \to \tilde\phi_1(\alpha,\beta)$ is finite and strictly increasing in a neighborhood of the line segment $[0, \ln Y_2(e^\alpha)]$, one gets therefore that for any $\alpha\in [\alpha^*,\alpha^{**}]$ and $\beta < \tilde\beta_2(\alpha)$, 
\[
\tilde\phi_1(\alpha,\beta) < \tilde\phi_1(\alpha, \tilde\beta_2(\alpha)) \leq 1.
\]
This implies that for any $\alpha\in [\alpha^*,\alpha^{**}]$, \eqref{eq111-preliminary-lemma2} holds.  Moreover, since by Lemma~\ref{preliminary-lemma1}, $\tilde{P}(\alpha, \beta) < 1$ for any $\beta\in]\ln Y_1(e^\alpha), \ln Y_2(e^\alpha)[$, from the last relation it follows that for any  $\beta\in ]\ln Y_1(e^\alpha), \tilde\beta_2(\alpha)[$, \eqref{eq113-preliminary-lemma2} also holds. 
\end{proof}

As a consequence of Lemma~\ref{preliminary-lemma2} we obtain

\begin{cor}\label{preliminary_cor2} Under the assumptions (A1) and (A3),  for any $x\in [x^*_P, x^{**}_P] $,  
\begin{enumerate}[label=(\roman*)]
\item $x\in[x^*,x^{**}]$ if and only if $\phi_1(x, Y_1(x)) \leq 1$; 
\item  $\phi_1(x, Y_1(x)) < 1$ if $x^* < x < x^{**}$;
\item  $\phi_1(x^{**}, Y_1(x^{**})) = 1$ if $x^{**} < x^{**}_P$; 
\item $\phi_1(x^*, Y_1(x^*)) = 1$ if $x^* > x^*_P$.
\end{enumerate}
\end{cor}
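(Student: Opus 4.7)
The corollary follows directly from Lemma~\ref{preliminary-lemma2} once one exploits a simple monotonicity property of $y\mapsto\phi_1(x,y)$. The plan is first to record this monotonicity, then dispatch (i) by a tautological rewriting, (ii) by comparing $Y_1(x)$ to $\tilde Y_2(x)$, and (iii)--(iv) by a continuity/contradiction argument at the endpoints.

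Since $\mu_1$ is supported on $\Z\times\Z_+$, the function $y\mapsto\phi_1(x,y)$ is, for any $x>0$, a power series in $y$ with non-negative coefficients. Assumption (A3)(v) provides some $(j_1,j_2)$ with $j_2>0$ and $\mu_1(j_1,j_2)>0$, so this series has a strictly positive coefficient on some $y^{j_2}$ with $j_2>0$. Hence $y\mapsto\phi_1(x,y)$ is strictly increasing on $]0,{+}\infty[$ for every $x>0$. For (i), Lemma~\ref{preliminary-lemma1} gives $P(x,Y_1(x))=1$ for all $x\in[x^*_P,x^{**}_P]$, so $(x,Y_1(x))\in D$ automatically. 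Consequently the condition $\phi_1(x,Y_1(x))\leq 1$ is equivalent to $(x,Y_1(x))\in D\cap D_1$, and by Lemma~\ref{preliminary-lemma2} this is equivalent to the existence of some $y>0$ with $(x,y)\in D\cap D_1$, i.e.\ to $x\in[x^*,x^{**}]$. (In the converse direction one uses that $Y_1(x)\in[Y_1(x),\tilde Y_2(x)]$ whenever $x\in[x^*,x^{**}]$.)

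For (ii), if $x^*<x<x^{**}$ then Lemma~\ref{preliminary-lemma2} yields $Y_1(x)<\tilde Y_2(x)$, and both points solve $\max\{P(x,\cdot),\phi_1(x,\cdot)\}=1$. Since $\phi_1(x,\tilde Y_2(x))\leq 1$, strict monotonicity of $y\mapsto\phi_1(x,y)$ gives
\[
\phi_1(x,Y_1(x))\;<\;\phi_1(x,\tilde Y_2(x))\;\leq\;1.
\]
For (iii), argue by contradiction: suppose $x^{**}<x^{**}_P$ but $\phi_1(x^{**},Y_1(x^{**}))<1$. By Lemma~\ref{preliminary-lemma1}, the function $Y_1$ is continuous at $x^{**}$, and $\phi_1$ is continuous in a neighborhood of $(x^{**},Y_1(x^{**}))$ by Assumption (A3)(ii). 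Therefore there exists $\delta>0$ with $x^{**}{+}\delta<x^{**}_P$ and $\phi_1(x,Y_1(x))<1$ for all $x\in[x^{**},x^{**}{+}\delta]$. By part (i) this forces $[x^{**},x^{**}{+}\delta]\subset[x^*,x^{**}]$, a contradiction. Assertion (iv) follows by the analogous argument at the left endpoint $x^*$. The only mild subtlety is to make sure $Y_1$ remains well-defined slightly past $x^{**}$ in (iii), but this is exactly what the strict inequality $x^{**}<x^{**}_P$ provides via Lemma~\ref{preliminary-lemma1}.
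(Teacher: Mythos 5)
Your proof is correct and follows essentially the same route as the paper: reduce (i) to Lemma~\ref{preliminary-lemma2} via the observation that $P(x,Y_1(x))=1$, get (ii) from the strict monotonicity of $y\mapsto\phi_1(x,y)$ together with $Y_1(x)<\tilde Y_2(x)$, and deduce (iii)--(iv) by a continuity argument at the endpoints. The only cosmetic difference is in (iii): the paper combines $\phi_1(x,Y_1(x))>1$ on $(x^{**},x^{**}_P]$ (from (i)) with $\phi_1(x,Y_1(x))<1$ on $(x^*,x^{**})$ (from (ii)) and continuity, while you argue by contradiction from $\phi_1(x^{**},Y_1(x^{**}))\leq 1$ alone; both are valid formulations of the same continuity squeeze.
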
 
\begin{proof} Indeed, since $P(x, Y_1(x)) = 1$  for any $x\in [x^*_P, x^{**}_P]$, the first assertion of this corollary  follows from Lemma~\ref{preliminary-lemma2}.

To get (ii), we recall that by Lemma~\ref{preliminary-lemma2}, for any $x^* < x < x^{**}$ one has $Y_1(x) < \tilde{Y}_2(x)$ and $\max\{\phi_1(x, \tilde{Y}_2(x)), P(x, \tilde{Y}_2(x))\} = 1$. Since under the hypotheses (A3), for any $x\in[x^*,P, x^{**}_P]$, the function $y\to \phi_1(x,y)$ is finite and strictly increasing n a neighborhood of the line segment $[0, \ln Y_2(e^\alpha)]$, from this it follows that $\phi_1(x, Y_1(x)) < 1$ whenever $x^* < x < x^{**}$. The assertion (ii) is therefore also proved.  

Suppose now that $x^{**} < x^{**}_P$. Then by the first assertion  of our corollary, 
\be\label{eq1_preliminary_cor2}
\phi_1(x, Y_1(x)) > 1 \quad \text{if} \quad x^{**} < x \leq x^{**}_P,
\ee
and by the second assertion, 
\be\label{eq2_preliminary_cor2}
\phi_1(x, Y_1(x)) < 1 \quad \text{if} \quad x^{*} < x < x^{**}.
\ee
Since the function $\phi_1$ is continuous in a neighborhood of the set $D$, the function $Y_1$ is continuous on $[x^*_P, x^{**}_P]$  and $(x, Y_1(x))\in D$ for any $x\in[x^*_P, x^{**}_P]$, the function $x\to \phi_1(x,Y_1(x))$ is therefore continuous on $[x^*_P, x^{**}_P]$, and consequently, relations \eqref{eq1_preliminary_cor2} and \eqref{eq2_preliminary_cor2} prove that $\phi_1(x^{**}, Y_1(x^{**})) = 1$. The assertion (iii) is therefore also proved. The proof of the assertion (iv) is quite similar. 
\end{proof}

\section{The Classification in  Eight Regions}\label{section_cases} 
This section is devoted to the proof of Proposition~\ref{cases} establishing our classification into eight regions in the set of parameters of the random walk $(Z(n))$.

\subsection{The main idea of the proof} 
Remark first of all that the cases (B0){-}(B7) have a simple geometrical interpretation~:~ if we denote by $[(x,y), (\tilde{x},\tilde{y})]$ the line segment in $\R^2$ with the end-points at $(x,y)$ and $(\tilde{x},\tilde{y})$, then by  Lemma~\ref{convexity_lemma1} below,   for any $x\in[x^*_P, x^{**}_P]$ and $y\in [y^*_P, y^{**}_P]$, one and only one of the following assertions holds:
\begin{itemize}
\item the line segments $[(x, Y_1(x)), \, (x, Y_2(x))]$ and $[(X_1(y), y), \, (X_2(y), y)]$ have the common point $(x,y)\in D$,
\item these line segments are disjoint and $(x,y)\not\in D$.
\end{itemize} 
The case  (B0) occurs when the line segments 
 \be\label{line segments**}
 [(x^{**}, Y_1(x^{**})), \, (x^{**}, Y_2(x^{**}))] \quad \text{ and }\quad [(X_1(y^{**}), y^{**}), \, (X_2(y^{**}), y^{**})]
 \ee
 have the common point $(x^{**},y^{**})$ in the interior of the set $D$.  The cases (B1), (B3), (B5) and (B7) occur when these line segments have the common point $(x^{**},y^{**})$ on the boundary of the set $D$:~ 
 \[
 (x^{**}, y^{**})\in \begin{cases} {\mathcal S}_{22} &\text{in the case (B1)},\\
  {\mathcal S}_{12} &\text{in the case (B3)},\\
   {\mathcal S}_{21} &\text{in the case (B5)},\\
  {\mathcal S}_{11} &\text{in the case (B7)}.\\
 \end{cases} 
\] 
The cases (B2), (B4) and (B6)  occur when the line segments  \eqref{line segments**} are disjoint and  $(x^{**}, y^{**})~\not\in~D$. The nearest to $(x^{**}, y^{**})$ vertices of these line segments belong to ${\mathcal S}_{22}$ in the case (B2), to ${\mathcal S}_{12}$ in the case (B4) and to ${\mathcal S}_{21}$ in the case (B6).

The main idea of our proof is the following~:~  \\ 
 First, we  show that  the point $(x^{**},y^{**})$ either belongs to each of the line segments 
 \be\label{sec_cases_line segments**}
 [(x^{**}, Y_1(x^{**}), \, (x^{**}, Y_2(x^{**})] \quad \text{and} \quad [(X_1(y^{**}), y^{**}), \, (X_2(y^{**}), y^{**})]
 \ee
 or does not belong to any of them. With this result we describe all possible cases~: 
 \be\label{eq-cases-(a)- (i) } 
 \begin{matrix}
 \text{(a)} &X_1(y^{**}) < x^{**} < X_2(y^{**}) \quad \text{and}  \quad Y_1(x^{**}) < y^{**} < Y_2(x^{**})\\
 \text{(b)} &X_1(y^{**}) < x^{**} = X_2(y^{**})  \quad \text{  and }  \quad   Y_1(x^{**}) < y^{**} = Y_2(x^{**})   \\
 \text{(c)} &x^{**} > X_2(y^{**})  \quad \text{  and }   \quad  y^{**} > Y_2(x^{**})  \\
 \text{(d)} &x^{**} = X_1(y^{**})  \quad \text{  and }   \quad Y_1(x^{**}) < y^{**} = Y_2(x^{**})\\
 \text{(e)} &x^{**} < X_1(y^{**})  \quad \text{  and }   \quad y^{**} > Y_2(x^{**})\\
 \text{(f)} &X_1(y^{**}) < x^{**}  = X_2(y^{**})  \quad \text{  and }   \quad y^{**} = Y_1(x^{**})\\
 \text{(g)} &x^{**} > X_2(y^{**})  \quad \text{  and }   \quad y^{**} < Y_1(x^{**})\\
 \text{(h)} &x^{**} = X_1(y^{**})  \quad \text{  and }   \quad y^{**} =  Y_1(x^{**})\\
 \text{(i)} &x^{**} < X_1(y^{**})  \quad \text{  and }   \quad y^{**} < Y_1(x^{**}) 
 \end{matrix} 
\ee
 \noindent 
Next, using \eqref{eq2-critical-points}, we prove that the cases (B0) - (B7)  correspond respectively  to the cases (a){-}(h), and the case  i)  never holds. 
 
\subsection{Preliminary results for the proof of Proposition~\ref{cases}} 
 We begin our proof with the following  preliminary results.

\begin{lemma}\label{convexity_lemma1} Under the hypotheses (A1), for any $x\in [x^*_P, x^{**}_P]$ and $y\in [y^*_P, y^{**}_P]$,   the line segments 
$[(x, Y_1(x)), (x,Y_2(x))]$ and $[(X_1(y),y), (X_2(y),y)]$ 
 are disjoint  if and only if 
the point $(x,y)$ does not belong to the set $D$.
\end{lemma}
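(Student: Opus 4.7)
The plan is to observe that the two line segments in question lie on perpendicular lines: the first is the vertical segment at abscissa $x$, and the second is the horizontal segment at ordinate $y$. Consequently the only point at which they can possibly meet is $(x,y)$ itself, so they are disjoint if and only if at least one of them fails to contain $(x,y)$. The whole statement then reduces to showing that the two containment conditions are equivalent, and both amount to $(x,y)\in D$.

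Concretely, I will proceed in two short steps. First, by Proposition~\ref{propx*}(3), for $x\in[x^*_P,x^{**}_P]$ the vertical fiber $\{y'\in\,]0,+\infty[: (x,y')\in D\}$ is exactly the interval $[Y_1(x),Y_2(x)]$. Hence $(x,y)$ belongs to the segment $[(x,Y_1(x)),(x,Y_2(x))]$ if and only if $(x,y)\in D$. Symmetrically, by Proposition~\ref{propx*}(5), for $y\in[y^*_P,y^{**}_P]$ one has $(x,y)\in[(X_1(y),y),(X_2(y),y)]$ if and only if $(x,y)\in D$.

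Second, assemble the equivalence. If $(x,y)\in D$ then both segments contain $(x,y)$, so their intersection is nonempty (it is $\{(x,y)\}$). Conversely, suppose the two segments share a point $(x',y')$. Being on the vertical segment forces $x'=x$, and being on the horizontal segment forces $y'=y$, so $(x',y')=(x,y)$, and then either of the equivalences above gives $(x,y)\in D$. Taking contrapositives, the segments are disjoint iff $(x,y)\notin D$.

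There is essentially no obstacle: the lemma is a direct consequence of the parametrization of the slices of $D$ by $Y_1,Y_2$ and $X_1,X_2$ provided by Proposition~\ref{propx*}, combined with the trivial fact that a vertical line and a horizontal line meet in at most one point. No appeal to (strict) logarithmic convexity or to the assumption (A3) is needed for this statement.
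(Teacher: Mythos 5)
Your proof is correct and is essentially the same argument as the paper's: you identify the two segments as the vertical and horizontal slices of $D$ through $(x,y)$, note that perpendicular segments can only meet at $(x,y)$, and conclude. The only cosmetic difference is that you cite Proposition~\ref{propx*}(3),(5) for the slice description, whereas the paper cites Lemma~\ref{preliminary-lemma1} directly — but the former is established via the latter, so the substance is identical.
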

\begin{proof} Indeed, by Lemma~\ref{preliminary-lemma1}, for any $x\in [x^*_P, x^{**}_P]$ and $y\in [y^*_P, y^{**}_P]$, the line segment $[(x, Y_1(x)), (x,Y_2(x))]$ is the set of all points $(x',y')\in D$ with $x' = x$, and similarly the line segment $[(X_1(y),y), (X_2(y),y)]$ is the set of all points $(x',y')\in D$ with $y' = y$. Hence, if $(x,y)\in D$ then the both line segments $[(x, Y_1(x)), (x,Y_2(x))]$ and $[(X_1(y),y), (X_2(y),y)]$ contain the point $(x,y)$. Conversely, if these line segments are not disjoint, then the point  $(x,y)$ belongs to each of them, and consequently $(x,y)\in~D$. 
\end{proof} 

Since for any $x\in [x^*_P, x^{**}_P]$ and $y\in [y^*_P, y^{**}_P]$, the point $(x,y)$ is the only point that could belong to  the both line segments $[(x, Y_1(x)), (x,Y_2(x))]$ and $[(X_1(y),y), (X_2(y),y)]$,  the above lemma implies the following statement.

\begin{cor}\label{convexity_cor} Under the hypotheses (A1), for any $x\in [x^*_P, x^{**}_P]$ and $y\in [y^*_P, y^{**}_P]$,  if the point $(x,y)$ does not belong to some of the line segments $[(x, Y_1(x)), (x,Y_2(x))]$ or $[(X_1(y),y), (X_2(y),y)]$ then $(x,y)$ neither belongs  to any of them. 
\end{cor}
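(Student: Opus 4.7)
The plan is to proceed by contraposition: I assume $(x,y)$ belongs to at least one of the two line segments and show that it must then belong to both. Suppose, to fix ideas, that $(x,y) \in [(x, Y_1(x)), (x,Y_2(x))]$. Every point on this vertical segment has first coordinate $x$ and second coordinate lying in $[Y_1(x), Y_2(x)]$, so $Y_1(x) \leq y \leq Y_2(x)$. By Lemma~\ref{preliminary-lemma1}, this places $(x,y)$ in the set $D$.

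Once $(x,y)\in D$, Lemma~\ref{convexity_lemma1} tells me that the two segments $[(x, Y_1(x)), (x,Y_2(x))]$ and $[(X_1(y),y), (X_2(y),y)]$ are \emph{not} disjoint: they share at least one common point. The key geometric observation is that these segments are axis-parallel and of different orientation. The first lies entirely on the vertical line $\{(x',y')\in\R^2:x'=x\}$, while the second lies entirely on the horizontal line $\{(x',y')\in\R^2:y'=y\}$. These two lines intersect only at the point $(x,y)$, so the only candidate for a common point of the two segments is $(x,y)$ itself. Hence $(x,y)$ must also belong to $[(X_1(y),y), (X_2(y),y)]$.

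The symmetric case, where one assumes $(x,y)\in[(X_1(y),y),(X_2(y),y)]$ first, is handled identically by exchanging the roles of the two coordinates. Together these two cases establish that membership in one segment forces membership in the other, which is exactly the contrapositive of the statement of the corollary. There is no technical obstacle here: the corollary is a short geometric consequence of Lemma~\ref{convexity_lemma1}, with all the real content already packaged in that lemma and in Lemma~\ref{preliminary-lemma1}.
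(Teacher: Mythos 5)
Your argument is correct and follows the same route as the paper: the paper's (one-sentence) proof also combines Lemma~\ref{convexity_lemma1} with the observation that the vertical and horizontal segments can only meet at the single point $(x,y)$. The only minor difference is that you spell out the intermediate step that membership in the vertical segment forces $(x,y)\in D$, which in the paper is already packaged inside the proof of Lemma~\ref{convexity_lemma1}.
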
 

By lemma~\ref{preliminary-lemma1}, for any $x\in [x^*_P, x^{**}_P]$, the points $Y_1(x)$ and $(Y_2(x)$ are the only real and positive solutions of the equation $P(x,y) = 1$, and that $P(x,y) < 1$ for any $y\in]Y_1(x), Y_2(x)[$. Hence, for any $x\in [x^*_P, x^{**}_P]$, each of the points $(x,Y_1(x))$ and $(x,Y_2(x))$ belongs to the boundary of the set $D$, and for any $y\in]Y_1(x), Y_2(x)[$, the point $(x,y)$ belongs to the interior $\inter{D}$ of $D$. Similarly, for any $y\in[y^*_P, y^{**}_P]$, each of the points $(X_1(y), y)$ and $(X_2(y),y)$ belongs to the boundary of $D$ and for any $x\in]X_1(y), X_2(y)[$, the point $(x,y)$ belongs to $\inter{D}$. Using Corollary~\ref{convexity_cor}  it  follows the following useful for our purpose property of the set $D$: 

\begin{cor}\label{convexity_cor1} Under the hypotheses (A1), for any $(x,y)\in D$, one and only one of the following assertions holds
\begin{itemize}
\item the point $(x,y)$ belongs to the interior of the set $D$ and in this case, $X_1(y) < x < X_2(y)$ and $Y_1(x) < y < Y_2(x)$;
\item the point $(x,y)$ belongs to the boundary of $D$ and in this case, the point $(x,y)$ is an end point of each of the line segments $[(x, Y_1(x)), (x,Y_2(x))]$ and $[(X_1(y),y), (X_2(y),y)]$. 
\end{itemize} 
\end{cor}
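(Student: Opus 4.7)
The plan is to deduce this corollary directly from Lemma~\ref{preliminary-lemma1} by combining its level-set description of $D$ with the topological dichotomy $\inter{D}$ versus $\partial D$. The statement essentially says that $D$ looks, locally at any of its points, either like a genuinely two-dimensional interior point (strictly inside both parametrizing segments) or like a boundary corner where both parametrizations pinch down to an endpoint. No new machinery is needed beyond what has already been proved.

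First, I would record that $\inter{D} = \{(x,y)\in\,]0,+\infty[^2 : P(x,y) < 1\}$ and $\partial D\cap\,]0,+\infty[^2 = \{(x,y)\in\,]0,+\infty[^2 : P(x,y) = 1\}$. This is a direct consequence of the continuity and strict (log-)convexity of $P$, both of which were already used in the proof of Lemma~\ref{preliminary-lemma1} via the Laplace transform $\widetilde{P}$ of~\eqref{function-tilde-P}. Moreover, by Proposition~\ref{propx*}, $D$ is compact and does not meet the coordinate axes, so this characterization exhausts all of $D$.

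Next, fix $(x,y)\in D$ and apply Lemma~\ref{preliminary-lemma1}: one has $x\in[x^*_P,x^{**}_P]$, $y\in[y^*_P,y^{**}_P]$, $y\in[Y_1(x),Y_2(x)]$, and $x\in[X_1(y),X_2(y)]$, with $Y_1(x),Y_2(x)$ being the only positive real roots of $P(x,\cdot)=1$ and analogously $X_1(y),X_2(y)$ for $P(\cdot,y)=1$. If $(x,y)\in\inter{D}$, then $P(x,y)<1$, so $y\notin\{Y_1(x),Y_2(x)\}$, which together with $y\in[Y_1(x),Y_2(x)]$ forces $Y_1(x)<y<Y_2(x)$ (in particular $x\in\,]x^*_P,x^{**}_P[$, so the segment is non-degenerate). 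The symmetric argument yields $X_1(y)<x<X_2(y)$. If instead $(x,y)\in\partial D$, then $P(x,y)=1$, so $y\in\{Y_1(x),Y_2(x)\}$ and $x\in\{X_1(y),X_2(y)\}$, meaning $(x,y)$ is an endpoint of each of the segments $[(x,Y_1(x)),(x,Y_2(x))]$ and $[(X_1(y),y),(X_2(y),y)]$.

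Mutual exclusion of the two cases is then immediate: the strict inequalities in the first case require $P(x,y)<1$, whereas the endpoint characterization in the second case forces $P(x,y)=1$. I do not anticipate any real obstacle; the only minor technical point is the identification of $\inter{D}$ and $\partial D$ with the sets $\{P<1\}$ and $\{P=1\}$, which is handled by the strict log-convexity of $D$ and its compact containment in $]0,+\infty[^2$ noted above.
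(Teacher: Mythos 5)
Your proof is correct and follows essentially the same route as the paper's: both rest on Lemma~\ref{preliminary-lemma1} to identify $Y_1(x),Y_2(x)$ (resp.\ $X_1(y),X_2(y)$) as the sole positive roots of the kernel equation on each vertical (resp.\ horizontal) slice, plus the level-set dichotomy $\inter{D}=\{P<1\}$, $\partial D=\{P=1\}$ coming from convexity of $\tilde{P}$ and nonemptiness of $\inter{D}$. The paper phrases this more tersely (immediately deriving the boundary/interior status of $(x,Y_i(x))$ from Lemma~\ref{preliminary-lemma1} and invoking Corollary~\ref{convexity_cor} to tie the two segment parametrizations together), whereas you make the $\{P<1\}$ vs.\ $\{P=1\}$ identification explicit and run the symmetric slice argument directly; this is clean and avoids any appeal to Corollary~\ref{convexity_cor}, which is a mild simplification rather than a different idea.
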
 
 
By Lemma~\ref{preliminary-lemma1}, for any $x\in[x^*_P, x^{**}_P]$,  we have
\[
Y_1(x) = Y_2(x) \quad \text{if and only if} \quad x\in\{x^*_P, x^{**}_P\} 
\]
and, for any $y\in[y^*_P, y^{**}_P]$, 
\[
X_1(y) = X_2(y) \quad \text{if and only if} \quad y\in\{y^*_P, y^{**}_P\}, 
\]
and,  the four points $(x^{*}_P, Y_1(x^*_P))$, $(x^{**}_P, Y_1(x^{**}_P))$, $(X_1(y^*_P), y^*_P)$ and $(X_2(y^{**}_P), y^{**}_P)$ are two by two distinct. Hence, the case with $x= X_1(y)=X_2(y)$ and $y =Y_1(x) = Y_2(x)$ is not possible, and as a straightforward consequence of Corollary~\ref{convexity_cor} and Corollary~\ref{convexity_cor1} one gets 

\begin{cor}\label{convexity_cor2} Under the hypotheses (A1), for any $x\in [x^*_P, x^{**}_P]$ and $y\in [y^*_P, y^{**}_P]$, one and only one of the following cases holds: 
\be\label{eq2-cases-(a)- (i)} 
 \begin{matrix}
 \text{(a)} & X_1(y) < x < X_2(y) \quad \text{and}  \quad Y_1(x) < y < Y_2(x)\\
 \text{(b)} &x = X_2(y) > X_1(y)  \quad \text{  and }  \quad y = Y_2(x) > Y_2(x) \\
 \text{(c)}  &x > X_2(y)  \quad \text{  and }   \quad y > Y_2(x)\\
 \text{(d)} & x = X_1(y)  \quad \text{  and }   \quad Y_1(x) < y = Y_2(x)\\
 \text{(e)} &x < X_1(y)  \quad \text{  and }   \quad y > Y_2(x)\\
 \text{(f)}  &X_1(y) < x  = X_2(y)  \quad \text{  and }   \quad y = Y_1(x)\\
 \text{(g)}  &x > X_2(y)  \quad \text{  and }   \quad y < Y_1(x)\\
 \text{(h)} &x = X_1(y)  \quad \text{  and }   \quad y =  Y_1(x)\\
 \text{ (i) }   &x < X_1(y)  \quad \text{  and }   \quad y < Y_1(x) 
 \end{matrix} 
\ee
\end{cor}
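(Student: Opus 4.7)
My plan is to deduce the nine-case trichotomy directly from the two preceding corollaries, treating separately the case where $(x,y)\in D$ and the case where $(x,y)\notin D$, and then refining each of these according to whether the relevant inequalities are strict.

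First I would apply Corollary~\ref{convexity_cor} to the fixed pair $(x,y)\in[x^*_P,x^{**}_P]\times[y^*_P,y^{**}_P]$. This gives a clean dichotomy: either $(x,y)$ lies on both of the segments $[(x,Y_1(x)),(x,Y_2(x))]$ and $[(X_1(y),y),(X_2(y),y)]$ (equivalently $(x,y)\in D$ by Lemma~\ref{convexity_lemma1}), or it lies on neither of them (equivalently $(x,y)\notin D$).

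Next I would handle the case $(x,y)\in D$ via Corollary~\ref{convexity_cor1}. If $(x,y)\in \inter{D}$, then $X_1(y)<x<X_2(y)$ and $Y_1(x)<y<Y_2(x)$, giving case (a). If instead $(x,y)\in\partial D$, then the same corollary states that $(x,y)$ is an endpoint of each of the two segments, so $x\in\{X_1(y),X_2(y)\}$ and $y\in\{Y_1(x),Y_2(x)\}$. This yields four potential sub-configurations, which I would match to cases (b), (d), (f), (h) depending on which endpoints are attained. The delicate point is that whenever $x=X_1(y)$ and $x=X_2(y)$ both hold (i.e. $X_1(y)=X_2(y)$), Lemma~\ref{preliminary-lemma1} forces $y\in\{y^*_P,y^{**}_P\}$, and similarly $Y_1(x)=Y_2(x)$ forces $x\in\{x^*_P,x^{**}_P\}$; by the remark preceding the corollary, the four corner points are pairwise distinct, so $X_1(y)=X_2(y)$ and $Y_1(x)=Y_2(x)$ cannot occur simultaneously. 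This guarantees that at most one of cases (b), (d), (f), (h) applies, and the strict inequalities in their statements (such as $Y_1(x)<Y_2(x)$ in (d)) are consistent with the classification, with any ``degenerate'' boundary point being absorbed into the corner case (h).

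For the complementary case $(x,y)\notin D$, neither segment contains $(x,y)$. Since $(x,y)\in[x^*_P,x^{**}_P]\times[y^*_P,y^{**}_P]$, both intervals $[Y_1(x),Y_2(x)]$ and $[X_1(y),X_2(y)]$ are nonempty, so the non-membership forces $y\notin[Y_1(x),Y_2(x)]$ and $x\notin[X_1(y),X_2(y)]$; moreover, the inequalities are strict, because equality would place $(x,y)$ on $\partial D\subset D$. Thus $x<X_1(y)$ or $x>X_2(y)$, and $y<Y_1(x)$ or $y>Y_2(x)$, producing exactly the four remaining configurations (c), (e), (g), (i).

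Finally I would observe that the nine cases are pairwise incompatible, which is already built into how they are stated (interior vs.\ boundary vs.\ exterior of $D$, with disjoint sign patterns in each group), so the existence established above is automatically accompanied by uniqueness. The main obstacle — and essentially the only nontrivial point — is the boundary analysis in the second step, where one must use the distinctness of the four extremal corner points on $\partial D$ to rule out the tenth ``degenerate'' configuration $x=X_1(y)=X_2(y)$ with $y=Y_1(x)=Y_2(x)$; this is precisely the content of the sentence immediately preceding the statement of the corollary.
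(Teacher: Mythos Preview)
Your proposal is correct and follows exactly the route the paper takes: the paper presents this corollary as ``a straightforward consequence of Corollary~\ref{convexity_cor} and Corollary~\ref{convexity_cor1}'' after noting that the doubly degenerate case $x=X_1(y)=X_2(y)$, $y=Y_1(x)=Y_2(x)$ is ruled out by the pairwise distinctness of the four extremal boundary points, and your sketch simply spells out that consequence. One minor imprecision: a singly degenerate boundary point (e.g.\ $X_1(y)=X_2(y)$ with $Y_1(x)<Y_2(x)$, which forces $y\in\{y^*_P,y^{**}_P\}$) need not land in (h); for instance $y=y^{**}_P$ gives $y=Y_2(x)$ and the point falls into (d). This does not affect your argument, since such points still belong to exactly one of (b), (d), (f), (h).
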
 
In the case (a) of this statement, the point $(x,y)$ belongs to the interior of the set $D$, in each of the cases (b), (d), (f) and (h), the point $(x,y)$ belongs to the boundary of $D$, and in each of the cases (c), (e), (g) and (i), the point $(x,y)$ does not belong to the set $D$.

When applied with $x=x^{**}$ and $y=y^{**}$, this statement  proves that under the hypotheses (A1), one of the assertions (a){-}(i) of \eqref{eq-cases-(a)- (i) }  holds.

The following statement will be used to investigate the position of the nearest to $(x^{**}, y^{**})$ vertices of the line segments \eqref{sec_cases_line segments**}  when the point $(x^{**},y^{**})$ does not belong to the interior of  $D$. 
\begin{lemma}\label{convexity_lemma3} Under the hypotheses (A1), for any $x\in [x^*_P, x^{**}_P]$ and $y\in [y^*_P, y^{**}_P]$, the following assertions hold~: \\
1) If $x\leq X_1(y)$ and $Y_2(x) \leq y$, then $(x,Y_2(x)), (X_1(y),y) \in{\cal S}_{12}$. \\
2) If $y\leq Y_1(x)$ and $X_2(y) \leq x$, then $(X_2(y),y), \, (x,Y_1(x))\in{\mathcal S}_{21}$.\\
3) If $x \geq X_2(y)$ and $y  \geq X_2(x)$ then $(x,Y_2(x)), (X_2(y),y)\in{\mathcal S}_{22}$.\\
4) If $x\leq X_1(y)$ and $y\leq Y_1(x)$ then $(x, Y_1(x)), \, (X_1(y), y)\in{\mathcal S}_{11}$. 
\end{lemma}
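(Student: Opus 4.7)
I would work in the logarithmic coordinates $(\alpha,\beta)=(\ln x,\ln y)$, so that $\tilde D=\{(\alpha,\beta):\tilde P(\alpha,\beta)\leq 1\}$ is strictly convex and compact, and set $\alpha_i=\ln X_i$, $\beta_i=\ln Y_i$, as in the proof of Lemma~\ref{preliminary-lemma1}. By the obvious symmetries (exchanging $x\leftrightarrow y$, or $Y_1\leftrightarrow Y_2$ and $X_1\leftrightarrow X_2$), the four assertions reduce to a single computation, so I treat assertion~1 in detail. The central tool is the support-hyperplane inequality at a boundary point $P_*\in\partial\tilde D$: since $\nabla\tilde P(P_*)$ is a positive multiple of the outward normal to $\tilde D$, convexity gives, for every $(\alpha',\beta')\in\tilde D$,
\[
\partial_\alpha\tilde P(P_*)(\alpha'-\alpha_*)+\partial_\beta\tilde P(P_*)(\beta'-\beta_*)\leq 0.
\]

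For assertion~1 the hypothesis reads $\alpha\leq\alpha_1(\beta)$ and $\beta\geq\beta_2(\alpha)$, and I would apply this inequality twice. First, at $P_*=(\alpha,\beta_2(\alpha))\in\partial\tilde D$ with test point $(\alpha_1(\beta),\beta)\in\tilde D$: by Lemma~\ref{preliminary-lemma1} one has $\partial_\beta\tilde P(P_*)\geq 0$, the factors $\alpha_1(\beta)-\alpha\geq 0$ and $\beta-\beta_2(\alpha)\geq 0$ come from the hypothesis, and the non-negativity of the $\beta$-term then forces $\partial_\alpha\tilde P(P_*)(\alpha_1(\beta)-\alpha)\leq 0$; hence $\partial_\alpha\tilde P(P_*)\leq 0$, that is, $\partial_x P(x,Y_2(x))\leq 0$. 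Combined with $\partial_y P(x,Y_2(x))\geq 0$, this places $(x,Y_2(x))\in{\mathcal S}_{12}$ by~\eqref{eq_S_12}. An entirely parallel application at $P_{**}=(\alpha_1(\beta),\beta)$ with test point $(\alpha,\beta_2(\alpha))$, using $\partial_\alpha\tilde P(P_{**})\leq 0$ from Lemma~\ref{preliminary-lemma1}, yields $\partial_y P(X_1(y),y)\geq 0$, whence $(X_1(y),y)\in{\mathcal S}_{12}$.

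Assertions~2, 3 and 4 follow by the same device, with $\beta_1$ or $\alpha_2$ replacing $\beta_2$ or $\alpha_1$ wherever the hypotheses point to the lower-$y$ or right-$x$ boundary arc; the factor coming from the hypothesis and the one coming from the sign of the corresponding partial derivative (Lemma~\ref{preliminary-lemma1}) always multiply to a non-positive quantity, which forces the remaining gradient component to have the correct sign for the target arc ${\mathcal S}_{ij}$. The main technical obstacle I anticipate lies in the degenerate cases where a hypothesis holds with equality or the relevant candidate point coincides with a corner of $\partial D$ (i.e.\ $x\in\{x^*_P,x^{**}_P\}$ or $y\in\{y^*_P,y^{**}_P\}$); there one of the partial derivatives used above vanishes and the tangent inequality becomes uninformative, but Lemma~\ref{preliminary-lemma1} identifies the candidate point with one of the four corners shared by two adjacent arcs, so membership in the closed arc ${\mathcal S}_{ij}$ follows directly from~\eqref{eq2_S_11}--\eqref{eq2_S_22}. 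This final case analysis is purely combinatorial.
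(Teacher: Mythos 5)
Your approach is correct and genuinely different from the paper's. The paper proceeds by elimination: it first notes from \eqref{eq_S_11}--\eqref{eq_S_22} that $(x,Y_2(x))\in{\mathcal S}_{12}\cup{\mathcal S}_{22}$, then assumes for contradiction that $(x,Y_2(x))\in{\mathcal S}_{22}$ and uses the strict monotonicity of $Y_2$ and $X_2$ on ${\mathcal S}_{22}$ together with the identity $X_2\circ Y_2=\mathrm{id}$ to deduce $X_2(y)<x$, contradicting $x<X_1(y)\leq X_2(y)$. You instead extract the sign $\partial_x P(x,Y_2(x))\leq 0$ directly from the supporting-hyperplane inequality for the strictly convex set $\tilde D$, tested against $(X_1(y),y)\in D$; this is shorter, avoids the reductio, and makes the role of convexity explicit. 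Both proofs ultimately reduce matters to the boundary parameterizations and sign conditions of Lemma~\ref{preliminary-lemma1}.

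There is, however, a genuine imprecision in your handling of the degenerate case. When $\alpha_1(\beta)-\alpha=0$, i.e.\ $x=X_1(y)$, the tangent inequality indeed yields nothing about $\partial_\alpha\tilde P(P_*)$, but your claim that ``Lemma~\ref{preliminary-lemma1} identifies the candidate point with one of the four corners shared by two adjacent arcs'' is not correct: $x=X_1(y)$ can place $(x,Y_2(x))$ anywhere on ${\mathcal S}_{12}$, not just at a corner. The right argument (and the one the paper uses) is Lemma~\ref{convexity_lemma1}/Corollary~\ref{convexity_cor}: since $(X_1(y),y)\in D$, the two segments $[(x,Y_1(x)),(x,Y_2(x))]$ and $[(X_1(y),y),(X_2(y),y)]$ are not disjoint, so $(x,y)$ lies on both, hence $Y_1(x)\leq y\leq Y_2(x)$; combined with $Y_2(x)\leq y$ this forces $y=Y_2(x)$, and the two candidate points coincide. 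Membership in ${\mathcal S}_{12}$ then follows from the sign characterization at the single common point: $\partial_y P\geq 0$ there because $y=Y_2(x)$, and $\partial_x P\leq 0$ there because $x=X_1(y)$. Note also that the separate ``corner of $\partial D$'' degeneracy you single out (e.g.\ $x\in\{x^*_P,x^{**}_P\}$) is not actually an obstacle: for assertion~1 the hypothesis $x\leq X_1(y)$ already forces $x<x^{**}_P$, and at $x=x^*_P$ the vanishing of $\partial_\beta\tilde P(P_*)$ only strengthens the conclusion $\partial_\alpha\tilde P(P_*)(\alpha_1(\beta)-\alpha)\leq 0$, so the main argument applies unchanged. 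With the equality case reorganized as above, the proof is complete.
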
 
\begin{proof} Suppose that $x\in [x^*_P, x^{**}_P]$ and $y\in [y^*_P, y^{**}_P]$ and let $x \leq X_1(y)$ and $Y_2(x) \leq y$. Then by Corollary~\ref{convexity_cor2}, either $x < X_1(y)$ and $Y_2(x) < y$, or $x = X_1(y)$ and $Y_2(x) =  y$. In the second case, i.e when $x = X_1(y)$ and $Y_2(x) =  y$, we have $(x,Y_2(x)) = (X_1(y),y)$ and consequently, by the definition of ${\mathcal S}_{12}$, 
\[
(x,Y_2(x)) = (X_1(y),y) \in {\mathcal S}_{12}.
\]

Consider now the case when $x < X_1(y)$ and $Y_2(x) < y$. By the definition of the curves \eqref{eq_S_11} - \eqref{eq_S_22},  we have
\[
(x, Y_2(x))\in {\mathcal S}_{12}\cup{\mathcal S}_{22} \quad \text{and} \quad (X_1(y), y) \in{\mathcal S}_{11}\cup{\mathcal S}_{12}.
\]
If we suppose that $(x,Y_2(x))\in{\mathcal S}_{22}$, then we will get $Y_2(x)\in[Y_1(x^{**}_P), y^{**}_P]$ and consequently, since $Y_2(x) < y \leq y^{**}_P$, we will have also $y\in [Y_1(x^{**}_P), y^{**}_P]$. Since the function $Y_2$ is strictly decreasing on $[Y_1(x^{**}_P), y^{**}_P]$ and since $y > Y_2(x)$,  it  follows that 
\be\label{eq1_convexity_lemma3}
X_2(x) < X_2\circ Y_2(x) = x
\ee
where the last relation holds because by Lemma~\ref{preliminary-lemma1}, the function $X_2:~[Y_1(x^{**}_P), y^{**}_P]\to [X_2(y^{**}_P), x^{**}_P]$ is inverse to the function $Y_2:~  [X_2(y^{**}_P), x^{**}_P]\to [Y_1(x^{**}_P), y^{**}_P]$. Since $X_1(y)\leq X_2(y)$, \eqref{eq1_convexity_lemma3} contradicts the inequality $x < X_1(y)$ and consequently, when $x < X_1(y)$ and $Y_2(x) < y$, the point $(x, Y_2(x))$ belongs to ${\mathcal S}_{12}$. Similar arguments show that in this case,  the point $(X_1(y),y)$ also belongs to ${\mathcal S}_{12}$. The first assertion of Lemma\ref{convexity_lemma3} is therefore proved. 
The proof of the assertions 2)-4) is quite similar.  
\end{proof}

To investigate the cases (d) and (e) we will need the following  preliminary result.

\begin{lemma}\label{convexity_lemma4} If the conditions (A1) - (A3) are satisfied and $(x^{**}, Y_2(x^{**}))\in{\mathcal S}_{12}$, then 
\be\label{eq1-convexity-lemma4}
Y_1(x^{**}) < Y_2(x^{**}) \quad \text{and} \quad 1 < Y_2(x^{**}). 
\ee
\end{lemma}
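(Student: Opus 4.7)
The plan is to extract two consequences from the hypothesis $(x^{**},Y_2(x^{**}))\in\mathcal{S}_{12}$ via the parameterization~\eqref{eq2_S_12}: it forces $x^{**}\in[x^*_P, X_1(y^{**}_P)]$, and since Lemma~\ref{preliminary-lemma1} gives $X_1(y^{**}_P)\in\,]x^*_P,x^{**}_P[\,$, we obtain $x^{**}<x^{**}_P$. Combining this with the chain $x^*_P\leq x^*<x^{**}$ coming from~\eqref{eq-critical-points}, we arrive at the strict double inequality $x^*_P<x^{**}<x^{**}_P$. The first conclusion $Y_1(x^{**})<Y_2(x^{**})$ is then immediate from Lemma~\ref{preliminary-lemma1}, which pins down the equality $Y_1 = Y_2$ to the two endpoints of $[x^*_P,x^{**}_P]$.

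For the second conclusion $Y_2(x^{**})>1$, I would invoke strict monotonicity of $Y_2$ on the interval $[x^*_P,X_1(y^{**}_P)]$: by assertion (5) of Lemma~\ref{preliminary-lemma1}, on this interval $Y_2$ is the inverse of the strictly increasing function $X_1:[Y_1(x^*_P),y^{**}_P]\to[x^*_P,X_1(y^{**}_P)]$, hence is itself strictly increasing. Since $x^*\leq 1\leq x^{**}\leq X_1(y^{**}_P)$ by~\eqref{eq2-critical-points} and the already noted membership, the point $1$ lies in $[x^*_P, X_1(y^{**}_P)]$; from $P(1,1)=1$ one gets $Y_1(1)\leq 1\leq Y_2(1)$.

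I would then split on the sign of $x^{**}-1$. If $x^{**}>1$, strict monotonicity of $Y_2$ yields directly $Y_2(x^{**})>Y_2(1)\geq 1$. If instead $x^{**}=1$, the inequality $1\leq Y_2(1)$ does not yet upgrade to strict, and this is the delicate point. Here I would use Lemma~\ref{preliminary-lemma2}: since $x^{**}$ is by definition the right endpoint of $[x^*,x^{**}]$, the interval $[Y_1(x^{**}),\tilde Y_2(x^{**})]$ reduces to the single point $\{Y_1(x^{**})\}=\{Y_1(1)\}$; but $(1,1)\in D\cap D_1$ because $P(1,1)=1$ and $\phi_1(1,1)\leq 1$, so $1\in\{Y_1(1)\}$, forcing $Y_1(1)=1$. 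If in addition $Y_2(1)$ were equal to $1$, then $Y_1(1)=Y_2(1)$ would force $1\in\{x^*_P,x^{**}_P\}$ by Lemma~\ref{preliminary-lemma1}, contradicting the strict inequalities $x^*_P<x^{**}=1<x^{**}_P$ already established. Hence $Y_2(1)>1$, completing the proof.

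The main obstacle is precisely the borderline configuration $x^{**}=1$, where monotonicity of $Y_2$ alone does not suffice; the resolution is to exploit Lemma~\ref{preliminary-lemma2} to identify $Y_1(1)$ exactly and then apply Lemma~\ref{preliminary-lemma1} to rule out its coincidence with $Y_2(1)$. All the remaining steps are straightforward bookkeeping with the strict inequalities supplied by Proposition~\ref{propx*}, \eqref{eq-critical-points} and \eqref{eq2-critical-points}.
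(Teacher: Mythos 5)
Your proof is correct. The first half (establishing $x^*_P < x^{**} < x^{**}_P$ from the $\mathcal{S}_{12}$ parameterization and \eqref{eq-critical-points}, deducing $Y_1(x^{**}) < Y_2(x^{**})$ from Lemma~\ref{preliminary-lemma1}, strict monotonicity of $Y_2$ giving the result when $x^{**}>1$) matches the paper's argument step for step. Where you diverge is the delicate $x^{**}=1$ subcase, and your route there is genuinely different and arguably cleaner. The paper first proves $Y_2(1)\neq 1$ indirectly: it invokes Corollary~\ref{preliminary_cor2}(iii) to get $\phi_1(1,Y_1(1))=1$, uses strict monotonicity of $y\mapsto\phi_1(1,y)$ together with $Y_1(1)<Y_2(1)$ to conclude $\phi_1(1,Y_2(1))>1$, and contrasts this with $\phi_1(1,1)\leq 1$; only then does it infer $Y_1(1)=1$ from the dichotomy $Y_1(1)=1\leq Y_2(1)$ or $Y_1(1)\leq 1 = Y_2(1)$. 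You instead go straight to Lemma~\ref{preliminary-lemma2}: at the endpoint $x^{**}=1$ the interval $\{y>0:(1,y)\in D\cap D_1\}$ collapses to the singleton $\{Y_1(1)\}$, and since $(1,1)\in D\cap D_1$ this forces $Y_1(1)=1$ directly, after which Lemma~\ref{preliminary-lemma1} and $x^*_P<1<x^{**}_P$ immediately yield $Y_2(1)>Y_1(1)=1$. The two proofs rest on the same preliminary lemmas (Corollary~\ref{preliminary_cor2} is itself a consequence of Lemma~\ref{preliminary-lemma2}), but yours bypasses the $\phi_1$-monotonicity contradiction and is a touch more direct; the paper's has the merit of reusing Corollary~\ref{preliminary_cor2}, which it also needs elsewhere.
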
 
\begin{proof} Indeed, under the hypotheses of this lemma, with the definition of ${\mathcal S}_{12}$ and using \eqref{eq-critical-points}, one gets 
\be\label{eq2-convexity-lemma4}
x^*_P \leq x^* < x^{**} \leq X_1(y^{**}_P) < x^{**}_P.
\ee
Hence, in this case, $x^{**}\not= x^*_P$ and $x^{**}\not= x^{**}_P$, and consequently,  by Lemma~\ref{preliminary-lemma1}, $Y_1(x^{**}) < Y_2(x^{**})$. The first relation of \eqref{eq1-convexity-lemma4} is therefore proved. To get the second relation of \eqref{eq1-convexity-lemma4}, we recall that  by Lemma~\ref{preliminary-lemma1}, the function $Y_2$ is strictly increasing on the line segment $[x^*_P,  X_1(y^{**}_P)]$, and we remark that by\eqref{eq2-convexity-lemma4} and  \eqref{eq2-critical-points} the following relations hold :
\[x^*_P \leq 1 \leq x^{**} < X_1(y^{**}_P).
\]
 Hence, 
$
Y_2(1) \leq Y_2(x^{**})$ 
and moreover, 
\be\label{eq4-convexity-lemma4}
Y_2(1) < Y_2(x^{**}) \quad \text{whenever} \quad 1 < x^{**}. 
\ee
Remark now that, with the definition of $Y_1(1)$ and $Y_2(1)$ and since $P(1,1)=1$, we have 
\be\label{eq5-convexity-lemma4}
\text{either } \quad 
Y_1(1) = 1 \leq Y_2(1) \quad \text{or} \quad Y_1(1) \leq 1 = Y_2(1). 
\ee
With these relations and using \eqref{eq4-convexity-lemma4}, one gets 
\[
1 < Y_2(x^{**})  \quad \text{whenever} \quad 1 < x^{**}. 
\]
Now, to complete the proof of our lemma it is sufficient to show that 
\[
1 < Y_2(1) \quad \text{whenever} \quad 1 = x^{**}. 
\]
Suppose that $x^{**}=1$. Then from the first relation of \eqref{eq1-convexity-lemma4} one gets 
\be\label{eq6-convexity-lemma4}
Y_1(1) < Y_2(1),
\ee
and by  Corollary~\ref{preliminary_cor2} and using \eqref{eq2-convexity-lemma4}, we obtain  
\[
\phi(1,Y_1(1)) = 1.
\]
Since the function $y\to \phi(1, y)$ is strictly increasing, from  the last relation and \eqref{eq6-convexity-lemma4} it follows that 
$
\phi_1(1, Y_2(1)) > 1,
$
 and since  under our hypotheses, $\phi_1(1,1) \leq 1$, this proves that  $Y_2(1) \not=1$.  Hence, using again \eqref{eq6-convexity-lemma4}, we conclude  that 
 \[
Y_2(x^{**}) = Y_2(1) > Y_1(1) = 1.
\]
Lemma~\ref{convexity_lemma4} is therefore proved. 
\end{proof} 

To investigate the cases (h) and  i)  the following lemma will be used. 

\begin{lemma}\label{convexity_lemma2} Suppose that the condition (A1)  is satisfied and let $x\in [x^*_P, x^{**}_P]$, $y\in [y^*_P, y^{**}_P]$ and  $(x',y')\in D$ be such that  
\be\label{eq0_convexity_lemma2} 
x' \leq x \leq X_1(y) \quad \text{ and } \quad y' \leq y\leq Y_1(x). 
\ee
Then 
\be\label{eq0a_convexity_lemma2} 
x' = x = X_1(y)\quad \; \text{and} \; \quad y' = y = Y_1(x). 
\ee
\end{lemma}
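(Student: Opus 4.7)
The plan is to pass to logarithmic coordinates and use the strict convexity of the Laplace transform $\tilde P(\alpha,\beta) = P(e^\alpha, e^\beta)$ on $\R^2$, established in the proof of Lemma~\ref{preliminary-lemma1} under (A1). Setting $\alpha = \ln x$, $\beta = \ln y$, $\alpha' = \ln x'$, $\beta' = \ln y'$, together with $\tilde X_1(\beta) = \ln X_1(e^\beta)$ and $\tilde Y_1(\alpha) = \ln Y_1(e^\alpha)$, the hypotheses read $\tilde P(\alpha',\beta') \leq 1$, $\alpha' \leq \alpha \leq \tilde X_1(\beta)$ and $\beta' \leq \beta \leq \tilde Y_1(\alpha)$, and the conclusion~\eqref{eq0a_convexity_lemma2} becomes $(\alpha',\beta') = (\alpha,\beta)$ together with $\alpha = \tilde X_1(\beta)$ and $\beta = \tilde Y_1(\alpha)$.

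The first observation is that, for each $\beta$ in the allowed range, the strictly convex one-variable map $\alpha \mapsto \tilde P(\alpha,\beta)$ attains its unique minimum inside $[\tilde X_1(\beta), \tilde X_2(\beta)]$ and equals $1$ at the left endpoint $\tilde X_1(\beta)$. Consequently, the hypothesis $\alpha \leq \tilde X_1(\beta)$ forces simultaneously $\tilde P(\alpha,\beta) \geq 1$ (strict unless $\alpha = \tilde X_1(\beta)$) and $\partial_\alpha \tilde P(\alpha,\beta) \leq 0$; symmetrically, $\beta \leq \tilde Y_1(\alpha)$ yields $\tilde P(\alpha,\beta) \geq 1$ and $\partial_\beta \tilde P(\alpha,\beta) \leq 0$. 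Applying the subgradient inequality for the convex function $\tilde P$ at $(\alpha,\beta)$ evaluated at $(\alpha',\beta')$,
\[
\tilde P(\alpha',\beta') \geq \tilde P(\alpha,\beta) + \partial_\alpha \tilde P(\alpha,\beta)\,(\alpha'{-}\alpha) + \partial_\beta \tilde P(\alpha,\beta)\,(\beta'{-}\beta),
\]
the two gradient terms on the right are products of non-positive factors, hence non-negative, and $\tilde P(\alpha,\beta) \geq 1$; so $\tilde P(\alpha',\beta') \geq 1$. Combined with $\tilde P(\alpha',\beta') \leq 1$, this chain collapses to equalities.

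From here everything falls out of strict convexity. Equality in the subgradient inequality is possible for a strictly convex $C^1$ function only when the two points coincide, so $(\alpha',\beta') = (\alpha,\beta)$, that is $x' = x$ and $y' = y$. The equality $\tilde P(\alpha,\beta) = 1$ with $\alpha \leq \tilde X_1(\beta)$ and strict convexity of $\alpha \mapsto \tilde P(\alpha,\beta)$ force $\alpha = \tilde X_1(\beta)$, and symmetrically $\beta = \tilde Y_1(\alpha)$; translating back to $(x,y)$ yields~\eqref{eq0a_convexity_lemma2}. There is no genuine obstacle here: the whole argument rests on a rigidity property of strictly convex sublevel sets, and all the ingredients — strict convexity of $\tilde P$, the characterization of $\tilde X_1$, $\tilde Y_1$ as smallest positive roots of $\tilde P(\cdot,\beta)=1$ and $\tilde P(\alpha,\cdot)=1$, and the tangent inequality — are already available from Section~\ref{preliminary-section}.
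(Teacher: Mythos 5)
Your route is genuinely different from the paper's: the paper works entirely on the boundary $\partial D$, invoking Lemma~\ref{convexity_lemma3} to place $(x, Y_1(x))$ and $(X_1(y), y)$ on $\mathcal{S}_{11}$ and then using the monotonicity of $X_1, Y_1$ on the corresponding intervals together with $X_1(y')\leq x'$, $Y_1(x')\leq y'$; you instead use strict convexity of $\tilde P$ and the tangent inequality directly. That is a cleaner high-level argument, but as written it has a gap.

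The tangent inequality at $(\alpha,\beta)$, and the claims $\partial_\alpha\tilde P(\alpha,\beta)\leq 0$, $\partial_\beta\tilde P(\alpha,\beta)\leq 0$, all presuppose that $\tilde P$ is finite and $C^1$ at $(\alpha,\beta)$. Under (A1) alone, $\tilde P$ is only guaranteed finite in a neighborhood of the compact set $\tilde D$; a priori you only know that $(\alpha,\beta)$ lies in the bounding rectangle $[\tilde\alpha^*_P,\tilde\alpha^{**}_P]\times[\tilde\beta^*_P,\tilde\beta^{**}_P]$, which in general protrudes outside that neighborhood. (Nothing in (A1) bounds the negative jumps, so $\tilde P$ can blow up inside the bounding box.) The statement ``$\tilde P(\alpha,\beta)\geq 1$'' is harmless even if the value is $+\infty$, but once you write down $\partial_\alpha\tilde P(\alpha,\beta)$ and use the first-order inequality you are committed to finiteness and smoothness. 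This is exactly the kind of difficulty the paper's argument sidesteps, since it only ever evaluates $P$ at $(x',y')\in D$ and at the boundary points $(x,Y_1(x))$, $(X_1(y),y)$.

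The gap is fillable without retreating to the paper's method. Note that the three points $(\alpha',\beta')$, $(\tilde X_1(\beta),\beta)$, $(\alpha,\tilde Y_1(\alpha))$ all lie in $\tilde D$, and $(\alpha,\beta)$ lies in their convex hull (translate so $(\alpha,\beta)=(0,0)$: then the vertices are $(-a,-b)$, $(c,0)$, $(0,d)$ with $a,b,c,d\geq 0$, and barycentric weights $\lambda_1=(1+a/c+b/d)^{-1}$, $\lambda_2=\lambda_1 a/c$, $\lambda_3=\lambda_1 b/d$ exhibit the origin as a convex combination, with the degenerate cases trivial). By convexity of $\tilde D$ this forces $(\alpha,\beta)\in\tilde D$, so $\tilde P$ is finite and analytic there; moreover $\tilde X_1(\beta)\leq\alpha\leq\tilde X_1(\beta)$ and $\tilde Y_1(\alpha)\leq\beta\leq\tilde Y_1(\alpha)$ follow at once, and the tangent inequality then gives $(\alpha',\beta')=(\alpha,\beta)$ as you argue. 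You should make this preliminary observation explicit, since without it the derivative computations at $(\alpha,\beta)$ are not justified.
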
 
\begin{proof} Indeed, suppose that the conditions of our lemma are satisfied and let \eqref{eq0_convexity_lemma2} holds. Then according to the definition of the line segments $[x^*_P, x^{**}_P]$ and $[y^{*}_P, y^{**}_P]$, we have $x^*_P \leq x' \leq x^{**}_P$ and $y^*_P\leq y' \leq y^{**}_P$, and  by Lemma~\ref{convexity_lemma3}, the points $(x,Y_1(x))$ and $(X_1(y),y)$ belong to the set ${S}_{11}$. Using the definition of ${\mathcal S}_{11}$,  it  follows that 
\be\label{eq1-convexity-lemma2} 
x^*_P \leq x' \leq x < X_1(y^*_P) \quad \text{and} \quad 
 y^*_P \leq y' \leq y \leq Y_1(x^*_P). 
\ee
Since by Lemma~\ref{preliminary-lemma1}, the function $Y_1$ is decreasing on the line segment $[x^*_P, X_1(y^*_P)]$ and the function $X_1$ is  decreasing on the line segment $[y^*_P, Y_1(x^*_P)]$,  relations \eqref{eq1-convexity-lemma2}  imply that $
Y_1(x') \geq Y_1(x)$ and $
X_1(y') \geq X_1(y)$, 
and consequently, using \eqref{eq0_convexity_lemma2} one gets 
\be\label{eq3-convexity-lemma2} 
y' \leq y \leq Y_1(x) \leq Y_1(x') \quad \text{and} \quad x' \leq x \leq X_1(y) \leq X_1(y'). 
\ee
Under the hypotheses of our lemma, $(x',y')\in D$ and by Corollary~\ref{convexity_cor} and  the definition of the line segments $[X_1(y'), X_2(y')]$ and $[Y_1(x'), Y_2(x')]$,  we have
\[
X_1(y') \leq x' \leq X_2(y') \quad \text{and} \quad Y_1(x') \leq y' \leq Y_2(x').
\]
When combined with \eqref{eq3-convexity-lemma2} these relations prove \eqref{eq0a_convexity_lemma2}. 
\end{proof}

\subsection{Proof of Proposition~\ref{cases}} Now we are ready to complete the proof of Proposition~\ref{cases}. Under our hypotheses, by Corollary~\ref{convexity_cor2} applied with $x=x^{**}$ and $y=y^{**}$, one and only one of the cases (a)- (i)  of \eqref{eq-cases-(a)- (i) } holds, and remark that the case (a) is equivalent  to the case (B0). 

Remark furthermore that the case (B1) implies (b), and conversely, if the case (b) of \eqref{eq-cases-(a)- (i) } holds, then by Lemma ~\ref{convexity_lemma3} applied with $x=x^{**}$ and $y=y^{**}$, the point $(x^{**},y^{**})$ belongs to the set ${\mathcal S}_{22}$, and  by Lemma~\ref{preliminary-lemma1},  
\[
x^{**} < x^{**}_P \quad \; \text{and} \; \quad y^{**} < y^{**}_P
\]
because in this case, we have $Y_1(x^{**}) < Y_2(x^{**})$ and $X_1(y^{**}) < X_2(y^{**})$,  The case (B1) is therefore equivalent to the case (b). 

Similarly, the case (B2) implies the case (c) of \eqref{eq-cases-(a)- (i) } and conversely, in the case (c), by Lemma~\ref{convexity_lemma3} applied with $x=x^{**}$ and $y=y^{**}$, the points $(x^{**}, Y_2(x^{**}))$ and $(X_2(y^{**}), y^{**})$ belong to the set ${\mathcal S}_{22}$. The case (B2) is therefore equivalent to the case (c) of \eqref{eq-cases-(a)- (i) }.  

And similarly, the case (B3) (resp. (B4)) implies the case (d) (resp (e)), and  conversely, if (d) (resp. (e)) holds, then by Lemma~\ref{convexity_lemma3} applied with $x=x^{**}$ and $y=y^{**}$ the point $(x^{**},y^{**}) = (X_1(y^{**}), y^{**}) = (x^{**}, Y_2(x^{**}))$ belongs to the set  ${\mathcal S}_{12}$ (resp. the points 
  $(x^{**}, Y_2(x^{**}))$ and $(X_1(y^{**}), y^{**})$ belong to the set  ${\mathcal S}_{12}$), and by Lemma~\ref{convexity_lemma4} combined with Lemma~\ref{preliminary-lemma1} and \eqref{eq2-critical-points}, one gets 
\[
Y_1(x^{**}) < Y_2(x^{**}), \quad x^{**} < x^{**}_P \quad \text{and} \quad y^* \leq 1 < Y_2(x^{**}). 
\]
The case (d) is therefore equivalent to the case (B3), and the case (e) is equivalent to (B4). 
 
\noindent 
Similar arguments (it is sufficient to exchange the roles of $x$ and $y$) show that the case (f) is equivalent to (B5) and the case (g) is equivalent to (B6). 
 
\noindent 
Now, to complete the proof of our proposition it is sufficient  to show that the case  (i) of \eqref{eq-cases-(a)- (i) } never holds, and the case (h) is equivalent to (B7). To get this result  we apply Lemma~\ref{convexity_lemma2} with $(x',y')=(1,1)$ and $(x,y)=(x^{**},y^{**})$. 
By relations~\eqref{eq-critical-points} and~\eqref{eq2-critical-points}, we have
\be\label{eq111}
x^*_P\leq x^*\leq 1 \leq x^{**} \leq x^{**}_P \quad \text{and} \quad y^*_P\leq y^* \leq 1 \leq y^{**}\leq y^{**}_P.  
\ee
Hence, when either (h) or  (i)  holds, i.e. if 
\[
X_1(y^{**}) \geq x^{**} \quad \text{and} \quad Y_1(x^{**}) \geq y^{**},
\]
using Lemma~\ref{convexity_lemma2} with $x=x^{**}$, $y= y^{**}$ and $(x',y') = (1,1)$ we obtain   
\be\label{eq222}
(x^{**},y^{**}) = (1,1) \quad \text{and} \quad X_1(1) = Y_1(1) = 1.
\ee
The case  (i)  is therefore impossible and the case (h) is equivalent to \eqref{eq222}. Remark moreover that by Lemma~\ref{convexity_lemma3} applied with $x=x^{**}=1$ and $y=y^{**}=1$, from \eqref{eq222} it follows that the point $(1,1)$ belongs to the curve ${\mathcal S}_{11}$ 
and consequently, by Lemma~\ref{preliminary-lemma1},
\be\label{eq-333}
\partial_x P(1,1) \leq 0 \quad \; \text{and}\; \quad \partial_y P(1,1) \leq 0,
\ee
and notice that by \eqref{eq222} and using \eqref{eq-critical-points} and \eqref{eq2-critical-points},
\[
x^*_P \leq x^* < x^{**} = 1 \quad\; \text{and}\quad \; y^*_P \leq y^* < y^{**} = 1. 
\]
Since the points $(x,y) = (x^*_P, Y_1(x^*_P))$ and $(x',y') =  (X_1(y^*_P), y^*_P)$ are  the only points in ${\mathcal S}_{11}$ for which $\partial_y P(x,y)=0$ and $\partial_x P(x',y') = 0$, the last relations show that $\partial_x P(1,1) \not= 0$ and $\partial_y P(1,1) \not=0$. Using \eqref{eq-333} we conclude therefore that when (h) holds, we have also $\partial_x P(1,1) < 0$ and $\partial_y P(1,1) < 0$ and consequently the case (h) is equivalent to (B7).

\section{The Functional Equation and the Convergence Domain}\label{proof-theorem1} 
\subsection{Sketch of the proof of Theorem~\ref{theorem1}} 
The main ideas of the proof of Theorem~\ref{theorem1} are the following :  By using the method of Lyapunov functions, we first show  that the series 
\be\label{eq-generating-functions} 
H_j(x,y) = \sum_{k=(k_1,k_2)\in\Z^2_+} g(j,k) x^{k_1}y^{k_2}, \quad j\in\Z^2_+ 
\ee
(and consequently, also the series \eqref{series-h} and \eqref{series-h1-h2}) converge on a suitable polycircular set $\Omega(\Theta)$ closed to the points $(x_d,0)$ and $(0,y_d)$. This is a subject of Proposition~\ref{upper-bounds} below. With this preliminary result, we will be able to get the first assertion of Theorem~\ref{theorem1}  and to  introduce on the set $\Omega(\Theta)$, the functional equation \eqref{extended-functional-equation} (see Proposition~\ref{functional-eq-prop}). Next, we show that the functions at the right hand side of \eqref{extended-functional-equation} are analytic in the set $\{(x,y)\in \Omega({\Gamma}) :~ |x| < x_d, \;  |y| < y_d\}$, and we extend in this way, first the function $(x,y)\to R_j(x,y) = Q(x,u)h_j(x,y)$ and next the function $(x,y)\to h_j(x,y)$ as analytic functions to the set $\{(x,y)\in \Omega({\Gamma}) :~ |x| < x_d, \;  |y| < y_d\}$.

\begin{defi}\label{defi-Theta-Omega} 
If  one of the cases (B0){-}(B2) holds, we define  $\Theta$ as the logarithmically convex hull of the union of the two rectangles  $[0,x_d[\times[0,Y_1(x_d)[$ and  $[0, X_1(y_d)[\times[0,y_d[$ (see Figure~\ref{F5})~: 
\be\label{eq_def_Theta_B0_B2}
\Theta =  \text{LogCH}\Bigl\{\bigl([0,x_d[\times[0,Y_1(x_d)[ \bigr) \cup \bigl( [0, X_1(y_d)[\times[0,y_d[ \bigr)  \Bigr\}. 
 \ee 
 In the case when one of the assertions (B3){-}(B6) holds, we let  
 \be\label{eq-def-Theta-B3-B6}
\Theta = [0,x_d[\times [0, y_d[.
\ee
\end{defi}

\begin{prop}\label{upper-bounds} Under the hypotheses (A1){-}(A4),  the  series \eqref{eq-generating-functions}  converges on the set \be\label{Omega} 
\Omega(\Theta) = \{(x,y)\in\C^2~:~(|x|, |y|)\in \Theta\}.
\ee
\end{prop}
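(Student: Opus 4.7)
The strategy is a Foster--Lyapunov argument based on multiplicative test functions. For $(x,y) \in \,]0,+\infty[^2$, set $V_{x,y}(k) = x^{k_1}y^{k_2}$. A direct computation from the definition~\eqref{trans_probabilities} of the transition kernel gives, for $k \in \Z_+^2 \setminus \{0\}$,
\[
\E_k\bigl[V_{x,y}(Z(1));\, \tau_0 > 1\bigr] = V_{x,y}(k) \cdot \rho(x,y;k),
\]
with $\rho(x,y;k) = P(x,y)$ on the interior of $\Z_+^2$, $\rho(x,y;k) = \phi_1(x,y)$ on $(\Z_+{\setminus}\{0\}) \times \{0\}$, and $\rho(x,y;k) = \phi_2(x,y)$ on $\{0\} \times (\Z_+{\setminus}\{0\})$. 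Consequently $V_{x,y}$ is strictly super-harmonic for the killed chain with a uniform multiplicative deficit whenever $(x,y) \in \inter{D} \cap \inter{D_1} \cap \inter{D_2}$, and the usual Foster--Lyapunov inequality then yields $\sum_{k} g(j,k) V_{x,y}(k) < +\infty$ for every $j \in \Z_+^2$.

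The obstacle is that $\Theta$ generally extends beyond $\inter{D} \cap \inter{D_1} \cap \inter{D_2}$: in cases (B0)--(B2) the corner $(x_d, Y_1(x_d))$ of the rectangle $R_1 = [0,x_d[\, \times\, [0,Y_1(x_d)[$ lies on $\partial D$ and, by Corollary~\ref{preliminary_cor2}, on $\partial D_1$ (when $x_d = x^{**} < x^{**}_P$), but need not belong to $D_2$. I therefore plan to construct, for each $(r_1,r_2)$ in the interior of $\Theta$, a finite combination
\[
V = \sum_{i=1}^{N} c_i V_{x_i, y_i}, \qquad c_i > 0,
\]
with two properties: (i) $V$ is super-harmonic for the killed chain outside a finite set $F \subset \Z_+^2$; (ii) $V(k) \geq r_1^{k_1} r_2^{k_2}$ for all $k$ with $\|k\|$ large enough. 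Property (ii) is secured by weighted AM--GM applied to the tuples $(\log x_i, \log y_i)$: if $(\log r_1, \log r_2)$ is a convex combination of these tuples with weights $\lambda_i$ and one sets $c_i = \lambda_i$, then $\sum_i c_i x_i^{k_1} y_i^{k_2} \geq r_1^{k_1}r_2^{k_2}$ for every $k$.

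For $(r_1,r_2) \in R_1$ I would pick $(x_1, y_1) \in \inter{D} \cap \inter{D_1}$ slightly below $(x_d, Y_1(x_d))$ so that $x_1 > r_1$, $y_1 > r_2$, and a companion point $(x_2, y_2) \in \inter{D} \cap \inter{D_2}$ with $y_2$ large enough to handle the $y$-axis and with $(\log x_2, \log y_2)$ positioned so that $(\log r_1, \log r_2)$ is a convex combination of $(\log x_i, \log y_i)$, $i=1,2$; a symmetric construction handles $R_2$. The general case $(r_1,r_2) \in \Theta$ is reduced to these two via another log-convex combination, exploiting the very definition of $\Theta$ as the logarithmic convex hull. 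In cases (B3)--(B6), where $\Theta = [0,x_d[\, \times\, [0,y_d[$ is a single rectangle, the geometry encoded in Proposition~\ref{cases} places $(x_d,y_d)$ on the closure of $D \cap D_1 \cap D_2$, and a single multiplicative Lyapunov function $V_{x_1,y_1}$ with $(x_1,y_1)$ slightly below $(x_d,y_d)$ in $\inter{D} \cap \inter{D_1} \cap \inter{D_2}$ suffices.

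The main technical hurdle is verifying property (i) on the axes when no individual $V_{x_i,y_i}$ is by itself super-harmonic there: on the $x$-axis one summand may have $\phi_1(x_i,y_i) \leq 1$ while the other has $\phi_1(x_i,y_i) > 1$, and symmetrically on the $y$-axis. I plan to overcome this by calibrating the weights $c_i$ so that the summand compatible with a given axis dominates the excess of the other on that axis; the residual finite set where the net drift fails to be non-positive is absorbed into an additive constant in the Foster--Lyapunov inequality. This gives $\sum_k g(j,k) V(k) < +\infty$, and together with (ii) yields $\sum_k g(j,k) r_1^{k_1} r_2^{k_2} < +\infty$, whence absolute convergence of the series $H_j(x,y)$ throughout $\Omega(\Theta)$ by the dominated convergence argument built into the polycircular structure of this set.
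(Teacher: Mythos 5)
Your overall strategy is the same as the paper's: a Lyapunov argument with geometric test functions $V_{x,y}(k)=x^{k_1}y^{k_2}$, combining one point in $\inter{D}\cap\inter{D_1}$ with another in $\inter{D}\cap\inter{D_2}$ so that each summand controls the drift on the axis the other cannot, and then a logarithmic-convexity step to pass from the union of rectangles to $\Theta$. The weighted AM--GM observation you invoke for the convex-hull step is exactly the elementary mechanism behind the paper's black-box use of the log-convexity of domains of convergence of power series; these are interchangeable.

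There is, however, a concrete error in your treatment of cases (B3)--(B6). You assert that Proposition~\ref{cases} places $(x_d,y_d)$ in $\overline{D\cap D_1\cap D_2}$ in these cases, so that a \emph{single} test function $V_{x_1,y_1}$ with $(x_1,y_1)\in\inter{D}\cap\inter{D_1}\cap\inter{D_2}$ near $(x_d,y_d)$ would suffice. This is false. Take (B3) or (B4): there $x_d=x^{**}<x^{**}_P$ and $y_d=Y_2(x^{**})$. By Corollary~\ref{preliminary_cor2}(iii), $\phi_1(x^{**},Y_1(x^{**}))=1$, and since $Y_2(x^{**})>Y_1(x^{**})$ and $y\mapsto\phi_1(x^{**},y)$ is strictly increasing (Assumption~(A3)(v)), one gets $\phi_1(x_d,y_d)=\phi_1(x^{**},Y_2(x^{**}))>1$. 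Hence $(x_d,y_d)\notin D_1$, and since $D_1$ is closed, $(x_d,y_d)\notin\overline{D\cap D_1\cap D_2}$. By symmetry, in (B5)--(B6), $(x_d,y_d)=(X_2(y^{**}),y^{**})$ and $\phi_2(x_d,y_d)>1$, so $(x_d,y_d)\notin D_2$. Consequently there is \emph{no} point of $\inter{D}\cap\inter{D_1}\cap\inter{D_2}$ that dominates $(r_1,r_2)$ coordinatewise once $(r_1,r_2)$ is close to $(x_d,y_d)$, and the single-function Lyapunov bound collapses precisely in the region you most need it.

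The correct construction in (B3)--(B6) is still a two-term test function, but with the two base points placed asymmetrically. In (B3)--(B4), for $(r_1,r_2)\in\Theta=[0,x_d[\times[0,Y_2(x_d)[$, pick $\hat{x}<\tilde{x}<x_d$ both close to $x_d$ with $\hat{x}>\max\{x^*,r_1\}$, set $\tilde{y}=Y_2(\hat{x})$, and then take $(x_1,y_1)=(\tilde{x},y_1)\in\inter{D}\cap\inter{D_1}$ with $y_1$ just above $Y_1(\tilde{x})$, and $(x_2,y_2)=(x_2,\tilde{y})\in\inter{D}\cap\inter{D_2}$ with $x_2\in\,]\hat{x},\tilde{x}[$. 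Then $x_1>x_2$, $y_1<y_2$, and the \emph{second} term already dominates the target: $r_1^{k_1}r_2^{k_2}\leq x_2^{k_1}y_2^{k_2}$. The first term is there only to make the combined test function super-harmonic on the $x$-axis; it does not dominate $(r_1,r_2)$ and cannot be dispensed with. The case (B5)--(B6) is symmetric. Once you replace your single-function step by this two-point construction, the remainder of your plan goes through.
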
 
\noindent
In the case when the  random walk $(Z(n))$ is positive recurrent, this statement follows from the results of Miyazawa~\cite{Miyazawa}. In Section~\ref{upper-bounds-section}, we give another  proof of this statement by using the method of Lyapunov functions.  Our proof is valid both for positive recurrent and for transient random walks. 
 
 \bigskip 
 
 \noindent 
Since for any $j\in\Z^2_+$, and $(x,y)\in]0,+\infty[^2$,
 \[
 xyh_j(x,y) \leq H_j(x,y), \quad x h_{1j}(x) = H_j(x,0) \leq H_j(x,y) \quad \text{and} \quad y h_{2j}(y) = H_j(0,y)\leq H_j(x,y), 
 \]
 as a straightforward consequence of Proposition~\ref{upper-bounds}, one gets 

\begin{cor}\label{upper-bounds-cor1} Under the hypotheses (A1){-}(A4), for any $j\in\Z^2$, 

i) the series \eqref{series-h1-h2} converge   (and consequently the functions $h_{1j}$ and $h_{2j}$ are analytic)   respectively in  $B(0, x_d)$ and in $B(0, y_d)$, with $x_d$ and $y_d$ defined respectively by \eqref{eq_def_xd} and \eqref{eq_def_yd};

ii) the series \eqref{series-h} converge on the set $\Omega(\Theta)$ and consequently, the function $h_j$ is analytic in $\Omega(\Theta)$. 
\end{cor}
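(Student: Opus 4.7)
The strategy rests on two basic facts: every coefficient $g(j,k)$ is non-negative, and the set $\Theta$ of Definition~\ref{defi-Theta-Omega} contains both axis segments $[0,x_d)\times\{0\}$ and $\{0\}\times[0,y_d)$ together with a small relative neighborhood (in $[0,+\infty[^2$) of each. In cases (B0)--(B2) this is visible because the log-convex hull contains the two half-open rectangles $[0,x_d)\times[0,Y_1(x_d))$ and $[0,X_1(y_d))\times[0,y_d)$; in cases (B3)--(B6) it is immediate since $\Theta$ is itself the open rectangle $[0,x_d)\times[0,y_d)$. Consequently $\Omega(\Theta)$ contains $B(0,x_d)\times\{0\}$ and $\{0\}\times B(0,y_d)$.

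For (i), I would substitute $y=0$ into the series for $H_j$, which converges on $\Omega(\Theta)$ by Proposition~\ref{upper-bounds}: for $|x|<x_d$,
\[
H_j(x,0) \;=\; \sum_{k_1\geq 1} g(j,(k_1,0))\, x^{k_1} \;=\; x\,h_{1j}(x),
\]
so the series defining $h_{1j}$ is absolutely convergent on $B(0,x_d)$, hence analytic there. The statement for $h_{2j}$ on $B(0,y_d)$ follows by swapping the roles of the two coordinates.

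For (ii), the plan rests on the termwise splitting of the sum $H_j(x,y)=\sum_{k\in\Z_+^2\setminus\{0\}}g(j,k)x^{k_1}y^{k_2}$ according to whether $k_1=0$, $k_2=0$, or both $k_1,k_2\geq 1$, which yields
\[
H_j(x,y) \;=\; H_j(x,0) + H_j(0,y) + xy\,h_j(x,y),
\]
as already noted in the remark following Theorem~\ref{theorem1}. Since the coefficients of $h_j$ are non-negative, it suffices to verify that $h_j(s,t)<+\infty$ for every $(s,t)\in\Theta$. For $s,t>0$ this is immediate from $st\,h_j(s,t)\leq H_j(s,t)<+\infty$ via Proposition~\ref{upper-bounds}. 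The only delicate step is the behavior on the axes: if $s=0$, I would choose $s_0>0$ with $(s_0,t)\in\Theta$ (possible by the neighborhood property noted above) and invoke the termwise domination $h_j(0,t)\leq h_j(s_0,t)<+\infty$; the case $t=0$ is symmetric. Absolute convergence on the complete Reinhardt set $\Omega(\Theta)$ then yields analyticity. This small axis analysis is the only step requiring care; everything else is routine power-series bookkeeping.
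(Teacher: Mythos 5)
Your proposal is correct and follows the same route as the paper: both derive the corollary from Proposition~\ref{upper-bounds} by comparing the series for $h_j$, $h_{1j}$, $h_{2j}$ termwise against $H_j$ at strictly positive points of $\Theta$, using the nonnegativity of the coefficients $g(j,\cdot)$ and the fact that $\Theta$ contains (a neighborhood of) the two axis segments $[0,x_d)\times\{0\}$ and $\{0\}\times[0,y_d)$. The paper compresses this into the one-line inequalities $xy\,h_j(x,y)\leq H_j(x,y)$, $x\,h_{1j}(x)=H_j(x,0)\leq H_j(x,y)$, $y\,h_{2j}(y)=H_j(0,y)\leq H_j(x,y)$ for $(x,y)\in\,]0,+\infty[^2$, leaving the axis cases implicit since $\Omega(\Theta)$ is a complete Reinhardt set; your explicit treatment of $s=0$, $t=0$ is a cosmetic, not substantive, difference.
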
 

With this results, using classical arguments (see Section~\ref{functional-equation-section}) we obtain  
\begin{prop}\label{functional-eq-prop} Under the hypotheses (A1){-}(A4), for any $j\in\Z^2$ and $(x,y)\in\Omega(\Theta)$,  the functional equation \eqref{extended-functional-equation} holds.
\end{prop}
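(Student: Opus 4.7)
The plan is to derive equation~\eqref{extended-functional-equation} directly from the one-step (Kolmogorov forward) recurrence for the killed Green function $g(j,\cdot)$, working on the domain $\Omega(\Theta)$ on which convergence has already been established by Proposition~\ref{upper-bounds} and Corollary~\ref{upper-bounds-cor1}. First I would apply the strong Markov property to the killed chain $(Z_{\tau_0}(n))$ at time~$1$ to obtain, for every $j\in\Z^2_+$ and every $k\in\Z^2_+\setminus\{0\}$,
\[
g(j,k) \;=\; \delta_{j,k}\mathbb{1}_{j\neq 0} \,+\, \mu_0(k)\mathbb{1}_{j=0} \,+\, \sum_{k'\in\Z^2_+\setminus\{0\}} g(j,k')\,\tilde{p}(k',k),
\]
where $\tilde{p}(k',\cdot)$ coincides with $\mu$, $\mu_1$ or $\mu_2$ according to whether $k'$ lies in the interior of $\Z^2_+$, on the horizontal axis or on the vertical axis. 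This identity is seen by splitting $g(j,k)=\sum_{n\geq 0}\P_j(Z(n){=}k,n{<}\tau_0)$ into the $n{=}0$ term and the $n{\geq}1$ tail, and conditioning the latter on $Z(n{-}1)$; the case $j{=}0$ requires isolating the $n{=}1$ step, whose transition uses $\mu_0$ and produces the term $\mu_0(k)\mathbb{1}_{j=0}$.

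Next I would multiply by $x^{k_1}y^{k_2}$ and sum over $k\in\Z^2_+\setminus\{0\}$. For $(x,y)\in\Omega(\Theta)$, Proposition~\ref{upper-bounds} gives absolute convergence of every series involved, so Fubini legitimates all rearrangements. The left-hand side recovers $H_j(x,y)=xyh_j(x,y)+xh_{1j}(x)+yh_{2j}(y)$. On the right, the main computation is, for each fixed $k'$, the inner sum $\sum_{k\neq 0}\tilde{p}(k',k)x^{k_1}y^{k_2}$, evaluated via the change of variables $\ell=k-k'$. For $k'$ in the interior, assumption~(A2) guarantees $k'+\ell\in\Z^2_+$ automatically, so
\[
\sum_{k\neq 0}\mu(k-k')\,x^{k_1}y^{k_2} \;=\; x^{k'_1}y^{k'_2}P(x,y) \,-\, \mu(-k')\mathbb{1}_{k'=(1,1)};
\]
the axis cases, relying on the support conditions of~(A3)~(iv), give analogous identities in terms of $\phi_1$ and $\phi_2$ with $\delta$-corrections localised at $k'=(1,0)$ and $k'=(0,1)$.

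Summing over $k'$, the principal contributions assemble into $P(x,y)\,xyh_j(x,y)+\phi_1(x,y)\,xh_{1j}(x)+\phi_2(x,y)\,yh_{2j}(y)$, while the three corner corrections combine, via the first-passage decomposition
\[
\P_j(\tau_0{<}{+}\infty) \;=\; \mu_0(0)\mathbb{1}_{j=0} + g(j,(1,1))\mu(-1,-1) + g(j,(1,0))\mu_1(-1,0) + g(j,(0,1))\mu_2(0,-1),
\]
into $\P_j(\tau_0{<}{+}\infty)-\mu_0(0)\mathbb{1}_{j=0}$. Adding back the constant terms $\mathbb{1}_{j\neq 0}x^{j_1}y^{j_2}+\mathbb{1}_{j=0}(\phi_0(x,y){-}\mu_0(0))$, one recognises exactly $L_j(x,y)$ as defined in~\eqref{eq-def-Lj}, giving
\[
H_j(x,y) \;=\; L_j(x,y) + P(x,y)\,xyh_j(x,y) + \phi_1(x,y)\,xh_{1j}(x) + \phi_2(x,y)\,yh_{2j}(y),
\]
and a direct rearrangement using the decomposition $H_j=xyh_j+xh_{1j}+yh_{2j}$ together with the definitions of $Q,\psi_1,\psi_2$ then produces~\eqref{extended-functional-equation}.

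The substance of the argument is a careful accounting exercise: the genuine analytic input is the absolute convergence supplied by Proposition~\ref{upper-bounds}, on which every exchange of summations relies. The main delicate point is keeping the boundary corrections straight — one must separate the cases $j{=}0$ and $j{\neq}0$ (which enter the constant term of the recurrence differently), and verify that the three corner terms extracted by the constraint $k{\neq}0$ in the inner sums reassemble precisely into the $\P_j(\tau_0{<}{+}\infty)$ subtracted in the definition of $L_j$. Once this bookkeeping is in place, the functional equation follows by pure algebra.
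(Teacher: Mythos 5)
Your proof is correct and follows essentially the same route as the paper's: both peel off the last step of the killed chain (the paper phrases this as $H_j(x,y)=x^{j_1}y^{j_2}+\sum_{\ell}g(j,\ell)\,\E_\ell(x^{Z_1(1)}y^{Z_2(1)},\tau_0>1)$ while you write the transposed one-step recurrence for $g(j,\cdot)$, but these are the same decomposition), compute the inner sums with the corner corrections at $(1,1),(1,0),(0,1)$ allowed by Assumptions (A2) and (A3)(iv), and reassemble using $\P_j(\tau_0<{+}\infty)$, with Proposition~\ref{upper-bounds} justifying the Fubini exchanges on $\Omega(\Theta)$. The only cosmetic difference is that you treat $j=0$ and $j\neq 0$ in a single recurrence via the $\mu_0(k)\mathbb{1}_{j=0}$ term, whereas the paper first proves the identity for $j\neq 0$ and then obtains the $j=0$ case by conditioning on the first step.
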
 

By the definition of the points $x_d$ and $y_d$, see relations~\eqref{eq_def_xd} and~\eqref{eq_def_yd}, and of the sets $\Theta$ and ${\Gamma}$, when either (B3) or (B4) holds,  we have the relations 
\[
x_d = x^{**}, \quad y_d = Y_2(x_d)\quad \text{and} \quad  \Theta = [0,x_d[\times[0,Y_2(x_d)[ = \{(x,y)\in{\Gamma}: \, x < x_d, \; y<y_d\}, 
\]
and when either (B5) or (B6) holds,
\[
y_d = y^{**}, \quad x_d = X_2(y_d)\quad \text{and} \quad  \Theta = [0, X_2(y_d)[\times[0, x_d[ = \{(x,y)\in{\Gamma}: \, x < x_d, \; y<y_d\}. 
\]
Hence,  when one of the case (B3){-}(B6) occurs, Theorem~\ref{theorem1} follows from Corollary~\ref{upper-bounds-cor1}  and Proposition~\ref{functional-eq-prop}.

When one of the cases (B0){-}(B2) occurs, the first assertion of Theorem~\ref{theorem1} follows from Corollary~\ref{upper-bounds-cor1} as well, and 
to prove the second assertion of Theorem~\ref{theorem1}  the following proposition will be used.

\begin{prop}\label{lemma1-proof-theorem1} If the conditions (A1){-}(A3) are satisfied and one of the assertions (B0){-}(B2) holds, then 
\be\label{eq1-lemma1-proof-theorem11}
\{(x,y)\in{\Gamma}: \, x < x_d, \; y<y_d\} \subset \Theta\cup \{(x,y)\in\inter{D}: \, x < x_d, \; y<y_d\}.
\ee
\end{prop}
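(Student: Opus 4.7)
The plan is to work in the logarithmic coordinates $(\alpha, \beta) = (\log x, \log y)$, in which, by Proposition~\ref{propx*}, the set $\widetilde D = \{(\alpha, \beta) : P(e^\alpha, e^\beta) \le 1\}$ is compact and \emph{strictly} convex, and the logarithmic convex hull defining $\Theta$ reduces to an ordinary convex hull. In these coordinates, the two rectangles forming $\Theta$ become open ``south-west'' half-infinite quadrants whose north-east corners
\[
A = (\log x_d, \log Y_1(x_d)) \quad \text{and} \quad B = (\log X_1(y_d), \log y_d)
\]
both lie on $\partial \widetilde D$, since $P(x_d, Y_1(x_d)) = P(X_1(y_d), y_d) = 1$ by Lemma~\ref{preliminary-lemma1}.

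I would first localize the argument to the corner rectangle $[X_1(y_d), x_d) \times [Y_1(x_d), y_d)$: any $(x,y) \in \Gamma$ with $x < x_d$ and $y < y_d$ that falls outside this rectangle already lies in one of the constituent rectangles of $\Theta$. A short direct computation then shows that the convex hull in log coordinates of the two south-west quadrants, intersected with this corner rectangle, is exactly the open region strictly below the chord $[A,B]$. Thus the hypothesis $(x,y) \notin \Theta$ forces $(\alpha, \beta) = (\log x, \log y)$ to lie on or strictly above the chord $AB$. Strict convexity of $\widetilde D$ places the open chord $(A,B)$ in $\inter{\widetilde D}$, which handles the subcase when $(\alpha, \beta)$ lies on the chord (the endpoints $A$ and $B$ being excluded by $x < x_d$ and $y < y_d$).

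For a point strictly above the chord, I would split the argument according to the position of the fourth corner $E = (\log x_d, \log y_d)$ of the corner rectangle. In case~(B0), the defining strict inequalities $X_1(y^{**}) < x^{**} < X_2(y^{**})$ and $Y_1(x^{**}) < y^{**} < Y_2(x^{**})$ put $E$ in $\inter{\widetilde D}$, so the closed triangle $ABE$ lies in $\widetilde D$ and its open triangle lies in $\inter{\widetilde D}$. Since $(\alpha, \beta)$ cannot coincide with any of $A$, $B$, $E$ and cannot lie on the open edges $AE$ or $BE$ (each requiring $\alpha = \log x_d$ or $\beta = \log y_d$), $(\alpha, \beta)$ falls into the open triangle, hence into $\inter{\widetilde D}$. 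In case~(B1) one has $E \in \partial \widetilde D$, and strict convexity of the Laplace transform $\widetilde P$ applied iteratively gives $\widetilde P(p) < 1$ at any convex combination $p$ of $A$, $B$, $E$ with all three weights strictly positive; the same conclusion follows.

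The main obstacle is case~(B2), where $E \notin \widetilde D$ and the triangle $ABE$ is no longer contained in $\widetilde D$. Here I would introduce the two auxiliary points
\[
A' = (\log x_d, \log Y_2(x_d)) \quad \text{and} \quad B' = (\log X_2(y_d), \log y_d),
\]
which, by the defining conditions of (B2), both belong to the log-image of ${\mathcal S}_{22}$ and hence to $\partial \widetilde D$; $A$ sits below $A'$ on the right edge of the corner rectangle, and $B$ sits to the left of $B'$ on its top edge. The region bounded by the chord $AB$, the boundary segments $[A, A']$ and $[B, B']$ (throughout which $\widetilde P \le 1$), and the arc of $\partial \widetilde D$ from $A'$ to $B'$, is convex and contained in $\widetilde D$, and its relative interior lies in $\inter{\widetilde D}$ by strict convexity. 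The key observation is that, in the corner region, $(x,y) \in \Gamma$ is equivalent to $(\alpha, \beta)$ lying strictly below the arc from $A'$ to $B'$, because any point of $\widetilde D$ strictly dominating $(\alpha, \beta)$ in both coordinates must sit below this upper-right portion of $\partial \widetilde D$. Combined with the ``on or above the chord'' constraint from the previous step, this places $(\alpha, \beta)$ in the relative interior of the region above, yielding $(x, y) \in \inter{D}$ and completing the proof.
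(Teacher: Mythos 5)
Your argument is correct and organized differently from the paper's. Both pass to logarithmic coordinates and identify $\Theta$ intersected with the corner rectangle as the open region strictly below the chord $[A,B]$; thereafter they diverge. The paper argues by contraposition in one stream covering (B0)--(B2): for $(\hat x,\hat y)\notin\Theta\cup\inter{D}$ with $\hat x<x_d$, $\hat y<y_d$ it parametrizes $\hat x=x_\theta$ along the log-chord, derives $\hat y\geq Y_2(\hat x)$ and, by symmetry, $\hat x\geq X_2(\hat y)$, applies Lemma~\ref{convexity_lemma3} to place the corresponding boundary points on ${\mathcal S}_{22}$, and then adapts the argument of Lemma~\ref{convexity_lemma2} to conclude $(\hat x,\hat y)\notin\Gamma$. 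You argue directly and split on the position of $E=(\log x_d,\log y_d)$: in (B0) and (B1) the corner box is already covered by $\Theta$ and $\inter{D}$ with no use of the $\Gamma$ hypothesis, and only in (B2) does $\Gamma$ enter, via the auxiliary points $A'$, $B'$ and the convex region bounded by the chord and the arc of $\partial\widetilde{D}$. Your version makes transparent exactly where and why the $\Gamma$ hypothesis is needed, at the cost of a case split; the paper's version is case-free and reuses its prepared convexity lemmas. The one step to tighten is the final deduction in case (B2): rather than locating $(\alpha,\beta)$ in the relative interior of your auxiliary region (which as stated presupposes membership in $\widetilde{D}$), it is cleaner to observe directly that ``strictly above the chord'' gives $\beta>\beta_1(\alpha)$ because the open chord lies in $\inter{\widetilde{D}}$, while ``strictly below the arc'' gives $\beta<\beta_2(\alpha)$ --- for $\alpha<\log X_2(y_d)$, where the arc is not overhead, one uses that $\beta_2$ is concave and takes values $\geq\log y_d$ at both $\log X_1(y_d)$ and $\log X_2(y_d)$, hence on the whole subinterval --- so that $(\alpha,\beta)\in\inter{\widetilde{D}}$ follows at once.
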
 

\medskip

With this lemma and using our previous results we will be able to show first that the function $(x,y){\to}h_j(x,y)$ can be continued to the set  $\Omega_d(\gamma)$ as a meromorphic function, and next to show that it is in fact analytic in this set. 

The proof of Theorem~\ref{theorem1} is organized as follows:
\begin{itemize} 
\item[--] Proposition~\ref{upper-bounds} is proved in Section~\ref{upper-bounds-section};
\item[--] Section~\ref{functional-equation-section} is devoted to the proof of Proposition~\ref{functional-eq-prop}; 
\item[--] the proof of Proposition~\ref{lemma1-proof-theorem1} is given in Section~\ref{section-proof- lemma1-proof-theorem1};
\item[--] the proof of Theorem~\ref{theorem1} is completed in Section~\ref{section-proof-theorem1-completed}.
\end{itemize} 
\subsection{Proof of Proposition~\ref{upper-bounds} }\label{upper-bounds-section} 

\subsubsection{The main idea of the proof.} 
To main tool of our proof of this proposition is the method of Lyapunov functions. We show that for any  $(x_1,y_1)\in\inter{D}\cap\inter{D_1}$ and $(x_2,y_2)\in\inter{D}\cap\inter{D_2}$ such that 
\be\label{eq0_upper_bounds} 
x_1 > x_2\quad \text{and} \quad y_1 < y_2,
\ee
the function $f:\Z^2_+\to[0,+\infty[$ defined by 
\be\label{lyapunov_function}
f(k_1,k_2) = x_1^{k_1}y_1^{k_2} + x_2^{k_1}y_2^{k_2}, \quad (k_1,k_2)\in\Z^2_+.
\ee
satisfies 
\be\label{lyapunov_eq}
\E_j\bigl(f(Z(1))\bigr) \leq \theta f(j), \quad \forall j\in\Z^2_+{\setminus} E,
\ee
with some  $0<\theta < 1$ and  for some finite subset $E\subset\Z^2_+$. This is a subject of lemma~\ref{lemma1-upper-bounds} below.    With this result we are able to prove that for any two points $(x_1,y_1)\in\inter{D}\cap\inter{D_1}$ and $(x_2,y_2)\in\inter{D}\cap\inter{D_2}$ satisfying \eqref{eq0_upper_bounds}, the series 
\be\label{eq1_upper_bounds}
\sum_{k= (k_1,k_2)\in\Z^2_+{\setminus} \{(0,0)\}} g(j,k) (x_1^{k_1}y_1^{k_2}  + x_2^{k_1}y_2^{k_2}) 
\ee
converges, this is a subject of Lemma~\ref{lemma2-upper-bounds}. Next, to prove that the series \eqref{eq-generating-functions}  converge for any $(x,y)\in\Theta$, we show that for any $(x,y)\in\Theta$, there are two points $(x_1,y_1)\in\inter{D}\cap\inter{D_1}$ and $(x_2,y_2)\in\inter{D}\cap\inter{D_2}$ satisfying \eqref{eq0_upper_bounds} and such that 
\be\label{eq1a_upper_bounds}
x^{k_1}y^{k_2} \leq x_1^{k_1}y_1^{k_2} + x_2^{k_1}y_2^{k_2}, \quad \forall k=(k_1,k_2)\in\Z^2_+. 
\ee

\subsubsection{Preliminary results for the proof of Proposition~\ref{upper-bounds}.} 
We begin our analysis with the following statements.

\begin{lemma}\label{lemma1-upper-bounds}    Suppose that the conditions (A1){-}(A3) are satisfied and let two points $(x_1,y_1)\in\inter{D}\cap\inter{D_1}$ and $(x_2,y_2)\in\inter{D}\cap\inter{D_2}$ satisfy \eqref{eq0_upper_bounds}. Then  for some finite set $E\subset\Z^2_+$, the function 
$f:\Z^2_+\to ]0,+\infty[$ defined by \eqref{lyapunov_function} satisfies \eqref{lyapunov_eq} with some $0 < \theta < 1$.
\end{lemma}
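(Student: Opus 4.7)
The plan is to compute $\E_j(f(Z(1)))$ explicitly in each of the four strata of $\Z_+^2$ determined by the transition kernels $\mu,\mu_0,\mu_1,\mu_2$, and to set
\[
\rho := \max\bigl\{P(x_1,y_1),\; P(x_2,y_2),\; \phi_1(x_1,y_1),\; \phi_2(x_2,y_2)\bigr\} < 1,
\]
then to fix any $\theta \in (\rho, 1)$. The strict inequality $\rho < 1$ is exactly the content of the hypotheses $(x_i,y_i)\in\inter{D}$ for $i=1,2$, together with $(x_1,y_1)\in\inter{D_1}$ and $(x_2,y_2)\in\inter{D_2}$; and every generating function entering the computation is finite by (A1)(ii) and (A3)(ii), since $(x_1,y_1)$ and $(x_2,y_2)$ lie in $\inter{D}$.

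In the interior $\{j_1 > 0,\, j_2 > 0\}$ the computation is immediate from~\eqref{jump_generating_function_P}:
\[
\E_j\bigl(f(Z(1))\bigr) = x_1^{j_1}y_1^{j_2}P(x_1,y_1) + x_2^{j_1}y_2^{j_2}P(x_2,y_2) \leq \rho\, f(j),
\]
so the desired bound holds with no exceptions.

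The main obstacle lies on the axes. For $j=(j_1,0)$ with $j_1 > 0$, one computes
\[
\E_j\bigl(f(Z(1))\bigr) = x_1^{j_1}\phi_1(x_1,y_1) + x_2^{j_1}\phi_1(x_2,y_2),
\]
where $\phi_1(x_1,y_1) < 1$ by hypothesis but $\phi_1(x_2,y_2)$ has no a priori bound $< 1$. The asymmetric choice $x_1 > x_2$ is precisely what circumvents this obstruction: the difference
\[
\theta f(j) - \E_j\bigl(f(Z(1))\bigr) = x_1^{j_1}\bigl(\theta - \phi_1(x_1, y_1)\bigr) - x_2^{j_1}\bigl(\phi_1(x_2, y_2) - \theta\bigr)
\]
has a strictly positive leading coefficient, and since $(x_2/x_1)^{j_1} \to 0$ it will be non-negative for all $j_1 \geq N_1$ with $N_1 = N_1(\theta)$ finite. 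A symmetric argument using $y_1 < y_2$ together with $\phi_2(x_2,y_2) < 1$ handles the positive $y$-axis and yields the bound for all $j_2 \geq N_2$.

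It remains to take $E := \{(0,0)\} \cup \{(j_1,0) : 1 \leq j_1 < N_1\} \cup \{(0,j_2) : 1 \leq j_2 < N_2\}$, which is finite. The origin must be put into $E$ because $f(0,0)=2$ is too small to dominate $\phi_0(x_1,y_1) + \phi_0(x_2,y_2)$ in general, but this is the only point at the origin that needs to be excluded. Outside $E$, the inequality $\E_j(f(Z(1))) \leq \theta f(j)$ holds, completing the proof.
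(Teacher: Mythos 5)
Your proof is correct and follows essentially the same route as the paper's: same Lyapunov function, same $\rho=\tilde\theta$, same use of $x_1>x_2$ (resp.\ $y_1<y_2$) to make the uncontrolled term $\phi_1(x_2,y_2)x_2^{j_1}$ (resp.\ $\phi_2(x_1,y_1)y_1^{j_2}$) negligible on the corresponding axis, and the same finite exceptional set of small axis points together with the origin. The only difference is cosmetic — you write the difference $\theta f(j)-\E_j(f(Z(1)))$ and observe its sign, while the paper bounds $\E_j(f(Z(1)))$ by $(\tilde\theta+\eps)f(j)$ for $j$ far along the axis — so this is a match, not a departure.
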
 

\begin{proof} Consider two functions $f_1,f_2:\Z_+^2\to\R$ defined by 
\[
f_1(j_1,j_2) = x_1^{j_1}y_1^{j_2}, \quad \text{and} \quad f_2(j_1,j_2) = x_2^{j_1}y_2^{j_2}, \quad \forall (j_1,j_2)\in\Z_+^2,
\]
and let 
\[
\tilde\theta = \max\{\phi_1(x_1,y_1), \; P(x_1,y_1), \; \phi_2(x_2,y_2),\; P(x_2,y_2)\}.
\]
Then $0 < \tilde\theta < 1$ because  $(x_1,y_1)\in\inter{D}\cap\inter{D_1}$ and $(x_2,y_2)\in\inter{D}\cap\inter{D_2}$, and  moreover, for any $(j=(j_1,j_2)\in\Z_+^2{\setminus} \{0\}$, 
\[
\E_{(j_1,j_2)}(f_1(Z(1)) \leq 
\begin{cases} \tilde\theta f_1(j_1,j_2), &\text{if} \; j_1\not= 0, \\
\phi_2(x_1,y_1) y_1^{j_2},  &\text{if} \; j_1= 0,
\end{cases} 
\]
and 
\[
\E_{(j_1,j_2)}(f_2(Z(1)) \leq \begin{cases} \tilde\theta  f_2(j_1,j_2), &\text{if} \; j_2\not= 0, \\ 
\phi_1(x_2,y_2) x_2^{j_1}&\text{if} \; j_2= 0.
\end{cases} 
\]
For the function $f = f_1 + f_2$, we obtain therefore that for any $j=(j_1,j_2)\in\Z_+^2{\setminus} \{0\}$, 
\be\label{eq2_upper_bounds}
\E_{(j_1,j_2)}(f(Z(1)) \leq \begin{cases} \tilde\theta f(j_1,j_2) &\text{if $j_1 >0$ and $j_2 >0$}\\
\tilde\theta y_2^{j_2} + \phi_2(x_1,y_1) y_1^{j_2} &\text{if $j_1 = 0$ }\\
\tilde\theta x_1^{j_1} + \phi_1(x_2,y_2) x_2^{j_1} &\text{if  $j_2 = 0$}.
\end{cases} 
\ee
Since under  hypotheses of our lemma, $x_1 > x_2> 0$ and $0 < y_1 < y_2$, we have moreover 
\[
\lim_{j_1} (x_2/x_1)^{j_1} = \lim_{j_2\to\infty}(y_1/y_2)^{j_2} = 0,
\]
and consequently, for any $\eps > 0$ there is $N_\eps > 0$ such that for any $(j_1,j_2)\in\Z^2_+$ with $j_2 > N_\eps$ and $j_1 = 0$, 
\[
\tilde\theta y_2^{j_2} + \phi_2(x_1,y_1) y_1^{j_2} \leq (\tilde\theta + \eps) y_2^{j_2} = (\tilde\theta +\eps) f_2(j_1,j_2) \leq (\tilde\theta +\eps) f(j_1,j_2) 
\]
and similarly, for any $(j_1,j_2)\in\Z^2_+$ with $j_1 > N_\eps$ and $j_2 = 0$, 
\[
\tilde\theta x_1^{j_1} + \phi_1(x_2,y_2) x_2^{j_1} \leq (\tilde\theta+\eps) f(j_1,j_2). 
\]
Hence, for $\eps > 0$ such that $\tilde\theta + \eps < 1$, letting $$E =\{(j_1,0)\in\Z_+^2~:~j_1 \leq N_\eps\}\cup \{(0,j_2)\in\Z^2_+~:  \; j_2 \leq N_\eps\}$$ one gets \eqref{lyapunov_eq} with $\theta=\tilde\theta + \eps < 1$. 
\end{proof}

\begin{lemma}\label{lemma2-upper-bounds} Under the hypotheses of Lemma~\ref{lemma1-upper-bounds}, for any $j\in\Z_+^2$,  the series \eqref{eq1_upper_bounds} converges. 
\end{lemma}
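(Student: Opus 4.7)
The plan is to combine the Lyapunov drift of Lemma~\ref{lemma1-upper-bounds} with an excursion decomposition around the finite set $E$, using irreducibility from Assumption~(A3)(i). As a first reduction, Fubini's theorem identifies the series~\eqref{eq1_upper_bounds} with
\[
S(j) := \E_j\Bigl(\sum_{n=0}^{\tau_0-1} f(Z(n))\Bigr)\quad\text{for } j\neq 0,
\]
the exclusion of $k=0$ in~\eqref{eq1_upper_bounds} being automatic since $\{n<\tau_0,\,n\geq 1\}$ implies $Z(n)\ne 0$. For $j=0$, conditioning on $Z(1)\sim \mu_0$ and the Markov property give $S(0) = \sum_{k\ne 0}\mu_0(k)\,S(k)$, which will be finite once a uniform bound $S(k)\leq C_1 f(k)+C_2$ is established, because $\sum_k \mu_0(k)f(k) = \phi_0(x_1,y_1)+\phi_0(x_2,y_2)<+\infty$ by Assumption~(A3)(ii). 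It therefore suffices to prove such a uniform bound on $S(j)$ for $j\neq 0$.

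The main step is to iterate the inequality~\eqref{lyapunov_eq}. Writing $\tau_{E\cup\{0\}} := \inf\{n\geq 1: Z(n)\in E\cup\{0\}\}$, a standard induction yields
\[
\E_{j'}\bigl(f(Z(n))\1_{\tau_{E\cup\{0\}} > n}\bigr)\leq \theta^n f(j'),\quad j'\notin E\cup\{0\},
\]
and summing shows that the expected sum of $f$ along one excursion of $(Z(n))$ away from $E\cup\{0\}$, started at $j'\notin E\cup\{0\}$, is at most $f(j')/(1-\theta)$. Applying the strong Markov property at the first visit to $E$ then implies that the expected $f$-sum of any excursion started at a point $k\in E$ is bounded by a constant $K<\infty$ independent of $k\in E$; here $K$ depends only on $\max_{k\in E}f(k)$ and $\max_{k\in E}\E_k(f(Z(1)))$, both of which are finite since $E$ is finite and the generating functions $P,\phi_1,\phi_2,\phi_0$ are finite at $(x_1,y_1)$ and $(x_2,y_2)$.

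To close the argument, one must bound the mean number $N_E$ of visits to $E$ strictly before $\tau_0$. By irreducibility~(A3)(i) and the finiteness of $E$, there exist $N_0\in\N$ and $\epsilon>0$ such that $\P_k(\tau_0\leq N_0)\geq \epsilon$ for every $k\in E$. Observing the chain on successive time windows of length $N_0$ after each visit to $E$ then shows that $N_E$ is stochastically dominated by $N_0$ times a geometric random variable of parameter $\epsilon$, so $\E_j(N_E)<+\infty$. Combining this with the excursion bound yields
\[
S(j) \leq \frac{f(j)}{1-\theta} + K\,\E_j(N_E),
\]
uniformly in $j\neq 0$, which completes the proof. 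The main technical obstacle is the uniform lower bound $\epsilon$ on the hitting probability of $0$ from $E$, but this follows in a standard way from irreducibility and the finiteness of $E$.
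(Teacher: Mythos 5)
Your proof is correct and takes essentially the same route as the paper's: iterate the Lyapunov drift of Lemma~\ref{lemma1-upper-bounds} to bound the $f$-sum along each excursion away from the finite set $E$, then control the total via the strong Markov property at visits to $E$. The only difference worth noting is that the paper's one-step decomposition of $g(j,k)$ through the killed Green function $g_E$ implicitly relies on $g(j,\ell)<\infty$ for $\ell\in E\setminus\{0\}$ --- a standard fact for irreducible chains --- while you spell this out explicitly through the irreducibility-plus-geometric-trials bound on $\E_j(N_E)$, which is a slightly more complete rendering of the same underlying idea.
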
 
\begin{proof} Indeed, consider two points $(x_1,y_1)\in\inter{D}\cap\inter{D_1}$ and $(x_2,y_2)\in\inter{D}\cap\inter{D_2}$  satisfying \eqref{eq0_upper_bounds} and let  the function 
$f:\Z^2_+\to ]0,+\infty[$ be defined by \eqref{lyapunov_function}. Then by Lemma~\ref{lemma1-upper-bounds}, for some finite set $E\subset\Z^2_+$ and some positive number $\theta < 1$,   \eqref{lyapunov_eq} holds. Without any restriction of generality, we will suppose that $E$ contains the origin $(0,0)$. Denote by $\tau_E$ the first time when the process $(Z(n))$ hits the set $E$~: $$\tau_E = \inf\{n\geq 1:~Z(n)\in E\},$$ and let
\[
g_E(j,k) = \sum_{n=0}^\infty \P_j(Z(n) = k, \; \tau_E \geq n), \quad j\in\Z_+^2{\setminus} E, \; k\in\Z^2_+.
\]
Then by \eqref{lyapunov_eq}, for any $j\in\Z^2_+{\setminus} E$, 
\[
\E_j(f(Z(n)); \; \tau_E \geq n) \leq \theta^n f(j), \quad \forall  n\in\N, 
\]
and consequently,  
\be\label{eq-for-harmonic-functions} 
 \sum_{k\in\Z^2_+} g_E(j,k) \!\!\left(x_1^{k_1}y_1^{k_2} + x_2^{k_1}y_2^{k_2}\right) = 
\sum_{k\in\Z^2_+} g_E(j,k) f(k) = \sum_{n=1}^\infty \E_j\bigl(f(Z(n)); \; \tau_E\geq n\bigr) \leq f(j)(1 - \theta)^{-1}. 
\ee 
Using the identity 
\begin{align*}
g(j,k)  = g_E(j,k) +  \sum_{\ell\in E{\setminus} \{0\}}\sum_{m\in\Z_+^2{\setminus} E} g(j,\ell) \P_\ell(Z(1) = m) g_E(m,k)
\end{align*}
from the above relation it follows that for any $j\in\Z^2_+{\setminus} E$, 
\begin{align*}
\sum_{k=(k_1,k_2)\in\Z^2_+} g(j,k) \bigl(x_1^{k_1}y_1^{k_2} + x_2^{k_1}y_2^{k_2}\bigr) &\leq \frac{1}{1 - \theta}\Bigl( f(j)+  \sum_{\ell\in E{\setminus} \{0\}}\sum_{m\in\Z_+^2{\setminus} E} g(j,\ell) \P_\ell(Z(1) = m) f(m)\Bigr) \\
&\leq \frac{1}{1 - \theta} \Bigl( f(j)+  \sum_{\ell\in E{\setminus} \{0\}} g(j,\ell)  \E_\ell(f(Z(1))) \Bigr).
\end{align*} 
Since the set $E$ is finite and under the hypotheses (A1) and (A3), $\E_\ell(f(Z(1)))  < + \infty$ for any $\ell \in E$, this proves that  the series \eqref{eq1_upper_bounds} converges for any $j\in\Z^2_+{\setminus} E$. To prove that this series converges for $j\in E$, it is sufficient now to notice that for $j\in E$, 
\[
g(j,k) =  \sum_{\ell\in E{\setminus} \{0\}} g(j,\ell) \sum_{m\in\Z_+^2{\setminus} E} \P_\ell(Z(1) = m) g_E (m, k), 
\]
and consequently, 
\begin{align*}
&\sum_{k=(k_1,k_2)\in\Z^2_+}g(j,k) \left(x_1^{k_1}y_1^{k_2} + x_2^{k_1}y_2^{k_2}\right)  \\ 
&\quad \leq \sum_{\ell\in E{\setminus} \{0\}} g(j,\ell) \sum_{m\in\Z_+^2{\setminus} E} \P_\ell(Z(1) = m)  \!\!\!\sum_{k=(k_1,k_2)\in\Z^2_+} g_E(m,k) \left(x_1^{k_1}y_1^{k_2} + x_2^{k_1}y_2^{k_2}\right) < + \infty. 
\end{align*}  
\end{proof}

\begin{lemma}\label{lemma4_upper_bounds} Suppose that the conditions (A1){-}(A3) are satisfied and let $\tilde{x}\in]x^*,x^{**}[$ and $\tilde{y}\in]y^*,y^{**}[$ be such that 
\be\label{eq1_lemma4_upper_bounds}
\tilde{x} > X_1(\tilde{y}) \quad \text{and} \quad \tilde{y} > Y_1(\tilde{x}).
\ee
Then  for any $y_1 > Y_1(\tilde{x})$ closed enough to $Y_1(\tilde{x})$ and any $x_2 > X_1(\tilde{y})$ closed enough to $X_1(\tilde{y})$, the points  $(x_1,y_1) = (\tilde{x}, y_1)$ and $(x_2,y_2) = (x_2, \tilde{y})$ satisfy the conditions of Lemma~\ref{lemma1-upper-bounds}. 
\end{lemma}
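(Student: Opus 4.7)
The plan is to verify the four conditions required by Lemma~\ref{lemma1-upper-bounds} for the pair $(x_1,y_1) = (\tilde{x}, y_1)$ and $(x_2,y_2) = (x_2, \tilde{y})$: namely $(x_1,y_1)\in \inter{D}\cap \inter{D_1}$, $(x_2,y_2)\in \inter{D}\cap \inter{D_2}$, $x_1>x_2$ and $y_1<y_2$. Each of these will follow by combining the strict inequalities assumed on $(\tilde{x},\tilde{y})$ with the continuity of $P,\phi_1,\phi_2$ and of the curves $Y_1,X_1$ on their domain of definition.

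First I would handle $(x_1,y_1)=(\tilde{x},y_1)$. Since $\tilde{x}\in ]x^*,x^{**}[\subset [x^*_P,x^{**}_P]$, Lemma~\ref{preliminary-lemma1} gives $Y_1(\tilde{x})<Y_2(\tilde{x})$ and $P(\tilde{x},y)<1$ on the open interval $]Y_1(\tilde{x}),Y_2(\tilde{x})[$, so that any $y_1>Y_1(\tilde{x})$ close enough to $Y_1(\tilde{x})$ lies in this interval and therefore $(\tilde{x},y_1)\in \inter{D}$. Moreover, by Corollary~\ref{preliminary_cor2}(ii), $\phi_1(\tilde{x},Y_1(\tilde{x}))<1$ because $\tilde{x}\in ]x^*,x^{**}[$; by continuity of $\phi_1$ near $(\tilde{x},Y_1(\tilde{x}))$ (Assumption (A3)(ii)), this strict inequality persists for $y_1$ sufficiently close to $Y_1(\tilde{x})$, which gives $(\tilde{x},y_1)\in \inter{D_1}$. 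A symmetric argument, using the corresponding statement for $\phi_2$ and $X_1$ (exchanging the roles of the coordinates in Corollary~\ref{preliminary_cor2}), shows $(x_2,\tilde{y})\in \inter{D}\cap \inter{D_2}$ whenever $x_2>X_1(\tilde{y})$ is close enough to $X_1(\tilde{y})$.

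Finally, to obtain $x_1>x_2$ and $y_1<y_2$, it suffices to exploit the two strict inequalities \eqref{eq1_lemma4_upper_bounds} in the hypothesis: $\tilde{x}>X_1(\tilde{y})$ and $\tilde{y}>Y_1(\tilde{x})$ allow one to choose $x_2$ strictly between $X_1(\tilde{y})$ and $\tilde{x}$, and $y_1$ strictly between $Y_1(\tilde{x})$ and $\tilde{y}$. Shrinking the neighborhoods chosen in the previous paragraph if necessary, we keep all four required conditions simultaneously. There is no real obstacle here; the only care needed is to take a single common $\varepsilon>0$ controlling how close $x_2$ is to $X_1(\tilde{y})$ and $y_1$ is to $Y_1(\tilde{x})$, so that the four conditions (two interior memberships and two strict orderings) hold at once.
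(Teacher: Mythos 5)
Your proof is correct. The only difference from the paper's argument is in how you certify $(\tilde{x},y_1)\in\inter{D}\cap\inter{D_1}$: the paper invokes Lemma~\ref{preliminary-lemma2} directly, which already describes the slice $\{y>0:(\tilde{x},y)\in D\cap D_1\}$ as the non-degenerate segment $[Y_1(\tilde{x}),\tilde{Y}_2(\tilde{x})]$ with the open segment inside $\inter{D}\cap\inter{D_1}$, and then simply restricts $y_1$ to $]Y_1(\tilde{x}),\min\{\tilde{Y}_2(\tilde{x}),\tilde{y}\}[$. You instead split the interiority check into two strict inequalities, getting $P(\tilde{x},y_1)<1$ from Lemma~\ref{preliminary-lemma1} (one-dimensional structure of $\{y:P(\tilde{x},y)\le 1\}$) and $\phi_1(\tilde{x},y_1)<1$ from Corollary~\ref{preliminary_cor2}(ii) plus continuity of $\phi_1$ near $(\tilde{x},Y_1(\tilde{x}))$ (which is legitimate since $\phi_1$ is finite on a neighborhood of $D$ by (A3)(ii)). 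Both routes buy the same conclusion; the paper's is tighter because Lemma~\ref{preliminary-lemma2} packages the interval characterization in one step, whereas yours re-derives that information from lower-level facts but avoids referring to $\tilde{Y}_2$, $\tilde{X}_2$. The final ordering $x_1>x_2$, $y_1<y_2$ is handled the same way in both: \eqref{eq1_lemma4_upper_bounds} creates strict room to place $y_1$ below $\tilde{y}$ and $x_2$ below $\tilde{x}$, and one shrinks the admissible ranges so all four conditions hold simultaneously. One remark on hygiene: your invocation of a ``corresponding statement for $\phi_2$ and $X_1$'' is fine but implicit — the paper never records the $\phi_2$-analogue of Corollary~\ref{preliminary_cor2}, so it is worth noting explicitly that it follows by exchanging the roles of the two coordinates, as the paper does elsewhere.
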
 
\begin{proof}  Indeed,  by Lemma~\ref{preliminary-lemma2}, for $\tilde{x}\in]x^*,x^{**}[$ and $\tilde{y}\in]y^*,y^{**}[$, each of the  line segments $[Y_1(\tilde{x}), \tilde{Y}_2(\tilde{x})] = \{y > 0:~(\tilde{x},y)\in D\cap D_1\}$ and $[X_1(\tilde{y}), \tilde{X}_2(\tilde{y})] = \{x > 0:~(x,\tilde{y})\in D\cap D_2\}$ has a non-zero length, and for any $y_1$ and $x_2$ such that $Y_1(\tilde{x}) < y_1 <  \tilde{Y}_2(\tilde{x})$ and $X_1(\tilde{y}) < x_2 <  \tilde{X}_2(\tilde{y})$, one has 
\[
(\tilde{x}, y_1)\in\inter{D}\cap\inter{D_1}, \quad \text{and} \quad (x_2, \tilde{y}) \in\inter{D}\cap\inter{D_2}.
\]
Using \eqref{eq1_lemma4_upper_bounds}, this proves that for any $y_1$ and $x_2$ such that $Y_1(\tilde{x}) < y_1 < \min\{\tilde{Y}_2(\tilde{x}), \tilde{y}\}$ and $X_1(\tilde{y}) < x_2 <  \min\{\tilde{x}, \tilde{X}_2(\tilde{y})\}$, the points $(x_1,y_1) = (\tilde{x}, y_1)$ and $(x_2,y_2) = (x_2, \tilde{y})$ satisfy the conditions of Lemma~\ref{lemma1-upper-bounds}. 
\end{proof}

Now we are ready to get 
\begin{lemma}\label{lemma6-upper-bounds}    Suppose that the conditions (A1){-}(A4) are satisfied and let 

\be\label{def-Theta0}
\Theta_0 =  \begin{cases} \bigl([0,x_d[\times[0,Y_1(x_d)[ \bigr) \cup \bigl( [0, X_1(y_d)[\times[0,y_d[ \bigr)   &\text{if either (B0), or (B1) or (B2) holds}\\ 
  [0,x_d[\times [0, Y_2(x_d)[ &\text{if  either (B3) or (B4) holds}\\ 
[0,X_1(y_d)[\times [0, y_d[ &\text{if  either (B5) or (B6) holds}.\\
\end{cases} 
\ee
Then for any $(x,y)\in\Theta_0$, there are two points $(x_1,y_1)$ and $(x_2,y_2)$ satisfying the conditions of Lemma~\ref{lemma1-upper-bounds}    and relations  \eqref{eq1a_upper_bounds}. 
\end{lemma}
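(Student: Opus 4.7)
The plan is to construct, for each $(x,y)\in\Theta_0$, two bounding pairs $(x_1,y_1)\in\inter{D}\cap\inter{D_1}$ and $(x_2,y_2)\in\inter{D}\cap\inter{D_2}$ such that one of them already dominates $(x,y)$ componentwise; the inequality~\eqref{eq1a_upper_bounds} then holds trivially from a single term of the right-hand side. The two candidates are obtained by perturbing the canonical boundary points $C_1=(x_d,Y_1(x_d))$ and $C_2=(X_1(y_d),y_d)$ slightly inward. By Corollary~\ref{preliminary_cor2}, $C_1$ lies on $\partial D\cap\partial D_1$ (since $P(x_d,Y_1(x_d))=1$ and $\phi_1(x_d,Y_1(x_d))\leq 1$, with equality when $x_d<x^{**}_P$), and $C_2$ symmetrically lies on $\partial D\cap\partial D_2$. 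Lemma~\ref{preliminary-lemma2} ensures that the vertical slice $\{x\}\times(Y_1(x),\tilde{Y}_2(x))$ is contained in $\inter{D}\cap\inter{D_1}$ for every $x\in(x^*,x^{**})$, with $\tilde{Y}_2(x)\to Y_1(x_d)$ as $x\uparrow x_d$; the analogous statement holds on the $D\cap D_2$ side.

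The case-by-case description of $\Theta_0$ follows from Proposition~\ref{cases}. Writing $R_1=[0,x_d[\times[0,Y_1(x_d)[$ and $R_2=[0,X_1(y_d)[\times[0,y_d[$, in cases (B0)-(B2) the two corners $C_1$ and $C_2$ are strictly distinct in both coordinates and $\Theta_0=R_1\cup R_2$. In cases (B3)-(B4), the relation $(x_d,y_d)\in{\mathcal S}_{12}$ together with the parametrization~\eqref{eq2_S_12} yields $X_1(y_d)=x_d$, whence $\Theta_0=R_2=[0,x_d[\times[0,y_d[$ and $R_1\subsetneq R_2$. Symmetrically, in cases (B5)-(B6) one has $Y_1(x_d)=y_d$, and by the definition of $\Theta_0$ in these cases $\Theta_0=R_2$ again (with $R_2\subset R_1$ this time).

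Given $(x,y)\in R_1$, one chooses $x_1\in(\max\{x,x^*\},x_d)$ close enough to $x_d$ so that $\tilde{Y}_2(x_1)>y$ (possible since $\tilde{Y}_2(x_1)\to Y_1(x_d)>y$), and then $y_1\in(\max\{y,Y_1(x_1)\},\tilde{Y}_2(x_1))$. This yields $(x_1,y_1)\in\inter{D}\cap\inter{D_1}$ with $x\leq x_1$ and $y\leq y_1$, hence $x^{k_1}y^{k_2}\leq x_1^{k_1}y_1^{k_2}$ for all $(k_1,k_2)\in\Z_+^2$. A complementary $(x_2,y_2)\in\inter{D}\cap\inter{D_2}$ satisfying $x_2<x_1$ and $y_2>y_1$ is produced by the symmetric perturbation of $C_2$. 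The case $(x,y)\in R_2$ is treated analogously, using $(x_2,y_2)$ as the dominating pair.

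The main obstacle lies in securing the orderings $x_1>x_2$ and $y_1<y_2$ simultaneously in the degenerate cases (B3)-(B6), where $C_1$ and $C_2$ share one coordinate. For (B3)-(B4), $X_1(y_d)=x_d$, so a naive choice of $(x_2,y_2)$ close to $C_2=(x_d,y_d)$ pushes $x_2$ toward $x_d$, leaving no room for $x_1\in\inter{D}\cap\inter{D_1}$ to satisfy $x_1>x_2$ since such $x_1$ must lie strictly below $x_d$. One resolves this by approaching $C_2$ along the strictly monotone increasing branch of $X_1$ on $[Y_1(x^*_P),y^{**}_P]$: taking $y_2\in(y,y_d)$ slightly below $y_d$ gives $X_1(y_2)<X_1(y_d)=x_d$ strictly, and then choosing $x_2\in(\max\{x,X_1(y_2)\},X_1(y_2)+\varepsilon)$ for $\varepsilon$ small leaves the interval $(x_2,x_d)$ non-empty to contain $x_1$. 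Meanwhile, $y_1<y_2$ is automatic since $y_1<\tilde{Y}_2(x_1)\to Y_1(x_d)<y_d$. Symmetric arguments apply to (B5)-(B6), with the added subtlety in (B6) that $x_1$ must lie in $(X_1(y^{**}),x_d)$ with $x_d=X_2(y^{**})$, which is non-empty thanks to the strict inequality $X_1(y^{**})<X_2(y^{**})$. The boundary sub-cases $x_d=x^{**}_P$ or $y_d=y^{**}_P$ require an extra argument invoking the strict log-convexity of $D$ (Proposition~\ref{propx*}) to ensure the admissible intervals do not collapse.
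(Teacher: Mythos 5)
Your construction follows the paper's own argument: perturb near the corner points $(x_d,Y_1(x_d))$ and $(X_1(y_d),y_d)$ to obtain $(x_1,y_1)\in\inter{D}\cap\inter{D_1}$ and $(x_2,y_2)\in\inter{D}\cap\inter{D_2}$, with one of the two pairs dominating $(x,y)$ componentwise, and in the degenerate cases (B3)--(B6) use the strict monotonicity of $X_1$ (resp.\ $Y_1$) near $y_d$ (resp.\ $x_d$) to secure the ordering $x_1>x_2$, $y_1<y_2$ --- precisely what the paper does via the auxiliary choices $\hat{x}<\tilde{x}<x_d$ with $\tilde{y}=Y_2(\hat{x})$. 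Two small inaccuracies in your write-up are harmless: in (B5)--(B6) one has $x_d=X_2(y^{**})<x^{**}$, so Corollary~\ref{preliminary_cor2}(ii) gives $\phi_1(x_d,Y_1(x_d))<1$ and $C_1$ lies in $\inter{D_1}$ rather than on $\partial D_1$ (which only makes placing $(x_1,y_1)$ easier), and the closing remark that the sub-cases $x_d=x^{**}_P$ or $y_d=y^{**}_P$ need an extra argument is unfounded, since the construction covers them uniformly.
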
 
\begin{proof} Suppose first that one of the cases  (B0), (B1) or (B2) occurs and let $(x,y)\in [0, x_d[\times[0,Y_1(x_d)[$. The definitions of the cases (B0), (B1), (B2) and of the points $x_d$ and $y_d$ (see Proposition~\ref{cases} and relations \eqref{eq2-def-xd}, \eqref{eq2-def-yd}),  give
\[
\max\{X_1(y_d), x\}  < x_d = x^{**} \quad \text{and} \quad y < Y_1(x_d) < y_d = y^{**}, 
\]
and by \eqref{eq-critical-points}, 
\[
x^*_P \leq x^* < x^{**}= x_d \quad \text{and} \quad y^*_P \leq y^* < y^{**} = y_d. 
\]
Since  the functions $X_1:[y^*_P,y^{**}_P]\to [x^*_P, x^{**}_P]$ and $Y_1:[x^*_P, x^{**}_P]\to [y^*_P,y^{**}_P]$ are continuous,  it follows that for any $\tilde{x}\in]\max\{x^*, x\}, x_d[$ and $\tilde{y}\in]y^*,y_d[$ closed enough respectively to $x_d$ and $y_d$ one has 
\be\label{eq0a-lemma6-upper-bounds} 
0 \leq x < \tilde{x}, \quad \quad 0\leq y < Y_1(\tilde{x}), 
\ee
\be\label{eq0b-lemma6-upper-bounds} 
X_1(\tilde{y})  < \tilde{x} \quad \text{and} \quad  Y_1(\tilde{x}) < \tilde{y}. 
\ee
By Lemma~\ref{lemma4_upper_bounds}, from \eqref{eq0b-lemma6-upper-bounds} it follows that for  $y_1 > Y_1(\tilde{x})$ and $x_2 > X_1(\tilde{y})$ closed enough respectively to $Y_1(\tilde{x})$ and $X_1(\tilde{y})$, the points $(x_1, y_1) = (\tilde{x}, y_1)$ and $(x_2,y_2) = (x_2 , \tilde{y})$  
satisfy the conditions of Lemma~\ref{lemma1-upper-bounds}, and using \eqref{eq0a-lemma6-upper-bounds} one gets  \eqref{eq1a_upper_bounds}.

 When one of the cases (B0), (B1) or (B2) holds and $(x,y)\in [0, x_d[\times[0,Y_1(x_d)[$, our lemma is therefore proved. For $(x,y)\in [0, X_1(y_d)[\times[0, y_d[$, the proof is quite similar. 

Suppose now that either (B3) or (B4) holds. The definitions of the cases (B3) and (B4) and of the points $x_d$ and $y_d$ (see Proposition~\ref{cases} and relations \eqref{eq2-def-xd}, \eqref{eq2-def-yd}),    and by  \eqref{eq-critical-points}, give
\be\label{eq2-lemma6-upper-bounds}
x^*_P \leq x^* < x^{**} = x_d \leq X_1(y^{**}), 
\ee
\be\label{eq3-lemma6-upper-bounds}
  y^*  \leq 1 < y_d = Y_2(x_d) \leq y^{**}, 
\ee
\be\label{eq1-lemma6-upper-bounds}
 (X_1(y^{**}), y^{**}), \, (x^{**},Y_2(x^{**})) = (x_d,y_d)\in {\mathcal S}_{12}, 
\ee
and 
\be\label{eq4-lemma6-upper-bounds}
Y_1(x_d) < Y_2(x_d).
\ee
From the definition of ${\mathcal S}_{12}$ and relation~ \eqref{eq1-lemma6-upper-bounds}, it follows that 
$
X_1(y^{**}) \leq X_1(y^{**}_P)$ 
and consequently, by  \eqref{eq2-lemma6-upper-bounds},
\be\label{eq5-lemma6-upper-bounds}
x^*_P \leq x^* < x_d \leq X_1(y^{**}) \leq X_1(y^{**}_P).
\ee
Consider now a point $(x,y)\in \Theta_0$. Then by \eqref{def-Theta0} and using \eqref{eq5-lemma6-upper-bounds}, \eqref{eq3-lemma6-upper-bounds} and \eqref{eq4-lemma6-upper-bounds},
\be\label{eq6-lemma6-upper-bounds}
0\leq \max\{x^*, x\} < x_d \leq X_1(y^{**}_P) \quad \text{and} \quad 0\leq \max\{Y_1(x_d), y^*, y \} < y_d = Y_2(x_d) \leq y^{**} 
\ee
Since the function $Y_2$ is strictly increasing on the line segment $[x^*_P, X_1(y^{**}_P)]$ and the functions $Y_1$ and $Y_2$ are continuous on $[x^*_P, x^{**}_P]$, it follows  that for any  $\hat{x}$ and $\tilde{x}$ closed enough to $x_d$ and such that 
\be\label{eq6p-lemma6-upper-bounds}
\max\{x^*,x\} < \hat{x} < \tilde{x} < x^{**} = x_d, 
\ee
one has
\[
\max\{Y_1(\tilde{x}), y^*, y \} < Y_2(\hat{x})< Y_2(\tilde{x}) < Y_2(x_d) = y_d \leq  y^{**}, 
\]
Remark that because of  \eqref{eq5-lemma6-upper-bounds} and \eqref{eq6p-lemma6-upper-bounds}, the points $\hat{x}$ and $\tilde{x}$ belong to the line segment  $[x^*_P, X_1(y^{**}_P)]$. 
Since  the functions $X_1:[ Y_1(x^*_P), y^{**}_P] \to [x^*_P, X_1(y^{**}_P)]$ and $Y_2:  [x^*_P, X_1(y^{**}_P)]\to [ Y_1(x^*_P), y^{**}_P]$ are inverse to each other,  letting   $\tilde{y} = Y_2(\hat{x})$ we get therefore $
X_1(\tilde{y}) = \hat{x}$ and using  the above relations we obtain 
\be\label{eq7-lemma6-upper-bounds}
y^* < \tilde{y} = Y_2(\hat{x}) < y^{**},  \quad Y_1(\tilde{x}) < Y_2(\hat{x}) = \tilde{y}, \quad   X_1(\tilde{y}) = \hat{x} < \tilde{x},
\ee
\be\label{eq8-lemma6-upper-bounds}
x < \hat{x} = X_1(\tilde{y}) \quad \text{and } \quad y < Y_2(\hat{x}) = \tilde{y}. 
\ee
By  Lemma~\ref{lemma4_upper_bounds}, from \eqref{eq6p-lemma6-upper-bounds} and \eqref{eq7-lemma6-upper-bounds} it follows that  for any $y_1 > Y_1(\tilde{x})$ closed enough to $Y_1(\tilde{x})$ and $x_2 > X_1(\tilde{y})$ closed enough to  $X_1(\tilde{y})$, the points 
$(x_1, y_1) = (\tilde{x}, y_1)$ and $(x_2,y_2) = (x_2 , \tilde{y})$ 
satisfy the conditions of Lemma~\ref{lemma1-upper-bounds}, and using moreover \eqref{eq8-lemma6-upper-bounds}, we get that for any $j=(j_1,j_2)\in\Z^2_+$, 
\[
x^{j_1}y^{j_2} \leq (X_1(\tilde{y}))^{j_1} \tilde{y}^{j_2} \leq  x_2^{j_1}y_2^{j_2}  \leq x_1^{j_1}y_1^{j_2} + x_2^{j_1}y_2^{j_2}.
\]
Hence, in the case when either (B3) or (B4) holds, Lemma~\ref{lemma4_upper_bounds}  is also proved.  To prove this lemma the case when either (B5) or (B6) holds, it is sufficient to exchange the roles of $x$ and $y$. 
\end{proof} 

\subsubsection{ Proof of Proposition~\ref{upper-bounds} }\label{subsection_upper_bounds}  
This proposition is a  consequence of  Lemma~\ref{lemma2-upper-bounds} and Lemma~\ref{lemma4_upper_bounds}:~ by Lemma~\ref{lemma4_upper_bounds}, for any $(x,y)\in\Theta_0$, there are two points $(x_1,y_1)$ and $(x_2,y_2)$ for which the conditions of Lemma~\ref{lemma1-upper-bounds}    are satisfied and relations  \eqref{eq1a_upper_bounds} hold. By Lemma Lemma~\ref{lemma2-upper-bounds} and using \eqref{eq1a_upper_bounds}, this proves that 
\[
\sum_{k=(k_1,k_2)\in\Z^2_+} g(j,k)x^{k_1}y^{k_2} \leq \sum_{k=(k_1,k_2)\in\Z^2_+} g(j,k)\left(x_1^{k_1}y_1^{k_2} + x_2^{k_1}y_2^{k_2}\right) < + \infty. 
\]
Hence, for any $(x,y)\in\Theta_0$, the series \eqref{eq-generating-functions}  converge.  Since   the set $\Theta$ is the logarithmically convex hull  of the set $\Theta_0$, and  the domain of convergence of power series with center $0$ is always logarithmically convex, this proves that the series  \eqref{eq-generating-functions}  converge in $\Omega(\Theta)$.

\subsection{Proof of Proposition~\ref{functional-eq-prop}}\label{functional-equation-section} Consider first the case when $j=(j_1,j_2)\not=(0,0)$. 
By Proposition~\ref{upper-bounds}, the series \eqref{eq-generating-functions} 
converge on the set $\Omega(\Theta)$. Hence,  for any  $(x,y) \in \Omega(\Theta)$ with non-zero $x$ and $y$, by the Fubini theorem and using the Markov property, one gets 
\begin{align}
H_j(x,y) &= x^{j_1}y^{j_2} + \sum_{k=(k_1,k_2)\in\Z^2_+}\sum_{n=1}^\infty \P_j(Z(n) = k, \; \tau_0 > n) x^{k_1}y^{k_2} \nonumber \\ 
&=  x^{j_1}y^{j_2} + \sum_{\ell\in\Z^2_+{\setminus} \{(0,0)\}}  g(j,\ell) \E_\ell\left(x^{Z_1(1)}y^{Z_2(1)},\; \tau_0 > 1\right) \label{eq1_functional_equation} 
\end{align} 
Because of Assumptions (A1){-}(A3), for $(x,y) \in \Omega(\Theta)$ with $x\not= 0$ and $y\not= 0$,  we have
\[
\E_\ell\left(x^{Z_1(1)}y^{Z_2(1)},\; \tau_0 > 1\right) =
\begin{cases} x^{\ell_1}y^{\ell_2} P(x,y)  - \mu(-1,-1) \1_{\{(1,1)\}}(\ell_1,\ell_2) & \text{if $\ell_1> 0$ and $\ell_2 >0$,}\\
 x^{\ell_1}\, \phi_1(x,y) - \mu_1(-1,0) \1_{\{(1,0)\}}(\ell_1,\ell_2) & \text{if $\ell_1> 0$ and $\ell_2 = 0$,}\\
 y^{\ell_2}\, \phi_2(x,y) - \mu_2(0,-1) \1_{\{(0,1)\}}(\ell_1,\ell_2)  & \text{if $\ell_1= 0$ and $\ell_2 > 0$.}
 \end{cases} 
\]
Using these relations in \eqref{eq1_functional_equation}, for $(x,y) \in \Omega(\Theta)$ with non-zero $x$ and $y$, one gets therefore 
\begin{align*}
H_j(x,y)  &=  x^{j_1}y^{j_2} - \mu(-1,-1) g(j,(1,1)) - \mu_1(-1,0) g(j, (1,0)) - \mu_2(0,-1) g(j, (0,1)) \\
&+  P(x,y)\hspace{-6mm}\sum_{\substack{\ell=(\ell_1,\ell_2)\in\Z^2_+:\\ \ell_1 > 0, \; \ell_2 > 0}} g(j,\ell) x^{\ell_1}y^{\ell_2} + \phi_1(x,y)\!\!\sum_{\ell_1 = 1}^\infty g(j, (\ell_1,0)) x^{\ell_1}  + \; \phi_2(x,y)\!\!\sum_{\ell_2= 1}^\infty g(j,(0,\ell_2)) y^{\ell_2} 
\end{align*}
or equivalently, 
\[
H_j(x,y) = L_j(x,y) +  xyP(x,y) h_j(x,y)  + x\phi_1(x,y) h_{1j}(x)+ y\phi_2(x,y)h_{2j}(y) \nonumber\\
\]
with
\begin{align*}
L_j(x,y) &=  x^{j_1}y^{j_2} - \mu(-1,-1) g(j,(1,1)) - \mu_1(-1,0) g(j, (1,0)) - \mu_2(0,-1) g(j, (0,1)) \\
&= x^{j_1}y^{j_2}  - \P_j(\tau_0 < +\infty). 
\end{align*} 
Since clearly $H_j(x,y) = xy h_j(x,y) + x h_{1j}(x) + y h_{2j}(y)$, the last relation proves \eqref{extended-functional-equation} for any $(x,y) \in \Omega(\Theta)$ and 
$j=(j_1,j_2)\in\Z^2_+{{\setminus}}\{(0,0)\}$. To get \eqref{extended-functional-equation}  for $(j_1,j_2)=(0,0)$, it is sufficient now to notice that for any $(x,y)\in\Omega(\Theta)$, 
\[
H_{(0,0)}(x,y) = \sum_{(j_1,j_2)\in\Z_+^2{\setminus}\{(0,0)\}} \P_{(0,0)}\bigl(Z(1) = (j_1,j_2)\bigr) H_j(x,y)
\]
and
\begin{align*}
 \sum_{(j_1,j_2)\in\Z_+^2{\setminus}\{(0,0)\}} \hspace{-0.6cm}\P_{(0,0)}\bigl(Z(1) = (j_1,j_2)\bigr) L_j(x,y) &= \E_{(0,0)} \left(x^{Z_1(1)}y^{Z_2(1)}; \; \tau_0 > 1\right) 
 - \P_{(0,0)}(1 < \tau_0 < +\infty) \\
 &= \phi_0(x,y) - \P_{(0,0)}(\tau_0 < +\infty). 
\end{align*}

\subsection{Proof of  Proposition~\ref{lemma1-proof-theorem1}}\label{section-proof- lemma1-proof-theorem1}
Suppose that one of the cases (B0)-(B2) holds. Then by Proposition~\ref{cases} and  the definition of the points $x_d$, $y_d$, the following relations hold 
\be\label{eq3-lemma1-proof-theorem1}
x_d=x^{**} > X_1(y_d), \quad \quad y_d=y^{**} > Y_1(x_d),
\ee
 and the set $\Theta$ is defined as a logarithmically convex hull of the set $$([0, x_d[\times[0, Y_1(x_d)[)\cup([0, X_1(y_d)[\times [0, y_d[).$$ 
 The points $(x_1,y_2) = (x_d, Y_1(x_d))$ and $(x_2,y_2) = (X_1(y_d), y_d)$ are therefore on the boundary of the set $\Theta$ and also on the boundary of the set $D$. Since the set $\Theta$ is logarithmically convex and the set $D$ is strictly logarithmically convex,  it follows that for any $0 < \theta < 1$, the point $(x_\theta, y_\theta)$, with $x_\theta = x_1^\theta x_2^{1-\theta}$ and $y_\theta = y_1^\theta y_2^{1-\theta}$, belongs to the set $\overline{\Theta}\cap\inter{D}$, and consequently, 
 \[
 Y_1(x_\theta) < y_\theta < Y_2(x_\theta). 
 \]
 By the definition of the set $\Theta$, for any $0 < \theta < 1$ and $y\in[0, y_\theta[$, the point $(x_\theta,y)$ is in $\Theta$, and for any $y\in]Y_1(x_\theta), Y_2(x_\theta)[$ and the point $(x_\theta, y)$ is in $\inter{D}$. Consequently, for any $0 < \theta < 1$ and $y\in[0, Y_2(x_\theta)]$, the point $(x_\theta, y)$ belongs to the set $\overline{\Theta}\cup\inter{D}$, or equivalently, that for any $y \geq 0$, 
 \be\label{eq1-lemma1-proof-theorem1}
 y \geq Y_2(x_\theta) \quad \text{whenever} \quad (x_\theta,y)\not\in \overline{\Theta}\cup\inter{D}. 
 \ee
 Consider now  a point $(\hat{x}, \hat{y})\in[0,+\infty[^2$ with $\hat{x} < x_d$ and $\hat{y} < y_d$ and such that $(\hat{x},\hat{y})\not\in (\Theta\cup \inter{D})$. Then $\hat{x} \geq X_1(y_d)$ (because otherwise $ (\hat{x},\hat{y})\in [0, X_1(y_d)[\times[0, y_d)[ \subset \Theta$), and consequently for some $0 < \theta \leq 1$, $\hat{x} = x_\theta = x_d^\theta (X_1(y_d))^{1-\theta}$. Hence, by \eqref{eq1-lemma1-proof-theorem1}, we get $\hat{y} \geq Y_2(\hat{x})$, and with similar arguments (it is sufficient to exchange the roles of $x$ and $y$) we obtain $\hat{x} \geq X_2(\hat{y})$. By  Lemma~\ref{convexity_lemma3}, these inequalities show that $(\hat{x}, Y_2(\hat{x})), (X_2(\hat{y}), \hat{y}) \in{\mathcal S}_{22}$. Hence, if we suppose that  for some $(x',y')\in D$, the inequalities $\hat{x} \leq x'$ and $\hat{y} \leq y'$ hold, then using the similar arguments as in the proof of Lemma~\ref{convexity_lemma2},  we will obtain that  $\hat{x} = x' = X_2(\hat{y})$ and $\hat{y} = y' = Y_2(\hat{x})$. With the definition of the set ${\Gamma}$, this proves that $(\hat{x},\hat{y})\not\in{\Gamma}$.

\subsection{Proof of Theorem~\ref{theorem1}}\label{section-proof-theorem1-completed} Now, we are ready to complete the proof of Theorem~\ref{theorem1}. 
In the cases (B3){-}(B6), Theorem~\ref{theorem1} follows from Corollary~\ref{upper-bounds-cor1}  and Proposition~\ref{functional-eq-prop}, and in the cases  (B0){-}(B2), the first assertion of Theorem~\ref{theorem1} follows from Corollary~\ref{upper-bounds-cor1} as well.To complete the proof of Theorem~\ref{theorem1} we have therefore to prove its second assertion in the cases (B0){-}(B2).  

We know that, for any $j\in\Z^2_+$, the function $(x,y)\to L_j(x,y)$ is analytic in $\Omega({\Gamma})$, the functions $(x,y) \to Q(x,y)= xy(1-P(x,y)$, $(x,y) \to \psi_1(x,y)=x(\phi_1(x,y)-1)$ and $(x,y)\to \psi_2(x,y)=y(1-\phi_2(x,y))$ can be analytically continued  to  the set $\Omega({\Gamma})$, and  by Corollary~\ref{upper-bounds-cor1},  the functions $h_{1j}$ and $h_{2j}$ are analytic respectively in the discs $B(0, x_d)$ and $B(0,y_d)$. Hence for any $j\in\Z^2_+$, the function 
$(x,y) \to L_j(x,y) + \psi_1(x,y) h_{1j}(x) + \psi_2(x,y) h_{2j}(y)$ is analytic in the set $\{(x,y)\in \Omega({\Gamma}):~|x| < x_d, \; |y| < y_d\}$. Since by Proposition~\ref{functional-eq-prop}, on the set $\Omega(\Theta)$, 
\[
Q(x,y) h_j(x,y) = L_j(x,y) + \psi_1(x,y) h_{1j}(x) + \psi_2(x,y) h_{2j}(y), 
\]
and since clearly $\Theta \subset \{(x,y)\in{\Gamma}:~x < x_d, y < y_d\}$, we conclude therefore that the functions $(x,y) \to  Q(x,y) h_j(x,y)$  can be analytically continued to the set  $$\Omega_d(\Gamma) \steq{def} \{(x,y)\in \Omega({\Gamma}):~|x| < x_d, \; |y| < y_d\}$$ and the function $(x,y)\to h_j(x,y)$ can be continued as a meromorphic function 
\[
h_j(x,y) = \frac{L_j(x,y) + \psi_1(x,y) h_{1j}(x) + \psi_2(x,y) h_{2j}(y)}{Q(x,y)}. 
\]
to the set $\Omega_d(\Gamma)$. Since 
\begin{itemize}
\item[--] by Proposition~\ref{upper-bounds},  for any $j\in\Z^2_+$, the function $h_j$ is analytic in $\Omega(\Theta)$;
\item[--] for any $(x,y)\in\Omega(\inter{D})$, by the definition of the set $D$, we have $|P(x,y)| \leq P(|x|,|y|) < 1$ and, consequently, $Q(x,y)\not=0$; 
\item[--] and by Proposition~\ref{lemma1-proof-theorem1}, the set $\Omega_d(\Gamma)$ is included to the union of the open sets $\Omega(\Theta)$ and $\Omega(\inter{D})$,
\end{itemize} 
we conclude therefore that the function $h_j$ can be analytically continued to the set $\Omega_d(\Gamma)$.

\section{Singularity Analysis of Generating Functions}\label{proof-theorem2} 
This section is devoted to the proof of Theorem~\ref{theorem2}. Recall that the case (B7) will not be considered. See Section~\ref{Part-Sec}. Throughout this section we will assume therefore that 

\medskip
\noindent
{\bf Assumption~(A4)} one of the cases (B0){-}(B6) holds. 
\medskip

\noindent

\subsection{The main ideas and the sketch of the proof}\label{sec-sketch-proof-theorem2}  

The main steps of our proof are the following. First, we prove the light version of the assertion (i){-}(v) of this theorem, i.e. we get, in each of the corresponding cases, relations \eqref{eq1-theorem2}, \eqref{eq-analytical-structure-IIb} \eqref{eq-analytical-structure-IIa}, \eqref{eq-analytical-structure-IV}, \eqref{eq-analytical-structure-Va}, \eqref{eq-analytical-structure-Vb}, \eqref{eq-analytical-structure-III}, and \eqref{limit-R} with  positive constants given by \eqref{eq2b-cases-B0-B4-theorem2}, \eqref{eq4a-case-B2-theorem2}, \eqref{eq2b-case-B2-theorem2}, \eqref{eq-analytical-structure-IVb}, \eqref{eq-analytical-structure-Vab}, \eqref{eq-analytical-structure-Vbb}, \eqref{eq-analytical-structure-IIIb}. This is a subject of Propositions~\ref{cases-B0-B4-theorem2} and~\ref{injection-prop2} and Lemma~\ref{lemma-assertion-vi-th2} below.

With these results we will be able to show that in each of the assertions (i){-}(vi) of our theorem, the corresponding function $\varkappa_i$, $\tilde\varkappa_i$ or $\varkappa_{(\hat{x},\hat{y})}$ is non-negative on $\Z^2_+$. 

Second, in Proposition~\ref{harmonic-functions} below, we prove that in each of the assertions (i){-}(vi), the corresponding function $\varkappa_i$, $\tilde\varkappa_i$ or $\varkappa_{(\hat{x},\hat{y})}$ is harmonic for the random walk  $({Z}_{\tau_0}(n))$ and  positive everywhere in the set $\Z^2_+{\setminus} E_0$. With this statement, the proof of our theorem will be completed. 

The first step of the proof of the last assertion of Theorem~\ref{theorem2} is given by Lemma~\ref{lemma-assertion-vi-th2}. This result is obtained as a traightforward consequence of Theorem~\ref{theorem1}.

The first step of the proof of the assertions (i){-}(v) of Theorem~\ref{theorem2} proof is the most difficult. Its main idea is the following: we extend the function $Y_1$ as an analytic function to some domain of $\C$ large enough and we inject next $y= Y_1(x)$ to the functional equation \eqref{extended-functional-equation} in order to get the identity 
\be\label{injection-identity} 
L_j(x, Y_1(x)) + \psi_1(x,Y_1(x))h_{1j}(x) + \psi_2(x,Y_1(x))h_{2j}(Y_1(x)) = 0.   
\ee
Using this result we will be able to extend beyond the point $x_d$ first the function $x\to \psi_1(x,Y_1(x))h_{1j}(x) = (\phi_1(x, Y_1(x)-1)H_j(x,0)$ and next the function $x\to H_j(x,0)$.

Remark that in a difference with nearest neighbor random walks, we have no explicit form of the function $Y_1$. Moreover, by the implicit function theorem, this function can be extended only to some neighborhood of the interval $]x^*_P, x^{**}_P[$ in $\C$ which is clearly not sufficient for our purpose. The first difficulty of our proof is therefore to extend the function $Y_1$ to some sufficient for our analysis domain of $\C$. We perform this first step of our proof by using the probabilistic representation of the function $Y_1: [x^*_P, x^{**}_P]\to [y^*_P, y^{**}_P]$ obtained in~\cite{Ignatiouk:2010}. This is a subject of Proposition~\ref{functions-Y1-X1} below. In this statement, for some $\eps > 0$, we extend the function $Y_1$ as an analytic function to the set
\[
\inter{U}_{\eps} = C(x^*_P, x^{**}_P+\eps) {\setminus} [x^{**}_P, x^{**}_P+\eps[,
\]
and a continuous function on the set 
\be\label{eq-def-U-delta}
U_{\eps} ~=~ C(x^*_P, x^{**}_P+\eps) {\setminus} ]x^{**}_P, x^{**}_P+\eps[ ~=~ \inter{U}_{\eps}  \cup\{x^{**}_P\}.
\ee
It is proved moreover that the extended function $Y_1$ satisfies  the identity 
\be\label{key-identity}
Q(x, Y_1(x)) = 0,
\ee
on  the set $U_\eps$, and that, on the closed annulus $\overline{C}(x^*_P, x^{**}_P)$, the following relations holds 
\be\label{key-point-inequality}
|Y_1(x)| < Y_1(|x|) \quad \text{whenever} \quad x\not= |x|. 
\ee

Another difficulty is that by Theorem~\ref{theorem1}, we know only that the function $y \to h_{2j}(y)$ is analytic in $B(0,y_d)$, and the function $(x,y)\to h_j(x,y)$ is analytic in $\{(x,y)\in \Omega({\Gamma}):~ |x| < x_d, \; |y| < y_d\}$. Hence, we can inject $y=Y_1(x)$ to the functional equation \eqref{extended-functional-equation} only for those $x\in B(0,x_d)$ for which  $|Y_1(x)| < y_d$. To overcome this difficulty, we use the inequality \eqref{key-point-inequality}.
With this inequality, we are able to show that for some $\delta > 0$, the function $x\to (x,Y_1(x))$ maps the annulus $C(x_d-\delta, x_d) = \{x\in\C: x_d - \delta < |x| < x_d\}$ to the set $\{(x,y)\in\Omega({\Gamma}):~|x| < x_D, \, |y| < y_d\}$ where the functions $(x,y)\to H_j(x,y)$, $(x,y)\to L_j(x,y) + \psi_2(x,y)h_{2j}(y)$ and $(x,y)\to \psi_1(x,y) h_{1j}(x)$ are analytic. This is a subject of Proposition~\ref{prop-mapping-injection}  below. 

In this way, on the annulus $C(x_d-\delta, x_d)$,  by letting in the functional equation \eqref{extended-functional-equation} $y=Y_1(x)$ and using \eqref{key-identity} we obtain  the identity \eqref{injection-identity} with analytic in $C(x_d - \delta, x_d)$ functions $x \to \psi_1(x,Y_1(x))h_{1j}(x)$ and $x \to L_j(x, Y_1(x))$ and $x \to \psi_2(x,Y_1(x))h_{2j}(Y_1(x))$. Moreover,  when one of the cases (B0){-}(B4) holds, using again the inequality \eqref{key-point-inequality}, we will be able to show that  for some $\delta > 0$, the function $(x,y) \to L_j(x, Y_1(x)) + \psi_2(x,Y_1(x))h_{2j}(Y_1(x))$ is analytic  in $C(x_d-\delta, x_d+\delta)\cap \inter{U}_\eps$ and  continuous  on $C(x_d-\delta, x_d+\delta)\cap {U}_\eps$. Using this result together with the identity \eqref{injection-identity},    we will extend the function $x \to - \psi_1(x,Y_1(x))h_{1j}(x)=(1-\phi_1(x,Y_1(x))H_j(x,0)$ as an analytic function to the set $C(x_d-\delta, x_d+\delta)\cap \inter{U}_\eps$ and  a continuous function to the set $C(x_d-\delta, x_d+\delta)\cap {U}_\eps$. This is a subject of Corollary~\ref{cor2-proof-theorem2} below. 
 
 Next, in Proposition~\ref{phi-proof-theorem2}, we investigate the function $x\to 1 - \phi_1(x, Y_1(x)))$,  and  finally,  in Proposition~\ref{cases-B0-B4-theorem2} and Proposition~\ref{case-B2-theorem2} below,  when one of the cases (B0){-}(B4) holds, we extend the function $x\to H_j(x,0)$ beyond the point $x_d$. 

In the case when either (B5) or (B6) holds, we begin our analysis by investigating the function $h_{2j}$. With the same arguments as in the previous cases  (it is sufficient to exchange the roles of $x$ and $y$) we obtain that the function $h_{2j}$ can be  extended as an analytic function to the set $B(0, y_d+\delta_0){\setminus}\{y_d\}$. Next we show that for some $\delta > 0$, the function $x\to h_{2j}(Y_1(x))$ is analytic in the set $C(x_d-\delta, x_d+\delta)\cap U_\eps {\setminus}\{x_d\}$. And finally, using again the identity \eqref{injection-identity} we extend first the function $x \to \psi_1(x,Y_1(x))h_{1j}(x)$ and next the function $x\to h_{1j}(x)$ beyond the point $x_d$. This is a subject of Proposition~\ref{injection-prop2} below.

\subsection{Analytic continuation and properties of the function $x\to Y_1(x)$}\label{section-functions-Y1-X1}

We begin our analysis with the following result. 
\begin{prop}\label{functions-Y1-X1}{\bf Analytic continuation of the function $Y_1$}. Suppose the  conditions (A1)  are satisfied and let $\mu(j) = 0$ for all $j\in\Z^2$ with $j_2 < -1$ (remark that we do not need the whole condition (A2) to be satisfied). Then 
\begin{enumerate}[label=\roman*)]
\item the function $Y_1$ is strictly convex on the line segment $[x^*_P, x^{**}_P]$ and for some  $\eps > 0$, it can be extended to the set $U_\eps$  as an analytic function in the set $\inter{U}_\eps$  and a continuous function on the set  $U_\eps$ satisfying  there the identity \eqref{key-identity}.

\item  relation~\eqref{key-point-inequality} holds on the set $\overline{C}(x^*_P, x^{**}_P)$;

\item  for any $\hat{x}\in ]X_1(y^*_P), x^{**}_P[$, the function $x\to (Y_1(x) - Y_1(\hat{x})$ does not vanish in the set $C(\hat{x} - \delta, \hat{x} + \delta){\setminus}\{\hat{x}\}$ for some $\delta > 0$, and has at the point $\hat{x}$ a simple zero with 
\be\label{eq1-injection-prop-2-lemma1}
 \frac{d}{dx} Y_1(\hat{x}) = - \left.\partial_x P(x,y)/\partial_y P(x,y)\right|_{(x,y) = (\hat{x}, Y_1(\hat{x}))}> 0;
\ee

\item  the function $x\to Y_1(x) - Y_1(x^{**}_P)$ does not vanish in the set $C(x^{**}_P - \delta, x^{**}_P + \delta){\setminus} [x^{**}_P, x^{**}_P+\delta[$ for some $\delta > 0$, and as $x\to x^{**}_P$,
\be\label{eq1-functions-Y1}
Y_1(x) - Y_1(x^{**}_P) \sim - c \sqrt{x^{**}_P-x} \quad \; \text{and } \; \quad \frac{d}{dx} Y_1(x)  \sim  \frac{c}{2\sqrt{x^{**}_P-x}} 
\ee
with 
\[
c = \left.\sqrt{\partial_x P(x,y)/\partial^2_{yy}P(x,y)}\, \right|_{(x,y)= (x^{**}_P, Y_1(x^{**}_P))} > 0.
\]
\end{enumerate}
\end{prop}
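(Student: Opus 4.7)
The strategy is to describe $Y_1$ simultaneously as a generating function with non-negative coefficients (via a probabilistic construction) and as the smaller branch of the implicit equation $P(x,y)=1$, and then to combine these two viewpoints to obtain the analytic continuation and the inequality~\eqref{key-point-inequality}. First, following~\cite{Ignatiouk:2010}, I would realise $Y_1(x)$ as a Laurent series in $x$ with non-negative coefficients. Because Assumption~A2 forbids jumps of the second coordinate of $\mu$ below $-1$, a Wiener--Hopf-type factorisation of the one-dimensional random walk obtained by projecting the $x$-twisted $\mu$-walk onto its second coordinate expresses $Y_1(x)$ in terms of the generating function of the first coordinate at the first hitting time of level $-1$ by the second coordinate. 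The resulting series has non-negative coefficients, strictly positive along consecutive integers by the irreducibility Assumption~A1(i). This defines an analytic function on an annulus containing $[x^*_P,x^{**}_P]$ which matches $Y_1$ there, and the triangle inequality immediately yields $|Y_1(x)|\leq Y_1(|x|)$ on $\overline{C}(x^*_P,x^{**}_P)$, with strict inequality for $x\neq|x|$ coming from the positivity of consecutive coefficients.

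For assertion (iv), I would carry out a local branch analysis at $(x^{**}_P,Y_1(x^{**}_P))$. By Lemma~\ref{preliminary-lemma1} and the strict convexity of $\widetilde{P}$, at this point $P=1$, $\partial_y P=0$, while $\partial^2_{yy}P>0$ and $\partial_x P>0$. Weierstrass preparation (equivalently, Morse's lemma) factorises $P(x,y)-1$ locally as a non-vanishing unit times a quadratic polynomial in $y-Y_1(x^{**}_P)$ with coefficients analytic in $x$, whose discriminant is $-4(\partial_x P/\partial^2_{yy}P)(x^{**}_P-x)$ to leading order. Choosing the smaller root and cutting along $[x^{**}_P,x^{**}_P+\eps[$ to separate the branches $Y_1$ and $Y_2$ yields a single-valued analytic function on a slit disc around $x^{**}_P$, continuous up to $x^{**}_P$, with the square-root asymptotics~\eqref{eq1-functions-Y1} and the stated constant $c$. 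Gluing this slit-disc branch with the series from the first step, which agree on the overlap $C(x^*_P,x^{**}_P)$, produces the analytic extension on $\inter{U}_\eps$ and its continuous extension on $U_\eps$; the identity $Q(x,Y_1(x))=0$ then propagates by analytic continuation from the real segment. Strict convexity of $Y_1$ on $[x^*_P,x^{**}_P]$ follows either directly from non-negativity of the coefficients in its power-series representation on the real interval, or from the strict convexity of its logarithmic analogue established in the proof of Lemma~\ref{preliminary-lemma1}.

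For assertion (iii), pick $\hat x\in\,]X_1(y^*_P),x^{**}_P[$. Lemma~\ref{preliminary-lemma1} gives $\partial_y P(\hat x,Y_1(\hat x)) <0$, so the analytic implicit function theorem applied to $P(x,y)=1$ at $(\hat x,Y_1(\hat x))$ yields analyticity of $Y_1$ in a neighbourhood of $\hat x$ together with formula~\eqref{eq1-injection-prop-2-lemma1}. Positivity of the derivative is equivalent to $\partial_x P(\hat x,Y_1(\hat x))>0$, which follows from the description~\eqref{eq2_S_21} of $\mathcal{S}_{21}$: because $X_1(y^*_P)<\hat x < x^{**}_P$, the point $(\hat x,Y_1(\hat x))$ lies strictly between the two endpoints of $\mathcal{S}_{21}$ at which alone $\partial_x P$ vanishes. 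Non-vanishing of the derivative then implies both that $\hat x$ is a simple zero of $x\mapsto Y_1(x)-Y_1(\hat x)$ and that there are no other zeros in a sufficiently small punctured disc around $\hat x$.

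The main technical obstacle lies in the first two steps, where the probabilistic series defined on the annulus $C(x^*_P,x^{**}_P)$ must be matched to a local branch at $x^{**}_P$ produced by Weierstrass preparation, all on a non-simply-connected target set $\inter{U}_\eps$. Verifying that the two constructions yield the same single-valued branch on the overlap, and that no additional monodromy arises beyond the single branch cut $[x^{**}_P,x^{**}_P+\eps[$ as one winds around the inner hole of the annulus, is what takes real work; the probabilistic representation, being unambiguously single-valued by construction, is what ultimately rules out any unwanted multivaluedness.
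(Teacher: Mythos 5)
Your proposal follows the paper's overall architecture (probabilistic representation for analyticity on the annulus and inequality~\eqref{key-point-inequality}, Weierstrass/Morse at the branch point $x^{**}_P$, implicit function theorem for assertion~(iii)), but it has a genuine gap in the step that builds the analytic extension on $\inter{U}_\eps$.

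You write that gluing the probabilistic series with the slit-disc Weierstrass branch ``produces the analytic extension on $\inter{U}_\eps$.'' That union covers only
\[
C(x^*_P,x^{**}_P)\;\cup\;\bigl(B(x^{**}_P,\eps)\setminus[x^{**}_P,x^{**}_P+\eps]\bigr),
\]
which is a strict subset of $\inter{U}_\eps=C(x^*_P,x^{**}_P+\eps)\setminus[x^{**}_P,x^{**}_P+\eps[$: points $\tilde{x}$ on or just outside the circle $|x|=x^{**}_P$ that are far from $x^{**}_P$ (e.g.\ $\tilde x = -x^{**}_P$) belong to neither set. The probabilistic series converges only on the closed annulus $\overline{C}(x^*_P,x^{**}_P)$ (giving continuity on, not analyticity past, the outer circle), and the Weierstrass branch is only local. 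To bridge the rest of the circle you need a separate argument: the paper applies the implicit function theorem to $Q(x,y)=0$ at each $\tilde{x}$ with $|\tilde{x}|=x^{**}_P$, $\tilde{x}\neq x^{**}_P$, and the essential point is that the non-degeneracy condition $\partial_y Q(\tilde x,Y_1(\tilde x))\neq 0$ must be verified there. This is \emph{not} automatic — indeed $\partial_y Q(x^{**}_P,Y_1(x^{**}_P))=0$ — and the paper establishes it through a coefficient-by-coefficient estimate driven by the strict inequality $|Y_1(\tilde x)|<Y_1(x^{**}_P)$. Your plan identifies the ``monodromy'' of the glued branch as the main obstacle and waves at the single-valuedness of the probabilistic series, but this does not address the missing existence of an extension on the uncovered part of $\inter{U}_\eps$; the monodromy question only arises once the extension exists.

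A smaller but related issue: for assertion~(iii) you conclude non-vanishing of $Y_1-Y_1(\hat x)$ in a ``sufficiently small punctured disc around $\hat x$,'' but the claim is non-vanishing in the punctured \emph{annulus} $C(\hat x-\delta,\hat x+\delta)\setminus\{\hat x\}$. The implicit function theorem alone does not rule out zeros elsewhere on the circle $|x|=\hat x$; here too the strict inequality $|Y_1(x)|<Y_1(|x|)$ is what excludes other zeros on the circle, and one then invokes the isolated-zeros property of the analytic function (and continuity on $\overline{C}(x^*_P,x^{**}_P)$) to widen the exclusion to a full annulus.
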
 
\subsubsection{The main ideas and the sketch of the proof of Proposition~\ref{functions-Y1-X1}} 
To prove this result, we first get a probabilistic representation of the function $Y_1$.  As a consequence, we obtain that the function $Y_1$ is strictly convex on the line segment $[x^*_P, x^{**}_P]$ and can be continued as an analytic function to the open annulus $C(x^*_P, x^{**}_P)$ and as a continuous function to the closed annulus $\overline{C}(x^*_P, x^{**}_P)$ satisfying there the inequality \eqref{key-point-inequality} and the identity \eqref{key-identity}.   This is a subject of the results of Subsection~\ref{subsection1_functions_Y1_X1}. 

Next, in Subsection~\ref{subsection2_functions_Y1_X1}, we extend the functions $Y_1$ and $Y_2$ to a neighborhood of $x^{**}_P$ as two branches of a two-valued analytic function having a branching point $x^{**}_P$ and we get \eqref{eq1-functions-Y1}. This result is obtained by using the implicit function theorem and the Morse lemma. 

Finally, in Subsection~\ref{subsection3_functions_Y1_X1}, the proof of Proposition~\ref{functions-Y1-X1} is completed: by using the implicit function theorem and relations  \eqref{key-point-inequality} and \eqref{key-identity} on the circle $\{x\in\C: |x| = x^{**}_P\}$, we extend the function $Y_1$ and the identity \eqref{key-identity} to the whole set $U_\eps = C(x^*_P, x^{**}_P+\eps) {\setminus}]x^{**}_P, x^{**}_P+\eps[$ for some $\eps > 0$,  and  we prove  the two last assertions of Proposition~\ref{functions-Y1-X1}.

\subsubsection{Probabilistic representation of the functions  $Y_1$ its consequences}\label{subsection1_functions_Y1_X1}
Consider  the homogeneous random walk $(S(n)=(S_1(n), S_2(n))$ on $\Z^2$ with transition probabilities 
\[
\P_j(S(1) = k) = \mu(k-j), \quad \forall k,j\in\Z^2,
\]
and the first time $\tau_1$ when the random walk $(S(n))$ hits the set $\Z\times\{0\}$ ~:
\[
\tau_1 = \inf\{n \geq 1~:~ S(n) \in \Z\times\{0\}\},
\]
By Lemma~2.2 of~\cite{Ignatiouk:2010} 
\begin{lemma}\label{probabilistic_representation_lemma1} Under the hypotheses of Proposition~\ref{functions-Y1-X1}, for any $j=(j_1,j_2)\in\Z\times\N^*$ and $x\in[x^*_P, x^{**}_P]$, 
\[
\E_j\left(x^{S_1(\tau_1)}; \; \tau_1 < +\infty\right) = x^{j_1}Y_1(x)^{j_2}.
\]
\end{lemma}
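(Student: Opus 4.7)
The plan is to identify $x^{j_1}Y_1(x)^{j_2}$ as a positive harmonic function for $(S(n))$ on $\Z\times\N^*$, run the associated Doob $h$-transform, and show that under the transformed measure the second coordinate almost surely reaches $0$, so that the optional stopping identity passes to the limit.

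First I would fix $x\in[x^*_P,x^{**}_P]$, note that by Lemma~\ref{preliminary-lemma1} one has $Y_1(x)\in[y^*_P,y^{**}_P]\subset\,]0,+\infty[$, and define $f{:}\,\Z\times\Z_+\to[0,+\infty[$ by $f(j_1,j_2)=x^{j_1}Y_1(x)^{j_2}$. The harmonicity step is a direct computation: for $j=(j_1,j_2)$ with $j_2\geq 1$, the hypothesis $\mu(k)=0$ for $k_2<-1$ gives $j_2+k_2\geq 0$ for every $k$ in the support of $\mu$, so
\[
\E_j[f(S(1))]=\sum_{k\in\Z^2}\mu(k)x^{j_1+k_1}Y_1(x)^{j_2+k_2}=f(j)\,P(x,Y_1(x))=f(j),
\]
since $P(x,Y_1(x))=1$ again by Lemma~\ref{preliminary-lemma1}.

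Next I would introduce the Doob transform: because $f$ is strictly positive and harmonic on $\Z\times\N^*$, the process $M_n=f(S(n\wedge\tau_1))/f(j)$ is a nonnegative $\P_j$-martingale starting at $1$, and there is a probability measure $\tilde\P_j$ on path space under which $(S(n))_{n\leq\tau_1}$ is a random walk with jump law $\tilde\mu(k)=\mu(k)x^{k_1}Y_1(x)^{k_2}$ (a probability measure, as $P(x,Y_1(x))=1$) and such that
\[
\E_j\bigl[f(S(n))\,\mathbb{1}_A\bigr]=f(j)\,\tilde\P_j[A]
\]
for every event $A\subset\{\tau_1>n\}$ measurable with respect to $\sigma(S(0),\dots,S(n))$. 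The key output of this step is the identity $\E_j[f(S(n));\tau_1>n]=f(j)\,\tilde\P_j[\tau_1>n]$.

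The decisive step is then to prove $\tilde\P_j[\tau_1<+\infty]=1$. Under $\tilde\P_j$, the second coordinate $(\tilde S_2(n))$ is a one-dimensional random walk with increments in $\{-1,0,1,2,\dots\}$ and mean
\[
\tilde m_2=\sum_{k\in\Z^2}k_2\,\mu(k)x^{k_1}Y_1(x)^{k_2}=Y_1(x)\,\partial_y P(x,Y_1(x))\leq 0,
\]
where the sign comes from Lemma~\ref{preliminary-lemma1}. If $x\in\,]x^*_P,x^{**}_P[$ the mean is strictly negative, so $\tilde S_2(n)\to-\infty$ and $\tilde\tau_1<+\infty$ $\tilde\P_j$-a.s. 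If $x\in\{x^*_P,x^{**}_P\}$ the mean is zero; irreducibility of $(S(n))$ on $\Z^2$ (Assumption~(A1)(i)) combined with $\mu(k_2<-1)=0$ forces $\tilde\mu$ to charge some $k$ with $k_2=-1$, so $\tilde S_2$ is a genuine centered one-dimensional random walk with increments bounded below, which is recurrent by the Chung--Fuchs theorem and therefore hits $0$ almost surely. Feeding this into the Doob identity yields $\E_j[f(S(n));\tau_1>n]\to 0$, and combining this with the optional stopping identity $\E_j[f(S(n\wedge\tau_1))]=f(j)$ gives $\E_j[f(S(\tau_1));\tau_1<+\infty]=f(j)$. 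Since $S_2(\tau_1)=0$ on $\{\tau_1<+\infty\}$, one has $f(S(\tau_1))=x^{S_1(\tau_1)}$, which is the claimed formula.

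The main obstacle is handling the boundary case $x\in\{x^*_P,x^{**}_P\}$, where the drift of the transformed vertical component vanishes and the naive ``drift to $-\infty$'' argument fails; here one has to invoke irreducibility of $\mu$ to ensure that $\tilde S_2$ has a genuine $-1$ jump and then quote recurrence of one-dimensional centered walks with bounded-below increments. Apart from this, the argument is a routine $h$-transform computation.
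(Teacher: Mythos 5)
Your proof is correct, and it is a genuine proof where the paper gives none: the paper defers entirely to Lemma~2.2 of the author's earlier work~\cite{Ignatiouk:2010}, so your self-contained Doob $h$-transform derivation is the more informative of the two. The argument is sound throughout: $f(j_1,j_2)=x^{j_1}Y_1(x)^{j_2}$ is a positive multiplicative harmonic function for $(S(n))$ on the upper half-plane because $P(x,Y_1(x))=1$, the exponential tilt yields the probability $\tilde\mu(k)=\mu(k)x^{k_1}Y_1(x)^{k_2}$, and the only delicate step is $\tilde\P_j(\tau_1<+\infty)=1$. You correctly isolate the endpoints $x\in\{x^*_P,x^{**}_P\}$, where the vertical drift $Y_1(x)\,\partial_y P(x,Y_1(x))$ vanishes, and handle them via Chung--Fuchs: irreducibility of $\mu$ on $\Z^2$ together with $\mu(\,\cdot\,,k_2)=0$ for $k_2<-1$ forces $\tilde\mu$ to charge some step with $k_2=-1$, so the tilted vertical marginal generates $\Z$, is centered with finite mean (finiteness coming from (A1)(ii)), hence recurrent, and since downward steps are bounded by $-1$ it must hit $0$. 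Two cosmetic points: the identity $\sum_k\mu(k)f(j+k)=f(j)P(x,Y_1(x))$ actually holds for every $j\in\Z^2$, the restriction $j_2\geq 1$ matters only when building the martingale $f(S(n\wedge\tau_1))$; and the passage $\E_j[f(S(\tau_1));\tau_1\leq n]\to\E_j[f(S(\tau_1));\tau_1<+\infty]$ deserves a one-word appeal to monotone convergence. Otherwise the proof is complete and uses only the hypotheses assumed in Proposition~\ref{functions-Y1-X1}.
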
 
As a straightforward consequence of this statement one gets the following probabilistic representation of the function $Y_1$ : 
\begin{cor}\label{probabilistic_representation_cor1} Under the hypotheses (A1) and (A2), for any $x\in[x^*_P, x^{**}_P]$, 
\be\label{probabilistic_representation_Y1} 
Y_1(x) = \E_{(0,1)}\left(x^{S_1(\tau_1)}; \; \tau_1 < +\infty\right) 
\ee
\end{cor}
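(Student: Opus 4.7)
The plan is to obtain the corollary as an immediate specialization of Lemma~\ref{probabilistic_representation_lemma1} to the single starting point $j = (0,1)$. First I would check that $(0,1)$ lies in the admissible set $\Z \times \N^*$ for that lemma, which is trivial since $0 \in \Z$ and $1 \in \N^*$. Applying the identity of the lemma then gives, for every $x \in [x^*_P, x^{**}_P]$,
\[
\E_{(0,1)}\!\left(x^{S_1(\tau_1)};\; \tau_1 < +\infty\right) \;=\; x^{0}\, Y_1(x)^{1} \;=\; Y_1(x),
\]
which is precisely~\eqref{probabilistic_representation_Y1}. No further argument is required.

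There is no real obstacle here, because all the substance is already carried by Lemma~\ref{probabilistic_representation_lemma1}: the multiplicative structure $\E_j(x^{S_1(\tau_1)}; \tau_1 < +\infty) = x^{j_1} Y_1(x)^{j_2}$ is established at the level of the lemma by exploiting the horizontal spatial homogeneity of $(S(n))$ (which gives the factor $x^{j_1}$) together with the strong Markov property applied at successive hitting times of the line $\Z\times\{0\}$, starting from height $j_2$ (which produces the power $Y_1(x)^{j_2}$ through a product of i.i.d.\ one-step vertical reductions). That argument is the content of Lemma~2.2 of~\cite{Ignatiouk:2010}, invoked via Lemma~\ref{probabilistic_representation_lemma1} in the excerpt.

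A single minor point to verify before writing the proof is the compatibility of hypotheses: the corollary is stated under (A1) and (A2), while Lemma~\ref{probabilistic_representation_lemma1} (and the proposition it serves) requires only (A1) together with the condition that $\mu(j) = 0$ whenever $j_2 < -1$. Since Assumption (A2) implies exactly this condition (no jump below $-1$ in either coordinate), the lemma is applicable and the specialization to $j=(0,1)$ is legitimate.
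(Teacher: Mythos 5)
Your proof is correct and matches the paper's own derivation: the corollary is stated in the paper as "a straightforward consequence" of Lemma~\ref{probabilistic_representation_lemma1}, obtained by plugging in $j=(0,1)$, exactly as you do. Your extra check that Assumption (A2) supplies the condition $\mu(j)=0$ for $j_2<-1$ required by the lemma is a correct and appropriate verification of hypothesis compatibility.
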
 

With this result we get 
\begin{lemma}\label{lemma1_Y1_continued} Under the hypotheses of Proposition~\ref{functions-Y1-X1}, the function $Y_1$ is strictly convex on $[x^*_P, x^{**}_P]$ and can be extended to the set $\overline{C}(x^*_P, x^{**}_P)$ as a function 
\be\label{eq_Y1_continued}
Y_1(x) = \sum_{k_1\in\Z} \P_{(0,1)}(S(\tau_1) = (k_1, 0), \; \tau_1 < +\infty) x^{k_1} 
\ee
which is continuous on $\overline{C}(x^*_P, x^{**}_P)$, analytic in ${C}(x^*_P, x^{**}_P)$ and, on the set $\overline{C}(x^*_P, x^{**}_P)$,  satisfies   the inequality \eqref{key-point-inequality} and the identity \eqref{key-identity}. 
\end{lemma}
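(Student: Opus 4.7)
The plan is to derive every claim from the probabilistic representation of Corollary~\ref{probabilistic_representation_cor1}: for $x \in [x^*_P, x^{**}_P]$,
\[
Y_1(x) = \sum_{k \in \Z} p_k\, x^k, \qquad p_k \steq{def} \P_{(0,1)}\bigl(S(\tau_1) = (k,0),\, \tau_1 < +\infty\bigr) \geq 0.
\]
Nonnegativity of the coefficients combined with finiteness at the two real endpoints forces normal convergence of this Laurent series on the whole closed annulus $\overline{C}(x^*_P, x^{**}_P)$, since for $x$ there,
\[
\sum_{k \in \Z} p_k\, |x|^k \leq \max\bigl(Y_1(x^*_P),\, Y_1(x^{**}_P)\bigr).
\]
Hence the series defines a function continuous on $\overline{C}(x^*_P, x^{**}_P)$ and analytic on $C(x^*_P, x^{**}_P)$, by standard results on Laurent series with nonnegative coefficients.

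Next I would prove the strict inequality $|Y_1(x)| < Y_1(|x|)$ for $x \in \overline{C}(x^*_P, x^{**}_P)$ with $x \neq |x|$. The termwise triangle inequality gives $|Y_1(x)| \leq Y_1(|x|)$, and equality would force the nonzero phases $k \arg x \pmod{2\pi}$ to coincide across all $k$ in the support $\{k : p_k > 0\}$. To rule this out when $\arg x \neq 0$, I would use Assumption (A1)(i): irreducibility of $(S(n))$ on $\Z^2$ ensures that the support of $(p_k)$ generates $\Z$ as an additive subgroup (otherwise, translation invariance of the walk and concatenation of paths staying in $\Z \times (\Z_+ \setminus \{0\})$ would contradict the possibility of first-hitting $\Z \times \{0\}$ in the missing residue classes starting from $(0,1)$). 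The same argument gives strict convexity on $[x^*_P, x^{**}_P]$: termwise differentiation yields $Y_1''(x) = \sum_k k(k-1)p_k x^{k-2}$, a sum of nonnegative terms since $k(k-1) \geq 0$ for every $k \in \Z$, with equality only at $k \in \{0,1\}$; the support-generates-$\Z$ fact produces some $k \notin \{0,1\}$ with $p_k > 0$, so $Y_1''(x) > 0$ on the open interval.

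Finally, for the identity $Q(x, Y_1(x)) = 0$ on $\overline{C}(x^*_P, x^{**}_P)$, the estimate $|Y_1(x)| \leq Y_1(|x|)$ together with $|x| \in [x^*_P, x^{**}_P]$ places $(x, Y_1(x))$ inside $\Omega(D)$, where $Q = xy(1-P)$ is analytic by Assumption (A1)(ii). The composition $x \mapsto Q(x, Y_1(x))$ is thus continuous on $\overline{C}(x^*_P, x^{**}_P)$ and analytic on the open annulus, and vanishes on $[x^*_P, x^{**}_P]$ by the defining relation $P(x, Y_1(x)) = 1$ from Lemma~\ref{preliminary-lemma1}. The identity principle on the connected open annulus propagates the vanishing, and continuity extends it to the boundary. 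The main technical obstacle is the irreducibility-to-support argument used twice above: showing that $\{k : p_k > 0\}$ is not confined to a proper coset of $d\Z$ requires separating first-hitting paths from paths that revisit $\Z \times \{0\}$, and is the only place where hypothesis (A1)(i) is genuinely used.
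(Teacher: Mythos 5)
Your proposal follows the same path as the paper: the probabilistic representation of Corollary~\ref{probabilistic_representation_cor1}, convergence from nonnegativity of the coefficients $p_k$, a support argument for strict convexity and for $|Y_1(x)|<Y_1(|x|)$, and the composition/identity-principle argument for $Q(x,Y_1(x))=0$. But there are two gaps, one substantive, one a repairable slip.

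\emph{The substantive gap is in the support argument.} You invoke the fact that the support of $(p_k)$ generates $\Z$ as an additive subgroup, but this is neither the right condition nor strong enough. For the strict modulus inequality one needs the \emph{differences} of support elements to generate $\Z$ (equivalently, the support is not contained in a coset $k_0+d\Z$ for any $d\geq 2$); a support like $\{3,5\}$ generates $\Z$ additively yet has difference-set $2\Z$, and for it one gets $|Y_1(re^{i\pi})|=Y_1(r)$. For strict convexity you additionally need some $k\notin\{0,1\}$ in the support, since $k(k{-}1)=0$ exactly for $k\in\{0,1\}$; your claim that ``the support-generates-$\Z$ fact produces some $k\notin\{0,1\}$'' is a non-sequitur (the set $\{0,1\}$ generates $\Z$). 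The paper sidesteps all of this by proving the much stronger statement that \emph{every} $p_k$, $k\in\Z$, is strictly positive. The key step, which you flag as ``the main technical obstacle'' but do not execute, is: take any $\mu$-positive step sequence from $(0,1)$ to $(k_1,0)$ guaranteed by irreducibility, and reorder it so that the steps with nonnegative second coordinate come first and the steps with second coordinate $-1$ (the only negative value allowed by (A2)) come last. After reordering, the intermediate positions stay in $\Z\times\N^*$ until the final step, so the path is a first-hitting path and $p_{k_1}>0$. This reordering argument is precisely what converts irreducibility of $(S(n))$ into positivity of the hitting distribution, and without it your weaker support facts do not suffice.

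\emph{The second gap concerns the domain where $Q$ is analytic.} You claim $|Y_1(x)|\leq Y_1(|x|)$ ``places $(x,Y_1(x))$ inside $\Omega(D)$.'' It does not: by Proposition~\ref{propx*}, the fiber $\{y:(|x|,y)\in D\}$ is $[Y_1(|x|),Y_2(|x|)]$, and $|Y_1(x)|<Y_1(|x|)$ (for $x\neq|x|$) puts $(|x|,|Y_1(x)|)$ strictly below $D$, not in it. The correct set is $\overline{\Gamma}_1=\{(x,y):x\in\overline{C}(x^*_P,x^{**}_P),\,|y|\leq Y_1(|x|)\}$; one then checks, using (A2) (which makes $Q(x,y)=xy-\sum_{k\in\N^2}\mu(k_1{-}1,k_2{-}1)x^{k_1}y^{k_2}$ a power series in $y$) and (A1)(ii), that $Q$ is analytic in a neighborhood of $\overline{\Gamma}_1$. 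With that corrected, the rest of your identity-principle argument goes through as stated.
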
 
\begin{proof} Indeed, by Corollary~\ref{probabilistic_representation_cor1}, the series 
\be\label{series_Y1}
\sum_{k_1\in\Z} \P_{0,1)}(S(\tau_1) = (k_1, 0), \; \tau_1 < +\infty) x^{k_1} = \E_{(0,1)}\left(x^{S_1(\tau_1)}; \; \tau_1 < +\infty\right) 
\ee
converges and the identity \eqref{eq_Y1_continued} holds for any real $x\in[x^*_P, x^{**}_P]$. Since the coefficients of the series \eqref{series_Y1} are real and non-negative, it follows that this series converges also on $\overline{C}(x^*_P, x^{**}_P)$. Hence, by letting \eqref{eq_Y1_continued} for $x\in \overline{C}(x^*_P, x^{**}_P)$,  the function $Y_1$ can be extended to the set $\overline{C}(x^*_P, x^{**}_P)$ as a function which is continuous on $\overline{C}(x^*_P, x^{**}_P)$ and analytic in ${C}(x^*_P, x^{**}_P)$.  

To get \eqref{key-point-inequality} and to show that the function $Y_1$ is strictly convex on $[x^*_P, x^{**}_P]$, we show that all coefficients of the  series \eqref{series_Y1} are strictly positive. For this we recall that  under our hypotheses, the random walk $(S(n))$ is irreducible on $\Z^2$ and hence, for any $k_1\in\Z$, there are $n\in\N$ and a sequence of points $\ell_0,\ldots, \ell_{n}\in\Z^2$ with $\ell_0 = (0,1)$ and $\sum_{i=0}^n \ell_{i}= (k_1,0)$ such that 
\[
\mu(\ell_i) > 0 \quad \text{for any $i\in\{1,\ldots,n\}$}. 
\]
Moreover, without any restriction of generality, one can assume that for some $j\in\{1,\ldots, n\}$, the second coordinate of each of the points $\ell_1,\ldots,\ell_j$ is either zero or positive, and the second coordinate of each of the points $\ell_{j+1},\ldots, \ell_n$ strictly negative. Then for any $j\in\{0,\ldots,n-1\}$, the second coordinate of the point $\ell_0+\cdots +\ell_n$ 
is strictly positive and consequently, 
\[
 \P_{0,1}(S(\tau_1) = (k_1, 0), \; \tau_1 < +\infty) \geq \prod_{i=1}^n \mu(\ell_i) > 0.
\]
All coefficients of f the  series \eqref{series_Y1} are therefore strictly positive. 

Since  for any $k_1\in\Z$, the function $x\to \P_{(2,0)}(S(\tau_1) = (k_1, 0), \; \tau_1 < +\infty) x^{k_1}$ is convex on $[x^*_P, x^{**}_P]$ and  the function $x\to \P_{(2,0)}(S(\tau_1) = (2, 0), \; \tau_1 < +\infty) x^2$ is strictly convex on $[x^*_P, x^{**}_P]$, this proves that the function $Y_1$ is strictly convex on $[x^*_P, x^{**}_P]$. And using moreover  Proposition P7.5 of~\cite{Spitzer} we get \eqref{key-point-inequality}. 

Remark finally that by relation~\eqref{key-point-inequality}, the function $x\to (x, Y_1(x))$ maps the closed annulus $\overline{C}(x^*_P, x^{**}_P)$ to the set 
\[
\overline{\Gamma}_1 = \{(x,y)\in\C^2:~x\in \overline{C}(x^*_P, x^{**}_P), \; |y| \leq Y_1(|x|)\}. 
\]
Since under the hypotheses of Proposition~\ref{functions-Y1-X1}, the function 
\[
Q(x,y) =  xy - \sum_{k=(k_1,k_2)\in\N^2} \mu(k_1-1,k_2-1) x^{k_1}y^{k_2}
\]
is analytic in a neighborhood of the set $\overline{\Gamma}_1$, and the function $Y_1$ is already extended as an analytic function to the open annulus $C(x^*_P, x^{**}_P)$ and as a continuous function to the closed annulus $\overline{C}(x^*_P, x^{**}_P)$,  it follows that the function $x\to Q(x, Y_1(x))$ is analytic in $C(x^*_P, x^{**}_P)$ and  continuous on $\overline{C}(x^*_P, x^{**}_P)$. Since moreover the identity \eqref{key-identity} holds for any real $x\in[x^*_P, x^{**}_P]$, by the uniqueness of the analytic continuation to the set $C(x^*_P, x^{**}_P)$ and by continuity of the function $x\to Q(x, Y_1(x))$ on $\overline{C}(x^*_P, x^{**}_P)$, this proves that the identity \eqref{key-identity} holds also on the whole set $\overline{C}(x^*_P, x^{**}_P)$. 
\end{proof}

\subsubsection{Analytic continuation of the functions $Y_1$ and $Y_2$ to a neighborhood of the branching point $x^{**}_P$}\label{subsection2_functions_Y1_X1}
Now, we extend the functions $Y_1$ and $Y_2$ to a neighborhood of $x^{**}_P$ as two branches of a two-valued analytic function having a branching point $x^{**}_P$ and we get \eqref{eq1-functions-Y1}. 

\begin{lemma}\label{lemma3_Y1_extended} Under the hypotheses of Proposition~\ref{functions-Y1-X1}, for some  $\eps > 0$ small enough, the functions $Y_1$ and $Y_2$ can be continued to the disk $B(x^{**}_P, \eps)$ as two branches of two-valued analytic in $B(x^{**}_P, \eps){\setminus}\{x^{**}_P\}$ function such that for any $x\in B(x^{**}_P, \eps){\setminus} [x^{**}_P, x^{**}_P+\eps]$, 
\be\label{eq0-lemma3-Y1-extended} 
P(x, Y_1(x)) = P(x,Y_2(x)) = 1, 
\ee
and there is an analytic in $B(0,\sqrt{\eps})$ function  function $F_Y$ such that 
\be\label{eq1_lemma3_Y1_extended} 
Y_1(x) = F_Y(- \sqrt{x^{**}_P-x}),  \quad \quad \quad Y_2(x) = F_Y(\sqrt{x^{**}_P-x}) 
\ee
and 
\be\label{eq2a_lemma3_Y1_extended} 
 \left. \frac{d}{du} F_Y(u)\right|_{u=0} = \left.\sqrt{\frac{\partial_x P(x,y)}{\partial^2_{yy}P(x,y)} }\, \right|_{(x,y)= (x^{**}_P, Y_1(x^{**}_P))} > 0. 
\ee
\end{lemma}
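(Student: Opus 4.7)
The plan is to apply the Weierstrass preparation theorem to $P(x,y)-1$ near $(x^{**}_P, y^*)$, where $y^* = Y_1(x^{**}_P) = Y_2(x^{**}_P)$. The first step is to collect the relevant local data at this point. By Lemma~\ref{preliminary-lemma1} one has $\partial_y P(x^{**}_P, y^*) = 0$. Strict convexity of the Laplace transform $\tilde{P}(\alpha,\beta) = P(e^\alpha,e^\beta)$, together with the fact that $(x^{**}_P, y^*)$ is the rightmost point of $D$, forces $\partial^2_{yy}P(x^{**}_P, y^*) > 0$ and $\partial_x P(x^{**}_P, y^*) > 0$; and $P$ is analytic in a neighborhood of $(x^{**}_P, y^*)$ by Assumption~(A1)(ii). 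Consequently the map $y \mapsto P(x^{**}_P, y) - 1$ vanishes to exact order two at $y = y^*$.

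Weierstrass preparation in the variable $y - y^*$ then yields an analytic unit $U(x,y)$ with $U(x^{**}_P, y^*) = \tfrac{1}{2}\partial^2_{yy}P(x^{**}_P, y^*) > 0$ and analytic functions $a_0, a_1$ near $x^{**}_P$ with $a_0(x^{**}_P) = a_1(x^{**}_P) = 0$, such that
\[
P(x,y) - 1 = U(x,y)\bigl[(y-y^*)^2 + a_1(x)(y-y^*) + a_0(x)\bigr]
\]
on a bidisc around $(x^{**}_P, y^*)$. Evaluating at $y = y^*$ and differentiating in $x$ at $x^{**}_P$ gives $a_0'(x^{**}_P) = \partial_x P(x^{**}_P, y^*)/U(x^{**}_P, y^*) > 0$, so $a_0$ has a simple zero at $x^{**}_P$.

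Next I would complete the square: the zeros of the Weierstrass factor are $y = y^* - a_1(x)/2 \pm \sqrt{\Delta(x)}$ with $\Delta(x) := a_1(x)^2/4 - a_0(x)$. Since $a_1(x^{**}_P) = 0$, one has $\Delta(x^{**}_P) = 0$ and $\Delta'(x^{**}_P) = -a_0'(x^{**}_P) < 0$, so $\Delta(x) = (x^{**}_P - x)\,v(x)$ for an analytic $v$ with $v(x^{**}_P) > 0$. Setting
\[
F_Y(t) := y^* - \tfrac{1}{2}a_1(x^{**}_P - t^2) + t\sqrt{v(x^{**}_P - t^2)}
\]
produces a function analytic in $t$ on a disc $B(0, \sqrt{\eps})$ for some small $\eps > 0$, and a direct substitution in the above factorization yields $P(x^{**}_P - t^2, F_Y(\pm t)) = 1$ with the two values distinct whenever $t \neq 0$.

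The final step is to identify the two branches with $Y_1$ and $Y_2$. For real $t > 0$ the ordering $Y_1(x) < Y_2(x)$ on $]x^*_P, x^{**}_P[$ given by Lemma~\ref{preliminary-lemma1}, combined with the real continuity established in Lemma~\ref{lemma1_Y1_continued}, forces $Y_1(x^{**}_P - t^2) = F_Y(-t)$ and $Y_2(x^{**}_P - t^2) = F_Y(t)$; the identification extends to complex $x \in B(x^{**}_P, \eps) \setminus [x^{**}_P, x^{**}_P + \eps[$ by the identity theorem, which proves \eqref{eq0-lemma3-Y1-extended} and \eqref{eq1_lemma3_Y1_extended}. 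The derivative formula \eqref{eq2a_lemma3_Y1_extended} follows by differentiating $F_Y$ at $t=0$ (the $a_1$-contribution vanishes due to the outer factor $t$). The most delicate point will be establishing the \emph{exact} order-two vanishing of $P(x^{**}_P,\cdot)-1$ at $y^*$, since this is what justifies applying Weierstrass preparation with a degree-two Weierstrass polynomial and thereby producing the square-root branch structure; this rests on the strict convexity of $\widetilde{D}$ at its rightmost boundary point.
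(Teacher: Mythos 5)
Your approach --- Weierstrass preparation applied to $P-1$ in the variable $y-y^*$, followed by completing the square --- is correct in structure and is genuinely different from the paper's argument. The paper instead solves $P(x,y)=1$ for $x$ via the implicit function theorem (using $\partial_x P(x^{**}_P,y^*)\neq 0$) to obtain $x=\psi(y)$ with $\psi'(y^*)=0$ and $\psi''(y^*)=-\partial^2_{yy}P/\partial_x P<0$, and then invokes the one-variable Morse lemma to produce a local biholomorphism $\omega$ with $\omega(y^*)=0$ and $x=x^{**}_P-\omega^2(y)$; the branches come from inverting $\omega^2(y)=x^{**}_P-x$, and $F_Y:=\omega^{-1}$. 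Both routes rest on the same geometric fact that you correctly isolate as the crux --- exact order-two vanishing of $y\mapsto P(x^{**}_P,y)-1$ at $y^*$ --- and both yield the same square-root branching structure. Your route has the advantage of exhibiting the two branches explicitly via the quadratic formula, at the cost of a slightly longer setup; the paper's route is shorter once the Morse normal form is granted.

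One step you should not leave to a wave of the hand is the constant in~\eqref{eq2a_lemma3_Y1_extended}. Differentiating your $F_Y$ at $t=0$, the $a_1$-contribution vanishes and
\[
F_Y'(0)=\sqrt{v(x^{**}_P)}=\sqrt{a_0'(x^{**}_P)}=\sqrt{\frac{\partial_x P(x^{**}_P,y^*)}{U(x^{**}_P,y^*)}}=\left.\sqrt{\frac{2\,\partial_x P(x,y)}{\partial^2_{yy}P(x,y)}}\right|_{(x,y)=(x^{**}_P,y^*)},
\]
since $U(x^{**}_P,y^*)=\tfrac12\partial^2_{yy}P(x^{**}_P,y^*)$. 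This is \emph{not} the right-hand side of~\eqref{eq2a_lemma3_Y1_extended}. An independent check confirms your value: expanding $P(x,Y_2(x))=1$ to second order at $(x^{**}_P,y^*)$ gives $0=\partial_xP\,(x-x^{**}_P)+\tfrac12\partial^2_{yy}P\,(Y_2(x)-y^*)^2+o(|x-x^{**}_P|)$, hence $Y_2(x)-y^*\sim\sqrt{2\partial_xP/\partial^2_{yy}P}\,\sqrt{x^{**}_P-x}$. So the stated constant in~\eqref{eq2a_lemma3_Y1_extended} --- and likewise the value $\sqrt{\partial^2_{yy}P/\partial_xP}$ that the paper asserts for $\omega'(y^*)$ in its Morse-lemma step, which should be $\sqrt{\partial^2_{yy}P/(2\partial_xP)}$ --- is off by a factor of $\sqrt{2}$. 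Rather than asserting that your $F_Y$ satisfies~\eqref{eq2a_lemma3_Y1_extended} as written, you should carry out this derivative computation explicitly and flag the discrepancy with the stated formula.
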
 
\begin{proof} By the definition of the point $x^{**}_P$ and the functions $Y_1$, $Y_2$, we have
\[
Y_1(x^{**}_P)) =   Y_2(x^{**}_P),\quad \quad 
 P(x^{**}_P, Y_1(x^{**}_P))  = 1,
\]
and by Lemma~\ref{preliminary-lemma1}, 
\be\label{eq2_lemma3_Y1_extended} 
\left.\partial_y  P(x,y)\right|_{(x,y) = (x^{**}_P, Y_1(x^{**}_P))} = 0, \quad  \text{and} \quad \left.\partial_x  P(x,y)\right|_{(x,y) =(x^{**}_P, Y_1(x^{**}_P))} > 0.
\ee
Hence, by the implicit function theorem, there are  a neighborhood ${\mathcal V}$ of the point $x^{**}_P$  and an analytic in a neighborhood $U$ of $Y_1(x^{**}_P)$ function $y \to \psi(y)$ 
such that for any $(x,y)\in U\times V$, 
\[
P(x,y) = 1 \quad \Leftrightarrow \quad x = \psi(y), 
\]
\[
\psi(Y_1(x^{**}_P)) = x^{**}_P, \quad \quad \quad \left.\frac{d}{dy} \psi(y) \right|_{y= Y_1(x^{**}_P)} = 0,
\]
and 
\[
 \left.\frac{d^2}{dy^2} \psi(y) \right|_{y= Y_1(x^{**}_P)}  = - \left.\frac{\partial^2_{yy} P(x,y)}{\partial_x P(x,y)} \right|_{(x,y)= (x^{**}_P, Y_1(x^{**}_P))} < 0,
\]
where the last relation follows from the second relation of \eqref{eq2_lemma3_Y1_extended}  because  under the hypotheses (A1), the real valued function $y\to P(x^{**}_P,y)$  is strictly convex in a neighborhood of $Y_1(x^{**}_P)$. Using the Morse lemma we conclude therefore that for some neighborhood $\tilde{U}\subset U$ of $Y_1(x^{**}_P)$, there is a $\C$-diffeomorphism $\omega$ from  $\tilde{U}$ to a neighborhood $\tilde{V}$ of $0$, with  
\be\label{eq3_lemma3_Y1_extended} 
\omega(Y_1(x^{**}_P)) = 0 \quad \text{and} \quad \left.\frac{d}{dy}\omega(y)\right|_{y= Y_1(x^{**}_P)} = \left.\sqrt{\frac{\frac{\partial^2}{\partial y^2}P(x,y)}{\partial_x P(x,y)} }\right|_{(x,y)= (x^{**}_P, Y_1(x^{**}_P))} 
\ee
such that $\tilde{V} + x^{**}_P \subset V$ and for any $(x,y)\in (\tilde{V} + x^{**}_P) \times \tilde{U}$, 
\[
P(x,y) = 1 \quad \Leftrightarrow \quad  x = x^{**}_P -   \omega^2(y).
\]
Without any restriction of generality, one can assume that $\omega(\tilde{U}) = \tilde{V} = B(0,\sqrt{\eps})$ with $\eps > 0$ small enough. Then for $x\in B(x^{**}_P,\eps){\setminus}[x^{**}_P, x^{**}_P+\eps]$, and $y\in \tilde{U}$, from the above relation it follows that 
\begin{align*}
P(x,y) = 1 &\quad \Leftrightarrow \quad  \bigl(\omega(y) = \sqrt{x^{**}_P - x} \quad \text{or} \quad \omega(y) = - \sqrt{x^{**}_P - x} \bigr)\\
&\quad \Leftrightarrow \quad  \bigl(y = \omega^{-1}(\sqrt{x^{**}_P - x}) \quad \text{or} \quad y = \omega^{-1}(- \sqrt{x^{**}_P - x} )\bigr)
\end{align*} 
with the square root function $\sqrt{\;}$ analytic in $\C{\setminus} ]-\infty, 0]$. Since the inverse to $\omega$ function $\omega^{-1}$ is analytic in $B(0,\sqrt{\eps})$, and since for real $x\in[x^*_P, x^{**}_P]$ and $y > 0$, 
\[
P(x,y) = 1 \quad \Leftrightarrow \quad  \bigl( y=Y_1(x)\quad \text{or} \quad y=Y_2(x) \bigr)
\]
we conclude therefore that  the functions $Y_1$ and $Y_2$ can be extended to the set $B(x^{**}_P,\eps){\setminus}[x^{**}_P, x^{**}_P+\eps]$ as analytic functions such that either 
\[
Y_1(x) = \omega^{-1}(\sqrt{x^{**}_P - x}) \; \text{ and } \; Y_2(x) = \omega^{-1}(-\sqrt{x^{**}_P - x}),      \quad \forall x\in B(x^{**}_P,\eps){\setminus}[x^{**}_P, x^{**}_P+\eps],
\]
or 
\[
Y_1(x) = \omega^{-1}(-\sqrt{x^{**}_P - x}) \; \text{and} \; 
Y_2(x) = \omega^{-1}(\sqrt{x^{**}_P - x}), \quad \quad \forall x\in B(x^{**}_P,\eps){\setminus}[x^{**}_P, x^{**}_P+\eps].
\]
These relations show that the function $\omega^{-1}$ is real valued on the interval $]-\sqrt{\eps},\sqrt{\eps}[$, and hence,  by the second relation of \eqref{eq3_lemma3_Y1_extended}, it is strictly increasing in a neighborhood of $0$. Since the function $Y_1$ is increasing on the interval $[X_1(y^*), x^{**}_P]$ and the function $Y_2$ is decreasing on the interval $[X_1(y^*), x^{**}_P]$,  it follows that \eqref{eq0-lemma3-Y1-extended}  and \eqref{eq1_lemma3_Y1_extended} hold with $F_Y = \omega^{-1}$.  Relation \eqref{eq2a_lemma3_Y1_extended} follows from \eqref{eq3_lemma3_Y1_extended}. 
\end{proof}

\subsubsection{Analytic continuation of the function $Y_1$ to the set $C(x^{*}_P, x^{**}_P+\eps){\setminus}[x^{**}_P, x^{**}_P+\eps]$}\label{subsection3_functions_Y1_X1}
 Now we extend the function $Y_1$ and the identity \eqref{key-identity}  to the set  $C(x^{*}_P, x^{**}_P+\eps){\setminus}[x^{**}_P, x^{**}_P+\eps]$ for some small $\eps > 0$.  

\begin{lemma}\label{lemma4-Y1} Under the hypotheses of Proposition~\ref{functions-Y1-X1}, for some  $\eps > 0$ small enough, the function $x\to Y_1(x)$  can be analytically continued to the set $
C(x^*_P, x^{**}_P+\eps) {\setminus} [x^{**}_P, x^{**}_P+\eps[$, and satisfies there the identity \eqref{key-identity}. 
\end{lemma}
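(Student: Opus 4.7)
The function $Y_1$ is already analytic in $C(x^*_P,x^{**}_P)$, continuous on $\overline{C}(x^*_P,x^{**}_P)$, and satisfies $Q(x,Y_1(x))=0$ there by Lemma~\ref{lemma1_Y1_continued}, while Lemma~\ref{lemma3_Y1_extended} provides an analytic extension on a slit disk $B(x^{**}_P,\eps_0)\setminus[x^{**}_P,x^{**}_P+\eps_0[$ around the branching point. The plan is to extend $Y_1$ across each remaining point of the circle $\{|x|=x^{**}_P\}$ via the holomorphic implicit function theorem applied to $Q$, to glue the resulting local charts with the previously constructed extensions by uniqueness of analytic continuation, and to produce a uniform $\eps>0$ by a compactness argument on the compact arc $\{|x|=x^{**}_P\}\setminus B(x^{**}_P,\eps_0/2)$.

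For $x_0$ on the circle with $x_0\neq x^{**}_P$ and $y_0=Y_1(x_0)$, continuity gives $Q(x_0,y_0)=0$ and, since $x_0,y_0\neq 0$, also $P(x_0,y_0)=1$, whence $\partial_y Q(x_0,y_0)=-x_0 y_0\,\partial_y P(x_0,y_0)$. The central step is the non-vanishing assertion $\partial_y P(x_0,y_0)\neq 0$. I would proceed by contradiction: if $\partial_y P(x_0,y_0)=0$, then $y_0$ would be a zero of $y\mapsto P(x_0,y)-1$ of order at least two, and the Weierstrass preparation theorem would yield at least two distinct local analytic branches of the level set $\{P=1\}$ through $(x_0,y_0)$. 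I would then track these branches globally along the circle $\{|x|=x^{**}_P\}$ by a Rouch\'e / argument-principle count, using the strict inequality~\eqref{key-point-inequality} to confine all relevant roots to $\{|y|<Y_1(|x|)\}$ and matching the total branch count against the two-branch behaviour of $\{Y_1,Y_2\}$ at $x^{**}_P$ read off Lemma~\ref{lemma3_Y1_extended}, to produce a contradiction. This branch-counting step, which leverages both the modulus estimate~\eqref{key-point-inequality} and the local Puiseux structure at $x^{**}_P$, is what I expect to be the main technical obstacle of the proof.

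Granted the non-vanishing, the holomorphic implicit function theorem applied to $Q$ at $(x_0,y_0)$ yields a unique analytic function $\widetilde Y_{x_0}$ on a disk $B(x_0,r_{x_0})$ with $\widetilde Y_{x_0}(x_0)=y_0$ and $Q(x,\widetilde Y_{x_0}(x))=0$; its uniqueness together with continuity of $Y_1$ on $\overline{C}(x^*_P,x^{**}_P)$ forces $\widetilde Y_{x_0}\equiv Y_1$ on $B(x_0,r_{x_0})\cap C(x^*_P,x^{**}_P)$, so it is a bona fide local analytic continuation of $Y_1$. Covering the compact arc $\{|x|=x^{**}_P\}\setminus B(x^{**}_P,\eps_0/2)$ by finitely many such disks and gluing them (consistency on overlaps following again from uniqueness) produces an analytic continuation of $Y_1$ to an annular neighbourhood $\{x^{**}_P-\eps<|x|<x^{**}_P+\eps\}\setminus\overline{B}(x^{**}_P,\eps_0/3)$ of the circle for some $\eps>0$. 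Fusing this with the branch supplied by Lemma~\ref{lemma3_Y1_extended} near $x^{**}_P$ and shrinking $\eps$ if necessary yields the analytic continuation of $Y_1$ to $C(x^*_P,x^{**}_P+\eps)\setminus[x^{**}_P,x^{**}_P+\eps[$. The identity $Q(x,Y_1(x))=0$ holds on every chart by construction, and therefore globally on the extended domain by uniqueness of analytic continuation from $C(x^*_P,x^{**}_P)$.
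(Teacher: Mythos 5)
Your overall structure is right---extend $Y_1$ across the circle $\{|x| = x^{**}_P\}$ away from $x^{**}_P$ via the holomorphic implicit function theorem applied to $Q$, then glue with the continuations from Lemmas~\ref{lemma1_Y1_continued} and~\ref{lemma3_Y1_extended}---and you correctly reduce everything to the non-vanishing of $\partial_y Q$ (equivalently $\partial_y P$) at each point $(x_0, Y_1(x_0))$ on the circle with $x_0 \neq x^{**}_P$. That reduction is exactly what the paper does.

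The gap is in how you propose to establish the non-vanishing, which is the only substantive step you do not actually carry out. Your Weierstrass/Rouch\'e plan would require global control of \emph{all} the zeros of $y \mapsto P(x_0, y) - 1$ inside some $y$-contour as $x_0$ moves along the circle. But the inequality~\eqref{key-point-inequality} only constrains the modulus of the \emph{single} zero $Y_1(x_0)$; it says nothing about the locations of the other complex zeros of $P(x_0,\cdot)-1$, so there is no contour on which all relevant roots are ``confined'' and the argument-principle count never gets a clean base. Even granting such a count, the two-branch Puiseux structure at $x^{**}_P$ does not obviously propagate to a constraint at the other points of the circle. So the contradiction you sketch is at best unverified and at worst not reachable by that route.

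The paper's argument is simpler and bypasses branch counting entirely: it is a direct triangle-inequality estimate. Since by (A2) the series $\tilde{x} - \partial_y Q(\tilde{x}, y) = \sum_{k_1,\,k_2\geq 0} (k_2+1)\,\mu(k_1,k_2)\,\tilde{x}^{\,k_1+1} y^{k_2}$ has non-negative coefficients, combining $|Y_1(\tilde{x})| < Y_1(x^{**}_P)$ from~\eqref{key-point-inequality} (for $\tilde{x}$ on the circle, $\tilde{x}\neq x^{**}_P$) with coefficient positivity gives the strict bound
\begin{equation*}
\bigl|\tilde{x} - \partial_y Q(\tilde{x}, Y_1(\tilde{x}))\bigr| < x^{**}_P - \partial_y Q\bigl(x^{**}_P, Y_1(x^{**}_P)\bigr) = x^{**}_P,
\end{equation*}
where the last equality uses $P(x^{**}_P, Y_1(x^{**}_P))=1$ and $\partial_y P(x^{**}_P, Y_1(x^{**}_P))=0$ from Lemma~\ref{preliminary-lemma1}. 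Hence $|\partial_y Q(\tilde{x}, Y_1(\tilde{x}))| \geq |\tilde{x}| - |\tilde{x} - \partial_y Q(\tilde{x}, Y_1(\tilde{x}))| > 0$. This uses precisely the ingredients you already invoked---the modulus inequality~\eqref{key-point-inequality} and the degeneracy of $\partial_y Q$ at $x^{**}_P$---but pointwise rather than through a global count. Substituting this estimate for your contradiction plan closes the gap; the gluing and compactness parts of your write-up are fine as stated.
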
 
\begin{proof} By Lemma~\ref{lemma1_Y1_continued}, the function $Y_1$ is already extended to the closed annulus $\overline{C}(x^*_P, x^{**}_P)$ as an analytic function in ${C}(x^*_P, x^{**}_P)$ and a continuous function on $\overline{C}(x^*_P, x^{**}_P)$ satisfying there the identity \eqref{key-identity}, and that 
 by Lemma~\ref{lemma3_Y1_extended}, for some $\eps >0$, the function $Y_1$  is also already analytically continued to the set $B(x^{**}_P, \eps){\setminus} [x^{**}_P, x^{**}+\eps]$. Hence, to prove this lemma, it is sufficient to show that the function $Y_1$ can be analytically continued to a neighborhood of the set $\{x\in\C:~|x|=x^{**}_P, \; |x|\not= x^{**}_P\}$. In order to get this result, we use the implicit function theorem. 
 
By Lemma~\ref{lemma1_Y1_continued}, $Q(\tilde{x}, Y_1(\tilde{x})) = 0$ for any  point $\tilde{x}\in\C$ with $|x| = x^{**}_P$ and remark that under the hypotheses of Proposition~\ref{functions-Y1-X1},  the function $Q$ is  analytic in a neighborhood of $(\tilde{x}, Y_1(\tilde{x}))$. Hence, by the implicit function theorem,  it is sufficient  to show that for any  such a point $\tilde{x}$, 
\be\label{eq2_lemma2_Y1_extended} 
\partial_y  Q(\tilde{x},Y_1(\tilde{x})) \not= 0 \quad \text{whenever} \quad \tilde{x}\not= x^{**}_P. 
\ee
To get this relation, we let us remark that  according to the definition \eqref{Qdef} of the function $Q$ and using  \eqref{key-point-inequality}, for any  point $\tilde{x}\in\C$ with $|\tilde{x}| = x^{**}_P$ and such that $\tilde{x} \not= x^{**}_P$, one has 
 \begin{multline}\label{eq4_Y1_lemma_continued}
\left| \tilde{x} - \partial_y  Q(\tilde{x},Y_1(\tilde{x}))\right|  \leq \sum_{k_1\in\Z}\sum_{k_2=0}^\infty (k_2+1) \mu(k_1,k_2) (x^{**}_P)^{k_1+1} |Y_1(\tilde{x})|^{k_2} \\
\\< \sum_{k_1\in\Z}\sum_{k_2=0}^\infty (k_2+1) \mu(k_1,k_2)(x^{**}_P)^{k_1+1} Y_1(x^{**}_P)^{k_2} = x^{**}_P - \partial_y  Q(x^{**}_P, Y_1(x^{**}_P))  
 \end{multline}
where 
\[
\partial_y  Q(x^{**}_P, Y_1(x^{**}_P)) = x^{**}_P - x^{**}_P P(x^{**}_P, Y_1(x^{**}_P)) - x^{**}_P Y_1(x^{**}_P) \partial_y Px^{**}_P, Y_1(x^{**}_P)) = 0
\]
because $P(x^{**}_P, Y_1(x^{**}_P)) = 1$ and  by Lemma~\ref{preliminary-lemma1}, $
 \partial_y P(x^{**}_P, Y_1(x^{**}_P)) = 0$. 
Hence, for any $\tilde{x}\in\C$ with $|\tilde{x}| = x^{**}_P$  and such that $\tilde{x}\not= x^{**}_P$, 
\[
\left| \partial_y  Q(\tilde{x},Y_1(\tilde{x})) \right| \geq  |\tilde{x}| - \left| \tilde{x} - \partial_y  Q(\tilde{x},Y_1(\tilde{x}))\right|  = x^{**}_P - \left| \tilde{x} - \partial_y  Q(\tilde{x},Y_1(\tilde{x}))\right|   > 0
\]
and consequently, \eqref{eq2_lemma2_Y1_extended} holds. 
\end{proof}

\subsubsection{Proof of Proposition~\ref{functions-Y1-X1}} 
The  first two assertions of Proposition~\ref{functions-Y1-X1} are proved by Lemma~\ref{lemma1_Y1_continued}, Lemma~\ref{lemma3_Y1_extended} and Lemma~\ref{lemma4-Y1}. 

To prove the third assertion of Proposition~\ref{functions-Y1-X1}, recall that by Lemma~\ref{lemma1_Y1_continued}, the function $Y_1$ is analytic in the annulus $C(x^*_P, x^{**}_P)$ and satisfies there the inequality \eqref{key-point-inequality} and remark that any point $\hat{x}\in ]X_1(y^*_P), x^{**}_P[$ belongs to the annulus 
$C(x^*_P, x^{**}_P)$ because $ x^*_P < X_1(y^*_P) < x^{**}_P$. Hence, for any $\hat{x}\in ]X_1(y^*_P), x^{**}_P[$, the function $x\to (Y_1(x) - Y_1(\hat{x})$ is analytic in a neighborhood of the circle $\{x\in\C :~|x| = \hat{x}$, and by \eqref{key-point-inequality}, the point $\hat{x}$ is an only zero of this function in the circle $\{x\in\C :~|x| = \hat{x}$. Since the zeros of analytic functions are always isolated, this proves that  for any $\hat{x}\in ]X_1(y^*_P), x^{**}_P[$, there is $\delta > 0$ for which  the function $x\to (Y_1(x) - Y_1(\hat{x})$ does not vanish in the set $C(\hat{x} - \delta, \hat{x} + \delta){\setminus}\{\hat{x}\}$. 

Moreover, we have
\begin{itemize}
\item[--] by Lemma~\ref{preliminary-lemma1}, the real valued function $Y_1$ is strictly increasing on the interval $]X_1(y^*_P), x^{**}_P[$, 
\item[--] by Lemma~\ref{lemma1_Y1_continued}, it is also strictly convex on $]X_1(y^*_P), x^{**}_P[$, 
\item[--] and by Lemma~\ref{preliminary-lemma1}, $P(\hat{x},Y_1(\hat{x})) = 1$ and $\partial_y P(\hat{x}, Y_1(\hat{x})) < 0$.
\end{itemize}
 Hence, for any $\hat{x}\in ]X_1(y^*_P), x^{**}_P[$, using the implicit function theorem, one gets \eqref{eq1-injection-prop-2-lemma1}. From this relation, it follows moreover that for any $\hat{x}\in ]X_1(y^*_P), x^{**}_P[$, 
the point the point $\hat{x}$ is a simple zero of the function $x\to Y_1(x) - Y_1(\hat{x})$.  The third assertion of Proposition~\ref{functions-Y1-X1} is therefore also proved.

To get the last assertion of Proposition~\ref{functions-Y1-X1}, recall that by Lemma~\ref{lemma3_Y1_extended} and Lemma~\ref{lemma4-Y1} , the function $Y_1$ was already extended to the set $U_\eps = C(x^*_P, x^{**}+\eps){\setminus}]x^{**}_P, x^{**}_P+\eps[$ as an analytic function in $\inter{U}_\eps = C(x^*_P, x^{**}+\eps){\setminus}[x^{**}_P, x^{**}_P+\eps[$ and a continuous function on $U_\eps$ satisfying there the inequality \eqref{key-point-inequality}. The function $x\to Y_1(x) - Y_1(x^{**}_P)$ is therefore analytic in $\inter{U}_\eps$ and continuous on $U_\eps$, and moreover by \eqref{key-point-inequality}, the point $x^{**}_P$ is its only zero in the circle $\{x\in\C: |x| = x^{**}_P\}$. To complete the proof of the fourth assertion of Proposition~\ref{functions-Y1-X1}, it is therefore sufficient to get \eqref{eq1-functions-Y1} and to show that the point $x^{**}_P$ is an isolated zero of the function $x\to Y_1(x) - Y_1(x^{**}_P)$. For this, we remark that  by Lemma~\ref{lemma3_Y1_extended}, as $x\to x^{**}_P$, 
\be\label{eq10-functions-Y1}
Y_1(x) - Y_1(x^{**}_P)  = F_Y(-\sqrt{x^{**}_P-x}) - F_Y(0)  ~\sim~ - c \sqrt{x^{**}_P-x}  
\ee
and 
\[
\frac{d}{dx} Y_1(x)  \sim  \frac{c}{2\sqrt{x^{**}_P-x}}  
\]
with 
\[
c = \left. \frac{d}{du} F_Y(0)\right|_{u=0}  = \left.\sqrt{\partial_x P(x,y)/\partial^2_{yy}P(x,y)}\, \right|_{(x,y)= (x^{**}_P, Y_1(x^{**}_P))} > 0. 
\]
Relations \eqref{eq1-functions-Y1} are therefore proved, and since the constant $c$ is strictly positive, from \eqref{eq10-functions-Y1} it follows that  the point $x^{**}_P$ is an isolated zero of the function $x\to Y_1(x) - Y_1(x^{**}_P)$.


\subsection{The properties of the mapping  $x\to (x,Y_1(x))$ and their consequences}  Remark that by \eqref{key-point-inequality}, the function $x\to (x, Y_1(x))$ maps the closed annulus to the set $\{(x,y)\in\C^2:~ \; x\in C(x^*_P, x^{**}_P), \; |y| \leq Y_1(|x|)\}$, and under the hypotheses (A1){-}(A3), for any $j\in\Z^2_+$, the functions $(x,y) \to \phi_1(x,y)$, $(x,y)\to \psi_2(x,y)$ and $(x,y)\to L_j(x,y)$ are analytic in a neighborhood of this set. Hence, as a straightforward consequence of Proposition~\ref{functions-Y1-X1} one gets 

\begin{cor}\label{cor0-functions-Y1-X1} Under the hypotheses (A1){-}(A3), for some $\delta > 0$, the functions $x\to \phi_1(x, Y_1(x))$, $x\to \psi_2(x,Y_1(x))$ and $x\to L_j(x,Y_1(x))$ for any $j\in\Z^2_+$, are analytic in the set $\inter{U}_\delta$ and continuous on the set $U_\delta$. 
\end{cor}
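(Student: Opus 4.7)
The plan is to realize each of the three functions as a composition $F \circ \Phi$ with $\Phi(x) := (x, Y_1(x))$, and to choose $\delta \in (0,\eps]$ (with $\eps$ from Proposition~\ref{functions-Y1-X1}) so that $\Phi(U_\delta)$ sits inside a common open subset of $\C^2$ on which $F = \phi_1$, $F = \psi_2$ and $F = L_j$ are all analytic. By Assumptions (A1) and (A3)(ii), together with the definitions $\psi_2 = y(1-\phi_2)$ and \eqref{eq-def-Lj}, each of these three functions extends analytically to a polycircular neighborhood
\[
W \;:=\; \{(x,y)\in\C^2 : (|x|,|y|)\in V_\R\}
\]
of $\Omega(D)$, where $V_\R\subset[0,+\infty[^2$ is a real open neighborhood of the compact set $D$ common to the three extensions.

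It remains to produce $\delta$. On the inner portion $U_\eps\cap \overline{C}(x^*_P, x^{**}_P)$, the bound $|Y_1(x)|\leq Y_1(|x|)$ from Proposition~\ref{functions-Y1-X1}(ii), combined with $(|x|, Y_1(|x|))\in\partial D$ (Lemma~\ref{preliminary-lemma1}), shows that $(|x|,|Y_1(x)|)\in D$, hence $\Phi(x)\in\Omega(D)\subset W$. On the outer portion $U_\eps\cap \{x^{**}_P < |x| < x^{**}_P+\eps\}$ this $D$-membership can fail, but continuity of $Y_1$ on $U_\eps$ (Proposition~\ref{functions-Y1-X1}(i)) together with the fact that $\Phi$ already maps the compact boundary circle $\{|x|=x^{**}_P\}\cap U_\eps$ into the open set $W$ (by the inner-portion argument) produces, through a routine uniform-continuity/compactness shrinking, a number $\delta\in(0,\eps]$ such that $\Phi(U_\delta)\subset W$.

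With such a $\delta$ fixed, the three compositions $\phi_1\circ\Phi$, $\psi_2\circ\Phi$ and $L_j\circ\Phi$ automatically inherit analyticity on $\inter{U}_\delta$ and continuity on $U_\delta$ from the corresponding regularity of $\Phi$ provided by Proposition~\ref{functions-Y1-X1}(i) and from the analyticity of $\phi_1$, $\psi_2$, $L_j$ on $W$. No substantial obstacle arises: the only slightly delicate step is the compactness-continuity argument on the outer band near the circle $|x|=x^{**}_P$, which is standard once the analytic extension of $Y_1$ to $\inter U_\eps$ is available.
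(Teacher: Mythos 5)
Your overall plan (compose $\Phi(x)=(x,Y_1(x))$ with an analytic $F$, then shrink $\eps$ to $\delta$ by compactness on the outer band near $|x|=x^{**}_P$) is the same as the paper's, but there is a genuine error in the inner-portion step. You assert that $|Y_1(x)|\leq Y_1(|x|)$ together with $(|x|,Y_1(|x|))\in\partial D$ shows $(|x|,|Y_1(x)|)\in D$. This is false: for a fixed $\hat{x}\in[x^*_P,x^{**}_P]$, Proposition~\ref{propx*}(3) gives $\{y>0:(\hat{x},y)\in D\}=[Y_1(\hat{x}),Y_2(\hat{x})]$, i.e. $D$ is a lens bounded below by $Y_1$ and above by $Y_2$. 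When $x\neq|x|$, relation~\eqref{key-point-inequality} gives the \emph{strict} inequality $|Y_1(x)|<Y_1(|x|)$, so $(|x|,|Y_1(x)|)$ lies strictly \emph{below} $D$ and is therefore \emph{outside} $D$. Consequently $\Phi$ does not map the inner annulus into $\Omega(D)$, and a neighborhood $W$ of $\Omega(D)$ does not contain $\Phi(\overline{C}(x^*_P,x^{**}_P))$ in general (the gap $Y_1(|x|)-|Y_1(x)|$ can be of order $1$, not small, so no shrinking of $W$ toward $D$ will help).

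The fix is to use the correct analyticity domain. The paper records, just before Theorem~\ref{theorem1}, that $\psi_2$ and each $L_j$ extend analytically to a neighborhood of $\Omega(\overline{\Gamma})$ (and $\phi_1=1-\psi_1/x$ does too, away from $x=0$). The image $\Phi(\overline{C}(x^*_P,x^{**}_P))$ is contained in the set $\{(x,y)\in\C^2:\,x\in\overline{C}(x^*_P,x^{**}_P),\,|y|\leq Y_1(|x|)\}$, whose real trace $\{(r,s):x^*_P\leq r\leq x^{**}_P,\,0\leq s\leq Y_1(r)\}$ sits inside $\overline{\Gamma}$, not inside $D$. Once you replace $D$ with this region (or with $\overline{\Gamma}$), the rest of your argument, including the compactness/continuity shrinking on the outer band, goes through exactly as you describe and reproduces the paper's proof.
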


Another consequence of Proposition~\ref{functions-Y1-X1} is the following property of the mapping $x{\to}(x, Y_1(x))$. 
\begin{prop}\label{prop-mapping-injection} Under the hypotheses  (A1){-}(A4),  for some $\delta > 0$, the function $x\to (x,Y_1(x))$ maps the annulus $C(x_d-\delta, x_d)$ to the set $\{ (x,y) \in \Omega({\Gamma}): \, |x| < x_d, \, |y| < y_d\}$. Moreover,  for any neighborhood $V$ of the set $\Omega(\overline{\Gamma})$ in $\C^2$ and $\hat{y} > Y_1(x_d)$, there is  $\delta > 0$, for which the function $x\to (x,Y_1(x))$ maps the set $C(x_d-\delta, x_d +\delta)\cap U_\eps$ to the set $\{ (x,y) \in V: \, |y| < \hat{y}\}$.
\end{prop}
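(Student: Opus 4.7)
The argument combines continuity of $Y_1$ at $x_d$, the strict bound $|Y_1(x)|<Y_1(|x|)$ from Proposition~\ref{functions-Y1-X1}, a case-by-case comparison of $Y_1(x_d)$ with $y_d$, and a compactness-and-continuity argument at the end.

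\textbf{Part~1.} I begin by choosing $\delta>0$ so small that $(x_d-\delta,x_d)\subset(x^*_P,x^{**}_P)$; this is possible in every case (B0){-}(B6) because $x^*_P<x_d\leq x^{**}_P$ (the left strict inequality uses $x^*_P\leq x^*<x^{**}$ when $x_d=x^{**}$, and $x_d=X_2(y^{**})>X_1(y^*_P)$ when $x_d=X_2(y^{**})$, the last inequality coming from $y^{**}>y^*_P$, itself a consequence of the strict bound $X_1(y^{**})<X_2(y^{**})$ in the conditions (B5), (B6)). For any $x\in C(x_d-\delta,x_d)$, Proposition~\ref{functions-Y1-X1}(ii) gives $|Y_1(x)|\leq Y_1(|x|)$, while Lemma~\ref{preliminary-lemma1} gives $Y_1(|x|)<Y_2(|x|)$ since $|x|\in(x^*_P,x^{**}_P)$. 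I next verify $Y_1(|x|)<y_d$ in every case: in (B0){-}(B4) this is immediate by continuity from the strict inequality $Y_1(x_d)<y_d$ (read off Proposition~\ref{cases}: $Y_1(x^{**})<Y_2(x^{**})\leq y^{**}$ in (B0){-}(B2), while $y_d=Y_2(x^{**})>Y_1(x^{**})$ in (B3){-}(B4)); in (B5){-}(B6) one has $Y_1(x_d)=y_d$, but $x_d=X_2(y^{**})\in[X_1(y^*_P),x^{**}_P]$ by \eqref{eq2_S_21} with $x_d>X_1(y^*_P)$, so the strict monotonicity of $Y_1$ on $[X_1(y^*_P),x^{**}_P]$ yields $Y_1(|x|)<Y_1(x_d)=y_d$. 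To conclude $(|x|,|Y_1(x)|)\in\Gamma$, I would exhibit a witness $(x',y')\in\inter{D}$ with $x'>|x|$ and $y'>|Y_1(x)|$: take $x'\in(|x|,x^{**}_P)$ close to $|x|$ and $y'\in(\max\{|Y_1(x)|,Y_1(x')\},Y_2(x'))$, which is a non-empty open interval by continuity of $Y_1,Y_2$ together with $Y_2(|x|)>Y_1(|x|)\geq|Y_1(x)|$.

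\textbf{Part~2.} Fix a neighborhood $V$ of $\Omega(\overline{\Gamma})$ and $\hat y>Y_1(x_d)$, and introduce the compact set
\[
S=\{x\in\C:|x|=x_d\}\cap U_\eps,
\]
which lies in $U_\eps$ (the real slit $]x^{**}_P,x^{**}_P+\eps[$ is avoided since $|x|=x_d\leq x^{**}_P$, and the point $x^{**}_P$ itself, the only intersection when $x_d=x^{**}_P$, belongs to $U_\eps$ by definition). First I would show that $x\mapsto(x,Y_1(x))$ maps $S$ into $\Omega(\overline{\Gamma})\subset V$: for $x\in S$ the key inequality gives $|Y_1(x)|\leq Y_1(x_d)$, so it suffices to prove $(x_d,y_0)\in\overline{\Gamma}$ for every $y_0\in[0,Y_1(x_d)]$. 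Using the witness $(x_d,Y_1(x_d))\in D$, the approximating sequence $(x_d-1/n,\max\{0,y_0-1/n\})$ lies in $\Gamma$ and converges to $(x_d,y_0)$.

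I would then argue by contradiction to transfer the conclusion from $S$ to $C(x_d-\delta,x_d+\delta)\cap U_\eps$ for small $\delta$: if no $\delta$ worked, there would be a sequence $(x_n)\subset U_\eps$ with $|x_n|\to x_d$ and either $(x_n,Y_1(x_n))\notin V$ or $|Y_1(x_n)|\geq\hat y$ for every $n$. The boundedness of $(x_n)$ yields a convergent subsequence with limit $x^*$ satisfying $|x^*|=x_d$; the limit belongs to $U_\eps$ for the reasons recalled above, so $x^*\in S$. Continuity of $Y_1$ on $U_\eps$ then gives $(x_n,Y_1(x_n))\to(x^*,Y_1(x^*))\in\Omega(\overline{\Gamma})\subset V$, which together with the openness of $V$ contradicts the first alternative, while $|Y_1(x^*)|\leq Y_1(x_d)<\hat y$ contradicts the second.

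\textbf{Main obstacle.} The delicate points lie in the cases (B5){-}(B6), where $Y_1(x_d)=y_d$ and the strict bound $Y_1(|x|)<y_d$ must be obtained from strict monotonicity of $Y_1$ near $x_d$ from the left rather than from continuity alone, and in the boundary situation $x_d=x^{**}_P$, possible in (B2) and (B5), where the slit of $U_\eps$ touches the circle $|x|=x_d$ exactly at its endpoint $x^{**}_P$, so the compactness argument in Part~2 relies crucially on this endpoint being included in $U_\eps$.
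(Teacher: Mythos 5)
Your proof is correct and takes essentially the same route as the paper: the same case split for the first assertion (continuity in (B0){-}(B4), starting from the strict inequality $Y_1(x_d)<y_d$; strict monotonicity of $Y_1$ on $[X_1(y^*_P),x^{**}_P]$ in (B5){-}(B6)), and a compactness-plus-continuity argument on the circle $\{|x|=x_d\}\cap U_\eps$ for the second assertion, which is exactly what the paper asserts in a single terse sentence about the continuity of $x\mapsto(x,Y_1(x))$ on $U_\eps$. One small point to tighten: in cases (B5){-}(B6) your opening constraint $(x_d-\delta,x_d)\subset(x^*_P,x^{**}_P)$ must be replaced by the stronger requirement $\delta\leq x_d-X_1(y^*_P)$, so that $|x|$ stays in the interval on which $Y_1$ is strictly increasing --- you have already established $x_d>X_1(y^*_P)$, so such a $\delta$ exists; your ``main obstacle'' paragraph shows you are aware of this, but the bound on $\delta$ should be stated explicitly. (For comparison, the paper's own proof takes the looser $\delta=x_d-x^*_P$ at this step and asserts $Y_1(|x|)<Y_1(x_d)$ for every $|x|\in(x^*_P,x_d)$, which does not follow from convexity alone, since $Y_1(x^*_P)$ need not lie below $y_d$; restricting to the monotone interval, as you effectively do, is the clean fix.)
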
 
\begin{proof} Remark  that by \eqref{key-point-inequality}, for any $x\in ]x^*_P, x^{**}_P[$, 
\[
|Y_1(x)| \leq Y_1(|x|) < Y_2(|x|).
\]
Hence, the function $x\to (x, Y_1(x))$ maps the set $C(x^*_P, x^{**}_P)$ to the set
\[
{\Gamma}_1 = \{(x,y)\in\C^2:~ \; x\in C(x^*_P, x^{**}_P), \; |y| < Y_2(|x|)\}. 
\]
For any $x\in ]x^*_P, x^{**}_P[$, $Y_1(x)$ and $Y_2(x)$ are the only real and positive solution of the equation $P(x,y) = 1$, $Y_1(x) < Y_2(x)$ and for any  $Y_1(x) < y < Y_2(x)$, the point $(x,y)$ belongs to the interior of the set $D=\{(x,y)\in[0,+\infty[^2: P(x,y) \leq 1\}$ (see Lemma~\ref{preliminary-lemma1} for more details). Hence for any $(x,y)\in{\Gamma}_1)$, there is a point $(x',y')\in D$ such that $|x| < x'$ and $|x|< y'$ and consequently, the set ${\Gamma}_1$ is included to the set ${\Gamma}$. The mapping $x\to (x, Y_1(x))$ maps therefore the set $C(x^*_P, x^{**}_P)$ to the set  ${\Gamma}$. 

Consider now the case when one of the cases (B0){-}(B4) holds. In this case, (see Proposition~\ref{cases} and relations \eqref{eq2-def-xd}, \eqref{eq2-def-yd}), we have $Y_1(x_d) < y_d$. Since  the function $x\to (x,Y_1(x))$ maps the annulus $C(x^*_P, x^{**}_P)$ to the set ${\Gamma}$ and 
by Proposition~\ref{functions-Y1-X1}, the function $Y_1$ is continuous on $U_\eps$, it follows that for some $\delta > 0$, the function $x\to (x, Y_1(x))$ maps the annulus $C(x_d-\delta, x)$ to the set $\{ (x,y) \in \Omega({\Gamma}): \, |x| < x_d, \, |y| < y_d\}$. 

When one of the cases (B0){-}(B4) holds, the first assertion of Proposition~\ref{prop-mapping-injection} is therefore proved. 

Consider now the case when  (B5) or (B6) holds. In this case (see Proposition~\ref{cases} and relations \eqref{eq2-def-xd}, \eqref{eq2-def-yd}), the point $(x_d,y_d) = (X_2(y^{**}), y^{**})$ belongs to the curve ${\mathcal S}_{21} = \{(x, Y_1(x)):~ x\in [X_1(y^*_P), x^{**}_P]\} = \{(X_2(y),y):~y\in [y^*_P, Y_1(x^{**}_P)]\}$ and $y^*_P \leq y^* < y_d = y^{**} < y^{**}_P$. Since by lemma~\ref{preliminary-lemma1}, the functions $X_2: [y^*_P, Y_1(x^{**}_P)]\to [X_1(y^*_P), x^{**}_P]$ and $Y_1:[X_1(y^*_P), x^{**}_P] \to [y^*_P, Y_1(x^{**}_P)]$ are strictly increasing and inverse to each other,  it follows that in this case, we have  $x^*_P < X_2(y_d) = x_d$.  Hence, for any $x\in C(x^*_P, x_d)$, using \eqref{key-point-inequality} one gets 
\[
|Y_1(x)| \leq Y_1(|x|) < Y_1(x_d).
\]
Since we have already proved that the function $x\to (x,Y_1(x))$ maps the annulus $C(x^*_P, x^{**}_P)$ to the set ${\Gamma}$, the last relations prove that for $\delta = x_d - x^*_P > 0$, this function maps the annulus $C(x_d - \delta, x_d)$ to the set $\{ (x,y) \in \Omega({\Gamma}): \, |x| < x_d, \, |y| < y_d\}$. 

In the case when one of the cases (B5) or (B6) holds, the first assertion of Proposition~\ref{prop-mapping-injection} is therefore also proved. 

The second assertion of Proposition~\ref{prop-mapping-injection} holds because the function $x\to (x,Y_1(x))$ is continuous in the set $U_\eps$ including the closed annulus $\overline{C}(x^*_P, x^{**}_P)$ and maps  $\overline{C}(x^*_P, x^{**}_P)$ to the closure of the set ${\Gamma}$.

\end{proof} 

Remark  that under the hypotheses (A1)(ii), (A2) and (A3) (ii), (iv), the functions $(x,y) \to Q(x,y)= xy(1-P(x,y)$, $(x,y) \to \psi_1(x,y)=x(\phi_1(x,y)-1)$ and $(x,y)\to \psi_2(x,y)=y(1-\phi_2(x,y))$ can be continued as analytic functions to some neighborhood of the set 
$\Omega(\overline{\Gamma})$.  Hence, from Theorem~\ref{theorem1} it follows  

\begin{cor}\label{upper-bounds-cor2} Under the hypotheses (A1){-}(A4), there is a  neighborhood ${\mathcal V}\subset\C^2$ of the set $\Omega(\overline{\Gamma})$ such that  for any $j\in\Z^2$,
\begin{itemize}
\item the function $(x,y) \to \psi_1(x,y) h_{1j}(x)$ is analytic in the set $\{(x,y)\in {\mathcal V} :~ |x| < x_d\}$ ;
\item the function $(x,y)\to \psi_2(x,y) h_{2j}(y)$ is analytic in the set $\{(x,y)\in {\mathcal V} :~ |y| < y_d\}$ 
\item the function $(x,y) \to R_j(x,y) = Q(x,y) h_j(x,y)$  can be extended as an analytic function to the set  $
\{(x,y)\in {\mathcal V} :~|x| < x_d, \; |y| < y_d\}$ by letting 
\be\label{extended-functional-equation1} 
R_j(x,y) = L_j(x,y) + \psi_1(x,y) h_{1j}(x) + \psi_2(x,y) h_{2j}(y). 
\ee
\end{itemize} 
\end{cor}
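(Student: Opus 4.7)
The plan is to derive this corollary directly from Theorem~\ref{theorem1}, piecing together the individual analyticity statements for the factors appearing in the functional equation~\eqref{extended-functional-equation}. The essential input, already recorded just before the corollary, is that under Assumptions (A1)(ii), (A2), (A3)(ii),(iv), the functions $Q$, $\psi_1$, $\psi_2$ and $L_j$ admit analytic continuations to an open neighborhood~$\mathcal V \subset \C^2$ of the compact polycircular set $\Omega(\overline{\Gamma})$. I would fix any such neighborhood $\mathcal V$ at the outset.

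Next, I would invoke the first assertion of Theorem~\ref{theorem1}: for each $j \in \Z_+^2$, the function $x \to h_{1j}(x)$ is analytic on $B(0,x_d)$ and $y \to h_{2j}(y)$ is analytic on $B(0,y_d)$. Viewing $h_{1j}(x)$ as a function of $(x,y)$ that is constant in $y$, the product $(x,y) \to \psi_1(x,y)\, h_{1j}(x)$ is then the product of two analytic functions on $\{(x,y) \in \mathcal V : |x| < x_d\}$, hence analytic there. The analogous argument gives analyticity of $(x,y) \to \psi_2(x,y)\, h_{2j}(y)$ on $\{(x,y) \in \mathcal V : |y| < y_d\}$. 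This proves the first two bullets of the statement.

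For the third bullet, define $R_j$ on $\{(x,y) \in \mathcal V : |x|<x_d,\, |y|<y_d\}$ by the right-hand side of~\eqref{extended-functional-equation1}. By the previous step together with analyticity of $L_j$ on $\mathcal V$, this is an analytic function on its domain. On the subset $\Omega_d(\Gamma) = \{(x,y) \in \Omega(\Gamma) : |x|<x_d,\, |y|<y_d\}$, the second assertion of Theorem~\ref{theorem1} tells us that $h_j$ is analytic and that the functional equation~\eqref{extended-functional-equation} reads $Q(x,y)\, h_j(x,y) = L_j(x,y) + \psi_1(x,y)\, h_{1j}(x) + \psi_2(x,y)\, h_{2j}(y)$, so $R_j$ coincides with $Q \cdot h_j$ on $\Omega_d(\Gamma)$. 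Since $\Omega_d(\Gamma) \subset \{(x,y) \in \mathcal V : |x|<x_d,\, |y|<y_d\}$ (because $\Omega(\Gamma) \subset \Omega(\overline\Gamma) \subset \mathcal V$), the analytic function $R_j$ supplied by~\eqref{extended-functional-equation1} is the desired analytic continuation of $Q \cdot h_j$.

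There is no real obstacle here; the proof is essentially a bookkeeping exercise assembling three analytic factors on their respective domains. The only point worth checking carefully is the domain inclusion $\Omega_d(\Gamma) \subset \{(x,y) \in \mathcal V : |x|<x_d,\, |y|<y_d\}$, which guarantees that the extension $R_j$ is consistent with the values of $Q h_j$ already produced by Theorem~\ref{theorem1} and that the identity~\eqref{extended-functional-equation1} is a genuine analytic continuation rather than an independent definition.
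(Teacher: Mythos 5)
Your proof is correct and takes exactly the paper's intended approach: the paper presents this corollary as a direct consequence of Theorem~\ref{theorem1} together with the prior remark that $Q$, $\psi_1$, $\psi_2$, $L_j$ extend analytically to a neighborhood of $\Omega(\overline{\Gamma})$, and you have supplied precisely the bookkeeping (product of analytic factors, domain inclusion $\Omega_d(\Gamma)\subset\{(x,y)\in\mathcal V: |x|<x_d, |y|<y_d\}$) that makes this explicit.
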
 

When combined together, Proposition~\ref{prop-mapping-injection}, Corollary~\ref{cor0-functions-Y1-X1} and Corollary~\ref{upper-bounds-cor2} imply the following statement. 

\begin{cor}\label{cor2-proof-theorem2} Under the hypotheses (A1){-}(A4) for some $\delta > 0$, and any $j\in\Z^2_+ $, 

i) the functions $x \to - \psi_1(x, Y_1(x)) h_{1j}(x)=(1 - \phi_1(x, Y_1(x)))H_j(x,0)$ and $x\to \psi_2(x, Y_1(x))h_{2j}(Y_1(x))$ are  analytic in the annulus $C(x_d-\delta,x_d)$ and satisfy there the identity 
\be\label{functional-equation-injection} 
(1 - \phi_1(x, Y_1(x)))H_j(x,0) = L_j(x, Y_1(x)) + \psi_2(x, Y_1(x))h_{2j}(Y_1(x));
\ee 

ii) moreover,  if one of the cases (B0){-}(B4) holds with $x_d  < x^{**}_P$ (i.e. if either one of the cases (B0), (B1), (B3) or (B4) holds or (B2) and $x_d < x^{**}_P$ hold),  then for some $\delta > 0$, the function $x\to L_J(x,Y_1(x)) + \psi_2(x, Y_1(x))h_{2j}(Y_1(x))$ is analytic in the annulus $C(x_d-\delta, x_d+\delta)$ and by \eqref{functional-equation-injection}, the function  $x \to \eta_j(x) = (1 - \phi_1(x, Y_1(x)))H_j(x,0)$ can be extended as an analytic function to $C(x_d-\delta, x_d+\delta)$.

iii) if (B2) holds with $x_d=x^{**}_P$, then for some $\delta > 0$, the function $x\to L_j(x, Y_1(x)) + \psi(x, Y_1(x))h_{2j}(Y_1(x))$ is analytic in the set $C(x_d-\delta, x_d+\delta) \cap \inter{U}_\eps$ and continuous on the set  $C(x_d-\delta, x_d+\delta) \cap {U}_\eps$, and by \eqref{functional-equation-injection},  the function $x \to \eta_j(x) = (1 - \phi_1(x, Y_1(x)))H_j(x,0)$ can be extended to the set $C(x_d-\delta, x_d+\delta) \cap {U}_\eps$ as an analytic function in $C(x_d-\delta, x_d+\delta) \cap \inter{U}_\eps$ and a continuous function on  $C(x_d-\delta, x_d+\delta) \cap {U}_\eps$.
\end{cor}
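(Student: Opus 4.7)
The plan is to combine three ingredients already in hand---Proposition~\ref{prop-mapping-injection} (the mapping properties of $x\mapsto(x,Y_1(x))$), Corollary~\ref{upper-bounds-cor2} (analyticity of the right-hand side of the functional equation and of $R_j$), and Corollary~\ref{cor0-functions-Y1-X1} (analyticity and continuity properties of $\phi_1(x,Y_1(x))$, $\psi_2(x,Y_1(x))$ and $L_j(x,Y_1(x))$ on $U_\eps$)---and to exploit the identity $Q(x,Y_1(x))=0$ from Proposition~\ref{functions-Y1-X1}, which collapses the left-hand side of the functional equation of Theorem~\ref{theorem1} along the curve $y=Y_1(x)$.

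For assertion (i), the first half of Proposition~\ref{prop-mapping-injection} yields $\delta>0$ for which $x\mapsto(x,Y_1(x))$ sends $C(x_d-\delta,x_d)$ into $\{(x,y)\in\Omega(\Gamma):|x|<x_d,\ |y|<y_d\}\subset\Omega_d(\Gamma)$. On the latter set, Theorem~\ref{theorem1} gives analyticity of $h_j$ together with the identity $Q(x,y)h_j(x,y)=R_j(x,y)$, where $R_j$ is the right-hand side of \eqref{extended-functional-equation1}. Composing with $y=Y_1(x)$ and invoking $Q(x,Y_1(x))=0$ yields $R_j(x,Y_1(x))=0$ on $C(x_d-\delta,x_d)$. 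Since $Y_1$ is analytic on $C(x^*_P,x^{**}_P)$ by Proposition~\ref{functions-Y1-X1}, each of the three summands of $R_j(x,Y_1(x))$ is an analytic function of $x$ there, and rearranging the vanishing sum via $\psi_1(x,y)h_{1j}(x)=(1-\phi_1(x,y))H_j(x,0)$ produces \eqref{functional-equation-injection}.

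For assertion (ii), one first checks that $Y_1(x_d)<y_d$ in each of the listed subcases, using Proposition~\ref{cases} and the definition of $y_d$: in (B0), (B1), (B3), (B4) this is immediate, and in (B2) with $x_d<x^{**}_P$ it follows from $Y_1(x^{**})<Y_2(x^{**})<y^{**}=y_d$. Fix $\hat y\in\,]Y_1(x_d),y_d[$ and a neighborhood $\mathcal V$ of $\Omega(\overline\Gamma)$ on which Corollary~\ref{upper-bounds-cor2} applies. Since $x_d<x^{**}_P$, we may choose $\delta>0$ small enough so that $C(x_d-\delta,x_d+\delta)\subset\inter U_\eps$ and, by the second half of Proposition~\ref{prop-mapping-injection}, so that $x\mapsto(x,Y_1(x))$ sends $C(x_d-\delta,x_d+\delta)$ into $\{(x,y)\in\mathcal V:|y|<\hat y<y_d\}$, the joint domain of analyticity of $L_j(x,y)$ and of $\psi_2(x,y)h_{2j}(y)$. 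Consequently $x\mapsto L_j(x,Y_1(x))+\psi_2(x,Y_1(x))h_{2j}(Y_1(x))$ is analytic on the full annulus $C(x_d-\delta,x_d+\delta)$; by (i) it coincides with $\eta_j(x)$ on $C(x_d-\delta,x_d)$, and analytic continuation promotes it to the desired extension of $\eta_j$ across $x_d$. Assertion (iii) is identical except that $x_d=x^{**}_P$ is the branch point: Proposition~\ref{functions-Y1-X1} now only supplies $Y_1$ as analytic on $\inter U_\eps$ and continuous on $U_\eps$, so the same argument restricted to $C(x_d-\delta,x_d+\delta)\cap\inter U_\eps$ and $C(x_d-\delta,x_d+\delta)\cap U_\eps$ yields analyticity of $\eta_j$ in the interior and continuity up to the slit.

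The main obstacle is the coordination of constants in (ii): $\hat y$ must be fixed first (so that $Y_1(x_d)<\hat y<y_d$), then the neighborhood $\mathcal V$ of $\Omega(\overline\Gamma)$, and only then $\delta>0$, so that the image of $x\mapsto(x,Y_1(x))$ genuinely lies inside $\{(x,y)\in\mathcal V:|y|<\hat y\}$ where $h_{2j}(y)$ is analytic. The second half of Proposition~\ref{prop-mapping-injection} is designed precisely to permit this sequential choice. In (iii) the extra subtlety is that continuity (rather than holomorphy) of the composition at $x_d=x^{**}_P$ must suffice for the assertion on $U_\eps$; this is automatic from the continuity clause of Proposition~\ref{functions-Y1-X1} combined with the joint continuity of $L_j$, $\psi_2$ and $h_{2j}$ on the closed region.
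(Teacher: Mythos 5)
Your proposal is correct and takes essentially the same approach as the paper. The paper's own proof of this corollary consists of a single sentence asserting that it follows by combining Proposition~\ref{prop-mapping-injection}, Corollary~\ref{cor0-functions-Y1-X1}, and Corollary~\ref{upper-bounds-cor2}; your argument is exactly the fleshed-out version of that combination, including the verification that $Y_1(x_d)<y_d$ in each subcase, the sequential choice of $\hat y$, $\mathcal V$, and $\delta$ permitted by the second half of Proposition~\ref{prop-mapping-injection}, and the use of the relation $Q(x,Y_1(x))=0$ to collapse the functional equation along $y=Y_1(x)$.
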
 

With this result,  when one of the cases (B0){-}(B4) holds, the function $x \to (1 - \phi_1(x, Y_1(x)))H_j(x,0)$ is therefore already extended beyond the point $x_d$. To extend the function $x\to H_j(x,0)$ beyond the point $x_d$ we need to investigate the function $x\to \phi_1(x, Y_1(x))$. This is a subject of the next section.

\subsection{Analytic continuation and properties of the function $x\to \phi_1(x,Y_1(x))$}\label{section-analytic-continuation-functions-phi1-Y1}  
By Corollary~\ref{cor0-functions-Y1-X1}, for some $\delta > 0$, the function $x\to \phi_1(x, Y_1(x))$ is already extended to the set $U_\delta$  as an analytic function in the set $\inter{U}_\delta$  and a continuous function on the set  $U_\delta$. We will need moreover the following properties of the function $x\to \phi_1(x, Y_1(x))$.

\begin{prop}\label{phi-proof-theorem2} Under the hypotheses (A1)- (A3), the following assertions hold: 

i) the function $x\to \phi_1(x,Y_1(x))$ is strictly convex on the line segment $[x^*_P, x^{**}_P]$.  

ii) for any $x\in C(x^*, x^{**})$, 
\be\label{eq10-phi-proof-theorem2}
|\phi_1(x, Y_1(x))| \leq \phi(|x|, Y_1(|x|)) < 1; 
\ee

iii) if $x^{**} < x^{**}_P$, then for some $\hat\delta > 0$, the point $x^{**}$ is an only and simple zero of the function $x\to 1 - \phi_1(x, Y_1(x))$ in the annulus $C(x^*, x^{**} +\hat\delta)$ and  
\be\label{eq20-phi-proof-theorem2}
\left.\frac{d}{dx} \phi_1(x, Y_1(x))\right|_{x=x^{**}} > 0;
\ee

iv) if $x^{**} = x^{**}_P$, then $\phi_1(x^{**}_P, Y_1(x^{**}_P)) \leq 1$;

v)  if $x^{**} = x^{**}_P$ and $\phi_1(x^{**}_P, Y_1(x^{**}_P)) < 1$, then for some $\hat\delta > 0$, the function $x\to 1-\phi_1(x, Y_1(x))$ has no zeros  in $U_{\hat\delta} = C(x^*, x^{**}_P+\hat\delta){\setminus}]x^{**}_P, x^{**}_P+\hat\delta[$;

vi) if $x^{**} = x^{**}_P$ and $\phi_1(x^{**}_P, Y_1(x^{**}_P)) = 1$, then for some $\hat\delta > 0$, the point $x^{**}_P$ is an only zero of the function $x\to 1-\phi_1(x, Y_1(x))$ in $U_{\hat\delta}$ and  as $x\to x^{**}_P$, 
\be\label{eq30-phi-proof-theorem2}
1-\phi_1(x, Y_1(x)) \sim c \sqrt{x^{**}_P-x}  \quad \; \text{with} \; \quad c = \left. \partial_y\phi_1(x,y)\sqrt{\frac{\partial_x P(x,y)}{\partial^2_{yy}P(x,y)} }\, \right|_{(x,y)= (x^{**}_P, Y_1(x^{**}_P))} > 0.
\ee
\end{prop}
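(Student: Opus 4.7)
The plan is to reduce every assertion to a single power-series representation of the composed function $x\mapsto\phi_1(x,Y_1(x))$ with non-negative coefficients, combined with the square-root behaviour of $Y_1$ at $x^{**}_P$ already recorded in Proposition~\ref{functions-Y1-X1}(iv). My first step is to obtain this representation probabilistically. Via Lemma~\ref{probabilistic_representation_lemma1} and shift-invariance of the homogeneous walk $(S(n))$, one has $x^{j_1}Y_1(x)^{j_2}=\E_{(j_1,j_2)}(x^{S_1(\tau_1)};\tau_1<+\infty)$ for every $(j_1,j_2)\in\Z\times\N$ and every $x\in[x^*_P,x^{**}_P]$. Weighting by $\mu_1(j_1,j_2)$, summing, and applying Fubini on the real interval where absolute convergence holds, one obtains
\[
\phi_1(x,Y_1(x))=\sum_{k\in\Z}c_k\,x^k,\qquad c_k\geq 0,
\]
where $c_k$ is the probability that a walker performing a single $\mu_1$-distributed jump and then evolving under $\mu$ first touches $\Z\times\{0\}$ at $(k,0)$. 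The same irreducibility argument used in the proof of Lemma~\ref{lemma1_Y1_continued}, combined with assumption (A3)(v), shows that the set $\{k:c_k>0\}$ is infinite and generates $\Z$. The series converges on $\overline C(x^*_P,x^{**}_P)$ (since it does on the real circles) and, by uniqueness of analytic continuation, agrees there with the function furnished by Corollary~\ref{cor0-functions-Y1-X1}.

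My second step addresses (i) and (ii). Differentiating the series termwise twice on the real interval yields $\sum k(k-1)c_k x^{k-2}$, a sum of non-negative terms containing strictly positive contributions from indices $k\notin\{0,1\}$ in the support, which proves (i). For (ii), the triangle inequality applied to the series gives $|\phi_1(x,Y_1(x))|\leq\sum c_k|x|^k=\phi_1(|x|,Y_1(|x|))$ on $\overline C(x^*_P,x^{**}_P)$, and Corollary~\ref{preliminary_cor2}(ii) furnishes the strict inequality $\phi_1(|x|,Y_1(|x|))<1$ on $C(x^*,x^{**})$. Furthermore, because the support of $(c_k)$ generates $\Z$, the triangle inequality is \emph{strict} for every non-real $x$ on any fixed circle $\{|x|=r\}\subset[x^*_P,x^{**}_P]$; this sharpening is what drives all the uniqueness statements below.

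My third step treats (iii)–(vi). Corollary~\ref{preliminary_cor2}(iii) gives $\phi_1(x^{**},Y_1(x^{**}))=1$; strict convexity from (i), together with the fact that $\phi_1(x,Y_1(x))<1$ for $x\in]x^*,x^{**}[$, forces (\ref{eq20-phi-proof-theorem2}) and makes the real zero simple, while the chain-rule identity $\tfrac{d}{dx}\phi_1(x,Y_1(x))=\partial_x\phi_1+\partial_y\phi_1\cdot Y_1'(x)$ combined with \eqref{eq1-injection-prop-2-lemma1} expresses the derivative in the form requested. Uniqueness of the zero in $C(x^*,x^{**}+\hat\delta)$ comes from (ii) (no zeros with $|x|<x^{**}$), the sharp triangle inequality on $|x|=x^{**}$ (ruling out non-real zeros), and isolation of zeros of analytic functions for $|x|>x^{**}$. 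Item (iv) is Corollary~\ref{preliminary_cor2}(i) read at $x^{**}_P$. Item (v) is immediate from (ii), from the hypothesis $\phi_1(x^{**}_P,Y_1(x^{**}_P))<1$, and from continuity of $x\mapsto\phi_1(x,Y_1(x))$ on $U_\delta$ established in Corollary~\ref{cor0-functions-Y1-X1}. For (vi), I expand
\[
\phi_1(x,Y_1(x))-1=\partial_x\phi_1(x-x^{**}_P)+\partial_y\phi_1\bigl(Y_1(x)-Y_1(x^{**}_P)\bigr)+O\bigl((x-x^{**}_P)+|Y_1(x)-Y_1(x^{**}_P)|^2\bigr),
\]
substitute $Y_1(x)-Y_1(x^{**}_P)\sim-\sqrt{\partial_xP/\partial^2_{yy}P}\,\sqrt{x^{**}_P-x}$ from Proposition~\ref{functions-Y1-X1}(iv), and note that since $\partial_y\phi_1(x^{**}_P,Y_1(x^{**}_P))>0$ by assumption (A3)(v), the square-root term dominates the linear term; this yields (\ref{eq30-phi-proof-theorem2}) with a strictly positive constant, and uniqueness of the zero in $U_{\hat\delta}$ again uses the sharp triangle inequality.

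The main obstacle is precisely the sharp triangle inequality invoked in (iii) and (vi): one needs $|\sum c_k x^k|<\sum c_k|x|^k$ whenever $x$ is non-real. This is equivalent to showing that $\{k:c_k>0\}$ is not contained in any proper coset $a+b\Z$ with $b\geq 2$, i.e.\ to an aperiodicity statement for the boundary hitting distribution of the $(\mu_1,\mu)$-walk. Everything else in the argument is either routine power-series manipulation or a direct substitution; writing the combinatorial path-construction that derives this aperiodicity from the irreducibility hypotheses (A1)(i) and (A3)(v)–(vi) is the least mechanical part.
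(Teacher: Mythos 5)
Your proposal is correct and follows essentially the same path as the paper's proof: a probabilistic (boundary-hitting) representation giving a Laurent series with non-negative coefficients, yielding strict convexity and the sharp modulus inequality $|\phi_1(x,Y_1(x))|<\phi_1(|x|,Y_1(|x|))$ for non-real $x$, combined with Corollary~\ref{preliminary_cor2} on the real axis and the square-root expansion of $Y_1$ from Proposition~\ref{functions-Y1-X1}(iv) for item~(vi). The paper formulates the representation via the half-plane walk $(\hat S(n))$ of Corollary~\ref{probabilistic_representation_cor2} and proves the stronger statement that \emph{all} coefficients are positive (via the path-construction you flag as the least mechanical point), but this is only a presentational difference from your support-generates-$\Z$ aperiodicity argument.
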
 

\subsubsection{Outline of the proof of Proposition~\ref{phi-proof-theorem2}} 

To prove this proposition, we first  get a probabilistic representation of the function $x\to \phi_1(x, Y_1(x))$ similar to those of the function $x\to Y_1(x)$. This is a subject of Corollary~\ref{probabilistic_representation_cor2}. In Section~\ref{sec-proof-i-ii-prop-phi-theorem2}, as a straightforward consequence of this result, we get the first and the second assertion of Proposition~\ref{phi-proof-theorem2}. The proof of the third assertion of this statement is given in Section~\ref{sec-proof-iii-prop-phi-theorem2} and the proofs of the last tree assertions are completed in Section~\ref{sec-proof-iv-vi-prop-phi-theorem2}. 

\subsubsection{Probabilistic representation of the function $x\to\phi_1(x,Y_1(x))$} The following  probabilistic representation of the function $x\to \phi_1(x,Y_1(x))$ is another useful for our purpose consequence of Lemma~\ref{probabilistic_representation_lemma1}~:

\begin{cor}\label{probabilistic_representation_cor2} Let  $(\hat{S}(n))$ be a random walk on the half-plane $\Z\times\N$ with transition probabilities 
\be\label{trans_probabilities2}
\P_k(\hat{S}(1) = j) ~=~\begin{cases} \mu(j-k) &\text{for all $j, k=(k_1,k_2)\in\Z\times\N$ with  $k_2 > 0$}\\
\mu_1(j-k) &\text{for  all $j, k=(k_1,k_2)\in\Z\times\N$ with   $k_2 = 0$}\\
\end{cases} 
\ee
and  let $T_1$ be the first time  when the random walk $\bigl(\hat{S}(n)=(\hat{S}_1(n),\hat{S}_2(n))\bigr)$ hits the boundary $\Z\times\{0\}$~:
\[
T_1 = \inf\{n > 0:~ \hat{S}(n) \in \Z\times\{0\}\}.
\] 
Then under the hypotheses (A1) - (A3), for any $x\in \overline{C}(x^*_P, x^{**}_P)$
\be\label{probabilistic_representation_phi_1} 
\phi_1(x,Y_1(x)) = \E_{(0,0)}\left(x^{\hat{S}_1(T_1)}; \; T_1 < +\infty\right).
\ee
\end{cor}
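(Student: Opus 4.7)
The approach is to condition on $\hat{S}(1)$ and invoke the strong Markov property, reducing the problem to an application of Lemma~\ref{probabilistic_representation_lemma1} for subsequent excursions into $\Z\times\N^*$. Since $\hat{S}(0)=(0,0)$ lies on the boundary $\Z\times\{0\}$ of $\Z\times\N$, the first transition uses $\mu_1$; and by Assumption (A3), $\mu_1$ is supported on $\Z\times\Z_+$, so $\P_{(0,0)}(\hat{S}(1)=k)=\mu_1(k)$ for $k=(k_1,k_2)\in\Z\times\Z_+$. I split the analysis on the value of $k_2$: when $k_2=0$, one has $T_1=1$ and $\hat{S}_1(T_1)=k_1$; when $k_2>0$, the strong Markov property at time $1$ yields
\[
\E_{(0,0)}\bigl(x^{\hat{S}_1(T_1)};T_1<+\infty,\hat{S}(1)=k\bigr)=\mu_1(k)\,\E_k\bigl(x^{\hat{S}_1(\sigma)};\sigma<+\infty\bigr),
\]
with $\sigma=\inf\{n\geq 0:\hat{S}(n)\in\Z\times\{0\}\}$ under $\P_k$.

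For $k=(k_1,k_2)$ with $k_2>0$, the walk $\hat{S}$ started at $k$ stays in $\Z\times\N^*$ until time $\sigma$, so during this excursion all its transitions are governed by $\mu$. Consequently the joint law of $(\sigma,\hat{S}(\sigma))$ under $\P_k$ coincides with that of $(\tau_1,S(\tau_1))$ under $\P_k$, where $(S(n))$ and $\tau_1$ are as in Lemma~\ref{probabilistic_representation_lemma1}. That lemma then gives, for real $x\in[x^*_P,x^{**}_P]$,
\[
\E_k\bigl(x^{\hat{S}_1(\sigma)};\sigma<+\infty\bigr)=\E_k\bigl(x^{S_1(\tau_1)};\tau_1<+\infty\bigr)=x^{k_1}Y_1(x)^{k_2}.
\]
Summing over $k\in\Z\times\Z_+$ and observing that the $k_2=0$ contribution $\mu_1(k_1,0)\,x^{k_1}$ is precisely the value of $\mu_1(k_1,k_2)\,x^{k_1}Y_1(x)^{k_2}$ at $k_2=0$, one obtains
\[
\E_{(0,0)}\bigl(x^{\hat{S}_1(T_1)};T_1<+\infty\bigr)=\sum_{(k_1,k_2)\in\Z\times\Z_+}\mu_1(k_1,k_2)\,x^{k_1}Y_1(x)^{k_2}=\phi_1(x,Y_1(x)),
\]
which establishes \eqref{probabilistic_representation_phi_1} on the real interval $[x^*_P,x^{**}_P]$.

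To extend the identity to $x\in\overline{C}(x^*_P,x^{**}_P)$, note that the left-hand side is the Laurent series $\sum_{m\in\Z}\P_{(0,0)}(\hat{S}_1(T_1)=m,T_1<+\infty)\,x^m$ with non-negative coefficients; by the previous step it is finite at every real $x\in[x^*_P,x^{**}_P]$, so it converges absolutely throughout the closed annulus. By Lemma~\ref{lemma1_Y1_continued}, $Y_1$ also extends to $\overline{C}(x^*_P,x^{**}_P)$ as an absolutely convergent Laurent series with non-negative coefficients, so $\phi_1(x,Y_1(x))$ admits a Laurent expansion with non-negative coefficients converging on the whole closed annulus. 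Agreement on $[x^*_P,x^{**}_P]$ then forces coefficientwise agreement of the two expansions, and \eqref{probabilistic_representation_phi_1} follows on $\overline{C}(x^*_P,x^{**}_P)$. I do not anticipate a genuine obstacle here; the one delicate step is the coefficient identification that justifies passing from the real interval to the complex annulus, and it rests decisively on the positivity of every Laurent coefficient that appears.
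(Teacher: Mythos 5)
Your proof is correct and follows essentially the same route as the paper's: a first-step decomposition of $\E_{(0,0)}(x^{\hat{S}_1(T_1)};T_1<\infty)$ conditioned on $\hat{S}(1)$, followed by the strong Markov property and an application of Lemma~\ref{probabilistic_representation_lemma1} to the subsequent excursion in $\Z\times\N^*$. The paper's proof is terser and leaves the passage from real $x\in[x^*_P,x^{**}_P]$ to the complex annulus $\overline{C}(x^*_P,x^{**}_P)$ implicit (relying, as in Lemma~\ref{lemma1_Y1_continued}, on the non-negativity of the Laurent coefficients of both sides), whereas you spell out that step carefully; this is added rigor rather than a different approach.
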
 
\begin{proof} To get \eqref{probabilistic_representation_phi_1} from Lemma~\ref{probabilistic_representation_lemma1} it is sufficient to notice that by the Markov property, and according to the definition of the random walks $(S(n))$ and $(\hat{S}(n))$, 
\begin{align*}
\E_{(0,0)}\left(x^{\hat{S}_1(\tau_1)}; \; \tau_1 < +\infty\right)  &= \sum_{k\in\Z\times\{0\}} \mu_1(k) x^{k_1} + \hspace{-3mm}\sum_{k=(k_1,k_2)\in\Z\times\N:~k_2\not= 0} \mu_1(k) \E_k(x^{S_1(T_1)}, \; T_1 < +\infty) \\
&= \sum_{k\in\Z\times\{0\}} \mu_1(k) x^{k_1} + \sum_{k=(k_1,k_2)\in\Z\times\N:~k_2\not= 0} \mu_1(k) x^{j_1}Y_1(x)^{j_2}\\
& = \phi_1(x, Y_1(x)). 
\end{align*} 
\end{proof} 

\subsubsection{Proof of the first and the second assertions of Proposition~\ref{phi-proof-theorem2}}\label{sec-proof-i-ii-prop-phi-theorem2}
With Corollary~\ref{probabilistic_representation_cor2}, by using the same arguments as in the proof of Lemma~\ref{lemma1_Y1_continued}  we get 

\begin{cor}\label{lemma_phi_extended} Under the hypotheses (A1) - (A3), the function $x\to\phi_1(x, Y_1(x))$ is strictly convex on $[x^*_P, x^{**}_P]$ and  satisfies  on the set $\overline{C}(x^*_P, x^{**}_P)$ the following relation 
\be\label{eq1_functions_phi_1}
|\phi_1(x, Y_1(x))| < \phi_1(|x|, Y_1(|x|)), \quad \forall x\not= |x|.
\ee
\end{cor}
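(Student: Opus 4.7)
The strategy mirrors exactly that of Lemma~\ref{lemma1_Y1_continued}, with the probabilistic representation for $Y_1$ replaced by the representation~\eqref{probabilistic_representation_phi_1} for $\phi_1(x,Y_1(x))$. The plan is to expand $\phi_1(x,Y_1(x))$ as a Laurent series with non-negative coefficients in $x$, then deduce both strict convexity on $[x^*_P,x^{**}_P]$ and the strict inequality~\eqref{eq1_functions_phi_1} directly from coefficient positivity and the triangle inequality.

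First, I would write, for $k_1\in\Z$,
\[
c_{k_1} \steq{def} \P_{(0,0)}\bigl(\hat{S}_1(T_1) = k_1, \; T_1 < +\infty\bigr) \geq 0,
\]
so that by Corollary~\ref{probabilistic_representation_cor2}, the identity
\[
\phi_1(x,Y_1(x)) = \sum_{k_1\in\Z} c_{k_1}\, x^{k_1}
\]
holds for every real $x\in[x^*_P, x^{**}_P]$. Since the $c_{k_1}$ are non-negative, the series converges absolutely on the whole closed annulus $\overline{C}(x^*_P, x^{**}_P)$, giving the analytic continuation.

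Next, for strict convexity on $[x^*_P,x^{**}_P]$, I would observe that each map $x\mapsto x^{k_1}$ is convex on $]0,+\infty[$ and strictly convex whenever $k_1\in\Z\setminus\{0,1\}$. It therefore suffices to exhibit at least one such $k_1$ with $c_{k_1}>0$. By Assumption (A3)(v), there exists $(j_1,j_2)$ with $j_2>0$ and $\mu_1(j_1,j_2)>0$; starting from $(0,0)$, the walk $(\hat{S}(n))$ enters the interior $\Z\times(\N\setminus\{0\})$ with positive probability in one step. Combining this with the $\Z^2$-irreducibility of $\mu$ (A1) and the negative-jump restriction (A2), a standard concatenation of admissible steps produces, for any prescribed $k_1\in\Z$, a path from $(0,0)$ to $(k_1,0)$ that stays in $\Z\times(\N\setminus\{0\})$ until its final step; this gives $c_{k_1}>0$ for every $k_1\in\Z$, which in particular covers values outside $\{0,1\}$.

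Finally, for the inequality~\eqref{eq1_functions_phi_1}, I would apply the triangle inequality termwise:
\[
|\phi_1(x,Y_1(x))| \leq \sum_{k_1\in\Z} c_{k_1} |x|^{k_1} = \phi_1(|x|, Y_1(|x|)).
\]
Strictness when $x\neq|x|$ follows from the fact that $c_{k_1}>0$ for (at least) two distinct values of $k_1$, so that the complex phases $e^{ik_1\arg x}$ cannot simultaneously align; this is precisely the content of Proposition~P7.5 of Spitzer~\cite{Spitzer} invoked in the proof of Lemma~\ref{lemma1_Y1_continued}. The main subtlety, and the only step that is not purely formal, is the coefficient-positivity argument of the previous paragraph, which depends on combining irreducibility of $\mu$ (A1) with the boundary condition (A3)(v) to cover all $k_1\in\Z$; once this is in hand, both conclusions of the corollary follow immediately.
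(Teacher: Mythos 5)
Your proposal is correct and follows exactly the route the paper intends: the paper itself dismisses this corollary as obtained "with Corollary~\ref{probabilistic_representation_cor2}, by using the same arguments as in the proof of Lemma~\ref{lemma1_Y1_continued}", and you have reproduced precisely those arguments — expand $\phi_1(x,Y_1(x))$ as a power series in $x$ via the hitting-time representation, show all coefficients $c_{k_1}$ are strictly positive by a path argument combining (A1), (A2) and (A3)(v), and read off convergence on the closed annulus, strict convexity, and the strict modulus inequality via Spitzer's P7.5. The only minor imprecision is the parenthetical "at least two distinct values of $k_1$" as the reason the phases cannot align — two values alone would not suffice (consider $c_0,c_2>0$ and $\arg x = \pi$) — but since your positivity argument actually yields $c_{k_1}>0$ for every $k_1\in\Z$, in particular for two consecutive integers, aperiodicity holds and the strictness claim is justified.
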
 

Remark that this statement proves the first assertion of Proposition~\ref{phi-proof-theorem2}. Moreover, since by Corollary~\ref{preliminary_cor2}, 
\be\label{eq1_cor_phi_extended}
\phi_1(x, Y_1(x)) < 1 \quad \text{for any} \quad x\in]x^*, x^{**}[,
\ee
using \eqref{eq1_functions_phi_1} one gets \eqref{eq10-phi-proof-theorem2}, and consequently, the  second assertion of Proposition~\ref{phi-proof-theorem2} also holds. 

\subsubsection{Proof of the third assertion of Proposition~\ref{phi-proof-theorem2}}\label{sec-proof-iii-prop-phi-theorem2}
Suppose now that $x^{**} < x^{**}_P$. In this case, by Corollary~\ref{preliminary_cor2}, 
\be\label{eq11_cor_phi_extended}
\phi_1(x^{**}, Y_1(x^{**})) = 1 \quad 
\text{and } \quad \phi_1(x, Y_1(x)) > 1 \quad \text{for all} \quad x\in]x^{**}, x^{**}_P]. 
\ee
Using this relation together with \eqref{eq10-phi-proof-theorem2} one get that the point $x^{**}$ is an only zero of the function $x\to\phi_1(x, Y_1(x))-1$ in the set $\{x\in\C~:~x^* < |x| \leq x^{**}\}$, and since the function $x\to\phi_1(x, Y_1(x))-1$ is analytic in the neighborhood $\inter{U}_\delta$ of this set,  it  follows that for some $\hat\delta > 0$, the point $x^{**}$ is an only zero of this function in the annulus $C(x^*, x^{**} +\hat\delta)$. Moreover, since the function $x\to\phi_1(x, Y_1(x))$ is strictly convex on the line segment $[x^*_P, x^{**}_P]$ and by \eqref{eq-critical-points}, in the case when $x^{**}< x^{**}_P$, the point $x^{**}$ belongs to the interior of this line segment, from \eqref{eq1_cor_phi_extended} and \eqref{eq11_cor_phi_extended} it follows that 
\[
\left.\frac{d}{dx} \phi_1(x, Y_1(x))\right|_{x= x^{**}} > 0,
\]
and consequently, the point $x^{**}$ is in this case a simple zero of the function $x\to\phi_1(x, Y_1(x))-1$. The third assertion of Proposition~\ref{phi-proof-theorem2} is therefore also proved. 

\subsubsection{Proof of the last three assertions of Proposition~\ref{phi-proof-theorem2}}\label{sec-proof-iv-vi-prop-phi-theorem2}
Suppose now that $x^{**} = x^{**}_P$. Then by  Corollary~\ref{preliminary_cor2}, $\phi_1(x^{**}_P, Y_1(x^{**}_P)) \leq 1$, and consequently, the fourth assertion of Proposition~\ref{phi-proof-theorem2} holds. 

Moreover, if $x^{**}=x^{**}_P$ and $\phi_1(x^{**}_P, Y_1(x^{**}_P)) < 1$, using \eqref{eq1_functions_phi_1} we get 
\[
 |\phi_1(x, Y_1(x))] \leq \phi_1(|x|, Y_1(|x|)) < 1, \quad \text{for any $x\in\C$ with  $x^* < |x| \leq x^{**}_P$}, 
 \]
 and consequently, the function $x\to\phi_1(x, Y_1(x))-1$ has no zeros in the set $\{x\in\C~:~x^* < |x| \leq x^{**}_P\}$. Since  the function $x\to \phi_1(x, Y_1(x))$ is    continuous  on the set  $U_\delta$,  it  follows that, for some $\hat\delta > 0$, it has no zeros in the set $U_{\hat\delta}$. 
 
 An finally, if $x^{**}=x^{**}_P$ and $\phi_1(x^{**}_P, Y_1(x^{**}_P)) = 1$, using again \eqref{eq1_functions_phi_1}  one gets that the point $x^{**}_P$ is an only zero of the function $x\to\phi_1(x, Y_1(x))-1$ in the set $\{x\in\C~:~x^* < |x| \leq x^{**}_P\}$. Moreover, since under our hypotheses, the function $(x,y)\to \phi_1(x,y)$ is analytic in a neighborhood of the point $(x^{**}_P, Y_1(x^{**}_P))$ and since because of Assumption (A3)(iv), 
\[
\partial_y \phi_1(x^{**}_P, Y_1(x^{**}_P)) > 0,  
\]
using \eqref{eq1-functions-Y1} one gets that as $x\to x^{**}_P$, 
 \begin{align*}
 \phi_1(x, Y_1(x)) - 1 &~\sim~ (x-x^{**}_P) \partial_x \phi_1(x^{**}_P, Y_1(x^{**}_P))  - c \sqrt{x^{**}_P-x} \partial_y \phi_1(x^{**}_P, Y_1(x^{**}_P))  \\ 
 &~\sim~  - c \sqrt{x^{**}_P-x} \partial_y \phi_1(x^{**}_P, Y_1(x^{**}_P)) 
 \end{align*}
 with 
\[
c = \left.\sqrt{\partial_x P(x,y)/\partial^2_{yy}P(x,y)}\, \right|_{(x,y)= (x^{**}_P, Y_1(x^{**}_P))} > 0.
\]
This proves that the point $x^{**}_P$ is in this case an isolated zero of the function $x\to\phi_1(x, Y_1(x))-1$ in $U_\delta$. Since this function is continuous in a neighborhood $U_\delta$ of the set $\{x\in\C~:~x^* < |x| \leq x^{**}_P\}$, and has no zeros in $\{x\in\C~:~x^* < |x| \leq x^{**}_P\}$ aside of the point $x^{**}_P$, we conclude therefore that for some $\hat\delta > 0$, the point $x^{**}_P$ is an only zero of the function $x\to\phi_1(x, Y_1(x))-1$ in $U_{\hat\delta}$. The last assertion of 
Proposition~\ref{phi-proof-theorem2} is therefore also proved. 
\bigskip

\subsection{Analytic continuation of the function $x{\to}H_j(x,0)$, cases (B0){-}(B4)} 
In this section, we extend the function $x\to H_j(x,0)$ beyond the point $x_d$ when one of the cases (B0){-}(B4) holds. The cases (B0), (B1), (B3), (B4), and  the case (B2) with $x^{**} < x^{**}_P$ are considered in Proposition~\ref{cases-B0-B4-theorem2}. The case (B2) with $x_d = x^{**}_P$ is considered in Proposition~\ref{case-B2-theorem2}  below.

\begin{prop}\label{cases-B0-B4-theorem2} Suppose that the conditions (A1){-}(A3) are satisfied and let either one of the cases (B0), (B1), (B3), (B4) holds or (B2) and $x^{**} < x^{**}_P$ hold.  Then for some $\delta> 0$ and any $j\in\Z^2_+ $, the function $x\to H_j(x,0)$ can be extended  as an analytic function to the set $B(0, x_d+\delta){\setminus}\{x_d\}$, and \eqref{eq1-theorem2} holds with $\varkappa_1(j)$ defined by \eqref{eq-def-kappa-1} and $a_1>0$ defined by \eqref{eq2b-cases-B0-B4-theorem2}.
 \end{prop}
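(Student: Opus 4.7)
The proof is a local quotient argument that puts together three preparatory results from Section~\ref{proof-theorem2}. In all the cases listed—(B0), (B1), (B3), (B4), and (B2) with $x^{**}<x^{**}_P$—one has $x_d=x^{**}<x^{**}_P$, which is precisely the hypothesis under which Corollary~\ref{cor2-proof-theorem2}(ii) and Proposition~\ref{phi-proof-theorem2}(iii) are simultaneously available. The first gives, for some $\delta>0$, an analytic continuation of
\begin{equation*}
\eta_j(x) \steq{def} (1-\phi_1(x,Y_1(x)))H_j(x,0)
\end{equation*}
to the annulus $C(x_d-\delta,x_d+\delta)$, together with the identity $\eta_j(x)=L_j(x,Y_1(x))+\psi_2(x,Y_1(x))h_{2j}(Y_1(x))$ on $C(x_d-\delta,x_d)$ provided by \eqref{functional-equation-injection}. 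The second says that $x_d$ is the unique zero of $1-\phi_1(x,Y_1(x))$ in a neighbourhood of the circle $\{|x|=x_d\}$, and that this zero is simple with $\phi_1'(x_d)\steq{def}\left.\tfrac{d}{dx}\phi_1(x,Y_1(x))\right|_{x=x_d}>0$.

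Shrinking $\delta$ so that both statements live on the same annulus, the quotient
\begin{equation*}
\widetilde H_j(x) \steq{def} \frac{\eta_j(x)}{1-\phi_1(x,Y_1(x))}
\end{equation*}
is analytic on $C(x_d-\delta,x_d+\delta)\setminus\{x_d\}$ because its denominator does not vanish there. On the overlap $C(x_d-\delta,x_d)$ it coincides with $H_j(x,0)$ by the very definition of $\eta_j$, and by Theorem~\ref{theorem1} the original $H_j(\cdot,0)$ is analytic on $B(0,x_d)$. Gluing the two descriptions yields the desired analytic continuation of $H_j(x,0)$ to $B(0,x_d+\delta)\setminus\{x_d\}$, with at worst a simple pole at $x_d$. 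The limit relation \eqref{eq1-theorem2} is then immediate: using $1-\phi_1(x,Y_1(x))=-(x-x_d)\,\phi_1'(x_d)+O((x-x_d)^2)$ one obtains
\begin{equation*}
\lim_{x\to x_d}(x_d-x)H_j(x,0) ~=~ \frac{\eta_j(x_d)}{\phi_1'(x_d)} ~=~ a_1\,\eta_j(x_d),
\end{equation*}
with $a_1$ given by~\eqref{eq2b-cases-B0-B4-theorem2}; a brief rewrite of the term $\psi_2(x_d,Y_1(x_d))h_{2j}(Y_1(x_d))$ through $\psi_2(x,y)=y(1-\phi_2(x,y))$ and $H_j(0,y)=y\,h_{2j}(y)$ identifies $\eta_j(x_d)$ with the quantity $\varkappa_1(j)$ of \eqref{eq-def-kappa-1}.

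The only non-routine work is therefore hidden in the preparatory results and not in the present proposition: the analytic continuation of $Y_1$ beyond $\overline C(x^*_P,x^{**}_P)$ (Proposition~\ref{functions-Y1-X1}), the verification that $x\mapsto (x,Y_1(x))$ lands in the known analyticity domain of $h_{2j}$ on an annulus straddling $|x|=x_d$ (Proposition~\ref{prop-mapping-injection}, used in Corollary~\ref{cor2-proof-theorem2}), and the localisation of the zeros of $1-\phi_1(x,Y_1(x))$ via its probabilistic representation and its strict convexity on $[x^*_P,x^{**}_P]$ (Proposition~\ref{phi-proof-theorem2}). With those ingredients in hand, Proposition~\ref{cases-B0-B4-theorem2} reduces to the simple pole extraction above, and the positivity and harmonicity of $\varkappa_1$ are deferred to Proposition~\ref{harmonic-functions}.
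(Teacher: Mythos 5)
Your proof is correct and follows essentially the same route as the paper: apply Corollary~\ref{cor2-proof-theorem2}(ii) to continue $\eta_j(x)=(1-\phi_1(x,Y_1(x)))H_j(x,0)$ analytically across the circle $|x|=x_d$, apply Proposition~\ref{phi-proof-theorem2}(iii) to locate the unique simple zero of $1-\phi_1(x,Y_1(x))$ at $x_d$ and to get the positive derivative, glue with the known analyticity of $H_j(\cdot,0)$ on $B(0,x_d)$ from Theorem~\ref{theorem1}, and read off the residue. The only ingredient left slightly implicit in your write-up is that $\phi_1(x_d,Y_1(x_d))=1$ under these hypotheses (Corollary~\ref{preliminary_cor2}(iii)), which is what makes the Taylor expansion start at first order; this is implicit in your invocation of Proposition~\ref{phi-proof-theorem2}(iii), so nothing is actually missing.
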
 
 \begin{proof}  Under the hypotheses of this proposition,  by Corollary~\ref{cor2-proof-theorem2}, for some $\delta_1 > 0$, the function $x\to L_j(x, Y_1(x)) + \psi_2(x, Y_1(x)) h_{2j}(Y_1(x))$ is analytic in the annulus 
 $C(x_d-\delta_1, x_d+\delta_1)$, and using the identities  
 \begin{align}
 (1-\phi_1(x, Y_1(x))) H_j(x,0) &=  - \psi_1(x, Y_1(x)) h_{1j}(x) \label{eq1-proof-cases-B0-B4-theorem2}\\ &= L_j(x, Y_1(x)) + \psi_2(x, Y_1(x)) h_{2j}(Y_1(x)) \label{eq2-proof-cases-B0-B4-theorem2}
  \end{align} 
 the function $x\to (1-\phi_1(x, Y_1(x))) H_j(x,0)$ 
 can be analytically continued to the open annulus $C(x_d-\delta_1, x_d+\delta_1)$. Since by Proposition~\ref{phi-proof-theorem2}, for some $\delta_2 > 0$, the function $x\to (1 - \phi_1(x, Y_1(x)))^{-1}$ is analytic in the set $C(x_d -\delta_2, x_d+\delta_2){\setminus}\{x_d\}$ and has at the point $x_d$ a simple pole with 
 \[
 \lim_{x\to x_d} \frac{x_d - x}{1-\phi_1(x, Y_1(x))} = \left.\left(\frac{d}{dx} \phi_1(x, Y_1(x))\right)^{-1}\right|_{x=x_d} > 0,
 \]
 we conclude therefore that the function $x\to H_j(x,0)$ can be extended as an analytic function to the set $B(0, x_d+\delta){\setminus}\{x_d\}$, and that \eqref{eq1-theorem2}  holds with $\varkappa_1(j)$ given by \eqref{eq-def-kappa-1} , and $a_1 > 0$ given by \eqref{eq2b-cases-B0-B4-theorem2}. 
 \end{proof}

 \begin{prop}\label{case-B2-theorem2}  Suppose that the conditions (A1){-}(A3) are satisfied and let (B2) and $x_d=x^{**}_P$ hold. Then the following assertions holds. 
 \begin{itemize} 
 \item[--] If $\phi_1(x^{**}_P, Y_1(x^{**}_P)) = 1$, then for some $\delta > 0$, the function $x\to H_j(x,0)$ can be extended  as an analytic function to the set $B(0, x^{**}_P+\delta){\setminus} [x^{**}, x^{**}_P+\delta[$ satisfying  \eqref{eq-analytical-structure-IIa}  with $\varkappa_1(j)$  defined by \eqref{eq-def-kappa-1} with $x_d=x^{**}_P$ and $a_2>0$ defined by \eqref{eq4a-case-B2-theorem2}.
 \item[--] If $\phi_1(x^{**}_P, Y_1(x^{**}_P)) < 1$, then for some $\delta > 0$, the function $x\to H_j(x,0)$ can be extended  to the set $B(0, x^{**}_P+\delta){\setminus} ]x^{**}_P, x^{**}_P+\delta[$ as an analytic function in $B(0, x^{**}_P+\delta){\setminus} [x^{**}_P, x^{**}_P+\delta[$ and a continuous function on $B(0, x^{**}_P+\delta){\setminus} ]x^{**}_P, x^{**}_P+\delta[$, satisfying \eqref{eq-analytical-structure-IIb} with $\tilde\varkappa_1(j)$ defined by \eqref{eq-def-tilde-kappa-1} 
 and $\tilde{a}_2 > 0$ defined by \eqref{eq2b-case-B2-theorem2}. 
\end{itemize}  
 \end{prop}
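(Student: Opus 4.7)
\medskip

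\noindent\textbf{Proof plan for Proposition~\ref{case-B2-theorem2}.} The strategy is to divide the identity already established in Corollary~\ref{cor2-proof-theorem2}(iii) by the factor $1-\phi_1(x,Y_1(x))$, whose behaviour near the branching point $x^{**}_P$ is completely described by Proposition~\ref{phi-proof-theorem2}, and then to chain this with the square-root expansion of $Y_1$ from Proposition~\ref{functions-Y1-X1}(iv). Concretely, Corollary~\ref{cor2-proof-theorem2}(iii) provides $\delta>0$ such that the function
\[
\eta_j(x) \;=\; (1-\phi_1(x,Y_1(x)))\,H_j(x,0)
\]
extends to $C(x^{**}_P-\delta,x^{**}_P+\delta)\cap U_\eps$ as an analytic function on the interior and a continuous function on $U_\eps$, via the explicit formula $\eta_j(x)=-\bigl(L_j(x,Y_1(x))+\psi_2(x,Y_1(x))h_{2j}(Y_1(x))\bigr)$ (with the sign dictated by the functional equation $Q h_j = L_j+\psi_1 h_{1j}+\psi_2 h_{2j}$ once $Q(x,Y_1(x))=0$ is injected). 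By Theorem~\ref{theorem1}, $H_j(x,0)$ is already analytic on the disc $B(0,x^{**}_P)$, so dividing $\eta_j$ by $1-\phi_1(x,Y_1(x))$ will furnish the desired extension beyond $x^{**}_P$, and the asymptotic behaviour will be read off from the relative orders of vanishing of numerator and denominator.

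\medskip

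\noindent\textbf{Case $\phi_1(x^{**}_P,Y_1(x^{**}_P))=1$.} Proposition~\ref{phi-proof-theorem2}(vi) gives $\hat\delta>0$ for which $x^{**}_P$ is the unique zero of $1-\phi_1(x,Y_1(x))$ in $U_{\hat\delta}$, with the square-root expansion $1-\phi_1(x,Y_1(x))\sim c_1\sqrt{x^{**}_P-x}$ and explicit $c_1=\partial_y\phi_1\cdot\sqrt{\partial_x P/\partial^2_{yy}P}$ evaluated at $(x^{**}_P,Y_1(x^{**}_P))$. Therefore $x\mapsto 1/(1-\phi_1(x,Y_1(x)))$ is analytic in $\inter{U}_{\hat\delta}\setminus\{x^{**}_P\}$, and the quotient $H_j(x,0)=\eta_j(x)/(1-\phi_1(x,Y_1(x)))$ extends analytically to $B(0,x^{**}_P+\delta)\setminus[x^{**}_P,x^{**}_P+\delta[$. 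Taking $x\to x^{**}_P$ in this ratio, the numerator tends to $\eta_j(x^{**}_P)=\varkappa_1(j)$ (which matches~\eqref{eq-def-kappa-1} after substituting $x_d=x^{**}_P$ and recalling $y h_{2j}(y)=H_j(0,y)$), and the denominator is equivalent to $c_1\sqrt{x^{**}_P-x}$. This yields~\eqref{eq-analytical-structure-IIa} with $a_2=1/c_1$, in agreement with~\eqref{eq4a-case-B2-theorem2}.

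\medskip

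\noindent\textbf{Case $\phi_1(x^{**}_P,Y_1(x^{**}_P))<1$.} Proposition~\ref{phi-proof-theorem2}(v) ensures that $1-\phi_1(x,Y_1(x))$ has no zeros in $U_{\hat\delta}$, so the quotient $H_j(x,0)=\eta_j(x)/(1-\phi_1(x,Y_1(x)))$ is itself analytic on $\inter{U}_{\hat\delta}$ and continuous on $U_{\hat\delta}$; in particular it extends continuously (but no longer analytically) across $x^{**}_P$ on the left side of the cut. The singular behaviour now appears only after differentiation, and is inherited from that of $Y_1$. Introducing
\[
\Phi_j(x,y)\;:=\;\frac{L_j(x,y)+(\phi_2(x,y)-1)H_j(0,y)}{1-\phi_1(x,y)},
\]
which is analytic in a neighbourhood of $(x^{**}_P,Y_1(x^{**}_P))$, one has $H_j(x,0)=\Phi_j(x,Y_1(x))$ for $x\in U_{\hat\delta}$. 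The chain rule gives
\[
\frac{d}{dx}H_j(x,0)\;=\;\partial_x\Phi_j(x,Y_1(x))+\partial_y\Phi_j(x,Y_1(x))\,\frac{dY_1}{dx}(x),
\]
and the first summand stays bounded whereas, by Proposition~\ref{functions-Y1-X1}(iv), $dY_1/dx\sim c/(2\sqrt{x^{**}_P-x})$ with $c=\sqrt{\partial_x P/\partial^2_{yy}P}$ at $(x^{**}_P,Y_1(x^{**}_P))$. Thus
\[
\sqrt{x^{**}_P-x}\,\frac{d}{dx}H_j(x,0)\;\longrightarrow\;\frac{c}{2}\,\partial_y\Phi_j(x^{**}_P,Y_1(x^{**}_P))\;=\;\tilde a_3\,\tilde\varkappa_1(j),
\]
which is precisely~\eqref{eq-analytical-structure-IIb} with the constant~\eqref{eq2b-case-B2-theorem2}.

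\medskip

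The main delicate point is not the analytic continuation itself, which is essentially handed to us by Corollary~\ref{cor2-proof-theorem2}(iii) and Proposition~\ref{phi-proof-theorem2}, but the careful sign bookkeeping relating $\psi_i=x(1-\phi_i)$, the functional equation $0=L_j+\psi_1h_{1j}+\psi_2h_{2j}$ obtained on the curve $y=Y_1(x)$, and the definitions~\eqref{eq-def-kappa-1},~\eqref{eq-def-tilde-kappa-1} of $\varkappa_1$, $\tilde\varkappa_1$. One must also verify that the continuous extension is effectively one-sided (the cut $]x^{**}_P,x^{**}_P+\delta[$ in Case A becomes the closed cut $]x^{**}_P,x^{**}_P+\delta[$ in Case B because $H_j$ itself stays bounded, whereas its derivative blows up), consistently with the regimes stated in the proposition; this is handled by controlling the boundary values of $Y_1$ on the two sides of the real axis via Proposition~\ref{functions-Y1-X1}(i).
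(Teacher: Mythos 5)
Your proposal is correct and follows essentially the same route as the paper: divide the extended function $\eta_j(x)=(1-\phi_1(x,Y_1(x)))H_j(x,0)$ from Corollary~\ref{cor2-proof-theorem2}(iii) by $1-\phi_1(x,Y_1(x))$, and read off the asymptotics from Proposition~\ref{phi-proof-theorem2}(v)--(vi) in the first case and from the $\sqrt{x^{**}_P-x}$ blow-up of $\tfrac{d}{dx}Y_1$ in the second; your introduction of the auxiliary analytic function $\Phi_j(x,y)$ and the subsequent chain rule is exactly the paper's computation of $\tfrac{d}{dx}H_j(x,0)$ written more compactly. The only discrepancy is the minus sign you insert in $\eta_j(x)=-(L_j+\psi_2 h_{2j})$: that sign follows from the literal definitions~\eqref{psi1def}--\eqref{psi2def} combined with~\eqref{extended-functional-equation}, but it is inconsistent with both~\eqref{functional-equation-injection} and with the definition~\eqref{eq-def-kappa-1} of $\varkappa_1$ -- the paper's $\psi_i$ are in fact written with a sign flip relative to its own functional equation, and the version that makes $\eta_j(x_d)=\varkappa_1(j)$ hold is $(1-\phi_1(x,Y_1(x)))H_j(x,0)=L_j(x,Y_1(x))+(\phi_2(x,Y_1(x))-1)H_j(0,Y_1(x))$, without the overall minus; you correctly flag this as the delicate bookkeeping point, and your computation of the constants $a_2=1/c_1$ and ${a}_3=c/2$ does land on~\eqref{eq4a-case-B2-theorem2} and~\eqref{eq2b-case-B2-theorem2}.
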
 
 \begin{proof} Indeed, in this case, by Corollary~\ref{cor2-proof-theorem2},  for some $0 < \delta_1 \leq \eps$, the function $x\to L_j(x, Y_1(x)) + \psi(x, Y_1(x))h_{2j}(Y_1(x))$ is analytic in $C(x_d-\delta_1, x_d+\delta_1) \cap \inter{U}_\eps$ and continuous on  $C(x_d-\delta_1, x_d+\delta_1) \cap {U}_\eps$, and using the identities \eqref{eq1-proof-cases-B0-B4-theorem2} end \eqref{eq2-proof-cases-B0-B4-theorem2}, the function $x \to \eta_j(x)  = (1 - \phi_1(x, Y_1(x)))H_j(x,0)$ can be extended to the set $C(x_d-\delta_1, x_d+\delta_1) \cap {U}_\eps$ as an analytic function in $C(x_d-\delta_1, x_d+\delta_1) \cap \inter{U}_\eps$ and a continuous function on  $C(x_d-\delta_1, x_d+\delta_1) \cap {U}_\eps$. Since by Proposition~\ref{phi-proof-theorem2}, for some $0 < \delta \leq \delta_1$, the function $x \to 1/(1 - \phi_1(x, Y_1(x)))$ is analytic in the set $C(x_d-\delta, x_d+\delta) \cap \inter{U}_\eps$, we get therefore that  the function $x\to H_j(x,0)$ can be extended as an analytic function  to the set $C(x_d-\delta, x_d+\delta) \cap \inter{U}_\eps$ by letting 
 \begin{align} 
 H_j(x,0) &= \eta_j(x) \bigl(1 - \phi_1(x, Y_1(x))\bigr)^{-1} \nonumber\\ &= \left(L_j(x, Y_1(x)) + \psi_2(x, Y_1(x)) h_{2j}(Y_1(x))\right) \bigl(1 - \phi_1(x, Y_1(x))\bigr)^{-1}.\label{eq5-case-B2-theorem2} 
  \end{align} 
Recall moreover that by Proposition~\ref{phi-proof-theorem2},  when  $\phi_1(x^{**}_P, Y_1(x^{**}_P)) < 1$, the function $x \to 1/(1 - \phi_1(x, Y_1(x)))$   is  continuous on $C(x_d-\delta, x_d+\delta) \cap {U}_\eps$, and when $\phi_1(x^{**}_P, Y_1(x^{**}_P)) = 1$, 
 \be\label{eq6-case-B2-theorem2} 
 \lim_{x\to x^{**}_P} \frac{\sqrt{x^{**}_P - x}}{1-\phi_1(x, Y_1(x))} = \left. \left(\partial_y\phi_1(x,y) \sqrt{\partial_x P(x,y)/\partial^2_{yy}P(x,y)} \right)^{-1}  \right|_{(x,y) = (x^{**}_P, Y_1(x^{**}_P))} > 0.
 \ee
 Hence,   in the case when $\phi_1(x^{**}_P, Y_1(x^{**}_P)) < 1$, the function $x\to H_j(x,0)$ can be extended as a continuous function to the set $C(x_d-\delta, x_d+\delta) \cap {U}_\eps$, and  in the case when $\phi_1(x^{**}_P, Y_1(x^{**}_P)) = 1$,  using \eqref{eq5-case-B2-theorem2}  \eqref{eq6-case-B2-theorem2}, one gets \eqref{eq-analytical-structure-IIa} with $\varkappa_1(j)$ defined by \eqref{eq-def-kappa-1}, and $a_2> 0$ given by \eqref{eq4a-case-B2-theorem2}.  
 
Remark finally that  when  $\phi_1(x^{**}_P, Y_1(x^{**}_P)) < 1$,  using \eqref{eq5-case-B2-theorem2} , for $x\in B(0, x^{**}_P)$ closed enough to $x^{**}_P$, one gets 
\begin{align*}
\frac{d}{dx} &H_j(x,0) = \frac{d}{dx} \left(r_j(x, Y_1(x))(1 - \phi_1(x, Y_1(x)))^{-1}\right) \\ 
&=\frac{d}{dx} \left((L_j(x, Y_1(x)) + (\phi_2(x, Y_1(x))-1) H_j(0, Y_1(x)))(1 - \phi_1(x, Y_1(x)))^{-1}\right) \\ 
&= \partial_x \left.\left((L_j(x, y) + (\phi_2(x, y)-1) H_j(0, y))(1 - \phi_1(x, y))^{-1}\right)\right|_{y=Y_1(x)} \\ &\; + \partial_y \left. \left(( L_j(x, Y_1(x)) + (\phi_2(x, Y_1(x))-1) H_j(0, Y_1(x)))(1 - \phi_1(x, Y_1(x)))^{-1}\right)\right|_{y=Y_1(x)} \times\frac{d}{dx} Y_1(x) 
\end{align*} 
where by \eqref{eq1-functions-Y1}, as $x\to x^{**}_P$, 
\[
\frac{d}{dx} Y_1(x)  \sim  \frac{c}{2\sqrt{x^{**}_P-x}}  \quad \; \text{with} \; \quad 
c =  \left.\frac{1}{2} \sqrt{\partial_x P(x,y)/\partial^2_{yy}P(x,y) } \right|_{(x,y) = (x^{**}_P, Y_1(x^{**}_P))} > 0.
\]
Since in the case when $\phi_1(x^{**}_P, Y_1(x^{**}_P)) < 1$, the function $$(x,y) \to  (L_j(x, y) + (\phi_2(x, y)-1) H_j(0, y))(1 - \phi_1(x, y))^{-1}$$ is analytic in a neighborhood of the point  $(x^{**}_P, Y_1(x^{**}_P))$,  it follows \eqref{eq-analytical-structure-IIb} with  $\tilde{a}_2 > 0$ given by \eqref{eq2b-case-B2-theorem2}.
\end{proof}

\subsection{Analytic continuation of the function $x{\to}H_j(x,0)$, cases (B5) and (B6)}

Now we are ready to extend the function $x\to H_j(x,0)$ beyond the point $x_d$ when either (B5) or (B6) holds.  This is a subject of Proposition~\ref{injection-prop2} below. 

\begin{prop}\label{injection-prop2} Under the hypotheses (A1){-}(A3), there is $\delta > 0$ such that for any $j\in \Z^2_+ $,  the following assertions hold 

 i)   If (B6) holds, then  the function $x\to H_j(x,0)$ can be extended as an analytic function to the set $B(0, x_d + \delta){\setminus}\{x_d\}$ and  \eqref{eq-analytical-structure-III} holds with  $a_5>0$  given by \eqref{eq-analytical-structure-IIIb}. 
  
 ii)  If (B5) holds and $x_d =  x^{**} < x^{**}_P$, then  the function $x\to H_j(x,0)$ can be extended as an analytic function to the set $B(0, x_d + \delta){\setminus}\{x_d\}$ and \eqref{eq-analytical-structure-IV} holds with  $a_3>0$ given  by \eqref{eq-analytical-structure-IVb}. 

iii) If (B5), $x_d = x^{**} = x^{**}_P$ and $\phi_1(x^{**}_P, Y_1(x^{**}_P)) = 1$ hold,  the function $x\to H_j(x,0)$ can be extended as an analytic function to the set $B(0, x_d + \delta){\setminus} [x_d, x_d +\delta[$ and  \eqref{eq-analytical-structure-Va} holds with $a_4>0$ given by \eqref{eq-analytical-structure-Vab}. 

iv)   If (B5), $x_d = x^{**} = x^{**}_P$ and $\phi_1(x^{**}_P, Y_1(x^{**}_P)) < 1$ hold,  the function $x\to H_j(x,0)$ can be extended as an analytic function to the set $B(0, x_d + \tilde\delta){\setminus} [x_d, x_d +\tilde\delta[$ and  \eqref{eq-analytical-structure-Vb} holds with $\tilde{a}_4$ given by \eqref{eq-analytical-structure-Vbb}.
\end{prop}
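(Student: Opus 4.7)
\textbf{Proof plan for Proposition~\ref{injection-prop2}.}
The overall strategy is dual to the proof of Propositions~\ref{cases-B0-B4-theorem2} and~\ref{case-B2-theorem2}: the roles of the two variables are essentially exchanged, so we first extend $h_{2j}$ beyond $y_d$, then transport that extension through the injection $y=Y_1(x)$ in order to unlock the functional equation~\eqref{functional-equation-injection} in the $x$-direction. The first step is to observe that in both (B5) and (B6) we have $y_d=y^{**}<y^{**}_P$, and that the classification swapped in $(x,y)$ places us in the analogue of case (B3) (when (B5) holds) or case (B4) (when (B6) holds). Applying Proposition~\ref{cases-B0-B4-theorem2} to the generating function $y\mapsto H_j(0,y)$ with the roles of $x$ and $y$ exchanged, we get that $y\mapsto H_j(0,y)$, hence also $y\mapsto h_{2j}(y)$, extends analytically to $B(0,y_d+\delta_0)\setminus\{y_d\}$ with a simple pole at $y_d$ and
\[
\lim_{y\to y_d}(y_d-y)H_j(0,y) \;=\; a_1^{\sharp}\,\varkappa_2(j),\qquad a_1^{\sharp}=\Bigl(\tfrac{d}{dy}\phi_2(X_1(y),y)\Big|_{y=y_d}\Bigr)^{-1}>0,
\]
and the function $\varkappa_2$ defined by~\eqref{eq-def-kappa-2} is thereby non-negative on $\Z^2_+$ (positivity and harmonicity will be obtained later, jointly with all other assertions, by the argument sketched in Section~\ref{sec-sketch-proof-theorem2}).

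The second step is to compose with $Y_1$. In both (B5) and (B6), the identity $P(X_2(y),y)=1$ together with Lemma~\ref{preliminary-lemma1} gives $Y_1(x_d)=Y_1(X_2(y_d))=y_d$. When $x_d<x^{**}_P$ (this covers (B6) entirely, as well as sub-case~(ii) of (B5)), Proposition~\ref{functions-Y1-X1}(iii) shows that $Y_1(x)-y_d$ has a simple zero at $x_d$ with positive derivative $Y_1'(x_d)=-\partial_x P/\partial_y P|_{(x_d,y_d)}$, so the composition $x\mapsto h_{2j}(Y_1(x))$ extends to $B(0,x_d+\delta)\setminus\{x_d\}$ with a simple pole at $x_d$ and residue determined by $a_1^{\sharp}\varkappa_2(j)/(y_d Y_1'(x_d))$. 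When $x_d=x^{**}_P$ (sub-cases~(iii) and~(iv) of (B5)), Proposition~\ref{functions-Y1-X1}(iv) gives the square-root asymptotics $Y_1(x)-y_d\sim -c\sqrt{x_d-x}$, and the composition extends to $B(0,x_d+\delta)\setminus[x_d,x_d+\delta[$ with a $1/\sqrt{x_d-x}$ singularity. In both situations, combined with Corollary~\ref{cor0-functions-Y1-X1} and the fact that the functions $(x,y)\mapsto L_j(x,y)$ and $(x,y)\mapsto \psi_2(x,y)$ are analytic near $(x_d,y_d)$, we obtain an analytic (respectively continuous up to a slit) extension of the right-hand side
\[
R_j(x) \;\steq{def}\; L_j(x,Y_1(x)) + \psi_2(x,Y_1(x))\,h_{2j}(Y_1(x))
\]
of equation~\eqref{functional-equation-injection}, whose singularity at $x_d$ is fully explicit.

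The third step is to divide by $1-\phi_1(x,Y_1(x))$, whose behaviour at $x_d$ is dictated by Proposition~\ref{phi-proof-theorem2}. In case (B6), $x_d<x^{**}$, so by Proposition~\ref{phi-proof-theorem2}(ii)~and Corollary~\ref{preliminary_cor2}(ii) one has $1-\phi_1(x_d,Y_1(x_d))>0$ and the factor is analytic and non-zero near $x_d$; dividing gives a simple pole for $H_j(x,0)$ at $x_d$, and extracting $\lim(x_d-x)H_j(x,0)$ yields~\eqref{eq-analytical-structure-III} with the constant~\eqref{eq-analytical-structure-IIIb}. In case (B5), sub-case~(ii), we have $x_d=x^{**}<x^{**}_P$ and Proposition~\ref{phi-proof-theorem2}(iii) gives that $x_d$ is a simple zero of $1-\phi_1(\cdot,Y_1(\cdot))$; combining this simple zero with the simple pole of $R_j$ produces a double pole for $H_j(x,0)$, yielding~\eqref{eq-analytical-structure-IV} and~\eqref{eq-analytical-structure-IVb}. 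In sub-case~(iv), $x_d=x^{**}_P$ and $\phi_1(x_d,y_d)<1$, so by Proposition~\ref{phi-proof-theorem2}(v) the factor is analytic and non-zero in $U_{\hat\delta}$, while $R_j$ carries the $(x_d-x)^{-1/2}$ behaviour from $h_{2j}(Y_1(x))$; this produces~\eqref{eq-analytical-structure-Vb} with constant~\eqref{eq-analytical-structure-Vbb}.

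The main obstacle is sub-case~(iii) of~(B5), where both $1-\phi_1(x,Y_1(x))\sim c_\phi\sqrt{x_d-x}$ (by Proposition~\ref{phi-proof-theorem2}(vi)) and $h_{2j}(Y_1(x))\sim c_h/\sqrt{x_d-x}$ meet, so that a naive division would wrongly cancel the singularity. The correct accounting uses the full square-root expansion of $Y_1$ around $x^{**}_P$ from Lemma~\ref{lemma3_Y1_extended}, namely $y_d-Y_1(x)\sim c\sqrt{x_d-x}$ with $c=\sqrt{\partial_x P/\partial^2_{yy}P}\,|_{(x_d,y_d)}$, together with the explicit residue $a_1^{\sharp}\varkappa_2(j)$ from Step~1 and the asymptotic constant from Proposition~\ref{phi-proof-theorem2}(vi); the two $\sqrt{\,\cdot\,}$-factors combine into a simple pole, and a direct computation of the quotient gives~\eqref{eq-analytical-structure-Va} with the constant~\eqref{eq-analytical-structure-Vab}. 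The positivity and harmonicity statements for $\varkappa_2$ required throughout can then be read off from the analogue of Proposition~\ref{harmonic-functions} in the sketch of Section~\ref{sec-sketch-proof-theorem2}, applied with $\varkappa_2$ in place of $\varkappa_1$.
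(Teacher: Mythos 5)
Your plan matches the paper's proof almost step for step: extending $y\mapsto H_j(0,y)$ beyond $y_d$ by applying Proposition~\ref{cases-B0-B4-theorem2} with the roles of $x$ and $y$ exchanged, transporting that through the injection $y=Y_1(x)$ (which the paper packages as the preliminary Lemma~\ref{lemma-injection-prop2}, with a separate argument showing $x_d$ is the only zero of $Y_1(\cdot)-Y_1(x_d)$ in the relevant annulus), and then dividing by $1-\phi_1(\cdot,Y_1(\cdot))$ case by case via Proposition~\ref{phi-proof-theorem2}. One expository slip: in sub-case~(iii) you say a naive division would ``wrongly cancel the singularity,'' but nothing cancels — dividing $c_h(x_d-x)^{-1/2}$ by $c_\phi(x_d-x)^{1/2}$ directly gives the simple pole $c_h/\bigl(c_\phi(x_d-x)\bigr)$, which is precisely the conclusion you then correctly state; similarly the composition $h_{2j}\circ Y_1$ only lives where $Y_1$ is defined, i.e.\ on $C(x_d-\delta,x_d+\delta)\cap U_\eps\setminus\{x_d\}$, and the extension to the full punctured disk comes from gluing with the analyticity of $H_j(\cdot,0)$ on $B(0,x_d)$.
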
 
Recall that under our hypotheses, all functions $(x,y){\to} L_j(x,y)$, $(x,y){\to} \psi_1(x,y)$ and $(x,y){\to}\psi_2(x,y)$ are analytic in some neighborhood ${\mathcal V}$ of the set $\Omega(\overline{\Gamma})$. Throughout this section,  the set ${\mathcal V}$ will be given. 

\subsubsection{Preliminary result} 

We begin the proof of this proposition with the following lemma.

\begin{lemma}\label{lemma-injection-prop2} Under the hypotheses (A1){-}(A3) and if one of the cases (B5) or (B6) holds, there is $\delta > 0$ such that for any $j\in\Z^2_+$, the function $x\to (1-\phi_1(x,Y_1(x))) H_j(0, Y_1(x))$ can be analytically continued to the set $C(x_d-\delta, x_d+\delta)\cap U_\eps \setminus\{x_d\}$.
\end{lemma}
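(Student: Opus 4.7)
The strategy is to exploit the $x \leftrightarrow y$ symmetry to continue $y \to H_j(0, y)$ beyond $y_d$, and then push this continuation through the map $x \to Y_1(x)$.

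First, I would observe that swapping the two coordinates turns case (B5) into an instance of (B3) and case (B6) into an instance of (B4), both of which are within the scope of Proposition~\ref{cases-B0-B4-theorem2}. Applying that proposition symmetrically therefore gives, in our (B5) and (B6), the existence of $\delta_0 > 0$ such that for every $j \in \Z^2_+$ the function $y \to H_j(0, y)$ extends analytically to $B(0, y_d + \delta_0) \setminus \{y_d\}$ with a simple pole at $y_d$. The next step is to show that for some $\delta > 0$ the map $x \to Y_1(x)$ sends the punctured set $C(x_d-\delta, x_d+\delta) \cap U_\eps \setminus \{x_d\}$ into $B(0, y_d + \delta_0) \setminus \{y_d\}$. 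The inclusion in the disc $B(0, y_d+\delta_0)$ will follow from the continuity of $Y_1$ on $U_\eps$ (Proposition~\ref{functions-Y1-X1}) together with the identity $Y_1(x_d)=y_d$: indeed, in both (B5) and (B6) one has $(x_d,y_d) = (X_2(y_d), y_d) \in {\mathcal S}_{21}$, hence $y_d = Y_1(X_2(y_d)) = Y_1(x_d)$ by~\eqref{eq2_S_21}. The genuine task is to verify that $Y_1(x) = y_d$ has no other solution in the punctured neighborhood.

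This verification splits into two subcases and contains the main obstacle of the argument. When $x_d < x^{**}_P$ (the whole of (B6) and part of (B5)), the equality $x_d = X_2(y_d)$ with $y_d > y^*_P$ places $x_d$ in the open interval $]X_1(y^*_P), x^{**}_P[$, so Proposition~\ref{functions-Y1-X1}(iii) yields directly that $x\to Y_1(x) - y_d$ has an isolated simple zero at $x_d$ in an ordinary disc around $x_d$. In the remaining subcase, (B5) with $x_d = x^{**}_P$, the point $x_d$ is a branch point of $Y_1$; this is the hard subcase, and it is precisely covered by Proposition~\ref{functions-Y1-X1}(iv), whose Morse-type factorisation (Lemma~\ref{lemma3_Y1_extended}) gives $Y_1(x) - y_d \sim -c\sqrt{x^{**}_P - x}$ with $c > 0$ on $U_\eps$ and therefore an isolated zero at $x_d = x^{**}_P$ within $C(x^{**}_P-\delta, x^{**}_P+\delta) \cap U_\eps$.

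Combining these ingredients, on $C(x_d-\delta, x_d+\delta) \cap \inter{U}_\eps \setminus \{x_d\}$ the function $Y_1$ is analytic, takes values in $B(0, y_d + \delta_0)$, and avoids $y_d$, so the composition $x \to H_j(0, Y_1(x))$ is analytic there. Multiplying by the factor $x \to 1 - \phi_1(x, Y_1(x))$, which by Corollary~\ref{cor0-functions-Y1-X1} is analytic in $\inter{U}_\eps$, produces the stated continuation once one notices that in both subcases $C(x_d-\delta, x_d+\delta) \cap U_\eps \setminus \{x_d\}$ agrees with $C(x_d-\delta, x_d+\delta) \cap \inter{U}_\eps$: when $x_d = x^{**}_P$ this uses $U_\eps \setminus \{x^{**}_P\} = \inter{U}_\eps$, and when $x_d < x^{**}_P$ a small enough $\delta$ places the whole set inside $\inter{U}_\eps$.
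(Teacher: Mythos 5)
The statement as printed---with $H_j(0,Y_1(x))$---is almost certainly a misprint for $H_j(x,0)$: the paper's own proof concludes that ``the function $x\to-\psi_1(x,Y_1(x))h_{1j}(x)=(1-\phi_1(x,Y_1(x)))H_j(x,0)$ can be analytically continued,'' and the lemma is used in the proof of Proposition~\ref{injection-prop2} precisely to assert the continuation of $(1-\phi_1(x,Y_1(x)))H_j(x,0)$. Your argument, which continues $y\to H_j(0,y)$ beyond $y_d$ by symmetry and pushes this through $Y_1$, does carry out the central technical step of the paper's proof (it is essentially Step~1, combined with the analysis of the zero of $Y_1(\cdot)-y_d$ at $x_d$ via Proposition~\ref{functions-Y1-X1}), and so establishes the continuation of $h_{2j}(Y_1(\cdot))$, hence of $(1-\phi_1(x,Y_1(x)))H_j(0,Y_1(x))$. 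But it stops before the step the paper actually needs: transferring the continuation to the other side of the functional equation. That transfer is done in the paper by invoking Corollary~\ref{cor2-proof-theorem2}, which gives the identity $(1-\phi_1(x,Y_1(x)))H_j(x,0)=L_j(x,Y_1(x))+\psi_2(x,Y_1(x))h_{2j}(Y_1(x))$ on $C(x_d-\delta_1,x_d)$; once the right-hand side is continued, the left-hand side follows by analytic continuation. Without this identity your construction says nothing about $H_j(x,0)$.

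A second, smaller gap: you justify the inclusion $Y_1\bigl(C(x_d-\delta,x_d+\delta)\cap U_\eps\bigr)\subset B(0,y_d+\delta_0)$ by ``continuity of $Y_1$ on $U_\eps$ together with $Y_1(x_d)=y_d$.'' That is not sufficient: the set $C(x_d-\delta,x_d+\delta)\cap U_\eps$ contains the whole circle $\{|x|=x_d\}$, and continuity at the single point $x_d$ gives no control over $|Y_1(x)|$ for $x$ on that circle with argument away from $0$. What does give the control is the inequality~\eqref{key-point-inequality}, $|Y_1(x)|\le Y_1(|x|)$, which bounds $|Y_1|$ on each circle by its value on the positive axis; this is exactly the content of the second assertion of Proposition~\ref{prop-mapping-injection}, which the paper invokes at this point (with $\hat y=y_d+\delta_0$). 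Citing that proposition, rather than appealing to continuity alone, is what makes the inclusion step rigorous.
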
 
\begin{proof} Suppose that either (B5) or (B6) holds. Then by Proposition~\ref{cases} and using relations~\eqref{eq2-def-xd}, \eqref{eq2-def-yd}), one gets 
\be\label{eq1-injection2}
 X_1(y_d) < x_d = X_2(y_d) \leq x^{**}, \quad  y^* < y_d = y^{**} = Y_1(x_d) < y^{**}_P \quad \text{and} \quad (x_d,y_d)\in{\mathcal S}_{21}, 
\ee
with $x_d = x^{**}$ in the case $(B5)$, and $x_d < x^{**}$ in the case (B6). Hence, with the same arguments as in the proof of proposition~\ref{cases-B0-B4-theorem2}, (it is sufficient to exchange the roles of $x$ and $y$), one gets that there exists $\delta_0 > 0$ such that for any $j\in\Z^2_+ $,   the functions $h_{1j}$ and $y\to y h_{2j}(y) = H_j(0,y)$ and  can be analytically continued to the set $B(0, y_d+\delta_0){\setminus}\{y_d\}$, and 
  \begin{align}
  \lim_{y\to y_d} (y_d - y) yh_{2j}(y)  &= \lim_{y\to y_d} (y_d - y) H_j(0,y) \nonumber \\ 
  &= c_1 \Bigl(L_j(X_1(y_d), y_d) + (\phi_1(X_1(y_d),y_d))-1) H_j(X_1(y_d),0)\Bigr) \label{injection-relation2} 
  \end{align}
with 
 \be\label{injection-relation2p} 
 c_1 = \left.\left(\frac{d}{dy} \phi_2(X_1(y), y)\right)^{-1}\right|_{y=y_d} > 0. 
  \ee
 Since by \eqref{eq1-injection2}, $y_d < y^{**}_P$, without any restriction of generality, we will suppose throughout our proof that 
 \be\label{eq-supposed} 
 y_d + \delta_0 < y^{**}_P. 
 \ee
Moreover, by using  Proposition~\ref{functions-Y1-X1} and the first assertion of Corollary~\ref{cor2-proof-theorem2} we can  inject $y=Y_1(x)$ to the functional equation \eqref{extended-functional-equation}:
 \begin{itemize}
 \item[--]  by Proposition~\ref{functions-Y1-X1}, for some $\eps > 0$, the function $Y_1$ is already analytically continued to $\inter{U}_\eps$ and extended as a continuous function to the set $U_\eps$, 
 \item[--]  by Corollary~\ref{cor2-proof-theorem2},  there is $\delta_1{\in}]0, \eps[$, such that any $j\in\Z^2_+ $, 
 the functions  $x\to L_j(x, Y_1(x)) + \psi_2(x, Y_1(x))h_{2j}(Y_1(x))$ and $x\to -\psi_1(x, Y_1(x)) h_{1j}(x) = (1 - \phi_1(x, Y_1(x)))H_j(x,0)$ are  analytic in the set $C(x_d-\delta_1,x_d)$ and for any $x\in C(x_d-\delta_1, x_d)$ the following relation holds 
 \be\label{eq100000-proof-theorem2} 
 (1 - \phi_1(x, Y_1(x)))H_j(x,0) = L_j(x, Y_1(x)) + \psi_2(x, Y_1(x))h_{2j}(Y_1(x)). 
 \ee
\end{itemize} 
 To complete the proof of Lemma~\ref{lemma-injection-prop2}, the following steps will be performed: 
\begin{enumerate}
 \item[{\em step 1}:] First we will show that for some $\delta_2 \in]0,\delta_1[$, the function $x\to L_j(x, Y_1(x)) + \psi_2(x, Y_1(x))h_{2j}(Y_1(x))$ is analytic in the set
 \be\label{eq-100001-proof-theorem2} 
 \{x\in C(x_d-\delta_2, x_d+\delta_2)\cap U_\eps :~ Y_1(x) \not= Y_1(x_d)\}.
 \ee
 \item[{\em step 2}:] Next we will prove that for some $\delta_3 \in]0,\delta_2[$, the point $x_d$ is an only zero of the function $x\to Y_1(x)-Y_1(x_d)$ in $C(x_d-\delta_3, x_d +\delta_3)\cap U_\eps$, and we will deduce from our previous result that the function $x\to L_j(x, Y_1(x)) + \psi_2(x, Y_1(x))h_{2j}(Y_1(x))$ is analytic in the set
 \be\label{eq-100002-proof-theorem2} 
 C(x_d-\delta_3, x_d+\delta_3)\cap U_\eps \setminus\{x_d\}.
 \ee
With this result and using the identity \eqref{eq100000-proof-theorem2}  we will be able to complete our proof.
 \end{enumerate}

\noindent
{\em Step 1}: Since because of ~\eqref{eq1-injection2}, $Y_1(x_d) = y_d < y_d + \delta_0$,  by Proposition~\ref{prop-mapping-injection} applied with $\hat{y} = y_d +\delta_0$, we obtain that  for some $0 < \delta_2 < \delta_1$,  the function $x\to (x,Y_1(x))$ maps the set $C(x_d-\delta_2, x_d +\delta_2)\cap U_\eps$  to the set $\{(x,y)\in {\mathcal V} : \, |y| < y_d + \delta_0\}$. Since the function $y\to H_j(0,y)$ is already analytically continued to the set $B(0, y_d+ \delta_0)\setminus\{y_d\}$ and the functions $(x,y){\to}\psi_2(x,y)$ and $(x,y)\to L_j(x,y)$ are analytic in ${\mathcal V}$, it follows that the function $x\to L_j(x, Y_1(x)) + \psi_2(x, Y_1(x)) h_{2j}(Y_1(x))$ is  analytic in the set \eqref{eq-100001-proof-theorem2}. 

 \noindent
 {\em Step 2}: by~\eqref{eq1-injection2} and \eqref{eq-critical-points}, we have 
\[
y^*_P \leq y^* < y_d = y^{**} = Y_1(x_d),
\]
and consequently, $(x_d,y_d)\not= (X_1(y^*_P),y^*_P)$. Since by ~\eqref{eq1-injection2}, $(x_d,y_d)\in{\mathcal S}_{21}$, using the definition of the curve ${\mathcal S}_{21}$ (see \eqref{eq2_S_21})  one gets therefore 
\[
X_1(y^*_P) < x_d \leq x^{**}_P,
\]
 and consequently, using the assertions (ii) and (iv) of Proposition~\ref{functions-Y1-X1}, we conclude  that  for some $0 < \delta_3 < \delta_2$, the point $x_d$ is an only zero of the function $x\to Y_1(x)-Y_1(x_d)$ in $C(x_d-\delta_3, x_d +\delta_3)\cap U_\eps$. Since with our previous result, we have already proved that the function $x\to L_j(x, Y_1(x)) + \psi_2(x, Y_1(x))h_{2j}(Y_1(x))$ is analytic in the set \eqref{eq-100001-proof-theorem2}, this proves that the function $x\to L_j(x, Y_1(x)) + \psi_2(x, Y_1(x))h_{2j}(Y_1(x))$ is analytic in the set \eqref{eq-100002-proof-theorem2}.  Finally, since on the annulus $C(x_d - \delta_1, x_d)$, the identity \eqref{eq100000-proof-theorem2}  holds, this proves that the function $x{\to} - \psi_1(x, Y_1(x)) h_{1j}(x)=(1 - \phi_1(x, Y_1(x)))H_j(x,0)$ can be analytically continued to the set $(C(x_d -\delta_3, x_d+\delta_3)\cap U_\eps){\setminus}\{x_d\}$.
\end{proof} 

\subsubsection{Proof of Proposition~\ref{injection-prop2}} To complete the proof of Proposition~\ref{injection-prop2}, we consider separately all possible cases~:
\begin{itemize}
\item[--] when (B6) holds; 
\item[--] when (B5) holds with $x_d = x^{**} < x^{**}_P$;
\item[--] when (B5) holds with  $x_d = x^{**} = x^{**}_P$ and $\phi_1(x^{**}_P, Y_1(x^{**}_P)) < 1$;
\item[--] when (B5) holds with $x_d = x^{**} = x^{**}_P$ and $\phi_1(x^{**}_P, Y_1(x^{**}_P)) = 1$. 
\end{itemize} 
Suppose first that (B6) holds.  Then by Proposition~\ref{cases}  and the definition of $x_d$ (see \eqref{eq2-def-xd}),  the following relations hold 
\be\label{eq2-proof-injection-prop2}
x^*_P\leq x^* < X_1(x^{**}) < x_d = X_2(y^{**}) < x^{**} \leq x^{**}_P. 
\ee
Hence, by Lemma~\ref{lemma-injection-prop2} and according to the definition of the set $U_\eps$ (see \eqref{eq-def-U-delta}), for some $\hat\delta > 0$, the function $x \to  - \psi_1(x, Y_1(x)) h_{1j}(x) = (1-\phi_1(x, Y_1(x))) H_j(x,0)$ is already analytically continued  to the set $C(x_d-\hat\delta, x_d+\hat\delta){\setminus}\{x_d\}$  .  By Proposition~\ref{phi-proof-theorem2}, for any $x\in C(x^*, x^{**})$,
\[
|\phi_1(x, Y_1(x))| \leq \phi_1(|x|, Y_1(|x|)) < 1. 
\]
Since by \eqref{eq2-proof-injection-prop2}, $x^* < x_d < x^{**}$,  it follows that  for $0 < \delta' < \min\{x^{**} - x_d, x_d - x^*\}$, 
the function $x \to  1/(1-\phi_1(x, Y_1(x))$  is analytic in $C(x_d-\delta', x_d + \delta')$ and 
\be\label{eq99}
1 - \phi_1(x_d, Y_1(x_d)) >  0.  
\ee
Since the function $x \to \eta_j(x) = - \psi_1(x, Y_1(x)) h_{1j}(x) = (1-\phi_1(x, Y_1(x))) H_j(x,0)$ is already extended as an analytic function  to the set $C(x_d-\hat\delta, x_d+\hat\delta){\setminus}\{x_d\}$, it follows that for $\delta = \min\{\delta', \hat\delta\}$, the function the function $x\to H_j(x, 0)$ can be analytically continued to $C(x_d-\delta, x_d+\delta){\setminus}\{x_d\}$ by letting 
\be\label{eq100}
H_j(x,0) = (1-\phi_1(x,Y_1(x)))^{-1}\bigl(L_j(x, Y_1(x)) + \psi_2(x, Y_1(x))h_{2j}(Y_1(x))\bigr). 
\ee
Remark finally that by \eqref{eq1-injection2}, $y_d = y^{**} < y^{**}_P$. Hence, using exactly the same arguments as in the proof of Proposition~\ref{phi-proof-theorem2} (it is sufficient to exchange the roles of $x$ and $y$), one gets $\phi_2(X_1(y_d), y_d) = \phi_2(X_1(y^{**}), y^{**})  = 1$. Since by \eqref{eq1-injection2}, $X_1(y_d) < X_2(y_d) = x_d$ and $Y_1(x_d) = y_d$, and since under our hypotheses (see Assumption (A3)(vi)) the real valued function $x\to \phi_2(x, y_d)$ is strictly increasing,  it follows that 
\be\label{eq101}
\phi_2(x_d, Y_1(x_d))  = \phi_2(x_d, y_d)  = \phi_2(X_2(y_d), y_d)  > \phi_2(X_1(y_d), y_d)  = 1.
\ee
Using these relations together with \eqref{eq100}, \eqref{eq99}, \eqref{injection-relation2} and \eqref{eq1-injection-prop-2-lemma1}, we obtain 
\begin{align*} 
\lim_{x\to x_d} (x_d-x)  &H_j(x,0) = \lim_{x\to x_d}  (x_d-x)  (1-\phi_1(x,Y_1(x)))^{-1} \psi_2(x, Y_1(x))h_{2j}(Y_1(x)) \\ &= \lim_{x\to x_d}  (x_d-x)  (1-\phi_1(x,Y_1(x)))^{-1}(\phi_2(x, Y_1(x))- 1) H_j(0, Y_1(x)) \\
&= a_5  \Bigl(L_j(X_1(y_d), y_d) + (\phi_1(X_1(y_d),y_d)-1) H_j(X_1(y_d),0)\Bigr) = a_5\varkappa_2(j) 
\end{align*} 
with 
\[
a_5 =  \left.(\phi_2(x, y)-1) \left((1 - \phi_1(x, y))\frac{d}{dy} \phi_2(X_1(y), y)  \frac{d}{dx} Y_1(x)\right)^{-1}\right|_{(x,y)=(x_d,y_d)}  > 0.
\]
The first assertion of Proposition~\ref{injection-prop2} is therefore proved. 
 
\bigskip
 Suppose now that (B5) holds and let $x_d = x^{**} < x^{**}_P$. Then by Proposition~\ref{cases} and  the definition of $x_d$ and $y_d$ (see \eqref{eq2-def-xd} and \eqref{eq2-def-yd}), one has  
\[
y_d = y^{**} = Y_1(x^{**}) = Y_1(x_d) \quad \text{and} \quad   x^* < X_1(y_d) < x_d = X_2(y_d) = x^{**} < x^{**}_P.
\]
and by Proposition~\ref{phi-proof-theorem2}, for some $\delta > 0$, the function $x \to  1/(1-\phi_1(x, Y_1(x)))$ is meromorphic  in the annulus $C(x^*, x_d+\delta)$, and has there a unique and simple pole at the point $x_d=x^{**}$ with 
\be\label{eq10-injection-prop2} 
\lim_{x\to x_d} \frac{x_d-x}{1-\phi_1(x, Y_1(x))} = \left. \left(\frac{d}{dx} \phi_1(x, Y_1(x))\right)^{-1}\right|_{x=x_d} > 0; 
\ee
Here, the only difference with the previous case is that the function $x \to  1/(1-\phi_1(x, Y_1(x))$ has a simple pole at $x_d= x^{**}$. Using therefore exactly the same arguments as in the previous case we obtain that for some $\delta > 0$, the function $x\to   H_j(x,0)$ can be extended as an analytic function to the set $C(x_d-\delta, x_d+\delta){\setminus}\{x_d\}$. And using finally \eqref{eq100} together with  \eqref{injection-relation2},    \eqref{eq101}, \eqref{eq1-injection-prop-2-lemma1} and  \eqref{eq10-injection-prop2} we get 
\begin{align*} 
\lim_{x\to x_d} &(x_d-x)^2  H_j(x,0) =  a_3 \Bigl(L_j(X_1(y_d), y_d) + (\phi_1(X_1(y_d),y_d)-1) H_j(X_1(y_d),0)\bigr) = a_3 \varkappa_2(j) 
\end{align*} 
with 
 \begin{align*}
 a_3 &= \left. (\phi_2(x, Y_1(x))-1) \left(\frac{d}{dx} \phi_1(x, Y_1(x))\frac{d}{dy} \phi_2(X_1(y), y) \frac{d}{dx} Y_1(x)\right)^{-1}\right|_{x=x_d, y=y_d} \\
 &=  \left.(\phi_2(x, y)-1) \left(\frac{d}{dx} \phi_1(x, Y_1(x))\frac{d}{dy} \phi_2(X_1(y), y) \frac{d}{dx} Y_1(x)\right)^{-1}\right|_{x=x_d, y=y_d}  > 0. 
 \end{align*} 
 The second  assertion of Proposition~\ref{injection-prop2} is therefore also proved. 
 
 \medskip 
 
 Suppose  now that (B5), $x_d = x^{**} = x^{**}_P$ and $\phi_1(x^{**}_P, Y_1(x^{**}_P)) < 1$ hold. Then by Lemma~\ref{lemma-injection-prop2} and according to the definition of the set $U_\eps$ (see \eqref{eq-def-U-delta}), by using the identity \eqref{eq100000-proof-theorem2}, the function $x \to  - \psi_1(x, Y_1(x)) h_{1j}(x)$ was already analytically continued to the set $C(x^{**}_P - \hat\delta, x^{**}_P + \hat\delta){\setminus} [x^{**}_P, x^{**}_P+\hat\delta[$,  and by  Proposition~\ref{phi-proof-theorem2}, for some $0 < \delta < \hat\delta$, the function  $x\to (1-\phi_1(x, Y_1(x))^{-1}$ is analytic in  $C(x^{**}_P-\delta , x^{**}_P+\delta){\setminus}[x^{**}_P, x^{**}_P+\delta[$
and 
\[
H_j(x, 0) = x h_{1j}(x) = x \eta_j(x) (\psi_1(x, Y_1(x)))^{-1}  =  \eta_j(x) (1-\phi_1(x, Y_1(x))^{-1}, \quad \forall x\in C(x^{**}_P - \delta, x^{**}_P).
\]
 Hence, by using the identity \eqref{eq100}, the function $x\to H_j(x,0)$ can be analytically continued to the set $C(x^{**}_P - \delta, x^{**}_P + \delta){\setminus} [x^{**}_P, x^{**}_P[$, and  using finally \eqref{eq100} together with \eqref{injection-relation2}, \eqref{eq1-functions-Y1} and \eqref{eq1-injection2} one gets  
 \begin{align*} 
\lim_{x\to x^{**}_P} &\sqrt{x^{**}_P-x} H_j(x,0) \\
&=  \lim_{x\to x^{**}_P} \sqrt{x^{**}_P-x}   (1-\phi_1(x, Y_1(x))^{-1} \bigl(L_j(x, Y_1(x)) + \psi_2(x, Y_1(x))h_{2j}(Y_1(x))\bigr) \\
&=  \lim_{x\to x^{**}_P} \sqrt{x^{**}_P-x}   (1-\phi_1(x, Y_1(x))^{-1}  \psi_2(x, Y_1(x))h_{2j}(Y_1(x)) \\ 
&=  \lim_{x\to x^{**}_P} \sqrt{x^{**}_P-x}   (1-\phi_1(x, Y_1(x))^{-1}  (\phi_2(x, Y_1(x))-1) H_j(0, Y_1(x)) \\ 
&= \tilde{a}_4 \Bigl(L_j(X_1(y_d), y_d) + (\phi_1(X_1(y_d),y_d)-1) H_j(X_1(y_d),0)\bigr) = \tilde{a}_4 \varkappa_2(j) 
\end{align*} 
with
\[
\tilde{a}_4 =  (\phi_2(x_d, y_d)-1)\sqrt{\partial^2_{yy}P(x_d,y_d)/\partial_x P(x_d,y_d) } \left((1-\phi_1(x_d, y_d)) \frac{d}{dy} \phi_2(X_1(y_d), y_d) \right)^{-1}   > 0,  
\] 

\medskip 
Consider now the case when (B5), $x_d = x^{**} = x^{**}_P$ and $\phi_1(x^{**}_P, Y_1(x^{**}_P)) = 1$ hold.  In this case, with exactly the same arguments as above one gets that for some $\delta > 0$, the function $x\to H_j(x,0)$ can be continued as an analytic function to the set $C(x^{**}_P - \delta, x^{**}_P + \delta){\setminus} [x^{**}_P, x^{**}_P[$ by using the identity \eqref{eq100}. The only difference is here that now, by Proposition~\ref{phi-proof-theorem2},
\[
\frac{1}{1-\phi_1(x, Y_1(x))} \sim \frac{c_1}{\sqrt{x^{**}_P - x}} 
\]
with 
\[
c_1 = \left.\left(\partial_y\phi_1(x, y)) \sqrt{\partial_x P(x, y)/\partial^2_{yy}P(x, y) } \right)^{-1}\right|_{(x,y)=(x^{**}_P, Y_1(x^{**}_P))} > 0
\]
and consequently, since in this case $x_d = x^{**}_P$ and $y_d = Y_1(x^{**}_P)$, using \eqref{eq100} together with \eqref{injection-relation2} and \eqref{eq1-injection2}, one gets 
\[
\lim_{x\to x_d} (x_d-x) H_j(x,0) = a_4 \Bigl(L_j(X_1(y_d), y_d) + (\phi_1(X_1(y_d),y_d)-1) H_j(X_1(y_d),0)\bigr) 
\]
with
\[
a_4 =   \left.(\phi_2(x, y)-1) \partial^2_{yy}P(x, y)\left(\partial_y\phi_1(x, y) \partial_x P(x, y) \frac{d}{dy} \phi_2(X_1(y), y)  \right)^{-1}\right|_{(x,y)=(x_d,y_d)} > 0.
\] 
Proposition~\ref{injection-prop2} is therefore proved. 

\subsection{Analytic continuation of the function $(x,y)\to (1-P(x,y))H_j(x,y)$} 

The following Lemma is the first step in the proof of the last assertion of Theorem~\ref{theorem2}. 

\begin{lemma}\label{lemma-assertion-vi-th2} If the conditions (A1){-}(A3) are satisfied and  (B2) holds, then 
\begin{itemize}
\item the set $\{(x,y)\in{\mathcal S}_{22}: \; x < x_d, \; y < y_d\}$ is non-empty;
\item there exists a neighborhood ${\mathcal V}$ of the set ${\mathcal S}_{22}$ in $\R^2_+$ such that, for any $j{\in}\Z^2_+$,\\ the function $x{\to} (1{-}P(x,y))H_j(x,y)$ can be  analytically  continued to the set $\{(x,y){\in}\Omega({\mathcal V}): |x| {<} x_d,  |y| {<} y_d\}$; 
\item for any $(\hat{x},\hat{y}){\in} \{(x,y){\in}{\mathcal S}_{22}: x{<} x_d, y {<} y_d\}$,  the function $\varkappa_{(\hat{x},\hat{y})}$ is non-negative on $\Z^2_+$ and for any $j\in\Z^2_+{\setminus} E_0$, 
\be\label{limit-R-lemma}
\lim_{\substack{(x,y) \to (\hat{x},\hat{y})\\ (x,y)\in\inter{D}}} (1-P(x,y))(H_j(x,y)- H_j(x,0) - H_j(0,y)) =  \varkappa_{(\hat{x},\hat{y})}
\ee
\end{itemize} 
\end{lemma}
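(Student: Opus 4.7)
\textbf{Step 1: Non-emptiness of $\{(x,y)\in{\mathcal S}_{22}: x<x_d,\;y<y_d\}$.}
Under (B2), Proposition~\ref{cases} gives $X_2(y^{**})<x^{**}$ and $Y_2(x^{**})<y^{**}$ with both endpoints $(x^{**},Y_2(x^{**}))$ and $(X_2(y^{**}),y^{**})$ lying on ${\mathcal S}_{22}$; moreover $x_d=x^{**}$ and $y_d=y^{**}$. Using the parametrisation \eqref{eq2_S_22}, namely ${\mathcal S}_{22}=\{(X_2(y),y): y\in[Y_1(x^{**}_P),y^{**}_P]\}$, and the fact from Lemma~\ref{preliminary-lemma1} that $X_2$ and $Y_2$ are mutual inverses between $[X_1(y^{**}_P),x^{**}_P]$ and $[Y_1(x^{**}_P),y^{**}_P]$, so in particular $X_2(Y_2(x^{**}))=x^{**}=x_d$, I would take any $y\in(Y_2(x^{**}),y^{**})$ and set $(\hat x,\hat y)=(X_2(y),y)$. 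Strict monotonicity of $X_2$ then yields $X_2(y^{**})<\hat x<x_d$ and $Y_2(x^{**})<\hat y<y_d$.

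\textbf{Step 2: Analytic continuation of $(1-P(x,y))H_j(x,y)$.}
This is where Theorem~\ref{theorem1} does essentially all the work. On $\Omega_d(\Gamma)$ one has
\[
Q(x,y)h_j(x,y)=L_j(x,y)+\psi_1(x,y)h_{1j}(x)+\psi_2(x,y)h_{2j}(y).
\]
Under (A1){-}(A3), the functions $L_j$, $\psi_1$, $\psi_2$, $Q$ are analytic in some neighbourhood of the compact set $\Omega(\overline{\Gamma})$, and by Theorem~\ref{theorem1} the functions $h_{1j}$, $h_{2j}$ are analytic on $B(0,x_d)$ and $B(0,y_d)$ respectively. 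Since ${\mathcal S}_{22}\subset\partial D\subset\overline{\Gamma}$, one can pick a neighbourhood ${\mathcal V}\subset\R^2_+$ of ${\mathcal S}_{22}$ small enough that all of $L_j$, $\psi_1$, $\psi_2$, $Q$ are analytic on $\Omega({\mathcal V})$. Set ${\mathcal O}=\{(x,y)\in\Omega({\mathcal V}): |x|<x_d,\;|y|<y_d\}$. The right-hand side of the functional equation is then analytic on ${\mathcal O}$, and since $Q(x,y)h_j(x,y)=(1-P(x,y))\,xy\,h_j(x,y)=(1-P(x,y))\bigl(H_j(x,y)-H_j(x,0)-H_j(0,y)\bigr)$, this gives an analytic extension of $(1-P(x,y))\bigl(H_j-H_j(x,0)-H_j(0,y)\bigr)$ to ${\mathcal O}$. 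Adding the trivially analytic terms $(1-P(x,y))H_j(x,0)$ and $(1-P(x,y))H_j(0,y)$ (each factor analytic on ${\mathcal O}$) yields the analytic continuation of $(1-P(x,y))H_j(x,y)$ claimed in the second bullet.

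\textbf{Step 3: Limit identification and non-negativity.}
For $(\hat x,\hat y)\in{\mathcal S}_{22}\cap\{x<x_d,\,y<y_d\}$, one has $P(\hat x,\hat y)=1$ because ${\mathcal S}_{22}\subset\partial D$. Since $H_j(\hat x,0)$ and $H_j(0,\hat y)$ are finite (because $|\hat x|<x_d$ and $|\hat y|<y_d$), the contributions $(1-P(x,y))H_j(x,0)$ and $(1-P(x,y))H_j(0,y)$ vanish in the limit. Continuity of the RHS of the functional equation at $(\hat x,\hat y)$ then gives
\[
\lim_{\substack{(x,y)\to(\hat x,\hat y)\\ (x,y)\in\inter{D}}}(1-P(x,y))\bigl(H_j(x,y)-H_j(x,0)-H_j(0,y)\bigr)=L_j(\hat x,\hat y)+\psi_1(\hat x,\hat y)h_{1j}(\hat x)+\psi_2(\hat x,\hat y)h_{2j}(\hat y),
\]
which, after rewriting $\psi_i(\hat x,\hat y)h_{ij}$ in terms of $H_j(\hat x,0)$ and $H_j(0,\hat y)$, is $\varkappa_{(\hat x,\hat y)}(j)$ by \eqref{def-kappa-interior-direction}. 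For non-negativity, approach $(\hat x,\hat y)$ from inside $D$ along the real positive quadrant: then $Q(x,y)=xy(1-P(x,y))>0$ because $P<1$ on $\inter{D}$, and $h_j(x,y)\geq 0$ because $h_j$ is a power series with non-negative coefficients $g(j,\cdot)\geq 0$. Passing to the limit shows $\varkappa_{(\hat x,\hat y)}(j)\geq 0$.

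\textbf{Where the difficulty lies.}
As the authors already point out, this lemma is essentially a corollary of Theorem~\ref{theorem1}; the only mildly delicate point is choosing ${\mathcal V}$ so that every factor in the right-hand side of the functional equation is analytic simultaneously on $\Omega({\mathcal V})$, and ensuring that ${\mathcal V}$ meets $\{x<x_d,\,y<y_d\}$. The latter is guaranteed by Step~1, and the former follows from compactness of ${\mathcal S}_{22}$ together with the analyticity statements already recorded in Theorem~\ref{theorem1} and the remarks preceding it. No new analytic-continuation argument through singularities is required for this particular lemma — all the heavy lifting (e.g.\ the analysis through the curve $Q=0$) is deferred to the other assertions of Theorem~\ref{theorem2}.
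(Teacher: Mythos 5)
Your proof is correct and follows essentially the same route as the paper's: both reduce the lemma to a direct application of Theorem~\ref{theorem1}, read off the analytic continuation of $(1-P)(H_j-H_j(\cdot,0)-H_j(0,\cdot))$ from the functional equation, identify the limit with $\varkappa_{(\hat x,\hat y)}(j)$, and obtain non-negativity from the non-negativity of the Taylor coefficients of $h_j$ on $\Omega_d(\Gamma)$. You are in fact slightly more thorough than the paper's own proof, which omits the explicit verification that $\{(x,y)\in{\mathcal S}_{22}:x<x_d,\,y<y_d\}\neq\emptyset$ (your Step~1 via the strict monotonicity of $X_2$) and works with local neighbourhoods $V(\hat x,\hat y)$ rather than a single tubular neighbourhood of ${\mathcal S}_{22}$, leaving implicit the passage from $(1-P)(H_j-H_j(\cdot,0)-H_j(0,\cdot))$ to $(1-P)H_j$ which you carry out explicitly.
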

\begin{proof}  Indeed, by Theorem~\ref{theorem1}, for any $j\in\Z^2_+$ and $(\hat{x},\hat{y})\in {\mathcal S}_{22}$ with $\hat{x} < x_d$ and $\hat{y} < y_d$, there is a neighborhood $V(\hat{x},\hat{y})$ of the point $(\hat{x},\hat{y})$ in $\R^2$, such that for any $j\in\Z^2_+$, the function 
\begin{align*}
(x,y) \to L_j(x,y) + (\phi_1(x,y)-1) &H_j(x,0) + (\phi_2(x,y)-1) H_j(0,y) \\ &= L_j(x,y) + \psi_1(x,y)h_{1j}(x) + \psi_2(x,y) h_{2j}(y)
\end{align*} 
 is analytic in  the polycircular set $\{(x,y)\in\C^2:~ (|x|, |y|)\in V(\hat{x}, \hat{y})\}$, and the function $(x,y) \to (1-P(x,y))(H_j(x,y)- H_j(x,0) - H_j(0,y)) = Q(x,y) h_j(x,y)$ can be continued as an analytic function to $\{(x,y)\in\C^2:~ (|x|, |y|)\in V(\hat{x}, \hat{y})\}$ by letting 
\[
(1-P(x,y))(H_j(x,y)- H_j(x,0) - H_j(0,y)) = L_j(x,y) + (\phi_1(x,y)-1) H_j(x,0) + (\phi_2(x,y)-1) H_j(0,y). 
\]
Hence, for any $j\in\Z^2_+$ the quantity 
\[
\varkappa_{(\hat{x},\hat{y})} = L_j(\hat{x},\hat{y}) + (\phi_1(\hat{x},\hat{y}) -1) H_j(\hat{x},0) + (\phi_2(\hat{x},\hat{y})-1) H_j(0,\hat{y}) 
\]
is well defined and as $(x,y)\to (\hat{x}, \hat{y})$ for $(x,y)\in\inter{D}$, \eqref{limit-R-lemma} holds. Moreover, since by Theorem~\ref{theorem1}, the function $(x,y)\to H_j(x,y)$ is analytic in the set $\Omega_d(\Gamma) = \{(x,y)\in\C^2: (|x|,|y|)\in \Gamma, \; |x| < x_d, \; |y| < y_d\}$, and since the set $\Omega_d(\Gamma)$ is a union of the poly-discs centered at the origin in $\C^2$, the power series 
\[
H_j(x,y) - H_j(x,0) - H_j(0,y) = \sum_{k_1=1}^\infty\sum_{k_2=1}^\infty g(j,k) x^{k_1} y^{k_2}
\]
converge on the set $\Omega_d(\Gamma)$, and consequently, for $(x,y)\in \Gamma$ such that $x < x_d$ and $y < y_d$, 
\[
H_j(x,y) - H_j(x,0) - H_j(0,y)  \geq 0. 
\]
Since for $(x,y)\in \inter{D}$, $P(x,y) < 1$, it follows that for any $j\in\Z^2_+$, the left hand side of \eqref{limit-R-lemma} is non-negative and consequently, the function $\varkappa_{(\hat{x},\hat{y})}$ is non-negative on $\Z^2_+$. 
\end{proof}

\subsection{Harmonic functions $\varkappa_i$, $\tilde\varkappa_i$ and $\varkappa_{(x,y)}$ and their properties }\label{harmonic-functions-section} 
In this section, the second step of the proof of Theorem~\ref{theorem2} is performed (see Section~\ref{sec-sketch-proof-theorem2}). This is a subject of the following statement.

\begin{prop}\label{harmonic-functions} Under the hypotheses (A1) - (A3),  the following assertions hold:~ 

i) The function $\varkappa_1$ defined by \eqref{eq-def-kappa-1}
is non-negative on $\Z^2_+$,  positive on $\Z_+^2{\setminus} E_0$ and harmonic for $(Z_{\tau_0}(n))$ in each of the following cases~:~
\begin{itemize}
\item[--] if one of the assertions (B0), (B1), (B3) or (B4) is  valid;
\item[--] if (B2) holds and $x^{*}  < x^{**}_P$;
\item[--] if (B2) holds, $x^{**}  = x^{**}_P$ and  $\phi_1(x^{**}_P, Y_1(x^{**}_P)) =1$.
\end{itemize} 

ii) The function $\varkappa_2$ defined by \eqref{eq-def-kappa-2}
is non-negative on $\Z^2_+$,  positive on $\Z_+^2{\setminus} E_0$ and harmonic for $(Z_{\tau_0}(n))$ in each of the following cases~:~
\begin{itemize}
\item[--] if one of the assertions (B0), (B1), (B5) or (B6) is  valid;
\item[--] if (B2) holds and $y^{**}  < y^{**}_P$;
\item[--] if (B2) holds, $y^{**}  = y^{**}_P$ and  $\phi_1(X_1(y^{**}_P), y^{**}_P) =1$.
\end{itemize} 

iii)   If  (B2) is  valid with  $x^{**}  = x^{**}_P$ and $\phi_1(x^{**}_P, Y_1(x^{**}_P)) < 1$, then the  function $\tilde\varkappa_1$ defined by \eqref{eq-def-tilde-kappa-1}
is non-negative on $\Z^2_+$,  positive on $\Z_+^2{\setminus} E_0$ and harmonic for $(Z_{\tau_0}(n))$.  

iv)  If (B2) is  valid, then for any $(\hat{x}, \hat{y})\in {\mathcal S}_{22}$ with $\tilde{x} < x_d$ and $\hat{y} < y_d$, the function $\varkappa_{(\hat{x},\hat{y})}$ defined by \eqref{def-kappa-interior-direction} is non-negative on $\Z^2_+$,  positive on $\Z_+^2{\setminus} E_0$ and harmonic for $(Z_{\tau_0}(n))$. 
\end{prop}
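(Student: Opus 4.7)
My plan is to establish, for each of the functions $\varkappa_1,\varkappa_2,\tilde\varkappa_1,\tilde\varkappa_2,\varkappa_{(\hat x,\hat y)}$, the three claimed properties in the order: non-negativity, harmonicity, positivity on $\Z_+^2\setminus E_0$. Non-negativity is essentially free from what has already been done: in each case Propositions~\ref{cases-B0-B4-theorem2}, \ref{case-B2-theorem2}, \ref{injection-prop2} identify the $\varkappa_i$, up to a strictly positive multiplicative constant $a_k$ or $\tilde a_k$, as a limit of the form $\lim_{x\to x_d}(x_d-x)^{\alpha}H_j(x,0)$ along $x\in[0,x_d)$; since $H_j(\cdot,0)$ is a power series with non-negative coefficients, these limits are non-negative, whence $\varkappa_1,\tilde\varkappa_1\geq0$, and analogously (by exchanging $x\leftrightarrow y$) $\varkappa_2,\tilde\varkappa_2\geq0$. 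For $\varkappa_{(\hat x,\hat y)}$ the non-negativity was already noted in Lemma~\ref{lemma-assertion-vi-th2}.

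Harmonicity is the substantive point, and I intend to check it by a direct first-step Markov computation rather than by passing to the limit through Fatou. By splitting according to whether $j$ is interior, on one of the axes, or the origin, and using the strong Markov identity $\P_j(\tau_0<\infty)=\P_j(\tau_0=1)+\E_j(\P_{Z(1)}(\tau_0<\infty);\tau_0>1)$, one obtains the identities
\begin{equation*}
\E_j(L_{Z(1)}(x,y);\tau_0>1)=L_j(x,y)-x^{j_1-1}y^{j_2-1}Q(x,y)\mathbf{1}_{\{j_1,j_2>0\}}-x^{j_1-1}\psi_1(x,y)\mathbf{1}_{\{j_1>0,j_2=0\}}-y^{j_2-1}\psi_2(x,y)\mathbf{1}_{\{j_1=0,j_2>0\}},
\end{equation*}
together with $\E_j(H_{Z(1)}(0,y);\tau_0>1)=H_j(0,y)-y^{j_2}\mathbf{1}_{\{j_1=0,j_2\geq1\}}$, both valid for every $j\in\Z_+^2$ and $(x,y)$ in a neighborhood of $\Omega(\overline\Gamma)$. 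Specializing to $(x,y)=(x_d,Y_1(x_d))$, the kernel identity $P(x_d,Y_1(x_d))=1$ kills the $Q$-term, and the $\psi_2$-correction cancels exactly against the $(\phi_2-1)$ correction to $H_j(0,\cdot)$ (since $\psi_2(x,y)=y(1-\phi_2(x,y))$ and the two correction indicators agree on $\{j_1=0,j_2\geq1\}$). What remains is
\begin{equation*}
\E_j(\varkappa_1(Z(1));\tau_0>1)=\varkappa_1(j)-x_d^{j_1-1}\psi_1(x_d,Y_1(x_d))\mathbf{1}_{\{j_1>0,j_2=0\}}.
\end{equation*}
Under the hypotheses of assertion (i), Corollary~\ref{preliminary_cor2} yields $\phi_1(x_d,Y_1(x_d))=1$, so $\psi_1(x_d,Y_1(x_d))=0$ and harmonicity follows. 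The analogous calculation, with $x\leftrightarrow y$ and $\phi_1\leftrightarrow\phi_2$, gives harmonicity of $\varkappa_2$ in the cases listed in assertion (ii).

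For $\tilde\varkappa_1$, which matters only in case (B2) with $x_d=x^{**}_P$ and $\phi_1(x_d,Y_1(x_d))<1$, I will apply the same Markov identities to the function $f_j(x,y)=(L_j(x,y)+(\phi_2(x,y)-1)H_j(0,y))/(1-\phi_1(x,y))$ and then take $\partial_y$ at $(x^{**}_P,Y_1(x^{**}_P))$. The computation above gives
\begin{equation*}
\E_j(f_{Z(1)}(x,y);\tau_0>1)=f_j(x,y)-\frac{x^{j_1}y^{j_2}(1-P(x,y))}{1-\phi_1(x,y)}\mathbf{1}_{\{j_1,j_2>0\}}-x^{j_1}\mathbf{1}_{\{j_1>0,j_2=0\}},
\end{equation*}
and differentiating in $y$ at the base point the correction vanishes because both $1-P$ and $\partial_y P$ vanish there (Lemma~\ref{preliminary-lemma1}); after justifying the exchange of $\partial_y$ and $\E_j$ by local boundedness of $\partial_y f_{Z(1)}$ on a neighborhood of $(x^{**}_P,Y_1(x^{**}_P))$ (which itself follows from the analyticity results of Section~\ref{section-functions-Y1-X1}), we conclude $\E_j(\tilde\varkappa_1(Z(1));\tau_0>1)=\tilde\varkappa_1(j)$. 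The case $\tilde\varkappa_2$ is the mirror image. Finally, for $\varkappa_{(\hat x,\hat y)}$ with $(\hat x,\hat y)\in{\mathcal S}_{22}$, one has $P(\hat x,\hat y)=1$, so the $Q$-term again drops; here both boundary corrections survive but combine with the $\psi_1(\hat x,\hat y)H_j(\hat x,0)$ and $\psi_2(\hat x,\hat y)H_j(0,\hat y)$ contributions present in $\varkappa_{(\hat x,\hat y)}$ itself (this time $\phi_1(\hat x,\hat y)$ need not equal $1$), and a bookkeeping computation identical in spirit to the one above yields harmonicity.

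Positivity on $\Z_+^2\setminus E_0$ is the step I expect to be the main obstacle. Once harmonicity and non-negativity are in hand, iterating the harmonic identity $\varkappa(j)=\E_j(\varkappa(Z(n));\tau_0>n)$ and invoking Lemma~\ref{irreductibility-lemma} (which guarantees $g(j,k)>0$ for $j\not\in E_0$ and $\|k\|\geq N_0$) gives a dichotomy: either $\varkappa\equiv0$ on $\Z_+^2\setminus E_0$ or $\varkappa>0$ throughout $\Z_+^2\setminus E_0$. It remains to exclude the trivial alternative. For $\varkappa_1$ I will argue from its explicit formula along $j=(j_1,0)$, namely
$$
\varkappa_1((j_1,0))=x_d^{j_1}-\P_{(j_1,0)}(\tau_0<\infty)+(\phi_2(x_d,Y_1(x_d))-1)\,H_{(j_1,0)}(0,Y_1(x_d)),
$$
combined with a Lyapunov-type tail bound (as in Section~\ref{upper-bounds-section}) giving $H_{(j_1,0)}(0,Y_1(x_d))=O(\rho^{j_1})$ with $\rho<1$; provided $x_d>1$ (which holds in all of (B0)--(B6) since (B7) is excluded), the leading term $x_d^{j_1}$ dominates as $j_1\to\infty$, yielding $\varkappa_1((j_1,0))>0$ for $j_1$ large. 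Boundary cases where $x_d=1$, and the analogous arguments for $\varkappa_2$, $\tilde\varkappa_i$, $\varkappa_{(\hat x,\hat y)}$, are handled by the same strategy applied to the corresponding explicit formula. I anticipate that verifying $\rho<1$ uniformly and sharpening the tail bound in the marginal regimes (e.g.\ $x_d=x^{**}_P$) will require the most care.
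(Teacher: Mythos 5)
Your non-negativity and harmonicity arguments are correct and essentially track the paper's proof; the direct first-step computation you set up for $\varkappa_1$ reduces to the same relation the paper obtains (its equations \eqref{eq30-harmonic-functions}--\eqref{eq32-harmonic-functions}), and your treatment of $\tilde\varkappa_1$ — apply the Markov identity to $f_j(x,y)$ and then take $\partial_y$, observing that both $1-P$ and $\partial_yP$ vanish at $(x^{**}_P, Y_1(x^{**}_P))$ — is a mild reorganization of the paper's decomposition into $\varkappa_1$- and $\hat\varkappa_1$-parts; either route works once the exchange of $\partial_y$ and $\E_j(\cdot;\tau_0>1)$ is justified by domination of the power series.

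The gap is in the positivity step, and it is a genuine one. You claim $H_{(j_1,0)}(0,Y_1(x_d))=O(\rho^{j_1})$ with $\rho<1$ and lean on $x_d>1$. Both are wrong or unavailable. The Lyapunov bound of Section~\ref{upper-bounds-section} (more precisely Lemma~\ref{preliminary-harmonic-functions-lemma} combined with Lemma~\ref{preliminary-harmonic-functions-lemma2}) gives $H_{(j_1,0)}(0,Y_1(x_d))\leq C(x_1^{j_1}+x_2^{j_1})$ with $x_1,x_2<x_d$, not $<1$: the points $(x_1,y_1)\in\inter D\cap\inter D_1$ can well have $x_1>1$. And $x_d>1$ is not guaranteed: from \eqref{eq2-critical-points} one only has $x_d\geq x^{**}\geq 1$, and $x^{**}=1$ occurs for instance in cases (B3)--(B4) when (T1) holds (Proposition~\ref{comparison-cor}). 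In fact the correct comparison is against $x_d$, not against $1$: since $x_1,x_2<x_d$ the ratio $\varkappa_1(j_1,0)/x_d^{j_1}\to 1$, and $x_d>1$ is neither needed nor true in general. You also do not control the $\P_{(j_1,0)}(\tau_0<\infty)$ term at all; if $x_d=1$ this term is the same order as the leading $x_d^{j_1}=1$, and you need the same Lyapunov bound (giving $\P_{(j_1,0)}(\tau_0<\infty)\leq C(x_1^{j_1}+x_2^{j_1})\to 0$) to close the argument. Finally, for $\varkappa_{(\hat x,\hat y)}$ the strategy of going to infinity along an axis does not work — $\hat x<x_d$ and $\hat y<y_d$, so neither axis direction produces a dominant term; the paper instead sends $j\to\infty$ along the interior direction $\hat w = w_D(\hat x,\hat y)$, where $\hat x^{j_1}\hat y^{j_2}$ strictly dominates every $x_i^{j_1}y_i^{j_2}$ with $(x_i,y_i)\in\inter D$ because $(\hat x,\hat y)$ is the unique maximizer of $u\ln x+v\ln y$ over $D$. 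Your "same strategy applied to the corresponding explicit formula" elides this change of direction, which is the substantive point in that case.
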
 
To get this result, we first show there that the left hand sides of \eqref{eq1-theorem2}, \eqref{eq-analytical-structure-IIa}, \eqref{eq-analytical-structure-IIb},  \eqref{eq-analytical-structure-IV}, \eqref{eq-analytical-structure-Va}, \eqref{eq-analytical-structure-Vb}, \eqref{eq-analytical-structure-III} and \eqref{limit-R} are non-negative. With these arguments we conclude that each of the functions defined by \eqref{eq-def-kappa-1},  \eqref{eq-def-tilde-kappa-1},  \eqref{eq-def-kappa-2}, \eqref{def-kappa-interior-direction} is non-negative on $\Z^2_+$.  Next we show that each of these functions (in the corresponding cases i)-iv). of our proposition) is harmonic for  $(Z_{\tau_0}(n))$, and  by using suitable Lyapunov functions, we prove that each of them is not identically zero on the set $\{j\in \Z^2_+; \; \|j\| \geq N_0\}$. With this results and using Lemma~\ref{irreductibility-lemma} we will be able to show that each of the functions defined by \eqref{eq-def-kappa-1},  \eqref{eq-def-tilde-kappa-1},  \eqref{eq-def-kappa-2}, \eqref{def-kappa-interior-direction} (in the corresponding cases i)-iv). of our proposition) is  positive throughout the set $\Z^2_+{\setminus} E_0$.  
 
 \subsubsection{Preliminary estimates.} 

 We begin the proof of this proposition with the following preliminary result

\begin{lemma}\label{preliminary-harmonic-functions-lemma} Suppose that the conditions (A1){-}(A3) are satisfied and let two points $(x_1,y_1)\in\inter{D}\cap\inter{D_1}$ and $(x_2,y_2)\in\inter{D}\cap\inter{D_2}$ satisfy \eqref{eq0_upper_bounds}. Then for some constant $C>0$ (depending on the points $(x_1,y_1)\in\inter{D}\cap\inter{D_1}$ and $(x_2,y_2)\in\inter{D}\cap\inter{D_2}$  but do not depending on $j\in\Z^2_+$), 
\be\label{eq1-preliminary-harmonic-functions-lemma}
\P_j(\tau_0 < +\infty)  \leq C \left(x_1^{j_1}y_1^{j_2} + x_2^{j_1}y_2^{j_2} \right), \quad \forall j\in\Z^2_+
\ee
and 
\be\label{eq2-preliminary-harmonic-functions-lemma}
\sum_{k\in\Z^2_+} g(j,k) \left(x_1^{k_1}y_1^{k_2} + x_2^{k_1}y_2^{k_2}\right) \leq C \left(x_1^{j_1}y_1^{j_2} + x_2^{j_1}y_2^{j_2} \right),  \quad \forall j\in\Z^2_+. 
\ee
\end{lemma}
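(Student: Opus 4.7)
The natural candidate is the Lyapunov function $f(k) = x_1^{k_1}y_1^{k_2} + x_2^{k_1}y_2^{k_2}$ which, by Lemma~\ref{lemma1-upper-bounds}, satisfies the one-step drift inequality $\E_j(f(Z(1))) \leq \theta f(j)$ for every $j\in\Z^2_+\setminus E$, with some $\theta\in\,]0,1[$ and a finite set $E\subset\Z^2_+$. The plan is to combine this drift inequality with the strong Markov property applied at the stopping time
\[
\sigma \steq{def} \inf\{n\geq 1: Z(n)\in E\cup\{0\}\},
\]
which satisfies $\sigma\leq\tau_0$ since $0\in E\cup\{0\}$, and thereby to reduce both estimates to a single supermartingale argument.

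First I would iterate the Lyapunov inequality along trajectories avoiding $E\cup\{0\}$ to get, for every $j\in\Z^2_+\setminus(E\cup\{0\})$ and every $n\geq 0$, the bound $\E_j(f(Z(n));\, n<\sigma) \leq \theta^n f(j)$. Summing over $n$ and applying Fubini would then give
\[
\sum_{k\in\Z^2_+} g_\sigma(j,k)f(k) \leq \frac{f(j)}{1-\theta}, \qquad g_\sigma(j,k) \steq{def} \sum_{n=0}^\infty \P_j(Z(n)=k,\, n<\sigma).
\]
Optional stopping for the non-negative supermartingale $\theta^{-(n\wedge\sigma)} f(Z(n\wedge\sigma))$, together with monotone convergence, would also yield $\E_j(f(Z(\sigma));\sigma<\infty) \leq f(j)$, whence, with $c_E \steq{def} \min_{\ell\in E\cup\{0\}} f(\ell)>0$,
\[
\P_j(\tau_0<\infty)\leq\P_j(\sigma<\infty)\leq f(j)/c_E,
\]
which is exactly~\eqref{eq1-preliminary-harmonic-functions-lemma} on $\Z^2_+\setminus(E\cup\{0\})$.

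For~\eqref{eq2-preliminary-harmonic-functions-lemma}, the strong Markov property at $\sigma$, combined with the observation that on the event $\{\sigma<\tau_0\}$ the process must be at some point of $E\setminus\{0\}$, yields the decomposition
\[
g(j,k) = g_\sigma(j,k) + \sum_{\ell\in E\setminus\{0\}} \P_j(Z(\sigma)=\ell)\, g(\ell,k), \qquad j\notin E\cup\{0\}.
\]
Summing against $f$, the first term is bounded by $f(j)/(1-\theta)$ by the previous step, and since $\P_j(Z(\sigma)=\ell)\leq\P_j(\sigma<\infty)\leq f(j)/c_E$ while $M \steq{def} \max_{\ell\in E\setminus\{0\}}\sum_k g(\ell,k)f(k)$ is finite by Lemma~\ref{lemma2-upper-bounds}, one obtains~\eqref{eq2-preliminary-harmonic-functions-lemma} on $\Z^2_+\setminus(E\cup\{0\})$ with constant $1/(1-\theta)+M/c_E$.

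The finitely many remaining starting points $j\in E\cup\{0\}$ are handled trivially: on this finite set $f(j)\geq c_E>0$, while $\P_j(\tau_0<\infty)\leq 1$ and $\sum_k g(j,k)f(k)<\infty$ (once more by Lemma~\ref{lemma2-upper-bounds}); taking the maximum of these finitely many ratios and of the constant from the previous paragraph gives the single constant $C$ of the lemma. The only delicate step is the bookkeeping of the exceptional set $E$ appearing in Lemma~\ref{lemma1-upper-bounds}: once $\sigma$ is chosen as the first hit of $E\cup\{0\}$, both inequalities of the lemma follow from a single supermartingale estimate and one strong-Markov decomposition.
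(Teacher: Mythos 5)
Your proposal is correct and follows essentially the same route as the paper: the Lyapunov drift inequality from Lemma~\ref{lemma1-upper-bounds} together with the strong Markov property at the first hitting time of the finite exceptional set, then adjustment of the constant to absorb the finitely many exceptional starting points. The only (cosmetic) difference is that you bound $\P_j(\sigma<\infty)$ by a direct supermartingale/optional-stopping argument, whereas the paper reads the same bound off from the already established estimate $\sum_k g_E(j,k)f(k)\leq f(j)/(1-\theta)$ via $\P_j(\tau_E<\infty)=\sum_{\ell\in E}g_E(j,\ell)\leq c_E^{-1}\sum_k g_E(j,k)f(k)$; both give the same conclusion (note that the passage to the limit in your optional-stopping step is most cleanly justified via Fatou, or by monotone convergence applied to the increasing events $\{\sigma\leq n\}$).
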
  
\begin{proof} Indeed, if two points $(x_1,y_1)\in\inter{D}\cap\inter{D_1}$ and $(x_2,y_2)\in\inter{D}\cap\inter{D_2}$ satisfy \eqref{eq0_upper_bounds}, then by Lemma~\ref{lemma1-upper-bounds}, for some $0<\theta <1$ and some finite subset $E$ of $\Z^2_+$ such that $(0,0)\in E$ and the function $f:\Z^2_+\to \R_+$ defined by $f(j_1,j_2){=}x_1^{j_1}y_1^{j_2}{+}x_2^{j_1}y_2^{j_2}$, for all $(j_1,j_2){\in}\Z^2_+$ satisfies the inequality 
\[
\E_{j}(f(Z(1)); \; \tau_E > 1) \leq \theta f(j), \quad \forall j=(j_1,j_2)\in\Z^2_+,
\]
with $\tau_E = \inf\{n > 0~:~Z(n)\in E\}$. By Lemma~\ref{lemma2-upper-bounds}, it follows that for any $j\in\Z^2_+$, the series 
\be\label{eq3-preliminary-harmonic-functions-lemma}
\sum_{k\in\Z^2} g(j,k) f(k) = \sum_{k\in\Z^2} g(j,k) \left(x_1^{k_1}y_1^{k_2} + x_2^{k_1}y_2^{k_2}\right)
\ee
converges, and moreover, (see \eqref{eq-for-harmonic-functions} in the proof of Lemma~\ref{lemma2-upper-bounds}), for 
\[
g_E(j,k) = \sum_{n=0}^\infty \P_j(Z(n) = k; \; \tau_E \geq n), \quad j\in\Z^2_+{\setminus} E, \; k\in\Z^2_+, 
\]
the following relation holds :~
\be\label{eq4-preliminary-harmonic-functions-lemma}
\sum_{k\in\Z^2_+} g_E(j,k) f(k) \leq \frac{f(j)}{1-\theta} = \frac{x_1^{j_1}y_1^{j_2} + x_2^{j_1}y_2^{j_2}}{1-\theta}.
\ee
Since  $(0,0)\in E$, from the last relation  it follows in particular that for any $j\in\Z^2_+{\setminus} E$, 
\begin{align}
\P_j(\tau_0 < +\infty) \leq \P_j(\tau_E <+\infty) &\leq \left(\min_{\ell\in E} f(\ell)\right)^{-1} \sum_{k\in\Z^2_+} g_E(j,k) f(k) \nonumber\\
 &\leq \left(\min_{\ell\in E} f(\ell)\right)^{-1}  \frac{1}{1-\theta} \left(x_1^{j_1}y_1^{j_2} + x_2^{j_1}y_2^{j_2}\right) \label{eq5-preliminary-harmonic-functions-lemma}
\end{align} 
and consequently \eqref{eq1-preliminary-harmonic-functions-lemma} is proved. 

Remark finally that  for any $j= (j_1, 0)\in\Z^2_+{\setminus} E$, 
\begin{align*}
\sum_{k\in\Z^2_+} g(j,k) f(k) &= \sum_{k\in\Z^2_+} g_E(j,k) f(k) + \sum_{\ell\in E\setminus\{(0,0)\}} g_E(j,\ell) \sum_{k\in\Z^2_+} g(\ell, k) f(k),\\
&\leq \sum_{k\in\Z^2_+} g_E(j,k) f(k)  ~+~ \P_j(\tau_E < +\infty) \, \max_{\ell\in E} \sum_{k\in\Z^2_+} g(\ell, k) f(k).
\end{align*} 
Since the set $E$ is finite and for any $j\in\Z^2_+$, the series \eqref{eq3-preliminary-harmonic-functions-lemma} converge, this last relation combined with 
 \eqref{eq4-preliminary-harmonic-functions-lemma} and  \eqref{eq5-preliminary-harmonic-functions-lemma} prove  \eqref{eq2-preliminary-harmonic-functions-lemma}. 
\end{proof}

 \begin{lemma}\label{preliminary-harmonic-functions-lemma2} Under the hypotheses (A1){-}(A3), for any $0 < y < y_d$ there are two points $(x_1,y_1)\in\inter{D}\cap\inter{D_1}$ and $(x_2,y_2)\in\inter{D}\cap\inter{D_2}$ satisfying \eqref{eq0_upper_bounds} and such that for some $C > 0$,
 \be\label{eq1-preliminary-harmonic-functions-lemma2} 
 H_j(0,y) \leq C \left(x_1^{k_1}y_1^{k_2} + x_2^{k_1}y_2^{k_2}\right), \quad \forall j\in\Z^2_+.
  \ee 
 \end{lemma}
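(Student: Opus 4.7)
The plan is to reduce the estimate for $H_j(0,y)$ to the two-point bound already established in Lemma~\ref{preliminary-harmonic-functions-lemma}, by a careful selection of the two points that ensures $y_2 > y$ together with the required interior membership and the ordering \eqref{eq0_upper_bounds}.

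First I would construct the two points. Given $0 < y < y_d$, since $y_d \leq y^{**}$ in every one of the cases (B0)--(B6), the open interval $]\max\{y,y^*\}, y^{**}[$ is non-empty (it is non-empty as $y^* < y^{**}$ by Lemma~\ref{preliminary-lemma2}). Pick $\tilde y$ in this interval; then $\tilde y \in \;]y^*,y^{**}[\;$ and $\tilde y > y$. By Lemma~\ref{preliminary-lemma2} (applied with the roles of $x$ and $y$ exchanged, or equivalently by the symmetric version in Lemma~\ref{preliminary-lemma1} for $X_1, X_2$), the set $\{x>0 : (x,\tilde y)\in D\cap D_2\}$ is a line segment of positive length with endpoints $X_1(\tilde y) < \tilde X_2(\tilde y)$, and any point in its interior yields $\tilde x\in \,]x^*,x^{**}[\,$ with $X_1(\tilde y)<\tilde x$ and $Y_1(\tilde x) < \tilde y$. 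Lemma~\ref{lemma4_upper_bounds} then provides $y_1$ slightly above $Y_1(\tilde x)$ and $x_2$ slightly above $X_1(\tilde y)$ so that the pair $(x_1,y_1) = (\tilde x, y_1)\in \inter D\cap \inter{D_1}$ and $(x_2,y_2) = (x_2,\tilde y)\in \inter D\cap \inter{D_2}$ satisfies \eqref{eq0_upper_bounds}; by construction $y_2 = \tilde y > y$.

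Next I would use the non-negativity of $g(j,\cdot)$ and the comparison $0\leq y\leq y_2$ to bound the series termwise: since $y^{k_2}\leq y_2^{k_2}$ for every $k_2\in\Z_+$ and since $x_2^{k_1}y_2^{k_2}\geq 0$, the contribution of the axis indices can be dominated by the whole sum,
\[
H_j(0,y) \;=\; \sum_{k_2\geq 1} g(j,(0,k_2))\, y^{k_2} \;\leq\; \sum_{k=(k_1,k_2)\in\Z_+^2} g(j,k)\, x_2^{k_1}y_2^{k_2} \;\leq\; \sum_{k\in\Z_+^2} g(j,k)\bigl(x_1^{k_1}y_1^{k_2} + x_2^{k_1}y_2^{k_2}\bigr).
\]
Since $(x_1,y_1)$ and $(x_2,y_2)$ satisfy the hypotheses of Lemma~\ref{preliminary-harmonic-functions-lemma}, estimate \eqref{eq2-preliminary-harmonic-functions-lemma} furnishes a constant $C>0$ such that the last sum is bounded by $C(x_1^{j_1}y_1^{j_2} + x_2^{j_1}y_2^{j_2})$ for every $j\in\Z_+^2$, giving the asserted inequality (the exponents on the right-hand side of \eqref{eq1-preliminary-harmonic-functions-lemma2} must read $j_1,j_2$, consistent with \eqref{eq1-preliminary-harmonic-functions-lemma}).

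The only delicate point is the selection step: one has to ensure simultaneously that $\tilde y$ lies strictly above the given $y$ and strictly inside $]y^*,y^{**}[$ while admitting some $\tilde x$ with $\tilde y > Y_1(\tilde x)$. This relies on the structural facts $y^*\leq 1\leq y^{**}$, $y_d\leq y^{**}$ and the non-emptiness of the interior of $D\cap D_2$ from Assumption~(A3)(iii); all other ingredients are direct applications of the preceding lemmas in the section, and no further analytic work is needed.
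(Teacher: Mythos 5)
Your overall reduction to Lemma~\ref{preliminary-harmonic-functions-lemma} and the termwise comparison $y^{k_2}\leq y_2^{k_2}$ are sound and mirror the spirit of the paper's argument, but there is a genuine gap in the construction of the two points. You claim that picking $\tilde x$ in the interior of $\{x>0:(x,\tilde y)\in D\cap D_2\}$ automatically yields $\tilde x\in\,]x^*,x^{**}[$, and then invoke Lemma~\ref{lemma4_upper_bounds} with that $\tilde x$. This is not justified: by Lemma~\ref{preliminary-lemma2} the interval $]x^*,x^{**}[$ is the interior of the $x$-projection of $D\cap D_1$, whereas $]X_1(\tilde y),\tilde X_2(\tilde y)[$ is a horizontal slice of $D\cap D_2$ at height $\tilde y$. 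Membership of $\tilde x$ in the latter tells you only that $(\tilde x,\tilde y)\in\inter D\cap\inter{D_2}$, i.e.\ $\tilde x\in\,]x^*_P,x^{**}_P[$; it does not guarantee that the vertical line through $\tilde x$ meets $D\cap D_1$, which is exactly what $\tilde x\in\,]x^*,x^{**}[$ asserts and what the hypothesis of Lemma~\ref{lemma4_upper_bounds} requires. Without that, Lemma~\ref{lemma4_upper_bounds} does not apply and the pair $(x_1,y_1),(x_2,y_2)$ is not produced.

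The paper avoids this pitfall by not attempting a direct construction: since $0<y<y_d$, the point $(0,y)$ lies in $\Theta_0$, and Lemma~\ref{lemma6-upper-bounds} (whose proof carries out the careful case-by-case selection of $\tilde x$ and $\tilde y$ close to $x_d$ and $y_d$, precisely so that $\tilde x\in\,]x^*,x^{**}[$ and $\tilde y\in\,]y^*,y^{**}[$) already delivers $(x_1,y_1)$ and $(x_2,y_2)$ together with the domination \eqref{eq1a_upper_bounds}. Taking $k_1=0$ in \eqref{eq1a_upper_bounds} gives $y^{k_2}\leq y_1^{k_2}+y_2^{k_2}$, after which Lemma~\ref{preliminary-harmonic-functions-lemma} finishes the argument exactly as you do. To repair your proof, replace the ad hoc choice of $\tilde x$ by an appeal to Lemma~\ref{lemma6-upper-bounds} applied to the point $(0,y)\in\Theta_0$, or reproduce its selection of $\tilde x$ and $\tilde y$ near $x_d$ and $y_d$ with the verification of $\tilde x\in\,]x^*,x^{**}[$ in each of the cases (B0)--(B6). (You are right that the exponents on the right of \eqref{eq1-preliminary-harmonic-functions-lemma2} should read $j_1,j_2$; this is a typo in the statement.)
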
 
\begin{proof} Indeed, if $0< y < y_d$, then, by the definition of the set $\Theta$, the point  $(0, Y_1(x^{**}))$ is in  $\Theta$ and, consequently, by   Lemma~\ref{lemma6-upper-bounds}    there are two points $(x_1,y_1)\in\inter{D}\cap\inter{D_1}$ and $(x_2,y_2)\in\inter{D}\cap\inter{D_2}$  satisfying relations \eqref{eq0_upper_bounds} and  \eqref{eq1_upper_bounds} with $(x,y) = (0,Y_1(x^{**}))$.  By  Lemma~\ref{preliminary-harmonic-functions-lemma},   for these  two points $(x_1,y_1)$ and $(x_2,y_2)$, one gets \eqref{eq2-preliminary-harmonic-functions-lemma}  with $(x,y) = (0,y)$, and consequently  \eqref{eq1-preliminary-harmonic-functions-lemma2} holds. 
\end{proof}

 \subsubsection{Proof of  the first two assertions of Proposition~\ref{harmonic-functions}}  
 Consider first the case when one of the following assertions holds:
\begin{itemize}
\item[--] one of the cases (B0), (B1), (B3), (B4) holds;
\item[--] (B2) and $x_d  {<} x^{**}_P$  hold;
\item[--] (B2), $x_d {=} x^{**}_P$ and $\phi_1(x^{**}_P, Y_1(x^{**}_P)) {=} 1$ hold.
\end{itemize} 
By Proposition~\ref{phi-proof-theorem2}  and  the definition of  $x_d$, we have always $x_d = x^{**}$ and 
\be\label{eq1-harmonic-functions}
\phi_1(x_d, Y_1(x_d)) =1. 
\ee
To prove Proposition~\ref{harmonic-functions} in each of the above cases, we first show that the function $\varkappa_1$ is non-negative and harmonic for  $(Z_{\tau_0}(n))$. Next we will prove that $\varkappa_1(j_1,0) > 0$ for all $j_1> 0$ large enough, and using  finally Lemma~\ref{irreductibility-lemma}, we will conclude that the function $\varkappa_1$ is strictly positive on the set $\Z^2_+{\setminus} E_0$.

Suppose first that either, one of the cases (B0), (B1), (B3), (B4) holds, or (B2) and $x_d  < x^{**}_P$ hold. Then, by Proposition~\ref{cases-B0-B4-theorem2}, for any $j\in\Z^2_+$,   
\[
\lim_{x\to x_d} (x_d-x) H_j(x,0)  = c \varkappa_1(j)
\]
with $c >0$. Since $(x_d-x) H_j(x,0) \geq 0$  for any real $x \in ]0, x^{**}[$,  it follows that $\varkappa_1(j)\geq 0$ for any $j\in\Z^2_+$.
When either one of the cases (B0), (B1), (B3), (B4) occurs or (B2) and $x^{**}  < x^{**}_P$ hold, the function $\varkappa_1$ is therefore non-negative on $\Z^2_+$. 

Suppose now that (B2), $x^{**}  = x^{**}_P$ and $\phi_1(x^{**}_P, Y_1(x^{**}_P)) = 1$ hold. In this case, by Proposition~\ref{case-B2-theorem2}, for any $j\in\Z^2_+$,
\[
\lim_{x\to x_d} \sqrt{x_d-x}\, H_j(x,0) = c\,  \varkappa_1(j) 
\]
with $c>0$. Since $\sqrt{x_d-x}\, H_j(x,0) \geq 0$ for any real $x\in ]0, x_d[$,  it follows that $\varkappa_1(j)\geq 0$ for any $j\in\Z^2_+$. Hence, in this case,    the function $\varkappa_1$ is therefore also non-negative on $\Z^2_+$. 

Harmonicity property of the function $\varkappa_1$ for the killed random walk. See Definition~\ref{Killed}. 
For  $j=(j_1,j_2)\in\Z^2_+$, by the definition of the functions $(x,y)\to L_j(x,y)$ and $\varkappa_1:\Z^2_+\to\R_+$,  we have
\[
\varkappa_1(j) = \begin{cases} x_d^{j_1}(Y_1(x_d))^{j_2} - \P_j(\tau < +\infty) + (\phi_2(x_d, Y_1(x_d))-1) H_j(0, Y_1(x_d)) &\text{if $j\not=(0,0)$}, \\
\phi_0(x_d, Y_1(x_d)) - \P_{(0,0)}(\tau < +\infty) + (\phi_2(x_d, Y_1(x_d))-1) H_{(0,0)}(0, Y_1(x_d)) &\text{if $j=(0,0)$}.
\end{cases} 
\]
Remark that for any $j=(j_1,j_2)\in\Z^2_+$, 
\begin{align*}
\E_j\left(\P_{Z(1)}(\tau <+\infty); \; \tau > 1\right) =  &\P_j(\tau < +\infty) - \mu_1(-1,0) \1_{\{(0,1)\}}(j_1,j_2) - \mu_2(0,-1) \1_{\{(1,0)\}}(j_1,j_2) \\
&- \mu(-1,-1) \1_{\{(1,1)\}}(j_1,j_2)  - \mu_0(0,0)\1_{\{(0,0)\}}(j_1,j_2)  
\end{align*} 
and since the function $j \to H_j(0, Y_1(x_d))$ is potential for $(Z_{\tau_0}(n))$ (see the properties of potential for the Markov chains functions in \cite{Woess}),  
\[
\E_j( H_{Z(1)}(0, Y_1(x_d)), \; \tau > 1) = H_j(0, Y_1(x_d)) 
- (Y_1(x_d))^{j_2}  \1_{\{j_1 =0, j_2 > 0\}} (j_1,j_2),  
\]
where for $j,k\in\Z^2$ we denote 
\[
\1_{\{k\}}(j) = \begin{cases} 1 &\text{if $j=k$},\\
0 &\text{otherwise}, 
\end{cases} 
\]
By the definition of the function $Y_1$, we have $P(x_d, Y_1(x_d)) = 1$, and  because of  \eqref{eq1-harmonic-functions}, $\phi_1(x_d, Y_1(x_d)) = 1$. Hence, for any $j\in\Z^2_+$, 
\begin{align*}
\E_j\bigl( x_d^{Z_1(1)} &(Y_1(x_d))^{Z_2(1)}; \; \tau > 1\bigr)  \\
&= \begin{cases} 
\phi_0(x_d, Y_1(x_d)) - \mu_0(0,0)\1_{\{(0,0)\}}(j_1,j_2)  &\text{if $j=(0,0)$},\\  
x_d^{j_1} \phi_1(x_d, Y_1(x_d)) - \mu_1(-1,0)\1_{\{(0,1)\}}(j_1,j_2) &\text{if $j_1 >0$ and $j_2=0$},\\  
(Y_1(x_d))^{j_2}  \phi_2(x_d, Y_1(x_d)) - \mu_2(0,-1) \1_{\{(1,0)\}}(j_1,j_2)  &\text{if $j_1=0$ and $j_2 > 0$}, \\
{x_d^{j_1}(Y_1(x_d))^{j_2} } P(x_d, Y_1(x_d)) - \mu(-1,-1) \1_{\{(1,1)\}}(j_1,j_2) &\text{if $j_1 > 0$, $j_2 > 0$},
\end{cases} \\
&= \begin{cases} 
\phi_0(x_d, Y_1(x_d)) - \mu_0(0,0)\1_{\{(0,0)\}}(j_1,j_2)  &\text{if $j=(0,0)$},\\  
x_d^{j_1}  - \mu_1(-1,0)\1_{\{(0,1)\}}(j_1,j_2) &\text{if $j_1 >0$ and $j_2=0$},\\  
(Y_1(x_d))^{j_2}  \phi_2(x_d, Y_1(x_d)) - \mu_2(0,-1) \1_{\{(1,0)\}}(j_1,j_2)  &\text{if $j_1=0$ and $j_2 > 0$}, \\
{x_d^{j_1}(Y_1(x_d))^{j_2} }  - \mu(-1,-1) \1_{\{(1,1)\}}(j_1,j_2) &\text{if $j_1 > 0$, $j_2 > 0$},
\end{cases} 
\end{align*} 
With these relations,  for any $j=(j_1,j_2)\in\Z^2_+$ with $j_1 > 0$, we get 
\begin{align} 
\E_j(\varkappa_1(Z(1)), \; \tau > 1) &= x_d^{j_1}(Y_1(x_d))^{j_2} - \P_j(\tau < +\infty) + (\phi_2(x_d, Y_1(x_d))-1) H_j(0, Y_1(x_d)), \nonumber\\
&= \varkappa_1(j), \label{eq30-harmonic-functions} 
\end{align} 
 for any $j=(j_1, j_2) \in\Z^2_+$ with $j_1=0$ and $j_2 > 0$, we get
\begin{align} 
\E_j(\varkappa_1(Z(1)) &= {x_d^{j_1}(Y_1(x_d))^{j_2} } \phi_2(x_d, Y_1(x_d))  - \P_j(\tau < +\infty)  \nonumber\\ & \hspace{2cm}+  (\phi_2(x_d, Y_1(x_d))-1) \bigl(-{x_d^{j_1}(Y_1(x_d))^{j_2} } + H_j(0, Y_1(x_d)) \bigr) \nonumber\\
&= \varkappa_1(j). \label{eq31-harmonic-functions} 
\end{align} 
and for $j=(0,0)$, we obtain 
\begin{align} 
\E_j(\varkappa_1(Z(1)) &=  \phi_0(x_d, Y_1(x_d))  - \P_{(0,0)}(\tau < +\infty) +  (\phi_2(x_d, Y_1(x_d))-1) H_{(0,0)}(0, Y_1(x_d)) \nonumber\\
&= \varkappa_1(0,0). \label{eq32-harmonic-functions} 
\end{align} 
Relations \eqref{eq30-harmonic-functions}, \eqref{eq31-harmonic-functions} and \eqref{eq32-harmonic-functions} prove that  function $\varkappa_1$ is harmonic  for $(Z_{\tau_0}(n))$.  

To show that the function $\varkappa_1$ is strictly positive at some point $j=(j_1,j_2)\in\Z^2_+$, we investigate an asymptotic behavior of this function as $j_2 = 0$ and $j_1\to+\infty$. By Proposition~\ref{cases}, in each of the cases (B0){-}(B4), we have $0<Y_1(x_d) < y_d$. Hence, by Lemma~\ref{preliminary-harmonic-functions-lemma2}, there are two points $(x_1,y_1)\in\inter{D}\cap\inter{D_1}$ and $(x_2,y_2)\in\inter{D}\cap\inter{D_2}$ satisfying \eqref{eq0_upper_bounds} for which \eqref{eq1-preliminary-harmonic-functions-lemma2} holds for $y=Y_1(x_d)$ and consequently, by Lemma~\ref{preliminary-harmonic-functions-lemma} we obtain \eqref{eq1-preliminary-harmonic-functions-lemma}. Using these relations for $j=(j_1,j_2)\in\Z^2_+$ with $j_2 = 0$ one gets 
\be\label{eq6-harmonic-functions} 
\left|\varkappa_1(j_1,0) - x_d^{j_1}\right|  \leq C_1  \left(x_1^{j_1}+ x_2^{j_1}\right) , \quad \forall j= (j_1, 0)\in\Z^2_+, 
\ee 
with some do not depending on $j$ constant $C_1>0$. Remark that $0 < x_2 < x_1$ because the points $(x_1,y_1)$ and $(x_2,y_2)$  satisfy  \eqref{eq0_upper_bounds}, and that $x_1 < x_d$ because $(x_1,y_1)\in\inter{D}\cap\inter{D_1}$. 
Hence, from the last relation it follows that 
\[
\varkappa_1(j_1,0) ~\sim~ x_d^{j_1} \quad \text{as $j_1\to\infty$},
\]
and consequently, there is $N_1 > 0$ such that 
\be\label{eq1-varkappa1-strictly-positive}
\varkappa_1(j_1,0) > 0 \quad \text{ for any $j=(j_1,0)\in\Z^2_+$ with $j_1 \geq N_1$}. 
\ee
\medskip

{\em Now we are ready to complete the proof of the first  assertion of Proposition~\ref{harmonic-functions}} :~  By Lemma~\ref{irreductibility-lemma}, there are $N_0 > 0$ and a finite subset $E_0$ of $\Z^2_+$ such that $(0,0)\not\in E_0$ and for any $j\in\Z^2_+{\setminus} E_0$ and $k\in\Z^2_+$ with $\|k\| > N_0$, 
\[
g(j,k) =\sum_{n=0}^\infty \P_j(Z(n) = k, \; \tau_0 > n) > 0. 
\]
Hence, for any $j\in\Z^2_+{\setminus} E_0$ and $k\in\Z^2_+$ with $\|k\| > N_0$, there is $n_{j,k}\in\N$ such that 
\[
\P_j(Z(n_{j,k}) = k, \; \tau_0 > n_{j,k}) > 0. 
\]
For any $j\in\Z^2_+$, $k\in\Z^2_+$ and $n\in\N$, since the function $\varkappa_1$ is  non-negative and harmonic for the killed random walk, one has 
\[
\varkappa_1(j) = \E_j(\varkappa_1(Z(n)), \; \tau_0 > n) \geq \P_j(Z(n) = k, \; \tau_0 > n) \varkappa_1(k). 
\]
Using this relation together with \eqref{eq1-varkappa1-strictly-positive}, for $k = (k_1, 0) \in\Z^2_+$ with $k_1 \geq \max\{N_0, N_1\}$,  one gets that for any $j\in\Z^2_+{\setminus} E_0$ 
\[
\varkappa(j) \geq \P_j(Z(n_{j,k}) = k, \; \tau_0 > n_{j,k}) \varkappa_1(k) > 0,
\]
and consequently, the function $\varkappa_1$ is positive on $\Z^2_+{\setminus} E_0$.

The first  assertion of Proposition~\ref{harmonic-functions} is therefore proved. To get the second assertion of this proposition, it is sufficient to exchange the roles of $x$ and $y$.

 \subsubsection{Proof of  the third  assertion of Proposition~\ref{harmonic-functions}}  
 
Suppose  that (B2) holds and let $x^{**}=x^{**}_P$ and  $\phi_1(x^{**}_P, Y_1(x^{**}_P)) < 1$. By the definition of the point $x_d$, we have here 
\[
x_d = x^{**} = x^{**}_P,
\]
and remark that in this case,
\[
\tilde\varkappa_1(j) = \left. \partial_y\left( \frac{L_j(x, y) + \left(\phi_2(x, y)-1\right) H_j(0, y)}{1-\phi_1(x, y)}\right)\right|_{(x,y)=(x^{**}_P,Y_1(x^{**}_P))} , \quad   \; j\in\Z^2_+.
\]
To show that the function $\tilde\varkappa_1$ is non-negative on $\Z^2_+$, we recall that in this cases, by Proposition~\ref{case-B2-theorem2}, 
\[
\lim_{x\to x_d} \sqrt{x_d-x}\, \frac{d}{dx} \, H_j(x,0) = c\,  \tilde\varkappa_1(j) 
\]
with $c > 0$. Since for real $x\in ]0,x^{**}_P[$, the function $x\to H_j(0, x)$ is  increasing on $]0,x^{**}_P[$, 
 it follows that $\tilde\varkappa_1(j) \geq 0$ for any $j\in\Z^2_+$.  

 Harmonicity property of the function $\tilde\varkappa_1$ for the killed random walk.
 For  $j=(j_1,j_2)\in\Z^2_+$,  we have
\be\label{eq18-(B2)-harmonic-function}
\tilde\varkappa_1(j) = \frac{\partial_y\phi_1((x^{**}_P,Y_1(x^{**}_P)))}{(1-\phi_1(x^{**}_P,Y_1(x^{**}_P)))^2} \varkappa_1(j) + \frac{1}{1-\phi_1((x^{**}_P,Y_1(x^{**}_P)))}  \hat\varkappa_1(j) 
\ee 
with $\varkappa_1(j)$ defined by \eqref{eq-def-kappa-1} and  
\begin{align}
\hat\varkappa_1(j) =  \partial_y L_j(x^{**}_P, Y_1(x^{**}_P))+ &(\phi_2(x^{**}_P, Y_1(x^{**}_P)) - 1) \partial_y H_j(0, Y_1(x^{**}_P)) \label{eq19-(B2)-harmonic-function}\\ &\quad \quad + \partial_y\phi_2((x^{**}_P,Y_1(x^{**}_P))) H_j(0, Y_1(x^{**}_P)), \nonumber 
\end{align}
where  
\[
\partial_y L_j(x^{**}_P, Y_1(x^{**}_P)) = \begin{cases} j_2 \times (x^{**}_P)^{j_1} (Y_1(x^{**}_P))^{j_2-1} &\text{if $j=(j_1,j_2)\in\Z^2_+{\setminus}\{(0,0)\}$,}\\ 
\partial_y\phi_0(x^{**}_P, Y_1(x^{**}_P)) &\text{if $j=(j_1,j_2) =(0,0)$}. 
\end{cases} 
\]
Remark now that for any $j=(j_1,j_2)\in\Z^2_+$, with the same arguments as in the proof of the first assertion of our proposition, one gets 
\be\label{eq20-(B2)-harmonic-function} 
\E_j(\varkappa_1(Z(1)), \; \tau_0 > 1) = 
\varkappa_1(j) + (\phi_1(x^{**}_P, Y_1(x^{**}_P)) - 1) (x^{**}_P)^{j_1}\1_{\{j_1 > 0, j_2=0\}}(j_1,j_2) 
\ee
(here, an only difference with the proof of the first assertion of our proposition is that $\phi_1(x^{**}_P, Y_1(x^{**}_P)) < 1$). Moreover, 
 since the functions $j\to H_j(0, Y_1(x^{**}_P))$ and $j\to \partial_y H_j(0, Y_1(x^{**}_P))$  are potential for $(Z_{\tau_0}(n))$ (see the properties of potential functions for Markov chains  in Woess~\cite{Woess}),  then for any $j=(j_1,j_2)\in\Z^2_+$, one has 
\be\label{eq22-(B2)-harmonic-function} 
\E_j(H_{Z(1)}(0, Y_1(x^{**}_P)), \; \tau_0 > 1) = H_j(0, Y_1(x^{**}_P)) - (Y_1(x^{**}_P))^{j_2} \1_{\{j_1=0, j_2 > 0\}}(j_1,j_2)  
\ee
and
\be\label{eq23-(B2)-harmonic-function} 
\E_j(\partial_y H_{Z(1)}(0, Y_1(x^{**}_P)), \; \tau_0 > 1) = 
\partial H_j(0, Y_1(x^{**}_P))  - j_2\times (Y_1(x^{**}_P))^{j_2-1}\1_{\{j_1=0, j_2 > 0\}}.
\ee
Finally, straightforward calculation shows that 
\begin{align*} 
\E_j(&\partial_y L_{Z(1)}(x^{**}_P, Y_1(x^{**}_P)), \; \tau_0 > 1)  =    \E_j\bigl( Z_2(1) (x^{**}_P)^{Z_1(1)} (Y_1(x^{**}_P))^{Z_2(1)-1}, \; \tau_0 > 1\bigr) \\ 
&= \partial_y \phi_1(x^{**}_P, Y_1(x^{**}_P)) (x^{**}_P)^{j_1} \1_{\{j_1 > 0, j_2 = 0\}}(j_1,j_2)  \\ 
&+ \left( Y_1(x^{**}_P) \partial_y P(x^{**}_P, Y_1(x^{**}_P)) + j_2 P (x^{**}_P, Y_1(x^{**}_P))\right)  j_2 (x^{**}_P)^{j_1} (Y_1(x^{**}_P))^{j_2-1} \\
&+ j_2(Y_1(x^{**}_P))^{j_2-1} \bigl(Y_1(x^{**}_P) \partial_y \phi_2(x^{**}_P, Y_1(x^{**}_P))  + j_2 \phi_2(x^{**}_P, Y_1(x^{**}_P))\bigr) \1_{\{j_1=0, j_1 > 0\}}(j_1,j_2).
\end{align*}
Since $\partial_y P(x^{**}_P, Y_1(x^{**})) = 0$ and $P(x^{**}_P, Y_1(x^{**})) = 1$, from the last relation  it follows that 
\begin{align} 
\E_j(&\partial_y L_{Z(1)}(x^{**}_P, Y_1(x^{**}_P)), \; \tau_0 > 1)  \label{eq21-(B2)-harmonic-function} \\ 
&= \partial_y \phi_1(x^{**}_P, Y_1(x^{**}_P)) (x^{**}_P)^{j_1} \1_{\{j_1 > 0, j_2 = 0\}}(j_1,j_2) +  j_2 (x^{**}_P)^{j_1} (Y_1(x^{**}_P))^{j_2-1} \nonumber\\
&+ j_2(Y_1(x^{**}_P))^{j_2-1} \bigl(Y_1(x^{**}_P) \partial_y \phi_2(x^{**}_P, Y_1(x^{**}_P))  + j_2 \phi_2(x^{**}_P, Y_1(x^{**}_P))\bigr) \1_{\{j_1=0, j_1 > 0\}}(j_1,j_2). \nonumber
\end{align} 
When combined together, relations  \eqref{eq18-(B2)-harmonic-function} - \eqref{eq23-(B2)-harmonic-function} imply that $\E_j(\tilde\varkappa_1(Z(1), \; \tau_0 > 1) = \tilde\varkappa_1(j)$ for any $j\in\Z^2_+$, and hence, the function $\tilde\varkappa_1$ is harmonic for $(Z_{\tau_0}(n))$.

To show that the function $\tilde\varkappa_1$ is strictly positive on $\Z^2_+{\setminus}\{(0,0)\}$, we investigate an asymptotical behavior of this function as $j_2 = 0$ and $j_1\to+\infty$. Remark that for any $j=(j_1,j_2)\in\Z^2_+{\setminus}\{(0,0)\}$ with $j_2 = 0$, 
\be\label{eq1-(B2)-harmonic-functions} 
\tilde\varkappa_1(j) =  C_1 (x^{**}_P)^{j_1} + C_2 \P_j(\tau_0 < +\infty) + C_3H_j(0,Y_1(x^{**}_P)) + C_4 \partial_y H_j(0,Y_1(x^{**}_P))\nonumber 
\ee
with 
\be\label{eq0-(B2)-harmonic-functions} 
C_1 = \frac{\partial_y\phi_1((x^{**}_P,Y_1(x^{**}_P)))}{(1-\phi_1(x^{**}_P,Y_1(x^{**}_P)))^2} 
\ee
and some $C_2, C_3, C_4\in{\mathbb R}$ do not depending on $j_1$. When (B2) and $x^{**}= x^{**}_P$ hold, we have 
\[
x_d = x^{**} = x^{**}_P, \quad \text{and} \quad Y_1(x^{**}_P) < y_d = y^{**}\leq y^{**}_P, 
\]
Hence by Lemma~\ref{preliminary-harmonic-functions-lemma2}, for  $\eps > 0$ such that $Y_1(x^{**}) + \eps < y_d$, there are two points $(x_1,y_1)\in\inter{D}\cap\inter{D_1}$ and $(x_2,y_2)\in\inter{D}\cap\inter{D_2}$ satisfying \eqref{eq0_upper_bounds} for which \eqref{eq1-preliminary-harmonic-functions-lemma2} holds with $y=Y_1(x^{**})+\eps$, and by Lemma~\ref{preliminary-harmonic-functions-lemma} we get \eqref{eq1-preliminary-harmonic-functions-lemma}. Using these relations for $j=(j_1,j_2)\in\Z^2_+$ with $j_2 = 0$ we obtain 
\be\label{eq2-(B2)-harmonic-functions} 
\P_{(j_1,0)} (\tau_0 < +\infty) \leq C_5 \, \frac{x_1^{j_1} + x_2^{j_1}}{1-\theta}, \quad \forall j= (j_1, 0)\in\Z^2_+, 
\ee
and 
\be\label{eq3-(B2)-harmonic-functions} 
H_{(j_1,0)}(0, Y_1(x^{**}_P) + \eps) \leq C_6 (x_1^{j_1} + x_2^{j_1}) 
\ee
with some $C_5 > 0$ and $C_6 > 0$ do not depending on $j_1$. Since the function $y\to H_j(0,y)$ is equal on the disk of its analyticity $B(0, y_d)$ to its power series with the positive coefficients, we have moreover 
\[
\partial_y H_j(0, Y_1(x^{**}_P)) \leq C_7 H_j(0, Y_1(x^{**}_P)+\eps_2) 
\]
with some do not depending on $j_1$ constant $C_7 >0$, and consequently, using \eqref{eq1-(B2)-harmonic-functions}, \eqref{eq2-(B2)-harmonic-functions} and \eqref{eq3-(B2)-harmonic-functions}, we obtain 
\[
\left| \tilde\varkappa_1(j_1,0) -  C_1 (x^{**}_P)^{j_1} \right| \leq C_9 (x_1^{j_1} + x_2^{j_1}), \quad \forall j_1 > 0, 
\]
with some do not depending on $j_1$ constant $C_8 > 0$. Remark finally that the constant $C_1$ defined by \eqref{eq0-(B2)-harmonic-functions} is strictly positive because the function $y\to \phi_1(x^{**}_P, y)$ is convex and strictly increasing on $]0,+\infty[$. Hence,  
with the same argument as in the proof of the first assertion of Proposition~\ref{harmonic-functions} we obtain  
\[
\tilde\varkappa_1(j_1,0) ~\sim~  C_1 (x^{**}_P)^{j_1}  \quad \text{as $j_1 \to+\infty$},
\]
and consequently,  $\tilde\varkappa_1(j) > 0$ for all $j_1 > 0$ large enough. 

The function $\tilde\varkappa_1$ is therefore non-negative, harmonic for  $(Z_{\tau_0}(n))$ and non zero at the points $(j_1,0)$ for all $j_1> 0$ large enough. With this results and using exactly the same arguments as in the proof of the first assertion of our proposition, we conclude that the function $\tilde\varkappa_1$ is  positive everywhere in $\Z^2_+{\setminus} E_0$.

\subsubsection{Proof of  the last assertions of Proposition~\ref{harmonic-functions}}   

Suppose now that (B2) holds, and let a point $(\hat{x}, \hat{y})\in{\mathcal S}_{22}$ be such that $\hat{x} < x_d$ and $\hat{y} < y_d$. For such a point $(\hat{x}, \hat{y})$, by Lemma~\ref{lemma-assertion-vi-th2},  the function $\varkappa_{(\hat{x},\hat{y})}$ is non-negative on $\Z^2_+$. 

\medskip

The proof of the identity 
\[
\E_j(\varkappa_{(\hat{x},\hat{y})}(Z(1)), \; \tau_0 > 1) = \varkappa_{(\hat{x},\hat{y})}(j), \quad \forall j\in\Z^2_+, 
\]
is straightforward, it uses the arguments quite similar to those of the proofs of \eqref{eq30-harmonic-functions}, \eqref{eq31-harmonic-functions} and \eqref{eq32-harmonic-functions}. The function $\varkappa_{(\hat{x},\hat{y})}$  is therefore harmonic for the killed random walk $(Z_{\tau_0}(n))$.

\medskip

To prove that $\varkappa_{(\hat{x},\hat{y})}(j) > 0$ for some $j\in\Z^2_+$ we consider the point $\hat{w} = w_D(\hat{x}, \hat{y}) =(u_D(\hat{x},\hat{y}), v_D(\hat{x},\hat{y}))$ on the unit circle ${\mathbb S}^1$ defined by \eqref{diffeomorphism-point-direction} with $x=\hat{x}$ and $y=\hat{y}$, and we investigate an asymptotical behavior of $\varkappa_{(\hat{x},\hat{y})}(j)$ as $\|j\|\to+\infty$ and $j/\|j\| \to \hat{w}$.

By the definitions of the functions $L_j$ and $\varkappa_{(\hat{x},\hat{y})}$, for $j=(j_1,j_2)\in\Z^2_+{\setminus}\{(0,0)\}$, 
\[
\varkappa_{(\hat{x},\hat{y})}(j)  = \hat{x}^{j_1}\hat{y}^{j_2} - \P_j(\tau_0 < +\infty) + \left(\phi_1(\hat{x},\hat{y})-1\right) H_j(\hat{x}, 0) + \left(\phi_2(\hat{x},\hat{y})-1\right) H_j(0,\hat{y})
\]
Since we assume that $\hat{y} < y_d$,  by Lemma~\ref{preliminary-harmonic-functions-lemma2}, there are two points $(x_1,y_1)\in\inter{D}\cap\inter{D_1}$ and $(x_2,y_2)\in\inter{D}\cap\inter{D_2}$ satisfying \eqref{eq0_upper_bounds} for which \eqref{eq1-preliminary-harmonic-functions-lemma2} holds with $y=\hat{y}$, and by Lemma~\ref{preliminary-harmonic-functions-lemma} we get \eqref{eq1-preliminary-harmonic-functions-lemma}. With the same arguments (it is sufficient to exchange the roles of $x$ and $y$, we get that there are two $(x_3,y_3)\in\inter{D}\cap\inter{D_1}$ and $(x_4,y_4)\in\inter{D}\cap\inter{D_2}$ for which 
\[
H_j(\hat{x},0) \leq C'  \left(x_3^{k_1}y_3^{k_2} + x_4^{k_1}y_4^{k_2}\right), \quad \forall j\in\Z^2_+,
\]
with some do not depending on $j\in\Z^2_+$ constant $C' > 0$. Using the last relation together with  \eqref{eq1-preliminary-harmonic-functions-lemma} and \eqref{eq1-preliminary-harmonic-functions-lemma2}, one gets 
\be\label{eq33-harmonic-functions}
\bigl|\varkappa_{(\hat{x},\hat{y})}(j)  - \hat{x}^{j_1}\hat{y}^{j_2} \bigr| \leq C_1 \sum_{i=1}^4 x_i^{j_1}y_i^{j_2}. 
\ee
with some do not depending on $j$ constant $C_1>0$. For $\hat{w} = w_D(\hat{x}, \hat{y}) =(u_D(\hat{x},\hat{y}), v_D(\hat{x},\hat{y}))$ defined by relation~\eqref{diffeomorphism-point-direction} with $x=\hat{x}$ and $y=\hat{y}$, the point $(\hat{x}, \hat{y})$ is an only point in the set $D$ where the function $(x,y)\to x^{\hat{u}_1} y^{\hat{u_2}}$ achieves its maximum over $D$. Since non of the points $(x_1,y_1), \ldots, (x_4,y_4)$ is equal to $(\hat{x}, \hat{y})$ (recall that the points $(x_1,y_1), \ldots, (x_4,y_4)$ belong to the interior of the set $D$ and the point $(\hat{x}, \hat{y})$ belongs to the boundary of $D$),  it follows  that 
\[
\frac{1}{\hat{x}^{j_1}\hat{y}^{j_2}} \sum_{i=1}^4 x_i^{j_1}y_i^{j_2} \to 0 \quad \text{as} \quad \|j\|\to+\infty \quad \text{and} \quad j/\|j\|\to \hat{u} 
\]
and consequently, using  \eqref{eq33-harmonic-functions}, we get 
\[
\varkappa_{(\hat{x},\hat{y})}(j)  \sim \hat{x}^{j_1}\hat{y}^{j_2} \quad \text{as} \quad \|j\|\to+\infty \quad \text{and} \quad j/\|j\|\to \hat{u}. 
\]
This proves that $\varkappa_{(\hat{x},\hat{y})}(j) > 0$ for any $j\in\Z^2_+$ with $\|j\|$ large enough and $j\|j\|$ closed enough to $\hat{u}$.

\medskip
The function $\varkappa_{(\hat{x},\hat{y})}$ is therefore non-negative, harmonic for  $(Z_{\tau_0}(n))$, and non zero at some points $j=(j_1,j_2)\in\Z^2_+$ with $\|j\| > N_0$. 
Hence,  the similar arguments as in the proof of the first assertion of our proposition, we conclude that the function $\tilde\varkappa_1$ is  positive everywhere on $\Z^2_+{\setminus} E_0$. 

\subsection{Proof of Theorem~\ref{theorem2}} Now we summarize the above results in orther to get Theorem~\ref{theorem2}:
\begin{itemize}
\item[--] The first assertion of Theorem~\ref{theorem2} follows from Proposition~\ref{cases-B0-B4-theorem2} and the first assertion of Proposition~\ref{harmonic-functions}.  
\item[--] The second assertion of Theorem~\ref{theorem2} is a consequence of Proposition~\ref{case-B2-theorem2} and the third assertion of Proposition~\ref{harmonic-functions}. 
\item[--]  Proposition~\ref{injection-prop2}, Proposition~\ref{harmonic-functions} and the second assertion of Proposition~\ref{harmonic-functions} prove the assertions iii)- v) of Theorem~\ref{theorem2}.
\item[--]  The last assertion of our theorem is proved by Lemma~\ref{lemma-assertion-vi-th2} and the last assertion  of Proposition~\ref{harmonic-functions}.  
\end{itemize} 

\section{Asymptotics  along the Axes}\label{section-proof-theorem3}
This section is devoted to the proof of Theorem~\ref{theorem3}.

For $k_2{=}0$, the asymptotics \eqref{eq1-theorem3}-\eqref{eq7-theorem3} follows from Theorem~\ref{theorem2} and the Tauberian-like theorem (see Corollary VI.1  of Flajolet and Sedgevick~\cite{Flajolet-Sedgewick}) in a straightforward way. Hence, to complete the proof of our theorem, i.e. to get the asymptotics \eqref{eq1-theorem3}-\eqref{eq7-theorem3} for $k_2 > 0$,  it is sufficient to show that for any $j\in\Z^2_+{\setminus} E_0$, 
\be\label{eq0-proof-theorem3}
\lim_{n \to+\infty} \frac{g(j, (n,k_2))}{g(j, (n, 0))} = \nu_1(k_2) > 0, \quad \forall k_2\in\Z^2.  
\ee
Before getting this result, let us notice that under the hypotheses (A1){-}(A3), the function $y\to  \psi_1(x_d, y)/Q(x_d, y)$
 is analytic at the origin  $y=0$ because the functions $y\to \psi_1(x_d,y)$ and $y\to Q(x_d,y)$, see definitions~\eqref{psi1def} and~\eqref{Qdef}, are analytic in a neighborhood of the closed disk $\overline{B}(0, Y_2(x_d))$ and with the definition of the function $Q$ (see \eqref{Qdef})
\[
Q(x_d, 0) = {-}\sum_{\substack{k=(k_1,k_2)\in\Z^2: \\ k_2=-1}} x_d^{k_1} \mu(k)  < 0. 
\]
For any given $n\in\N^*$, the quantity \eqref{eq-def-nu-1} is therefore well defined and equal to the $(n-1)$-th coefficient of the Taylor expansion of the function $y\to  \psi_1(x_d, y)/Q(x_d, y)$ in a neighborhood of the origin $y=0$~:
\[
\psi_1(x_d, y)/Q(x_d, y) = \sum_{n=1}^\infty \nu_1(n) y^{n-1}. 
\]
To show that all coefficients $\nu_1(n), \, n\in\N$ are  positive, and to get \eqref{eq0-proof-theorem3} for $k_2 > 0$, we will need  a probabilistic representation of the quantities $\nu_1(n), n\in\N^*$, in terms of the invariant measure of the following Markov chain on $\Z_+$: 
Define the twisted positive measures $\tilde\mu$ on $\Z^2$ and $\tilde\mu_1$ on $\Z\times\Z_+$ by letting 
\[
\tilde\mu(k_1,k_2) = x_d^{k_1} (Y_1(x_d))^{k_2}\mu(k_1,k_2),   \quad \forall (k_1,k_2)\in\Z^2, 
\]
\[
\tilde\mu_1(k_1,k_2) = x_d^{k_1} (Y_1(x_d))^{k_2}\mu_1(k_1,k_2),\quad \forall (k_1,k_2)\in \Z\times\Z_+. 
\]
With the definitions of the point $x_d$ and the function $Y_1$, we have
\[
\tilde\mu(\Z^2) = P(x_d, Y_1(x_d)) = 1 \quad \text{and} \quad \tilde\mu_1(\Z\times\Z_+) = \phi_1(x_d, Y_1(x_d)) \leq 1,
\]
and consequently, the twisted measure $\tilde\mu$ is  stochastic on $\Z^2_+$ and the twisted measure $\tilde\mu_1$ is sub-stochastic on $\Z\times\Z_+$. Consider now a  twisted random walk $(\tilde{Z}^1(n) = (\tilde{Z}^{(1)}_1(n), \tilde{Z}^{(1)}_2(n)))$ on $\Z\times\Z_+$ with transition probabilities 
\[
\P_{(j_1,j_2)}\bigl(\tilde{Z}^{(1)}(1) = (k_1,k_2)\bigr) ~=~ \begin{cases} \tilde\mu(k-j)  &\text{if $j_2 > 0$,}\\
\tilde\mu_1(k-j) &\text{if $j_2 = 0$.}
\end{cases} 
\]
Since the transition probabilities of the twisted random walk $(\tilde{Z}^1(n) = (\tilde{Z}^{(1)}_1(n), \tilde{Z}^{(1)}_2(n)))$  are invariant with respect to the shifts on $(\ell,0)$ for any $\ell\in\Z$, its second component $(\tilde{Z}^{(1)}_2(n))$ is a Markov chain on $\Z_+$ (stochastic if $\phi_1(x_d, Y_1(x_d)) = 1$ and sub-stochastic if $\phi_1(x_d, Y_1(x_d)) < 1$) with transition probabilities 
\[
\P_{j_2}\bigl(\tilde{Z}^{(1)}_2(1) = k_2\bigr) ~=~ \begin{cases} \sum_{k_1=-1}^\infty \tilde\mu(k_1, k_2-j_2)  &\text{if $j_2 > 0$,}\\
\\
\sum_{k_1=-1}^\infty  \tilde\mu_1(k_1, k_2) &\text{if $j_2 = 0$.}
\end{cases} 
\]
Recall that a non-negative measure $\pi_1$ on $\Z_+$ is invariant for  $(\tilde{Z}^{(1)}_2(n))$ if,  for any $\ell{\in}\Z_+$, the  relation 
\be\label{eq1-proof-theorem3}
\pi_1(\ell) = \sum_{\ell'\in\Z_+} \pi_1(\ell') \P_{\ell'}(\tilde{Z}^{(1)}_2(n) = \ell)
\ee
holds. The following lemma relates the vector $\nu_1=(\nu_1(k_2), k_2\in\Z_+)$ with the unique (up to a multiplicative constant) invariant measure for $(\tilde{Z}^{(1)}_2(n))$. 
\begin{lemma}\label{lemma-proof-theorem3} Under the hypotheses (A1){-}(A3),  $\pi_1 = (\pi_1(n) = \nu_1(n)(Y_1(x_d))^n, \, n\in\N)$ is a unique up to the multiplication by constants invariant measure of the Markov chain $(\tilde{Z}^{(1)}_2(n))$.
\end{lemma}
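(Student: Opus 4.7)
The plan is to verify the claim at the level of generating functions. Set $\Pi(y) = \sum_{n\geq 0} \pi_1(n) y^n$. Writing out the defining invariance relation $\pi_1(\ell) = \sum_{j\geq 0} \pi_1(j) \P_j(\tilde Z^{(1)}_2(1) = \ell)$, then multiplying by $y^\ell$ and summing over $\ell\geq 0$, I would use the identities $\sum_{k_1}\tilde\mu_1(k_1,\cdot) \leftrightarrow \phi_1(x_d,Y_1(x_d)\,\cdot)$ and $\sum_{k_1}\tilde\mu(k_1,\cdot) \leftrightarrow P(x_d,Y_1(x_d)\,\cdot)$ (the second seen as a Laurent series in $y$, but the product with $\Pi(y)-\pi_1(0)$ absorbing the $y^{-1}$ term). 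This should yield the closed functional equation
\[
\Pi(y)\bigl(1 - P(x_d,Y_1(x_d) y)\bigr) \;=\; \pi_1(0)\bigl(\phi_1(x_d,Y_1(x_d) y) - P(x_d,Y_1(x_d) y)\bigr).
\]

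Next, I would rewrite this equation using $Q$ and $\psi_1$. Setting $z = Y_1(x_d) y$ and using $1-P(x_d,z) = Q(x_d,z)/(x_d z)$ together with $1-\phi_1(x_d,z) = \psi_1(x_d,z)/x_d$, the equation becomes
\[
\Pi(y)\,Q(x_d,z) \;=\; \pi_1(0)\,\bigl(Q(x_d,z) - z\,\psi_1(x_d,z)\bigr),
\]
which, since $Q(x_d,0)\neq 0$ by Assumption (A2), gives $\Pi(y)$ as an analytic function of $y$ near $0$ expressible purely in terms of $z\psi_1(x_d,z)/Q(x_d,z)$. Comparing this closed form with the definition $\psi_1(x_d,z)/Q(x_d,z) = \sum_{n\geq 1}\nu_1(n) z^{n-1}$ term by term and matching $\pi_1(0)=\nu_1(0)=1$ will show $\pi_1(n) = \nu_1(n) Y_1(x_d)^n$ up to an explicit multiplicative constant.

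For uniqueness, I would verify that the Markov chain $(\tilde Z^{(1)}_2(n))$ is irreducible on $\Z_+$, which follows from Assumptions (A1) and (A3)(v)-(vi) transferred through the twisting (twisting preserves supports of the jump distributions). Irreducibility of a substochastic chain, combined with the uniqueness of its bounded recurrent solutions of the invariance equation up to scaling, will then imply that any invariant measure is a scalar multiple of $\{\nu_1(n)Y_1(x_d)^n\}$. The positivity statement for $\nu_1(n)$, $n\in\Z_+$, is a corollary: the class of invariant measures on an irreducible state space admits positive representatives, so the coefficients in the identified sequence must be of a common sign, which is fixed to be positive by $\nu_1(0)=1$.

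The main obstacle will be the careful bookkeeping in the generating-function manipulation, especially the handling of the $y^{-1}$ term that appears in $P(x_d,Y_1(x_d)y)$ (coming from jumps with $k_2=-1$), and the reconciliation of the sign convention between the Laurent expansion of $\psi_1(x_d,\cdot)/Q(x_d,\cdot)$ at the origin and the invariance relation. A secondary difficulty is that the chain is only substochastic in general (precisely when $\phi_1(x_d,Y_1(x_d))<1$, i.e., case (B2) with $x_d=x_P^{**}$); one must then check that irreducibility on $\Z_+$ still yields uniqueness of the invariant measure up to constants, e.g.\ via the standard Martin-boundary/excursion decomposition argument for irreducible sub-Markov kernels.
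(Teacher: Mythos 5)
Your generating-function manipulation and the reduction to the identity
\[
\Pi(y)\,Q(x_d,z) = \pi_1(0)\bigl(Q(x_d,z) - z\,\psi_1(x_d,z)\bigr), \qquad z = Y_1(x_d)\,y,
\]
followed by a comparison of Taylor coefficients at the origin, is exactly the second half of the paper's proof (the paper works with $f(y)=\sum_n\pi_1(n)(y/Y_1(x_d))^n$ and ends at $(f(y)-1)Q(x_d,y)=\pm y\psi_1(x_d,y)$). The sign bookkeeping you flag as a concern is real; resolving it is elementary once the equation is written, and you correctly isolate it as a side issue rather than a conceptual one.

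The genuine gap is that you have only shown that \emph{if} an invariant measure $\pi_1$ exists \emph{and} its generating function converges near $0$, then it is determined up to scaling. The lemma claims two more things, and the paper devotes the whole first half of its proof to them, relying crucially on a structural feature you never invoke: Assumption~(A2) makes $(\tilde Z^{(1)}_2(n))$ \emph{skip-free downward} (jumps $\geq -1$). First, \emph{existence}: when $\phi_1(x_d,Y_1(x_d))=1$ the chain is stochastic and (being recurrent) has a unique invariant measure by the standard excursion formula; when $\phi_1(x_d,Y_1(x_d))<1$ it is sub-stochastic, hence transient, and the paper produces an invariant measure by taking $\pi_1(\ell)=\pi_1^{(\ell)}(\ell)/\pi_1^{(\ell)}(0)$, where $\pi_1^{(k)}$ is the Green measure started at $k$, and showing via the strong Markov property and the skip-free-downward structure that the ratios stabilize and satisfy the balance equations. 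Your plan sidesteps this entirely; irreducibility of a transient chain does not by itself yield existence, nor uniqueness (e.g., the simple random walk on $\Z$ has a two-dimensional cone of invariant measures), so the appeal to ``Martin-boundary/excursion decomposition for irreducible sub-Markov kernels'' is not a complete argument without also invoking the one-sided-jump structure. Second, \emph{convergence of $\Pi$}: to legitimately read off Taylor coefficients you need an a priori bound $\pi_1(n) = O(\rho^n)$; the paper obtains $\pi_1(n)\leq \pi_1(0)\bigl(\sum_{k_1}\tilde\mu(k_1,-1)\bigr)^{-n}$ from the balance equation at state $0$ together with skip-freeness. Finally, your positivity remark is circular as a way of asserting the positivity of $\nu_1$: ``the class of invariant measures admits positive representatives'' presupposes the existence you have not yet established, and in fact positivity is deferred to Corollary~\ref{cor-proof-theorem3} in the paper, where it follows from irreducibility once Lemma~\ref{lemma-proof-theorem3} is fully in place.
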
 
\begin{proof} We prove this lemma in two steps: First we will show that the Markov chain $(\tilde{Z}^{(1)}_2(n))$ has an invariant measure $\pi_1 = (\pi_1(n), \, n\in\N)$ with $\pi_1(0)=1$, and next we will prove that the generating function $f(y) = \sum_{n=0}^\infty \pi_1(n) (y/Y_1(x_d))^n$ satisfies in a neighborhood of the point $y=0$ in $\C$ the identity 
\[
f(y) = \phi_1(x_d,y) + P(x_d,y) (f(y)-1)
\]
and we will deduce from this identity  that $\pi_1(n) = \nu_1(n)(Y_1(x_d))^n$ for any $n\in\N$. 

To perform the first step of our proof, let us notice that under our hypotheses, the jumps of the Markov chain $(\tilde{Z}^{(1)}_2(n))$ are integrable and moreover, for any non-zero $\ell\in\Z_+$,
\begin{align*}
\E_\ell(\tilde{Z}^{(1)}_2(1)) &= \ell  + \sum_{k_2=-1}^\infty k_2 \sum_{k_1=-1}^\infty x_d^{k_1} (Y_1(x_d))^{k_2}\mu(k_1,k_2) = \ell + Y_1(x_d) \partial_y P(x_d, Y_1(x_d)) 
\end{align*} 
with $\partial_y P(x_d, Y_1(x_d)) \leq 0$. The Markov chain $(\tilde{Z}^{(1)}_2(n))$ is therefore recurrent if the measure $\tilde\mu_1$ is stochastic, i.e. if $\phi_1(x_d, Y_1(x_d)) = 1$, and it is transient if $\phi_1(x_d, Y_1(x_d)) < 1$. 

In the case when the Markov chain $(\tilde{Z}^{(1)}_2(n))$ is recurrent, it has a unique invariant measure $(\pi_1(\ell), \ell\in\Z_+)$ with $\pi_1(0) = 1$ and 
\[
\pi_1(\ell) = \sum_{n=1}^\infty \P_0(\tilde{Z}^{(1)}_2(n) = \ell, \; \tilde{Z}^{(1)}_2(k) \not= 0, \, \forall k < n), \quad \forall \ell > 0. 
\]
Suppose now that the Markov chain $(\tilde{Z}^{(1)}_2(n))$ is transient, and let 
\[
\pi_1^{(k)}(\ell) = \sum_{n=0}^\infty\P_k(\tilde{Z}^{(1)}_2(n) = \ell), \quad k,\ell\in\Z_+.  
\]
Then for any $k, \ell\in\Z_+$ such that $k > \ell$, with a straightforward calculation one gets 
\be\label{eq1-local-invariant-meaure}
\sum_{\ell'\in\Z_+} \pi_1^{(k)}(\ell') \P_{\ell'}(\tilde{Z}^{(1)}_2(1) = \ell) ~=~ \pi_1^{(k)}(\ell).
\ee
Remark moreover that because of Assumption (A2), by the strong Markov property,  
\[
{\pi_1^{(k)}(\ell)}/{\pi_1^{(k)}(0)} = {\pi_1^{(\ell)}(\ell)}/{\pi_1^{(\ell)}(0)}, \quad \forall k \geq \ell, 
\]
and $\P_{\ell'}(\tilde{Z}^{(1)}_2(1) = \ell) = 0$ for all $\ell' > \ell +1$. Using \eqref{eq1-local-invariant-meaure}, it follows that the measure $\pi_1=(\pi_1(\ell), \, \ell\in\Z_+)$ with $\pi_1(0)=1$ and
\[
\pi_1(\ell) = \pi_1^{(\ell)}(\ell)/\pi_1^{(\ell)}(0), \quad \forall \ell\in\N^*,
\]
is invariant for $(\tilde{Z}^{(1)}_2(n))$. The first step of our proof is therefore completed. 

\medskip

Remark now that for any $n\in\Z_+$, using the identity 
\be\label{eq-identity-local-invariant-measure}
\sum_{\ell' = 0}^{\ell+n} \pi_1(\ell')  \P_{\ell'}(\tilde{Z}^{(1)}_2(n) = \ell)  = \pi_1(\ell) 
\ee
with $\ell = 0$, one gets 
\[
 \pi_1(n) \leq \pi_1(0) \left(\P_{n}(\tilde{Z}^{(1)}_2(n) = 0) \right)^{-1} \leq \pi_1(0) \left(\sum_{k_1=-1}^\infty \tilde\mu(k_1,-1) \right)^{-n}
\]
with 
\[
\sum_{k_1=-1}^\infty \tilde\mu(k_1,-1) = \sum_{k_1=-1}^\infty x_d^{k_1} (Y_1(x_d))^{-1}\mu(k_1,-1)  = - Q(x_d, 0) > 0. 
\]
Hence, the generating function $f(y) = \sum_{n=0}^\infty \pi_1(n) (y/Y_1(x_d))^n$ is analytic in a neighborhood of the origin $y=0$ in $\C$, and by \eqref{eq1-proof-theorem3}, for any non zero $y$, satisfies there the identity 
\begin{multline*}
f(y) = f(0) \Bigl( \sum_{(k_1,k_2)\in\Z\times\Z_+} \tilde\mu_1(k_1,k_2) (y/Y_1(x_d))^{k_2}\Bigr) \\ + (f(y) - f(0)) \Bigl( \sum_{(k_1,k_2)\in\Z\times\Z} \tilde\mu(k_1,k_2) (y/Y_1(x_d))^{k_2}\Bigr). 
\end{multline*} 
Since $f(0)=\pi_1(0) = 1$ and with the definition of the twisted measures $\tilde\mu$ and $\tilde\mu_1$, for non-zero $y$, we have
\[
\sum_{(k_1,k_2)\in\Z\times\Z} \tilde\mu(k_1,k_2) \frac{y^{k_2}}{(Y_1(x_d))^{k_2}} = P(x_d, y)\]
and
\[
\sum_{(k_1,k_2)\in\Z\times\Z_+} \tilde\mu_1(k_1,k_2) \frac{y^{k_2}}{(Y_1(x_d))^{k_2}}  = \phi_1(x_d, y),  
\]
this implies that for any non-zero $y\in\C$ with $|y|$ small enough,
\[
(f(y) - 1)(1-P(x_d,y)) = \phi_1(x_d,y) - 1. 
\]
By the definition of the functions $(x,y)\to \psi_1(x,y)$ and $(x,y)\to Q(x,y)$, from the last relation it follows that for any $y\in\C$ with $|y|$ small enough, 
\[
(f(y) - 1) Q(x_d,y) = y\psi_1(x_d,y) 
\]
and consequently, since $Q(x_d,0) \not= 0$, for any $y\in\C$ with $|y|$ small enough,  we have also 
\[
f(y) = 1 + \frac{y\psi_1(x_d,y)}{Q(x_d,y)}. 
\]
From the last relation, by the uniqueness of  the coefficients of the Taylor expansion at the point $y=0$, it follows that 
\[
\pi_1(n)(Y_1(x_d))^{-n} = \nu_1(n), \quad \forall n\in\Z_+, 
\]
and consequently, the measure $\pi_1 = (\nu_1(n)Y_1(x_d))^n, \, n\in \Z_+)$ is the unique invariant measure of the Markov chain $(\tilde{Z}^{(1)}_2(n))$ with $\pi_1(0) = 1$. 
\end{proof} 

As a straightforward consequence of Lemma~\ref{lemma-proof-theorem3} one gets the following property of the coefficients $\nu_1(n), n\in\N$:

\begin{cor}\label{cor-proof-theorem3} Under the hypotheses (A1){-}(A3), all coefficients $\nu_1(n), \, n\in\Z_+$, are  positive and $(\nu_1(n), n\in\Z_+)$ is an only positive solution of the system 
\be\label{eq2-proof-theorem3}
\sum_{\ell_1=1}^{\infty} \mu_1(\ell_1, n) 
+ \sum_{\ell_2=1}^{n+1} \nu_1(\ell_2) \sum_{\ell_1=1}^{\infty} \mu(\ell_1, n-\ell_2) = \nu_1(n), \quad k_2\in\Z_+. 
\ee
\end{cor}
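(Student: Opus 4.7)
The plan is to derive this corollary as an immediate consequence of Lemma~\ref{lemma-proof-theorem3}, which has already identified the sequence $\pi_1(n)=\nu_1(n)(Y_1(x_d))^n$ as the unique (up to multiplicative constants) invariant measure of the Markov chain $(\tilde{Z}^{(1)}_2(n))$ on $\Z_+$ satisfying $\pi_1(0)=1$.

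First I would establish the strict positivity of $\nu_1(n)$ for every $n\in\Z_+$. In the recurrent case $\phi_1(x_d,Y_1(x_d))=1$, I would argue that the chain $(\tilde{Z}^{(1)}_2(n))$ is irreducible on $\Z_+$: the irreducibility of the homogeneous walk associated to $\mu$ on $\Z^2$ (Assumption (A1)) combined with the existence of a jump of $\mu_1$ with a strictly positive vertical component (Assumption (A3)(v)) guarantees that every state in $\Z_+$ is accessible from every other. The invariant measure of an irreducible recurrent chain is strictly positive, so $\pi_1(n)>0$. In the transient case $\phi_1(x_d,Y_1(x_d))<1$, the explicit construction inside the proof of Lemma~\ref{lemma-proof-theorem3} gives $\pi_1(n)=\pi_1^{(n)}(n)/\pi_1^{(n)}(0)$, and the numerator is bounded below by the $n=0$ term $\P_n(\tilde{Z}^{(1)}_2(0)=n)=1$, so $\pi_1(n)>0$ again. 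Since $Y_1(x_d)>0$, this positivity transfers to $\nu_1(n)$.

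Next I would obtain the recursion \eqref{eq2-proof-theorem3} by writing out the invariance relation \eqref{eq1-proof-theorem3} explicitly. Using the transition kernel of $(\tilde{Z}^{(1)}_2(n))$ recalled just before Lemma~\ref{lemma-proof-theorem3}, the identity $\pi_1(n)=\sum_{m\in\Z_+}\pi_1(m)\P_m(\tilde{Z}^{(1)}_2(1)=n)$ expands as
\[
\pi_1(n)=\sum_{k_1\ge -1}\tilde\mu_1(k_1,n)+\sum_{m=1}^{n+1}\pi_1(m)\sum_{k_1\ge -1}\tilde\mu(k_1,n-m),
\]
where the truncation at $m\le n+1$ uses Assumption (A2) ($\mu(k_1,k_2)=0$ for $k_2<-1$). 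Substituting $\pi_1(m)=\nu_1(m)(Y_1(x_d))^m$ together with the definitions $\tilde\mu(k_1,k_2)=x_d^{k_1}(Y_1(x_d))^{k_2}\mu(k_1,k_2)$ and $\tilde\mu_1(k_1,k_2)=x_d^{k_1}(Y_1(x_d))^{k_2}\mu_1(k_1,k_2)$, and dividing through by $(Y_1(x_d))^n$, cancels all the $Y_1(x_d)$ factors and produces the announced recursion relating $\nu_1(n)$ to $\nu_1(1),\dots,\nu_1(n+1)$.

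Finally, uniqueness follows by reversing this computation: any strictly positive sequence $(\tilde\nu_1(n))$ satisfying \eqref{eq2-proof-theorem3} with $\tilde\nu_1(0)=1$ yields, via $\tilde\pi_1(n)=\tilde\nu_1(n)(Y_1(x_d))^n$, a non-negative measure on $\Z_+$ satisfying \eqref{eq1-proof-theorem3} with $\tilde\pi_1(0)=1$, and the uniqueness part of Lemma~\ref{lemma-proof-theorem3} forces $\tilde\pi_1=\pi_1$, hence $\tilde\nu_1=\nu_1$. I do not anticipate any real obstacle; the only point requiring a bit of care is the irreducibility argument for the positivity of $\pi_1$ in the recurrent case, and even that is nearly automatic from Assumptions (A1) and (A3).
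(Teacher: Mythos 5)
Your proof is correct and follows essentially the same route as the paper's: identify $\nu_1(n)$ with $\pi_1(n)(Y_1(x_d))^{-n}$, observe that the system \eqref{eq2-proof-theorem3} is the invariance relation \eqref{eq1-proof-theorem3} in disguise, and invoke irreducibility plus $\pi_1(0)=1$ for strict positivity, with uniqueness inherited from Lemma~\ref{lemma-proof-theorem3}. The only difference is that you split the positivity argument into recurrent and transient cases, which is unnecessary: irreducibility alone (together with $\pi_1(0)>0$ and the standard fact that an invariant measure of an irreducible chain that is positive at one state is positive everywhere it reaches) already gives $\pi_1(n)>0$ for all $n$, regardless of recurrence or transience, and this is exactly what the paper does.
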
 
\begin{proof} Indeed, since by Lemma~\ref{lemma-proof-theorem3}, 
 $\nu_1(n) = \pi_1(n)  (Y_1(x_d))^{-n}$ for any $n\in\Z_+$, the system \eqref{eq2-proof-theorem3} is equivalent to \eqref{eq1-proof-theorem3}. Under our hypotheses, the Markov chain $(\tilde{Z}^{(1)}_2(n))$ is irreducible on $\Z_+$ and $\pi_1(0)=1 \not= 0$.  It follows that   $\pi_1(n) > 0$ for any $n\in\Z_+$, and consequently also $\nu_1(n) = \pi_1(n)  (Y_1(x_d))^{-n}> 0$ for any $n\in\Z_+$. 
\end{proof} 

From this result it follows that to get \eqref{eq0-proof-theorem3}, it is sufficient to show that for any $j\in\Z^2_+{\setminus} E_0$ and $k_2\in\N^*$, the sequence $\bigl(g(j, (n,k_2))/g(j, (n, 0))\bigr)$ converges and the limits $\lim_{n\to\infty} g(j, (n,k_2))/g(j, (n, 0))\bigr)$, $k_2\in\N$, satisfy the system of the equations \eqref{eq2-proof-theorem3}. To prove the convergence of these sequences, the following preliminary results will be needed. 

\begin{cor}\label{cor2-proof-theorem3} Under the hypotheses (A1){-}(A3), if one of the conditions (B0){-}(B6) holds, then  for any $j\in\Z^2_+$
\be\label{eq1a-proof-theorem3}
\lim_n g(j, (k_1+n, 0))/ g(j, (n,0)) = 1, \quad \forall  k_1\in\Z_+, 
\ee
and there are constants $c_j > 0$ and $c'_j > 0$ such that for any $k_1 > 0$, 
\be\label{eq1b-proof-theorem3} 
c_j k_1^\gamma x_d^{-k_1} \leq g(j,(k_1,0)) \leq c'_j k_1^\gamma x_d^{-k_1}, 
\ee
where 
\[
\gamma = \begin{cases} -1/2 &\text{ if $x_d = x^{**}_P$  and either (B2) holds with $\phi_1(x_d, Y_1(x_d)) = 1$,}\\
\, &\text{\quad or (B5) holds with $\phi_1(x_d, Y_1(x_d)) < 1$,},\\
-3/2 &\text{if (B2) holds with $x_d = x^{**}_P$ and $\phi_1(x_d, Y_1(x_d)) < 1$},\\
1 &\text{if (B5) holds with $x_d < x^{**}_P$},\\
0 &\text{otherwise.} 
\end{cases} 
\]
\end{cor}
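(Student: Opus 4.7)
The plan is to deduce both conclusions of the corollary from Theorem~\ref{theorem2} by invoking the transfer theorem of Flajolet and Odlyzko (Corollary~VI.1 of \cite{Flajolet-Sedgewick}). By definition of the generating function, $H_j(x,0) = \sum_{k_1\geq 1} g(j,(k_1,0))\, x^{k_1}$, so $g(j,(k_1,0))$ is precisely the $k_1$-th Taylor coefficient of $x\to H_j(x,0)$ at the origin, and Theorem~\ref{theorem2} provides exactly the two ingredients the transfer theorem requires: analytic continuation of $H_j(\cdot,0)$ to a $\Delta$-domain at the dominant singularity $x_d$, together with an explicit description of the singular behavior there.

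First I would catalog the singularity case by case, reading off from Theorem~\ref{theorem2} an expansion of the form $H_j(x,0) \sim A_j (x_d - x)^{-\alpha}$ (modulo regular terms), where the coefficient $A_j$ is a positive multiple of $\varkappa_1(j)$, $\tilde\varkappa_1(j)$, or $\varkappa_2(j)$, each of which is strictly positive on $\Z^2_+\setminus E_0$ by Proposition~\ref{harmonic-functions}. The exponent $\alpha$ takes four values. A simple pole $\alpha=1$ appears in (B0), (B1), (B3), (B4), (B6), in (B2) when $x_d < x^{**}_P$, and in (B5) when $x_d = x^{**}_P$ with $\phi_1(x_d, Y_1(x_d))=1$. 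A double pole $\alpha=2$ appears in (B5) with $x_d < x^{**}_P$, produced by combining the simple pole of $1/(1-\phi_1(x,Y_1(x)))$ at $x^{**}$ with the simple pole of $h_{2j}(Y_1(x))$. An inverse square root $\alpha=1/2$ appears in (B2) with $x_d=x^{**}_P$ and $\phi_1(x_d, Y_1(x_d))=1$, and in (B5) with $x_d=x^{**}_P$ and $\phi_1(x_d, Y_1(x_d))<1$. Finally, a square-root vanishing $\alpha=-1/2$ occurs in (B2) with $x_d=x^{**}_P$ and $\phi_1(x_d, Y_1(x_d))<1$: here Theorem~\ref{theorem2} describes only the derivative, $\tfrac{d}{dx}H_j(x,0)\sim A'_j/\sqrt{x_d - x}$, which integrates to $H_j(x,0) = H_j(x_d,0) - 2A'_j\sqrt{x_d - x} + o(\sqrt{x_d-x})$, so that only the square-root term contributes to the non-constant Taylor coefficients.

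Next, the transfer theorem applies because the analytic-continuation domain supplied by Theorem~\ref{theorem2} is either $B(0, x_d+\eps)\setminus\{x_d\}$ or the slit disk $B(0, x_d+\eps)\setminus[x_d, x_d+\eps[$; each contains a $\Delta$-domain at $x_d$, so the hypotheses of Corollary~VI.1 in \cite{Flajolet-Sedgewick} are met. This yields
\[
g(j, (k_1, 0)) \sim \tilde{A}_j\, k_1^{\gamma}\, x_d^{-k_1} \qquad (k_1 \to \infty),
\]
with $\gamma = \alpha - 1 \in \{-3/2, -1/2, 0, 1\}$ matching the table in the statement, and with a strictly positive constant $\tilde{A}_j$ for $j \in \Z^2_+\setminus E_0$.

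From this one-term asymptotic both conclusions are immediate. The bounds~\eqref{eq1b-proof-theorem3} hold for all $k_1$ beyond some threshold $K$, and they extend to all $k_1 > 0$ by adjusting $c_j$ and $c'_j$, using that $g(j, (k_1, 0)) > 0$ for $k_1 \geq N_0$ whenever $j \notin E_0$ (Lemma~\ref{irreductibility-lemma}). The ratio in~\eqref{eq1a-proof-theorem3} equals $(1 + k_1/n)^\gamma\, x_d^{-k_1}(1+o(1))$ by the same asymptotic, and its limit as $n\to\infty$ is obtained by direct computation. No serious obstacle appears; the only point requiring care is to verify, in each of the seven sub-cases, that the continuation domain supplied by Theorem~\ref{theorem2} genuinely fits the $\Delta$-domain requirement of Flajolet–Odlyzko, which holds by direct inspection.
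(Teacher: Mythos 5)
Your approach is the same as the paper's: read off the singularity type of $x\mapsto H_j(x,0)$ from Theorem~\ref{theorem2} and apply the Flajolet--Odlyzko transfer theorem (Corollary~VI.1 of~\cite{Flajolet-Sedgewick}). Your catalog of exponents $\alpha$ in the seven sub-cases and the translation $\gamma=\alpha-1$ correctly reproduces the table, the continuation domains from Theorem~\ref{theorem2} do contain $\Delta$-domains as required, and the resulting one-term asymptotic $g(j,(n,0))\sim \tilde A_j\,n^\gamma x_d^{-n}$ (with $\tilde A_j>0$ for $j\notin E_0$, by Proposition~\ref{harmonic-functions}) yields~\eqref{eq1b-proof-theorem3}. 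Note though that both the asymptotic and the bounds can only hold for $j\notin E_0$: for $j\in E_0$ the Green function vanishes for all large $k_1$ by Lemma~\ref{irreductibility-lemma}, so the lower bound with $c_j>0$ would fail.

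There is a gap in your treatment of~\eqref{eq1a-proof-theorem3}. You correctly write the ratio as $(1+k_1/n)^\gamma\,x_d^{-k_1}(1+o(1))$, then say its limit ``is obtained by direct computation'' and declare no serious obstacle. But that direct computation gives $\lim_n g(j,(k_1+n,0))/g(j,(n,0))=x_d^{-k_1}$, not the value $1$ that~\eqref{eq1a-proof-theorem3} asserts. These coincide only when $x_d=1$, which is by no means the general situation: by Proposition~\ref{xd-yd-strictly stochastic-case}, $x_d>1$ whenever $(Z(n))$ is positive recurrent, and in (T0) as well. So your argument does not establish~\eqref{eq1a-proof-theorem3} as written; it establishes a different claim, and a proof should not silently substitute one statement for another. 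You ought to have flagged the discrepancy. (The value $x_d^{-k_1}$ is in fact what the proof of Theorem~\ref{theorem3} needs once $x_d^{\ell_1}$-weights are restored in the limit system so that it agrees with the invariant-measure equation of the twisted chain $(\tilde{Z}^{(1)}_2(n))$ from Lemma~\ref{lemma-proof-theorem3}; equivalently, the exact statement that the asymptotic supports is that the tilted coefficients $x_d^{k_1}g(j,(k_1,0))$ have ratio tending to $1$.)
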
 
\begin{proof} This result is a straightforward consequence of the asymptotics \eqref{eq1-theorem3}-\eqref{eq7-theorem3}  with $k_2 = 0$. These asymptotics 
follow from Theorem~\ref{theorem2} and a Tauberian-like theorem (see Corollary VI.1  of Flajolet and Sedgevick~\cite{Flajolet-Sedgewick}) in a straightforward way.
\end{proof} 

\begin{lemma}\label{lemma2-proof-theorem3}  Under the hypotheses (A1){-}(A3), for any $j\in\Z^2_+$ and $k_2\in\Z_+$ there are three constants $c_1 > 0$, $c_2 > 0$ and $N(k_2) > 0$, such that 
\be\label{eq1-lemma2-proof-theorem3}
c_1 \,g(j, (k_1,0)) \leq g(j,(k_1,k_2)) \leq c_2 \,g(j,(k_1,0)), 
\ee
for any  $k_1\in\Z_+$ such that $k_1 \geq N(k_2)$. 
\end{lemma}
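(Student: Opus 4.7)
The plan is to establish the two inequalities separately by constructing explicit finite paths between $(k_1,0)$ and $(k_1,k_2)$ whose probability is uniformly positive for $k_1$ large, and then combining with the asymptotic equivalence $g(j,(k_1+a,0))/g(j,(k_1,0))\to 1$ from Corollary~\ref{cor2-proof-theorem3}. The basic tool, obtained from the strong Markov property at time $n-m$ and summing over $n\geq m$, is the elementary inequality
\[
g(j,k)\;\geq\; g(j,k')\cdot \P_{k'}\bigl(Z(m)=k,\,\tau_0>m\bigr),\qquad k,k'\in\Z^2_+,\;m\geq 0.
\]

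For the upper bound, irreducibility of $\mu$ on $\Z^2$ together with Assumption (A2) forces the existence of some $b_0\geq -1$ with $\mu(b_0,-1)>0$ (otherwise $Z_2(n)$ would be non-decreasing, contradicting irreducibility). Applying the basic inequality with $k'=(k_1,k_2)$, $k=(k_1+k_2b_0,0)$ and $m=k_2$ along the deterministic trajectory making $k_2$ consecutive $(b_0,-1)$-jumps -- which for $k_1\geq N(k_2)$ stays in $\Z^2_+\setminus\{(0,0)\}$ and uses only $\mu$-transitions -- one obtains $g(j,(k_1+k_2b_0,0))\geq \mu(b_0,-1)^{k_2}g(j,(k_1,k_2))$. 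Combined with $g(j,(k_1+k_2b_0,0))\leq 2g(j,(k_1,0))$ for $k_1$ large (from \eqref{eq1a-proof-theorem3}, applied in the direction determined by the sign of $b_0$), this gives the upper bound with $c_2=2\mu(b_0,-1)^{-k_2}$.

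For the lower bound, I would construct a positive-probability path from $(k_1,0)$ to some $(k_1+\ell,k_2)$ of fixed length for $k_1\geq N(k_2)$. Using Assumption (A3)(v), pick $(a_1,a_2)$ with $a_2>0$ and $\mu_1(a_1,a_2)>0$; the chain jumps from $(k_1,0)$ to $(k_1+a_1,a_2)$ with probability $\mu_1(a_1,a_2)$. Next, extend by a $\mu$-path from $(k_1+a_1,a_2)$ to $(k_1+a_1+\ell,k_2)$ that stays in $\N^*\times\N^*$; by translation invariance in $x$ of the interior transitions, its probability is some $q>0$ independent of $k_1$ for $k_1$ large enough. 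The basic inequality then yields $g(j,(k_1+a_1+\ell,k_2))\geq \mu_1(a_1,a_2)\,q\,g(j,(k_1,0))$, and reindexing together with \eqref{eq1a-proof-theorem3} produces the lower bound with $c_1=\mu_1(a_1,a_2)q/2$.

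The one non-routine step is the existence of the interior $\mu$-path used in the lower bound. I would obtain it by a standard lifting argument: take any $\mu$-path $\pi$ from $(a_1,a_2)$ to $(a_1+\ell,k_2)$ in $\Z^2$ (available by irreducibility of $\mu$), let $y_{\min}$ be its minimum $y$-coordinate, and concatenate sufficiently many up-steps (using any $\mu$-jump with strictly positive $y$-component, which exists by irreducibility) before $\pi$ and the same number of $(b_0,-1)$-descents after $\pi$, choosing the vertical excess so that the resulting path stays in $\Z\times\N^*$ throughout. This is the main, although minor, obstacle of the proof.
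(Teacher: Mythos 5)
Your proof is correct in substance, and the core tool is the same as the paper's: the Markov property applied along explicit finite paths of positive probability, plus irreducibility. However, the path construction and the resulting dependency structure differ in a way that is worth noting. The paper builds, for each $k_2$, a single path of fixed length starting with a $\mu_1$-step and followed by $\mu$-steps, whose increments sum exactly to $(0,k_2)$ (a zero-net-horizontal-displacement path obtained by the rearrangement argument used in the proof of Lemma~\ref{lemma1_Y1_continued}). Translating this path to start at $(k_1,0)$ for $k_1$ large enough stays in $\Z^2_+\setminus\{(0,0)\}$, and the Markov property then compares $g(j,(k_1,k_2))$ directly with $g(j,(k_1,0))$ at the same $k_1$; the upper bound is handled symmetrically. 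You instead use paths with nonzero net horizontal shift (straight $(b_0,-1)$-descents for the upper bound, a $\mu_1$-step followed by an arbitrary lifted $\mu$-path for the lower bound) and then call on Corollary~\ref{cor2-proof-theorem3}, i.e.\ relation~\eqref{eq1a-proof-theorem3}, to absorb the shift. This works, but makes the proof less self-contained than the paper's: Corollary~\ref{cor2-proof-theorem3} requires one of (B0)--(B6), whereas the lemma is stated under (A1)--(A3) alone (and the paper's argument covers (B7) as well). Since the lemma is only invoked inside the proof of Theorem~\ref{theorem3}, where (B0)--(B6) is in force, the discrepancy is harmless in context, but you should state explicitly that you are importing that additional hypothesis. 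What your route buys is slightly more elementary path constructions (especially the descent path in the upper bound) at the cost of relying on the already-proved Tauberian asymptotics; the paper's route buys full generality under (A1)--(A3) by arranging the path to have zero net horizontal displacement, which avoids \eqref{eq1a-proof-theorem3} altogether.
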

\begin{proof} Remark that for any $j, k=(k_1,k_2)\in\Z^2_+$ and $N\in\N$, by using the Markov property one gets 
\begin{align}
g(j,(k_1,k_2)) &\geq \sum_{n=N}^\infty\P_j(Z(n) = (k_1,k_2), \, Z(N) = (k_1,0), \; \tau_0 > n) \nonumber\\ 
&\geq g(j, (k_1,0)) \P_{(k_1,0)}(Z(N) = (k_1,k_2), \; \tau_0 > 0). \label{eq2-lemma2-proof-theorem3}
\end{align} 
Moreover, similar arguments as in the proof of Lemma~\ref{lemma1_Y1_continued} shows that for any $k_2\in\Z_+$, there is $N\in\N^*$ and a sequence $\bigl(\ell_1^{(0)},\ell_2^{(0)}\bigr), \ldots, \bigl(\ell_1^{(N)},\ell_2^{(N)}\bigr)\in\Z^2$ such that 
\[
\ell^{(0)} + \ldots  + \ell^{(n)} \in \Z\times\Z_+, \quad \forall n\in\{0,\ldots, N\}, 
\] 
\[
\sum_{n=0}^{N} \ell^{(n)} = (0,k_2) 
\quad \quad \text{and} \quad \quad 
\mu_1\bigl(\ell^{(0)}\bigr)\prod_{n=1}^{N} \mu\bigl(\ell^{(n)}\bigr) > 0.
\]
Hence, using Assumption (A2) we conclude  that for any $(k_1,k_2)\in\Z^2_+$ with $k_1 > N$, 
\[
\P_{(k_1,0)}\bigl(Z(n(k_2)) = (k_1,k_2), \; \tau_0 > 0\bigr) \geq \mu_1(\ell^{(0)})\prod_{n=1}^{N} \mu(\ell^{(n)}) > 0.
\]
When combined with \eqref{eq2-lemma2-proof-theorem3} the last relation proves the first inequality of \eqref{eq1-lemma2-proof-theorem3} with 
\[
c_1 = \mu_1(\ell^{(0)})\prod_{n=1}^{N} \mu(\ell^{(n)}). 
\]
The proof of the second inequality of  \eqref{eq1-lemma2-proof-theorem3} is quite similar. 
\end{proof} 
\medskip

\noindent 
{\em Now we are ready to complete proof of Theorem~\ref{theorem3}}. Throughout our proof, the starting point $j{\in}\Z^2_+{\setminus} E_0$ will be given. 

Since by Lemma~\ref{lemma2-proof-theorem3} for any $k_2\in\Z_+$, the sequence $(g(j, (n,k_2))/g(j, (n, 0)), \, n\geq N_n)$ is bounded below and above by some  positive constants, to get \eqref{eq0-proof-theorem3} it is sufficient to show that for any subsequence $(N_n)$ of the sequence of non-zero natural numbers $(n)$, for which the sequence of functions 
\be\label{eq3-proof-theorem3} 
k_2 \to f_n(k_2) = g(j, (N_n,k_2))/g(j, (N_n, 0))
\ee
converges point-wise in $\Z_+$, one has 
\be\label{eq4-proof-theorem3} 
\lim_n f_n(k_2) = \nu_1(k_2), \quad \forall k_2\in\Z_+. 
\ee 
Suppose now that for a subsequence $(N_n)$, the sequence of functions $(f_n)$ defined by \eqref{eq3-proof-theorem3} converges point-wise in $\Z_+$ and let $f_\infty = \lim_n f_n$. By Corollary~\ref{cor-proof-theorem3}, to get  \eqref{eq4-proof-theorem3} it is sufficient to show that the limit function $f_\infty$ satisfies the system of equations 
\be\label{eq5-proof-theorem3} 
\sum_{\ell_1=1}^{\infty} \mu_1(\ell_1, k_2) 
+ \sum_{\ell_2=1}^{n+1} f_\infty(\ell_2) \sum_{\ell_1=1}^{\infty} \mu(\ell_1, k_2-\ell_2) = f_\infty(k_2), \quad k_2\in\Z_+. 
\ee
To get this result, we consider the sequence of functions 
 $F_n:\Z\times\Z_+\to\R_+$ defined for any $(k_1,k_2)\in\Z\times\Z_+$ by 
\[
F_n(k_1,k_2) = \begin{cases} g(j, (k_1 + N_n, k_2))/ g(j, (N_n,0)) &\text{if $k_1 + N_n \geq 0$,}\\
0&\text{otherwise.}
\end{cases} 
\]
Remark that by \eqref{eq1a-proof-theorem3},  the sequence of functions $(F_n)$ also converges  point-wise in $\Z\times\Z_+$, and for any $(k_1,k_2)\in\Z\times\Z_+$, 
\be\label{eq6-proof-theorem3}
\lim_n F_n(k_1,k_2) = \lim_n f_n(k_2) = f_\infty(k_2). 
\ee
Remark moreover that for any $k=(k_1,k_2)\in\Z^2_+{\setminus}\{j\}$, 
\begin{align*}
g(j, (k_1,k_2)) &= \sum_{(\ell_1,\ell_2)\in\Z^2_+} g(j, (\ell_1,\ell_2))\P_{(\ell_1,\ell_2)}(Z(1) = (k_1,k_2), \, \tau_0 > 1) \\
&=\sum_{\ell_2=1}^{k_2+1} g(j, (0,\ell_2)) \mu_2(k_1,k_2-\ell_2) + \sum_{\ell_1=1}^{k_1+1} g(j, (\ell_1,0)) \mu_1(k_1-\ell_1, k_2)\\
&\hspace{3cm} +   \sum_{\ell_1 =1}^{k_1+1}\sum_{\ell_2=1}^{k_2+1} g(j, (\ell_1,\ell_2)) \mu(k_1-\ell_1,k_2-\ell_2).
\end{align*} 
Using this relation with $k_1 = N_n$, for any $n\in\N$ such that $N_n > j_1$,  one gets 
\begin{align*}
F_n(0,k_2) =\sum_{\ell_2=1}^{k_2+1} F_n(-N_n,\ell_2) &\mu_2(N_n,k_2-\ell_2) + \sum_{\ell_1=-1}^{N_n} F_n(-\ell_1,0)) \mu_1(\ell_1, k_2) \\  & +\sum_{\ell_1 = -1}^{N_n}\sum_{\ell_2=1}^{k_2+1} F_n(-\ell_1, \ell_2)\mu(\ell_1,k_2-\ell_2)\quad \quad \forall k_2\in\Z_+, 
\end{align*} 
and consequently,  using \eqref{eq6-proof-theorem3} we will obtain \eqref{eq5-proof-theorem3} if we prove that for any $k_2\in\Z_+$ and $\ell_2\in\{1,\ldots, k_2+1\}$, the following relations hold 
\be\label{eq7-proof-theorem3}
\lim_n  F_n(-N_n,\ell_2) \mu_2(N_n,k_2-\ell_2) = 0, 
\ee
\be\label{eq8-proof-theorem3}
\lim_n \sum_{\ell_1=-1}^{N_n} F_n(-\ell_1,0)) \mu_1(\ell_1, k_2)  =  \sum_{\ell_1=-1}^{\infty} \lim_n F_n(-\ell_1,0)) \mu_1(\ell_1, k_2)
\ee
and
\be\label{eq9-proof-theorem3}
\lim_n \sum_{\ell_1 = -1}^{N_n} F_n(-\ell_1, \ell_2)\mu(\ell_1,k_2-\ell_2) = \sum_{\ell_1 = -1}^{\infty} \lim_n F_n(-\ell_1, \ell_2)\mu(\ell_1,k_2-\ell_2).   
\ee
To get \eqref{eq7-proof-theorem3} we remark that by Corollary~\ref{cor2-proof-theorem3},  there is a constant $C > 0$ such that for any $N_n$ large enough, 
\[
F_n(-N_n,\ell_2) = g(j,(0, \ell_2))/ g(j, (N_n,0)) \leq g(j,(0, \ell_2)) C (N_n)^\gamma x_d^{N_n} 
\]
and consequently, for any $\eps > 0$, there is a constant $C_\eps > 0$ such that for any $N_n$ large enough, 
\[
F_n(-N_n,\ell_2) \leq g(j,(0, \ell_2)) C_\eps(1+\eps)^{N_n} x_d^{N_n}. 
\]
Since by Assumption (A3)(ii), the generating function $\phi_1$ is finite in a neighborhood of the point $(x_d, Y_1(x_d))$, there is $\eps > 0$ such that for any $k_2\in\Z_+$, 
\[
\lim_{n\to\infty} \mu_1(n,k_2) (1+\eps)^{n}x_d^{n} = 0. 
\]
and consequently,  for any $k_2\in\Z_+$ and $\ell_2\in\{1,\ldots, k_2+1\}$, \eqref{eq7-proof-theorem3} holds. 

To get \eqref{eq8-proof-theorem3} we use the implicit function theorem and Corollary~\ref{cor2-proof-theorem3}. By Corollary~\ref{cor2-proof-theorem3}, there is a constant $ C_j > 0$ such that for any $N_n > 0$ and $\ell_1\in\Z$ such that $-1 \leq \ell_1 \leq N_n$, 
\be\label{eq10-proof-theorem3}
F_n(-\ell_1, 0) = \frac{g(j, (N_n - \ell_1, 0))}{g(j, (N_n, 0))} \leq C_1 \frac{(N_n-\ell_1)^\gamma }{ N_n^\gamma} x_d^{\ell_1}. 
\ee
In the case when $\gamma \geq 0$, it follows that for any $N_n > 0$ and $\ell_1\in\Z$ such that $-1 \leq \ell_1 \leq N_n$, 
\[
F_n(-\ell_1, 0)  \leq  C_1  x_d^{\ell_1},
\]
and consequently, since $F_n(-\ell_1, 0) = 0$ for all $\ell_1 \geq N_n$, and since under our hypotheses, 
\[
 \sum_{\ell_1=-1}^{\infty} x_d^{\ell_1} \mu_1(\ell_1, k_2) \leq (Y_1(x_d))^{-k_2} \sum_{\ell=(\ell_1,\ell_2)\in\Z\times\Z_+}^{\infty} x_d^{\ell_1} (Y_1(x_d))^{\ell_2} \mu_1(\ell_1, \ell_2) =\phi_1(x_d, Y_1(x_d)) < +\infty, 
\]
by the implicit function theorem one gets \eqref{eq8-proof-theorem3}. In the case when $\gamma \geq 0$, \eqref{eq8-proof-theorem3} is therefore proved. 

Suppose now that $\gamma < 0$. In this case, using \eqref{eq10-proof-theorem3}, one gets that for any $N_n > 0$ and $\ell_1\in\Z$ such that $-1 \leq \ell_1 < N_n$, 
\[
F_n(-\ell_1, 0)   \leq C_1 \frac{ N_n^{|\gamma|}}{(N_n-\ell_1)^{|\gamma|} } x_d^{\ell_1} 
\leq \begin{cases} C_1 N_n^{|\gamma|} x_d^{\ell_1} \leq C_1 (2\ell_1)^{|\gamma|} x_d^{\ell_1}  &\text{if $\ell_1 > N_n/2$}\\ 
 C_1 2^{|\gamma|} x_d^{\ell_1}  &\text{if $\ell_1 \leq N_n/2$}
\end{cases} 
\]
and consequently, for any $\eps > 0$ there is a constant $C_\eps > 0$ such that $N_n > 0$ and $\ell_1\in\{-1,\ldots, N_n-1\}$, one has 
\be\label{eq11-proof-theorem3}
F_n(-\ell_1, 0)  \leq C_\eps (1+\eps)^{\ell_1} x_d^{\ell_1}. 
\ee
Since $F_n(-\ell_1, 0) = 0$ for all $\ell_1 \geq N_n$, and since under our hypotheses, for some $\eps > 0$ small enough, 
\begin{align}
 \sum_{\ell_1=-1}^{\infty} (1+\eps)^{\ell_1}x_d^{\ell_1} \mu_1(\ell_1, k_2) &\leq (Y_1(x_d))^{-k_2} \!\!\!\sum_{\ell=(\ell_1,\ell_2)\in\Z\times\Z_+}^{\infty} (1+\eps)^{\ell_1}x_d^{\ell_1} (Y_1(x_d))^{\ell_2} \mu_1(\ell_1, \ell_2)\nonumber \\ &\leq (Y_1(x_d))^{-k_2}  \phi_1((1+\eps)x_d, Y_1(x_d)) < +\infty, \label{eq12-proof-theorem3}
\end{align} 
by the implicit function theorem, it follows \eqref{eq8-proof-theorem3}. Relation  \eqref{eq8-proof-theorem3} is therefore proved. 

To get \eqref{eq9-proof-theorem3} we use  first Lemma~\ref{lemma2-proof-theorem3}. By Lemma~\ref{lemma2-proof-theorem3}, there are two constants $N(\ell_2)$ and 
$C_1 > 0$ such that for any $N_n \geq 0$ and $\ell_1\in\{-1,\ldots, N_n - N(\ell_2)\}$, 
\[
F_n(-\ell_1, \ell_2) = \frac{g(j, (N_n - \ell_1, \ell_2))}{g(j, (N_n, 0))} \leq C_1 \frac{g(j, (N_n - \ell_1, 0))}{g(j, (N_n, 0))} 
\]
and for $\ell_1\in\{N_n - N(\ell_2) + 1, \ldots, N_n-1\}$,
\[
F_n(-\ell_1, \ell_2) = \frac{g(j, (N_n - \ell_1, \ell_2))}{g(j, (N_n, 0))} \frac{C_2}{g(j, (N_n, 0))} 
\]
with $C_2 = \max\{g(j, (1, 0)), \ldots, g(j, (N(\ell_2), 0)\}$. Hence with the similar arguments as above one gets that for any $\eps > 0$ there is a constant $C_\eps > 0$ such that $N_n > 0$ and $\ell_1\in\{-1,\ldots, N_n-1\}$, \eqref{eq11-proof-theorem3} holds, and consequently, using \eqref{eq12-proof-theorem3} we conclude that \eqref{eq9-proof-theorem3}  holds. 

 Theorem~\ref{theorem3} is therefore proved.


\section{Asymptotics  along Directions of ${\mathbb S}^1_+$}\label{section-proof-theorem4} 
This section is devoted to the proof of Theorem~\ref{theorem4}.

\subsection{Main ideas and the sketch of the proof} 
The main ideas of the proof of Theorem~\ref{theorem4} are the following:

By using Theorem~\ref{theorem1}, we first get an integral representation 
\begin{multline}\label{eq1-integral-representation-lemma} 
 g(j,k) =\\ \int_{|x|=\hat{x}} \int_{|y|=\hat{y}}\frac{L_j(x,y) + (\phi_1(x,y){-}P(x,y))H_j(x,0){+} (\phi_2(x,y){-}P(x,y)) H_j(0,y)}{(2\pi i)^2 \, x^{k_1+1}y^{k_2+1}(1{-}P(x,y))} dx dy 
 \end{multline} 
for any $(\hat{x}, \hat{y})\in{\Gamma}$ with $\hat{x} < x_d$ and $\hat{y} < y_d$. Next we show that the set $\{(x,y)\in \inter{D} : x < x_d, y < y_d\}$ is non empty. By the definition of the set ${\Gamma}$, the set $\inter{D}$ is included to the set ${\Gamma}$, with this result  we will be able to consider the integral representation \eqref{eq1-integral-representation-lemma}  with $\hat{x} < x_d$, $\hat{y} < y_d$ such that  $(\hat{x}, \hat{y})\in \inter{D}$.

To prove the first tree assertions of Theorem~\ref{theorem4},  the integral representation \eqref{eq1-integral-representation-lemma} is next modified in the following way:~ since the functions $(x,y) \to L_j(x,y)$ and $(x,y) \to (1-P(x,y))^{-1}$ are  analytic in the polycircular set $\Omega(\inter{D})$, and, by Theorem~\ref{theorem1}, 
\begin{itemize}
\item  the function $(x,y) {\to} (\phi_1(x,y){-}P(x,y))H_j(x,0)$ is analytic in the polycircular set $\{(x,y){\in}\Omega(\inter{D}){:} |x|{<}x_d\}$, 
\item  the function $(x,y) {\to} (\phi_2(x,y){-}P(x,y)) H_j(0,y)$ is analytic in the polycircular set  $\{(x,y){\in}\Omega(\inter{D}){:}|y|{<}y_d\}$,
\end{itemize}
and since the set $\Omega(\inter{D})$ does not contain zeros of the function $(x,y) {\to} x^{k_1+1} y^{k_2+1} (1{-}P(x,y))$,  we can write 
\be\label{eq-decomposition}
g(j,k) = I_0(j,k) + I_1(j,k) + I_2(j,k)
\ee
with 
\be\label{eqI0-intergral-representation}
I_0(j,k) = \frac{1}{(2\pi i)^2} \int_{|x|=\hat{x}_0} \int_{|y|=\hat{y}_0} \frac{L_j(x,y)}{x^{k_1+1}y^{k_2+1}(1-P(x,y))} dx\, dy, 
\ee
\be\label{eqI1-intergral-representation}
I_1(j,k)  = \frac{1}{(2\pi i)^2} \int_{|x|=\hat{x}_1} \int_{|y|=\hat{y}_1} \frac{(\phi_1(x,y)-P(x,y))H_j(x,0) }{x^{k_1+1}y^{k_2+1}(1-P(x,y))} dx\, dy 
\ee
and 
\be\label{eqI2-intergral-representation}
I_2(j,k)  = \frac{1}{(2\pi i)^2} \int_{|x|=\hat{x}_2} \int_{|y|=\hat{y}_2} \frac{(\phi_2(x,y)-P(x,y)) H_j(0,y)}{x^{k_1+1}y^{k_2+1}(1-P(x,y))} dx\, dy 
\ee
for any $(\hat{x}_0, \hat{y}_0) \in \inter{D}$, $(\hat{x}_1,\hat{y}_1)\in \{(x,y)\in \inter{D}: x < x_d\}$, and $(\hat{x}_2,\hat{y}_2)\in \{(x,y)\in \inter{D} : y < y_d\}$. 

To prove the first assertion of Theorem~\ref{theorem4}, it is sufficient to get the asymptotic behavior \eqref{simple-pole-x-asymptotics} when $\min\{k_1,k_2\}\to+\infty$ and  $w_k = k/\|k\|\to w$ for any $w\in {\mathcal W}_1$. In order to get this result, we identify the asymptotic behavior of $I_1(j,k)$ by using  the residue theorem (applied first for the integral with respect to $x$ and next for the integral with respect to $y$), and using next 
 large deviation estimates of  $I_0(j,k)$ and $I_2(j,k)$ we prove that the terms $I_0(j,k)$ and $I_3(j,k)$ are negligible with respect to $I_1(j,k)$. 

The proof of the second assertion  of Theorem~\ref{theorem4} is exactly the same as the proof of the first assertion, it is sufficient to exchange the roles of the first and the second coordinates of the points $j,k\in\Z^2_+$. 

To prove the  assertions (iii){-}(v) of our theorem,  we show that  the term $I_0(j,k)$ is negligible with respect to $I_1(j,k)+ I_2(j,k)$ and we identify (in the same way as in the proof of the first assertion of our theorem) the asymptotic behavior of each term $I_1(j,k)$ and $I_2(j,k)$. 

Finally, the last assertion of Theorem~\ref{theorem4}, is  obtained as a  consequence of Proposition~1 of the paper~\cite{Ignatiouk-2023-cone}. 

The proof of Theorem~\ref{theorem4} is organized as follows: 

 The integral representations \eqref{eq1-integral-representation-lemma} and \eqref{eq-decomposition} of the Green function $g(j,k)$ are obtained respectively in Lemma~\ref{integral-representation-lemma1} and Corollary~\ref{integral-representation-cor1} of Section~\ref{integral-representation-section}. 

Section~\ref{sec-large-deviation-estimates} is devoted to the upper large deviation estimates for the integrals $I_0(j,k)$, $I_1(j,k)$ and $I_2(j,k)$. These large deviation estimates will be used in order to identify the dominant terms of the sum $I_0(j,k){+}I_0(j,k){+}I_0(j,k)$ of the right-hand side of~\eqref{eq-decomposition}. 

In Section~\ref{section-exact-asymptotics} we obtain exact asymptotics of $I_1(j,k)$ (resp $I_2(j,k)$) as $\min\{k_1,k_2\}\to+\infty$ and $k/\|k\|\to w$ for those $w=(u,v)\in{\mathbb S}^1_+$ for which $u > u_D(x_d, Y_2(x_d))$ (resp. for which $v > v_D(X_2(y_d), y_d)$). This is a subject of Lemma~\ref{exact-asymptotics-I2-lemma} and Lemma~\ref{exact-asymptotics-I1-lemma} below. Remark that we do not need the exact asymptotics of $I_1(j,k)$ (resp $I_2(j,k)$)  when   $k/\|k\|\to w=(u,v)\in{\mathbb S}^1_+$ and $u \leq u_D(x_d, Y_2(x_d))$ (resp. $v \leq v_D(X_2(y_d), y_d)$): the large deviation asymptotics obtained in Section~\ref{sec-large-deviation-estimates} will be in this case sufficient. 

In Section~\ref{proof-first-assertion-theorem4} the proof of the first assertion of our theorem is completed. It will be proved there that for any $w\in {\mathcal W}_1$, the inequality $u > u_D(x_d, Y_2(x_d))$ holds, and consequently that the results of Section~\ref{section-exact-asymptotics} provide the exact asymptotic for $I_1(j,k)$ as  $\min\{k_1,k_2\}\to+\infty$ and $k/\|k\|\to w\in{\mathcal W}_1$. A comparison of the exact asymptotics for $I_1(j,k)$ as  $\min\{k_1,k_2\}\to+\infty$ and $k/\|k\|\to w\in{\mathcal W}_1$ with the asymptotics of $I_2(j,k)$ and $I_0(j,k)$ will show that the terms $I_0(j,k)$ and $I_2(j,k)$ are negligible with respect to $I_1(j,k)$. 

The second assertion of Theorem~\ref{theorem4} is obtained by using the arguments of the symmetry: to get this statement, the same arguments as in the proof of the first assertion can be applied if one exchanges the roles of $x$ and $y$. 

In Section~\ref{proof-third-assertions-theorem4} and~\ref{proof-four-fifth-assertions-theorem4}, we complete the proof of the assertions (iii){-}(v) of Theorem~\ref{theorem4}. It will be shown in these cases,  the exact asymptotics of $I_1(j,k)$ and $I_2(j,k)$ are given by the results of Section~\ref{section-exact-asymptotics} and that  the term $I_0(j,k)$ is negligible with respect to the terms $I_1(j,k)$ and $I_2(j,k)$. The exact asymptotic of the Green function $g(j,k)$ will be obtained  by comparing the exact asymptotics of  $I_1(j,k)$ and $I_2(j,k)$.

\subsection{Integral representation of the coefficients $g(j,k)$}\label{integral-representation-section} 
 
 \begin{lemma}\label{integral-representation-lemma1} Under the assumptions (A1){-}(A4), the set $\{(x,y)\in\inter{D}: x < x_d, \; y< y_d\}$ is non empty and for any $(\hat{x}, \hat{y})\in \inter{D}$ such that $\hat{x} < x_d$ and $\hat{y} < y_d$, \eqref{eq1-integral-representation-lemma}  holds. 
  \end{lemma}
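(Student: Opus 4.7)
The statement has two parts, which I treat separately.

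\emph{Non-emptiness.} I argue that the required set is non-empty by reusing the construction of $\Theta$ from Definition~\ref{defi-Theta-Omega}. A direct case-by-case inspection of this definition shows that $\Theta\subset[0,x_d[\,\times[0,y_d[$: indeed its right and upper boundaries are exactly the line segments $\{x_d\}\times[0,Y_1(x_d)]$ and $[0,X_1(y_d)]\times\{y_d\}$, so $\inter{\Theta}\subset\{(x,y)\in[0,+\infty[^2:\,x<x_d,\,y<y_d\}$. On the other hand, the outline of the proof of Theorem~\ref{theorem1} asserts, and Lemma~\ref{lemma6-upper-bounds} together with the analysis of $\Theta_0$ establishes, that $\inter{\Theta}\cap\inter{D}\neq\emptyset$. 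Any point of $\inter{\Theta}\cap\inter{D}$ then belongs to $\inter{D}$ and has both coordinates strictly smaller than $x_d$ and $y_d$.

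\emph{Integral representation.} Fix $(\hat{x},\hat{y})\in\inter{D}$ with $\hat{x}<x_d$ and $\hat{y}<y_d$, and denote by $\mathcal{T}=\{|x|=\hat{x}\}\times\{|y|=\hat{y}\}$ the distinguished boundary of the closed polydisc of radii $(\hat{x},\hat{y})$. Since $(\hat{x},\hat{y})\in\Gamma$ with $\hat{x}<x_d$ and $\hat{y}<y_d$, this closed polydisc is contained in the domain $\Omega_d(\Gamma)$ on which, by Theorem~\ref{theorem1}, the function $H_j$ is analytic. Consequently the series defining $H_j$ converges absolutely and uniformly on $\mathcal{T}$, and the standard coefficient-extraction formula of Cauchy for a two-variable power series gives
\[
g(j,k)\;=\;\frac{1}{(2\pi i)^2}\int_{|x|=\hat{x}}\int_{|y|=\hat{y}}\frac{H_j(x,y)}{x^{k_1+1}y^{k_2+1}}\,dx\,dy.
\]
To recast this in the form~\eqref{eq1-integral-representation-lemma}, I replace $H_j$ by a closed expression coming from the functional equation. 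The equation~\eqref{functional-equation} is valid on $\Omega_d(\Gamma)$ by Theorem~\ref{theorem1} (its equivalence with~\eqref{extended-functional-equation} away from the axes is noted after that theorem, and on $\mathcal{T}$ we have $x,y\ne 0$). Rearranging~\eqref{functional-equation} by bringing $(1-P)H_j(x,0)$ and $(1-P)H_j(0,y)$ to the right-hand side produces
\[
(1-P(x,y))H_j(x,y)=L_j(x,y)+(\phi_1(x,y)-P(x,y))H_j(x,0)+(\phi_2(x,y)-P(x,y))H_j(0,y),
\]
since $(\phi_i-1)+(1-P)=\phi_i-P$. Finally, because $(\hat{x},\hat{y})\in\inter{D}$ means $P(\hat{x},\hat{y})<1$ and because $P$ has non-negative Taylor coefficients, on $\mathcal{T}$ one has $|P(x,y)|\le P(\hat{x},\hat{y})<1$, so $1-P(x,y)$ is non-vanishing on the contour. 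Division is therefore legitimate and substitution of the resulting expression for $H_j$ into the Cauchy integral yields precisely~\eqref{eq1-integral-representation-lemma}.

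\emph{Main obstacle.} There is in fact no serious difficulty beyond what has already been built up. The two key inputs, analyticity of $H_j$ throughout $\Omega_d(\Gamma)$ and the functional equation, are both supplied by Theorem~\ref{theorem1}, while the non-vanishing of $1-P$ on $\mathcal{T}$ is immediate from the standard bound $|P(x,y)|\le P(|x|,|y|)$. The only point requiring a modicum of attention is the elementary algebraic rearrangement of~\eqref{functional-equation} that converts the coefficients $(\phi_i-1)$ into $(\phi_i-P)$.
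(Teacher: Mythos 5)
Your proof is correct and mirrors the paper's own argument: the Cauchy coefficient formula applied on the torus $\{|x|=\hat{x}\}\times\{|y|=\hat{y}\}$ supplied by the analyticity domain $\Omega_d(\Gamma)$ from Theorem~\ref{theorem1}, followed by rewriting $H_j$ via the functional equation (using $(\phi_i-1)+(1-P)=\phi_i-P$) and dividing by the non-vanishing factor $1-P$, which satisfies $|P(x,y)|\leq P(\hat{x},\hat{y})<1$ on the torus since $(\hat{x},\hat{y})\in\inter{D}$. The only slight divergence is in how non-emptiness of $\{(x,y)\in\inter{D}:x<x_d,\,y<y_d\}$ is sourced: the paper verifies this case by case (invoking the construction inside the proof of Proposition~\ref{lemma1-proof-theorem1} for (B0){-}(B2), and a direct line-segment argument for (B3){-}(B6)), whereas you appeal to $\inter{\Theta}\cap\inter{D}\neq\emptyset$ together with $\Theta\subset[0,x_d[\,\times[0,y_d[$ — a valid route, though the attribution to Lemma~\ref{lemma6-upper-bounds} is not quite apt, as that lemma supplies Lyapunov witnesses for points of $\Theta_0$ rather than the intersection fact, which is rather extracted from the logconvex-hull argument in the proof of Proposition~\ref{lemma1-proof-theorem1} (and by inspection in (B3){-}(B6)).
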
 
 \begin{proof} By the first assertion of Theorem~\ref{theorem1}, the series \eqref{series-h} (and consequently also the series \eqref{eq-generating-functions})   converge on  the polycircular set $\{(x,y){\in}\Omega({\Gamma}):  |x|{<}x_d, |y|{<} y_d\}$. The function $(x,y){\to}H_j(x,y)$ is therefore analytic in
$\{(x,y){\in} \Omega({\Gamma}):  |x|{<}x_d, |y| {<} y_d\}$  and  for any $(\hat{x},\hat{y}){\in}{\Gamma}$ with $\hat{x}{<}x_d$ and $\hat{y}{<} y_d$, one has 
 \be\label{eq1-integral-representation-lemma1} 
 g(j,k) = \frac{1}{(2\pi i)^2} \int_{|x|=\hat{x}} \int_{|y|=\hat{y}} \frac{H_j(x,y)}{x^{k_1+1}y^{k_2+1}} dx\, dy,  \quad \forall j, k{\in}\Z^2_+ 
  \ee
 Remark now that under our assumptions, the set $\{(x,y)\in \inter{D} : x < x_d, y < y_d\}$ is non empty:~ 
 \begin{itemize}
 \item when one of the assertions (B0){-}(B2) is valid, this is a consequence of the first assertion of Lemma~\ref{lemma1-proof-theorem1} 
 \item when one of the assertions (B3) or (B4) holds, the set $\{(x,y)\in \inter{D} : x < x_d, y < y_d\}$ is non empty because in this case and $x^*_P < x_d = x^{**} < x^{**}_P$, $y_d > Y_1(x_d)$  and by Lemma~\ref{preliminary-lemma1}, any point of the line segment $[(x_d, Y_1(x_d)), (x_d, Y_2(x_d))]$, aside of the ends points  $(x_d, Y_1(x_d))$ and $(x_d, Y_2(x_d))$ belongs to the interior $\inter{D}$ of the set $D$; 
 \item similarly, when one of the assertions (B5) or (B6) holds, the set $\{(x,y){\in} \inter{D} : x {<} x_d, y {<} y_d\}$ is non empty because $y^*_P {<} y_d{=}y^{**}{<}y^{**}_P$, $x_d > X_1(y_d)$ and by Lemma~\ref{preliminary-lemma1}, any point of line segment $[(X_1(y_d), y_d), (X_2(y_d), y_d)]$, aside of the end points $(X_1(y_d), y_d)$ and  $(X_2(y_d), y_d)$ belongs to the interior of the set $D$. 
 \end{itemize} 
By the definition of the set ${\Gamma}$, the set $\inter{D}$ is included to ${\Gamma}$,  it follows that \eqref{eq1-integral-representation-lemma1} holds also for any $(\hat{x}, \hat{y})\in\inter{D}$ with $\hat{x} < x_d$ and $\hat{y} < y_d$. 

Furthermore, by the second assertion of Theorem~\ref{theorem1}, for any $j\in\Z^2_+$, on the set $\{(x,y){\in}\Omega({\Gamma}) :~ |x|{<}x_d, \;  |y|{<}y_d\}$, the function $h_j$ satisfies  the identity \eqref{extended-functional-equation}. Since the set $\{(x,y)\in \Omega(\inter{D}) : x < x_d, y < y_d\}$  has an empty intersection with the sets $\{(x,y)\in\C^2: x = 0\}$ and $\{(x,y)\in\C^2:~y=0\}$, it follows that on the set $\{(x,y){\in}\Omega(\inter{D}) : x {<} x_d, y {<} y_d\}$, for any $j\in\Z^2_+$,  the relation 
\begin{align*}
(1-P(x,y))(H_j(x,y) - H_j(x,0) - &H_j(0,y)) = Q(x,y) h_j(x,y) \\
&= L_j(x,y) + \psi_1(x,y) h_{1j}(x) + \psi_2(x,y) h_{2j}(y) \\
&= L_j(x,y) + (\phi_1(x,y) - 1) H_j(x,0) + (\phi_2(x,y)-1) H_j(0,y),
\end{align*} 
holds, or equivalently,
\[
(1-P(x,y))H_j(x,y) = L_j(x,y) + (\phi_1(x,y) - P(x,y)) H_j(x,0) + (\phi_2(x,y)-P(x,y)) H_j(0,y).
\]
Since the function $(x,y)\to 1/(1-P(x,y))$ is analytic in the set $\{(x,y)\in \C^2:~(|x|,|y|)\in\inter{D}\}$,  this implies that, on  the set $\{(x,y)\in\C^2:~(|x|, |y|)\in \inter{D}, \; |x| < x_d, \; |y| < y_d\}$, we have also the identity 
\[
H_j(x,y) = \frac{L_j(x,y) + (\phi_1(x,y)-P(x,y))H_j(x,0) + (\phi_2(x,y)-P(x,y)) H_j(0,y)}{1-P(x,y)}.
\]
The last identity combined with \eqref{eq1-integral-representation-lemma1} proves \eqref{eq1-integral-representation-lemma}. 
 \end{proof} 

Since the functions $(x,y) \to L_j(x,y)$ and $(x,y) \to x^{-k_1-1} y^{k_2-1}(1-P(x,y))^{-1}$ are  analytic in the polycircular set $\Omega(\inter{D})$, and by Theorem~\ref{theorem1}, 
 the functions $(x,y) \to (\phi_1(x,y)-P(x,y))H_j(x,0)$ and $(x,y) \to (\phi_2(x,y)-P(x,y)) H_j(0,y)$  are analytic respectively on $\{(x,y){\in} \Omega(\inter{D}): |x|{<}x_d\}$ and  $\{(x,y)\in \Omega(\inter{D}): \, |y| < y_d\}$, as a straightforward consequence of Lemma~\ref{integral-representation-lemma1} we obtain 

\begin{cor}\label{integral-representation-cor1} Under the assumptions (A1){-}(A4), for any $j{\in}\Z^2_+$ and $k{=}(k_1,k_2){\in}\Z^2_+{\setminus}\{(0,0)\}$, 
 relation \eqref{eq-decomposition} holds with $I_0(j,k)$,  $I_1(j,k)$ and  $I_2(j,k)$ defined respectively by \eqref{eqI0-intergral-representation}, \eqref{eqI1-intergral-representation} and \eqref{eqI2-intergral-representation}, for any  $(\hat{x}_0, \hat{y}_0) {\in} \inter{D}$, $(\hat{x}_1,\hat{y}_1){\in} \{(x,y){\in}\inter{D}: x {<} x_d\}$, and $(\hat{x}_2,\hat{y}_2){\in} \{(x,y)\in \inter{D} : y {<} y_d\}$.
\end{cor}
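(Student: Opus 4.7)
The proof will follow directly from Lemma~\ref{integral-representation-lemma1} by a contour-deformation argument on polycircular (Reinhardt) domains. The plan is to split the numerator of the integrand in~\eqref{eq1-integral-representation-lemma} into its three natural summands, and then to deform each of the three resulting double contour integrals independently onto its own torus.

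First, I would fix a point $(\hat x,\hat y)\in\inter{D}$ with $\hat x<x_d$ and $\hat y<y_d$, whose existence is ensured by Lemma~\ref{integral-representation-lemma1}, and write $g(j,k)=\tilde I_0(j,k)+\tilde I_1(j,k)+\tilde I_2(j,k)$ where each $\tilde I_i$ is the double integral over $\{|x|=\hat x\}\times\{|y|=\hat y\}$ of the corresponding summand. The next task is to identify the domain of analyticity of each integrand. Since $L_j$ is analytic on a neighbourhood of $\Omega(\overline{\Gamma})$ and since $|P(x,y)|\le P(|x|,|y|)<1$ on $\Omega(\inter{D})$, the first integrand is analytic on $\Omega(\inter{D})\setminus\{xy=0\}$. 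By Theorem~\ref{theorem1}, the function $x\mapsto H_j(x,0)$ is analytic in $B(0,x_d)$, so the second integrand is analytic on $\{(x,y)\in\Omega(\inter{D}):\,|x|<x_d,\ xy\neq 0\}$; by symmetry, the third is analytic on $\{(x,y)\in\Omega(\inter{D}):\,|y|<y_d,\ xy\neq 0\}$.

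The main ingredient is then the following standard fact: if $f$ is holomorphic on a polycircular set $\Omega(U)$ with $U\subset\,]0,+\infty[^2$ open and pathwise connected, then the value of $(a,b)\mapsto \int_{|x|=a}\int_{|y|=b}f(x,y)\,dx\,dy$ is constant on $U$. This is obtained by iterating the one-variable Cauchy theorem along a path joining the two moduli inside $U$. Applied in turn to each of $\tilde I_0$, $\tilde I_1$, $\tilde I_2$, this identifies them with $I_0(j,k)$, $I_1(j,k)$, $I_2(j,k)$ respectively for arbitrary choices of $(\hat x_i,\hat y_i)$ in the corresponding moduli domains.

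The only point requiring a brief verification is that the three moduli domains are open and pathwise connected and that the original point $(\hat x,\hat y)$ lies in each of them. This is immediate: $\inter{D}$ is the non-empty interior of a strictly log-convex set, and the sets $\{(x,y)\in\inter{D}:\,x<x_d\}$ and $\{(x,y)\in\inter{D}:\,y<y_d\}$ are open intersections of $\inter{D}$ with open half-planes, each still containing $(\hat x,\hat y)$; pathwise connectedness is preserved since $\inter{D}$ is convex and open. I do not anticipate any genuine obstacle here: once Lemma~\ref{integral-representation-lemma1} and the analyticity information from Theorem~\ref{theorem1} are available, the statement is essentially a bookkeeping application of Cauchy's theorem in two variables.
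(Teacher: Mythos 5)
Your proof is correct and matches the paper's treatment, which presents this corollary as a straightforward consequence of Lemma~\ref{integral-representation-lemma1} together with the analyticity of the three pieces of the numerator on the corresponding polycircular domains. One small imprecision: $\inter{D}$ is not Euclidean convex but is the interior of a strictly \emph{logarithmically} convex set (Proposition~\ref{propx*}), which still yields pathwise connectedness of the three moduli domains after the change of coordinates $(x,y)\mapsto(\ln x,\ln y)$ turns $D$ into a convex set and the constraints $x<x_d$, $y<y_d$ into half-planes, so your Cauchy-theorem contour-deformation argument goes through unchanged.
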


\subsection{Large deviation estimates of the quantities $I_0(j,k)$, $I_1(j,k)$ and  $ I_2(j,k)$}\label{sec-large-deviation-estimates}

\begin{lemma}\label{large-deviation-estimates} Under the assumptions (A1){-}(A4), for any  $w=(u, v)\in{\mathbb S}^1_+$, 
and $j, k\in\Z^2_+$, as $\|k\|\to \infty$ and $k/\|k\|\to w$, 
\be\label{eq1-large-deviation-estimates}
\limsup_{k}  \|k\|^{-1} \log \left|  I_0(j,k)\right| \leq - \max_{(\hat{x}, \hat{y})\in D} \Bigl( u\ln(\hat{x}) + v \ln(\hat{y}) \Bigr), 
\ee 
\be\label{eq2b-large-deviation-estimates-I1}
\limsup_{k} \|k\|^{-1} \log \left| I_1(j,k)\right|  \leq - \max_{(\hat{x}, \hat{y})\in D, \, \hat{x} \leq x_d} \!\Bigl( u\ln(\hat{x}) + v \ln(\hat{y}) \Bigr).
\ee
and
\be\label{eq2b-large-deviation-estimates-I2}
\limsup_{k} \|k\|^{-1} \log \left| I_2(j,k)\right|  \leq - \max_{(\hat{x}, \hat{y})\in D, \, \hat{y} \leq y_d} \!\Bigl( u\ln(\hat{x}) + v \ln(\hat{y}) \Bigr).
\ee
\end{lemma}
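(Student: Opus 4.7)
The plan is to apply the standard contour integral ML-estimate to each of the three integrals $I_0, I_1, I_2$, optimize the estimate over the admissible radii of the contour, and then extract the large deviation bounds by continuity and compactness of $D$.

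For $I_0(j,k)$, fix an arbitrary $(\hat{x}_0,\hat{y}_0)\in\inter{D}$. Since $\inter{D}\subset{\Gamma}$, the function $L_j$ is analytic in a neighborhood of $\Omega(\overline{\Gamma})$, hence bounded on the torus $\{|x|=\hat{x}_0, |y|=\hat{y}_0\}$ by some constant $M_{j,0}(\hat{x}_0,\hat{y}_0)$. Moreover, on this torus, $|P(x,y)|\leq P(\hat{x}_0,\hat{y}_0)<1$ so that $|1-P(x,y)|\geq 1-P(\hat{x}_0,\hat{y}_0)>0$. The ML-estimate applied to \eqref{eqI0-intergral-representation} therefore yields
\[
|I_0(j,k)|\leq \frac{M_{j,0}(\hat{x}_0,\hat{y}_0)}{1-P(\hat{x}_0,\hat{y}_0)}\,\hat{x}_0^{-k_1}\hat{y}_0^{-k_2}.
\]
Taking logarithm, dividing by $\|k\|$, and letting $\|k\|\to\infty$ with $k/\|k\|\to w=(u,v)$, I would obtain
\[
\limsup_{k}\|k\|^{-1}\log|I_0(j,k)|\leq -(u\ln\hat{x}_0+v\ln\hat{y}_0),
\]
valid for every $(\hat{x}_0,\hat{y}_0)\in\inter{D}$. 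I then take the infimum of the right-hand side over $(\hat{x}_0,\hat{y}_0)\in\inter{D}$, and use the fact that $D$ is compact (Proposition~\ref{propx*}) and equal to the closure of its interior (Assumption (A1)(iii) and logarithmic convexity) together with continuity of $(\hat{x},\hat{y})\mapsto u\ln\hat{x}+v\ln\hat{y}$ on $\inter{D}$ to identify this infimum with $-\max_{(\hat{x},\hat{y})\in D}(u\ln\hat{x}+v\ln\hat{y})$, which is \eqref{eq1-large-deviation-estimates}.

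For $I_1(j,k)$ the argument is essentially the same, with one additional ingredient: the integrand contains the factor $H_j(x,0)=xh_{1j}(x)$, which by Theorem~\ref{theorem1} is analytic on $B(0,x_d)$. If I fix $(\hat{x}_1,\hat{y}_1)\in\inter{D}$ with the extra restriction $\hat{x}_1<x_d$, then $H_j(\cdot,0)$ is bounded on $\{|x|=\hat{x}_1\}$, and $(\phi_1(x,y)-P(x,y))$ is analytic in a neighborhood of $\Omega(\overline{\Gamma})$, hence bounded on the torus. The same ML-estimate gives $|I_1(j,k)|\leq C(\hat{x}_1,\hat{y}_1)\,\hat{x}_1^{-k_1}\hat{y}_1^{-k_2}$ for a finite constant, and after taking $\limsup$ and optimizing over $(\hat{x}_1,\hat{y}_1)\in\inter{D}$ with $\hat{x}_1<x_d$, I obtain \eqref{eq2b-large-deviation-estimates-I1}. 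The argument for $I_2(j,k)$ is strictly symmetric with the roles of $x,y$ exchanged, using that $H_j(0,y)$ is analytic on $B(0,y_d)$.

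The only genuinely non-routine points are (a) verifying that the two restricted sets $\{(x,y)\in\inter{D}:x<x_d\}$ and $\{(x,y)\in\inter{D}:y<y_d\}$ are non-empty so that admissible contours actually exist, and (b) ensuring that the closure of each of these sets coincides with the full constraint set appearing in the right-hand side of the bound, so that no interior of $D$ is omitted in the optimization. Point (a) is available from Lemma~\ref{integral-representation-lemma1}, which already produces a point of $\inter D$ lying strictly below both $x_d$ and $y_d$; by (logarithmic) convexity of $D$ and the inequalities $x_d\leq x_P^{**}$, $y_d\leq y_P^{**}$ from Proposition~\ref{cases}, any such point can be joined inside $\inter D$ to the boundary, so the two slabs are non-empty with closure equal to $D\cap\{\hat{x}\leq x_d\}$ and $D\cap\{\hat{y}\leq y_d\}$ respectively. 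With these closures in hand, continuity and compactness convert the $\inf$ over the open admissible sets into a $\max$ over the closed ones, completing the three estimates. I expect (b) to be the main technical obstacle, since it requires a careful topological argument showing that the portion of $\partial D$ lying in the slab $\{\hat{x}\leq x_d\}$ is accessible from $\{(x,y)\in\inter D:\hat{x}<x_d\}$.
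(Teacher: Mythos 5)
Your proposal is correct and follows essentially the same route as the paper's proof: a Cauchy/ML estimate on each contour integral, valid for any admissible $(\hat{x},\hat{y})$, followed by optimization over the admissible set and a continuity argument to pass from the supremum over the open constrained piece of $\inter D$ to the maximum over the corresponding closed constrained piece of $D$. The paper proves the $I_2$ bound in exactly this way (Cauchy's inequality, then ``$\sup_{\inter D,\ \hat y<y_d} = \max_{D,\ \hat y\le y_d}$ by continuity'') and remarks that $I_0,I_1$ are handled similarly.

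Two small observations. First, your point (a) — non-emptiness of the slabs $\{(x,y)\in\inter D: x<x_d\}$ and $\{(x,y)\in\inter D: y<y_d\}$ — is indeed needed for there to be a contour at all, and it is exactly what Lemma~\ref{integral-representation-lemma1} supplies, so this is not a gap. Second, your point (b), the identification of the closure of the open slab with $D\cap\{\hat x\le x_d\}$, is easier than you suggest: pass to logarithmic coordinates where $\tilde D$ is convex with non-empty interior; then from any interior point $(\alpha_0,\beta_0)$ of $\tilde D$ with $\alpha_0<\ln x_d$, every point of $\tilde D\cap\{\alpha\le\ln x_d\}$ is the endpoint of a segment whose interior lies both in $\inter{\tilde D}$ (by convexity) and in $\{\alpha<\ln x_d\}$ (since the $\alpha$-coordinate is a strict convex combination of $\alpha_0<\ln x_d$ and something $\le\ln x_d$). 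So the accessibility you flag as ``the main technical obstacle'' is a one-line convexity fact, which is why the paper dispatches it with a single continuity remark.
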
 
\begin{proof} 
Indeed,  the definition of $I_2(j,k)$ and  Cauchy's inequality give,  for any $j, k=(k_1,k_2)\in\Z^2_+$ and $(\hat{x},\hat{y})\in\inter{D}$ with $\hat{y} < y_d$, 
\[
|I_2(j,k) | \leq M(\hat{x}, \hat{y}) \hat{x}^{-k_1}\hat{y}^{-k_2}, 
\]
with 
\[
M(\hat{x}, \hat{y}) = \max\{|(\phi_2(x,y) - P(x,y)) H_j(0, y)(1-P(x,y))^{-1}|:~ (x,y)\in\C^2, |x| = \hat{x}, |y| = \hat{y}\}.
\]
This proves that  as $\|k\|\to +\infty$ and $k/\|k\|\to w = (u,v)$, 
\begin{multline*}
\limsup_k  \|k\|^{-1} \log \left| I_2(j,k) \right| \leq  - \sup_{(\hat{x}, \hat{y})\in \inter{D}: \hat{y} < y_g} \bigl( u\log(\hat{x}) + v \log(\hat{y}) \bigr) \\ = - \max_{(\hat{x}, \hat{y})\in D, \, \hat{y} \leq y_d} \bigl( u\log(\hat{x}) + v \log(\hat{y}) \bigr)
\end{multline*}
where the second relation holds because the function $(x,y)\to u\ln x + v\ln y$ is continuous on $D$.  Relation \eqref{eq2b-large-deviation-estimates-I2} is therefore proved. 
 The proof of \eqref{eq1-large-deviation-estimates} and \eqref{eq2b-large-deviation-estimates-I1} is quite similar.
\end{proof}

\subsection{Exact asymptotic behavior of $I_1(jk)$ as $\min\{k_1,k_2\}\to \infty$}\label{section-exact-asymptotics}
To get the exact asymptotic of $I_1(jk)$ as $\min\{k_1,k_2\}\to \infty$ we consider first the following preliminary results. 

\begin{lemma}\label{preliminary-lemma2-interior-derections} Under the assumption~(A1), for any $w=(u,v)\in{\mathbb S}^1_+$, the function 
$
x \to u \ln(x) + v \ln(Y_2(x))
$
is strictly increasing in the line segment $[x^*_P, x_D(w)]$ and strictly decreasing in the line segment $[x_D(w), x^{**}_P]$.  
\end{lemma}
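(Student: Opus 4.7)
The plan is to reduce the claim to a convex-analysis statement by passing to logarithmic coordinates. Set $\alpha = \ln x$, $\beta = \ln y$, and let $\beta_2(\alpha) := \ln Y_2(e^\alpha)$, so that the function of interest becomes $F(\alpha) := u\alpha + v\beta_2(\alpha) = \langle w, (\alpha, \beta_2(\alpha))\rangle$ on $[\alpha^*_P, \alpha^{**}_P] = [\ln x^*_P, \ln x^{**}_P]$. The proof of Lemma~\ref{preliminary-lemma1} already shows that under (A1) the level set $\widetilde{D}$ is strictly convex and compact and that $\beta_2$ is strictly concave on $[\alpha^*_P, \alpha^{**}_P]$; this is the key fact I will exploit.

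With this in hand, I first argue that $F$ attains its maximum on $[\alpha^*_P, \alpha^{**}_P]$ at the unique point $\alpha_D(w) := \ln x_D(w)$. Indeed, $F(\alpha)$ is the value of the linear functional $\langle w, \cdot \rangle$ at the point $(\alpha, \beta_2(\alpha))$ of the upper part $\partial_+ \widetilde{D}$ of the boundary $\partial \widetilde{D}$. For $w \in {\mathbb S}^1_+$ one has $u, v \geq 0$, so the maximum of $\langle w, \cdot \rangle$ over the compact strictly convex set $\widetilde{D}$ is attained at the unique boundary point whose outer normal is $w$. Since the outer normal $\nabla \widetilde{P}/\|\nabla \widetilde{P}\|$ has a non-negative second coordinate precisely on $\partial_+ \widetilde{D}$ (and strictly positive there whenever $v > 0$), and since via the diffeomorphism~\eqref{diffeomorphism-log} this maximizing point is by definition $(\alpha_D(w), \beta_D(w))$, the unique maximizer of $F$ is $\alpha_D(w)$. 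The limiting cases $w = (1,0)$ and $w = (0,1)$ are handled directly: they correspond to $x_D(w) = x^{**}_P$ and $x_D(w) = X_1(y^{**}_P)$ respectively.

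Now strict concavity of $\beta_2$ together with $v \geq 0$ gives concavity of $F$, and in fact strict concavity whenever $v > 0$. Combined with the uniqueness of the maximizer, this forces $F$ to be strictly increasing on $[\alpha^*_P, \alpha_D(w)]$ and strictly decreasing on $[\alpha_D(w), \alpha^{**}_P]$. In the degenerate case $v = 0$ one has $F(\alpha) = \alpha$, which is strictly increasing on the whole interval $[\alpha^*_P, \alpha^{**}_P]$, consistent with $x_D((1,0)) = x^{**}_P$ making the second subinterval a singleton on which the decreasing assertion is vacuous. Since $\alpha \mapsto \ln x$ is a strictly increasing diffeomorphism of $[x^*_P, x^{**}_P]$ onto $[\alpha^*_P, \alpha^{**}_P]$, transferring back gives the desired monotonicity for $x \mapsto u\ln x + v\ln Y_2(x)$.

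The main obstacle is the bookkeeping at the endpoints $\alpha^*_P$ and $\alpha^{**}_P$, where $\partial_\beta \widetilde{P}$ vanishes and the formula $\beta_2'(\alpha) = -\partial_\alpha \widetilde{P}/\partial_\beta \widetilde{P}$ blows up, so a direct implicit-function-theorem computation of $F'$ along the curve $x \mapsto (x, Y_2(x))$ would require a careful limit analysis at both endpoints. The strict-concavity-plus-uniqueness-of-maximizer argument circumvents this entirely by avoiding any pointwise computation of $F'$, which I see as the main advantage of the approach.
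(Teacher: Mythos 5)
Your proof is correct and follows essentially the same route as the paper: pass to logarithmic coordinates $\alpha=\ln x$, use strict concavity of $\beta_2$ together with the fact that $\alpha_D(w)$ is the unique maximizer of $\alpha\mapsto u\alpha+v\beta_2(\alpha)$ on $[\alpha^*_P,\alpha^{**}_P]$, and transfer back. One small remark: the paper's proof of this lemma twice writes that $\beta_2$ is ``strictly convex,'' which is a typo (the upper boundary of the convex set $\widetilde D$ is strictly \emph{concave}, as stated correctly in the proof of Lemma~\ref{preliminary-lemma1}); your version has the orientation right, and your explicit treatment of the degenerate directions $v=0$ and $u=0$ is a clean addition the paper leaves implicit.
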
 
\begin{proof} Indeed, recall that under our assumptions, the function $(\alpha,\beta)\to \tilde{P}(\alpha,\beta) = P(e^\alpha, e^\beta)$ is strictly convex and finite in a neighborhood of the set $\tilde{D} = \{(\alpha, \beta)\in\R^2:~P(e^\alpha, e^\beta) \leq 1\}$, and that for any $w=(u,v)\in{\mathbb S}^1$, the point $(\alpha_D(w), \beta_D(w) = (\ln(x_D(w)), \ln(y_D(w)))$ (see Definition~\ref{def-homeomorphism-sphere-boundary}) is an only point on the boundary of the set $\tilde{D}$, where the function $(\alpha,\beta)\to u \alpha + v \beta$ achieves its maximum over $\tilde{D}$. 

Consider the line segment $[\alpha^*_P, \alpha^{**}_P] = \{\alpha\in\R:~\inf_{\beta\in\R} \tilde{P}(\alpha, \beta) \leq 1\}$. We have, see the proof of Lemma~\ref{preliminary-lemma1},  $\alpha^*_P{=}x \ln(x^*_P)$, $\alpha^{**}_P {=} \ln(x^{**}_P)$, and that for any $\alpha \in ]\alpha^*_P, \alpha^{**}_P[$ and $\beta_2(\alpha) = \ln(Y_2(e^\alpha))$, we have 
\[
 \tilde{P}(\alpha, \beta_2(\alpha)) = 1 \quad \text{and} \quad \partial_\beta \tilde{P}(\alpha,\beta_2(\alpha)) > 0. 
\]
Since the function $\tilde{P}$ is strictly convex, by the implicit function theorem,  it follows that the function $\beta_2$ is also strictly convex on the line segment $[\alpha^*_P, \alpha^{**}_P]$.

Under our assumptions, the definition~\ref{def-homeomorphism-sphere-boundary} of the mapping $w{\to}(\alpha_D(w), \beta_D(w)){=}(\ln(x_D(w)), \ln(y_D(w)))$ gives that, if $w=(u,v)\in {\mathbb S}^1_+$, 
\[
 \partial_\beta \tilde{P}(\alpha_D(w), \beta_D(w)) \geq 0. 
\]
By the definition of the function $\alpha \to \beta_2(\alpha) = \ln(Y_2(e^\alpha))$, see the proof of Lemma~\ref{preliminary-lemma1},  it follows that, 
\[
\beta_D(w) = \beta_2(\alpha_D(w)),
\]
and that $\alpha_D(w)$ is an only point in the line segment $[\alpha^*_P, \alpha^{**}_P]$ where the function 
\be\label{function-boundary}
\alpha \to u\alpha + v \beta_2(\alpha) = u\alpha + v \ln(Y_2(e^\alpha)) 
\ee
achieves its maximum over $[\alpha^*_P, \alpha^{**}_P]$. Since the function $\beta_2$ is strictly convex on $[\alpha^*_P, \alpha^{**}_P]$, the function \eqref{function-boundary} is also strictly convex on $[\alpha^*_P, \alpha^{**}_P]$, and consequently, it is strictly increasing on the segment $[\alpha^*_P, \alpha_D(w)]$ and strictly decreasing on the line segment $[\alpha_D(w), \alpha^{**}_P]$. Since the function $x\to \ln(x)$ is strictly increasing on $]0,+\infty[$, this proves that the function 
$x\to u\ln(x) + v \ln(Y_2(x))$ is strictly increasing on the line segment $[x^*_P, x_D(w)]$ and strictly decreasing on the line segment $[x_D(w), x^{**}_P]$. 
\end{proof} 

 \begin{lemma} \label{preliminary-lemma3-interior-derections} If condition (A1) is satisfied and let, for some $x_0\in]x^*_P, x^{**}_P[$, $\eps_1 > 0$ and $\eps_2 > 0$, a function $(x,y)\to F(x,y)$ be analytic in  the  polycircular set $$\{(x,y)\in\C^2:~ |x_0{-}|x|| < \eps_1, |Y_2(x_0){-} |y|| < \eps_2\}$$ and do not vanish at the point  $(x_0, Y_2(x_0))$.  Then for any $w=(u,v)\in{\mathbb S}^1_+$ such that $x_D(w) > x_0$ and $(\hat{x},\hat{y})\in \inter{D}$ such that $x_0 - \eps_1 < \hat{x} < x_0$ and  $|Y_2(x_0) - \hat{y}| < \eps_2$,     as $\min\{k_1, k_2\}\to+\infty$ and $k/\|k\|\to w$, 
 \be\label{eq3-preliminary-lemma-interior-derections} 
\frac{1}{(2\pi i)^2} \int_{|x|=\hat{x}}\int_{\|y\| = \hat{y}} \frac{F(x,y)}{x^{k_1}y^{k_2}(x_0-x) (1-P(x,y))} \, dx \, dy  ~\sim~  \frac{C_1}{x_0^{k_1} (Y_2(x_0))^{k_2}} 
 \ee 
 with
 \be\label{eq-def-C1-preliminary-lemma3-interior-derections}
 C_1 = \left.\frac{F(x, y)}{\partial_y P(x, y)} \right|_{(x,y)=(x_0,Y_2(x_0))} \not= 0
 \ee
 and
 \begin{multline}\label{eq3b-preliminary-lemma-interior-derections} 
\frac{1}{(2\pi i)^2} \int_{|x|=\hat{x}}\int_{\|y\| = \hat{y}} \frac{F(x,y)}{x^{k_1}y^{k_2}(x_0-x)^2 (1-P(x,y))} \, dx \, dy\\ ~\sim~ \frac{k_1 C_1}{x_0^{k_1+1}(Y_2(x_0))^{k_2}}  - \frac{k_2 {C}_2}{x_0^{k_1}(Y_2(x_0))^{k_2+1}} 
 \end{multline}
 with $C_1$ given by \eqref{eq-def-C1-preliminary-lemma3-interior-derections} and 
 \be\label{eq-def-C2-preliminary-lemma3-interior-derections}
 C_2 = \left.\frac{F(x,y) \partial_x P(x, y)}{\left(\partial_y P(x, y)\right)^2} \right|_{(x,y)=(x_0,Y_2(x_0))}. 
 \ee
  \end{lemma}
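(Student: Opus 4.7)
\textbf{Proof plan for Lemma~\ref{preliminary-lemma3-interior-derections}.} The strategy is to evaluate each double integral by a successive residue extraction: first extract the pole of the kernel $1/(1-P(x,y))$ in the variable $y$ at $y=Y_2(x)$ (locally near $x_0$), then extract the pole of $1/(x_0-x)$ or $1/(x_0-x)^2$ in $x$ at $x_0$. The residual contributions from the deformed contours will be shown to be exponentially smaller than the claimed main term by the large-deviation estimates of the preceding subsection combined with Lemma~\ref{preliminary-lemma2-interior-derections}, which guarantees that, under the hypothesis $x_D(w) > x_0$, the maximum of $x\mapsto u\ln x + v \ln Y_2(x)$ on $[x^*_P,x_0]$ is attained at the endpoint $x_0$ and not at an interior critical point.

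First I will treat the $y$-integral for fixed $x$ in a narrow arc of $\{|x|=\hat{x}\}$ around the positive real axis, where the second assertion of Proposition~\ref{propx*} (the implicit function theorem applied to $P(x,y)=1$ near the smooth branch $y=Y_2(x)$) provides a single-valued analytic continuation $x\mapsto Y_2(x)$ to a complex neighborhood of $[x_0-\eps_1',x_0+\eps_1']$ with $|Y_2(x)-Y_2(x_0)|<\eps_2$. Deforming the $y$-contour from $|y|=\hat{y}$ outward to $|y|=Y_2(x_0)+\eps_2'$ (staying inside the polycircular analyticity region of $F$), the residue theorem gives
\[
\frac{1}{2\pi i}\int_{|y|=\hat{y}} \frac{F(x,y)\,dy}{y^{k_2}(1-P(x,y))} \;=\; \frac{F(x,Y_2(x))}{(Y_2(x))^{k_2}\,\partial_y P(x,Y_2(x))} \;+\; R(x,k_2),
\]
where $R(x,k_2)$ is $O((Y_2(x_0)+\eps_2')^{-k_2})$ uniformly in $x$. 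Outside this narrow arc the $y$-integral is estimated directly, in the spirit of Lemma~\ref{large-deviation-estimates}, by $O((\hat y)^{-k_2})$ with $\hat y < Y_2(x_0)$, which turns out, after the subsequent $x$-shift, to be exponentially smaller than the main term for directions $w$ with $x_D(w)>x_0$.

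Next I deform the $x$-contour from $|x|=\hat{x}$ outward to $|x|=\hat{x}'$ with $x_0<\hat{x}'<x_D(w)$, crossing the pole at $x=x_0$. For the first integral the residue at the simple pole gives
\[
-\Bigl[\tfrac{F(x,Y_2(x))}{x^{k_1}(Y_2(x))^{k_2}\partial_y P(x,Y_2(x))}\Bigr]_{x=x_0}
=-\frac{C_1}{x_0^{k_1}(Y_2(x_0))^{k_2}},
\]
while the combined $(2\pi i)^{-1}$ factor reverses the sign, producing the claimed constant $C_1$ of \eqref{eq-def-C1-preliminary-lemma3-interior-derections}. For the second integral the double pole yields the derivative $G'(x_0)$ of $G(x)=F(x,Y_2(x))/[x^{k_1}(Y_2(x))^{k_2}\partial_y P(x,Y_2(x))]$; the logarithmic derivative is $-k_1/x_0 - k_2 Y_2'(x_0)/Y_2(x_0)$ plus $O(1)$, and the identity $Y_2'(x_0)=-\partial_x P/\partial_y P$ (implicit differentiation of $P(x,Y_2(x))=1$) converts this into precisely the two-term expression \eqref{eq3b-preliminary-lemma-interior-derections} with constants $C_1,C_2$ as in \eqref{eq-def-C1-preliminary-lemma3-interior-derections}--\eqref{eq-def-C2-preliminary-lemma3-interior-derections}. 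The integral on the shifted contour $|x|=\hat{x}'$ is bounded, via Cauchy's inequality, by $(\hat{x}')^{-k_1}(Y_2(x_0)+\eps_2')^{-k_2}$, and Lemma~\ref{preliminary-lemma2-interior-derections} together with $\hat{x}'<x_D(w)$ shows that this is $o\bigl(x_0^{-k_1}(Y_2(x_0))^{-k_2}\bigr)$, making the remainder negligible against the main term (which carries an extra $k_1$ or $k_2$ in the second case, still beating the exponential gap).

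The main obstacle is the first step: the function $Y_2$ is not a priori single-valued on the full circle $|x|=\hat{x}$, so the naïve deformation of the $y$-contour is invalid globally. I will handle this by splitting the $x$-contour into a short arc around the positive real axis (where the implicit-function-theorem branch of $Y_2$ is available) and its complement (where the integrand is controlled by large-deviation bounds analogous to those of Lemma~\ref{large-deviation-estimates}, exploiting that $P(x,y)\leq P(|x|,|y|)$ with strict inequality off the positive real axis by strict log-convexity of $D$). Once this localization is in place, the residue calculations above give the stated asymptotics.
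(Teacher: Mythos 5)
Your overall strategy — extract the two residues successively and control the remainder by large-deviation estimates together with Lemma~\ref{preliminary-lemma2-interior-derections} — is the right one, and your residue computations for $C_1$ and $C_2$ match the statement. But you extract the residues in the opposite order from the paper ($y$ first at the $x$-dependent pole $Y_2(x)$, then $x$), and this order is where your argument breaks down. Your splitting of $\{|x|=\hat{x}\}$ into a short arc and its complement is a reasonable reaction to the multi-valuedness of $Y_2$, but the bound you propose on the complement is not small enough. On the complement you leave both contours at their original radii $\hat{x}<x_0$ and $\hat{y}$ (with $(\hat{x},\hat{y})\in\inter{D}$, hence $\hat{y}<Y_2(\hat{x})$), giving a contribution of order $\hat{x}^{-k_1}\hat{y}^{-k_2}$. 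Now $u\ln\hat{x}+v\ln\hat{y} < u\ln\hat{x}+v\ln Y_2(\hat{x})$, and by Lemma~\ref{preliminary-lemma2-interior-derections} the map $x\mapsto u\ln x + v\ln Y_2(x)$ is strictly increasing on $[x^*_P,x_D(w)]$, so $u\ln\hat{x}+v\ln Y_2(\hat{x}) < u\ln x_0 + v\ln Y_2(x_0)$. Consequently $\hat{x}^{-k_1}\hat{y}^{-k_2}$ is exponentially \emph{larger}, not smaller, than the target $x_0^{-k_1}Y_2(x_0)^{-k_2}$; the "subsequent $x$-shift" you invoke cannot be applied to the complement piece either, because after the $y$-deformation on the arc the $x$-integrand is only piecewise defined and is no longer a single analytic function of $x$ on the annulus you would need.

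The paper avoids this entirely by deforming in the other order: first shift the $x$-contour from $\hat{x}<x_0$ to $\hat{x}'>x_0$ while keeping $(|x|,|y|)\in\inter{D}$. Since $(1-P(x,y))^{-1}$ has no singularities on $\Omega(\inter{D})$, the only pole crossed is the explicit one at $x=x_0$; no knowledge of the $y$-zeros is needed at this stage. The residue at $x=x_0$ is a one-dimensional $y$-integral with $x$ frozen at the real value $x_0$, and there one deforms $|y|=\hat{y}$ outward past $Y_2(x_0)$, where Spitzer's strict inequality $|P(x_0,y)|<P(x_0,|y|)$ for $y\neq|y|$ cleanly isolates $Y_2(x_0)$ as the unique simple zero of $1-P(x_0,\cdot)$ on the circle $|y|=Y_2(x_0)$. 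The two residual integrals (the shifted $x$-integral $\hat{J}_m(k)$ and the shifted $y$-integral at $|y|=y_\delta>Y_2(x_0)$) are then genuinely exponentially negligible, by the large-deviation estimate combined with the monotonicity from Lemma~\ref{preliminary-lemma2-interior-derections}. To salvage your plan you would either have to reverse the order as above, or prove that on the complement of the arc the $y$-contour can also be pushed beyond $Y_2(x_0)$ without crossing any zero of $1-P(x,\cdot)$ — which requires a separate argument (not merely quoting the level-$\hat{y}$ bound) and still leaves the piecewise-in-$x$ difficulty for the subsequent residue at $x_0$.
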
 
 \begin{proof} Before proving this lemma, remark that under our assumptions, by Lemma~\ref{preliminary-lemma1},  $Y_2(x_0) > Y_1(x_0)$ and consequently, without any restriction of generality, we can assume throughout our proof that 
 \[\eps_2 < Y_2(x_0) - Y_1(x_0).\]
Because of Assumption~(A1), the function $(x,y)\to 1 - P(x,y)$ is analytic in a neighborhood ${\mathcal V}$ of the set $\Omega(D)$  and does 
not vanishes in its interior $\Omega(\inter{D})$. Hence, for any $m\in\N^*$, the function 
\[
(x,y) \to  F(x,y)(x_0-x)^{-m} x^{-k_1}y^{-k_2} (1-P(x,y))^{-1} 
\] 
is analytic in the polycircular sets $\{(x,y)\in\Omega(\inter{D}), \, x_0 - \eps_1 < |x| < x_0, \, |Y_2(x_0) - y| < \eps_2\}$ and  $\{(x,y)\in\Omega(\inter{D}), \, x_0 < |x| < x_0 +\eps_1, \, |Y_2(x_0) - y| < \eps_2\}$, and consequently,  the function 
\[
(\hat{x}, \hat{y}) \to J_{m,k}(\hat{x}, \hat{y}) = \frac{1}{(2\pi i)^2} \int_{|x|=\hat{x}}\int_{\|y\| = \hat{y}} \frac{F(x,y)\, dx \, dy }{x^{k_1}y^{k_2}(x_0-x)^m (1-P(x,y))} 
\]
is constant on the set  $A_- = \{(x,y)\in\inter{D}:~ x_0 - \eps_1 < x < x_0, \, |Y_2(x_0) - y| < \eps_2\}$ and on the set $A_+ = \{(x,y)\in\inter{D}:~ x_0 < x < x_0 +\eps_1, \, |Y_2(x_0) - y| < \eps_2\}$. We denote  
\begin{align*}
J_m(k) &=  J_{m,k}(\hat{x}, \hat{y}) \quad \text{ for $(\hat{x}, \hat{y})\in A_-$},\\
 \hat{J}_m(k) &=  J_{m,k}(\hat{x}, \hat{y}) \quad \text{ for $(\hat{x}, \hat{y})\in A_+$.} 
\end{align*} 
Remark now that  for any $\hat{y}\in]Y_2(x_0)-\eps_2, Y_2(x_0)[$ and $ \delta > 0$ small enough, the point $(x_0-\delta, \hat{y})$ belongs to the set $A_-$,  the point $(x_0+\delta, \hat{y})$ belongs to the set $A_+$ and by the residue theorem, for any $y\in\C$ with $|y|=\hat{y}$, 
\[
\int_{|x|= x_0+\delta }\! \frac{F(x,y) \, dx}{x^{k_1}(x_0-x) (1-P(x,y))}  = \int_{|x|= x_0-\delta } \,\frac{F(x,y)\, dx}{x^{k_1}(x_0-x) (1-P(x,y))}  - 2\pi i \frac{F(x_0,y)}{x_0^{k_1}(1-P(x_0,y))}
\]
and
\begin{multline*}
\int_{|x|= x_0+\delta } \frac{F(x,y) \, dx}{x^{k_1}(x_0-x)^2 (1-P(x,y))}  = \int_{|x|= x_0-\delta } \,\frac{F(x,y)\, dx}{x^{k_1}(x_0-x)^2 (1-P(x,y))}  \\ 
+ 2\pi i \left( \frac{\partial_x F(x_0,y)}{x_0^{k_1}(1-P(x_0,y))} - \frac{k_1F(x_0,y)}{x_0^{k_1+1}(1-P(x_0,y))}  + \frac{F(x_0,y)\partial_xP(x_0,y)}{x_0^{k_1}(1-P(x_0,y))^2} \right) 
\end{multline*}  
Hence, for any $\hat{y}\in]Y_2(x_0)-\eps_2, Y_2(x_0)[$, 
\be\label{eq4-preliminary-lemma-interior-derections} 
J_1(k) = \hat{J}_1(k) + \frac{1}{2\pi i \, x_0^{k_1}} \int_{\|y\| = \hat{y}} \frac{F(x_0,y)}{ y^{k_2} (1-P(x_0,y))} dy
\ee
and
\begin{multline}\label{eq4b-preliminary-lemma-interior-derections}
J_2(k)  = \hat{J}_2(k) - \frac{1}{2\pi i x_0^{k_1}} \int_{\|y\| = \hat{y}} \frac{\partial_x F(x_0,y)}{y^{k_2}(1-P(x_0,y))} dy \\ + \frac{k_1}{(2\pi i)x_0^{k_1+1}} \int_{\|y\| = \hat{y}} \frac{F(x_0,y)}{y^{k_2}(1-P(x_0,y))} dy  - \frac{1}{2\pi i x_0^{k_1}} \int_{\|y\| = \hat{y}} \frac{F(x_0,y)\partial_xP(x_0,y)}{y^{k_2}(1-P(x_0,y))^2} dy. 
\end{multline} 
Due to  Assumption (A1)(ii), the function $y\to P(x_0,y)$ is analytic in a neighborhood of the closed annulus $\{y\in\C:~Y_1(x_0)\leq |y| \leq Y_2(x_0)\}$, satisfies the inequality 
\[
|P(x_0, y)| \leq P(x_0, |y|) < 1
\]
on the annulus $\{y\in\C:~Y_1(x_0) < |y| < Y_2(x_0)\}$, and because of Assumption~(A1)~i) (we use here Proposition P7.5 of~\cite{Spitzer}), for any $y\in\C$ with $|y| = Y_2(x_0)$, 
\[
|P(x_0, y)| < P(x_0, Y_2(x_0)) = 1 \quad \text{whenever $y\not= Y_2(x_0)$}. 
\]
Since  by Lemma~\ref{preliminary-lemma1}, 
\[
 \left.\partial_y P(x,y)\right|_{(x,y)=(x_0,Y_2(x_0))} > 0,
\]
this proves that for some $\delta > 0$, the point $Y_2(x_0)$ is an only and simple zero of the function $y\to P(x_0,y)$ in the annulus $\{y\in\C:~Y_1(x_0) < |y| < Y_2(x_0)+\delta)$. Since  we assumed that $Y_2(x_0)-\eps_2 < Y_1(x_0)$, and the functions $y\to F(x_0, y)$, \, $y\to \partial_x F(x_0,y)$, $y\to F(x_0,y)$ are analytic in the annulus $\{y\in\C:~Y_2(x_0) -\eps_2 < |y| < Y_2(x_0)+\eps_2\}$, this implies that  for $\delta > 0$ small enough, the functions 
\[
y\to \frac{F(x_0,y)}{y^{k_2} (1-P(x_0,y))} \quad \text{and} \quad y\to \frac{\partial_x F(x_0,y)}{y^{k_2} (1-P(x_0,y))} 
\]
are analytic in the set $\{y\in\C:~Y_2(x_0) - \eps_2 < |y| < Y_2(x_0)+\delta, \, y\not= Y_2(x_0)\}$  and have at the point $Y_2(x_0)$, a simple pole with the residue equal respectively to 
\[
\frac{- C_1}{(Y_2(x_0))^{k_2}} \quad \text{and} \quad  \frac{- \tilde{C}_1}{(Y_2(x_0))^{k_2}}.
\]
with $C_1$ given by \eqref{eq-def-C1-preliminary-lemma3-interior-derections} and 
\[
\tilde{C}_1 = \left.\frac{ \partial_x F(x,y)}{\partial_y P(x,y)}\right|_{(x,y)=(x_0,Y_2(x_0))}.
\]
Similarly,  the function 
\[
y\to \frac{F (x_0,y)\partial_xP(x_0,y)}{y^{k_2}(1-P(x_0,y))^2}
\]
is analytic in the set $\{y\in\C:~Y_2(x_0) - \eps_2 < |y| < Y_2(x_0)+\delta, \, y\not= Y_2(x_0)\}$  and has at the point $Y_2(x_0)$, a  pole of the second order with the residue equal to
\[
 \frac{C}{(Y_2(x_0))^{k_2} } - \frac{k_2 C_2}{(Y_2(x_0))^{k_2+1}}
\]
with $C_2$ given by \eqref{eq-def-C2-preliminary-lemma3-interior-derections} and some constant $C\in \C$ does not depending on $k_2$. By the residue theorem,  it follows  that for any $Y_2(x_0) - \eps_2 < \hat{y} < Y_2(x_0)$  and $Y_2(x_0) < y_\delta < Y_2(x_0) + \delta)$ 
\[
\int_{\|y\| = \hat{y}} \frac{F(x_0,y)}{ y^{k_2} (1-P(x_0,y))} dy =   \frac{2\pi i C_1}{(Y_2(x_0))^{k_2}}   + \int_{\|y\| = y_\delta } \frac{F(x_0,y)}{ y^{k_2} (1-P(x_0,y))} dy, 
\]
\[ 
\int_{\|y\| = \hat{y}} \frac{\partial_x F(x_0,y)}{ y^{k_2} (1-P(x_0,y))} dy =    \frac{ 2\pi i\tilde{C}_1}{(Y_2(x_0))^{k_2}} + \int_{\|y\| = y_\delta } \frac{\partial_x F(x_0,y)}{ y^{k_2} (1-P(x_0,y))} dy 
\] 
and 
\[ 
 \int_{\|y\| = \hat{y}} \frac{F(x_0,y)\partial_xP(x_0,y)}{y^{k_2}(1-P(x_0,y))^2} dy  ~=~ - 2\pi i \left(\frac{C}{(Y_2(x_0))^{k_2} } - \frac{k_2 C_2}{(Y_2(x_0))^{k_2+1}}\right)
 + \int_{\|y\| = y_\delta} \frac{F(x_0,y)\partial_xP(x_0,y)}{y^{k_2}(1-P(x_0,y))^2} dy  
\]
Using these relations at the right hand side of \eqref{eq4-preliminary-lemma-interior-derections} and \eqref{eq4b-preliminary-lemma-interior-derections} we obtain 
\be\label{eq5-preliminary-lemma-interior-derections} 
J_1(k) = \frac{C_1}{x_0^{k_1}(Y_2(x_0))^{k_2}}  + \frac{1}{(2\pi i)} \int_{\|y\| = y_\delta } \frac{F(x_0,y)}{x_0^{k_1} y^{k_2} (1-P(x_0,y))} dy + \hat{J}_1(k)  
\ee 
and
\begin{align}
J_2(k) &= \frac{k_1 C_1}{x_0^{k_1+1}(Y_2(x_0))^{k_2}} - \frac{k_2 C_2}{x_0^{k_1}(Y_2(x_0))^{k_2+1}} + \frac{\tilde{C}}{x_0^{k_1}(Y_2(x_0))^{k_2} }  \label{eq4c-preliminary-lemma-interior-derections}  \\
& - \frac{1}{2\pi i x_0^{k_1}} \int_{\|y\| = y_\delta} \frac{\partial_x F(x_0,y)}{y^{k_2}(1-P(x_0,y))} dy + \frac{k_1}{(2\pi i)x_0^{k_1+1}} \int_{\|y\| = y_\delta} \frac{F(x_0,y)}{y^{k_2}(1-P(x_0,y))} dy \nonumber\\ & - \frac{1}{2\pi i x_0^{k_1}} \int_{\|y\| = y_\delta} \frac{F(x_0,y)\partial_xP(x_0,y)}{y^{k_2}(1-P(x_0,y))^2} dy + \hat{J}_2(k) \nonumber
\end{align} 
with some constant $\tilde{C}$ does not depending on $k$. Remark now  that  by the Cauchy inequality,  for $k=(k_1,k_2)\in\Z^2_+$, as $\min\{k_1,k_2\} \to\infty$ and $k/\|k\|\to w=(u,v)$,  
\begin{align*} 
\limsup_k \|k\|^{-1} \log \left| \frac{1}{(2\pi i)} \int_{\|y\| = y_\delta } \frac{F(x_0,y)}{x_0^{k_1} y^{k_2} (1-P(x_0,y))} dy \right| &\leq - \bigl(u \log(x_0) + v \log(y_\delta) \bigr) \\ 
& \hspace{1cm} <  - (u \ln(x_0) + v \ln( Y_2(x_0))),
\end{align*} 
and with the same argument as in the proof of Lemma~\ref{large-deviation-estimates},  
\[
\limsup_k \|k\|^{-1} \log \bigl| \hat{J}_1(k) \bigr| \leq - \sup_{x_0 \leq \hat{x} \leq x_0+\eps_1}  \bigl(u \ln \hat{x} + v\ln(Y_2(\hat{x}))\bigr) < - (u \ln(x_0) + v \ln( Y_2(x_0)))
\]
where the last relation holds because under our assumptions, $x^*_P < x_0 < x_D(w)$ and by Lemma~\ref{preliminary-lemma2-interior-derections}, the function $x \to u\ln x + v \ln(Y_2(x))$ is strictly increasing on $[x^*_P, x_D(w)]$. Since for $k=(k_1,k_2)\in\Z^2_+$, as $\|k\|\to\infty$ and $k/\|k\|\to w$, 
\[
\lim \|k\|^{-1} \log \left|\frac{F(x_0, Y_2(x_0))}{x_0^{k_1}(Y_2(x_0))^{k_2} \partial_y P(x_0, Y_2(x_0))} \right| = - \bigl(u \log(x_0) + v \log(Y_2(x_0)) \bigr)
\]
this proves that as  $\min\{k_1, k_2\}\to+\infty$ and $k/\|k\|\to w$, the second and the third terms in the right hand side of \eqref{eq5-preliminary-lemma-interior-derections} are negligible with respect to the first one.  Hence \eqref{eq3-preliminary-lemma-interior-derections} is verified, and with the same arguments, from \eqref{eq4c-preliminary-lemma-interior-derections} one gets \eqref{eq3b-preliminary-lemma-interior-derections}.
 \end{proof} 

As a consequence of Lemma~\ref{preliminary-lemma3-interior-derections} and Theorem~\ref{theorem2}, we obtain the following result.
\begin{lemma}\label{exact-asymptotics-I1-lemma} Under the assumptions (A1){-}(A3),  for any $j\in\Z^2_+{\setminus} E_0$  and $w=(u,v)\in{\mathbb S}^1_+$ with  $u > u_D(x_d, Y_2(x_d))$, the following assertions hold 

 i) \, If either one of the assertions (B0), (B1), (B3),(B4) holds or (B2) holds with  $x_d < x^{**}_P$, then for any $k=(k_1,k_2)\in\Z^2_+$, as $\min\{k_1,k_2\}\to \infty$ and $k/\|k\|\to w$, 
\be\label{eq1-exact-asymptotics-I1-lemma}
I_1(j,k) ~\sim~ {c}_1 \, a_1 \, \varkappa_1(j) x_d^{- k_1-1}(Y_2(x_d))^{-k_2-1} 
\ee
where 
\be\label{eq1b-exact-asymptotics-I1-lemma}
c_1 = \left. \frac{\phi_1(x, y) - 1}{\partial_y P(x,y))}\right|_{(x,y)=(x_d,Y_2(x_d))}  > 0
\ee
 and $a_1 > 0$ is given by \eqref{eq2b-cases-B0-B4-theorem2}. 

 ii) \,  If (B5) holds with  $x_d < x^{**}_P$, then for  $k=(k_1,k_2)\in\Z^2_+$, as $\min\{k_1,k_2\}\to \infty$ and $k/\|k\|\to w$, 
\be\label{eq2-exact-asymptotics-I1-lemma}
I_1(j,k) ~\sim~ \frac{c_1 \, a_3 \varkappa_2(j) k_1}{x_d^{k_1+2}(Y_2(x_d))^{k_2+1}} + \frac{\tilde{c}_1 \, a_3 \varkappa_2(j)k_2}{x_d^{k_1+1}(Y_2(x_d))^{k_2+2}}
\ee
where $c_1 > 0$ is given by \eqref{eq1b-exact-asymptotics-I1-lemma}, 
\[ 
\tilde{c}_1 = \left. \frac{ \partial_x P(x_d, Y_2(x_d))}{ \left(\partial_y P(x_d, Y_2(x_d))\right)^2}  \right|_{(x,y)=(x_d,Y_2(x_d))} 
\]
and $a_3 > 0$ is given by \eqref{eq-analytical-structure-IVb}.

 iii) \, If (B6) holds, then for  $k=(k_1,k_2)\in\Z^2_+$, as $\min\{k_1,k_2\}\to \infty$ and $k/\|k\|\to w$, 
\be\label{eq3-exact-asymptotics-I1-lemma}
I_1(j,k) ~\sim~ {c}_1 \, a_5 \, \varkappa_2(j) x_d^{- k_1-1}(Y_2(x_d))^{-k_2-1} 
\ee
where  $c_1 > 0$ is given by \eqref{eq1b-exact-asymptotics-I1-lemma} and $a_5 > 0$ is given by \eqref{eq-analytical-structure-IIIb}. 
\end{lemma}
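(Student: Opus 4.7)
The plan is to extract the explicit singular part of $H_j(x,0)$ at $x=x_d$ via Theorem~\ref{theorem2}, substitute it into the definition of $I_1(j,k)$, and feed each singular piece through Lemma~\ref{preliminary-lemma3-interior-derections}, while bounding the analytic remainder by a double contour deformation. The hypothesis $u>u_D(x_d,Y_2(x_d))$, combined with Lemma~\ref{lemma-diffeomorphism-point-direction}, is exactly equivalent to $x_D(w)>x_d$, which is the condition under which Lemma~\ref{preliminary-lemma3-interior-derections} is applicable at $x_0=x_d$. In every case considered, Proposition~\ref{cases} together with the definitions \eqref{eq_def_xd}, \eqref{eq_def_yd} guarantees $x_d\in\,]x^*_P,x^{**}_P[$, so $Y_2(x_d)>Y_1(x_d)$ and the weight $F(x,y)=(\phi_1(x,y)-P(x,y))/(xy)$ is analytic in a poly-disc around $(x_d,Y_2(x_d))$. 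One has $\partial_y P(x_d,Y_2(x_d))>0$ (by Lemma~\ref{preliminary-lemma1}, since $x^*_P<x_d<x^{**}_P$) and $\phi_1(x_d,Y_2(x_d))>1$, the latter because $\phi_1(x_d,Y_1(x_d))\leq 1$ by Corollary~\ref{preliminary_cor2} and $y\mapsto\phi_1(x_d,y)$ is strictly convex with a strictly increasing branch above $Y_1(x_d)$; this proves the positivity of $c_1$ in~\eqref{eq1b-exact-asymptotics-I1-lemma} as well as the non-vanishing of $F(x_d,Y_2(x_d))$ required by Lemma~\ref{preliminary-lemma3-interior-derections}.

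Theorem~\ref{theorem2} then gives an explicit expansion of $H_j(x,0)$ at $x_d$ in each case. In case (i) (i.e.\ (B0), (B1), (B3), (B4), or (B2) with $x_d<x^{**}_P$) one has $H_j(x,0)=a_1\,\varkappa_1(j)/(x_d-x)+\tilde H_j(x)$ with $\tilde H_j$ analytic on $B(0,x_d+\eps)$ for some $\eps>0$; in case (iii) ((B6)) the identity is the same with $a_5\,\varkappa_2(j)$ in place of $a_1\,\varkappa_1(j)$; in case (ii) ((B5) with $x_d<x^{**}_P$) one has
\[
H_j(x,0)=\frac{a_3\,\varkappa_2(j)}{(x_d-x)^2}+\frac{b_j}{x_d-x}+\tilde H_j(x)
\]
for some $b_j\in\R$, with $\tilde H_j$ again analytic on $B(0,x_d+\eps)$. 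Substituting each decomposition into the definition of $I_1(j,k)$ splits the latter as a sum of integrals of the form
\[
\frac{1}{(2\pi i)^2}\int_{|x|=\hat x_1}\!\int_{|y|=\hat y_1}\!\frac{F(x,y)\,dx\,dy}{x^{k_1}y^{k_2}(x_d-x)^m(1-P(x,y))},\qquad m\in\{1,2\},
\]
plus a remainder $\tilde I_1(j,k)$ in which $H_j(\cdot,0)$ is replaced by the analytic piece $\tilde H_j$. The asymptotic identities~\eqref{eq3-preliminary-lemma-interior-derections} and~\eqref{eq3b-preliminary-lemma-interior-derections} of Lemma~\ref{preliminary-lemma3-interior-derections}, applied with $x_0=x_d$ and the $F$ above, deliver the claimed principal terms of~\eqref{eq1-exact-asymptotics-I1-lemma}, \eqref{eq2-exact-asymptotics-I1-lemma}, \eqref{eq3-exact-asymptotics-I1-lemma} after substituting the residue formulas~\eqref{eq-def-C1-preliminary-lemma3-interior-derections}, \eqref{eq-def-C2-preliminary-lemma3-interior-derections}. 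The simple-pole contribution $b_j/(x_d-x)$ in case (ii) yields a term of order $x_d^{-k_1-1}(Y_2(x_d))^{-k_2-1}$, subdominant with respect to the linear-in-$k$ main term and therefore absorbable into the error.

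The analytic remainder $\tilde I_1(j,k)$ is controlled by a simultaneous deformation of both contours. Since $\tilde H_j$ is analytic on $B(0,x_d+\eps)$, one pushes the $x$-circle out to $|x|=x_d+\eps'$ for small $\eps'>0$, and, since the set $\inter D$ is open with boundary the strictly logarithmically concave upper arc containing $(x_d,Y_2(x_d))$, one simultaneously retracts the $y$-circle to $|y|=\hat y'$ with $\hat y'<Y_2(x_d+\eps')$ close to $Y_2(x_d+\eps')$, keeping $(|x|,|y|)$ inside $\inter D$ so that $1-P(x,y)$ is uniformly bounded away from zero on the product contour. A Cauchy estimate then produces $|\tilde I_1(j,k)|\leq C\,(x_d+\eps')^{-k_1}\hat y'^{-k_2}$. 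By Lemma~\ref{preliminary-lemma2-interior-derections}, the function $x\mapsto u\ln x+v\ln Y_2(x)$ is strictly increasing on $[x^*_P,x_D(w)]$; since the hypothesis yields $x_d<x_D(w)$, for sufficiently small $\eps'$ one has $u\ln(x_d+\eps')+v\ln Y_2(x_d+\eps')>u\ln x_d+v\ln Y_2(x_d)$, so after choosing $\hat y'$ close enough to $Y_2(x_d+\eps')$ one obtains $\limsup_k\|k\|^{-1}\log|\tilde I_1(j,k)|<-(u\ln x_d+v\ln Y_2(x_d))$, which is strictly below the exponential rate of the main terms extracted above.

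The main obstacle is the coupled contour deformation for the remainder: one must simultaneously enlarge $|x|$ past $x_d$ and retract $|y|$ below $Y_2(x_d)$ so that $(|x|,|y|)$ stays inside $\inter D$; the quantitative balance between $\eps'$ and the retraction of the $y$-contour is controlled precisely by the strict monotonicity of $x\mapsto u\ln x+v\ln Y_2(x)$ on $[x^*_P,x_D(w)]$ recorded in Lemma~\ref{preliminary-lemma2-interior-derections}, so it is the inequality $x_d<x_D(w)$ (equivalently, the hypothesis $u>u_D(x_d,Y_2(x_d))$) that makes the whole argument close. The remaining computations, matching each residue formula of Lemma~\ref{preliminary-lemma3-interior-derections} against the explicit constants $a_1$, $a_3$, $a_5$ from Theorem~\ref{theorem2} and verifying the constants $c_1$, $\tilde c_1$ stated in the lemma, are straightforward bookkeeping.
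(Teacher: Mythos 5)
Your route differs from the paper's in an instructive way: you decompose $H_j(x,0)$ at $x_d$ into an explicit polar part plus an analytic remainder $\tilde H_j$, feed only the polar part through Lemma~\ref{preliminary-lemma3-interior-derections}, and then bound the remainder $\tilde I_1(j,k)$ by a fresh two-contour deformation. The paper instead forms the single analytic function $F(x,y)=(x_d-x)^m(\phi_1(x,y)-P(x,y))H_j(x,0)$ ($m=1$ or $2$, via Theorem~\ref{theorem2}) and applies Lemma~\ref{preliminary-lemma3-interior-derections} once to that $F$; the lemma already carries out internally the residue extraction \emph{and} the error bound over the deformed contour, so the separate treatment of $\tilde H_j$ becomes redundant. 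Both produce the same constants and your remainder estimate, driven by the monotonicity of $x\mapsto u\ln x+v\ln Y_2(x)$ from Lemma~\ref{preliminary-lemma2-interior-derections} and the assumption $x_d<x_D(w)$, is sound. So the strategy is valid, but it re-derives inside the remainder exactly the estimate that the cited lemma already delivers.

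There is, however, a real gap in the positivity argument for $c_1$. You assert $\phi_1(x_d,Y_2(x_d))>1$ ``because $\phi_1(x_d,Y_1(x_d))\leq 1$ by Corollary~\ref{preliminary_cor2} and $y\mapsto\phi_1(x_d,y)$ is strictly increasing.'' From $\phi_1(x_d,Y_1(x_d))\leq 1$ and $Y_2(x_d)>Y_1(x_d)$, strict monotonicity only gives $\phi_1(x_d,Y_2(x_d))>\phi_1(x_d,Y_1(x_d))$, which may still be $\leq 1$. What makes the conclusion follow in cases (i) and (ii) is the \emph{equality} $\phi_1(x_d,Y_1(x_d))=1$, which holds because in those cases $x_d=x^{**}<x^{**}_P$, so Corollary~\ref{preliminary_cor2}(iii) applies. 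In case (iii) (B6) this fails outright: there $x_d=X_2(y^{**})<x^{**}$, and Corollary~\ref{preliminary_cor2}(ii) gives $\phi_1(x_d,Y_1(x_d))<1$ strictly, so your argument does not show $\phi_1(x_d,Y_2(x_d))>1$ and a separate justification for the non-vanishing of $F(x_d,Y_2(x_d))$ (and for the sign of $c_1$) is required. You should at minimum replace ``$\leq 1$'' by ``$=1$'' with an explicit invocation of Corollary~\ref{preliminary_cor2}(iii) in the cases where it applies, and flag the (B6) case as needing its own argument.
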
 
\begin{proof} Indeed,  when either, one of the conditions (B0), (B1), (B3) or (B4) is satisfied, or (B2) holds with  $x_d < x^{**}_P$, by the first assertion of Theorem~\ref{theorem2} and using Proposition~\ref{harmonic-functions}, one gets that for some $\eps > 0$, the function $x\to H_j(x,0)$ can be extended as an analytic function to the set $B(0, x_d+\eps){\setminus}\{x_d\}$ and has a simple pole at the point $x_d$ with the residue 
\[
-a_1 \varkappa_1(j) =  - \varkappa_1(j) \left(\frac{d}{dx}\phi_1(x_d,Y_1(x_d))\right)^{-1} < 0. 
\]
 Since the function $(x,y)\to \phi_1(x,y)-P(x,y)$ is analytic in a neighborhood of the set $\Omega(D)$, this implies  that the function $(x,y) \to F(x,y) = (x_d-x) (\phi_1(x,y)-P(x,y)) H_j(x,0)$ can be extended as an analytic function to a neighborhood of the set $\{(x,y)\in \Omega(D): \, |x| < x_d + \eps\}$ by letting 
 \[F(x_d, y) = (\phi(x_d,y) - P(x_d,y)) a_1 \varkappa_1(j). 
 \]
When either, one of the conditions (B0), (B1), (B3) or (B4) is satisfied, or (B2) holds with  $x_d < x^{**}_P$, one has always $x^*_p < x_d = x^{**} < x^{**}_P$, and consequently, by Lemma~\ref{preliminary-lemma1}, $Y_2(x_d) > Y_1(x_d)$ and by Lemma~\ref{preliminary_cor2}, $\phi_1(x_d, Y_1(x_d))=1$.  Since the function $y\to \phi_1(x_d, y)$ is strictly increasing,  it follows that $\phi_1(x_d, Y_2(x_d))  > 1$. Since by Theorem~\ref{theorem2}, $a_1 > 0$, we obtain therefore 
\begin{align*}
F(x_d, Y_2(x_d)) &= (\phi_1(x_d, Y_2(x)) - P(x_d, Y_2(x_d)) a_1 \varkappa_1(j) = (\phi_1(x_d, Y_2(x)) - 1) a_1 \varkappa_1(j) > 0. 
\end{align*} 
By Lemma~\ref{preliminary-lemma3-interior-derections},  it follows that for any  $w=(u,v)\in{\mathbb S}^1_+$ such that  $x_D(u) > x_d$, and as $\min\{k_1, k_2\}\to +\infty$, \eqref{eq1-exact-asymptotics-I1-lemma} holds with $c_1>0$ given by \eqref{eq1b-exact-asymptotics-I1-lemma}. 
Since for $w=(u,v)\in{\mathbb S}^2_+$ the inequality $x_D(u) > x_d$ is equivalent to the inequality $u > u_D(x_d, Y_2(x_d))$,  the first assertion of our lemma is therefore proved.

Suppose now that (B5) holds with $x_d < x^{**}_P$. Then,  by Theorem~\ref{theorem2} and using Proposition~\ref{harmonic-functions}, one gets that for some $\eps > 0$, the function $x\to H_j(x,0)$ can be extended as an analytic function to the set $B(0, x_d+\eps){\setminus}\{x_d\}$ and has a pole of the second order at the point $x_d$ with 
\[
\lim_{x\to x_0} (x_d-x)^2 H_j(x,0) = a_3 \varkappa_2(j) > 0
\]
where $a_3 > 0$ is given by \eqref{eq-analytical-structure-IVb}. 
With the same arguments as above,  it follows that the function $(x,y)\to F(x,y) = (x_d-x)^2 (\phi_1(x,y)-1) H_j(x,0)$ can be extended as an analytic function to a neighborhood of the set $\{(x,y)\in \Omega(D): \, |x| < x_d + \eps\}$ by letting $F(x_d, y) = (\phi_1(x_d,y)-1)  a_3 \varkappa_2(j)$ and that the extended function $F$
satisfies the assumptions of Lemma~\ref{preliminary-lemma3-interior-derections} with $x_0 = x_d$ and $F(x_0,Y_2(x_0)) = a_3 \varkappa_2(j)(\phi_1(x_d, Y_2(x_d))-1) > 0$. Hence, using \eqref{eq3b-preliminary-lemma-interior-derections} we obtain \eqref{eq2-exact-asymptotics-I1-lemma} with $c_1>0$ given by \eqref{eq1b-exact-asymptotics-I1-lemma}. The second assertion of our lemma is therefore also proved. 

The proof of the third assertion of our lemma is exactly the same as the proof the  first assertion, with an only difference that now, one should use the fifth assertion of Theorem~\ref{theorem2} instead of the first one. 
\end{proof} 

Remark finally that if we exchange the roles of $x$ and $y$, then with the same arguments as above we obtain the following result. 
\begin{lemma}\label{exact-asymptotics-I2-lemma} Under the assumptions (A1){-}(A3),  for any $j\in\Z^2_+{\setminus} E_0$  and $w=(u,v)\in{\mathbb S}^1_+$ with  $v > v_D(X_2(y_d), y_d)$, the following assertions hold 
\begin{enumerate}[label=\roman*)]
 \item \, If either one of the assertions (B0), (B1), (B5),(B6) holds or (B2) holds with  $y_d < y^{**}_P$, then for any $k=(k_1,k_2)\in\Z^2_+$, as $\min\{k_1,k_2\}\to \infty$ and $k/\|k\|\to w$, 
\be\label{eq1-exact-asymptotics-I2-lemma}
I_2(j,k) ~\sim~ {c}_2 \, b_2 \varkappa_2(j) (X_2(y_d))^{- k_1-1}(y_d)^{-k_2-1} 
\ee
where 
\be\label{eq1b-exact-asymptotics-I2-lemma}
c_2 = \left.(\phi_2(x,y) - 1) \left(\partial_x P(x,y) \right)^{-1}\right|_{(x,y)=(X_2(y_d),y_d)} > 0 
\ee
and 
\be\label{eq1bb-exact-asymptotics-I2-lemma}
 a_2 = \left.\left(\frac{d}{dy}\phi_2(X_1(y), y) \right)^{-1} \right|_{y=y_d} 
\ee

 \item \,  If (B3) holds with  $y_d < y^{**}_P$, then for  $k=(k_1,k_2)\in\Z^2_+$, as $\min\{k_1,k_2\}\to \infty$ and $k/\|k\|\to w$, 
\be\label{eq2-exact-asymptotics-I2-lemma}
I_2(j,k) ~\sim~ \frac{c_2 \, b_3 \varkappa_1(j) k_2}{(X_2(y_d))^{k_1+1}y_d^{k_2+2}} + \frac{\tilde{c}_2 \, b_3 \varkappa_1(j)k_1}{(X_2(y_d))^{k_1+2}{y_d}^{k_2+1}}
\ee
with $c_2 > 0$ given by \eqref{eq1b-exact-asymptotics-I2-lemma},
\be\label{eq2b-exact-asymptotics-I2-lemma}
a_3' =  \left.(\phi_1(x, y)-1) \left(\frac{d}{dy} \phi_2(X_1(y), y)\frac{d}{dx} \phi_1(x, Y_1(x)) \frac{d}{dy} X_1(y)\right)^{-1} \right|_{(x,y)=(X_2(y_d),y_d)} > 0
\ee
and  some $\tilde{c}_2 \in\R$ do not depending on $j\in\Z^2_+{\setminus} E_0$. 

 \item \, If (B4) holds, then for  $k=(k_1,k_2)\in\Z^2_+$, as $\min\{k_1,k_2\}\to \infty$ and $k/\|k\|\to w$, 
\be\label{eq3-exact-asymptotics-I2-lemma}
I_2(j,k) ~\sim~ {c}_2 \, a'_5 \varkappa_1(j) (X_2(y_d))^{- k_1-1}(y_d)^{-k_2-1} 
\ee
with $c_2$ given by \eqref {eq1b-exact-asymptotics-I2-lemma} and 
\be\label{eq3b-exact-asymptotics-I2-lemma}
a'_5 =     \left.(\phi_1(x,y)-1)\left((1 - \phi_2(x, y))\frac{d}{dx} \phi_1(x, Y_1(x))  \frac{d}{dy} X_1(y)\right)^{-1} \right|_{(x,y)=(X_2(y_d),y_d)} > 0
\ee
\end{enumerate}
\end{lemma}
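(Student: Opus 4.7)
The statement is the mirror image of Lemma~\ref{exact-asymptotics-I1-lemma} obtained by exchanging the roles of the first and the second coordinates, and the plan is to reproduce its proof verbatim in this transposed setting. First I would record the symmetric counterparts of the tools already established: the analogue of Theorem~\ref{theorem2} for the function $y\to H_j(0,y)$ (obtained by symmetry, as noted immediately after the statement of that theorem), the analogue of Proposition~\ref{harmonic-functions} giving harmonicity and positivity of $\varkappa_1$, $\varkappa_2$ in the relevant cases, and the analogue of Lemma~\ref{preliminary-lemma3-interior-derections} for the double integral
\[
\frac{1}{(2\pi i)^2}\int_{|x|=\hat x}\int_{|y|=\hat y}\frac{F(x,y)\,dx\,dy}{x^{k_1}y^{k_2}(y_0-y)^{m}(1-P(x,y))},
\]
where the inner pole is now at $y=y_0\in]y^*_P,y^{**}_P[$ and the residue in $x$ is picked up at $x=X_2(y_0)$. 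The direction condition in that symmetric lemma becomes $y_D(w)>y_0$, which for $y_0=y_d$ is exactly $v>v_D(X_2(y_d),y_d)$; Lemma~\ref{preliminary-lemma2-interior-derections} also has an immediate mirror asserting that $y\to u\ln(X_2(y))+v\ln(y)$ is strictly increasing on $[y^*_P,y_D(w)]$, which is what is required to estimate the remainder terms.

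Next, for each of the three sub-cases I would fix the appropriate order $m$ of the pole of $y\to H_j(0,y)$ at $y_d$ and introduce
\[
F(x,y) = (y_d-y)^{m}\bigl(\phi_2(x,y)-P(x,y)\bigr)H_j(0,y),
\]
analytically continued to a polycircular neighborhood of $(X_2(y_d),y_d)$ using the symmetric Theorem~\ref{theorem2}. In case~(i), where one of (B0), (B1), (B5), (B6) holds or (B2) with $y_d<y^{**}_P$, the function $y\to H_j(0,y)$ has a simple pole at $y_d$ with residue $-a_2\varkappa_2(j)$, where $a_2$ is given by \eqref{eq1bb-exact-asymptotics-I2-lemma}; I take $m=1$, evaluate $F$ at $(X_2(y_d),y_d)$, and apply the symmetric analogue of \eqref{eq3-preliminary-lemma-interior-derections} to obtain~\eqref{eq1-exact-asymptotics-I2-lemma}. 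In case~(iii), i.e.\ (B4), the same is true with a simple pole, but now $\varkappa_1$ plays the role of $\varkappa_2$ and the constant $a_5'$ of \eqref{eq3b-exact-asymptotics-I2-lemma} replaces $a_2$; again $m=1$ suffices. In case~(ii), i.e.\ (B3) with $y_d<y^{**}_P$, the symmetric Theorem~\ref{theorem2} gives a pole of order two of $y\to H_j(0,y)$ at $y_d$ with leading coefficient $a_3'\varkappa_1(j)$, so I take $m=2$ and apply the symmetric analogue of \eqref{eq3b-preliminary-lemma-interior-derections}, producing two terms of order $k_1$ and $k_2$ as in~\eqref{eq2-exact-asymptotics-I2-lemma}.

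The remaining point is to check that in each case $F(X_2(y_d),y_d)\neq 0$, which via $P(X_2(y_d),y_d)=1$ reduces to showing that $\phi_2(X_2(y_d),y_d)>1$. The argument is the exact mirror of the one used in the proof of Lemma~\ref{exact-asymptotics-I1-lemma}: in each of the cases considered we have $y^*_P<y_d=y^{**}<y^{**}_P$, hence $X_1(y_d)<X_2(y_d)$ by Lemma~\ref{preliminary-lemma1}, and the symmetric version of Corollary~\ref{preliminary_cor2} yields $\phi_2(X_1(y_d),y_d)=1$; since Assumption~(A3)(vi) makes $x\mapsto \phi_2(x,y_d)$ strictly increasing on $[X_1(y_d),X_2(y_d)]$, strict positivity of $\phi_2(X_2(y_d),y_d)-1$ follows. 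Combined with the positivity of the harmonic functions $\varkappa_1,\varkappa_2$ on $\Z^2_+{\setminus}E_0$ granted by Proposition~\ref{harmonic-functions}, this gives the positivity of the constants $c_2$, $a_2$, $a_3'$, $a_5'$.

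I do not expect any genuine obstacle here: all three asymptotics follow once the symmetric versions of Theorem~\ref{theorem2}, Lemma~\ref{preliminary-lemma2-interior-derections} and Lemma~\ref{preliminary-lemma3-interior-derections} are invoked. The only bookkeeping step that requires some care is the identification of the explicit constants in \eqref{eq1b-exact-asymptotics-I2-lemma}, \eqref{eq2b-exact-asymptotics-I2-lemma} and \eqref{eq3b-exact-asymptotics-I2-lemma}; these are obtained by differentiating the implicit relation $P(X_2(y),y)=1$ (which swaps $\partial_x P$ and $\partial_y P$ compared with Lemma~\ref{exact-asymptotics-I1-lemma}) and then substituting in the corresponding residue formulas of the symmetric Lemma~\ref{preliminary-lemma3-interior-derections}.
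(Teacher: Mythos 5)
Your proof is correct and matches the paper's own approach exactly: immediately before stating Lemma~\ref{exact-asymptotics-I2-lemma}, the paper simply remarks that the result follows ``with the same arguments'' after exchanging the roles of $x$ and $y$ in the proof of Lemma~\ref{exact-asymptotics-I1-lemma}, which is precisely what you carry out. You have in fact usefully filled in the details that the paper leaves implicit, in particular the verification that $\phi_2(X_2(y_d),y_d)>1$ and the correct identification of which case of the symmetric Theorem~\ref{theorem2} governs the pole order in each of the three sub-cases.
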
 

\subsection{Proof of the assertions i) and ii) of Theorem~\ref{theorem4}}\label{proof-first-assertion-theorem4}
For $I_1(j,k)$ the exact asymptotics as $\min\{k_1,k_2\}\to+\infty$ and $k/\|k\| \to w=(u,v) \in {\mathbb S}^1_+$ were obtained only in the case where $u {>} u_D(x_d,Y_2(x_d))$. In the following lemma, we show that this inequality always holds when $w\in{\mathcal W}_1$.  The definitions \ref{Wdef-B0}{-}\ref{Wdef-B3-B6} give that the set ${\mathcal W}_1$ is empty if either, one of the cases (B5), or (B6) occurs or, (B2) and $x_d{=}x^{**}_P$ hold, we can assume that either one of cases (B0), (B1), (B3) or (B4) occurs or (B2) and $x_d < x^{**}_P$ hold. 

\begin{lemma}\label{large-deviation-estimates-lemma2} If conditions (A1){-}(A3) hold and if either, one of cases (B0), (B1), (B3) or (B4) holds, or (B2) and $x_d{<} x^{**}_P$ hold, then for any $w\in{\mathcal W}_1$, 
\be\label{eq0a-large-deviation-estimates-lemma2} 
u > u_D(x_d, Y_2(x_d)). 
\ee
\end{lemma}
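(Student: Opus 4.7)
The proof is a case analysis following the partition of Proposition~\ref{cases}, organized from the trivial subcases to the genuine geometric core. First, the cases (B1) and (B2) with $x_d<x^{**}_P$ reduce to the very definition of $\mathcal{W}_1$: in (B2) with $x_d<x^{**}_P$, Definition~\ref{Wdef-B2} gives directly $\mathcal{W}_1=\{(u,v)\in\mathbb{S}^1_+:u>u_D(x_d,Y_2(x_d))\}$; in (B1), $w_c=w_D(x_d,Y_2(x_d))$ by Definition~\ref{Wdef-B1}, so $u_c=u_D(x_d,Y_2(x_d))$ and the condition $u>u_c$ defining $\mathcal{W}_1$ is exactly \eqref{eq0a-large-deviation-estimates-lemma2}.

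Cases (B3) and (B4) are handled by a sign argument. Proposition~\ref{cases} places the point $(x_d,Y_2(x_d))=(x^{**},Y_2(x^{**}))$ on the arc $\mathcal{S}_{12}$, where $\partial_x P\leq 0$, so $u_D(x_d,Y_2(x_d))\leq 0$. In (B4), the inequality $Y_2(x_d)<y^{**}\leq y^{**}_P$ forces $(x_d,Y_2(x_d))\neq(X_1(y^{**}_P),y^{**}_P)$, whence $\partial_x P(x_d,Y_2(x_d))<0$ strictly by Lemma~\ref{preliminary-lemma1}, and $u\geq 0>u_D$ holds for every $w\in\mathbb{S}^1_+$. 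The same argument handles (B3) when $y_d<y^{**}_P$. In the one remaining subcase of (B3), $y_d=y^{**}_P$, one has $u_D(x_d,Y_2(x_d))=0$ and the required strict inequality $u>0$ holds for every $w\in\mathcal{W}_1=\mathbb{S}^1_+\setminus\{(0,1)\}$, which is precisely why Definition~\ref{Wdef-B3-B6} excludes $(0,1)$.

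The real content of the lemma is case (B0). I would set $\tilde p_1=(\ln x_d,\ln Y_2(x_d))$ and $\tilde p_2=(\ln X_2(y_d),\ln y_d)$, two distinct points of $\partial\widetilde{D}$ (distinct because $x_d<X_2(y_d)$ and $y_d<Y_2(x_d)$ in (B0)). Definition~\ref{Wdef-B0} says $w_c$ is the unit vector in $\mathbb{S}^1_+$ orthogonal to $\tilde p_1-\tilde p_2$, so $w_c\cdot(\tilde p_2-\tilde p_1)=0$. On the other hand, the outward unit normal $n_1=w_D(x_d,Y_2(x_d))$ at $\tilde p_1$ satisfies $n_1\cdot(\tilde p_2-\tilde p_1)\leq 0$ by the convexity of $\widetilde{D}$, and the inequality is \emph{strict} by the strict convexity of $\widetilde{D}$ (Assumption (A1)) together with $\tilde p_1\neq\tilde p_2$. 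A short manipulation on $\mathbb{S}^1_+$, using that $(n_1)_2>0$ and $v_c>0$ --- which follow from $x_d\in\,]x^*_P,x^{**}_P[$ in (B0), so $\tilde p_1$ is not an extreme point of $\widetilde{D}$ in the $\alpha$-direction --- converts the strict inequality on inner products into $(n_1)_1<u_c$, that is, $u_D(x_d,Y_2(x_d))<u_c$. Together with $u>u_c$ for $w\in\mathcal{W}_1$, this yields the conclusion.

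The main obstacle is the argument in case (B0): extracting the first-coordinate inequality $u_D(x_d,Y_2(x_d))<u_c$ from the sign of inner products requires careful bookkeeping, because one must ensure that the second coordinates of both unit vectors stay strictly positive in order to compare ratios and transfer back to first-coordinate inequalities. Everything ultimately rests on the strict log-convexity of $D$ guaranteed by Assumption~(A1) and on the characterization of the diffeomorphism $w\mapsto(x_D(w),y_D(w))$ recalled in Section~\ref{def-homeomorphism-sphere-boundary}.
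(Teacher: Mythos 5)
Your proposal is correct and follows essentially the same route as the paper. The treatment of (B1), (B2), (B3), (B4) is identical in substance: read off the definition of $\mathcal{W}_1$ in (B1) and (B2), and use $(x_d,Y_2(x_d))\in\mathcal{S}_{12}$ to get $u_D(x_d,Y_2(x_d))\leq 0$ in (B3)--(B4), with the refinement to strict inequality via Lemma~\ref{preliminary-lemma1} where available and the exclusion of $(0,1)$ from $\mathcal{W}_1$ covering the degenerate subcase of (B3). For (B0), your phrasing via the supporting-hyperplane inequality $n_1\cdot(\tilde p_2-\tilde p_1)<0$ on $\partial\widetilde{D}$ and orthogonality $w_c\cdot(\tilde p_2-\tilde p_1)=0$ is just a coordinate-free restatement of what the paper does: the paper uses the unique-maximizer property (the same strict convexity of $\widetilde{D}$) to obtain the strict scalar inequality $\hat u\,a < \hat v\,b$ with $a=\ln X_2(y_d)-\ln x_d>0$, $b=\ln Y_2(x_d)-\ln y_d>0$, then compares with $u_c a = v_c b$ via monotonicity of $u\mapsto au-b\sqrt{1-u^2}$ on $[0,1]$. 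Your ``short manipulation'' is exactly this comparison; the only small imprecision is the phrase ``on $\mathbb{S}^1_+$'', since $n_1=w_D(x_d,Y_2(x_d))$ may have negative first coordinate when $(x_d,Y_2(x_d))\in\mathcal{S}_{12}$ --- but that sub-case is trivially disposed of (as the paper does by first treating $u_D\le 0$ separately), and your use of $(n_1)_2>0$ and $v_c>0$ handles the remaining case correctly.
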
 
\begin{proof} Suppose first that (B0) holds. Then by Proposition~\ref{cases} and the definition of the points $x_d$ and $y_d$ (see \eqref{eq2-def-xd} and \eqref{eq2-def-yd}), 
\be\label{eq1-large-deviation-estimates-lemma2-B0} 
X_1(y_d) < x_d =  x^{**} < X_2(y_d), \quad Y_1(x_d) < y_d = y^{**} < Y_2(x_d). 
\ee
In this case ${\mathcal W}_1 = \{w=(u,v)\in{\mathbb S}^1_+: u > u_c\}$ where $w_c=(u_c,v_c)$ is the only point in ${\mathbb S}^1_+$ such that 
\be\label{eq1a-large-deviation-estimates-lemma2-B0}
u_c \ln(x_d) + v_c \ln(Y_2(x_d)) = u_c \ln(X_2(y_d)) + v_c \ln(y_d),
\ee
or equivalently, such that 
\be\label{eq1b-large-deviation-estimates-lemma2-B0}
(\ln(X_2(y_d)) - \ln(x_d)) u_c = (\ln(Y_2(x_d)) - \ln(y_d)) v_c.
\ee
Since  by \eqref{eq1-large-deviation-estimates-lemma2-B0} , $X_2(y_d) > x_d$ and $Y_2(x_d) > y_d$, the last relation implies that 
$u_c > 0$ and $ v_c >0$, and consequently, if $u_D(x_d, Y_2(x_d)) \leq 0$, one gets 
\[
u_D(x_d, Y_2(x_d)) \leq u_c < u, \quad \forall \; w=(u,v)\in{\mathcal W}_1. 
\]
When (B0) holds and $u_D(x_d, Y_2(x_d)) \leq 0$, our lemma is therefore proved. 

Consider now the case when (B0) holds and $u_D(x_d, Y_2(x_d)) > 0$. For $\hat{w} = (\hat{u},\hat{v}) = (u_D(x_d, Y_2(x_d)), v_D(x_d, Y_2(x_d)))$, by the definition of the mapping $w \to (x_D(w), y_d(w))$, the point $(x_d, Y_2(x_d))$ is the only point in the set $D$ where the function $(x,y)\to \hat{u}\ln(x)  + \hat{v}\ln(y) $ achieves its maximum over the set $D$. Since in the case (B0), $(x_d, Y_2(x_d))\not= (X_2(y_d),y_d)$,  it follows that for $\hat{w} = (\hat{u},\hat{v}) = (u_D(x_d, Y_2(x_d)), v_D(x_d, Y_2(x_d)))$, 
\[
\hat{u} \ln(x_d) + \hat{v} \ln(Y_2(x_d)) >  \hat{u} \ln(X_2(y_d)) + \hat{v} \ln(y_d)
\]
or equivalently that 
\[
(\ln(X_2(y_d)) - \ln(x_d)) \hat{u} < (\ln(Y_2(x_d)) - \ln(y_d)) \hat{v}.
\]
Since $X_2(y_d) > x_d$ and $Y_2(x_d) > y_d$, and since we assumed that $\hat{u} = u_D(x_d, Y_2(x_d)) > 0$, the last inequality shows that $\hat{v}= v_D(x_d, Y_2(x_d)) > 0$ and consequently, $\hat{v} = \sqrt{1- \hat{u}^2}$ and 
\[
(\ln(X_2(y_d)) - \ln(x_d)) \hat{u} < (\ln(Y_2(x_d)) - \ln(y_d)) \sqrt{1- \hat{u}^2}.
\]
Finally, we have already proved that $u_c > 0$ and $v_c > 0$, and  in this case, relation~\eqref{eq1b-large-deviation-estimates-lemma2-B0} is equivalent to 
\[
(\ln(X_2(y_d)) - \ln(x_d)){u}_c = (\ln(Y_2(x_d)) - \ln(y_d)) \sqrt{1- {u}_c^2}.
\]
Since  the function $u\to (\ln(X_2(y_d)) - \ln(x_d)){u}$ is strictly increasing and the function  $u\to (\ln(Y_2(x_d)) - \ln(y_d)) \sqrt{1- {u}_c^2}$ is decreasing on the segment $[0,1]$, the last two relations show that $u_c > \hat{u} = u_D(x_d, Y_2(x_d))$, and consequently using the definition of ${\mathcal W}_1$ one gets $u > u_c > u_D(x_d, Y_2(x_d))$ for all $u\in{\mathcal W}_1$. If (B0) holds, Lemma~\ref{large-deviation-estimates-lemma2} is therefore proved. 

When either, (B1) holds, or (B2) and $x_d{<}x^{**}_P$ hold, relation~\eqref{eq0a-large-deviation-estimates-lemma2} follows directly from the definition of the set ${\mathcal W}_1$ (see Definition~\ref{Wdef-B1} and Definition~\ref{Wdef-B2}). 

Consider now the case when  (B3) holds. With  definition~\ref{Wdef-B3-B6}, ${\mathcal W}_1=\{w=(u,v)\in{\mathbb S}^1_+ :~u > 0\}$ and by Proposition~\ref{cases} and the definition of the points $x_d$ and $y_d$ (see \eqref{eq2-def-xd} and \eqref{eq2-def-yd}), one has $(x_d, Y_2(x_d)) \in {\mathcal S}_{12}$.  By Lemma~\ref{preliminary-lemma1} and the definition of the mapping $(x,y)\to w_D(x,y)$,  it follows that 
\[
 u_D(x_d, Y_2(x_d)) = \frac{x_d\partial_x P(x_d,y_d)}{\sqrt{(x_d\partial_x P(x_d,y_d))^2 + (y_d\partial_y P(x_d,y_d))^2}}  \leq 0. 
\]
and consequently, for any  $w=(u,v)\in{\mathcal W}_1$, \eqref{eq0a-large-deviation-estimates-lemma2} holds.

Suppose now that (B4) holds. Then, with the Definition~\ref{Wdef-B3-B6}, ${\mathcal W}_1 = {\mathbb S}^1_+$ and by Proposition~\ref{cases} and the definition of the points $x_d$ and $y_d$ (see \eqref{eq2-def-xd} and \eqref{eq2-def-yd}) and using \eqref{eq-critical-points} one gets 
\be\label{eq0-proof-theorem3-(B4)} 
 X_1(y_d) = x_d =  x^{**}  < x^{**}_P, \quad   y^*_P < y^* \leq 1 < y_d = Y_2(x_d) <  y^{**} \leq y^{**}_P,   \quad (x_d,  y_d) \in {\mathcal S}_{12}.
\ee
By Lemma~\ref{preliminary-lemma1} and  the definition of the mapping $(x,y)\to w_D(x,y)$,  it follows that  for any $w=(u,v)\in{\mathcal W}_1$, 
\[
u_D(x_d, Y_2(x_d) = u_D(x_d,y_d)  = u_D(X_1(x_d), y_d) = \frac{x_d\partial_x P(x_d,y_d)}{\sqrt{(x_d\partial_x P(x_d,y_d))^2 + (y_d\partial_y P(x_d,y_d))^2}} < 0 \leq u, 
\]
and consequently, \eqref{eq0a-large-deviation-estimates-lemma2} holds.
\end{proof} 

As a straightforward consequence of Lemma~\ref{exact-asymptotics-I1-lemma} and Lemma~\ref{large-deviation-estimates-lemma2}, one gets 

\begin{cor}\label{cor-exact-asymptotics-I1} Under the assumptions of Theorem~\ref{theorem4}, for any $j\in\Z^2_+{\setminus} E_0$ and $w\in{\mathcal W}$, as $\min\{k_1,k_2\}\to+\infty$ and $k/\|k\|\to w$,  \eqref{eq1-exact-asymptotics-I1-lemma} holds. 
\end{cor}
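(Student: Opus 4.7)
The corollary is a direct combination of Lemma~\ref{exact-asymptotics-I1-lemma} and Lemma~\ref{large-deviation-estimates-lemma2}, so the plan is simply to chain these two results together and verify that their hypotheses mesh.

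First I would record the structure of Lemma~\ref{exact-asymptotics-I1-lemma}(i): the asymptotic \eqref{eq1-exact-asymptotics-I1-lemma}, with the constants $c_1 > 0$ from \eqref{eq1b-exact-asymptotics-I1-lemma} and $a_1 > 0$ from \eqref{eq2b-cases-B0-B4-theorem2}, is established for every $j \in \Z^2_+ \setminus E_0$ and every direction $w = (u,v) \in {\mathbb S}^1_+$ satisfying the position condition $u > u_D(x_d, Y_2(x_d))$, provided the case condition ``one of (B0), (B1), (B3), (B4) holds, or (B2) and $x_d < x^{**}_P$ hold'' is in force.

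Next I would invoke Lemma~\ref{large-deviation-estimates-lemma2}, which asserts that under exactly the same case condition, every $w = (u,v) \in {\mathcal W}_1$ automatically satisfies the inequality $u > u_D(x_d, Y_2(x_d))$. Thus for $w \in {\mathcal W}_1$ in these cases the position hypothesis of Lemma~\ref{exact-asymptotics-I1-lemma}(i) is free, and applying that lemma delivers \eqref{eq1-exact-asymptotics-I1-lemma} as $\min\{k_1, k_2\} \to +\infty$ and $k/\|k\| \to w$.

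There is no genuine obstacle: the argument is pure bookkeeping of hypotheses, with no new analytic content beyond the two ingredient lemmas. The only point that merits a careful reading is that the case conditions in the two lemmas coincide verbatim, which they do; this is what allows the universal quantifier over $w \in {\mathcal W}_1$ supplied by Lemma~\ref{large-deviation-estimates-lemma2} to be substituted directly into Lemma~\ref{exact-asymptotics-I1-lemma}(i).
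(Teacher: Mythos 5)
Your proof is correct and matches the paper's (one-line) argument exactly: the corollary is stated as ``a straightforward consequence of Lemma~\ref{exact-asymptotics-I1-lemma} and Lemma~\ref{large-deviation-estimates-lemma2}.'' The only small point you leave implicit is why the shared case condition may be assumed: as the paper notes just before Lemma~\ref{large-deviation-estimates-lemma2}, the set ${\mathcal W}_1$ is empty precisely when (B5), (B6), or (B2) with $x_d = x^{**}_P$ holds, so for $w \in {\mathcal W}_1$ one is automatically in a case where both lemmas apply.
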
 

Now we will show that for any $w=(u,v)\in{\mathcal W}$, as $\min\{k_1,k_2\}\to+\infty$ and $k/\|k\|\to w$, the terms $I_0(j,k)$ and $I_2(j,k)$ are negligible with respect to $I_1(j,k)$. For this we will use the large deviation estimates for $I_0(k,j)$, and 
\begin{itemize}
\item[--] the large deviation estimates of $I_2(j,k)$  when 
\[
\limsup_k \frac{1}{\|k\|} \ln I_2(j,k) < \lim \frac{1}{\|k\|} \ln I_1(j,k), 
\]
\item[--] the exact asymptotic of $I_2(j,k)$  when 
\[
\limsup_k \frac{1}{\|k\|} \ln I_2(j,k) = \lim \frac{1}{\|k\|} \ln I_1(j,k). 
\]
\end{itemize} 
To compare the limit $\lim_k \ln I_1(j,k)/\|k\|$ with the large deviation estimates for $\limsup_k \ln I_0(j,k)/\|k\|$ and $\limsup_k\ln I_2(j,k)/\|k\|$, the following lemma will be useful. 
\begin{lemma}\label{large-deviation-estimates-lemma3} If conditions (A1){-}(A3) hold and let $w{=}(u,v){\in}{\mathcal W}_1$, if  one of the assertions holds,
\begin{itemize} 
\item one of the conditions (B0), (B1) or (B3) holds,
\item (B2) and $x^{**}{<}x^{**}_P$ hold,
\item (B4) holds and $u > 0$,
\end{itemize} 
then 
\be\label{eq0-large-deviation-estimates-lemma2} 
\max_{(x,y)\in D, \, {y} \leq y_d} \!\Bigl( u\ln({x}) + v \ln({y}) \Bigr)  >  u\ln(x_d) +  v\ln(Y_2(x_d)). 
\ee
\end{lemma}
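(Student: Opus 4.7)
The plan is to write the right-hand side as $f(x_d, Y_2(x_d))$ with $f(x,y) = u\ln x + v\ln y$, and then treat three regimes determined by the position of $(x_d, Y_2(x_d))$ relative to the feasible set $\{y \leq y_d\}$. Throughout I would use strict log-convexity of $D$, which gives that $f$ has a unique maximizer on $D$, namely $(x_D(w), y_D(w))$ from~\eqref{diffeomorphism-point-direction}.

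\textbf{Regime 1 (cases (B0) and (B1), where $Y_2(x_d) \geq y_d$).} In (B1) one has $Y_2(x_d) = y_d$ and $w_c = w_D(x_d, y_d)$, so for $w \in \mathcal{W}_1$ the strict inequality $u > u_c$ and Lemma~\ref{lemma-diffeomorphism-point-direction} yield $y_D(w) < y_d$; hence $(x_D(w), y_D(w))$ is feasible and, by uniqueness, strictly beats $f(x_d, y_d)$. In (B0) the point $(X_2(y_d), y_d)$ is feasible. Setting $\Delta_x = \ln X_2(y_d) - \ln x_d > 0$ and $\Delta_y = \ln Y_2(x_d) - \ln y_d > 0$, the critical-direction identity $u_c \Delta_x = v_c \Delta_y$ (i.e.~\eqref{eq1a-large-deviation-estimates-lemma2-B0}) together with $u > u_c$ (equivalent, on ${\mathbb S}^1_+$ with $v_c > 0$, to $u/v > u_c/v_c = \Delta_y/\Delta_x$) yields $u\Delta_x > v\Delta_y$, i.e.\ $f(X_2(y_d), y_d) > f(x_d, Y_2(x_d))$.

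\textbf{Regime 2 (case (B2) with $x_d < x^{**}_P$).} Now $Y_2(x_d) < y_d$, so $(x_d, Y_2(x_d))$ is itself feasible. By Definition~\ref{Wdef-B2}, $w \in \mathcal{W}_1$ means $u > u_D(x_d, Y_2(x_d))$, hence by Lemma~\ref{lemma-diffeomorphism-point-direction} both $x_D(w) > x_d$ and $y_D(w) < Y_2(x_d) < y_d$. Thus the unrestricted maximizer $(x_D(w), y_D(w))$ lies in the feasible region and, by uniqueness, strictly exceeds $f(x_d, Y_2(x_d))$.

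\textbf{Regime 3 (cases (B3) and (B4) with $u > 0$).} Here $(x_d, Y_2(x_d)) \in \mathcal{S}_{12}$, so $\partial_x P \leq 0$ at this point; combined with $x_d < x^{**}_P$ (from the case definitions) and $Y_1(x_d) < Y_2(x_d)$ (ruling out $x_d = x^*_P$), Lemma~\ref{preliminary-lemma1} gives $\partial_y P(x_d, Y_2(x_d)) > 0$ strictly. I would take the direction $\mathbf{d} = (1, -\varepsilon)$ with $\varepsilon > 0$ small: then $\nabla P \cdot \mathbf{d} = \partial_x P - \varepsilon \partial_y P < 0$, so a small displacement stays in $D$; the $y$-coordinate decreases strictly (so stays below $y_d$); and $\nabla f \cdot \mathbf{d} = u/x_d - \varepsilon v/Y_2(x_d) > 0$ since $u > 0$, producing a nearby feasible point with strictly larger $f$-value. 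The crux of the argument is Regime 1, case (B0), which demands converting the angular critical identity and $u > u_c$ into the strict ordering between $u\Delta_x$ and $v\Delta_y$; Regimes 2 and 3 reduce to uniqueness of the maximizer of a strictly log-concave function on $D$ and to a one-line first-order perturbation, respectively.
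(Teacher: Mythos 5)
Your proof is correct, and for Regimes 1 and 2 it is essentially the paper's own argument: for (B0), the explicit conversion of the critical-direction identity $u_c\Delta_x = v_c\Delta_y$ together with $u > u_c$ into $u\Delta_x > v\Delta_y$; and for (B1) and for (B2) with $x_d < x^{**}_P$, the observation via Lemma~\ref{lemma-diffeomorphism-point-direction} that the unrestricted maximizer $(x_D(w), y_D(w))$ falls into the feasible region $\{y \leq y_d\}$ and differs from $(x_d, Y_2(x_d))$, so strict log-convexity forces the strict inequality. (Your passing remark that $u/v > u_c/v_c$ requires $v > 0$ is harmless: if $v = 0$ then the target inequality reduces to $\Delta_x > 0$, which holds trivially.)

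Where you genuinely diverge from the paper is Regime 3. The paper splits (B3) into two subcases: if $y_d < y^{**}_P$, it compares $f(x_d, y_d)$ against $f(X_2(y_d), y_d)$, exploiting $x_d = X_1(y_d) < X_2(y_d)$ and $u > 0$; if $y_d = y^{**}_P$ (so $X_1(y_d) = X_2(y_d) = x_d$ and that comparison collapses), it reverts to the maximizer argument using $w_D(x_d, y_d) = (0,1)$ and Lemma~\ref{lemma-diffeomorphism-point-direction}. Case (B4) is then handled by the $X_2(y_d)$-comparison again. Your first-order perturbation in the direction $(1,-\varepsilon)$ from $(x_d, Y_2(x_d)) \in \mathcal{S}_{12}$ avoids this case distinction entirely: the hypotheses on $x_d$ (strictly interior to $[x^*_P, x^{**}_P]$, which you correctly deduce from $Y_1(x_d) < Y_2(x_d)$) guarantee $\partial_y P > 0$ at that point, which is all you need for $\nabla P \cdot (1,-\varepsilon) < 0$ even when $\partial_x P = 0$; meanwhile $u > 0$ gives $\nabla f\cdot(1,-\varepsilon) > 0$ for small $\varepsilon$. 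The paper's approach produces explicit witness points which feed directly into the asymptotics later, so it is not wasted effort in context; your local argument is slightly more economical for the purposes of this lemma alone.
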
 
 \begin{proof}  Consider first the case when (B0) holds. We have, see the proof of Lemma~\ref{large-deviation-estimates-lemma3}, $X_2(y_d) > x_d$ and $Y_2(x_d) > y_d$, the point $(w_c=(u_c,v_c)\in{\mathbb S}^2_+$ satisfies \eqref{eq1b-large-deviation-estimates-lemma2-B0} and   ${\mathcal W}_1 = \{w=(u,v)\in{\mathbb S}^1_+: u > u_c\}$.  For $w=(u,v)\in{\mathbb S}^1_+$ with $u > u_c$ (and consequently also with $v < v_c$),  it follows that  
\[
(\ln(X_2(y_d)) - \ln(x_d)) u >  (\ln(Y_2(x_d)) - \ln(y_d)) v
\]
or equivalently,
\be\label{eq2-large-deviation-estimates-lemma2-B0} 
u\ln(X_2(y_d)) + v\ln(y_d)  > u\ln(x_d) + v \ln(Y_2(x_d)). 
\ee
Since the point $(X_2(y_d), y_d)$ belongs to the set $\{(x,y)\in D:~y\leq y_d\}$, this proves that for any $w=(u,v)\in{\mathbb S}^1_+$ with $u > u_c$, 
\be\label{eq3-large-deviation-estimates-lemma2-B0} 
\max_{({x}, {y})\in D, \, {y} \leq y_d} \!\Bigl( u\ln({x}) + v \ln({y}) \Bigr) \geq u\ln(X_2(y_d)) + v\ln(y_d)  > u\ln(x_d) + v \ln(Y_2(x_d)),
\ee
and consequently, \eqref{eq0-large-deviation-estimates-lemma2} holds. 

\medskip 

Consider now the case when (B1) holds, we have the relation
\[
{\mathcal W}_1 = \{w=(u,v)\in{\mathbb S}^1_+:~u > u_D(x_d, Y_2(x_d))\}, 
\]
and,  by Proposition~\ref{cases} and the definition of the points $x_d$ and $y_d$ (see \eqref{eq2-def-xd} and \eqref{eq2-def-yd}), 
\be\label{eq4-large-deviation-estimates-lemma2} 
X_1(y_d) < x_d =  x^{**} = X_2(y_d), \quad Y_1(x_d) < y_d = y^{**} = Y_2(x_d) 
\ee
and  $(x_d, Y_2(x_d)) = (X_2(y_d), y_d) \in {\mathcal S}_{22}$. Hence, for any $w=(u,v)\in{\mathcal W}_1$, 
\be\label{eq5-large-deviation-estimates-lemma2} 
x_D(w) > x_d = X_2(y_d) \quad \; \text{and} \; \quad y_D(w) < y_d = Y_2(x_d).  
\ee
The last relations show that the point $(x_D(w), y_D(w))$ belongs to the set $\{(x,y)\in D:~y\leq y_d\}$ and is not equal to $(x_d, Y_2(x_d))$. Since the point $(x_D(w), y_D(w))$ is an only point in $D$ where the function $(x,y)\to u\ln(x) + x \ln(y)$ achieves its maximum over $D$,  it follows that 
\be\label{eq6-large-deviation-estimates-lemma2} 
\max_{(\hat{x}, \hat{y})\in D, \, \hat{y} \leq y_d} \!\Bigl( u\ln(\hat{x}) + v \ln(\hat{y}) \Bigr) =  u\ln(x_D(w)) + v\ln(y_D(w))  > u\ln(x_d) + v \ln(Y_2(x_d)), 
\ee
and consequently, when the assertion (B1) holds, relation \eqref{eq0-large-deviation-estimates-lemma2}  is also proved.

Consider now the case when (B2) and $x_d < x^{**}_P$ hold. By Proposition~\ref{cases} and  the definition of the points $x_d$ and $y_d$ (see \eqref{eq2-def-xd} and \eqref{eq2-def-yd}), 
\be\label{eq1-large-deviation-estimates-lemma2-B2} 
 x_d =  x^{**} > X_2(y_d), \quad  y_d = y^{**} > Y_2(x_d),   \quad (x_d, Y_2(x_d)), \, (X_2(y_d), y_d) \in {\mathcal S}_{22}, 
\ee
and with  the definition of ${\mathcal W}_1$,
\[
{\mathcal W}_1 = \{w=(u,v)\in{\mathbb S}^1_+:~ u > u_D(x_d, Y_2(x_d))\}. 
\]
Hence, in this case, for any $w=(u,v)\in{\mathcal W}_1$, 
\[
x_D(w) > x_d > X_2(y_d) \quad \; \text{and} \; \quad y_D(w) < Y_2(x_d) < y_d. 
\]
These relations show that the point $(x_D(w), y_D(w))$ belongs to the set $\{(x,y)\in D:~y\leq y_d\}$ and is not equal to $(x_d, Y_2(x_d))$, and consequently, using exactly the same arguments as in the previous case, one gets \eqref{eq0-large-deviation-estimates-lemma2}.

Suppose now that (B3) and $y_d < y^{**}_P$ hold. By Proposition~\ref{cases} and  the definition of the points $x_d$ and $y_d$ (see \eqref{eq2-def-xd} and \eqref{eq2-def-yd}), we have
\be\label{eq1-large-deviation-estimates-lemma2-B3a} 
 X_1(y_d) = x_d =  x^{**}  < x^{**}_P, \quad  y_d = y^{**} = Y_2(x_d),   \quad (x_d,  y_d) = (x_d, Y_2(x_d)) \in {\mathcal S}_{12} 
\ee
and 
\[
{\mathcal W}_1 = \{w=(u,v)\in{\mathbb S}^1_+:~ u > 0\}.
\]
Since we assume that $y_d < y^{**}_P$ and by \eqref{eq-critical-points}, $y^{**} > y^* \geq y^{*}_P$,  by Lemma~\ref{preliminary-lemma1} and the definition of the mapping $(x,y)\to w_D(x,y) = (u_D(x,y), v_D(x,y))$, from \eqref{eq1-large-deviation-estimates-lemma2-B3a} it follows that 
\be\label{eq2-large-deviation-estimates-lemma2-B3a} 
x_d = X_1(y_d) < X_2(y_d) \quad \text{and} \quad u_D(x_d, Y_2(x_d)) = u_D(x_d, y_d)  < 0. 
\ee
Hence, in this case, for any $w=(u,v)\in{\mathcal W}_1$,  
\begin{multline}\label{eq4-large-deviation-estimates-lemma2-B3a} 
 u\ln(x_d) + x\ln(Y_2(x_d)) = u\ln(x_d) + v\ln(y_d)   <  u\ln(X_2(y_d))  + v\ln(y_d) \\ \leq \max_{({x}, {y})\in D, \, {y} \leq y_d} \bigl( u\ln({x}) + v \ln({y}) \bigr)
\end{multline}
 and consequently, \eqref{eq0-large-deviation-estimates-lemma2} holds.

Suppose now that (B3) holds and $y_d = y^{**}_P$. Then ${\mathcal W}_1 = \{w=(u,v)\in{\mathbb S}^1_+:~ u > 0\}$ and \eqref{eq1-large-deviation-estimates-lemma2-B3a} holds,   but now, instead of \eqref{eq2-large-deviation-estimates-lemma2-B3a}  one has 
\[
x_d = X_1(y_d) = X_2(y_d), \quad y_d = y^{**} = y^{**}_P \quad \text{and} \quad \quad w_D(x_d, y_d) = (0,1). 
\]
Hence, in this case, for any $w=(u,v)\in{\mathcal W}_1$, 
\be\label{eq5-large-deviation-estimates-lemma2-B3a} 
x_D(w) > x_d \quad \; \text{and} \; \quad y_D(w) < y_d.
\ee
These relations show that the point $(x_D(w), y_D(w))$ belongs to the set $\{(x,y)\in D:~y\leq y_d\}$ and is not equal to $(x_d, Y_2(x_d))$  and consequently, using exactly the same arguments as in the  case (B2), one gets \eqref{eq0-large-deviation-estimates-lemma2}.

Consider finally the case when (B4) holds. In this case, ${\mathcal W}_1 = {\mathbb S}^1_+$ and by Proposition~\ref{cases} and  the definition of the points $x_d$ and $y_d$ (see \eqref{eq2-def-xd} and \eqref{eq2-def-yd}) 
\be\label{eq00-proof-theorem3-(B4)} 
 X_1(y_d) = x_d =  x^{**}  < x^{**}_P, \quad   y^* \leq 1 < y_d = Y_2(x_d) <  y^{**} \leq y^{**}_P,   \quad (x_d,  y_d) \in {\mathcal S}_{12}.
\ee
Hence,  in this case, \eqref{eq2-large-deviation-estimates-lemma2-B3a} holds and consequently, using  exactly the same arguments as above, one gets \eqref{eq0-large-deviation-estimates-lemma2} for any $w=(u,v)\in{\mathcal W}_1$ with $u > 0$. 
\end{proof}

\bigskip

{\em Now we are ready to complete the proof of the first assertion of Theorem~\ref{theorem4}}: Since the right hand side of \eqref{simple-pole-x-asymptotics} does not depend on $w \in{\mathcal W}_1$, it is sufficient to show that for any $j\in\Z^2_+{\setminus} E_0$ and $w=(u,v)\in{\mathcal W}_1$,   \eqref{simple-pole-x-asymptotics}  holds as $\min\{k_1,k_2\}\to+\infty$ and $k/\|k\|\to w$. 

Consider first the case when one of the following assertion holds
\begin{itemize} 
\item one of the conditions (B0), (B1) or (B3) holds 
\item (B2) and $x^{**} < x^{**}_P$ hold,
\item (B4) holds and $u > 0$. 
\end{itemize} 
By  using  Lemma~\ref{large-deviation-estimates} and Lemma~\ref{large-deviation-estimates-lemma3}, as $\|k\|\to+\infty$ and $k/\|k\|\to w$, one gets 
\begin{align} 
\limsup_k \|k\|^{-1} \log \left|  I_0(j,k) + I_2(j,k) \right|& \leq - \max_{(\hat{x}, \hat{y})\in D, \, \hat{y} \leq y_d} \Bigl( u\ln(\hat{x}) + v \ln(\hat{y}) \Bigr) \nonumber \\
&< - u\ln(x_d) +  v\ln(Y_2(x_d)). \label{eq7-proof-theorem5-(B0)}
\end{align}
By Corollary~\ref{cor-exact-asymptotics-I1}, for any $w=(u,v)\in{\mathcal W}_1$ and $j\in\Z^2_+{\setminus} E_0$,  as $\min\{k_1,k_2\}\to+\infty$ and $k/\|k\|\to w$, 
\be\label{eq6-proof-theorem5-(B0)} 
I_1(j,k) ~\sim~ {c}_1 \, a_1 \, \varkappa_1(j) x_d^{- k_1-1}(Y_2(x_d))^{-k_2-1} 
\ee
where $c_1 > 0$ is given by \eqref{eq1b-exact-asymptotics-I1-lemma} and  $a_1 > 0$ is given by \eqref{eq2b-cases-B0-B4-theorem2}. 

Comparison of \eqref{eq6-proof-theorem5-(B0)}  with \eqref{eq7-proof-theorem5-(B0)} shows that  the terms $I_0(j,k)$ and $I_2(j,k)$ in \eqref{eq-decomposition}  are negligible with respect to $I_1(j,k)$, and consequently, from the integral representation \eqref{eq-decomposition} and using \eqref{eq6-proof-theorem5-(B0)}  one gets \eqref{simple-pole-x-asymptotics}. 

\medskip
The set of directions ${\mathcal W}_1$ is empty in each of the following cases:
\begin{itemize}
\item (B2) and $x_d{=} x^{**}_P$ hold; 
\item (B5) or (B6) holds. 
\end{itemize} 
\noindent 
Hence, to complete the proof of the first assertion of Theorem~\ref{theorem4} it is sufficient now to prove \eqref{simple-pole-x-asymptotics}   when (B4) holds and $w{=}(0,1)$. By Proposition~\ref{cases} and  the definition of the points $x_d$ and $y_d$ (see \eqref{eq2-def-xd} and \eqref{eq2-def-yd}) one has 
\be\label{eq10-proof-theorem4}
X_1(y_d) = x_d =  x^{**}  < x^{**}_P, \quad   y^* \leq 1 < y_d = Y_2(x_d) <  y^{**} \leq y^{**}_P,   \quad (x_d,  y_d) \in {\mathcal S}_{12}.
\ee
and  the definition of the point $y^{**}_P$ and the mapping $w\to (x_D(w), y_D(w))$ give, for $w=(0,1)$, 
\be\label{eq11-proof-theorem4}
(x_D(0,1), y_D(0,1)) = (X_1(y^{**}_P), y^{**}_P). 
\ee
Hence, in this case, $y_D(0,1) = y^{**}_P > y_d$, and consequently, also $1 > v_D(X_2(y_d), y_d)$. By Lemma~\ref{exact-asymptotics-I2-lemma} applied with $w=(0,1)$, it follows that there is $C > 0$ such that for any $j\in\Z^2_+{\setminus} E_0$ and $k=(k_1,k_2)\in\Z^2_+$, and as $\min\{k_1,k_2\}\to+\infty$, $k/\|k\|\to (0,1)$ 
\be\label{eq30-exact-asymptotics-I2-lemma}
I_2(j,k) ~\sim~ {C} \, \varkappa_1(j) (X_2(y_d))^{- k_1-1}(y_d)^{-k_2-1}.  
\ee
With Lemma~\ref{large-deviation-estimates} applied with $w=(0,1)$, the definition of the point $y^{**}_P$, and using relation~\eqref{eq10-proof-theorem4}, 
\be\label{eq6000-proof-theorem5-(B0)} 
\limsup_k \|k\|^{-1} \ln \bigl| I_0(j,k)\bigr| \leq - \max_{(x,y)\in D} \ln(y) = - \ln(y^{**}_P) < - \ln(y_d).
\ee
Remark that by relation~\eqref{eq10-proof-theorem4} and Corollary~\ref{cor-exact-asymptotics-I1} applied with  $w=(0,1)\in{\mathcal W}_1$, for any  $j\in\Z^2_+{\setminus} E_0$,  as $\min\{k_1,k_2\}\to+\infty$ and $k/\|k\|\to (0,1)$, 
\be\label{eq60-proof-theorem5-(B0)} 
I_1(j,k) ~\sim~ {c}_1 \, a_1 \, \varkappa_1(j) x_d^{- k_1-1}(Y_2(x_d))^{-k_2-1} = {c}_1 \, a_1 \, \varkappa_1(j) x_d^{- k_1-1} y_d^{-k_2-1} 
\ee
where $c_1 > 0$ is given by \eqref{eq1b-exact-asymptotics-I1-lemma} and  $a_1 > 0$ is given by \eqref{eq2b-cases-B0-B4-theorem2}. 
 
A comparison of relation~\eqref{eq6000-proof-theorem5-(B0)} with relations~\eqref{eq30-exact-asymptotics-I2-lemma} and \eqref{eq60-proof-theorem5-(B0)}  shows that for any $j\in\Z^2_+{\setminus} E_0$ and $k=(k_1,k_2)\in\Z^2_+$, and as $\min\{k_1,k_2\}\to+\infty$, $k/\|k\|\to (0,1)$, the term $I_0(j,k)$ is negligible with respect to $I_1(j,k) + I_2(j,k)$ in \eqref{eq-decomposition}, and consequently, using~\eqref{eq30-exact-asymptotics-I2-lemma} and \eqref{eq60-proof-theorem5-(B0)} one gets 
\[
g(j,k) ~\sim~ I_1(j,k) + I_2(j,k) ~\sim~ {c}_1 \, a_1 \, \varkappa_1(j) x_d^{- k_1-1}y_d^{-k_2-1}  + {C} \, \varkappa_1(j) (X_2(y_d))^{- k_1-1}(y_d)^{-k_2-1}. 
\]
Since by \eqref{eq10-proof-theorem4}, $x_d = X_1(y_d) < X_2(y_d)$, this proves  \eqref{simple-pole-x-asymptotics} with $b_1 = c_1 a_1$.

The first assertion of Theorem~\ref{theorem4} is therefore proved. The proof of the second assertion is the same by exchanging the roles of $x$ and $y$.

\subsection{Proof of the assertion iii) of Theorem~\ref{theorem4}}\label{proof-third-assertions-theorem4}

Suppose first that (B0) holds. In this case, see the proof of Lemma~\ref{large-deviation-estimates-lemma2},
\be\label{eq0-proof-IV-theorem4}
u_c > u_D(x_d, Y_2(x_d)) \quad \text{and} \quad v_c > v_D(X_2(y_d), y_d). 
\ee
Consider $j\in\Z^2_+{\setminus} E_0$ and let $\min\{k_1,k_2\}\to+\infty$ and $k/\|k\|\to w_c$. Then using the first relation of \eqref{eq0-proof-IV-theorem4}, by  Lemma~\ref{exact-asymptotics-I1-lemma} applied for $w=w_c =(u_c,v_c)$,  we get 
\be\label{eq1aaa-proof-IV-theorem4}
I_1(j,k) ~\sim~ {c}_1 \, a_1 \varkappa_1(j) x_d^{- k_1-1}(Y_2(x_d))^{-k_2-1}, 
\ee
where $c_1 > 0$ and $a_1 > 0$  given respectively by \eqref{eq1b-exact-asymptotics-I1-lemma} and \eqref{eq2b-cases-B0-B4-theorem2}, and  using the second relation of \eqref{eq0-proof-IV-theorem4}, by Lemma~\ref{exact-asymptotics-I2-lemma} applied for $w=w_c =(u_c,v_c)$, we obtain 
\be\label{eq2bbb-proof-IV-theorem4}
I_2(j,k) ~\sim~ c_2 \, a_2 \varkappa_2(j) (X_2(y_d))^{-k_1-1} y_d^{- k_2-1} 
\ee
where $c_2 > 0$ and $a_2 >0$ are defined respectively by \eqref{eq1b-exact-asymptotics-I2-lemma} and \eqref{eq1bb-exact-asymptotics-I2-lemma}. 
Comparison of \eqref{eq1aaa-proof-IV-theorem4} and \eqref{eq2bbb-proof-IV-theorem4} with \eqref{eq1-large-deviation-estimates} shows that the term $I_0(j,k)$ in \eqref{eq-decomposition}  is negligible with respect to $I_1(j,k)+I_1(j,k)$, and consequently, using \eqref{eq-decomposition} together with \eqref{eq1aaa-proof-IV-theorem4} and \eqref{eq2bbb-proof-IV-theorem4}  one gets \eqref{competition-simple-poles-asymptotics} with $b_1 = c_1 a_1$ and $b_2 = c_2 a_2$. . 

\subsection{Proof of the assertions  iv)  and v) of Theorem~\ref{theorem4}} \label{proof-four-fifth-assertions-theorem4}
Suppose that (B3) holds, $y_d < y^{**}_P$ and let $w=(0,1)$. By Proposition~\ref{cases} and the definition of the points $x_d$ and $y_d$, one has $x^*_P < x_d = x^{**} < x^{**}_P$, $(x_d,y_d) = (x_d, Y_2(x_d)) = (X_1(y_d), y_d)\in{\mathcal S}_{12}$ and $y^*_P < y_d = y^{**} < y^{**}_P$.  By Lemma~\ref{preliminary-lemma1} and  the definition of the mapping $(x,y)\to w_D(x,y) = (u_D(x,y), v_D(x,y))$, it follows that 
\be\label{eq0-proof-V-theorem4} 
x_d = X_1(y_d) < X_2(y_d),  
\ee
\be\label{eq1-proof-V-theorem4} 
u_D(x_d, Y_2(x_d)) < 0 \quad \text{and} \quad v_D(X_2(y_d), y_d) < 1. 
\ee
Consider now $j\in\Z^2_+{\setminus} E_0$ and let $k\in\Z^2_+$, $\min\{k_1,k_2\}\to+\infty$ and $k/\|k\| \to w=(1,0)$. Then by by Lemma~\ref{large-deviation-estimates} and since $y_d < y^{**}_P$, one gets 
\be\label{eq2-proof-V-theorem4}
\limsup_k \|k\|^{-1} \ln \bigl| I_0(j,k)\bigr| \leq - \max_{(x,y)\in D} \ln(y) = - \ln(y^{**}_P) < - \ln(y_d),
\ee
 by Lemma~\ref{exact-asymptotics-I1-lemma}, from the first relation of  \eqref{eq1-proof-V-theorem4}  it follows 
 \be\label{eq3-proof-V-theorem4}
I_1(j,k) ~\sim~ {c}_1 \, a_1 \varkappa_1(j) x_d^{- k_1-1}(Y_2(x_d))^{-k_2-1} = {c}_1 \, a_1 \varkappa_1(j) x_d^{- k_1-1} y_d^{-k_2-1}  , 
\ee
where $c_1 > 0$ and $a_1 > 0$ are  given respectively by \eqref{eq1b-exact-asymptotics-I1-lemma} and  \eqref{eq2b-cases-B0-B4-theorem2}, 
and by the second assertion of Lemma~\ref{exact-asymptotics-I2-lemma}, from the second relation of \eqref{eq1-proof-V-theorem4}  it follows that there exists $\tilde{c}_2 \in\R$ do not depending on $j\in\Z^2_+{\setminus} E_0$ such that 
\be\label{eq4-proof-V-theorem4}
I_2(j,k) ~\sim~ \frac{c_2 \, a'_3 \varkappa_1(j) k_2}{(X_2(y_d))^{k_1+1}y_d^{k_2+2}} + \frac{\tilde{c}_2 \, a'_3 \varkappa_1(j)k_1}{(X_2(y_d))^{k_1+2}{y_d}^{k_2+1}}
\ee
where $c_2 > 0$ and $a_3' > 0$ are given respectively by  \eqref{eq1b-exact-asymptotics-I2-lemma}  and \eqref{eq2b-exact-asymptotics-I2-lemma}. 

Comparison of \eqref{eq2-proof-V-theorem4} with \eqref{eq3-proof-V-theorem4}  shows that the term $I_0(j,k)$ is negligible with respect to $I_1(j,k)+I_2(j,k)$ in \eqref{eq-decomposition}. Using therefore \eqref{eq-decomposition} together with \eqref{eq3-proof-V-theorem4} and \eqref{eq4-proof-V-theorem4}  we obtain 
\begin{align}
g(j,k) &~\sim~ I_1(j,k) + I_2(j,k) \nonumber\\
&~\sim~ \frac{{c}_1 \, a_1 \varkappa_1(j)}{ x_d^{k_1+1} y_d^{k_2+1}}  +  \frac{c_2 \, a'_3 \varkappa_1(j) k_2}{(X_2(y_d))^{k_1+1}y_d^{k_2+2}} + \frac{\tilde{c}_2 \, b_3 \varkappa_1(j)k_1}{(X_2(y_d))^{k_1+2}{y_d}^{k_2+1}} \label{eq4p-proof-V-theorem4} 
\end{align} 
Finally,  by relation~\eqref{eq0-proof-V-theorem4} we have $x_d{<}X_2(y_d)$, hence, in relation~\eqref{eq4p-proof-V-theorem4}, the last term 
is negligible with respect to the first one and consequently, relation~\eqref{competition-simple-double-poles-asymptotics} holds with $b_1 = c_1 a_1$ and $b_3 = c_2 \, a'_3$. 

The assertion  iv)  of Theorem~\ref{theorem4} is therefore also proved. The proof of the assertion  v)  is exactly the same (it is sufficient to exchange the roles of the first and the second coordinates of the random walk $(Z(n))$). 

\subsection{Proof of the assertion  vi)  of Theorem~\ref{theorem4}} 

To prove the last assertion of Theorem~\ref{theorem4} it is sufficient to show that for any $w\in{\mathcal W}_0$, \eqref{Woess-asymptotics} holds uniformly with respect to $w_m=m/\|m\|$ in some neighborhood of $w$. For this we use Proposition~1 of \cite{Ignatiouk-2023-cone}.  In our setting, this result gives the following lemma. 
\begin{lemma}\label{lemma-Woess} Suppose that the Assumption~(A1) is satisfied and let  $w=(u,v){\in}{\mathbb S}^1_+$ and  $\eps{>} 0$ and a function $(x,y){\to}F(x,y)$ be analytic in  the  polycircular set 
 \be\label{eq2-lemma-Woess}
 \{ (x,y)\in\C^2:~ \bigl||x|-x_D(w)\bigr| < \eps, \; \bigl||y|-y_D(w)\bigr| < \eps\},
 \ee
 and not vanishing at the point $(x_D(w), y_D(w))$. 
 Then  the integrals
 \be\label{eq3-lemma-Woess}
 I(m) = \frac{1}{(2\pi i)^2} \int_{|x| = \hat{x}} \int_{|y|= \hat{y}} \frac{F(x,y)}{x^{m_1+1}y^{m_2+1}(1-P(x,y))} \, dx dy, \quad  m=(m_1,m_2)\in\Z^2_+,
 \ee
 are well defined and does not depend on the point $(\hat{x}, \hat{y})$ on the set 
 \be\label{eq4-lemma-Woess}
 \{ (\hat{x},\hat{y})\in \inter{D}:~ |\hat{x} -x_D(w)| < \eps, \quad  |\hat{y} - y_D(w)| < \eps \},
 \ee
 and as $\|m\|\to+\infty$, uniformly with respect to $w_m=m/\|m\|$ in some neighborhood of $w$, 
 \be\label{eq3p-lemma-Woess}
 I(m) ~\sim~ \frac{F(x_D(w_k), y_D(w_k)) \sqrt{w_k^\perp \cdot {\mathcal Q}(w_k)w_k^\perp }}{\bigl(2\pi\|k\|)^{1/2} \|{\mathfrak m}(w_k) \|^{-1}(x_D(w_k))^{k_1} (y_D(w_k))^{k_2}}  \ee
 \end{lemma}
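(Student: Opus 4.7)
First I would verify that the integral $I(m)$ is well defined and independent of $(\hat x,\hat y)$ on the set \eqref{eq4-lemma-Woess}. For any $(\hat x,\hat y)\in\inter D$ one has $P(\hat x,\hat y)<1$ and, by Assumption~(A1) together with Proposition~P7.5 of Spitzer, $|P(x,y)|<1$ on the whole polycircular set $\Omega(\inter D)$; hence $(x,y)\mapsto(1-P(x,y))^{-1}$ is analytic there. Since $F$ is by hypothesis analytic in the polycircular set \eqref{eq2-lemma-Woess}, the full integrand in \eqref{eq3-lemma-Woess} is analytic in a neighborhood of the torus $\{|x|=\hat x,|y|=\hat y\}$ for every $(\hat x,\hat y)$ in \eqref{eq4-lemma-Woess}. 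Contour independence then follows by applying the one-dimensional Cauchy theorem successively in each variable, the two one-dimensional deformations being allowed because the intersections with $\Omega(\inter D)$ of the sets on which we deform remain contained in the domain of joint analyticity.

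Next I would reduce the asymptotic \eqref{eq3p-lemma-Woess} to a direct application of Proposition~1 of \cite{Ignatiouk-2023-cone}. That proposition treats integrals of precisely the form \eqref{eq3-lemma-Woess} with a generic analytic and non-vanishing prefactor, and it identifies the asymptotic by a combined residue/saddle-point argument: one first deforms the inner $y$-contour outward past the simple root $y=Y_2(x)$ of $1-P(x,y)=0$ situated near $y_D(w)$, picking up a one-dimensional residue contribution supported on $\{(x,Y_2(x))\}$, and then applies the classical one-dimensional saddle-point method to the resulting integral in $x$. The saddle is $x_D(w_m)$, which under the diffeomorphism of Section~\ref{def-homeomorphism-sphere-boundary} corresponds to the unique point on $\partial D$ that maximizes $u\ln x + v\ln y$ along $\partial D$; the second-order data of the saddle reduce, by \eqref{diffeomorphism-point-direction} and a direct computation of the Hessian of $-m_1\ln x - m_2\ln y$ restricted to the tangent line to $\partial D$, to the quadratic form $w_m^\perp\cdot\mathcal Q(w_m)\cdot w_m^\perp$, and the Jacobian factor produces the drift norm $\|\mathfrak m(w_m)\|$. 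Uniformity for $w_m$ in a neighborhood of $w$ then follows from the smooth dependence of the saddle point $(x_D(w),y_D(w))$, of $\mathcal Q(w)$, and of $\mathfrak m(w)$ on $w$.

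The main technical point --- and the one that makes the direct appeal to \cite{Ignatiouk-2023-cone} valuable --- is to check that the analytic prefactor $F$ causes no difficulty. Because $F$ is analytic in the whole polycircular neighborhood \eqref{eq2-lemma-Woess} and does not vanish at $(x_D(w),y_D(w))$, it is uniformly close to $F(x_D(w_m),y_D(w_m))$ along the steepest-descent contour used in the saddle-point estimate (whose diameter shrinks like $\|m\|^{-1/2}$), so it factors out to leading order as $F(x_D(w_m),y_D(w_m))(1+o(1))$. This is exactly the hypothesis under which Proposition~1 of \cite{Ignatiouk-2023-cone} is stated, and verifying that our saddle point $(x_D(w),y_D(w))$ lies in its domain of analyticity (and stays inside a fixed compact subset of that domain as $w_m\to w$) is the only nontrivial matching step.
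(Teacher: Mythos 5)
Your proposal is correct and takes essentially the same route as the paper: the paper states Lemma~\ref{lemma-Woess} as an immediate consequence of Proposition~1 of \cite{Ignatiouk-2023-cone} (``In our setting, this result gives the following lemma''), which is precisely the reduction you make. The additional narrative you supply about the internal residue-plus-saddle-point mechanism of the cited proposition, and the verification of well-definedness and contour independence, go beyond what the paper spells out but do not constitute a different approach.
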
 
 
 \bigskip 
By Corollary~\ref{integral-representation-cor1}, for any $(\hat{x}, \hat{y})\in\inter{D}$, with $\hat{x} < x_d$ and $\hat{y} < y_d$, we have 
 \be\label{eq0-proof-I-theorem4}
 g(j,k) =  \frac{1}{(2\pi i)^2} \int_{|x| = \hat{x}} \int_{|y|= \hat{y}} \frac{F_j(x,y)}{x^{k_1+1}y^{k_2+1}(1-P(x,y))} \, dx dy. 
 \ee
 with 
 \[
 F_j(x,y) = L_j(x,y) + (\phi_1(x,y)-P(x,y)) H_j(x,0) + (\phi_2(x,y)-P(x,y)) H_j(0,y). 
 \]
 Remark moreover that the set of directions ${\mathcal W}_0$ is non-empty if and only if (B2) holds, and that in this case, for any $w=(u,v)\in{\mathcal W}_0$, one has 
  \be\label{eq2-proof-I-theorem4}
  x_D(w) < x_d, \quad \quad y_D(w) < y_d.  
  \ee
 By Theorem~\ref{theorem1},  it follows that for some neighborhood $V(x_D(w), y_D(w))$ of the point $(x_D(w), y_D(w))$ in $\R^2$,  the function $(x,y) \to F_j(x,y) = L_j(x,y) + (\phi_1(x,y)-1) H_j(x,0) + (\phi_2(x,y)-1) H_j(0,y)$ is analytic in  the polycircular set $\{(x,y)\in\C^2:~ (|x|, |y|)\in V(x_D(w), y_D(w))\}$,  and  by Proposition~\ref{harmonic-functions}, 
 \[
 F_j(x_D(w),y_D(w)) = \varkappa_{(x_D(w),y_D(w))}(j) > 0, \quad \forall j\in\Z^2_+{\setminus} E_0. 
 \]
For any $w{\in}{\mathcal W}_0$ and $j{\in}\Z^2_+{{\setminus}}E_0$, the conditions of Lemma~\ref{lemma-Woess} are therefore satisfied with $F{=}F_j$, and consequently relation~\eqref{Woess-asymptotics} holds uniformly with respect to $w_k{=}k/\|k\|$ in some neighborhood of~$w$.



\providecommand{\bysame}{\leavevmode\hbox to3em{\hrulefill}\thinspace}
\providecommand{\MR}{\relax\ifhmode\unskip\space\fi MR }
\providecommand{\MRhref}[2]{%
  \href{http://www.ams.org/mathscinet-getitem?mr=#1}{#2}
}
\providecommand{\href}[2]{#2}

\appendix
\section{Relations with Positive Recurrence and Transience Conditions}\label{section-conditions} 
This section discusses the conditions (B0){-}(B7) defining our classification with transience and positive recurrence conditions of  of these random walks. As we will see, the regions of our classification are in fact not defined in terms of positive recurrence or transience.  Transience and positive recurrence are possible in several regions.

Throughout this section, $\mu_0$, $\mu_1$ or $\mu_2$ are assumed to be stochastic, i.e. probability distributions. 
Before getting these results, note that Assumption~(A1)`(ii) is equivalent to the usual Cramer's condition for the distribution of the jumps of $(S(n))$. 
\[
(\alpha,\beta)\to \tilde{P}(\alpha,\beta) \steq{def} P(e^\alpha, e^\beta) = \sum_{k=(k_1,k_2)} \mu(k) e^{\alpha k_1 + \beta k_2}.
\]
It is satisfied if and only if the function $\tilde{P}$ is finite in a neighborhood of the set $\tilde{D} \steq{def} \{(\alpha,\beta) {\in}\R^2:~P(e^\alpha, e^\beta){\leq}1\}$. Similarly,  Assumption~(A3)~(ii) is satisfied if and only if the generating functions 
\[
\tilde\phi_i(\alpha,\beta) \steq{def} \phi_1(e^\alpha, e^\beta) = \sum_{k=(k_1,k_2)} \mu_i(k) e^{\alpha k_1 + \beta k_2}, \quad i{\in}\{0,1,2\},
\]
are finite in a neighborhood of $\tilde{D}$. Since $(0,0){\in}\tilde{D}$, all jumps of the random walk $(Z(n))$ are in particular integrable.  

\noindent
Remark furthermore  that because of Assumptions (A1)~(ii) and (A3)~(iv), the mean jumps 
\[
(M_1,M_2) = \sum_{j\in\Z^2} j\mu(j),  
\;
(M^1_1,M^1_2) = \sum_{j\in\Z^2} j\mu_1(j) 
\quad  
and 
\quad 
(M^2_1, M^2_2)  = \sum_{j\in\Z^2} j\mu_1(j) 
\]
are non zero, $M^1_2 {>} 0$ and $M^2_1 {>} 0$. Moreover, since with Assumption~(A1)~(iii), the sets $D{\cap}D_1$ and $D{\cap}D_2$ have a non-empty interior, one has also $M_1M^1_2 {\not=} M_2 M^1_1$ and  $M_2M^2_1 {\not=} M_1 M^2_2$. 

Under our assumptions, the necessary and sufficient conditions of  positive recurrence and transience for  the Markov chain $(Z(n))$, are given in the next proposition.  See Theorem~3.3.1 of~\cite{FMM} for example. 

\begin{prop}\label{recurrence_transience} Under the assumptions (A1){-}(A3) then, the following assertions hold for the Markov chain $(Z(n))$ on $\Z_+^2$.
\begin{enumerate}
\item  Positive recurrence.  If and only if one of the following conditions is satisfied:
\begin{enumerate}
\item[(R0)] $M_1{<}0$, $M_2{<}0$,  $M_1M^1_2{<}M_2 M^1_1$  and $M_2M^2_1{<}M_1 M^2_2$; 
\item[(R1)]  $M_2{<}0$,  $M_1{\geq} 0$ and $M_1 M^1_2{<}M_2 M^1_1$;
\item[(R2)] $M_1{<}0$,  $M_2{\geq} 0$ and $M_2M^2_1{<}M_1 M^2_2$.
\end{enumerate} 
\item  Transience. If  one of the following conditions is satisfied:
\begin{enumerate}
\item[(T0)] $M_1{>}0$ and $M_2{>}0$; 
\item[(T1)]  $M_2{<}0$   and   $M_1 M^1_2{>}M_2 M^1_1$; 
\item[(T2)] $M_1{<}0$  and  $M_2M^2_1{>}M_1 M^2_2$; 
\end{enumerate}
\end{enumerate}
\end{prop}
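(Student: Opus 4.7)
\section*{Proof proposal for Proposition~\ref{recurrence_transience}}

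The plan is to reduce the statement to the classical classification of Malyshev's scheme for partially homogeneous random walks in the quadrant, as presented in Theorem~3.3.1 of Fayolle, Malyshev, Menshikov~\cite{FMM}. Under the assumptions (A1){-}(A3), all jumps are integrable (Assumption~(A1)(ii) implies a Cramer condition for $\mu$, and (A3)(ii) for $\mu_1$, $\mu_2$), the chain is irreducible on $\Z_+^2$ (Assumption~(A3)(i)), and the mean jumps satisfy $M_2^1 > 0$, $M_1^2 > 0$ together with $M_1 M_2^1 \neq M_2 M_1^1$ and $M_2 M_1^2 \neq M_1 M_2^2$ (the non-degeneracy comes from the fact that the sets $D \cap D_1$ and $D \cap D_2$ have non-empty interior, Assumption~(A3)(iii)). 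These are precisely the hypotheses under which the Malyshev classification applies, so the abstract statement translates directly into our eight cases (R0){-}(R2), (T0){-}(T2).

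For completeness the proof can also be carried out by direct Lyapunov function arguments, which I sketch now. For positive recurrence one applies Foster's criterion, seeking $V:\Z_+^2 \to \R_+$ with $\E_j(V(Z(1))) - V(j) \leq -\eps$ outside a finite set. In case (R0) the linear function $V(j_1,j_2) = j_1 + j_2$ already works in the interior (drift $M_1 + M_2 < 0$), and the boundary conditions $M_1 M_2^1 < M_2 M_1^1$ and $M_2 M_1^2 < M_1 M_2^2$ (which, given $M_1, M_2 < 0$, force both boundary drifts to point into the interior in a compatible sense) let one adjust to $V(j_1,j_2) = a j_1 + b j_2$ with well-chosen $a,b > 0$. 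In case (R1), where $M_2 < 0$ but possibly $M_1 \geq 0$, the key observation is geometric: starting from $(k,0)$ with $k$ large, the chain first takes a jump of mean $(M_1^1, M_2^1)$, then executes interior steps with mean $(M_1, M_2)$ until returning to the horizontal axis after a time proportional to $M_2^1/|M_2|$; the expected horizontal displacement of this excursion is $M_1^1 - M_1 M_2^1/M_2 = (M_1^1 M_2 - M_1 M_2^1)/M_2$, which is strictly negative exactly when $M_1 M_2^1 < M_2 M_1^1$. This suggests the Lyapunov function $V(j_1,j_2) = j_1 + c j_2$ with $c = -M_1/M_2 > 0$ (so that the interior drift of $V$ vanishes) to which one adds a quadratic correction $\delta j_2^2$ to produce a negative interior drift on $\{j_2 > 0\}$; on the axis $\{j_2 = 0\}$ the linear part $j_1 + c j_2$ already has negative drift by the inequality above. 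Case (R2) is symmetric.

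For transience one constructs a bounded non-constant harmonic-like function or a positive supermartingale that tends to a positive limit with positive probability. In case (T0), both mean interior jumps are positive, so by the strong law of large numbers applied to the interior part and the fact that the walk almost surely spends only finitely many steps on the boundary, $\|Z(n)\| \to \infty$ almost surely; hence the chain is transient. In case (T1), $M_2 < 0$ but the excursion mean horizontal displacement $(M_1^1 M_2 - M_1 M_2^1)/M_2$ is positive (the reversed inequality), so the induced chain on the horizontal axis $\{j_2 = 0\}$ observed at successive return times to that axis has positive drift and is transient; this lifts to transience of $(Z(n))$ itself. Case (T2) is symmetric.

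The main obstacle will be the construction and verification of the Lyapunov functions in cases (R1) and (R2), where the interior drift is not uniformly negative and one must carefully balance a linear part killing the interior drift with a quadratic correction while simultaneously controlling the boundary jumps. The delicate point is to verify that the quadratic correction does not spoil the negative drift on the boundary line $\{j_2 = 0\}$, which uses the full strength of the inequality $M_1 M_2^1 < M_2 M_1^1$ and integrability of second moments (guaranteed by (A1)(ii)). Given the availability of Theorem~3.3.1 of~\cite{FMM}, citing that result is the cleanest route and avoids reproducing these standard but technical arguments.
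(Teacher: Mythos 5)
Your proposal is correct and takes essentially the same approach as the paper: the paper states this proposition without proof and simply refers the reader to Theorem~3.3.1 of Fayolle--Malyshev--Menshikov~\cite{FMM}, after noting exactly the same non-degeneracy facts ($M^1_2>0$, $M^2_1>0$, and $M_1M^1_2\neq M_2 M^1_1$, $M_2M^2_1\neq M_1 M^2_2$) that you invoke. The additional Lyapunov-function sketch you supply is a reasonable summary of the standard argument behind that theorem, but it is not part of the paper's treatment.
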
 

The proposition below studies the relation between  conditions of transience and positive recurrence for the Markov chain $(Z(n))$  and the location of the points defining the regions (B0){-}(B7). We formulate the conditions (R0){-}(R2) and (T0){-}(T3) in terms of the location of the points $(x^*,Y_i(x^*))$, $(X_i(y^*),y^*)$, and  $(x^{**},Y_i(x^{**}))$, $(X_i(y^{**}),y^{**})$, $i{\in}\{1,2\}$.
In Proposition~\ref{xd-yd-strictly stochastic-case}, the relation with the location of the  dominant singularities $x_d$ and $y_d$ is analyzed.

We first establish a technical lemma. 
\begin{lemma}\label{comparison-lemma} Under the assumptions (A1){-}(A3), then
\begin{enumerate}[label=(\roman*)]
\item \label{ER1}  $M_2{<} 0$ if and only if $(1,1){\in}{\mathcal S}_{11}{\cup}{\mathcal S}_{21}$ and $x^*_P{<}1{<}x^{**}_P$.\\  In this case,  $Y_1(1){=}1$ and 
\be
1 = 
\begin{cases}
x^* & \text{ if } M_1 M^1_2< M_2 M^1_1, \\ 
x^{**} &\text{ if } M_1 M^1_2> M_2 M^1_1. 
\end{cases} 
\ee

\item   $M_1{<}0$ if and only if $(1,1){\in}{\mathcal S}_{11}{\cup}{\mathcal S}_{12}$ and $y^*_P{<} 1 {<} y^{**}_P$.\\
In this case,  $X_1(1){=} 1$ and 
\be
1 = 
\begin{cases}
y^* & \text{ if } M_2M^2_1 < M_1 M^2_2, \\ 
y^{**} &\text{ if } M_2M^2_1 > M_1 M^2_2. 
\end{cases} 
\ee
\end{enumerate}
\end{lemma}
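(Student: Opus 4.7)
My plan is to prove only assertion (i); assertion (ii) then follows by the symmetry exchanging the roles of the two coordinates (equivalently, $x$ and $y$, $\mu_1$ and $\mu_2$). I will translate the moment conditions into sign conditions for partial derivatives of $P$ at $(1,1)$ and then read off the conclusions from the structure of $\partial D$ already obtained in Lemma~\ref{preliminary-lemma1} and Corollary~\ref{preliminary_cor2}.

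First I would set up the dictionary between moments and partial derivatives. Since $\tilde{P}(\alpha,\beta)=P(e^\alpha,e^\beta)$ and $M_i$ are moments of $\mu$, one has $M_1=\partial_x P(1,1)$ and $M_2=\partial_y P(1,1)$, and similarly $M^1_i=\partial_{x,y}\phi_1(1,1)$. Because $\mu_0$, $\mu_1$, $\mu_2$ are stochastic, $P(1,1)=\phi_1(1,1)=1$, so $(1,1)\in\partial D$. Comparing with the definitions \eqref{eq_S_11}--\eqref{eq_S_22}, the condition $M_2\leq 0$ is exactly $(1,1)\in{\mathcal S}_{11}\cup{\mathcal S}_{21}$. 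To upgrade this to a strict inequality, I recall from Lemma~\ref{preliminary-lemma1} that the only points of $\partial D$ at which $\partial_y P$ vanishes are $(x^*_P,Y_1(x^*_P))$ and $(x^{**}_P,Y_1(x^{**}_P))$; combined with $x^*_P\leq 1\leq x^{**}_P$ (which holds because $(1,1)\in D$), this yields the equivalence
\[
M_2<0\;\Longleftrightarrow\;(1,1)\in{\mathcal S}_{11}\cup{\mathcal S}_{21}\text{ and }x^*_P<1<x^{**}_P.
\]

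Next, assume $M_2<0$. Then $P(1,1)=1$ with $\partial_y P(1,1)<0$, so using the strict convexity of $\beta\mapsto\tilde{P}(0,\beta)$ (Lemma~\ref{preliminary-lemma1}) the value $y=1$ must be the smaller of the two positive roots of $P(1,y)=1$, giving $Y_1(1)=1$. Since $\phi_1(1,Y_1(1))=\phi_1(1,1)=1$, Corollary~\ref{preliminary_cor2} forces $1\in[x^*,x^{**}]$; moreover, assertion (ii) of that corollary says $\phi_1(x,Y_1(x))<1$ on the open interval $(x^*,x^{**})$, so $1$ must coincide with one of the endpoints $x^*$, $x^{**}$. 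It remains to decide which.

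The distinction will be made by computing the sign of the derivative of $f(x)=\phi_1(x,Y_1(x))$ at $x=1$ and invoking the strict convexity of $f$ on $[x^*_P,x^{**}_P]$ (Corollary~\ref{lemma_phi_extended}). Since $x^*_P<1<x^{**}_P$, the function $Y_1$ is differentiable at $1$ by the implicit function theorem applied to $P(x,Y_1(x))=1$, with
\[
Y_1'(1)=-\partial_x P(1,1)/\partial_y P(1,1)=-M_1/M_2.
\]
Then
\[
f'(1)=\partial_x\phi_1(1,1)+\partial_y\phi_1(1,1)\,Y_1'(1)=M^1_1-\frac{M^1_2\,M_1}{M_2}=\frac{M^1_1 M_2-M^1_2 M_1}{M_2}.
\]
Because $M_2<0$, the sign of $f'(1)$ is the opposite of that of $M^1_1 M_2-M^1_2 M_1$. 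Now the convex function $f$ satisfies $f\leq 1$ on $[x^*,x^{**}]$ with equality at the endpoints, so at a boundary point where $f=1$ the derivative is negative if the point is $x^*$ and positive if the point is $x^{**}$. This gives exactly $1=x^*$ when $M_1 M^1_2<M_2 M^1_1$ and $1=x^{**}$ when $M_1 M^1_2>M_2 M^1_1$, completing (i).

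The hardest point is not any single computation but ensuring at each step that the relevant object is well defined and strictly comparable: that $(1,1)$ lies strictly inside the $x$-range $(x^*_P,x^{**}_P)$ so that $Y_1$ is smooth at $1$, that $Y_1(1)=1$ and not $Y_2(1)$, and that the strict convexity of $f$ is available to convert a one-sided sign of $f'(1)$ into the identification of $1$ with $x^*$ or $x^{**}$. Once these three strict properties are in place, the rest is a direct application of the implicit function theorem and chain rule.
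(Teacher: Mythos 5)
Your proof is correct and follows essentially the same route as the paper's: identify $M_1=\partial_xP(1,1)$, $M_2=\partial_yP(1,1)$, use Lemma~\ref{preliminary-lemma1} to translate $M_2<0$ into $(1,1)\in{\mathcal S}_{11}\cup{\mathcal S}_{21}$ with $x^*_P<1<x^{**}_P$, deduce $Y_1(1)=1$, apply Corollary~\ref{preliminary_cor2} to place $1$ at an endpoint of $[x^*,x^{**}]$, and then use the implicit function theorem to compute $\frac{d}{dx}\phi_1(x,Y_1(x))|_{x=1}$ and read off which endpoint it is from the sign.

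One small difference in how the final sign is nailed down. The paper discards the possibility $M^1_1 M_2 - M^1_2 M_1 = 0$ by appealing to the non-degeneracy consequence $M_1M^1_2\neq M_2M^1_1$ of (A3)(iii), whereas you obtain the strict sign of $f'(1)$ at the endpoint directly from the strict convexity of $x\mapsto\phi_1(x,Y_1(x))$ (Corollary~\ref{lemma_phi_extended}); the latter is arguably cleaner and also self-correctingly gives, e.g., $f'<0$ at $x^*$ (the paper's proof text has a slip, stating ``non-decreasing'' where it should be ``non-increasing'' in the case $x^*=1$). You are also careful to keep track that dividing by $M_2<0$ reverses the inequality, which is exactly where a sign error would otherwise creep in. No gaps.
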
 
\begin{proof} To prove~\ref{ER1}, it is sufficient to remark that 
 $M_2 {=} \partial_y P(1,1)$ and that, according to the definition of the curves  $S_{ij}$, $i$ ,$j{\in}\{1,2\}$, 
\[
{\mathcal S}_{11} \cup{\mathcal S}_{21} = \{(x,y)\in\partial D:~\partial_y P(x,y) \leq 0\}
\]
and that by Lemma~\ref{preliminary-lemma1}, the points $(x^*_P, Y_1(x^*_P)){=}(x^*_P, Y_2(x^*_P))$ and $(x^{**}_P, Y_1(x^{**}_P)){=} (x^{**}_P, Y_1(x^{**}_P))$ are the only points on the boundary $\partial D$ of the set $D$ satisfying the relation $\partial_y P(x,y){=}0$. 

Moreover,  by relations~\eqref{eq2_S_11} and~\eqref{eq2_S_21}, we have ${\mathcal S}_{11}{\cup}{\mathcal S}_{21}{=}\{(x,y)\in\partial D:~ y {=} Y_1(x)\}$. 
Hence,  when $M_2{<}0$, one has $Y_1(1){=}1$ and therefore  $\phi_1(1,Y_1(1)){=} \phi_1(1,1) = 1$. By Corollary~\ref{preliminary_cor2}, it follows that the point $1$ is an end point of the line segment $[x^*, x^{**}]$, and, consequently, with relations~\eqref{eq-critical-points} and~\eqref{eq2-critical-points}, we conclude that one and only one of the following cases occurs:
\begin{itemize}
\item $x^*_P < x^* = 1  < x^{**}\leq x^{**}_P$;
\item  $x^*_P\leq x^* < 1 = x^{**} < x^{**}_P$.
\end{itemize} 
By the implicit function theorem we have, for any $x{\in}]x^*_P, x^{**}_P[$, 
\[
\frac{d}{dx} \phi_1(x, Y_1(x)) = \left.\partial_x \phi_1(x, y) + \partial_y\phi_1(x,y) \partial_x P(x,y)/\partial_y P(x,y)\right|_{(y=Y_1(x))},
\]
hence,  when $M_2{<} 0$, the relation
\begin{align*}
\left. \frac{d}{dx} \phi_1(x, Y_1(x)) \right|_{x=1} &= \partial_x \phi_1(1,1) + \partial_y\phi_1(1,1) \partial_x P(1,1)/\partial_y P(1,1) ~=~ M^1_1 - M^1_2 M_1/M_2,
\end{align*} 
holds, and, consequently,  in a neighborhood of $x{=}1$, the function $x{\to}\phi_1(x, Y_1x))$ is 
\begin{itemize} 
\item non-decreasing if $M^1_1{-}M^1_2 M_1/M_2 > 0$, 
\item non-increasing if $M^1_1{-}M^1_2 M_1/M_2 < 0$.
\end{itemize} 
By Corollary~\ref{preliminary_cor2}, we have the relation $\phi_1(x, Y_1(x)){<}1$, for all $x{\in}]x^*, x^{**}[$, and $\phi_1(x, Y_1(x)){>} 1$, for $x{\in}[x^*_P, x^*[\,\cup\,]x^{**}, x^{**}_P]$. Hence, when $x^*_P {<} x^*{=} 1 {<} x^{**}{\leq} x^{**}_P$, the function $x{\to}\phi_1(x, Y_1(x))$ is non-decreasing in a neighborhood of $x{=}1$ and $M^1_1 M_2{-}M^1_2 M_1{>} 0$, and, when $x^*_P{\leq} x^*{<} 1{=} x^{**}{<} x^{**}_P$, the function $x{\to}\phi_1(x, Y_1(x))$ is non-decreasing in a neighborhood of $x{=}1$ and $M^1_1 M_2{-}M^1_2 M_1{<}0$. 

The first assertion of our lemma is  proved. The second assertion is symmetrical by  exchanging the roles of $x$ and $y$. 
\end{proof} 

\begin{prop}\label{cases-recurrence-transience} Under Assumptions~(A1){-}(A3), then
\begin{enumerate}[label=(\roman*)] 
\item (R0) holds if and only if  $(x^*,Y_1(x^*)){=}(X_1(y^*),y^*){=} (1,1)$, $x^*_P{<}1$ and $y^*_P{<}1$;
\item (R1) \phantom{holds if}"\phantom{and only if}  $(x^*,Y_1(x^*)){=} (1,1)$ and $ x^{*}_P{<}1$;
\item (R2) \phantom{holds if}"\phantom{and only if} $(X_1(y^*),y^*){=} (1,1)$ and $y^*_P{<}1$.
\item (T0) \phantom{holds if}"\phantom{and only if} $(1,1){\in}{\mathcal S}_{22}$, $1{<}x^{**}$ and $1{<}y^{**}$.\\ In this case, $Y_2(x^{**}){<}1$ and $X_2(y^{**}){<}1$.
\item (T1) holds if and only if $(1,1){=} (x^{**}, Y_1(x^{**}))$ and $1{<}x^{**}_P$.
\item (T2) \phantom{holds if}"\phantom{and only if} $(1,1){=} (X_1(y^{**}), y^{**})$ and $1{<}y^{**}_P$.
\end{enumerate} 
\end{prop}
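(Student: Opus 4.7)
The plan is to view Proposition~\ref{cases-recurrence-transience} as a systematic translation dictionary between the sign/inequality conditions on the moments $M_i, M_j^i$ and the geometric conditions on the points $x^*, x^{**}, y^*, y^{**}$ and the curves $\mathcal{S}_{ij}$. Since the measures $\mu_0, \mu_1, \mu_2$ are stochastic, $P(1,1) = \phi_1(1,1) = \phi_2(1,1) = 1$, so $(1,1)\in\partial D\cap D_1\cap D_2$, and the identifications $M_1=\partial_x P(1,1)$, $M_2=\partial_y P(1,1)$ tell us which curve $\mathcal{S}_{ij}$ contains $(1,1)$ via the definitions~\eqref{eq_S_11}--\eqref{eq_S_22}. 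This observation, together with Lemma~\ref{comparison-lemma}, will carry most of the load.

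The cases (R0)--(R2), (T1), (T2) will all follow by direct application of Lemma~\ref{comparison-lemma}. For instance, for (R0): the conditions $M_2<0$ and $M_1M_2^1<M_2M_1^1$ are, by Lemma~\ref{comparison-lemma}(i), equivalent to $(x^*,Y_1(x^*))=(1,1)$ and $x^*_P<1$ (the upper bound $1<x^{**}_P$ follows automatically since $1=x^*<x^{**}\leq x^{**}_P$ by \eqref{eq-critical-points}). The symmetric conditions $M_1<0$ and $M_2M_1^2<M_1M_2^2$ yield by Lemma~\ref{comparison-lemma}(ii) the analogous $y$-side condition. The cases (R1), (R2) follow similarly, while for (T1): $M_2<0$ together with $M_1M_2^1>M_2M_1^1$ corresponds by Lemma~\ref{comparison-lemma}(i) to $x^{**}=1$ and $Y_1(1)=1$, i.e.\ $(1,1)=(x^{**},Y_1(x^{**}))$, together with $1<x^{**}_P$. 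Case (T2) is symmetric.

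Case (T0) will require more attention and is the main obstacle of the proof. In one direction, assuming $M_1>0$ and $M_2>0$, the inclusion $(1,1)\in\mathcal{S}_{22}$ is immediate from \eqref{eq_S_22}. To secure the strict inequality $1<x^{**}$: since $\partial_y P(1,1)>0$, $y=1$ is the larger root $Y_2(1)$ of $P(1,y)=1$, hence $Y_1(1)<1$. The strict monotonicity of $\phi_1$ in $y$ (Assumption~(A3)(v)) then yields $\phi_1(1,Y_1(1))<\phi_1(1,1)=1$. Since $\partial_y P(1,1)\neq 0$ forces $1<x^{**}_P$, Corollary~\ref{preliminary_cor2}(iii) (which would require $\phi_1(x^{**},Y_1(x^{**}))=1$ if $x^{**}=1<x^{**}_P$) implies $1<x^{**}$; similarly $1<y^{**}$. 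The final claims $Y_2(x^{**})<1$ and $X_2(y^{**})<1$ will follow from strict monotonicity: $Y_2$ is strictly decreasing on $[X_1(y^{**}_P),x^{**}_P]$ by Lemma~\ref{preliminary-lemma1}(7), and $(1,1)\in\mathcal{S}_{22}$ places $1$ in this interval with $Y_2(1)=1$, so $1<x^{**}\leq x^{**}_P$ gives $Y_2(x^{**})<1$.

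In the converse direction of (T0), the definition of $\mathcal{S}_{22}$ immediately gives $M_1\geq 0$ and $M_2\geq 0$, and the task reduces to ruling out equality. If $M_1=\partial_x P(1,1)=0$, then by Lemma~\ref{preliminary-lemma1} the point $(1,1)$ would coincide with $(X_1(y^{**}_P),y^{**}_P)$ (the only candidate on $\mathcal{S}_{22}$), forcing $y^{**}_P=1$ and contradicting $1<y^{**}\leq y^{**}_P$; symmetric reasoning handles $M_2>0$. Apart from this strict-positivity argument, the proof is largely an unpacking of Lemma~\ref{comparison-lemma} combined with the monotonicity properties established in Section~\ref{preliminary-section}.
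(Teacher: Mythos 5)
Your proof is correct and follows essentially the same route as the paper: assertions (i)–(iii), (v)–(vi) by direct invocation of Lemma~\ref{comparison-lemma}, and (iv) by combining the identification $M_1=\partial_x P(1,1)$, $M_2=\partial_y P(1,1)$ with Corollary~\ref{preliminary_cor2}(iii) and the strict monotonicity of $\phi_1$ and of $Y_2$, $X_2$ from Lemma~\ref{preliminary-lemma1}. The only difference is the order in which you treat the two implications of (iv), which is immaterial.
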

\begin{proof}
The assertions (i){-}(iii) and (v){-}(vi) follow directly from Lemma~\ref{comparison-lemma}. 

We have only to establish~(iv). By using  again the relations $M_1{=}\partial_x P(1,1)$ and $M_2{=}\partial_y P(1,1)$,  when $(1,1){\in}{\mathcal S}_{22}$,  by relations~\eqref{eq-critical-points}, \eqref{eq2-critical-points} and~\eqref{eq2_S_22}, and with Lemma~\ref{preliminary-lemma1},  we get the identities $(1,1){=}(1, Y_2(1)){=}(X_2(1), 1)$ and the relations
\be\label{eq2-comparison-prop}
x^*_P < X_1(y^{**}_P)  \leq 1 \leq x^{**}\leq x^{**}_P \quad \text{and} \quad y^*_P <  Y_1(x^{**}_P) \leq 1 \leq y^{**}\leq y^{**}_P, 
\ee
and
\begin{align}
M_1{>} 0 &\; \Leftrightarrow\; (1,1)\not= (X_1(y^{**}_P), y^{**}_P) \;\Leftrightarrow \;  1 < y^{**}_P,\label{eq3-comparison-prop}\\
M_2{>}0 &\; \Leftrightarrow \; (1,1){\not=} (x^{**}_P, Y_1(x^{**}_P))\quad \Leftrightarrow\;  1 < x^{**}_P. \label{eq4-comparison-prop}
\end{align}
Hence,  when $(1,1){\in}{\mathcal S}_{22}$, we have $1{<}x^{**}$ and $1{<} y^{**}$, the condition (T0) holds. 

Conversely, if (T0) holds, then, according to the definition of the curve ${\mathcal S}_{22}$, 
\be\label{eq5-comparison-prop}
 (1,1) = (1, Y_2(1)) = (X_2(1),1)\in {\mathcal S}_{22}
\ee 
and, by using  relations~\eqref{eq3-comparison-prop} and \eqref{eq4-comparison-prop}, we get
\[
1 < x^{**}_P, \quad \text{and} \quad  1 < y^{**}_P.
\]
By relation~\eqref{eq2-comparison-prop}, we have always $1{\leq}x^{**}$ and, if we assume that  $x^{**}{=}1$, then, by relations~\eqref{eq5-comparison-prop} and~\eqref{eq2-comparison-prop}, we obtain  $x^*_P{<} 1{=} x^{**}{<} x^{**}_P$. Consequently,   by Lemma~\ref{preliminary-lemma1} and Corollary~\ref{preliminary_cor2}, we have $Y_1(1){<}Y_2(1)$ and $\phi_1(1, Y_1(1)){=} 1$.

Since under our assumptions (A3)~(ii) and (A3)~(v), the function $y{\to}\phi_1(1, y)$ is strictly increasing on the line segment $[0, Y_1(1)]$, these relations imply that $\phi_2(1, Y_2(1)){>}1$ and,  consequently, $(1,Y_1(1)){\not=}(1,1)$. Since this last relation contradicts  relation~\eqref{eq5-comparison-prop}, we conclude  that $x^{**}{>}1$.

By exchanging  the roles of $x$ and $y$, the same arguments prove also that when (T0) holds, the relation $y^{**}{>}1$ holds. By relation~\eqref{eq2_S_22}, we have
\[
  {\mathcal S}_{22}{=}\{(x, Y_2(x)):~x{\in} [X_1(y^{**}_P), x^{**}_P]\}{=}\{(X_2(y), y):~y{\in}[Y_1(x^{**}_P), y^{**}_P]\},
  \]
  and,  by Lemma~\ref{preliminary-lemma1}, the functions
  \[
  X_2: [Y_1(x^{**}_P), y^{**}_P]{\to}[X_1(y^{**}_P), x^{**}_P]\text{ and } Y_2: [X_1(y^{**}_P), x^{**}_P]{\to}[Y_1(x^{**}_P), y^{**}_P]
  \]
  are strictly decreasing. It follows that $Y_2(x^{**}) {<} Y_2(1){=}1$ holds and, similarly, $X_2(y^{**}){<}X_2(1){=}1$. The  assertion~(iv) of Proposition~\ref{cases-recurrence-transience} is  proved. 
\end{proof}

The following statement give relations between  the conditions (B0){-}(B7) and the conditions (R0){-}(R2) and (T0){-}(T2). 
\begin{prop}\label{comparison-cor} Under the assumptions (A1){-}(A3), then
\begin{enumerate}[label=(\arabic*)] 
\item both conditions (T1) and (T2) hold if and only if (B7) holds;
\item  (B2) holds if (T0) holds. 

\item If either (B3) or (B4) holds, then  
\begin{itemize}
\item (T2) is not possible;
\item (T1) holds  if and only if $x^{**}{=}1$, and in this case, $Y_1(1){=}1$;
\item[--] either (R0) or (R1) or (R2) holds whenever $x^{**}{>}1$.
\end{itemize} 

\item If either (B5) or (B6) holds, then 
\begin{itemize}
\item (T1) is not possible;
\item  (T2) holds if and only if $y^{**}{=}1$, and in this case, $X_1(1){=}1$;
\item   either (R0) or (R1) or (R2) holds whenever $y^{**}{>}1$. 
\end{itemize}
\end{enumerate}
\end{prop}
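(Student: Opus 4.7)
The approach is to compare the geometric characterisations of the two classifications. Proposition~\ref{cases-recurrence-transience} has already reformulated the analytic conditions (R0)-(R2) and (T0)-(T2) as statements about the position of $(1,1)$ relative to the curves $\mathcal{S}_{11},\ldots,\mathcal{S}_{22}$ and to the distinguished points $x^{*}, y^{*}, x^{**}, y^{**}, x^{**}_P, y^{**}_P$. These can be matched directly against the geometric definitions of (B0)-(B7) in Proposition~\ref{cases}, with Lemma~\ref{convexity_lemma3}, Corollary~\ref{preliminary_cor2} and Lemma~\ref{comparison-lemma} providing the technical bridges.

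For (1), the simultaneous validity of (T1) and (T2) forces $(x^{**}, Y_1(x^{**})) = (X_1(y^{**}), y^{**}) = (1,1)$ together with $1 < x^{**}_P$ and $1 < y^{**}_P$. By Lemma~\ref{preliminary-lemma1} the only points of $\mathcal{S}_{11}$ where $\partial_y P$ vanishes are $(x^*_P, Y_1(x^*_P))$ and $(x^{**}_P, Y_1(x^{**}_P))$, and symmetrically for $\partial_x P$; the strict inequalities $1 < x^{**}_P$ and $1 < y^{**}_P$ rule out these extremal points, yielding $\partial_x P(1,1) < 0$ and $\partial_y P(1,1) < 0$, which is exactly (B7); the converse is immediate. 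For (2), Proposition~\ref{cases-recurrence-transience}(iv) gives $Y_2(x^{**}) < 1 \leq y^{**}$ and $X_2(y^{**}) < 1 \leq x^{**}$ under (T0), so that $x^{**} > X_2(y^{**})$ and $y^{**} > Y_2(x^{**})$; the third assertion of Lemma~\ref{convexity_lemma3} then places both $(x^{**}, Y_2(x^{**}))$ and $(X_2(y^{**}), y^{**})$ on $\mathcal{S}_{22}$, establishing (B2).

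For (3), suppose (B3) or (B4) holds. Both force $y^{**} \geq Y_2(x^{**}) > 1$, which rules out (T2) at once. For the equivalence (T1) $\Leftrightarrow x^{**} = 1$: the forward direction is immediate, while for the converse, if $x^{**} = 1$ then $P(1,1) = 1$ combined with $Y_2(1) = Y_2(x^{**}) > 1$ (which holds in both (B3) and (B4)) forces $Y_1(1) = 1$ by the uniqueness of the two positive roots in Lemma~\ref{preliminary-lemma1}, hence (T1). When instead $x^{**} > 1$, all three conditions (T0)-(T2) fail: (T0) is excluded by part~(2) since (B2) is incompatible with (B3)/(B4). To establish positive recurrence, Corollary~\ref{preliminary_cor2}(i) applied with $x^* \leq 1 < x^{**}$ gives $\phi_1(1, Y_1(1)) \leq 1$; combined with $\phi_1(1,1) = 1$ and the strict monotonicity of $\phi_1(1, \cdot)$ ((A3)(v)) this yields $Y_1(1) \leq 1$, and a symmetric argument gives $X_1(1) \leq 1$. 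A case analysis on the signs of $M_1$ and $M_2$ then locates $(1,1)$ at $(x^*, Y_1(x^*))$ and/or $(X_1(y^*), y^*)$, and the strict moment inequalities $M_1 M^1_2 < M_2 M^1_1$ or $M_2 M^2_1 < M_1 M^2_2$ required for (R0), (R1) or (R2) come directly from the ``$1 = x^*$'' clause of Lemma~\ref{comparison-lemma} (since $x^{**} > 1$ prevents the alternative $1 = x^{**}$). Assertion (4) follows from (3) by exchanging the roles of the two coordinates.

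The main obstacle is the positive-recurrence step of (3) and (4) when $x^{**} > 1$: one must exclude the degenerate sub-case $M_1 \geq 0, M_2 \geq 0$. The strict version $M_1 > 0, M_2 > 0$ yields (T0), hence (B2) via part~(2), contradicting (B3)/(B4); the boundary cases where $M_1 = 0$ or $M_2 = 0$ correspond to $(1,1)$ coinciding with one of the four extremal points of $\partial D$, and the chain of inequalities $x^*_P \leq x^* \leq 1 < x^{**} \leq x^{**}_P$ (together with its $y$-analogue and the strict position constraints of (B3)/(B4)) produces the required contradiction, pinning down one of the strict-sign scenarios already handled by Lemma~\ref{comparison-lemma}.
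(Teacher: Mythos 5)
Your proposal is correct and follows essentially the same approach as the paper: assertions (1)--(2) read off from Propositions~\ref{cases} and~\ref{cases-recurrence-transience}, and for (3)--(4) you rule out (T2), characterise (T1), exclude the degenerate sign configuration $M_1\geq 0,\, M_2\geq 0$ by locating $(1,1)$ on $\partial D$ relative to the extremal points, and then invoke Lemma~\ref{comparison-lemma} exactly as the paper does. Two minor blemishes that do not affect the argument: $(x^{**}_P, Y_1(x^{**}_P))$ is not a point of $\mathcal{S}_{11}$ but of $\mathcal{S}_{21}\cap\mathcal{S}_{22}$ --- the fact you actually use, namely that it is one of the two points of $\partial D$ where $\partial_y P$ vanishes and that it differs from $(1,1)$, is correct --- and the detour through Corollary~\ref{preliminary_cor2} to obtain $Y_1(1)\leq 1$ and $X_1(1)\leq 1$ is superfluous once one of $M_1, M_2$ is shown strictly negative and Lemma~\ref{comparison-lemma} is applied.
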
 
\begin{proof}
The first and the second assertions of this statement follow from directly  Proposition~\ref{cases} and Proposition~\ref{cases-recurrence-transience}.

If either (B3) or (B4) holds, by Proposition~\ref{cases}, $1{<} Y_2(x^{**}){\leq}y^{**}$ holds, and, consequently, by Proposition~\ref{cases-recurrence-transience}, the case (T2) is not possible.

Still under the condition that either (B3) or (B4) holds.  By Proposition~\ref{cases} and with relations~\eqref{eq-critical-points} and~\eqref{eq2-critical-points}, we have $1{\leq} x^{**}{<} x^{**}_P$. Therefore, by Proposition~\ref{cases-recurrence-transience}, (T1) holds if and only if $(1,1){=}(x^{**}, Y_1(x^{**}))$ holds. Since  $P(1,1){=}1$, by Lemma~\ref{preliminary-lemma1}, either $Y_1(1){=}1$ or $Y_2(1){=}1$ holds, since, in our case, we have $1{<}Y_2(x^{**})$ by Proposition~\ref{cases}, it follows that when either (B3) or (B4) holds, the condition (T1) is satisfied if and only if $x^{**}{=}1$. 

Under the condition that either (B3) or (B4) holds and that $x^{**}{>}1$ holds. We have that either $M_2{\leq}0$ or $M_1{\leq}0$, because otherwise (T0) and, consequently, (B2) holds.  By Proposition~\ref{cases} and with relation~\eqref{eq-critical-points}, we get the relations $1{<} Y_2(x^{**}) {\leq} y^{**}{\leq} y^{**}_P$ and $1 {<} x^{**} {\leq }x^{**}_P$, and, consequently, $(1,1){\not=}(x^{**}_P, Y_1(x^{**}_P))$ and $(1,1){\not=} (X_1(y^{**}_P), y^{**}_P)$. Since the point $(x,y)=(x^{**}_P, Y_1(x^{**}_P))$, resp. $(x,y){=}(X_1(y^{**}_P), y^{**}_P)$ ) is the only point on the boundary $\partial D$ of $D$ for which $\partial_xP(x,y) > 0$ and $\partial_yP(x,y)=0$, resp. $\partial_xP(x,y){=}0$ and $\partial_yP(x,y){>} 0$) and, since $M_1{=}\partial_x P(1,1)$ and $M_2{=}\partial_y P(1,1)$, we conclude therefore that,  either $M_1{<}0$ or $M_2{<} 0$.  If $M_2{<} 0$, then, by Lemma~\ref{comparison-lemma}, and the assumption $x^{**}{>}1$, we get $x^*{=}1$ and $M_1 M^1_2{<} M_2 M^1_1$. Similarly, if $M_1{<}0$, by Lemma~\ref{comparison-lemma} and since, by Proposition~\ref{cases}, $1{<} Y_2(x^{**}){\leq}y^{**} $ holds, we obtain $y^*{=}1$ and $M_2M^2_1{<} M_1 M^2_2$. We conclude that in the case when either (B3) or (B4) holds and  $x^{**}{>}1$, one of the conditions (R0), (R1) or (R2) is satisfied. 

The third assertion of Proposition~\ref{comparison-cor} is therefore also proved. The last assertion follows by exchanging  the roles of $x$ and $y$. 
\end{proof}

We can now establish the relations with the locations of the dominant singularities $x_d$ and $y_d$. 
\begin{prop}\label{xd-yd-strictly stochastic-case} Under the assumptions (A1){-}(A3) the following assertions hold: 

\begin{enumerate}[label=(\roman*)] 
\item If the Markov chain $(Z(n))$ is positive recurrent then $x_d{>}1$ and $y_d{>}1$. 

\item  If  (T0) is satisfied then  (B2),  $x_d{=} x^{**}{>}1$ and $y_d {=} y^{**}{>}1$ hold.

\item If  (T1) is satisfied then 
\begin{itemize} 
\item in  the cases (B0){-}(B2), relations  $x_d {=} x^{**} {=} 1{<}x^{**}_P$ and $y_d {=} y^{**}{>}Y_1(1) {=} 1$ hold;
\item in the cases (B3) and (B4), relations $x_d {=} x^{**} {=} 1{<}x^{**}_P$ and $y_d {=} Y_2(1){>}1$ holds;
\item (B5) and (B6) do not hold.
\end{itemize} 

\item If (T2) is satisfied then 
\begin{itemize} 
\item in the cases (B0){-}(B2),  relations $y_d {=} y^{**} {=} 1{<} y^{**}_P$ and $x_d {=} x^{**}{>}X_1(1) {=} 1$ hold;
\item and (B3) and (B4) do not hold;
\item in the cases (B5) and (B6), relations $y_d {=} y^{**} {=} 1{<}y^{**}_P$ and $x_d {=} X_2(1){>}1$ hold.
\end{itemize}
\end{enumerate}
\end{prop}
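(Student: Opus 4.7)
The unified strategy for all four parts is to combine the boundary-geometry characterizations of the recurrence/transience conditions (Proposition~\ref{cases-recurrence-transience}) with the compatibility constraints between these conditions and the regions (B0)-(B7) (Proposition~\ref{comparison-cor}), and then read off $x_d$ and $y_d$ from the refined formulas \eqref{eq2-def-xd}-\eqref{eq2-def-yd}. Once this dictionary is set up, each conclusion reduces to locating $x^{**}, y^{**}, X_i(1), Y_i(1)$ with respect to $1$ via the strict-monotonicity properties of $X_1, X_2, Y_1, Y_2$ recorded in Lemma~\ref{preliminary-lemma1}.

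Parts (ii)-(iv) are essentially direct. For (ii), (T0) yields (B2) by Proposition~\ref{comparison-cor}(2) and $x^{**}>1, y^{**}>1$ by Proposition~\ref{cases-recurrence-transience}(iv); in (B2) we have $x_d=x^{**}$, $y_d=y^{**}$, so the claim follows. For (iii), Proposition~\ref{cases-recurrence-transience}(v) gives $x^{**}=1$, $Y_1(1)=1$, $x^{**}_P>1$, while Proposition~\ref{comparison-cor} rules out (B5), (B6), (B7). The relations \eqref{eq-critical-points} then give $x^*_P\le x^*<1<x^{**}_P$, so $1\notin\{x^*_P,x^{**}_P\}$, whence Lemma~\ref{preliminary-lemma1} yields $Y_2(1)>Y_1(1)=1$. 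Running through each of (B0)-(B4) with \eqref{eq2-def-yd} then forces $y_d>1$: in (B0) it equals $y^{**}>Y_1(x^{**})=1$; in (B1) it equals $Y_2(1)>1$; in (B2) it exceeds $Y_2(1)>1$; in (B3), (B4) it equals $Y_2(1)>1$. Part (iv) is handled by exchanging the roles of $x$ and $y$.

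For (i), positive recurrence forces one of (R0), (R1), (R2), and excludes (B7) by Proposition~\ref{comparison-cor}(1). In (B3), (B4) Proposition~\ref{comparison-cor}(3) ensures $x^{**}>1$ under positive recurrence and the case definitions give $Y_2(x^{**})>1$, so $x_d=x^{**}>1$, $y_d=Y_2(x^{**})>1$; cases (B5), (B6) are symmetric. In the remaining cases (B0), (B1), (B2) one has $x_d=x^{**}$ and $y_d=y^{**}$, and both must be shown to exceed $1$. Under (R0), $x^*=y^*=1$ and \eqref{eq-critical-points} immediately yield $x^{**},y^{**}>1$. Under (R1) (respectively (R2)), the inequality $x^{**}>1$ (resp.\ $y^{**}>1$) is direct, but the other inequality requires a separate argument.

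The main obstacle is producing $y^{**}>1$ under (R1) (symmetrically $x^{**}>1$ under (R2)) in cases (B0)-(B2). The plan is to show that $(1,1)$ lies in the relative interior of the curve $\mathcal{S}_{21}$: the two possible degenerate locations $(1,1)=(x^{**}_P,Y_1(x^{**}_P))$ and $(1,1)=(X_1(y^*_P),y^*_P)$ are ruled out because $x^{**}_P\ge x^{**}>1$ and because the latter would force $y^*_P=y^*=1$, hence $X_1(1)=1$, collapsing (R1) into (R0) already handled. Consequently $1$ lies strictly inside the interval $[X_1(y^*_P),x^{**}_P]$ on which $Y_1$ is strictly increasing (Lemma~\ref{preliminary-lemma1}), so $Y_1(x^{**})>Y_1(1)=1$ whenever $x^{**}>1$. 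The defining inequalities of the three cases then finish the job: $y^{**}>Y_1(x^{**})>1$ in (B0); $y^{**}=Y_2(x^{**})\ge Y_1(x^{**})>1$ in (B1) (strict when $x^{**}\ne x^{**}_P$, and when $x^{**}=x^{**}_P$ one has $Y_2(x^{**})=Y_1(x^{**}_P)>1$ directly); $y^{**}>Y_2(x^{**})\ge Y_1(x^{**})>1$ in (B2). The hard part is a careful tracking of these edge cases at the corner $x^{**}=x^{**}_P$ in (B2) and in the degenerate sub-case of (R1) where $M_1=0$, both of which are dispatched by the same interior-of-$\mathcal{S}_{21}$ reasoning.
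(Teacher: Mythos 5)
Your argument follows essentially the same route as the paper's: translate (R0)--(R2) and (T0)--(T2) into the boundary-geometry facts of Propositions~\ref{cases-recurrence-transience} and~\ref{comparison-cor}, read off $x_d,y_d$ via the defining inequalities of each region together with \eqref{eq2-def-xd}--\eqref{eq2-def-yd}, and invoke the strict monotonicity of $Y_1$ on $[X_1(y^*_P),x^{**}_P]$ (resp.\ of $X_1$ after exchanging roles) for the non-obvious inequality under (R1) (resp.\ (R2)). One small inaccuracy worth flagging: the parenthetical exclusion of $(1,1)=(X_1(y^*_P),y^*_P)$ on the grounds that it ``collapses (R1) into (R0)'' is incorrect --- at that point $M_1=\partial_x P(1,1)=0$, which is permitted under (R1) but contradicts the strict inequality $M_1<0$ in (R0), so no such collapse takes place. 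The exclusion is, however, unnecessary for your argument: even if $1=X_1(y^*_P)$, Lemma~\ref{preliminary-lemma1} gives $Y_1(1)=y^*_P=1$ and $Y_1$ strictly increasing on all of $[1,x^{**}_P]$, so $Y_1(x^{**})>1$ still follows from $x^{**}>1$, and the rest of the proof goes through unchanged.
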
 
\begin{proof}
  Consider first the case when one of the cases (B0){-}(B2) occurs. Then by Proposition~\ref{cases} and according to the definition of $x_d$ and $y_d$, 
\be\label{eq1-xd-yd-strictly stochastic-case}
x_d = x^{**} > X_1(y^{**}) \quad  \text{and}  \quad y_d = y^{**} > Y_1(x^{**}). 
\ee
Hence, if we suppose moreover that (R0) holds,  by Lemma~\ref{comparison-lemma} and using \eqref{eq-critical-points},  we obtain 
\[
1 = x^* < x^{**} = x_d \quad \text{and} \quad 1 = y^* < y^{**} = y_d.
\]
If we suppose that (R1) holds, then by Proposition~\ref{cases-recurrence-transience} and using \eqref{eq-critical-points}, and \eqref{eq1-xd-yd-strictly stochastic-case}, 
\[
1 = x^*  < x^{**} = x_d, \quad \text{and} \quad 1 = Y_1(x^*) < y_d.
\]
And similarly, if we suppose that (R2) holds, then by Proposition~\ref{cases-recurrence-transience} and using \eqref{eq-critical-points}, and \eqref{eq1-xd-yd-strictly stochastic-case}, 
\[
1 = y^*  < y^{**} = y_d, \quad \text{and} \quad 1 = X_1(y^*) < x_d.
\]
In the case when one of the cases (B0){-}(B2) holds, the first assertion of out statement is therefore proved. 

Suppose now that either (B3) or (B4) holds. In this case, by Proposition~\ref{cases} and according to the definition of $x_d$ and $y_d$, we have
\be\label{eq2-xd-yd-strictly stochastic-case}
x_d = x^{**} < x^{**}_P \quad \text{and} \quad  y_d = Y_2(x^{**}) > 1. 
\ee
Moreover, by relation~\eqref{eq2-critical-points}, the inequality $1 \leq x^{**}$ always holds, and consequently, in this case, by Proposition~\ref{comparison-cor}, 
one of the conditions (R0){-}(R2) is satisfied if and only if $1 < x^{**} = x_d$. Hence,  when either (B3) or (B4) holds, the first assertion of Proposition~\ref{xd-yd-strictly stochastic-case} is also proved. The same arguments (it is sufficient to exchange the roles of $x$ and $y$) prove the first assertion of our proposition in the case when either (B5) or (B6) holds. 

Furthermore, if the condition (T0) is satisfied, then by Proposition~\ref{comparison-cor}, (B2) holds and consequently, by Proposition~\ref{cases-recurrence-transience} and using \eqref{eq1-xd-yd-strictly stochastic-case} one gets $x_d = x^{**} > 1$ and $y_d = y^{**} > 1$.  The second assertion of Proposition~\ref{xd-yd-strictly stochastic-case} is therefore also proved.

Suppose now that the condition (T1) is satisfied.  Then by Proposition~\ref{comparison-cor}, the cases (B5) and (B6) are impossible, and by Proposition~\ref{cases-recurrence-transience}, 
\be\label{eq3-xd-yd-strictly stochastic-case}
x^{**} = 1 \quad \text{ and } \quad Y_1(x^{**}) = 1.
\ee
Moreover, if one of the cases (B0){-}(B2) holds, then by \eqref{eq1-xd-yd-strictly stochastic-case}, one gets $y_d = y^{**} > Y_1(x^{**}) = 1$, and if either (B3) or (B4) holds, then by \eqref{eq2-xd-yd-strictly stochastic-case} and using the second relation of \eqref{eq3-xd-yd-strictly stochastic-case}, we obtain $y_d = Y_2(x^{**}) > Y_1(x^{**}) = 1$. The third assertion of Proposition~\ref{xd-yd-strictly stochastic-case} is therefore also proved. The proof of fourth assertion  is the same, it is sufficient to exchange the roles of $x$ and $y$. 
\end{proof}
\section{Irreducibility Properties of the killed Markov Chain}\label{pfirr}

\subsection*{Proof of Lemma~\ref{irreductibility-lemma}}
Since by the assumption (A1), the homogeneous random walk $(S(n))$ with transition probabilities $\P_j(S(1)=k) = \mu(k-j)$ is irreducible on $\Z^2$, for any $k\in\Z^2$, there is  a sequence of points $\ell^k(1)=(\ell_1^k(1),\ell_2^k(1)), \ldots \ell^k(N_{k}) = (\ell_1^k(N_{k}),\ell_2^k(N_{k}))\in\Z^2$ such that $\ell^k(1)+ \cdots + \ell^k(N_{k}) = k$ and $\P_0(S(N_k) = k) \geq \mu(\ell_1^k)\times\cdots\times\mu(\ell^k(N_k)) > 0$. 

Let $N_1 = 1 + \max\{N_{(-1,0)}, N_{(1,0)}, N_{0,-1)}, N_{(0,1)}\}$. Then because of Assumption (A2), for any  $j=(j_1,j_2)\in\Z^2_+$ with $j_1,j_2\geq N_1$ and $k\in \{(-1,0), (1,0), (0,-1), (0,1)\}$ the sequence of points $m^k(1)= j + \ell^k(1), \ldots , m^k(N_k) = j + \ell^k(1) + \cdots + \ell^k(N_{k})$ does not exit from the set  $(\N^*)^2$, and consequently 
\[
\P_j\bigl(Z(n) = j + k, \; \text{for some $n < \tau$}\bigr) \geq \P_j\bigl( Z(n) = m^k(n), \quad \forall n\in\{1,\ldots, N_k\}\bigr) > 0. 
\]
This proves that  
\be\label{eq1-irreductibility-lemma}
g(j,k) \geq \P_j(Z(n) = k, \; \text{for some $n < \tau$}) > 0 \quad \text{whenever \; $j_1,j_2, k_1, k_2\geq N_1$}.
\ee

Consider now a random walk $(\hat{S}(n))$ on $\Z\times\N$ with transition probabilities 
\[
\P_{j}(\hat{S}(1) = k) =\begin{cases} \mu(k-j) &\text{ for all $j= (j_1,j_2), k\in\Z\times\N$ with $j_2 > 0$,}\\
\mu_1(k-j) &\text{ for all $j= (j_1,j_2), k\in\Z\times\N$ with $j_2 = 0$.}
\end{cases} 
\]
Remark that under our hypotheses (see Assumptions (A1) and (A3) (v) and (vi)) such a random walk is irreducible on $\Z\times\N$, and consequently, for any $j_2 \in\N$ and $k=(k_1,k_2)\in\Z^2$ such that $(k_1, k_2 +j_2)\in\Z\times\N$, there is a sequence of points $\ell^{j_2,k}(1)=(\ell_1^{j_2,k}(1),\ell_2^{j_2,k}(1)), \ldots \ell^{j_2,k}(N_{j_2,k}) = (\ell_1^{j_2,k}(N_{j_2,k}),\ell_2^{j_2,k}(N_{j_2,k}))\in\Z^2$ such that for any $n\in\{1,\ldots, N_{j_2,k}\}$,
\[
(0,j_2) + \ell^{j_2,k}(1) +\cdots + \ell^{j_2,k}(n) \in\Z\times\N,
\]
and 
\[
\P_{(0,j_2)} \bigl(\hat{S}(N_{j_2,k}) = (0,j_2) + k \bigr) = \P_{(0,j_2)}\bigl( \hat{S}(n) - \hat{S}(n-1) = \ell^{j_2,k}(n), \; \forall n\in\{1,\ldots, N_{j_2,k}\}\bigr) > 0. 
\]
Letting
\[
N_2 = 1 + \max\{N_{j_2,k}:~ \; j_2 \leq N_1, \,  (k_1, k_2)\in \{(-1,0), (1,0), (0,-1), (0,1)\}, \;  (k_1, k_2 +j_2)\in\Z\times\N \}
\]
and using Assumptions (A2) and (A3) (iv), we obtain  that for any $j=(j_1,j_2)\in \Z^2_+$ with $j_1 \geq N_2$, and any $k=(k_1, k_2)\in \{(-1,0), (1,0), (0,-1), (0,1)\}$ such that $j+k\in \Z\times\N$, the sequence of points $m^{j,k}(1) = j+ \ell^{j_2,k}(1), \ldots, m^{j,k}(N_{j_2,k}) = j+ \ell^{j_2,k}(1) +\cdots + \ell^{j_2,k}(N_{j_2,k}) = (\ell_1^{j_2,k}(N_{j_2,k}),\ell_2^{j_2,k}(N_{j_2,k}))$ does not exist from the set $\N^*\times\N$, and consequently, 
\[
\P_j\bigl( Z(n) = j+k \; \text{for some $n < \tau$}\bigr) \geq \P_j\bigl(\hat{Z}(n) = m^{j_2,k}(n) \; \forall n\in\{1,\ldots, N_{j_2,k} \}\bigr) > 0. 
\]
This proves that 
\be\label{eq2-irreductibility-lemma}
g(j,k)  > 0 \quad \text{whenever \, $j_2\leq N_1,  k_2\leq N_1+1$ and $j_1, k_1 \geq N_2$},
\ee
and with exactly the same arguments (it is sufficient to exchange the roles of the first and the second coordinates of the points in $\Z^2_+$), one gets that for some $N_3 > 0$,
\be\label{eq3-irreductibility-lemma}
g(j,k)  > 0 \quad \text{whenever \, $j_1\leq N_1,  k_1\leq N_1+1$ and $j_2, k_2 \geq N_3$}. 
\ee
When combined together, relations \eqref{eq1-irreductibility-lemma}, \eqref{eq2-irreductibility-lemma} and \eqref{eq3-irreductibility-lemma} show that for some $N_0 \geq 0$,
\be\label{eq4-irreducibility-lemma}
g(j,k)  > 0 \quad \text{for all \; $j,k\in\Z^2_+$ with $\|j\|\geq N_0$ and  $\|k\|\geq N_0$}. 
\ee
Remark now that if $\ell\in\Z^2_+$ is such that  $g(\ell,j_0) > 0$ for some  $j_0\in\Z^2_+$ with $\|j_0\| \geq N_0$, then also $\P_\ell(Z(n) = j_0, \; \tau > n) > 0$ for some $n\in\N$, and consequently, by \eqref{eq4-irreducibility-lemma} and using the inequality 
\be\label{eq5-irreducibility-lemma}
g(\ell, k) \geq \sum_{j\in\Z^2_+\setminus\{0\}}\P_\ell(Z(n) = j, \; \tau > n) g(j,k) ~\geq~ \P_\ell(Z(n) = j_0, \; \tau > n) g(j_0,k)
\ee
we obtain 
\[
g(\ell, k) \geq \P_\ell(Z(n) = j, \; \tau > n)  g(j,k) > 0 \quad \text{for all $k\in\Z^2_+$ with $\|k\| \geq N_0$}. 
\]
Hence, for any $\ell\in\Z^2_+$  we have either  $g(\ell, k) > 0$ for all $k\in\Z^2_+$ with $\|k\|\geq N_0$, or $g(\ell,k) = 0$ also for all $k\in\Z^2_+$ with $\|k\|\geq N_0$, and in the last case, because of \eqref{eq4-irreducibility-lemma}, $\|\ell\| < N_0$. Letting therefore $E_0=\{\ell\in\Z^2_+:~g(\ell, k) = 0 \; \text{for all $ k\in\Z^2_+$ with $\|k\| \geq N_0$}\}$ we obtain a finite subset of $\Z^2_+$ satisfying \eqref{eq-set-E0-2}. Remark moreover that this set satisfies also \eqref{eq-set-E0-1} because if suppose that $g(\ell, j) > 0$ for some $\ell\in E_0$ and $j\in\Z^2_+{\setminus} E_0$, then using again the inequality \eqref{eq5-irreducibility-lemma} and the same arguments as above we would get $g(\ell, k) > 0$ for all $k\in\Z^2_+$ with $\|k\| \geq N_0$. Now, to complete the proof of our lemma, it is sufficient  to notice that $g(0,k) > 0$ for any $k\in\Z^2_+{\setminus}\{0\}$, because under our assumptions, the random walk $(Z(n))$ is irreducible on $\Z^2_+$, and consequently, the point $0=(0,0)$ does not belong to the set $E_0$.

\newpage

\section{Figures} \label{FigSec}
\begin{figure}[ht]
\resizebox{10cm}{5cm}{\includegraphics{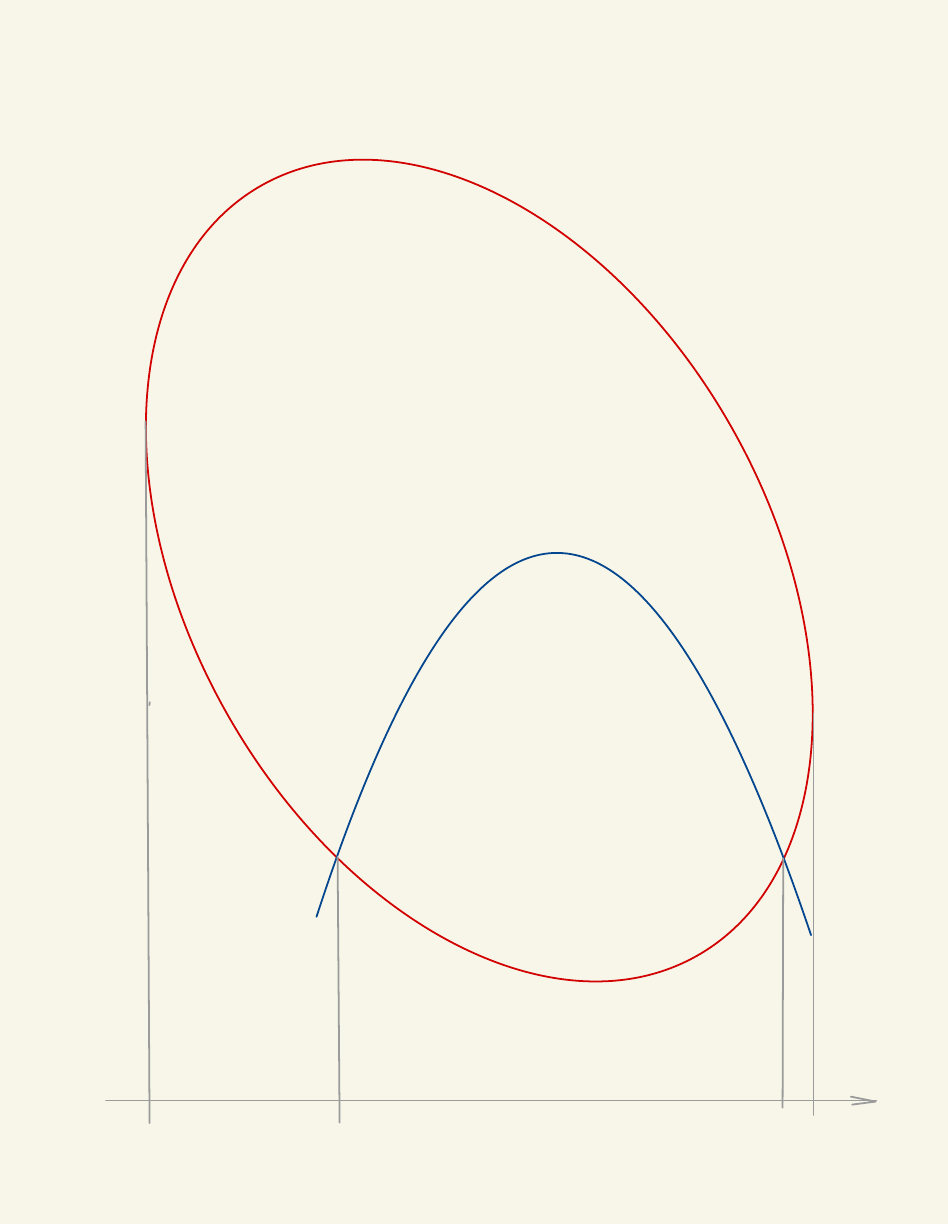}}
\put(-122,47){$D\cap D_1$}
\put(-155,85){$D$} 
\put(-80,6){$(x^{**}, 0)$}
\put(-37,6){$(x^{**}_P, 0)$}
\put(-180,6){$(x^{*}, 0)$}
\put(-260,6){$(x^{*}_P, 0)$}
\caption{ }\label{F0a} 
\end{figure}   

\begin{figure}[ht]
\resizebox{10cm}{5cm}{\includegraphics{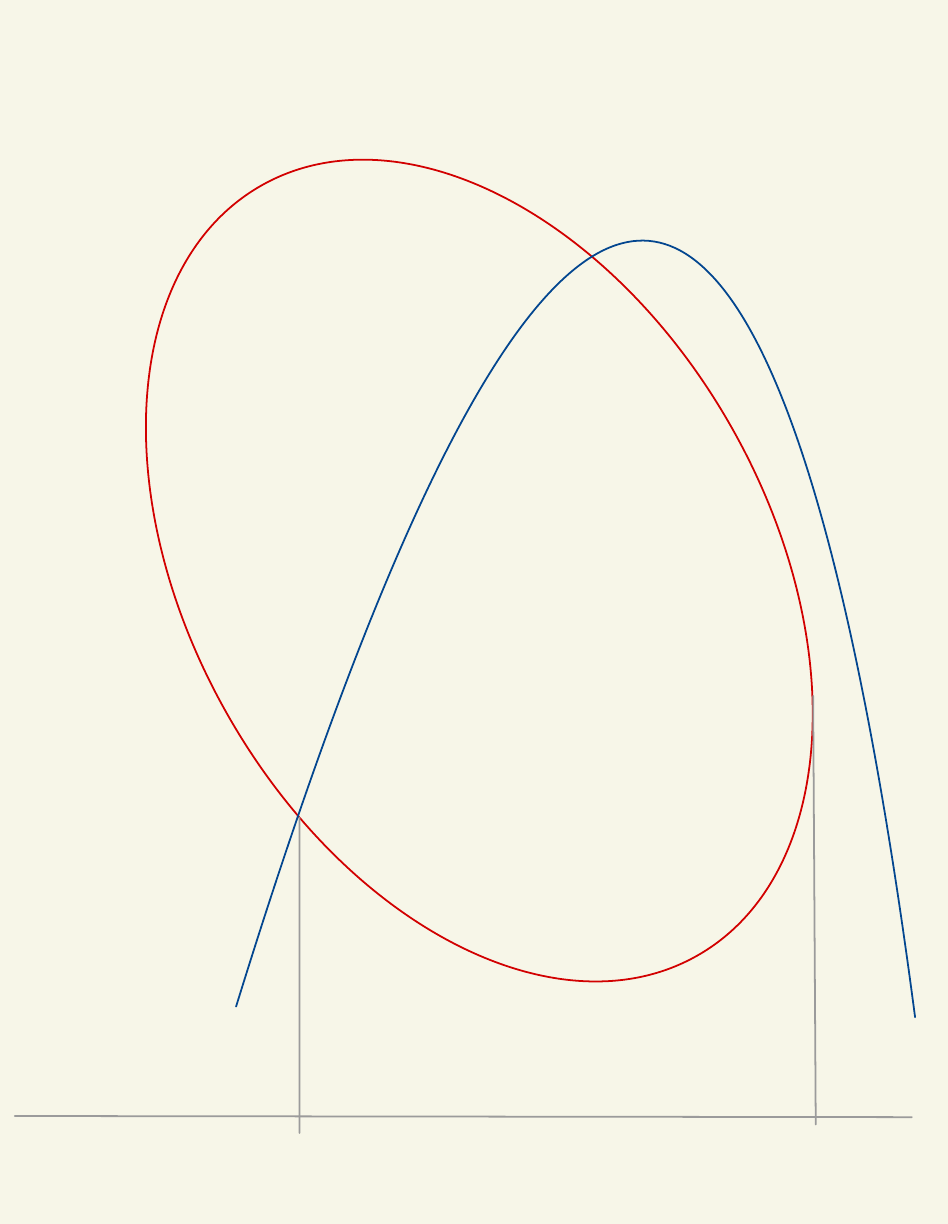}}
\put(-122,47){$D\cap D_1$}
\put(-165,95){$D$} 
\put(-80,4){$(x^{**}, 0)=(x^{**}_P,0)$}
\put(-195,4){$(x^{*}, 0)$}
\caption{ }\label{F0b} 
\end{figure}

 \begin{figure}[ht]
\resizebox{10cm}{5cm}{\includegraphics{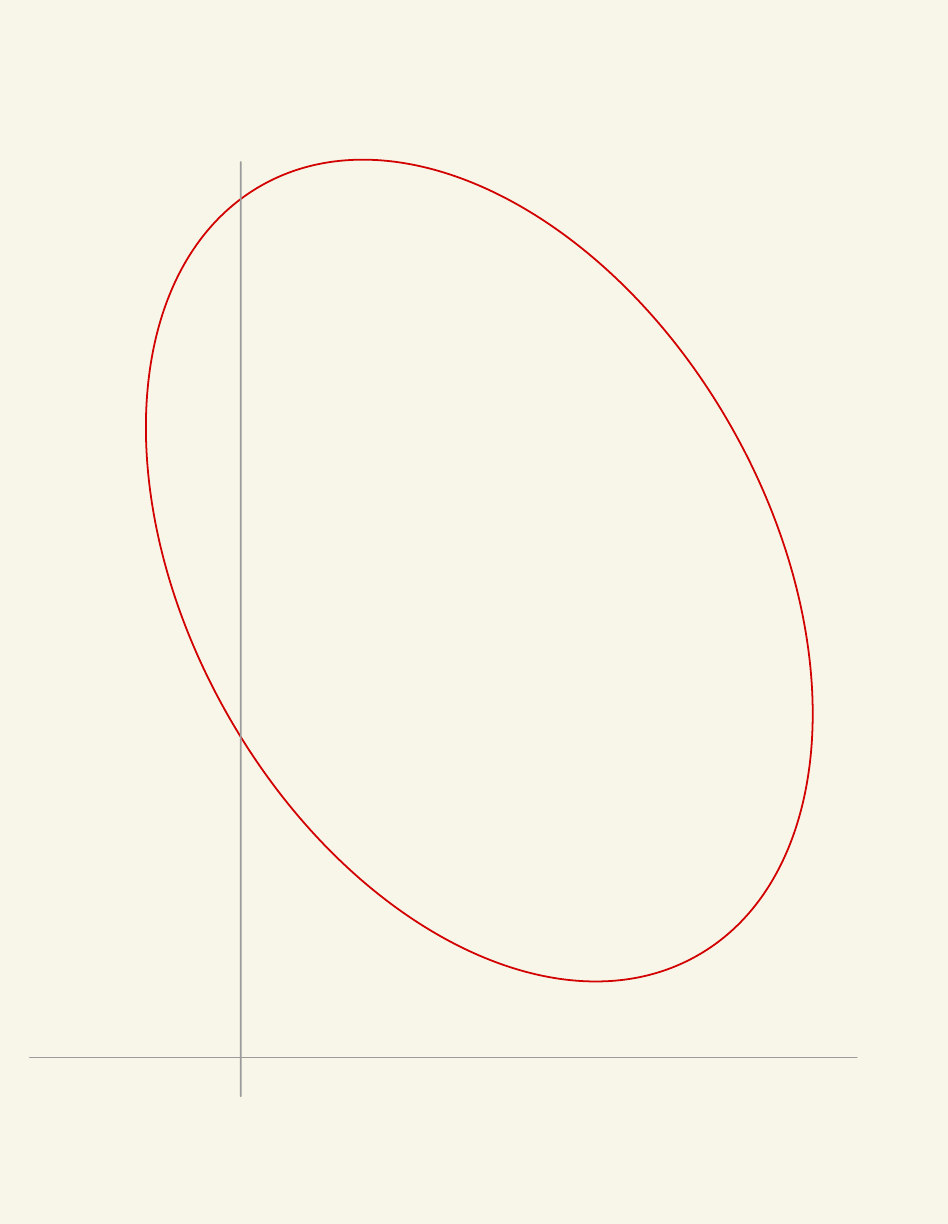}}
\put(-145,75){$D$} 
\put(-210,58){$(x, Y_1(x))$}
\put(-210,111){$(x, Y_2(x))$}
\put(-210,6){$(x, 0)$}
\caption{ }\label{F1} 
\end{figure}

\begin{figure}[ht]
\resizebox{10cm}{5cm}{\includegraphics{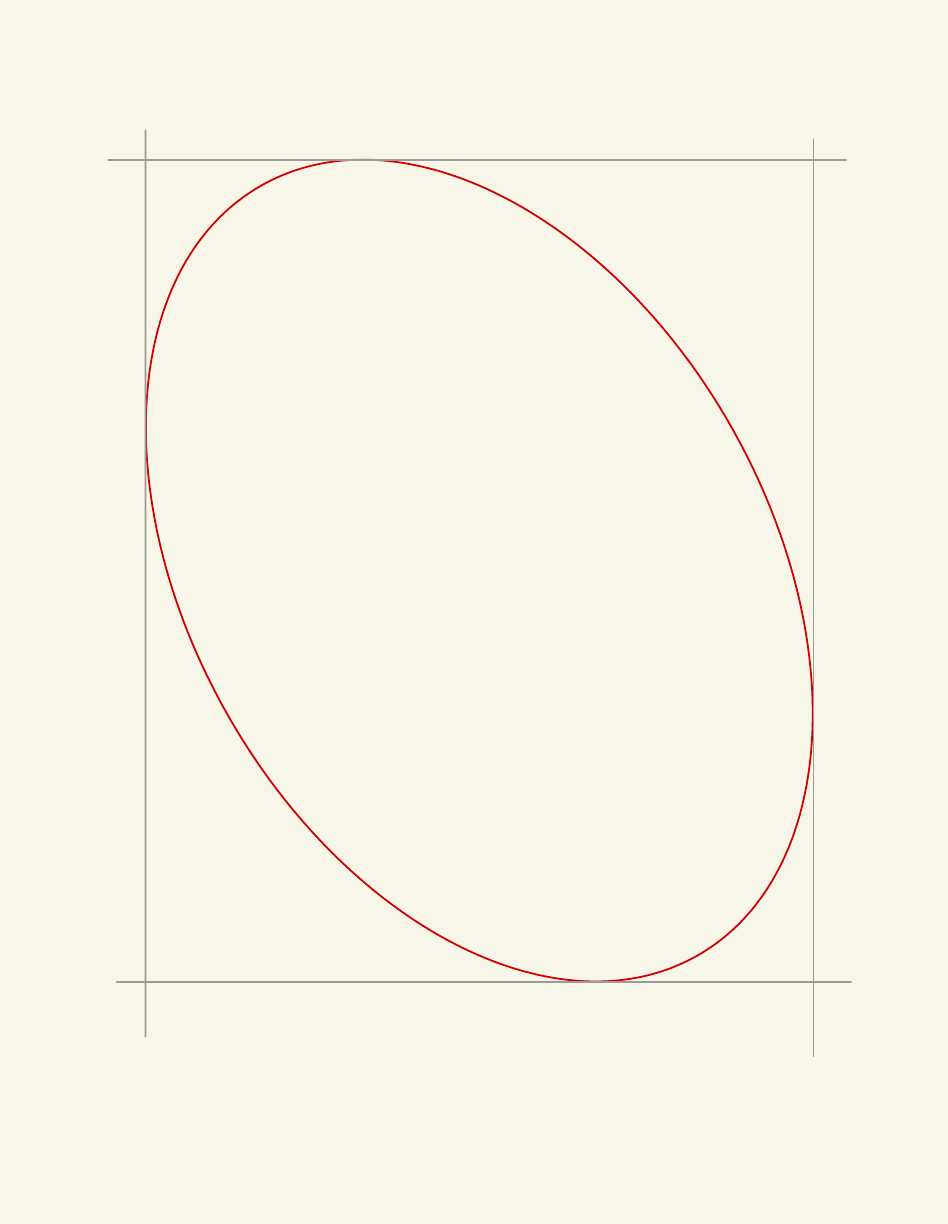}}
\put(-66,98){${\mathcal S}_{22}$}
\put(-56,31){${\mathcal S}_{21}$}
\put(-140,70){$D$} 
\put(-222,51){${\mathcal S}_{11}$}
\put(-230,105){${\mathcal S}_{12}$} 
\caption{ }\label{F2} 
\end{figure}  

\begin{figure}[ht]
\resizebox{15cm}{10cm}{\includegraphics{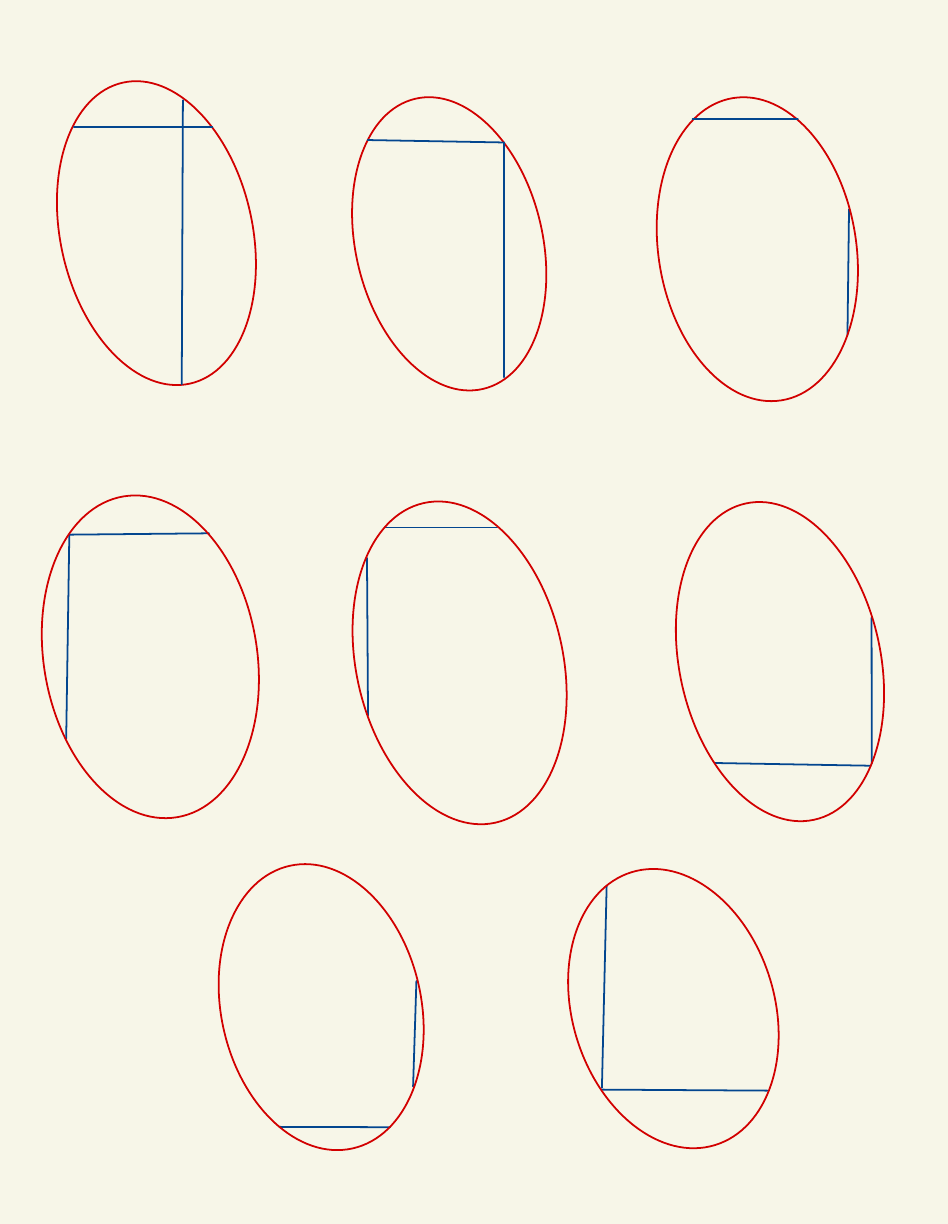}}
\put(-400,200){(B0)}
\put(-402,255){$a$}
\put(-330,255){$b$}
\put(-345,265){$d$}
\put(-350,187){$c$}
\put(-270,200){(B1)}
\put(-267,255){$a$}
\put(-202,255){$b=d$}
\put(-202,187){$c$}
\put(-152,200){(B2)}
\put(-127,255){$a$}
\put(-61,255){$b$}
\put(-45,200){$c$}
\put(-45,240){$d$}
\put(-410,95){(B3)}
\put(-420,162){$a=d$}
\put(-330,162){$b$}
\put(-403,107){$c$}
\put(-273,95){(B4)}
\put(-260,163){$a$}
\put(-199,163){$b$}
\put(-269,155){$d$}
\put(-267,113){$c$}
\put(-145,95){(B5)}
\put(-33,143){$d$}
\put(-113,103){$a$}
\put(-33,103){$b=c$}
\put(-330,10){(B6)}
\put(-237,29){$c$}
\put(-302,25){$a$}
\put(-252,25){$b$}
\put(-240,63){$d$}
\put(-170,10){(B7)}
\put(-163,80){$d$}
\put(-185,27){$a=c$}
\put(-77,27){$b$}
\caption{Possible cases for the location of the line segments  $[a,b] = [(X_1(y^{**}),y^{**}), (X_2(y^{**}),y^{**})]$ and $[cd] = [(x^{**}, Y_1(x^{**})), (x^{**}, Y_2(x^{**}))]$ }\label{F3} 
\end{figure}  

\begin{figure}[ht]
\resizebox{15cm}{10cm}{\includegraphics{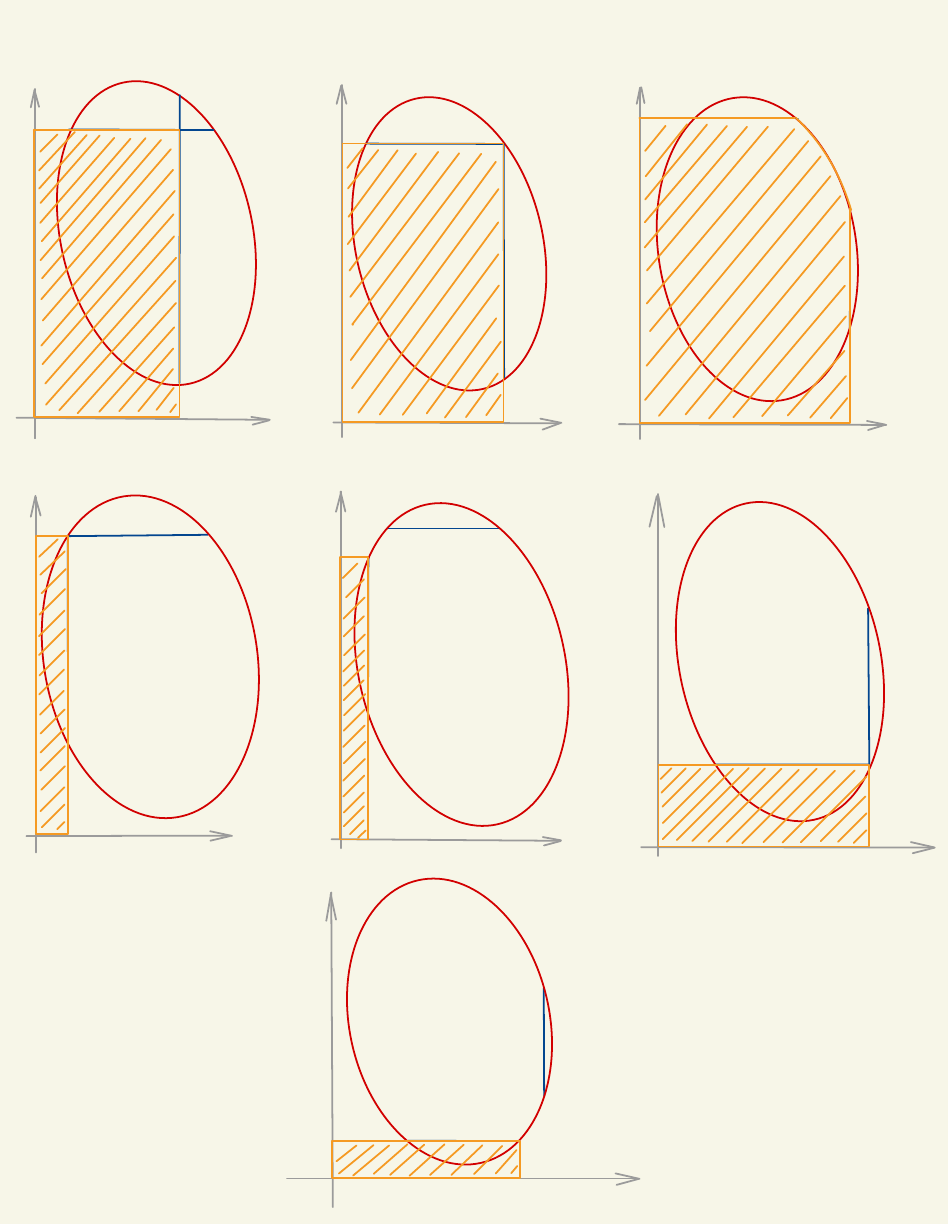}}
\put(-410,190){(B0)}
\put(-402,255){$a$}
\put(-330,255){$b$}
\put(-345,265){$d$}
\put(-350,187){$c$}
\put(-295,190){(B1)}
\put(-267,255){$a$}
\put(-202,255){$b=d$}
\put(-202,187){$c$}
\put(-160,190){(B2)}
\put(-127,255){$a$}
\put(-61,255){$b$}
\put(-45,200){$c$}
\put(-45,240){$d$}
\put(-410,77){(B3)}
\put(-420,162){$a=d$}
\put(-330,162){$b$}
\put(-403,107){$c$}
\put(-295,95){(B4)}
\put(-260,163){$a$}
\put(-199,163){$b$}
\put(-269,155){$d$}
\put(-267,113){$c$}
\put(-153,95){(B5)}
\put(-33,143){$d$}
\put(-113,103){$a$}
\put(-33,103){$b=c$}
\put(-300,20){(B6)}
\put(-180,27){$c$}
\put(-247,22){$a$}
\put(-197,22){$b$}
\put(-180,53){$d$}
\caption{All possible configurations for  the line segments  $[a,b] = [(X_1(y^{**}),y^{**}), (X_2(y^{**}),y^{**})]$ and $[cd] = [(x^{**}, Y_1(x^{**})), (x^{**}, Y_2(x^{**}))]$ and  the trace of the set $\{(x,y)\in{\mathcal D}: \; |x| < x_d, \; |y| < y_d\}$. Cases (B0){-}(B6). }\label{F4} 
\end{figure}  

\begin{figure}[ht]
\resizebox{14cm}{5cm}{\includegraphics{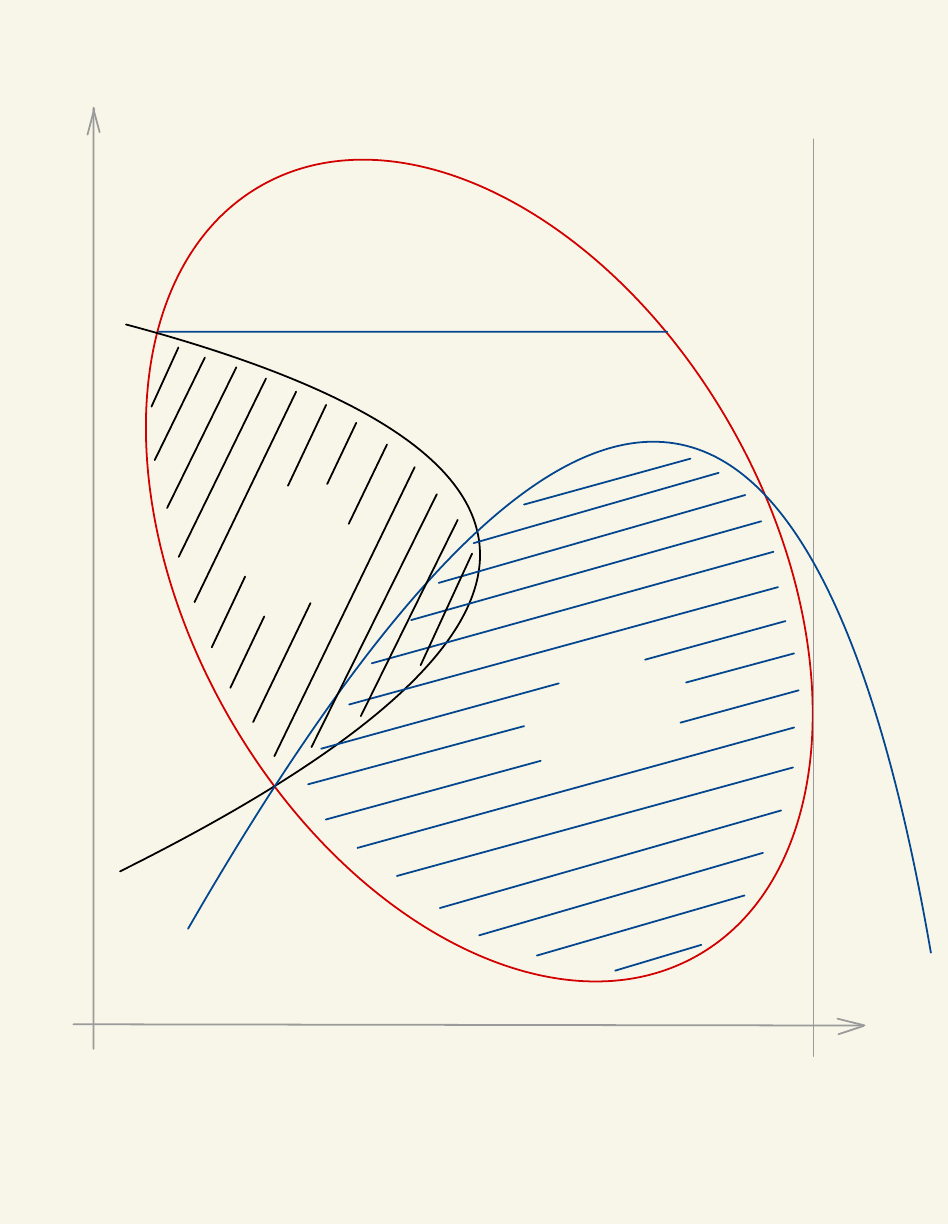}}
\put(-285,75){$D\cap D_2$} 
\put(-290,40){$(1,1)$} 
\put(-160,55){$D\cap D_1$} 
\put(-190,90){$D$} 
\put(-345,107){$(X_1(y_d),y_d)$} 
\put(-110,103){$(X_2(y_d),y_d)$} 
\put(-380,10){$(0,0)$}
\put(-70,10){$(x_d,0)$}
\put(-55,57){$(x_d,Y_1(x_d))$}
\caption{Case (B2) with $x_d=x^{**}=x^{**}_P$. In this case, the line segment $[(x^{**}, Y_1(x^{**})), (x^{**}, Y_2(x^{**}))]$ is a single point and $y_d=y^{**} \leq y^{**}_P$.}\label{F7} 
\end{figure} 

\begin{figure}[ht]
\resizebox{14cm}{5cm}{\includegraphics{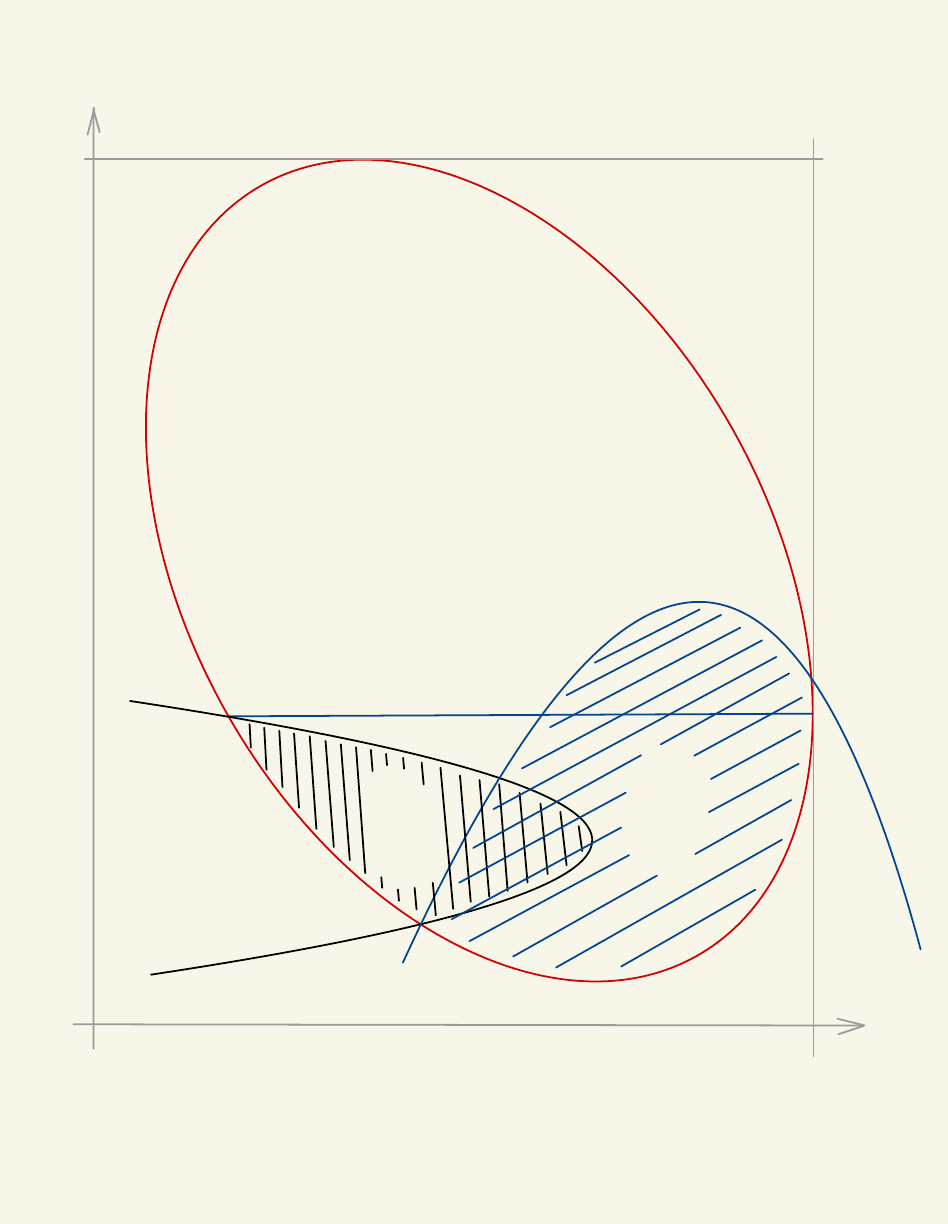}}
\put(-393,120){$(0,y^{**}_P)$}
\put(-180,70){$D$} 
\put(-244,42){$D\cap D_2$} 
\put(-135,45){$D\cap D_1$} 
\put(-355,53){$(X_1(y_d),y_d)$} 
\put(-380,10){$(0,0)$}
\put(-70,10){$(x_d,0)$}
\put(-55,57){$(x_d,y_d)$}
\caption{Case (B5) with $x_d=x^{**}=x^{**}_P$. In this case, the line segment $[(x^{**}, Y_1(x^{**})), (x^{**}, Y_2(x^{**}))]$ is a single point and $y_d=y^{**} = Y_1(x_d) < y^{**}_P$.}\label{F6} 
\end{figure} 

\begin{figure}[ht]
\resizebox{14cm}{5cm}{\includegraphics{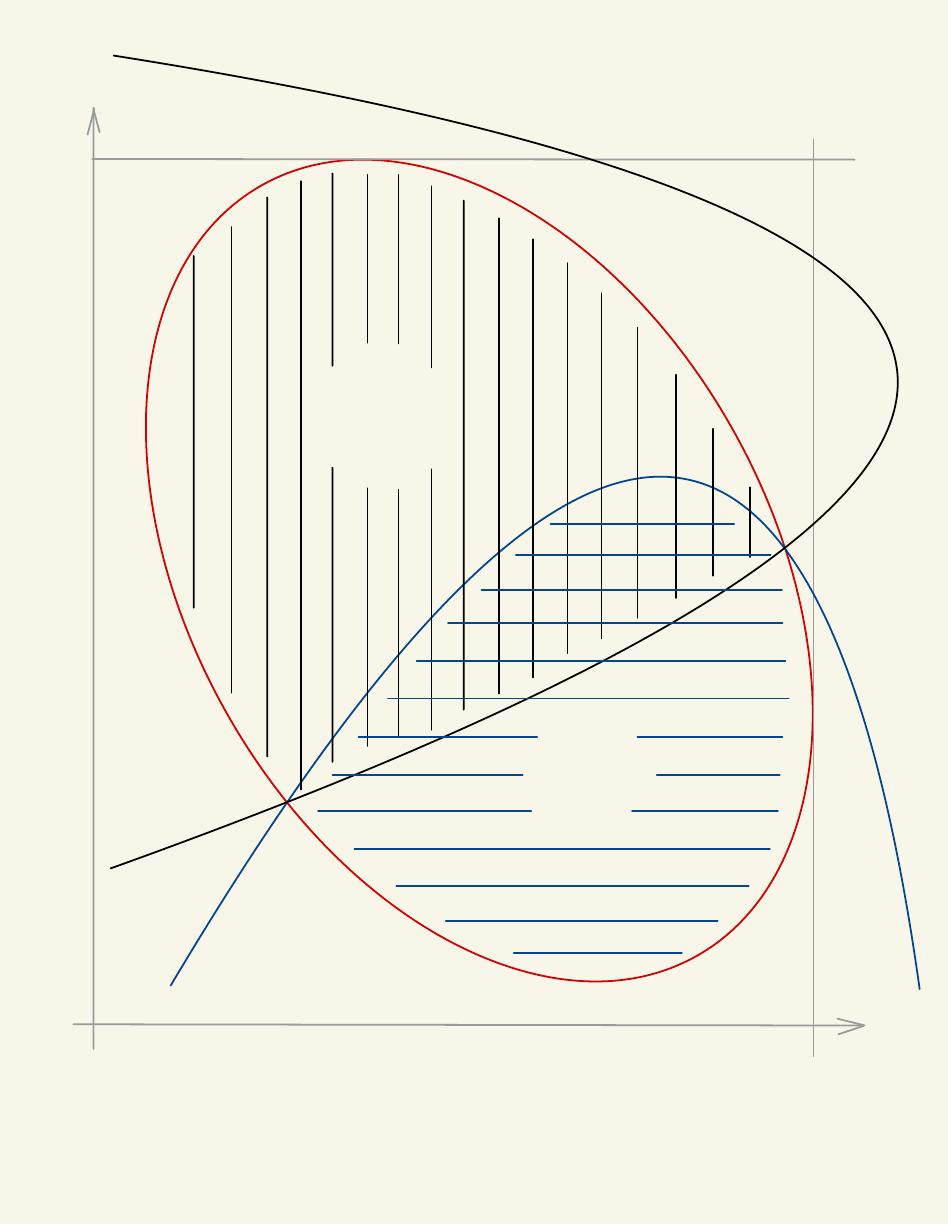}}
\put(-250,90){$D\cap D_2$} 
\put(-290,40){$(1,1)$} 
\put(-165,50){$D\cap D_1$} 
\put(-380,10){$(0,0)$}
\put(-388,121){$(0,y_d)$}
\put(-70,10){$(x_d,0)$}
\caption{Case (B2) with $x_d=x^{**}=x^{**}_P$ and $y_d = y^{**} = y^{**}_P$. Here, each of the line segments $[(x^{**}, Y_1(x^{**})), (x^{**}, Y_2(x^{**}))]$ and $[(X_1(y^{**}), y^{**}), (X_2(y^{**}), y^{**})]$ is a single point. }\label{F8} 
\end{figure}

\begin{figure}[ht]
\resizebox{10cm}{5cm}{\includegraphics{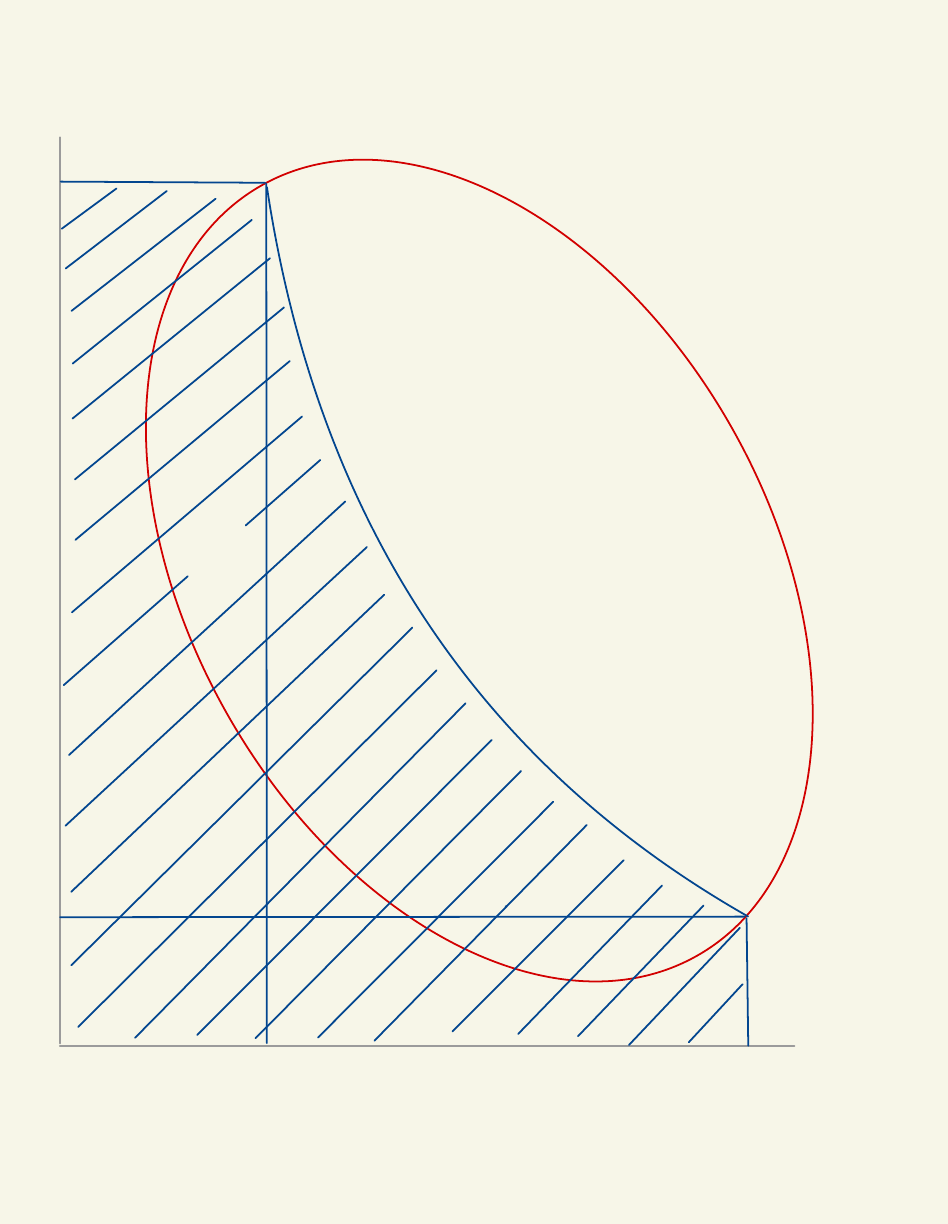}}
\put(-140,70){$D$} 
\put(-222,75){$\Theta$}
\put(-280,10){$(0,0)$}
\put(-280,40){$(0,Y_1(x^{**}))$}
\put(-220,10){$(X_1(y^{**}),0)$}
\put(-280,130){$(0,y^{**})$}
\put(-80,10){$(x^{**},0)$}
\put(-60,30){$(x^{**},Y_1(x^{**}))$}
\caption{Set $\Theta$, cases (B0){-}(B2). }\label{F5} 
\end{figure}

\end{document}